\tikzset{curve/.style={settings={#1},to path={(\tikztostart)
    .. controls ($(\tikztostart)!\pv{pos}!(\tikztotarget)!\pv{height}!270:(\tikztotarget)$)
    and ($(\tikztostart)!1-\pv{pos}!(\tikztotarget)!\pv{height}!270:(\tikztotarget)$)
    .. (\tikztotarget)\tikztonodes}},
    settings/.code={\tikzset{quiver/.cd,#1}
        \def\pv##1{\pgfkeysvalueof{/tikz/quiver/##1}}},
    quiver/.cd,pos/.initial=0.35,height/.initial=0}
\tikzset{tail reversed/.code={\pgfsetarrowsstart{tikzcd to}}}
\tikzset{2tail/.code={\pgfsetarrowsstart{Implies[reversed]}}}
\tikzset{2tail reversed/.code={\pgfsetarrowsstart{Implies}}}
\tikzset{no body/.style={/tikz/dash pattern=on 0 off 1mm}}
\DeclareMathOperator{\supp}{supp}
\newcommand{\brd}[2]{\underbrace{#1\cdots}_{#2}}
\newcommand{\CC}{{\mathbb C}}
\newcommand{\OEIS}[1]{\href{https:oeis.org/#1}{#1}}
\newcommand{\mP}{\mathsf P}
\newcommand{\la}{\langle}
\newcommand{\ra}{\rangle}
\newcommand{\wh}[1]{\widehat{#1}}
\newcommand{\ascprod}{\mathop{\overrightarrow{\prod}}}
\newcommand{\dscprod}{\mathop{\overleftarrow{\prod}}}
\newcommand{\ascprodst}{\mathop{\overrightarrow{\prod}^\times}}
\newcommand{\dscprodst}{\mathop{\overleftarrow{\prod}^\times}}
\newcommand{\prodst}{\mathop{{\prod}^\times}}
\newcommand{\partref}[1]{\StrCut{#1}{.}\@pref@\@suff@\ref{\@pref@}\StrLen{\@suff@}[\@slen@]\ifnum\@slen@>0\ref{#1}\fi}
\newtheorem{theorem}{Theorem}[section]
\newtheorem{corollary}[theorem]{Corollary}
\newtheorem{lemma}[theorem]{Lemma}
\newtheorem{proposition}[theorem]{Proposition}
\theoremstyle{definition}
\newtheorem{definition}[theorem]{Definition}
\newtheorem{remark}[theorem]{Remark}
\newtheorem{example}[theorem]{Example}
\newtheorem{problem}{Problem}
\newtheorem{conjecture}[theorem]{Conjecture}
\newcommand{\cx}[2]{c_{#1\to #2}}
\newcommand{\cxr}[2]{c_{#1\leftarrow #2}}
\newcommand{\Cx}[2]{C_{#1\to #2}}
\newcommand{\Cxr}[2]{C_{#1\leftarrow #2}}
\newenvironment{enmalph}{\begin{enumerate}[label={\rm(\alph*)},leftmargin=*]}{\end{enumerate}}
\newenvironment{enmroman}{\begin{enumerate}[label={\rm(\roman*)},align=center,leftmargin=*]}{\end{enumerate}}
\numberwithin{equation}{section}
\newcommand{\Cox}[1]{\mathbf{Cox}(#1)}
\newcommand{\tensor}{\otimes}
\newcommand{\kk}{\Bbbk}
\newcommand{\ZZ}{{\mathbb Z}}
\DeclareMathOperator{\id}{id}
\DeclareMathOperator{\End}{End}
\DeclareMathOperator{\Br}{Br}
\DeclareMathOperator{\Hom}{Hom}
\DeclareMathOperator{\Inv}{Inv}
\DeclareMathOperator{\SQF}{SQF}
\renewcommand*{\Hdynkin}{\Adynkin%
\dynkinEdgeLabel{\numexpr\dynkin@rank-1}{\dynkin@rank}{5}%
}
\newcommand{\plink}[1]{\hypertarget{#1}{}\label{page:#1}}
\begin{document}

\title{Hecke and Artin monoids and their homomorphisms}
\author{Arkady Berenstein, Jacob Greenstein and Jian-Rong Li}
\address{Arkady Berenstein, Department of Mathematics, University of Oregon, Eugene, OR 97403, USA}
\email{arkadiy@math.uoregon.edu}
\address{Jacob Greenstein, Department of Mathematics, University of California, Riverside, CA 92521, USA}
\email{jacobg@ucr.edu}
\address{Jian-Rong Li, Faculty of Mathematics, University of Vienna, Oskar-Morgenstern-Platz 1, 1090 Vienna, Austria}
\email{lijr07@gmail.com}
\date{}

\thanks{This work was partially supported by the Simons Foundation Collaboration Grant no.~636972 (A.~Berenstein), the Simons
foundation collaboration grant no.~245735 (J.~Greenstein), Austrian Science Fund (FWF): P 34602, Grant DOI: 10.55776/P34602, and PAT 9039323, Grant-DOI 10.55776/PAT9039323 (J.-R. Li).
}

\begin{abstract}
The aim of the present work is to systematically study
homomorphisms of Hecke and Artin monoids and thus to develop their comprehensive theory. 
Our original motivation was the striking observation that parabolic projections of Hecke monoids respect all parabolic elements. We found other classes of
 homomorphisms of Hecke monoids with the same property and discovered that many of them lift to
 homomorphisms of covering Artin monoids with a similar property. It turned out that they belong to a much larger class (in fact, a category) of homomorphisms of Artin monoids, most of which appear to be new.
\end{abstract}

\maketitle

\tableofcontents

\section{Introduction and main results}
The aim of the present work is to systematically study
homomorphisms of Hecke and Artin monoids and thus to develop their comprehensive theory. 

We start with {\em parabolic} homomorphisms of Hecke monoids, also referred to as 0-Hecke monoids, Demazure monoids and even Coxeter monoids in the literature. Our interest in them was motivated by the following surprising
observation which emerged in the framework
of geometric cactus group actions (\cites{BGL,BGLa}).
Let $M$ be a Coxeter matrix over a finite set~$I$ and let $W(M)=\la s_i\,:\, i\in I\ra$ 
(respectively, $(W(M),\star)$) be the corresponding Coxeter group 
(respectively, the Hecke
monoid generated by simple idempotents also denoted by $s_i$, $i\in I$, see~\S\S\ref{subs:Br(M)W(M)}, \ref{subs:Hecke} for details).
Given~$J\subset I$, let $W_J(M)=\la s_j\,:\, j\in J\ra$ be the {\em parabolic} subgroup of~$W(M)$
corresponding to~$J$ (\S\ref{subs:parab}). 
It is well-known that the assignments
$s_j\mapsto s_j$, $j\in J$ and $s_j\mapsto 1$, $j\in I\setminus J$, define a homomorphism of monoids $p_J:(W(M),\star)\to 
(W_J(M),\star)$, which we refer to as a {\em parabolic projection}.

More generally, we say that a homomorphism $\phi:(W(M'),\star)\to 
(W(M),\star)$ is {\em light}
if the image of any generator of~$W(M')$ is either 
a generator of~$W(M)$ or~$1$
(Definition~\ref{defn:light}). Clearly,  all
light homomorphisms form a category
(see~\S\ref{subs:parab proj} for its detailed description), and  parabolic projections form its subcategory.
For any~$K\subset I$
such that~$W_K(M)$ is finite, there is a distinguished
family of elements $w_{J;K}\in W_K(M)$, $J\subset K$ called {\em parabolic} (cf.~\S\ref{subs:parab elts}). Our first main result is
the following
\begin{theorem}[Theorem~\ref{thm:light->parab}]\label{thm:main thm 1}
A light homomorphism of Hecke monoids maps all parabolic elements to parabolic elements.
\end{theorem}
While it is easy to see that longest
parabolic elements
$w_{\emptyset;K'}$ for~$K'\subset I'$ such that~$W_{K'}(M')$ is finite
are mapped by
a light homomorphism~$\phi:(W(M'),\star)\to 
(W(M),\star)$ to $w_{\emptyset;K}$ for some~$K\subset I$, the equation
$w_{\emptyset;K}=w_{\emptyset;J}\star x$
obtained by
applying~$\phi$ to the identity $w_{\emptyset;K'}=
w_{\emptyset;J'}\star w_{J';K'}$
has too many solutions~$x\in W_{K}(M)$, which makes the above result highly non-trivial.

More generally, in~\S\ref{subs:multiparab} we introduce multiparabolic elements. We expect that
Theorem~\ref{thm:main thm 1} holds for them as well (see Conjecture~\ref{conj:pJ multiparab}). They
also have some interesting combinatorial properties.
For instance, the generating function of their number depends only
on the shape of the Coxeter graph of~$M$  (Theorem~\ref{thm:count multipar}) and
can be expressed in terms of Chebyshev polynomials
(Proposition~\ref{prop:Chebyshev}). 

It turns out that for every finite~$W(M)$ the set $\mP(W(M),\star):=
\{ w_{K;I}\,:\, K\subset I\}$
 is an abelian submonoid of~$(W(M),\star)$. For crystallographic
 $W(M)$ this
 was established in~\cite{BK07}*{Proposition~2.30},
 while remaining cases were treated in~\cite{He09}
 (see also Proposition~\ref{prop:submonoid *}).
 Thus,
Theorem~\ref{thm:main thm 1} implies that
a light homomorphism~$\phi:(W(M'),\star)\to
(W(M),\star)$ restricts to a homomorphism
of monoids $\mP(W(M'),\star)\to \mP(W_J(M),\star)$
where~$J=\supp\phi(W(M'))\subset I$ (see~\S\ref{subs:Br(M)W(M)}).
The set of multiparabolic elements is almost a submonoid, in the sense that it becomes one after
adjoining (very few of) their pairwise products 
(Conjecture~\ref{conj:multiparab submon} and Example~\ref{ex:multipar submon}).

In view of this discussion and Theorem~\ref{thm:main thm 1}, we say that
a homomorphism $\phi:(W(M'),\star)\to (W(M),\star)$
of finite Hecke monoids is {\em parabolic}
if it restricts to a homomorphism of monoids $\mP(W'(M),\star)\to
\mP(W(M),\star)$. So, it is natural to pose the following
\begin{problem}\label{prob:1.2}
Classify parabolic homomorphisms of finite Hecke monoids (clearly, such homomorphisms form a category).
\end{problem}
The problem is open so far.
In addition to light ones we found several other families of parabolic homomorphisms.
\begin{example}\label{ex:1.3}
The assignments $s'_i\mapsto s_i$, $1\le i\le n-1$,
$s'_n\mapsto s_n s_{n+1} s_n=s_{n+1}s_n s_{n+1}$
define a parabolic homomorphism of Hecke monoids
$\phi:(W',\star)\to (W,\star)$ where~$W'$ is of type~$B_n$ and~$W$ is of type~$A_{n+1}$
(in fact, $\phi=p_{[1,n+1]}\circ
\phi'$ where $\phi':(W(B_n),\star)\to (W(A_{2n}),\star)$
is the standard unfolding; see~\S\ref{subs:parab hom Artin} for details).
Likewise, each of the assignments
\begin{alignat*}{3}
&s'_1\mapsto s_1,& &s'_2\mapsto s_2 s_4,&
&s'_3\mapsto s_3 s_4 s_3,\\
&s'_1\mapsto s_1 s_4,&\quad &s'_2\mapsto s_2,&\quad
&s'_3\mapsto s_3 s_4 s_3,\\
&s'_1\mapsto s_1 s_4,&
&s'_2\mapsto s_1 s_3,&
&s'_2\mapsto s_1 s_2 s_1
 \end{alignat*}
 defines a parabolic homomorphism~$(W(B_3),\star)
 \to (W(A_4),\star)$. In fact, this list, together
 with the generic family described above, exhausts
 all such homomorphisms, up to the diagram automorphism of~$W(A_4)$.
\end{example}

Quite unexpectedly, as a byproduct of our approach to Problem~\ref{prob:1.2}, the 
classification of light homomorphisms, and Example~\ref{ex:1.3}, it is possible to describe, to some extent, {\em all} homomorphisms of Hecke monoids. To begin with,
since the generators $s_i$, $i\in I$ of~$(W(M),\star)$
are idempotents, to find
all homomorphisms of Hecke monoids it is necessary to find all idempotents in Hecke monoids.
One can show (see~\S\ref{subs:Hecke} and e.g.~\cite{K14}) that an element of~$(W(M),\star)$
is an idempotent if and only if it is the longest element~$w_\circ^J=w_{\emptyset;J}$
in a finite parabolic Hecke submonoid
$(W_J(M),\star)$ (and in the Coxeter subgroup~$W_J(M)$, see~\S\ref{subs:w0J}).
\begin{theorem}[Theorem~\ref{thm:Hom Heck Mon}]\label{thm:1.3}
Let $M=(m_{ij})_{i,j\in I}$, $M'=(m'_{ij})_{i,j\in I'}$ be
Coxeter matrices and let 
$(W(M),\star)$, $(W(M'),\star)=
\la s'_i: i\in I'\ra$ be the respective Hecke monoids.
\begin{enmalph}
    \item Given a family $\boldsymbol K=\{K_i\,:\,i\in I'\}$
    of subsets of~$I$ such that each~$W_{K_i}(M)$ is finite,
    the assignments $s'_i\mapsto w_\circ^{K_i}$, $i\in I'$,
    define a homomorphism $\phi_{\boldsymbol K}:(W(M'),\star)\to (W(M),\star)$
    if and only if  for all~$i\not=j\in I'$ either~$m'_{ij}=
    \infty$ or
    $W_{K_i\cup K_j}(M)$ is finite
    and
    $$m'_{ij}\ge \max(\mu_M(
    K_i,K_j),\mu_M(K_j,K_i)),$$
    where
    $\mu_M(K_i,K_j)$ is the minimal positive integer~$m$
    such that $\brd{w_\circ^{K_i}\star w_\circ^{K_j}\star}{m}=w_\circ^{K_i\cup K_j}$.
    \item All homomorphisms of Hecke monoids
    are of this form.
\end{enmalph}
\end{theorem}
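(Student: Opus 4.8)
The plan is to combine the standard presentation of a Hecke monoid by idempotent and braid relations with the classification of its idempotents, so that the whole statement reduces to the computation of the invariants $\mu_M(\cdot,\cdot)$. Recall from~\S\ref{subs:Hecke} that $(W',\star)$ has the presentation with generators $s'_i$, $i\in I'$, and relations $s'_i\star s'_i=s'_i$ together with, for every pair $i\ne j$ with $m'_{ij}<\infty$, the braid relation $\brd{s'_i\star s'_j\star}{m'_{ij}}=\brd{s'_j\star s'_i\star}{m'_{ij}}$. Hence an assignment $s'_i\mapsto t_i\in W$ extends to a homomorphism of Hecke monoids if and only if each $t_i$ is $\star$-idempotent and these braid relations hold for the $t_i$. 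Since the $\star$-idempotents of $(W,\star)$ are exactly the elements $w_\circ^K$ with $W_K$ finite, and $K=\supp w_\circ^K$ is recovered from $w_\circ^K$, part~(b) follows immediately from part~(a): any homomorphism sends $s'_i$ to some $w_\circ^{K_i}$ and therefore agrees with $\phi_{\boldsymbol K}$ on generators, hence everywhere. It thus remains, for~(a), to decide for $K,L\subseteq I$ with $W_K,W_L$ finite exactly when $\brd{w_\circ^K\star w_\circ^L\star}{m}=\brd{w_\circ^L\star w_\circ^K\star}{m}$, and then to specialize to $(K,L)=(K_i,K_j)$, $m=m'_{ij}$.

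Write $\beta_m(K,L):=\brd{w_\circ^K\star w_\circ^L\star}{m}$ for the product of $m$ alternating factors starting with $w_\circ^K$. Two elementary identities are used throughout: $\beta_m(K,L)=w_\circ^K\star\beta_{m-1}(L,K)$ (splitting off the first factor) and $\beta_m(K,L)=\beta_{m-1}(K,L)\star c$ with $c\in\{w_\circ^K,w_\circ^L\}$ (splitting off the last). Since $s_i\star u\ge u$ and $u\star s_i\ge u$ in the Bruhat order, one has $x\star y\ge x$ and $x\star y\ge y$ for all $x,y\in W$; consequently $\beta_1(K,L)\le\beta_2(K,L)\le\cdots$ is a non-decreasing chain, and it lies in the parabolic subgroup $W_{K\cup L}$ because the latter is a $\star$-submonoid containing $w_\circ^K$ and $w_\circ^L$. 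The third and crucial ingredient is the \emph{absorption lemma}: for $W_B$ finite, $w_\circ^B\star x=x$ if and only if $B$ is contained in the left descent set of $x$, and $x\star w_\circ^B=x$ if and only if $B$ is contained in the right descent set of $x$. For $\Leftarrow$ one writes $x=w_\circ^B\star u$ with $\ell(x)=\ell(w_\circ^B)+\ell(u)$ and uses $w_\circ^B\star w_\circ^B=w_\circ^B$; for $\Rightarrow$, given $i\in B$, write $w_\circ^B=s_i\star v$ with $\ell(w_\circ^B)=1+\ell(v)$, and then $x=s_i\star(v\star x)\ge v\star x\ge x$ forces $v\star x=x$ and hence $s_i\star x=x$.

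With these in hand, the dichotomy underlying~(a) is: for $K,L\subseteq I$ with $W_K,W_L$ finite, $\beta_m(K,L)=\beta_m(L,K)$ holds for a given $m$ if and only if $W_{K\cup L}$ is finite and $m\ge\max(\mu_M(K,L),\mu_M(L,K))$, the common value then being $w_\circ^{K\cup L}$. For the forward implication, put $v:=\beta_m(K,L)=\beta_m(L,K)$ (one may assume $m\ge 2$, as $m'_{ij}\ge 2$). Splitting off the first factor gives $v=w_\circ^K\star\beta_{m-1}(L,K)$, so $w_\circ^K\star v=v$, and by absorption $K$ lies in the left descent set of $v$; symmetrically so does $L$. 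Thus $v\in W_{K\cup L}$ has left descent set containing $K\cup L$, which forces $W_{K\cup L}$ finite with $v=w_\circ^{K\cup L}$, and minimality of $\mu_M$ yields $m\ge\mu_M(K,L)$ and $m\ge\mu_M(L,K)$. For the converse, assume $W_{K\cup L}$ finite; then the non-decreasing chains $\beta_m(K,L)$ and $\beta_m(L,K)$ stabilize to some $\beta_\infty,\beta'_\infty\in W_{K\cup L}$, and passing to the limit in $\beta_m(K,L)=w_\circ^K\star\beta_{m-1}(L,K)$ and its mirror gives $\beta_\infty=w_\circ^K\star\beta'_\infty\ge\beta'_\infty$ and $\beta'_\infty=w_\circ^L\star\beta_\infty\ge\beta_\infty$, so $\beta_\infty=\beta'_\infty$ is fixed by $w_\circ^K\star(-)$ and $w_\circ^L\star(-)$; by absorption its left descent set contains $K\cup L$, hence $\beta_\infty=w_\circ^{K\cup L}$. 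In particular $\mu_M(K,L)$ and $\mu_M(L,K)$ are well-defined, and once a braid product equals $w_\circ^{K\cup L}$ it stays there (absorption again, since $w_\circ^{K\cup L}\star w_\circ^K=w_\circ^{K\cup L}$ and likewise for $L$); therefore $\beta_m(K,L)=\beta_m(L,K)=w_\circ^{K\cup L}$ whenever $m\ge\max(\mu_M(K,L),\mu_M(L,K))$. Applying this with $(K,L)=(K_i,K_j)$, $m=m'_{ij}$ over all pairs $i\ne j$ with $m'_{ij}<\infty$ — the idempotent relations being automatic — gives~(a) in both directions, and shows in passing that $\phi_{\boldsymbol K}$ is genuinely well-defined under the stated hypotheses.

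The only genuinely load-bearing inputs are the absorption lemma and the classical fact that an element of a Coxeter group whose left descent set is the whole generating set must be the longest element — so that such an element exists only in the finite case; both are standard and are essentially recalled in~\S\ref{subs:Hecke} and~\S\ref{subs:w0J}. Everything else is a short induction on the number of factors. A minor bookkeeping point is the alternation of $K$ and $L$ when splitting off the last factor (the parity of $m$ decides which of $w_\circ^K$, $w_\circ^L$ appears), but since our splittings are used only to land on $w_\circ^{K\cup L}$, which absorbs $w_\circ^K$ and $w_\circ^L$ on either side, the parity never actually intervenes.
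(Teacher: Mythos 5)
Your proof is correct and follows essentially the same route as the paper: both reduce the statement to the presentation of the Hecke monoid together with the classification of its idempotents, and then settle the rank-two braid relation for $w_\circ^{K}$, $w_\circ^{L}$ by showing that any coincidence of the alternating $\star$-products forces the common value to be $w_\circ^{K\cup L}$ (so $W_{K\cup L}$ is finite and $m\ge\max(\mu_M(K,L),\mu_M(L,K))$), the absorption property of longest elements doing all the work. The only cosmetic difference is that you derive necessity from the descent set of the stabilized chain, whereas the paper phrases it via the finite submonoid $\phi(\la s'_i,s'_j\ra)$ and its unique maximal element; these are the same mechanism.
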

This provides the classification of homomorphisms of
Hecke monoids (for example, $p_J=\phi_{\boldsymbol K(J)}$
where~$K(J)_i=\{i\}$ if~$i\in J$ and~$K(J)_i=\emptyset$
otherwise)
and will enhance representation theory of Hecke monoids developed in~\cite{Cart}. 
However, this classification does not immediately solve Problem~\ref{prob:1.2}, as illustrated by the following example.
\begin{example}\label{ex:1.5}
The assignments
$$
s'_i\mapsto s_i,\, 1\le i\le n-1,\quad
s'_n\mapsto w_\circ^{[n,m]}
$$
$m\ge n+2$
define non-parabolic homomorphisms~$(W(B_n),\star)\to (W(A_m),\star)$.
Likewise, each of the assignments
\begin{alignat*}{3}
&s'_1\mapsto s_1,&&s'_2\mapsto w_\circ^{\{2,4\}},
&&s'_3\mapsto w_\circ^{\{3,4,5\}},\\
&s'_1\mapsto s_1,&\quad &s'_2\mapsto w_\circ^{\{2,4,5\}},
&\quad &s'_3\mapsto w_\circ^{\{3,4\}},
\end{alignat*}
defines a non-parabolic homomorphism
$(W(B_3),\star)\to (W(A_5),\star)$.
\end{example}

It turns out that many solutions of Problem~\ref{prob:1.2} originate from homomorphisms of {\em Artin monoids}. Recall 
(see~\S\ref{subs:Br(M)W(M)})  that
the Artin monoid~$\Br^+(M)$ associated with~$M$ is, in the presentation of Tits (\cite{Tits}), generated by
the~$T_w$, $w\in W(M)$ subject
to appropriate relations (see Theorem~\ref{thm:Tits}).
Analogously to the above, we call
elements $T_{w_{J;I}}$, $J\subset I$
(assuming that~$W(M)$ is finite)
{\em parabolic}. 
We can thus
formulate the following
\begin{problem}\label{prob:II}
Classify {\em strict parabolic homomorphisms} of Artin monoids, that is, those 
homomorphisms which map parabolic 
elements to parabolic elements
(like parabolic homomorphisms of Hecke monoids, they form a category).
\end{problem}
Parabolic elements in Artin monoids were already instrumental in our study of actions of the cactus group on  representation  categories of the corresponding quantum group (\cite{BGL}), and we expect that solving Problem~\ref{prob:II} will shed more light on symmetries of these categories.

When investigating such homomorphisms of Artin monoids,
we exploit their main advantage over their  Hecke counterparts. Namely, an Artin monoid embed into the corresponding Artin group (\cite{Par}*{Theorem~1.1}) where
every parabolic element~$T_{w_{J;I}}$ can be written as $T_{w_\circ^J}^{-1}T_{w_\circ^I}$. Thus, it would suffice to  determine the images of $T_{w_\circ^J}$ for all~$J$ for any given homomorphism, and
the most natural (``default'') solutions of Problem~\ref{prob:II} are those  which map~$T'_{w_\circ^{J'}}$, $J'\subset I'$
to~$T_{w_\circ^J}$ for some~$J\subset I$ depending on~$J'$.
We prove (Theorem~\ref{thm:artin parab}) that LCM homomorphisms studied e.g. in~\cites{Cas,Cri,God} are strict
parabolic and that these are the only ``default'' ones. Yet, surprisingly, there are new infinite families of solutions of Problem~\ref{prob:II} of a rather different nature.
\begin{theorem}[Propositions~\partref{prop:admissible hom from BrI22m to BrBn.a} and \ref{prop:Bm Bn parab}, 
Theorem~\partref{thm:monomial brd.c}]
\label{thm:intro strict parab}
\begin{enmalph}
\item\label{thm:intro strict parab.a} For any~$2\le m\le n$,
the assignments 
$$T'_1\mapsto (T_m T_{m+1}\cdots T_{n-1} T_n T_{n-1}\cdots T_m)(T_{m-2} T_{m-4}\cdots ),\qquad 
T'_2\mapsto T_{m-1}T_{m-3}\cdots
$$
define a strict parabolic homomorphism
$\Br^+(I_2(2m))\to \Br^+(B_n)$.

\item\label{thm:intro strict parab.b} For any~$2\le m\le n$ the 
assignments 
$$
T'_i\mapsto T_i,\quad i\in[1,m-1],
\quad T'_m\mapsto T_{m}T_{m+1}\cdots T_{n-1}T_n T_{n-1}\cdots T_m,
$$
define a strict parabolic homomorphism 
$\Br^+(B_m)\to \Br^+(B_n)$.

\item\label{thm:intro strict parab.c} For all $m,n\in\ZZ_{\ge 2}$, 
the assignments 
\begin{align*}
&T'_i\mapsto 
T_{w_{[(i-1)m+1,im-1]\cup[im+1,(i+1)m-1];[(i-1)m+1,(i+1)m-1]}},\qquad i\in[1,n-1],\\
&T'_n\mapsto T_{w_{[(n-1)m+1,nm-1];[(n-1)m+1,nm]}}
\end{align*}
define a strict parabolic homomorphism $\Br^+(B_n)\to\Br^+(B_{mn})$.
\end{enmalph}
\end{theorem}
The restriction of
the homomorphism from part~\ref{thm:intro strict parab.c} to $\Br^+_n:=\Br^+(A_{n-1})$
yields a homomorphism $\Br^+_n\to\Br^+_{nm}$
(Theorem~\partref{thm:monomial brd.a})
which is closely related to natural braidings between tensor powers of objects in braided monoidal categories and was inspired by our work in progress \cite{BGLmon}. 
These homomorphisms seem to provide most of solutions of Problem~\ref{prob:II} (see below for the discussion of ``missing'' ones).

It turns out that relaxing strict parabolicity is quite beneficial as it
yields additional interesting families of homomorphisms of Artin monoids. Namely, assume, for simplicity, that the Coxeter graph of~$M$ is connected, 
and denote $\mP(\Br^+(M))$ the 
(necessarily infinite) submonoid of~$\Br^+(M)$
generated by all parabolic elements $T_{w_{J;I}}$, $J\subset I$ (see~\S\ref{subs:parab elts} for the details and the general case). We call a homomorphism 
$\Phi:\Br^+(M')\to \Br^+(M)$ of Artin monoids {\em parabolic} if $\Phi(\mP(\Br^+(M')))\subset 
\mP(\Br^+_J(M))$ where~$J=\supp \Phi(\Br^+(M'))$ (see Definition~\ref{defn:parabhom}).
We can thus
pose the following
\begin{problem}\label{prob:III}
Classify parabolic homomorphisms of Artin monoids (which, like parabolic homomorphisms of Hecke monoids, form a category).
\end{problem}
Unlike Problem~\ref{prob:II}, this problem is wide open and highly non-trivial. 
The first class of solutions we found mimics  Theorem~\ref{thm:main thm 1} for Artin monoids. Namely,
we say that a homomorphism of Artin monoids 
is {\em light} if its maps generators of its domain to
powers of generators of its codomain (see Definition~\ref{defn:light Artin}).
\begin{theorem}[Theorem~\ref{thm:light->parabolic}]\label{thm:main thm 2}
All light homomorphisms of Artin monoids are parabolic.
\end{theorem}
For example, one of the very few existing parabolic 
projections of Artin monoids $P_{[1,n-1]}:\Br^+(B_n)\to \Br^+(A_{n-1})$, $T_i\mapsto T_i^{1-\delta_{i,n}}$, $i\in[1,n]$, is a parabolic homomorphism (Proposition~\ref{prop:PJ Bn An-1}).
Other examples of light homomorphisms are those 
which verify the famous {\em Tits conjecture} whose generalization was proved in~\cite{CP}. We extend it to Conjecture~\ref{conj:gen Tits} which essentially characterizes all injective light homomorphisms of 
Artin monoids.

Another class of non-strict parabolic homomorphisms is the following new infinite family.
\begin{theorem}[Propositions~\partref{prop:Cox hom from TJ}]
\label{thm:intro parab Cox}
For any $1\le a<b\le n$, the 
assignments
\begin{align*}
&T'_1\mapsto (T_a T_{a+1}\cdots T_{b-1} T_b T_{b-1} \cdots T_a)
(T_{a-2}T_{a-4}\cdots)(T_{b+2}T_{b+4}\cdots),\\
&T'_2\mapsto (T_{a-1} T_{a-3}\cdots ) 
(T_{b+1}T_{b+3}\cdots )
\end{align*}
define a parabolic homomorphism $\Br^+(I_2(2(n-b+(1-\delta_{a+b,n+1})(a-1))))\to \Br^+(A_n)$.
\end{theorem}
Note that this yields more solutions of Problem~\ref{prob:III} because every Artin
monoid is ``assembled'' out of dihedral ones $\Br^+(I_2(r))$. As an application, these
yield new actions of the corresponding Artin groups on any braided monoidal category.
The fact that these assignments yield homomorphisms of monoids is one of the most technically involved results of the paper, and its proof occupies most of Section~\ref{sec:SQF AB}. We also show, using Burau representation~\cite{Bu} of the Hecke algebra (see~\S\ref{subs:Burau}),
that there are no other homomorphisms of that type (see Theorem~\ref{thm:adm I2m converse } for the precise statement). The parabolicity of these homomorphisms is even more astonishing. 

The homomorphisms from Theorems~\ref{thm:intro strict parab} and~\ref{thm:intro parab Cox} are
``cleaned up'' (or more precisely ``undecorated'' in the sense of~\S\ref{subs:decor hom} and Remark~\ref{rem:undercoration}) versions 
of solutions of the following
\begin{problem}\label{prob:1.4}
Classify all homomorphisms of Artin monoids
$\Phi:\Br^+(M')\to \Br^+(M)$ such that $\Phi(T'_i)
=T_{w_\circ^{J_i}}$, $J_i\subset I$ for all~$i\in I'$
(we refer to them as {\em standard}, see Definition~\ref{defn:Heck Coxeter}).
\end{problem}
We completely solved Problem~\ref{prob:1.4} in the important case when all the~$J_i$, $i\in I'$
are pairwise disjoint and~$M$ is of type~$A$ or~$B$
(Theorems~\ref{thm:main thm adm}
and~\ref{thm:higher rank adm AB}). It is worth
mentioning that the central role in
proving these results is played by homomorphisms from
Theorems~\partref{thm:intro strict parab.a}\ref{thm:intro strict parab.b} and~\ref{thm:intro parab Cox}.
Unfolding these homomorphisms via~\eqref{eq:unfold Bn Dn+1} we obtain homomorphisms with the same property for~$M=D_{n+1}$, $n\ge 3$.
We expect that, apart from very few sporadic examples, that completes the 
classification of disjoint standard 
homomorphisms (see~\S\ref{subs:rank > 2}). 

The non-disjoint case seems to be even richer than the disjoint one. 
We have already discovered a plethora of families of standard non-disjoint homomorphisms in~\S\S\ref{subs:mon braid}, \ref{subs:inf ser non-disj} and~\ref{subs:non-disj Hecke}. For instance, parabolic homomorphisms from Theorem~\partref{thm:intro strict parab.c} are ``cleaned up'' versions of standard homomorphisms from
Theorem~\partref{thm:monomial brd.d}.
We exhibit even more conjectural non-disjoint homomorphisms in~\S\ref{subs:conj families}.

Compositions of standard homomorphisms are also quite interesting and recover
some known homomorphisms which are not standard.
\begin{example}\label{ex:1.6}
A well-known homomorphism $\Br^+(B_n)\to \Br^+(A_n)$
defined by $T'_i\mapsto T_i^{1+\delta_{i,n}}$, $i\in[1,n]$ is the composition of the standard
unfolding $\Br^+(B_n)\to \Br^+(D_{n+1})$ given by~\eqref{eq:unfold Bn Dn+1} and the standard
folding $\Br^+(D_{n+1})\to \Br^+(A_n)$
defined by $T'_i\mapsto T_{i-\delta_{i,n+1}}$, $i\in [1,n+1]$. Similarly, a well-known homomorphism
$\Br^+(G_2)\to \Br^+(A_2)$ defined by $T'_1\mapsto T_1^3$, $T'_2\mapsto T_2$ is the composition of
the standard unfolding $\Br^+(G_2)\to\Br^+(D_4)$,
$T'_1\mapsto T_1T_3 T_4$, $T'_2\mapsto T_2$
with the standard folding $\Br^+(D_4)\to \Br^+(A_2)$, $T'_i\mapsto T_1$, $i\in\{1,3,4\}$, $T'_2\mapsto T_2$
(see Example~\ref{ex:affine} for similar homomorphisms in affine types).
\end{example}
While these examples show that standard homomorphisms do not form a category, they still
share an important property with their compositions. Namely,
we say that a homomorphism~$\Phi:\Br^+(M')\to
\Br^+(M)$ is of {\em Hecke type} 
if it descends, necessarily uniquely, to a homomorphism~$\overline\Phi_\star$ of respective Hecke monoids; the notion
of a {\em Coxeter type} homomorphism is defined similarly (see Definition~\ref{defn:Heck Coxeter}).
 Artin monoids and their Hecke type homomorphisms form a subcategory~$\mathscr{AH}$ of the category~$\mathscr A$ of Artin monoids and all their homomorphisms, and
the assignments
$\Br^+(M)\mapsto (W(M),\star)$, $\Phi\mapsto \overline\Phi_\star$
define a functor~$\mathsf H$
from~$\mathscr{AH}$ to the category~$\mathscr H$ of Hecke monoids and their homomorphisms (Corollary~\ref{cor:cat AH AC}). Likewise,
we obtain a functor~$\mathsf C:\mathscr{AC}\to\mathscr C$ where~$\mathscr{AC}$ is the subcategory of~$\mathscr A$ with morphisms being of Coxeter type, and~$\mathscr C$ is the category of Coxeter groups. 

 Neither of the functors~$\mathsf H$ and~$\mathsf C$ is full or faithful (see Examples~\ref{ex:non-faithful}, \ref{ex:non-liftable} and~\ref{ex:non-liftable-1}
 and Remark~\ref{rem:C not full}). We refer to homomorphisms in their images as 
{\em liftable}, and they would be interesting to describe.

Based on the above discussion, we expect that the most interesting  Hecke and Coxeter type homomorphisms of Artin monoids are the standard ones and their compositions. 
For instance, every light homomorphism is in that class, i.e. belongs to the subcategory~$\mathscr {ACH}_{st}$ of~$\mathscr{AH}\cap\mathscr{AC}$ generated by all standard homomorphisms (Proposition~\ref{prop:Tits standard}). Moreover, we expect that a homomorphism in~$\mathscr {ACH}_{st}$
is injective if and only it is injective on all parabolic
submonoids of rank 2
which, 
in particular, recovers the aforementioned extension 
of Tits conjecture (Conjecture~\ref{conj:gen Tits}).
Besides, one can often obtain
homomorphisms of Artin monoids which are not of Hecke type by ``cleaning'' standard ones (that was how Theorems~\ref{thm:intro strict parab} and~\ref{thm:intro parab Cox} were derived from Theorem~\partref{thm:monomial brd.b}\ref{thm:monomial brd.d}). Sometimes this 
procedure yields homomorphisms which are neither Hecke nor Coxeter (Corollary~\ref{cor:strange homs}) or rather exotic Coxeter-Hecke type homomorphisms (Remark~\ref{rem:non-std B2 An}).

Surprisingly, we classified (yet conjecturally) {\em all} standard homomorphisms from~$\Br^+(A_2)$
and~$\Br^+(B_2)$ (Theorems~\ref{thm:Hom A2}, \ref{thm:Hom B2}, \ref{thm:B2 A2n-1 spec} and Proposition~\ref{prop:Hom A2|B2 EF}) to Artin monoids of finite types. 
They have rather intriguing combinatorial properties
(see Theorem~\ref{thm:combinatoric std}
and Remark~\ref{rem:OEIS} for OEIS appearances of the related numerology). For 
instance, there are precisely~$2^m$ fully supported standard homomorphisms~$\Br^+_3\to \Br^+_{3m}$, $m\ge 1$. 
Homomorphisms from~$\Br^+(B_2)\to\Br^+(M)$ are even more affluent. 
For instance,
their number grows asymptotically as $\frac14(1+\sqrt 3)^{\frac12(n+5)}$
for~$M$ of type~$A_n$, $3\cdot 2^n$
for~$M$ of type~$B_n$ and faster than~$\frac73\cdot 2^{n+1}$ if~$M$ is of type~$D_{n+1}$. There is an 
additional series of homomorphisms~$\Br^+(B_2)\to \Br^+_{2n}$ which grows at least as fast as $2^{\frac12 n}n$.
These results, in conjunction with conjectures in~\S\ref{subs:conj families}, give a hope to {\em completely} classify standard homomorphisms of Artin monoids of finite and affine types.

The above constructions and examples demonstrate that our theory of parabolic, Hecke and Coxeter type (especially standard) homomorphisms is rich and advances our understanding of Hecke and Artin monoids, Coxeter groups and their representations.

This paper is organized as follows. In Section~\ref{sec:Prelim} we introduce the notation and collect definitions and properties of Artin and Hecke monoids which are used throughout the rest of the paper. Section~\ref{sec:Gen homs} is dedicated to general properties of homomorphisms of Hecke and Artin monoids. In Section~\ref{sec:Parab proj} we study light homomorphisms of Hecke and Artin monoids and, in particular, prove Theorems~\ref{thm:main thm 1} and~\ref{thm:main thm 2}.
In Section~\ref{sec:SQF AB} we classify disjoint standard homomorphisms of Artin monoids whose codomain is of finite type~$A$ or~$B$ and describe several infinite families of non-disjoint standard or parabolic Coxeter type homomorphisms.

\subsection*{Acknowledgments}
The main part of this work was carried out while the authors were visiting Erwin Schr\"odinger
International Institute for Theoretical Physics (ESI), Vienna, Austria,
in the framework of the ``Research in teams'' program. It is our pleasure to thank the ESI for its hospitality. This work took its present shape while the first author was visiting Max Planck Institute for Mathematics in the Sciences (MIS), Leipzig, Germany and the second author was visiting Institut des Hautes \'Etudes Scientifiques (IHES), Bures-sur-Yvette, France. The hospitality of both institutions is gratefully acknowledged.

\section{Preliminaries}\label{sec:Prelim}

\subsection{General notation}
We extend the natural order on~$\mathbb Z$ to~$\mathbb Z\cup\{\infty\}$
via $\infty>n$ for all~$n\in\mathbb Z$
and use the convention that $n \infty=n+\infty=\infty$ for all~$n\in\mathbb Z_{>0}\cup\{\infty\}$.
In particular, $\infty$ is assumed 
to be divisible by all elements of~$\mathbb Z_{>0}\cup\{\infty\}$. 
Given~\plink{bar s}$s\in\mathbb Z$, let $\bar s\in\{0,1\}$
be the remainder of~$s$ when divided by~$2$.
For any $a,b\in\mathbb Z$ we denote $[a,b]=\{ i\in\mathbb Z\,:\,
a\le i\le b\}$ and \plink{[a,b]2}$[a,b]_2=\{ k\in [a,b]\,:\, \overline{b-k}=0\}$. Given $a,b\in\ZZ$ and~$J\subset \ZZ$, set $a+b J:=\{a +b j\,:\, j\in J\}$.
The power set of a set~$S$ will be denoted~\plink{PS}$\mathscr P(S)$. Given a category~$\mathscr{D}$, we denote $\Hom_{\mathscr D}(X,Y)$ the set of morphisms from $X\in\mathscr D$ to~$Y\in\mathscr D$.

\subsection{Monoids}\label{subs:monoids}
Throughout this paper, a homomorphism of monoids is assumed to map the identity element of the domain
to the identity element of the codomain.

Let~$\mathsf M$ be a multiplicative monoid. 
Given any finite subset~$I\subset \ZZ$ and
a family $X_i$, $i\in I$ of elements of~$\mathsf M$ we set
$$\plink{ascp}
\ascprod_{i\in I} X_i=X_{i_1}\cdots X_{i_r},\qquad
\dscprod_{i\in I} X_i=X_{i_r}\cdots X_{i_1}.
$$
where $I=\{i_1,\dots,i_r\}$ with~$i_1<\cdots<i_r$. This notation
will also be used for infinite families with all but finitely many $X_i$
equal to~$1$.

Given a family~$S$ of generators of~$\mathsf M$,
the length function $\ell_S:\mathsf M\to \mathbb Z_{\ge0}$ is defined by setting $\ell_S(x)$,
$x\in \mathsf M$
to be
the minimal length of a word in~$S$ which is equal
to~$x$. Clearly, $\ell_S(xy)\le \ell_S(x)+\ell_S(y)$
for all~$x,y\in\mathsf M$.

An equivalence relation~$\mathcal C\subset\mathsf M\times\mathsf M$
is called a {\em congruence relation} on~$\mathsf M$
if $(x,y),(x',y')\in\mathcal C$ implies that~$(xx',yy')\in\mathcal C$. In that case,
the set~$\mathsf M/\mathcal C$ of equivalence classes with respect to~$\mathcal C$ is also a monoid, with the multiplication defined by $[x]_{\mathcal C}[y]_{\mathcal C}=[xy]_{\mathcal C}$, $x,y\in\mathsf M$,
where~$[x]_{\mathcal C}$ is the equivalence class of~$\mathcal C$. Furthermore, the canonical map~$\pi_{\mathcal C}:\mathsf M\to \mathsf M/\mathcal C$, $x\mapsto [x]_{\mathcal C}$,
$x\in\mathsf M$ is a surjective homomorphism monoids.

We say that~$\mathsf M$ is {\em left} (respectively, {\em right}) {\em cancellative} if $x y=x y'$ (respectively, $y x=y' x$),
$x,x',y\in\mathsf M$ implies~$y=y'$. We say that~$\mathsf M$
is {\em cancellative} if it is left and right cancellative. 
For any $x,y\in \mathsf M$ and~$m\in\ZZ_{\ge0}$  denote
$$\plink{brd}
\brd{xy}{m}:=(xy)^{\lfloor \frac12 m\rfloor} x^{\bar m}.
$$
In other words, $\brd{xy}0=1$, $\brd{xy}{m+1}=
\brd{xy}m\,x$ if~$m$ is even, $\brd{xy}{m+1}=
\brd{xy}m\,y$ if~$m$ is odd, while $\brd{xy}{m+1}=
x\brd{yx}{m}$ for all~$m\in\mathbb Z_{\ge 0}$. Given $x,y\in\mathsf M$, define\plink{B(x,y)}
$$
B(x,y)=\{ k\in\ZZ_{>0}\,:\, \brd{xy}k=\brd{yx}k\}.
$$
\begin{lemma}\label{lem:taut homs}
Let~$\mathsf M$ be a multiplicative monoid suppose that~$B(x,y)\not=\emptyset$ for some~$x,y\in \mathsf M$. 
\begin{enmalph}
    \item \label{lem:taut homs.a}
    If~$m\in B(x,y)$ then
    \begin{equation}\brd{xy}{km}=(\brd{xy}m)^k, \qquad 
    k\ge 1\label{eq:taut homs}
    \end{equation}
    and so~$\mathbb Z_{>0}m\subset B(x,y)$. 
\item\label{lem:taut homs.b} If~$\mathsf M$ is left (or right) cancellative  then~$B(x,y)=\mathbb Z_{>0}\min B(x,y)$.
\end{enmalph}
\end{lemma}
\begin{proof}
Let~$t_0=x$ and~$t_1=y$. 
Note first that for~$r\le s\in\mathbb Z_{>0}$ and
$a\in \ZZ$
we have 
\begin{equation}\label{eq:brd tail}
\brd{t_{\overline{a\vphantom1}} t_{\overline{a+1}}}{s}=\brd{t_{\overline{a\vphantom1}}t_{\overline{a+1}}}r \,\brd{t_{\overline{a+r}}t_{\overline{a+r+1}}}{s-r}.
\end{equation}

We use induction on~$k$
to prove~\eqref{eq:taut homs}, the induction base being trivial. For the inductive step we have for~$k\in\mathbb Z_{>0}$ 
\begin{alignat*}{3}
\brd{t_0t_1}{(k+1)m}=&\brd{t_0t_1}{km} \,\brd{t_{\overline{km}}t_{\overline{km+1}}}{m}
&\qquad&\text{by~\eqref{eq:brd tail}}\\
=&(\brd{t_0t_1}{m})^k \brd{t_0t_1}{m}&&\text{by induction hypothesis and since~$m\in B(t_0,t_1)$}\\
=&(\brd{t_0t_1}{m})^{k+1}.
\end{alignat*}
The second assertion in part~\ref{lem:taut homs.a} is now immediate.

We only prove part~\ref{lem:taut homs.b} for
left cancellative monoids. Let~$m=\min B(t_0,t_1)$, $m'\in B(x,y)\setminus\{m\}$
and write~$m'=d m+r$,
$d\in\mathbb Z_{> 0}$,
$0\le r<m$. Suppose that~$r>0$. Then 
\begin{alignat*}{2}
\brd{t_1t_0}{dm}\,\brd{t_{\overline{dm+1}}t_{\overline{dm}}}{r}
=&\brd{t_1t_0}{m'}&&\text{by~\eqref{eq:brd tail}}\\
=&\brd{t_0t_1}{m'}&&\text{since $m'\in B(t_0,t_1)$}\\
=&\brd{t_0t_1}{dm}\,\brd{t_{\overline{dm}}t_{\overline{dm+1}}}{r}&&
\text{by \eqref{eq:brd tail}}\\
=&\brd{t_1t_0}{dm}\,\brd{t_{\overline{dm}}t_{\overline{dm+1}}}{r}&\qquad&
\text{by part~\ref{lem:taut homs.a}}.
\end{alignat*}
Since~$\mathsf M$ is left cancellative,
it follows that $\brd{t_{\overline{dm+1}}t_{\overline{dm}}}{r}=
\brd{t_{\overline{dm}}t_{\overline{dm+1}}}{r}$
whence~$r\in B(t_0,t_1)$ which is a contradiction. Thus, $r=0$.
\end{proof}

\subsection{Artin monoids and Coxeter groups}\label{subs:Br(M)W(M)}
Let~$I$ be a finite set and let~$M=(m_{ij})_{i,j\in I}$ be
a symmetric matrix with $m_{ii}=1$ and $m_{ij}\in\ZZ_{>1}\cup\{\infty\}$, $i\not=j$.
Such a matrix is called a {\em Coxeter matrix} (over~$I$), and we denote the set of 
all Coxeter matrices over~$I$ by~\plink{Cox I}$\Cox I$.
The Coxeter
graph~\plink{Gamma(M)}$\Gamma(M)$ associated with~$M$ is the undirected graph with vertex set~$I$
and with a unique edge connecting $i,j\in I$ if and only if~$m_{ij}>2$.
The edge is labeled with~$m_{ij}$ if~$m_{ij}>3$.

The {\em Artin monoid} $\Br^+(M)$\plink{Br+(M)} associated with~$M$ (see for example~\cites{BrSa,Del,Tits}) is generated by the~$T_i$, $i\in I$ subject to relations
$$
\brd{T_iT_j}{m_{ij}}=\brd{T_jT_i}{m_{ij}},\qquad i\not=j\in I,\, m_{ij}<\infty.
$$
The Artin group~$\Br(M)$ associated with~$M$ has the same generators satisfying the same relations. By~\cite{Par}*{Theorem~1.1}, the natural homomorphism $\Br^+(M)\to \Br(M)$
is injective. In particular, $\Br^+(M)$ is cancellative (see
also~\cite{BrSa}*{Proposition~2.3} which establishes the cancellativty
of~$\Br^+(M)$ without using the embedding into its Artin group). The following example illustrates
Lemma~\partref{lem:taut homs.b}.
\begin{example}
Let~$I=\{1,2\}$ and let~$M=\left(\begin{smallmatrix}1&3\\3&1\end{smallmatrix}\right)$. Then $\Br^+(M)$ is generated by $T_1$, $T_2$ subject to the relation $T_1T_2T_1=T_2T_1T_2$.
Yet $(T_1T_2)^2\not=(T_2T_1)^2$ for otherwise 
we would have $T_2T_1T_2^2=T_1T_2T_1T_2=T_2T_1T_2T_1$
which, since~$\Br^+(M)$ is cancellative, yields~$T_1=T_2$.
\end{example}
Since defining relations of~$\Br^+(M)$ are homogeneous in the number of generators, the
length function with respect to~$\{T_i\}_{i\in I}$
is a homomorphism of monoids~\plink{ell}$\ell:\Br^+(M)\to (\ZZ_{\ge 0},+)$. If~$|I|=1$ this homomorphism
is actually an isomorphism.

Since defining relations of~$\Br^+(M)$ are palindromic,
$\Br^+(M)$
admits a unique \plink{op}anti-involu\-tion~${}^{op}$ defined on
generators by~$(T_i)^{op}=T_i$, $i\in I$. It clearly extends to
an anti-involution of~$\Br(M)$.

The {\em Coxeter group}\plink{W(M)}
$W=W(M)$ associated with~$M$ is generated by the~$s_i$, $i\in I$ subject
to relations
$$
(s_i s_j)^{m_{ij}}=1,\qquad i,j\in I,\, m_{ij}\not=\infty.
$$
Clearly, $W(M)$ is the quotient of~$\Br(M)$ by the minimal normal
subgroup containing the $T_i^2$, $i\in I$. Let~\plink{piM}$\pi_M:\Br(M)\to W(M)$,
$T_i\mapsto s_i$, $i\in I$,
be the canonical projection, which obviously restricts to a
surjective homomorphism of monoids~$\Br^+(M)\to W(M)$. Note also that~$W(M)$ is isomorphic
to the quotient monoid of~$\Br^+(M)$ by the minimal congruence relation containing the~$(T_i^2,1)$, $i\in I$.

We denote~$\ell$ the length function for~$W(M)$
with respect to~$\{s_i\}_{i\in I}$.
An expression~$w=s_{i_1}\cdots s_{i_k}$,
$i_1,\dots,i_k\in I$ is called {\em reduced} if~$k=\ell(w)$. Clearly,
$\ell(\pi_M(T))\le \ell(T)$ for all~$T\in\Br^+(M)$ and we
set\plink{SQF}
$$
\SQF^+(M)=\{ T\in\Br^+(M)\,:\,\ell(\pi_M(T))=\ell(T)\}.
$$
Elements of~$\SQF^+(M)$ are called {\em square free}. The following is well-known.
\begin{theorem}[\cite{Tits}*{Theorem~3}] \label{thm:Tits}
\begin{enmalph}
\item\label{thm:Tits.a} $\pi_M$ restricts to a bijection~$\SQF^+(M)\to W(M)$.
\item\label{thm:Tits.b} Given $w\in W(M)$, denote $T_w$ the unique
element of~$\SQF^+(M)\cap \pi^{-1}_M(\{w\})$.
Then $T_w T_{w'}=T_{ww'}$ if and only if
$\ell(ww')=\ell(w)+\ell(w')$. In particular,
for any~$w\in W(M)$,
an expression $w=s_{i_1}\cdots s_{i_k}$, $i_1,\dots,i_k\in I$ is reduced
if and only if~$T_w=T_{i_1}\cdots T_{i_k}$.
\end{enmalph}
\end{theorem}
\begin{lemma}\label{lem:sq free fact}
Let~$M$ be a Coxeter matrix, $T\in\SQF^+(M)$ and suppose that~$T=T'T''$, $T',T''\in\Br^+(M)$.
Then both $T'$ and~$T''$ are square free.
\end{lemma}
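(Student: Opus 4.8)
The plan is to combine two homomorphism properties already recorded in the excerpt: that the generator length $\ell\colon\Br^+(M)\to(\mathbb Z_{\ge0},+)$ is a homomorphism of monoids, and that $\pi_M\colon\Br^+(M)\to W(M)$ is a homomorphism of monoids, together with the elementary inequalities for Coxeter length. So from $T=T'T''$ I would first record the two additivity/subadditivity facts $\ell(T)=\ell(T')+\ell(T'')$ and $\ell(\pi_M(T))\le\ell(\pi_M(T'))+\ell(\pi_M(T''))$, the latter because $\pi_M(T)=\pi_M(T')\pi_M(T'')$ and Coxeter length is subadditive.

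Next I would chain these with the universal bound $\ell(\pi_M(X))\le\ell(X)$, valid for every $X\in\Br^+(M)$. Writing the square-free hypothesis on $T$ as $\ell(T)=\ell(\pi_M(T))$, one gets
$$
\ell(T')+\ell(T'')=\ell(T)=\ell(\pi_M(T))\le \ell(\pi_M(T'))+\ell(\pi_M(T''))\le \ell(T')+\ell(T''),
$$
so every inequality in the line is an equality. In particular $\ell(\pi_M(T'))=\ell(T')$ and $\ell(\pi_M(T''))=\ell(T'')$, which is precisely the statement that $T'$ and $T''$ are square free.

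Honestly there is no real obstacle here; the only thing to be careful about is invoking the right facts in the right order, namely that $\ell$ is additive (so $\ell(T)$ splits exactly), that $\ell\circ\pi_M\le\ell$ pointwise (stated just before Theorem~\ref{thm:Tits}), and that Coxeter length is subadditive under multiplication in $W(M)$. Should a referee want it, one could alternatively note that by Theorem~\ref{thm:Tits}(b) a square-free $T$ has the form $T_{i_1}\cdots T_{i_k}$ for a reduced word, and any factorization of a monoid word into subwords yields subwords, each of which is reduced since a subword of a reduced word is reduced; but the length-count argument above is shorter and self-contained.
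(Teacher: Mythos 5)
Your argument is correct and is essentially identical to the paper's own proof: both chain $\ell(T')+\ell(T'')=\ell(T)=\ell(\pi_M(T))\le\ell(\pi_M(T'))+\ell(\pi_M(T''))\le\ell(T')+\ell(T'')$ using additivity of $\ell$ on $\Br^+(M)$, subadditivity of Coxeter length, and $\ell(\pi_M(X))\le\ell(X)$, and then conclude all inequalities are equalities. Nothing further is needed.
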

\begin{proof}
Since $\ell(X)\ge \ell(\pi_M(X))$ for all~$X\in\Br^+(M)$,
we have~$\ell(\pi_M(T))=\ell(T)=\ell(T')+\ell(T'')\ge
\ell(\pi_M(T'))+\ell(\pi_M(T''))$.
On the other hand, since $\pi_M(T)=
\pi_M(T')\pi_M(T'')$,
$\ell(\pi_M(T))
\le \ell(\pi_M(T'))+\ell(\pi_M(T''))$, whence $\ell(\pi_M(T))=
\ell(\pi_M(T'))+\ell(\pi_M(T''))$. This forces~$\ell(T')=\ell(\pi_M(T))$
and $\ell(T'')=\ell(\pi_M(T''))$.
\end{proof}

The anti-involution~${}^{op}$ factors through to an anti-involution of~$W(M)$ which coincides with
the anti-involution $w\mapsto w^{-1}$, $w\in W(M)$. The following is
immediate.
\begin{lemma}\label{lem:can image op inv}
If~$T\in\Br^+(M)$ is ${}^{op}$-invariant then~$\pi_M(T)\in W(M)$ is an involution.
In particular, $T\in\SQF^+(M)$ is ${}^{op}$-invariant
if and only if~$\pi_M(T)$ is an involution.
\end{lemma}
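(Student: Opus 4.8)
The plan is to exploit the compatibility of the two anti-involutions recorded just above the statement: for every $T\in\Br^+(M)$ one has $\pi_M(T^{op})=(\pi_M(T))^{op}=(\pi_M(T))^{-1}$, since ${}^{op}$ factors through $\pi_M$ to the inversion map on $W(M)$. The first assertion is then immediate. If $T^{op}=T$, applying $\pi_M$ yields $(\pi_M(T))^{-1}=\pi_M(T^{op})=\pi_M(T)$, so $\pi_M(T)^2=1$, i.e. $\pi_M(T)$ is an involution.

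For the second assertion, the forward implication is the special case of the first with $T\in\SQF^+(M)$, so it remains to show that a square-free $T$ with $\pi_M(T)$ an involution is ${}^{op}$-invariant. Write $w=\pi_M(T)$, so $T=T_w$ by Theorem~\ref{thm:Tits}(a). First I would verify that $T^{op}$ is again square free. Since ${}^{op}$ reverses words in the generators $T_i$, it preserves the length homomorphism $\ell$, hence $\ell(T^{op})=\ell(T)$; and $\ell(\pi_M(T^{op}))=\ell(w^{-1})=\ell(w)=\ell(T)$, using the standard identity $\ell(w^{-1})=\ell(w)$ in a Coxeter group together with square-freeness of $T$. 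Therefore $\ell(\pi_M(T^{op}))=\ell(T^{op})$, i.e. $T^{op}\in\SQF^+(M)$, and the uniqueness in Theorem~\ref{thm:Tits}(a) gives $T^{op}=T_{\pi_M(T^{op})}=T_{w^{-1}}$.

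Finally, since $w$ is an involution we have $w^{-1}=w$, whence $T^{op}=T_{w^{-1}}=T_w=T$. There is no real obstacle here beyond assembling these observations; the only genuine inputs are the already-stated factorization of ${}^{op}$ through $\pi_M$, the length-preservation of ${}^{op}$ on $\Br^+(M)$, and the bijectivity of $\pi_M|_{\SQF^+(M)}$ from Theorem~\ref{thm:Tits}, which together make square-freeness stable under ${}^{op}$.
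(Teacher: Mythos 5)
Your proof is correct and follows exactly the route the paper intends: the paper declares the lemma "immediate" from the fact (stated just above it) that ${}^{op}$ factors through $\pi_M$ as inversion on $W(M)$, combined with the bijection $\SQF^+(M)\to W(M)$ of Theorem~\ref{thm:Tits}, which is precisely what you use. Your extra check that $T^{op}$ is square free (via length preservation) is the small detail the paper leaves tacit, and it is carried out correctly.
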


\subsection{Parabolic submonoids and subgroups}\label{subs:parab}
Given~$J\subset I$, let $M_J=(m_{ij})_{i,j\in J}\in \Cox J$ be the corresponding
submatrix of~$M$. Then the submonoid \plink{Br+J(M)}$\Br^+_J(M):=\la T_j\,:\, j\in J\ra$ of~$\Br^+(M)$ is isomorphic to~$\Br^+(M_J)$. The subgroups~$\Br_J(M)$ of~$\Br(M)$ and~$W_J(M)$ of~$W(M)$ are defined similarly and are
isomorphic to respective objects corresponding to~$M_J$. Those subobjects are called {\em parabolic} submonoids (subgroups).
We will usually identify $W_J(M)$ with~$W(M_J)$ and so on and denote~\plink{iotaJ}$\iota_{J}$
the natural inclusion of~$W_J(M)$ (respectively, $\Br^+_J(M)$)
into~$W(M)$ (respectively, $\Br^+(M)$).

We say that~$J\subset I$ is of {\em finite type} if~$W(M_J)$ is finite.
The corresponding subgroups and submonoids are often referred to as being of {\em spherical type} in the literature. We denote~\plink{F(M)}$\mathscr F(M)$ the
set of all subsets of~$I$ of finite type. Clearly,
$\mathscr F(M)=\mathscr P(I)$ if and only if~$I\in\mathscr F(M)$, in which case we also say that~$M$ is of finite type. Note that~$\emptyset\in\mathscr F(M)$, the corresponding
parabolic subgroups and submonoids being trivial.

Define \plink{supp}$\supp:\Br(M)\to \mathscr P(I)$
by $$
\supp T=\bigcap_{J\subset I\,:\, T\in\Br_J(M)} J,
\qquad T\in\Br(M).
$$
The map~$\supp:W(M)\to \mathscr P(I)$ is defined
similarly.
Clearly, $\supp \pi_M(T)\subset \supp T$ for all~$T\in\Br(M)$.
Given a subset~$S$ of~$\Br(M)$ or~$W(M)$, we denote $\supp S=\bigcup_{x\in S} \supp x$. Observe that~$\supp TT'=\supp T\,\cup\,\supp T'$ for $T,T'\in\Br^+(M)$ while $\supp ww'\subset \supp w\,\cup\,\supp w'$ for $w,w'\in W(M)$. In particular, given
any expression $T=T_{i_1}\cdots T_{i_k}$
(respectively, a {\em reduced} expression
$w=s_{i_1}\cdots s_{i_k}$) where~$i_1,\dots,i_k\in I$
we have~$\supp T=\{i_1,\dots,i_k\}$ (respectively,
$\supp w=\{i_1,\dots,i_k\}$). It follows that the map
$\supp$ is surjective.
The following is well-known
(cf.~\cite{Tits}*{Theorem~3}, \cite{Bou}*{Ch. IV, \S1.5}).
\begin{lemma}\label{lem:extend supp}
Let~$w,w'\in W(M)$ with $\supp w\cap \supp w'=\emptyset$. Then
$\ell(ww')=\ell(w)+\ell(w')$ and~$\supp ww'=\supp w\cup\supp w'$.
\end{lemma}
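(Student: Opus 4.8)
The plan is to identify $w$ with the element of minimal length in the coset $wW_{J'}$, where $J'=\supp w'$, and then to invoke the standard length‑additivity enjoyed by minimal coset representatives; the statement about supports then follows formally from the first assertion.

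Put $J=\supp w$ and $J'=\supp w'$, so that $w\in W_J$, $w'\in W_{J'}$ and $J\cap J'=\emptyset$. First I would check that $\ell(ws_{j'})>\ell(w)$ for every $j'\in J'$: indeed, otherwise $\ell(ws_{j'})<\ell(w)$, so $w$ admits a reduced expression ending in $s_{j'}$ by the exchange condition, forcing $j'\in\supp w=J$ and contradicting $J\cap J'=\emptyset$. Hence $w$ is the minimal‑length element of the coset $wW_{J'}$, and by the standard property of such representatives (see \cite{Bou}) one has $\ell(wy)=\ell(w)+\ell(y)$ for all $y\in W_{J'}$; taking $y=w'$ gives $\ell(ww')=\ell(w)+\ell(w')$. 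For the supports, fix reduced expressions $w=s_{i_1}\cdots s_{i_p}$ and $w'=s_{j_1}\cdots s_{j_q}$ (so $p=\ell(w)$, $q=\ell(w')$); the concatenation $s_{i_1}\cdots s_{i_p}s_{j_1}\cdots s_{j_q}$ has length $p+q=\ell(ww')$ and represents $ww'$, hence is a reduced expression for $ww'$. By the remark preceding the statement, $\supp ww'=\{i_1,\dots,i_p\}\cup\{j_1,\dots,j_q\}=\supp w\cup\supp w'$.

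Should one want a self‑contained argument, the only ingredient beyond formal manipulation --- namely, that $\ell(wy)=\ell(w)+\ell(y)$ for $w$ minimal in $wW_{J'}$ and $y\in W_{J'}$, equivalently that the concatenated word above is reduced --- can be proved directly via the deletion condition. If that word were not reduced, one could delete two of its letters; these two cannot both lie in the first block (that would yield an expression of length $p-2$ for $w$ after cancelling $w'$ on the right) nor both in the second block, so exactly one letter is removed from each block, producing $ww'=uv'$ with $u\in W_J$, $\ell(u)=p-1$, and $v'\in W_{J'}$. Then $w^{-1}u=w'v'^{-1}$ lies in $W_J\cap W_{J'}=W_{J\cap J'}=\{1\}$, so $u=w$, contradicting $\ell(u)=p-1<p$.

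I expect the substance of the argument to be concentrated in two standard facts: the length‑additivity for minimal coset representatives (or, equivalently, the deletion‑condition computation above) and the triviality of $W_J\cap W_{J'}$ when $J\cap J'=\emptyset$. The disjointness hypothesis is used at exactly one point --- to guarantee that no reduced expression for $w$ ends in a generator $s_{j'}$ with $j'\in\supp w'$, equivalently that $w^{-1}u\in W_J\cap W_{J'}$ can only be the identity --- and everything else is bookkeeping with reduced words.
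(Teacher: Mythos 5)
Your argument is correct. Note that the paper does not prove this lemma at all: it is stated as well-known with references to \cite{Tits}*{Theorem~3} and \cite{Bou}*{Ch.~IV, \S1.5}, so there is no in-paper proof to compare against; what you have written is essentially the standard argument those references supply. Your two steps are sound: disjointness of supports rules out any right descent of $w$ lying in $J'=\supp w'$ (since every reduced expression of $w$ uses exactly the letters of $\supp w$, as the paper records just before the lemma), so $w$ is the minimal-length representative of $wW_{J'}$ and length-additivity gives $\ell(ww')=\ell(w)+\ell(w')$; the support claim then follows because the concatenated reduced words form a reduced expression for $ww'$. One small caveat: your ``self-contained'' deletion-condition variant is not quite self-contained, since it invokes $W_J\cap W_{J'}=W_{J\cap J'}$, which is itself a nontrivial standard fact (Bourbaki, Ch.~IV, \S1.8); citing either that or the minimal-coset-representative property is unavoidable, and both are legitimate.
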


We say that~$J,K\subset I$ are {\em orthogonal}
if $m_{jk}=2$ for all~$j\in J$, $k\in K$. We say that~$J\subset I$
is {\em self-orthogonal} if~$m_{ij}\le 2$ for all~$i,j\in J$.
A Coxeter
matrix~$M$ over~$I$ is said to be {\em irreducible} if~$I$ cannot be written as a
disjoint union of two non-empty orthogonal subsets or, equivalently,
if~$\Gamma(M)$ is connected. We denote~$\Gamma_J(M)$ the full
weighted subgraph
of~$\Gamma(M)$ with vertex set~$J$. Clearly, $\Gamma_J(M)=\Gamma(M_J)$. We say that~$J\subset I$
is {\em connected} if~$\Gamma_J(M)$ is connected as a graph or,
equivalently, if $J$ is not the disjoint union of two non-empty
orthogonal subsets. By abuse of terminology, we say that~$J\subset I$
is a {\em connected component} of~$I$ if~$\Gamma_J(M)$
is a connected component of~$\Gamma(M)$ or, equivalently, if~$J$ is a maximal connected subset of~$I$.

It is well-known (see, e.g.~\cite{Bou}*{Ch. VI, \S4, Thm.~1}) that the Coxeter group~$W(M)$ with irreducible~$M$ is finite if and only if
$\Gamma(M)$ is isomorphic to one of the following graphs 
\begin{alignat}{3}
A_n:&\dynkin[text style/.style={scale=0.8},Coxeter,root radius=0.07,expand labels={1,2,n-1,n},make indefinite edge={2-3},edge length=1.0cm]A4,&&n\ge 1,
\nonumber\\
B_n:&\dynkin[text style/.style={scale=0.8},Coxeter,root radius=0.07,expand labels={1,2,n-1,n},make indefinite edge={2-3},edge length=1.0cm]B4,&&n\ge 2,\nonumber\\
D_{n+1}: &\dynkin[text style/.style={scale=0.8},Coxeter,root radius=0.07,expand labels={1,2,n-1,n,n+1},label directions={,,right,,},make indefinite edge={2-3},edge length=1.0cm]D5,&&n\ge 3,\nonumber\\
E_n:&%
\dynkin[text style/.style={scale=0.8},Coxeter,ordering=Kac,root radius=0.07,expand labels={1,2,3,4,n-1,n},make indefinite edge={4-5},edge length=1.0cm]E6,&\qquad&
n\in\{6,7,8\},\nonumber\\
F_4:&\dynkin[text style/.style={scale=0.8},Coxeter,ordering=Kac,root radius=0.07,expand labels={1,2,3,4},edge length=1.0cm]F4,\nonumber\\
I_2(m):&
\dynkin[text style/.style={scale=0.8},Coxeter,ordering=Kac,root radius=0.07,expand labels={1,2},edge length=1.0cm,gonality=m]I2,&&m\ge 4,\nonumber\\
H_n:&\dynkin[text style/.style={scale=0.8},Coxeter,ordering=Kac,root radius=0.07,ordering=Adams,expand labels={1,2,n-1,n},make indefinite edge={2-3},edge length=1.0cm]H4
,&&n\in\{3,4\}.\label{eq:Coxeter graphs}
\end{alignat}
The labeling shown in~\eqref{eq:Coxeter graphs} will be used throughout the rest of
the paper unless specified otherwise.
Clearly, $I_2(3)$ (respectively,~$I_2(4)$) coincides with $A_2$ (respectively, $B_2$); the graph of type~$I_2(6)$
is traditionally denoted as~$G_2$. We will use~$X_n$ as the notation for the Coxeter matrix of the corresponding graph with the labeling as in~\eqref{eq:Coxeter graphs}. 
We will also denote $I_2(\infty)=\left(\begin{smallmatrix}1&\infty\\\infty&1\end{smallmatrix}\right)$, the corresponding
Artin monoid being just the free monoids with two generators.

An automorphism~$\sigma$ of the weighted graph~$\Gamma(M)$, or, equivalently
a permutation~$\sigma$ of~$I$ such that $m_{\sigma(i)\sigma(j)}=m_{ij}$
for all~$i,j\in I$, induces an automorphism of~$\Br^+(M)$ (respectively,
$\Br(M)$, $W(M)$), called a {\em diagram automorphism} and also denoted
by~$\sigma$, via $\sigma(T_i)=T_{\sigma(i)}$ (respectively,
$\sigma(s_i)=s_{\sigma(i)}$), $i\in I$. If~$W(M)$ is finite and~$\Gamma(M)$ is connected, diagram automorphisms of order~$2$
exist only if $\Gamma(M)$ is of type $A_n$, $n\ge 1$, $D_{n+1}$, $n\ge 3$, $F_4$, $E_6$ or~$I_2(m)$, the corresponding permutation of~$I$ being
\begin{equation}\label{eq:diag aut}
\sigma=\begin{cases}
\prod_{1\le i\le \frac12n} (i,n+1-i),& M=A_n,\, n\ge 2,\\
(n,n+1),&M=D_{n+1},\, n\ge 3,\\
(1,4)(2,3),&M=F_4,\\
(1,5)(2,4),&M=E_6.
\end{cases}
\end{equation}
In type $D_4$, there is also a diagram
automorphism of order~$3$ given by the permutation $(1,3,4)$ of~$[1,4]$ and
so the group of all diagram automorphisms of~$D_4$ is isomorphic to~$S_3$.

If~$J\in\mathscr F(M)$, then $W_J(M)$ contains the unique element~\plink{w0J}
$w_\circ^J$ of maximal length (see, e.g.~\cites{Bou,Tits}), which is obviously an involution. It is well-known (see e.g.~\cite{Bou}*{Ch. IV, Ex. 22} or~\cite{BjBr}*{Proposition~2.3.2}) that
\begin{equation}\label{eq:ell w w0}
\ell(ww_\circ^J)=\ell(w_\circ^J w)=\ell(w_\circ^J)-\ell(w),\qquad
w\in W_J(M).
\end{equation}

\subsection{Hecke monoids}\label{subs:Hecke}
The {\em Hecke monoid} associated with $M$ is the quotient
of~$\Br^+(M)$ by the minimal congruence relation
containing $(T_i^2,T_i)$, $i\in I$.
We denote~\plink{pi*M}$\pi^\star_M$ the canonical homomorphism from $\Br^+(M)$
to the corresponding Hecke monoid.
Thus, the Hecke monoid is generated
by the $s_i:=\pi^\star_M(T_i)$, $i\in I$
subject to relations $s_i\star s_i=s_i$, $i\in I$ and
$$
\brd{s_i\star s_j\star}{m_{ij}}=\brd{s_j\star s_i\star}{m_{ij}},\qquad
i\not=j\in I,\, m_{ij}\not=\infty.
$$
Note that~${}^{op}$ and diagram automorphisms factor through to
the Hecke monoid.
\begin{remark}
In the literature, Hecke monoids are also referred to as Coxeter monoids
(see e.g.~\cite{K14}), $0$-Hecke monoids or Demazure monoids. The latter term is due to the fact that idempotent Demazure operators provide a representation of Hecke monoids.
\end{remark}
\begin{proposition}\label{prop:prod *}
For all~$i\in I$, $w\in W(M)$
\begin{equation}\label{eq:prod *}
s_i\star w=\begin{cases}
s_i w,&\ell(s_i w)>\ell(w),\\
w,&\ell(s_i w)<\ell(w),
\end{cases}\qquad
w\star s_i=\begin{cases}
ws_i,&\ell(ws_i)>\ell(w),\\
w,&\ell(ws_i)<\ell(w),
\end{cases}
\end{equation}
where we abbreviate $w=\pi^\star_M(T_w)$.
In particular, $\pi^\star_M(\Br^+(M))$ identifies with~$W(M)$ as a set,  the restriction
of~$\pi^\star_M$ to~$\SQF^+(M)$ is a bijection onto~$W(M)$ and
$\pi^\star_M|_{\SQF^+(M)}=\pi_M|_{\SQF^+(M)}$.
\end{proposition}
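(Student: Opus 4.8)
The plan is to reduce the whole statement to the single fact that $\pi^\star_M$ restricts to a bijection $\beta$ from $\SQF^+(M)$ onto the Hecke monoid $H$ of $M$. Once this is available, one identifies $H$ with $W(M)$ via $\beta$ and via $\pi_M|_{\SQF^+(M)}$ (a bijection by Theorem~\ref{thm:Tits}(a)); then the identification of $\pi^\star_M(\Br^+(M))=H$ with $W(M)$, the equality $\pi^\star_M|_{\SQF^+(M)}=\pi_M|_{\SQF^+(M)}$ (which holds by construction of the identification), and the product formula all follow from short manipulations in $\Br^+(M)$ relying on Theorem~\ref{thm:Tits}(b) and the relation $T_i^2\equiv T_i$.

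The engine for the bijectivity claim is a left action of $H$ on the set $W(M)$. I would define $\lambda_i\colon W(M)\to W(M)$ by $\lambda_i(w)=s_iw$ if $\ell(s_iw)>\ell(w)$ and $\lambda_i(w)=w$ otherwise; the relation $\lambda_i\circ\lambda_i=\lambda_i$ is immediate. The \emph{main obstacle} is the braid relation $\underbrace{\lambda_i\circ\lambda_j\circ\cdots}_{m_{ij}}=\underbrace{\lambda_j\circ\lambda_i\circ\cdots}_{m_{ij}}$ for $i\ne j$, $m_{ij}\ne\infty$. To handle it I would set $J=\{i,j\}$, write $w=x\cdot v$ with $x\in W_J$ and $v$ the minimal-length element of the coset $W_Jw$ (so $\ell(w)=\ell(x)+\ell(v)$; see~\cite{Bou}), and check that for $k\in J$ one has $\lambda_k(w)=\lambda_k(x)\cdot v$ with $\lambda_k(x)\in W_J$ and $v$ still minimal in its coset; iterating, any composition of the $\lambda_k$ ($k\in J$) acts on $w$ through its action on $x$, so the braid relation reduces to the case $w\in W_J$, i.e.\ to a statement about the dihedral group $W_J$. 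There I would finish by tracking lengths: each $\lambda_k$ is length non-decreasing and fixes an element precisely when that element's (unique) reduced word begins with $s_k$, so along an alternating sequence of the operators $\lambda_i,\lambda_j$ the length strictly increases at every step after possibly the first, whence $m_{ij}$ steps carry any element of $W_J$ to $w_\circ^J$. By the defining presentation of $H$ this makes $s_i\mapsto\lambda_i$ into a monoid homomorphism $\phi\colon H\to\fun(W(M),W(M))$ (composition of functions), i.e.\ a left action of $H$ on $W(M)$.

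Using this action I would first observe that for a reduced word $w=s_{i_1}\cdots s_{i_k}$ one has $T_w=T_{i_1}\cdots T_{i_k}$ by Theorem~\ref{thm:Tits}(b), hence $\pi^\star_M(T_w)=s_{i_1}\star\cdots\star s_{i_k}$, and since every suffix of a reduced word is reduced, $\phi(\pi^\star_M(T_w))$ sends the identity $1\in W(M)$ to $w$. Therefore $h\mapsto\phi(h)(1)$ composed with $\beta=\pi^\star_M|_{\SQF^+(M)}$ equals $\pi_M|_{\SQF^+(M)}$, which is injective; hence $\beta$ is injective. For surjectivity of $\beta$ onto $H$ I would induct on word length: given $h=\pi^\star_M(T_{i_1}\cdots T_{i_k})$, if $s_{i_1}\cdots s_{i_k}$ is reduced then $h=\beta(T_w)$; otherwise, by Tits' solution of the word problem for Coxeter groups (see~\cite{Tits}) the word $(i_1,\dots,i_k)$ can be carried by braid moves to a word with two equal adjacent letters, and performing the corresponding braid relations in $\Br^+(M)$ and then using $T_j^2\equiv T_j$ realizes $h$ as $\pi^\star_M$ of a strictly shorter element. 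Thus $\beta$ is a bijection $\SQF^+(M)\to H$, identifying $\pi^\star_M(\Br^+(M))=H$ with $W(M)$ as a set, compatibly with $\pi_M$.

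It remains to compute the products, with $w$ now denoting $\pi^\star_M(T_w)$ under this identification. Since $\pi^\star_M$ is a homomorphism, $w\star s_i=\pi^\star_M(T_wT_i)$. If $\ell(ws_i)>\ell(w)$ then $T_wT_i=T_{ws_i}$ by Theorem~\ref{thm:Tits}(b), so $w\star s_i=ws_i$; if $\ell(ws_i)<\ell(w)$ then $w=(ws_i)s_i$ with $\ell(ws_i)=\ell(w)-1$, so $T_w=T_{ws_i}T_i$ and $T_wT_i=T_{ws_i}T_i^2\equiv T_{ws_i}T_i=T_w$, whence $w\star s_i=w$. The formula for $s_i\star w$ follows identically from $T_iT_w$, or by applying the anti-involution ${}^{op}$, which on $W(M)$ corresponds to $w\mapsto w^{-1}$. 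The only point requiring real work is the braid relation for the $\lambda_i$ in the second step; the rest is bookkeeping with Theorem~\ref{thm:Tits}.
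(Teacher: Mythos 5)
Your proposal is correct, and it overlaps with the paper only in its final step: the derivation of the product formula from Theorem~\ref{thm:Tits}(b) together with $T_i^2\equiv T_i$ (increase of length gives $T_wT_i=T_{ws_i}$, decrease gives $T_wT_i=T_{ws_i}T_i^2\equiv T_w$, the other side by ${}^{op}$) is exactly the paper's opening computation. Where you genuinely diverge is in how the set-theoretic identification is justified. The paper proves only surjectivity, by a one-line induction on $\ell(T)$ using the already-established product formula ($\pi^\star_M(T)=s_i\star\pi^\star_M(T')\in\{w',s_iw'\}$), and the agreement $\pi^\star_M|_{\SQF^+(M)}=\pi_M|_{\SQF^+(M)}$ by a similar induction; the injectivity of $\pi^\star_M|_{\SQF^+(M)}$, i.e.\ that the elements $\pi^\star_M(T_w)$, $w\in W(M)$, are pairwise distinct, is left implicit (it is the well-known identification of the Hecke monoid with $W(M)$, cf.\ the remark following the proposition and the references there). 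You supply this missing piece explicitly, by building the action $\phi$ of the Hecke monoid on $W(M)$ through the operators $\lambda_i$, verifying the braid relations via the parabolic coset decomposition $w=x\cdot v$ (reducing to the dihedral $W_J$, where both alternating compositions of length $m_{ij}$ send every element to $w_\circ^J$) and then recovering $w$ as $\phi(\pi^\star_M(T_w))(1)$; this is a self-contained and standard way to get injectivity, at the cost of a longer argument. Two small points: your surjectivity step via Tits' solution of the word problem is heavier than necessary — since you prove the product formula anyway, the paper's induction gives surjectivity immediately, so you could reorder and drop the word-problem input; and in the dihedral step the phrase ``fixes an element precisely when its (unique) reduced word begins with $s_k$'' and ``length strictly increases at every step after possibly the first'' need the obvious caveats for $w_\circ^J$ (two reduced words, fixed by both operators, length constant from then on), which do not affect the conclusion.
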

\begin{proof}
By~\cite{Bou}*{Ch.~IV, \S1.5}, if~$\ell(s_i w)>\ell(w)$ then $\ell(s_i w)=\ell(w)+1$ and
so $T_{s_iw}=T_i T_w$ and it remains to apply~$\pi^\star_M$.
Also by~\cite{Bou}*{Ch.~IV, \S1.5}, if $\ell(s_i w)<\ell(w)$ then
$w=s_i u$ for some~$u\in W(M)$ with~$\ell(w)=\ell(u)+1$. Then
$T_w=T_i T_u$ and so applying~$\pi^\star_W$ yields
$s_i\star w=s_i\star (s_i\star u)=s_i\star u=w$. The second identity
follows by using~${}^{op}$.

We now prove by induction on~$\ell(T)$ that for any~$T\in\Br^+(M)$, $\pi^\star_M(T)=w$ for some~$w\in W(M)$. The induction base $\ell(T)=0$ is obvious.
For the inductive step, if~$\ell(T)>0$ then $T=T_i T'$ for some~$i\in I$, $T'\in\Br^+(M)$ with $\ell(T')=\ell(T)-1$.
 Therefore, $\pi^\star_M(T)=s_i\star \pi^\star_M(T')=s_i\star w'$ for some~$w'\in W(M)$ by the induction hypothesis. Then $\pi^\star_M(T)\in \{w',s_i w'\}$ by~\eqref{eq:prod *}, which proves the inductive step. The last assertion
follows by an obvious induction on~$\ell(T)$, $T\in \SQF^+(M)$
since $T\in \SQF^+(M)$ implies
that $T=T_{i_1}\cdots T_{i_k}$, $k=\ell(T)\ge 0$ with $\ell(s_{i_1}
\cdots s_{i_r})=r$ for all~$1\le r\le k$.
\end{proof}
It follows that
$$
\SQF^+(M)=\{ T\in\Br^+(M)\,:\,\ell(\pi^\star_M(T))=\ell(T)\}.
$$
From now on, we identify the Hecke monoid associated with
the Coxeter matrix~$M$ with the Coxeter group~$W(M)$ {\em as a set} and denote it
\plink{HeMon}$(W(M),\star)$. Note that $\supp (w\star w')=\supp w\cup\supp w'$ for all
$w,w'\in W(M)$.
\begin{remark}
Proposition~\ref{prop:prod *} can be regarded as a presentation of the Hecke monoid. Namely,
we can define it as $W(M)$, as a set, equipped with the unique associative operation~$\star$ satisfying the
first property in~\eqref{eq:prod *}.
\end{remark}

The following are immediate.
\begin{lemma}\label{lem:free product Artin}\label{lem:free product}
Let~$M\in\Cox I$, $M'\in\Cox{I'}$. 
\begin{enmalph}
\item\label{lem:free product.a}
Define~$M\times M'\in\Cox{I\sqcup I'}$ by
$(M\times M')_{ij}=(M\times M')_{ji}=\begin{cases}
                                     m_{ij},&i,j\in I,\\
                                     m'_{ij},&i,j\in I',\\
                                     2,&i\in I,\,j\in I'
                                    \end{cases}$.
Then $\Br^+(M)\times \Br^+(M')\cong \Br^+(M\times M')$,
$W(M)\times W(M')\cong W(M\times M')$
and $(W(M),\star)\times (W(M'),\star)\cong 
(W(M\times M'),\star)$;
\item \label{lem:free product.b}
Define~$M\coprod M'\in\Cox{I\sqcup I'}$ by
$(M\textstyle\coprod M')_{ij}=(M\textstyle\coprod M')_{ji}=\begin{cases}
m_{ij},& i,j\in I,\\
m'_{ij},& i,j\in I',\\
\infty,& i\in I,\,j\in I'
\end{cases}
$.
Then $\Br^+(M)\coprod\Br^+(M')\cong \Br^+(M\coprod M')$,
$W(M)\coprod W(M')\cong \Br^+(M\coprod M')$
and $(W(M),\star)\coprod (W(M'),\star)\cong 
(W(M\coprod M'),\star)$, where $\mathsf M\coprod\mathsf M'$ denotes the free product of monoids $\mathsf M$ and~$\mathsf M'$.
\end{enmalph}
\end{lemma}
\begin{lemma}\label{lem:orth factors}
Let~$M\in\Cox I$ and let $J,K\subset I$ be orthogonal.
Then
\begin{enmalph}
    \item\label{lem:orth factors.a}
    $\Br^+_{J\cup K}(M)\cong\Br^+_J(M)\times 
    \Br^+_K(M)$;
    \item\label{lem:orth factors.b}
    $W_{J\cup K}(M)\cong W_J(M)\times 
    W_K(M)$;
    \item\label{lem:orth factors.c}
    $(W_{J\cup K}(M),\star)\cong (W_J(M),\star)\times 
    (W_K(M),\star)$.
\end{enmalph}
In particular, submonoids $\Br^+_J(M)$, $\Br^+_K(M)$ (respectively,
$W_J(M)$, $W_K(M)$ and $(W_J(M),\star)$, $(W_K(M),\star)$) commute element-wise in~$\Br^+(M)$
(respectively, in~$W(M)$, $(W(M),\star)$).
\end{lemma}
\begin{lemma}\label{lem:monoid len}
We have $\ell(u\star v)\ge \max(\ell(u),\ell(v))$ for all $u,v\in W(M)$.
\end{lemma}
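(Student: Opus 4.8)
The plan is to reduce the statement to the one-step estimates $\ell(s_i\star w)\ge\ell(w)$ and $\ell(w\star s_i)\ge\ell(w)$ for $i\in I$, $w\in W(M)$. Both are immediate from Proposition~\ref{prop:prod *}: in each branch of~\eqref{eq:prod *} the element $s_i\star w$ (resp. $w\star s_i$) equals either $s_iw$ with $\ell(s_iw)=\ell(w)+1$, or $w$ itself, so its length is at least $\ell(w)$ in both cases.

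First I would prove $\ell(u\star v)\ge\ell(u)$. Pick a reduced expression $v=s_{i_1}\cdots s_{i_k}$, so that $k=\ell(v)$; by the last assertion of Proposition~\ref{prop:prod *} (equivalently, applying $\pi^\star_M$ to the identity $T_v=T_{i_1}\cdots T_{i_k}$ of Theorem~\ref{thm:Tits}) we have $v=s_{i_1}\star\cdots\star s_{i_k}$, hence $u\star v=u\star s_{i_1}\star\cdots\star s_{i_k}$. A trivial induction on $k$, using $\ell(x\star s_i)\ge\ell(x)$ at each step, then gives $\ell(u\star v)\ge\ell(u)$.

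By the symmetric argument — writing a reduced expression $u=s_{j_1}\cdots s_{j_m}$, noting $u\star v=s_{j_1}\star\cdots\star s_{j_m}\star v$, and inducting with $\ell(s_j\star x)\ge\ell(x)$ — or, more slickly, by applying the anti-involution ${}^{op}$ (which preserves $\ell$ on $(W(M),\star)$, since on $W(M)$ it is $w\mapsto w^{-1}$ and $\ell(w^{-1})=\ell(w)$) to the already-established inequality $\ell(v^{op}\star u^{op})\ge\ell(v^{op})$, one obtains $\ell(u\star v)\ge\ell(v)$. Combining the two inequalities yields $\ell(u\star v)\ge\max(\ell(u),\ell(v))$.

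There is essentially no obstacle here; the only point worth isolating is the identification of a reduced word with the corresponding iterated $\star$-product, but that is precisely the content of the final sentence of Proposition~\ref{prop:prod *}.
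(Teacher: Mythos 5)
Your argument is correct and is essentially the paper's own proof: both reduce to the one-step inequalities $\ell(u\star s_i)\ge\ell(u)$ and $\ell(s_i\star v)\ge\ell(v)$ from Proposition~\ref{prop:prod *} and then induct on the length of the other factor. The ${}^{op}$ shortcut you mention is a harmless cosmetic variant of the second induction.
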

\begin{proof}
By Proposition~\ref{prop:prod *}, $\ell(u\star s_i)\ge \ell(u)$ for all $u\in W$, $i\in I$. An obvious induction on the length of~$v$ proves that $
\ell(u\star v)\ge \ell(u)$. Again by Proposition~\ref{prop:prod *}, $\ell(s_i\star v)\ge \ell(v)$ for all $u\in W$ and $i\in I$, and an induction on $\ell(u)$ shows that $\ell(u\star v)\ge \ell(v)$.
\end{proof}
The multiplication in~$(W(M),\star)$ has an additional
characterization which will be important later.

Define a relation $\longrightarrow$ on~$W(M)$
by $u\longrightarrow w$, $u,w\in W(M)$ if and only if
$\ell(u)<\ell(w)$ and $u^{-1}w$ is conjugate to~$s_i$
for some~$i\in I$.
The {\em strong Bruhat order} on~$W(M)$, which we will denote by $\le$, is the transitive
closure of this relation and is easily seen to be
a partial order (see e.g.~\cite{BjBr}*{\S2.1}).
We will need the following properties of the strong
Bruhat order.
\begin{proposition}[see e.g.~\cite{BjBr}*{Theorems~2.2.2 and~2.2.6, Proposition~2.3.4}]\label{prop:Bruhat order}
\begin{enmalph}
    \item\label{prop:Bruhat order.a}
    Let $w=s_{i_1}\cdots s_{i_k}\in W(M)$ be a reduced expression.
    Then $u\le w$ if and only if there exists $J\subset [1,k]$
    such that $|J|=\ell(u)$ and $u=\ascprod_{j\in J} s_{i_j}$.
    In other words, $u\le w$ if and only if any reduced expression for~$w$ contains a reduced expression for~$u$ as a subexpression.
    In particular, the restriction of the strong Bruhat order on~$W(M)$
    to~$W_K(M)$ coincides with the strong Bruhat order on~$W_K(M)$ for any~$K\subset I$.
     \item\label{prop:Bruhat order.b} If~$u\le w\in W(M)$
     then there exists a chain $x_0=u<x_1<\cdots<x_r=w$ in~$W(M)$
     such that $\ell(x_i)=\ell(u)+i$, $0\le i\le r$.
     \item\label{prop:Bruhat order.c} If~$J\in\mathscr F(M)$
     then for any $w,w'\in W_J(M)$, $w<w'$ if and only
     if $w_\circ^J w'<w_\circ^J w$ and if and only if $w'w_\circ^J<
     ww_\circ^J$.
\end{enmalph}
\end{proposition}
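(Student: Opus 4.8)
The plan is to reduce all three statements to the \emph{Strong Exchange Condition}, which I take as the single piece of standard Coxeter-theoretic input (see e.g.~\cite{Bou}*{Ch.~IV} or \cite{BjBe}*{\S1.4}): if $w=s_{j_1}\cdots s_{j_q}$ is any expression and $t$ is a reflection (a $W(M)$-conjugate of some $s_i$) with $\ell(wt)<\ell(w)$, then $wt=s_{j_1}\cdots\widehat{s_{j_p}}\cdots s_{j_q}$ for some~$p$ (unique when the expression is reduced), together with its left-handed version for $tw$ and the Deletion Condition that any non-reduced word can be shrunk to a reduced subword. From this I would first extract the \emph{Lifting Property}: if $u\le w$ and $i\in I$ satisfies $\ell(s_iw)<\ell(w)$, then $s_iu\le w$ and $s_iu\le s_iw$, and $u\le s_iw$ when moreover $\ell(s_iu)>\ell(u)$; applying this to the pair $(u,s_iw)$ then yields the ascending version: if $u\le w$ with $\ell(s_iu)>\ell(u)$ and $\ell(s_iw)>\ell(w)$, then $s_iu\le s_iw$. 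Everything below is bookkeeping around these two forms of the Lifting Property.

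For the subword characterization in part~(a), in the forward direction I would fix a reduced word $\rho$ for $w$ and, given $u\le w$, a chain $u=x_0\to x_1\to\cdots\to x_m=w$; read from the top, each step reads $x_p=x_{p+1}t$ with $\ell(x_p)<\ell(x_{p+1})$, so applying Strong Exchange to a reduced word of $x_{p+1}$ that is a subword of $\rho$ (and, if necessary, Deletion to pass to a reduced subword) inductively produces such a reduced word for each $x_p$, hence for $u$. In the backward direction I would induct on $\ell(w)$: writing $w=s_{i_1}w''$ with $\ell(w'')=\ell(w)-1$, if the distinguished reduced subword for $u$ avoids the first letter then it lies inside $w''$, so $u\le w''$ by induction and $w''<w$ with $\ell(w'')=\ell(w)-1$, whence $u\le w$; if it uses the first letter then $s_{i_1}u$ is a reduced subword of $w''$, so $s_{i_1}u\le w''$ by induction, and the ascending Lifting Property gives $u=s_{i_1}(s_{i_1}u)\le s_{i_1}w''=w$. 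Since the forward direction used an arbitrary reduced word of $w$, this already gives the ``some reduced word'' $\iff$ ``every reduced word'' equivalence of~(a); and the final sentence of~(a) is then immediate, because a reduced word of an element of $W_K(M)$ involves only generators $s_j$ with $j\in K$, so the subword descriptions of $\le_{W(M)}$ restricted to $W_K(M)$ and of $\le_{W_K(M)}$ coincide verbatim.

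For the chain property in part~(b), by a secondary induction on $\ell(w)-\ell(u)$ it suffices to show that for $u<w$ there is $v$ with $u\le v$, $\ell(v)=\ell(w)-1$ and $v<w$; I would prove this by induction on $\ell(w)$. Pick $i\in I$ with $\ell(s_iw)<\ell(w)$. If $\ell(s_iu)>\ell(u)$, the Lifting Property gives $u\le s_iw$, so $v:=s_iw$ works. If $\ell(s_iu)<\ell(u)$, the Lifting Property gives $s_iu<s_iw$, so by the inductive hypothesis applied to the shorter element $s_iw$ there is $v'$ with $s_iu\le v'$ and $\ell(v')=\ell(w)-2$; then, splitting on whether $\ell(s_iv')$ equals $\ell(v')+1$ or $\ell(v')-1$ and using the two forms of the Lifting Property, one checks that $v:=s_iv'$ in the first case and $v:=s_iw$ in the second case meets all three requirements. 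I expect this descent case analysis to be the only genuinely delicate point of the proposition; the rest is formal once the Lifting Property is available.

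Finally, for part~(c), by~\eqref{eq:ell w w0} the map $v\mapsto w_\circ^J v$ is an involution of $W_J(M)$ with $\ell(w_\circ^J v)=\ell(w_\circ^J)-\ell(v)$, so it suffices to prove that $v<v'$ in $W_J(M)$ forces $w_\circ^J v'<w_\circ^J v$, the converse following by applying this to the pair $w_\circ^J v',w_\circ^J v$. By part~(b), applied inside $W_J(M)$ (legitimate by the last sentence of~(a)), I would fix a saturated chain $v=v_0\to v_1\to\cdots\to v_m=v'$ with $\ell(v_{p+1})=\ell(v_p)+1$; writing $v_{p+1}=v_pt_p$ with $t_p$ a reflection, I get $w_\circ^J v_{p+1}=(w_\circ^J v_p)t_p$ with $\ell(w_\circ^J v_{p+1})=\ell(w_\circ^J v_p)-1$ by~\eqref{eq:ell w w0}, hence $w_\circ^J v_{p+1}\to w_\circ^J v_p$; chaining these gives $w_\circ^J v'<w_\circ^J v$. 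The assertion about right multiplication by $w_\circ^J$ is proved identically after replacing each $t_p$ by its $v_p$-conjugate, which completes the proof.
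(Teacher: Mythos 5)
The paper itself gives no proof of this proposition -- it is quoted from Bj\"orner--Brenti -- so there is nothing internal to compare against; your route is precisely the standard one (Strong Exchange/Deletion $\Rightarrow$ subword property, Lifting Property $\Rightarrow$ saturated chains, then \eqref{eq:ell w w0} plus saturated chains and the involutivity of $v\mapsto w_\circ^Jv$ for part (c)), and the structure of all three arguments, including the reduction of (b) to finding a single coelement $v$ of length $\ell(w)-1$ and the descent case analysis there, is correct. I checked the deferred verification in (b): in the subcase $\ell(s_iv')=\ell(v')+1$ the choices $v=s_iv'$ satisfy $u\le v<w$ by two applications of the ascending Lifting Property, and in the subcase $\ell(s_iv')=\ell(v')-1$ one gets $u=s_i(s_iu)\le v'<s_iw=v$ from the clause ``$s_ix\le y$ whenever $x\le y$ and $\ell(s_iy)<\ell(y)$''; so the sketch completes as claimed.

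One caveat: your statement of the Lifting Property is too strong. From $u\le w$ and $\ell(s_iw)<\ell(w)$ one may conclude $s_iu\le w$ unconditionally, $s_iu\le s_iw$ only when $\ell(s_iu)<\ell(u)$, and $u\le s_iw$ when $\ell(s_iu)>\ell(u)$; the unconditional clause ``$s_iu\le s_iw$'' is false, e.g. in type $A_2$ take $u=s_2s_1$, $w=s_1s_2s_1$, $i=1$: then $\ell(s_1w)<\ell(w)$ and $u\le w$, but $s_1u=w\not\le s_2s_1=s_1w$. Fortunately every invocation you make is in a regime where the correct clause applies: the ascending version follows from the unconditional clause $s_iu\le w$ applied to the pair $(u,s_iw)$, and in part (b) you only use ``$s_iu\le s_iw$'' under the hypothesis $\ell(s_iu)<\ell(u)$. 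So the proof is sound once the auxiliary lemma is stated with the proper case split.
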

Given $u\in W(M)$, denote \plink{dar}$\downarrow u=\{ w\in W(M)\,:\, w\le u\}$ and $\uparrow u=\{ w\in W(M)\,:\, u\le w\}$.

\begin{proposition}[\cite{He09}*{Lemma~1 and Corollary~1}, \cite{K14}*{Lemma~2 and Proposition~8}]\label{prop:Bruhat order *}
Let $w,w'\in W(M)$.
\begin{enmalph}
    \item\label{prop:Bruhat order *.a}
    $u\le w$, $u'\le w'$
    implies that $u\star u'\le w\star w'$;
    \item\label{prop:Bruhat order *.b}
    $(\downarrow w)(\downarrow w'):=
    \{ uu'\,:\, u\in \downarrow w,\,u'\in\downarrow u'\}=
    \downarrow w\star w'$, that is, $w\star w'$ is
    the unique maximal element of $\{ uu'\,:\, u\le w,u'\le w'\}$
    with respect to the strong Bruhat order. Moreover,
    $w\star w'=u\times w'=w\times u'$ for some~$u\le w$, $u'\le w'$.
\end{enmalph}
\end{proposition}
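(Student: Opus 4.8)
The plan is to deduce everything from two facts proved one generator at a time: the length‑additive evaluation $w\star s_i\in\{w,ws_i\}$ supplied by Proposition~\ref{prop:prod *}, and the monotonicity $a\le b\implies a\star s_i\le b\star s_i$ for every $i\in I$. For the monotonicity statement I would run through the four sign combinations of $\ell(as_i)-\ell(a)$ and $\ell(bs_i)-\ell(b)$. Two of them are immediate: whenever $a\star s_i=a$ one only needs $a\le b\le b\star s_i$. The case $as_i>a$, $bs_i<b$ (so $a\star s_i=as_i$, $b\star s_i=b$) reduces to $as_i\le b$, which is the lifting property of the strong Bruhat order (itself a quick consequence of the subword criterion of Proposition~\ref{prop:Bruhat order}\ref{prop:Bruhat order.a}). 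The case where both products are length‑increasing reduces to $as_i\le bs_i$, obtained by appending $s_i$ to reduced words of $a$ and $b$ and applying Proposition~\ref{prop:Bruhat order}\ref{prop:Bruhat order.a} again. Writing any $y$ as a reduced word $s_{i_1}\cdots s_{i_k}$, so that $y=s_{i_1}\star\cdots\star s_{i_k}$, and iterating yields $a\le b\implies a\star y\le b\star y$; passing this through the anti‑involution ${}^{op}$, which induces $x\mapsto x^{-1}$ on $W(M)$, preserves $\le$, and converts $\star$ into the opposite product, gives the companion statement $a'\le b'\implies w\star a'\le w\star b'$. Part~\ref{prop:Bruhat order *.a} is then immediate: $u\star u'\le w\star u'\le w\star w'$.

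For part~\ref{prop:Bruhat order *.b} I would first prove the ``Moreover'' clause by induction on $\ell(w')$. If $w'=w''s_i$ is reduced and $w\star w''=wu''$ with $u''\le w''$ and $\ell(wu'')=\ell(w)+\ell(u'')$ by the inductive hypothesis, then $w\star w'=(w\star w'')\star s_i$ is either $wu''$, in which case $u'=u''\le w''\le w'$ works, or $w(u''s_i)$; in the latter case $\ell(w(u''s_i))>\ell(wu'')=\ell(w)+\ell(u'')$ forces $\ell(u''s_i)=\ell(u'')+1$ and $\ell(w(u''s_i))=\ell(w)+\ell(u''s_i)$, while the lifting property gives $u''s_i\le w''s_i=w'$, so $u'=u''s_i$ works. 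Thus $w\star w'=wu'$ with $u'\le w'$ and the product length‑additive, and applying ${}^{op}$ produces the mirror factorisation $w\star w'=uw'$ with $u\le w$ and the product length‑additive.

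With the factorisations in hand the two inclusions defining $(\downarrow w)(\downarrow w')=\downarrow(w\star w')$ follow. For ``$\subseteq$'' it suffices to check $uu'\le u\star u'$ whenever $u\le w$, $u'\le w'$, since then $uu'\le u\star u'\le w\star w'$ by part~\ref{prop:Bruhat order *.a}; this is once more an induction on $\ell(u')$ using the monotonicity of $\star s_i$ and the lifting property, in exactly the pattern of the ``Moreover'' clause. For ``$\supseteq$'', the factorisation $w\star w'=wu'$ with the product length‑additive means a reduced word for $w\star w'$ is a reduced word for $w$ concatenated with one for $u'$; by Proposition~\ref{prop:Bruhat order}\ref{prop:Bruhat order.a} any $z\le w\star w'$ is a subword of it, and cutting the chosen subword at the junction writes $z=z_1z_2$ with $z_1\le w$ and $z_2\le u'\le w'$, i.e.\ $z\in(\downarrow w)(\downarrow w')$. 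Finally $w\star w'$ is the maximum of $\{uu'\,:\,u\le w,\,u'\le w'\}$, hence its unique maximal element, because it lies in this set by the ``Moreover'' clause and dominates every member by the displayed equality.

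The computations are all elementary; the only real care needed is the bookkeeping — ensuring that at each step of the two inductions and of the case analysis the relevant group product is genuinely length‑additive, so that Proposition~\ref{prop:prod *} applies and reduced words may be concatenated without loss. I expect the cleanest write‑up to isolate ``$a\le b\implies a\star s_i\le b\star s_i$'' and ``$w\star s_i$ equals $w$ or $ws_i$, the latter length‑additively'' as standalone lemmas and then invoke them uniformly throughout.
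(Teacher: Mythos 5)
Your proof is correct. Note that the paper does not prove Proposition~\ref{prop:Bruhat order *} at all: it is quoted from \cite{He09} and \cite{K14}, so there is no in-paper argument to compare against; your write-up is essentially a self-contained reconstruction of the standard arguments in those references (one-generator monotonicity of $\star$ via the lifting property and the subword criterion, then the length-additive factorisations $w\star w'=w\times u'=u\times w'$, from which both inclusions of $(\downarrow w)(\downarrow w')=\downarrow(w\star w')$ follow). Two small remarks on the write-up: in the inductive step of the ``Moreover'' clause the inequality $u''s_i\le w''s_i$ is again the ``append $s_i$ to reduced words'' observation rather than the lifting property, which you only need in the mixed case $as_i>a$, $bs_i<b$; and in the ``$\supseteq$'' direction it is worth saying explicitly that the chosen subword of the concatenated reduced expression has length $\ell(z)$, so after cutting at the junction one gets $\ell(z)=\ell(z_1z_2)\le\ell(z_1)+\ell(z_2)\le$ (number of letters in the two pieces) $=\ell(z)$, forcing both pieces to be reduced expressions of $z_1$ and $z_2$; this is exactly what Proposition~\partref{prop:Bruhat order.a} requires to conclude $z_1\le w$ and $z_2\le u'\le w'$.
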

\subsection{Idempotents in Hecke monoids}\label{subs:idemp}
First, note the following characterization of the~$w_\circ^J$, $J\in\mathscr F(M)$, in Hecke monoids.
\begin{lemma}\label{lem:char w_0 monoid}
Suppose that~$J\in\mathscr F(M)$. The following
are equivalent for $w\in W_J(M)$:
\begin{enmroman}
 \item\label{lem:char w_0 monoid.i} $w=w_\circ^J$;
 \item\label{lem:char w_0 monoid.ii} $s_i\star w=w$ for all $i\in J$;
 \item\label{lem:char w_0 monoid.iv} $x\star w=w$ for all $x\in W_J(M)$;
 \item\label{lem:char w_0 monoid.iii} $w\star s_i=w$ for all $i\in J$;
 \item\label{lem:char w_0 monoid.v} $w\star x=w$ for all $x\in W_J(M)$.
\end{enmroman}
In particular, $w_\circ^J$ is an idempotent in~$(W(M),\star)$.
\end{lemma}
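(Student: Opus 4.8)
The plan is to prove the cycle of implications $\ref{lem:char w_0 monoid.i}\Rightarrow\ref{lem:char w_0 monoid.ii}\Rightarrow\ref{lem:char w_0 monoid.iv}\Rightarrow\ref{lem:char w_0 monoid.i}$, which already gives the equivalence of the three ``left-handed'' conditions, then to deduce the ``right-handed'' conditions $\ref{lem:char w_0 monoid.iii}$ and $\ref{lem:char w_0 monoid.v}$ by transporting everything through the anti-involution~${}^{op}$. The final assertion that $w_\circ^J$ is idempotent will then be a one-line corollary.

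For $\ref{lem:char w_0 monoid.i}\Rightarrow\ref{lem:char w_0 monoid.ii}$ I would use that, if $w=w_\circ^J$, then for each $i\in J$ equation~\eqref{eq:ell w w0} applied to $s_i\in W_J(M)$ gives $\ell(s_iw_\circ^J)=\ell(w_\circ^J)-1<\ell(w_\circ^J)$, so the second branch of~\eqref{eq:prod *} yields $s_i\star w_\circ^J=w_\circ^J$. For $\ref{lem:char w_0 monoid.ii}\Rightarrow\ref{lem:char w_0 monoid.iv}$ I would bootstrap from generators to all of $W_J(M)$ by induction on $\ell(x)$, the base $x=1$ being trivial: given a reduced expression $x=s_{i_1}\cdots s_{i_k}$ with all $i_r\in J$ and $k\ge1$, put $x'=s_{i_2}\cdots s_{i_k}$, so that $\ell(s_{i_1}x')>\ell(x')$ and hence $x=s_{i_1}\star x'$ by the first branch of~\eqref{eq:prod *}; then, using associativity of~$\star$, the inductive hypothesis and $\ref{lem:char w_0 monoid.ii}$, $x\star w=s_{i_1}\star(x'\star w)=s_{i_1}\star w=w$. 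For $\ref{lem:char w_0 monoid.iv}\Rightarrow\ref{lem:char w_0 monoid.i}$ I would specialize $x=w_\circ^J$ to obtain $w_\circ^J\star w=w$, and then invoke Lemma~\ref{lem:monoid len}: $\ell(w)=\ell(w_\circ^J\star w)\ge\ell(w_\circ^J)$, while $w\in W_J(M)$ forces $\ell(w)\le\ell(w_\circ^J)$; equality of lengths together with uniqueness of the longest element of $W_J(M)$ gives $w=w_\circ^J$.

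For the right-handed conditions I would exploit that ${}^{op}$ descends to an anti-automorphism of $(W(M),\star)$ which restricts to $w\mapsto w^{-1}$ on $W(M)$, fixes each $s_i$, preserves $W_J(M)$, and satisfies $(w_\circ^J)^{op}=(w_\circ^J)^{-1}=w_\circ^J$ since $w_\circ^J$ is an involution. Applying ${}^{op}$ converts ``$w\star s_i=w$ for all $i\in J$'' into ``$s_i\star w^{op}=w^{op}$ for all $i\in J$'', i.e. condition $\ref{lem:char w_0 monoid.ii}$ for $w^{op}$, which by the first part is equivalent to $w^{op}=w_\circ^J$, hence to $w=w_\circ^J$; this gives $\ref{lem:char w_0 monoid.iii}\Leftrightarrow\ref{lem:char w_0 monoid.i}$. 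Likewise, since $x\mapsto x^{op}$ permutes $W_J(M)$, applying ${}^{op}$ to ``$w\star x=w$ for all $x\in W_J(M)$'' yields condition $\ref{lem:char w_0 monoid.iv}$ for $w^{op}$, so $\ref{lem:char w_0 monoid.v}\Leftrightarrow\ref{lem:char w_0 monoid.i}$. Finally, idempotency is immediate from $\ref{lem:char w_0 monoid.i}\Rightarrow\ref{lem:char w_0 monoid.iv}$ with $x=w=w_\circ^J$, giving $w_\circ^J\star w_\circ^J=w_\circ^J$.

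I do not anticipate a serious obstacle: the only non-formal inputs are~\eqref{eq:ell w w0} and Lemma~\ref{lem:monoid len}, and the rest is bookkeeping with Proposition~\ref{prop:prod *} and with~${}^{op}$. The one mildly delicate point is the induction in $\ref{lem:char w_0 monoid.ii}\Rightarrow\ref{lem:char w_0 monoid.iv}$: one must peel off a generator on the correct side and correctly recognize the factor $s_{i_1}\star x'$ as computed by the length-increasing branch of~\eqref{eq:prod *}.
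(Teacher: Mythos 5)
Your proof is correct and follows essentially the same route as the paper: (i)$\Rightarrow$(ii) via \eqref{eq:ell w w0} and Proposition~\ref{prop:prod *}, the generator-to-monoid bootstrap for (ii)$\Leftrightarrow$(iv), and then a closing argument identifying $w$ with $w_\circ^J$. The only cosmetic differences are that the paper finishes (iv)$\Rightarrow$(i) by playing the two identities $w_\circ^J\star w=w$ and $w_\circ^J\star w=w_\circ^J$ against each other instead of invoking Lemma~\ref{lem:monoid len} together with uniqueness of the longest element, and it handles the right-handed conditions by a symmetric argument rather than transporting through ${}^{op}$; both variants are equally valid and rely on the same inputs.
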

\begin{proof}
We may assume, without loss of generality, that~$J=I$. Clearly,
\ref{lem:char w_0 monoid.ii} (respectively, \ref{lem:char w_0 monoid.iii})
is equivalent to~\ref{lem:char w_0 monoid.iv} (respectively, \ref{lem:char w_0 monoid.v}).
Since
$\ell(s_iw_\circ^I),\ell(w_\circ^Is_i)<\ell(w_\circ^I)$ for all~$i\in I$, it follows from
Proposition~\ref{prop:prod *} that~$s_i\star w_\circ^I=w_\circ^I=
w_\circ^I\star s_i$
for all~$i\in I$ and so~\ref{lem:char w_0 monoid.i} implies~\ref{lem:char w_0 monoid.ii} and~\ref{lem:char w_0 monoid.iii}. Finally, if $x\star w=w$ for all~$x\in W(M)$
then $w_\circ^I\star w=w$. Yet since $w_\circ^I\star x=w_\circ^I$
for all~$x\in W(M)$, it follows that $w_\circ^I\star w=w_\circ^I$
hence \ref{lem:char w_0 monoid.iv} implies~\ref{lem:char w_0 monoid.i}.
Similarly, \ref{lem:char w_0 monoid.v} implies~\ref{lem:char w_0 monoid.i}.
\end{proof}
We will now prove that
$$
\{ w_\circ^J\,:\, J\in\mathscr F(M)\}=
\{ x\in (W(M),\star)\,:\, x\star x=x\}.
$$
Note that if $\ell(uv)=\ell(u)+\ell(v)$ then $uv=u\star v$ and, following~\cite{K14}, we will abbreviate
that equality as \plink{times}$u\times v$.

\begin{proposition}[Absorption property]\label{prop:absorb prop}
Suppose that $u,v\in W(M)$ satisfy $\ell(u\star v)=\ell(u)$
(respectively, $\ell(u\star v)=\ell(v)$).
Then $u\star v=u$ and $\supp v\subset \supp u$
(respectively, $u\star v=v$  and $\supp u\subset \supp v$).
In particular, if $\ell(u\star v)=\ell(u)=\ell(v)$ then
$\supp u=\supp v\in\mathscr F(M)$ and
$u=v=w_\circ^{\supp u}$.
\end{proposition}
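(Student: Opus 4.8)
The plan is to establish the two ``respectively'' claims first and then deduce the last assertion. For the claim that $\ell(u\star v)=\ell(u)$ forces $u\star v=u$ and $\supp v\subset\supp u$, I would induct on $\ell(v)$; the case $\ell(v)=0$ is immediate. For the inductive step, choose a reduced expression $v=s_iv'$, so that $v=s_i\star v'$ by Proposition~\ref{prop:prod *} and hence $u\star v=(u\star s_i)\star v'$ by associativity. Lemma~\ref{lem:monoid len} together with Proposition~\ref{prop:prod *} gives $\ell(u)=\ell(u\star v)\ge\ell(u\star s_i)\ge\ell(u)$, so $\ell(u\star s_i)=\ell(u)$, whence $u\star s_i=u$; since $\supp(u\star s_i)=\supp u\cup\{i\}$, this also yields $i\in\supp u$. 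Now $u\star v=u\star v'$ with $\ell(u\star v')=\ell(u)$, so the induction hypothesis gives $u\star v'=u$ and $\supp v'\subset\supp u$, and therefore $u\star v=u$ and $\supp v=\{i\}\cup\supp v'\subset\supp u$. The claim with $\ell(u\star v)=\ell(v)$ follows by applying this to $(u\star v)^{op}=v^{op}\star u^{op}$, using that ${}^{op}$ acts on $(W(M),\star)$ as $w\mapsto w^{-1}$ and preserves $\ell$ and $\supp$. (Alternatively, both claims drop out of Proposition~\ref{prop:Bruhat order *}\ref{prop:Bruhat order *.b}: writing $u\star v=u\times v_1$ with $v_1\le v$ forces $\ell(v_1)=0$, hence $u\star v=u$, and $(\downarrow u)(\downarrow v)=\downarrow(u\star v)=\downarrow u$ contains $v$, so $v\le u$ and $\supp v\subset\supp u$ by Proposition~\ref{prop:Bruhat order}\ref{prop:Bruhat order.a}.)

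For the last assertion, suppose $\ell(u\star v)=\ell(u)=\ell(v)$. Then both previous cases apply, giving $u\star v=u$ and $u\star v=v$, hence $u=v$, and $\supp v\subset\supp u\subset\supp v$, hence $\supp u=\supp v=:J$; in particular $u\star u=u$. Fix a reduced expression $u=s_{j_1}\cdots s_{j_k}$, so $\{j_1,\dots,j_k\}=J$ and $u=s_{j_1}\star\cdots\star s_{j_k}$. By associativity, $u=u\star u$ equals the left-nested product $(\cdots((u\star s_{j_1})\star s_{j_2})\cdots)\star s_{j_k}$; by Proposition~\ref{prop:prod *}, multiplication by each $s_{j_m}$ either fixes the current element or raises its length by one, so, the first and last terms having equal length $\ell(u)$, all partial products coincide with $u$ and $u\star s_{j_m}=u$ for every $m$, i.e. $u\star s_i=u$ for all $i\in J$. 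A straightforward induction on $\ell(x)$ (peeling off the last letter of a reduced word for $x$) then upgrades this to $u\star x=u$ for all $x\in W_J(M)$.

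It remains to see that $J$ is of finite type. By Proposition~\ref{prop:Bruhat order *}\ref{prop:Bruhat order *.b}, $(\downarrow u)(\downarrow x)=\downarrow(u\star x)=\downarrow u$, which contains $1\cdot x=x$, so $x\le u$ for every $x\in W_J(M)$; thus $W_J(M)\subset\downarrow u$, and $\downarrow u$ is finite by Proposition~\ref{prop:Bruhat order}\ref{prop:Bruhat order.a}, so $J\in\mathscr F(M)$. Now Lemma~\ref{lem:char w_0 monoid} (the implication \ref{lem:char w_0 monoid.v}$\Rightarrow$\ref{lem:char w_0 monoid.i}, applicable since $J\in\mathscr F(M)$) gives $u=w_\circ^J$, and hence $v=u=w_\circ^{\supp u}$. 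I expect the only genuinely delicate point to be the inductive step of the first claim — extracting $u\star s_i=u$ and $i\in\supp u$ from the bare length equality via the monotonicity of $\ell$ under $\star$ — together with the observation in the last part that the idempotency $u\star u=u$ must be spread out to $u\star s_i=u$ for every $i\in\supp u$; everything after that is formal.
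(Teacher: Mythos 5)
Your proof is correct and follows essentially the same route as the paper: induction on $\ell(v)$ using Lemma~\ref{lem:monoid len} and Proposition~\ref{prop:prod *} to force $u\star s_i=u$, the ${}^{op}$-symmetry for the second case, then spreading the idempotency $u\star u=u$ to $u\star s_i=u$ for all $i\in\supp u$, deducing finiteness of $W_{\supp u}(M)$, and finishing with Lemma~\ref{lem:char w_0 monoid}. The only deviations are cosmetic: you get $i\in\supp u$ from $\supp(u\star s_i)=\supp u\cup\{i\}$ rather than Lemma~\ref{lem:extend supp}, replace the paper's minimal-counterexample argument by a direct length-monotonicity count, and prove $J\in\mathscr F(M)$ via $W_J(M)\subset\,\downarrow u$ instead of the length bound $\ell(x)\le\ell(u)$ — all equally valid.
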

\begin{proof}
The argument is by induction on length of~$v$.
If $v=1$ then there is nothing to prove.

If~$v=s_j$ and $\ell(u\star s_j)=\ell(u)$ then~$u\star s_j=u$
by~\eqref{eq:prod *}. Also, if~$j\notin\supp u$ then $\ell(u\star s_j)=\ell(us_j)>\ell(u)$
which contradicts~\eqref{eq:prod *}.

For the inductive step, write $v=s_j\times v'$
for some~$j\in\supp v$ and~$v'\in W(M)$. Then
$$\ell(u)=\ell(u\star v)=\ell((u\star s_j)\star v')
\ge \ell(u\star s_j)\ge \ell(u)
$$
by Lemma~\ref{lem:monoid len}.
Thus, $\ell(u\star s_{j})=\ell(u)$ and so $u\star s_j=u$ and $j\in\supp u$
by the induction base. Then
$u\star v=u\star v'=u$ and $\supp v'\subset\supp u$ by the induction hypothesis,
whence~$\supp v=\{j\}\cup\supp v'\subset \supp u$.

The second assertion is proved verbatim.

Finally, if $\ell(u\star v)=\ell(u)=\ell(v)$ then
$u=v$ and $\supp u=\supp v=:J$. We claim that
$u\star s_j=u$ for all~$j\in J$. Suppose not. Write
$u=s_{j_1}\times\cdots\times s_{j_r}$, $r=\ell(u)$.
Then~$J=\{j_1,\dots,j_r\}$. Let~$1\le t\le r$ be minimal such that $\ell(u\star s_{j_t})>\ell(u)$.
Then by Lemma~\ref{lem:monoid len},
$\ell(u)=\ell(u\star u)\ge \ell(u\star s_{j_t})>\ell(u)$, which is a
contradiction.

In particular, for all~$w\in W_J(M)$ we have
$u\star w=u$
and so
$\ell(u)=\ell(u\star w)\ge \ell(w)$ by Lemma~\ref{lem:monoid len}. Thus, $J\in\mathscr F(M)$. It remains to apply Lemma~\ref{lem:char w_0 monoid}.
\end{proof}

\begin{corollary}\label{cor:max elts}
\begin{enmalph}
\item\label{cor:max elts.a} Let~$G$ be
a finite subsemigroup of~$(W(M),\star)$. Then~$\supp G\in\mathscr F(M)$, $w_\circ^{\supp G}\in G$ and is the unique element of~$G$ of
maximal length.
\item\label{cor:max elts.b} $w\in (W(M),\star)$ is an idempotent if
and only if $\supp w\in\mathscr F(M)$ and $w=w_\circ^{\supp w}$.
\item\label{cor:max elts.c} Let~$J\subset I$. Then~$J\in\mathscr F(M)$
if and only if $(W(M),\star)$ contains an idempotent~$w$ with~$\supp w=J$.
\end{enmalph}
\end{corollary}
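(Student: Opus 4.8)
The plan is to deduce all three parts from the Absorption Property (Proposition~\ref{prop:absorb prop}), the characterization of the $w_\circ^J$ in Lemma~\ref{lem:char w_0 monoid}, and the length monotonicity of $\star$ (Lemma~\ref{lem:monoid len}); part~\ref{cor:max elts.a} does the real work and parts~\ref{cor:max elts.b} and~\ref{cor:max elts.c} then follow formally.

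For~\ref{cor:max elts.a}, assuming $G\neq\emptyset$, I would first pick $w\in G$ of maximal length. Since $w\star w\in G$, Lemma~\ref{lem:monoid len} and maximality give $\ell(w)\le\ell(w\star w)\le\ell(w)$, hence $\ell(w\star w)=\ell(w)$, and the final assertion of Proposition~\ref{prop:absorb prop} (with $u=v=w$) yields $\supp w\in\mathscr F(M)$ and $w=w_\circ^{\supp w}$. Next, for an arbitrary $v\in G$ the element $w\star v$ again lies in $G$, so $\ell(w)\le\ell(w\star v)\le\ell(w)$ forces $\ell(w\star v)=\ell(w)$; Proposition~\ref{prop:absorb prop} applied with $u=w$ then gives $w\star v=w$ and $\supp v\subseteq\supp w$. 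Taking the union over $v\in G$ shows $\supp G=\supp w\in\mathscr F(M)$, and therefore $w=w_\circ^{\supp G}\in G$. Finally, if $v\in G$ also has $\ell(v)=\ell(w)$, then $\ell(w\star v)=\ell(w)=\ell(v)$ and the last assertion of Proposition~\ref{prop:absorb prop} forces $v=w$; this gives the uniqueness of the maximal-length element.

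Part~\ref{cor:max elts.b} is then immediate: if $w\star w=w$, apply~\ref{cor:max elts.a} to the one-element subsemigroup $\{w\}$ (so that $\supp\{w\}=\supp w$) to obtain $\supp w\in\mathscr F(M)$ and $w=w_\circ^{\supp w}$; conversely, if $\supp w\in\mathscr F(M)$ and $w=w_\circ^{\supp w}$, then $w$ is an idempotent by Lemma~\ref{lem:char w_0 monoid}. For part~\ref{cor:max elts.c}, if $J\in\mathscr F(M)$ then $w_\circ^J$ is an idempotent by Lemma~\ref{lem:char w_0 monoid}, and $\supp w_\circ^J=J$ (the inclusion $\supp w_\circ^J\subseteq J$ is clear since $w_\circ^J\in W_J(M)$, while for each $j\in J$ we have $\ell(s_j w_\circ^J)<\ell(w_\circ^J)$ by~\eqref{eq:ell w w0}, which together with Lemma~\ref{lem:extend supp} rules out $j\notin\supp w_\circ^J$); conversely, an idempotent $w$ with $\supp w=J$ has $J\in\mathscr F(M)$ by~\ref{cor:max elts.b}.

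The one point that needs care---rather than a genuine obstacle---is the step in~\ref{cor:max elts.a} showing that a maximal-length element $w$ of $G$ left-absorbs every element of $G$: once one notices that multiplying $w$ by any $v\in G$ cannot increase the length, Proposition~\ref{prop:absorb prop} immediately confines $\supp v$ inside $\supp w$, and everything else is bookkeeping. All the substantive work has already been absorbed into Proposition~\ref{prop:absorb prop} and Lemma~\ref{lem:char w_0 monoid}.
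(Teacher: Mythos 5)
Your proof is correct and follows essentially the same route as the paper: part (a) is obtained by taking a maximal-length element of $G$ and applying Lemma~\ref{lem:monoid len} together with Proposition~\ref{prop:absorb prop}, and parts (b) and (c) are then deduced formally from (a) and Lemma~\ref{lem:char w_0 monoid}. The only differences are cosmetic (e.g.\ you first apply the absorption property to $w\star w$ and you spell out $\supp w_\circ^J=J$, which the paper leaves implicit), so there is nothing to fix.
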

\begin{proof}
Since~$G$ is finite, it contains an element~$u$ of maximal length.
Suppose that $v\in G$  with~$\ell(v)=\ell(u)$.
By Lemma~\ref{lem:monoid len}, $\ell(u\star v)=\ell(u)=\ell(v)$ and so by Proposition~\ref{prop:absorb prop}, $\supp u\in\mathscr F(M)$ and~$u=v=w_\circ^{\supp u}$. If $w\in G$
then, as $\ell(w\star u)\ge \ell(u)$ by Lemma~\ref{lem:monoid len}, $\ell(w\star u)=\ell(u)$ and hence $\supp w\subset \supp u$ by Proposition~\ref{prop:absorb prop}. It follows that~$\supp G=\supp u$.

Part~\ref{cor:max elts.b}
follows from~\ref{cor:max elts.a} since
the subset consisting of an idempotent
element is a subsemigroup of~$W(M)$.

The forward direction in part~\ref{cor:max elts.c} is established in Lemma~\ref{lem:char w_0 monoid} while the converse is established in Proposition~\ref{prop:absorb prop}.
\end{proof}

Given~$w\in W(M)$, denote\plink{DL(w)}
$$
D_L(w)=\{ i\in I\,:\, \ell(s_iw)<\ell(w)\},\quad
D_R(w)=\{ i\in I\,:\, \ell(ws_i)<\ell(w)\}.
$$
Clearly, $D_R(w)=D_L(w^{-1})$.
The following is apparently well-known (see for example~\cite{BjBr}*{Proof of Lemma~3.2.3}). We
provide a proof here since the argument is quite elegant in the setting of Hecke monoids.
\begin{lemma}\label{lem:left desc}
For any~$w\in W(M)$, $\{
x\in W(M)\,:\, x\star w=w\}$ is a finite
submonoid of~$(W(M),\star)$ and is
equal to $(W_{D_L(w)}(M),\star)$. In particular, $D_L(w)\in\mathscr F(M)$.
\end{lemma}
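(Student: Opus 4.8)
The plan is to set $S=\{x\in W(M)\,:\,x\star w=w\}$ and prove directly that $S=W_{D_L(w)}(M)$; the remaining assertions then follow at once, since $W_{D_L(w)}(M)$, regarded inside $(W(M),\star)$, is exactly the submonoid generated by the idempotents $s_i$, $i\in D_L(w)$ (that is, the parabolic Hecke submonoid $(W_{D_L(w)}(M),\star)$, cf.\ Proposition~\ref{prop:prod *}), and $\supp W_{D_L(w)}(M)=D_L(w)$. That $S$ is a submonoid is immediate: $1\in S$, and if $x\star w=w=y\star w$ then $(x\star y)\star w=x\star(y\star w)=x\star w=w$ by associativity of $\star$.

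For the inclusion $W_{D_L(w)}(M)\subset S$: if $i\in D_L(w)$ then $\ell(s_iw)<\ell(w)$, so $s_i\star w=w$ by Proposition~\ref{prop:prod *}; hence every generator $s_i$, $i\in D_L(w)$, lies in the submonoid $S$, and therefore so does the whole submonoid $(W_{D_L(w)}(M),\star)$ they generate. The opposite inclusion $S\subset W_{D_L(w)}(M)$ is the crux, and I would obtain it by showing $\supp x\subset D_L(w)$ for every $x\in S$, by induction on $\ell(x)$. The base $x=1$ is trivial. For the step, pick $j\in D_R(x)$ (non-empty as $x\ne 1$) and write $x=x'\times s_j$ with $\ell(x')=\ell(x)-1$, so that $x=x'\star s_j$. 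Then $w=x\star w=x'\star(s_j\star w)$, and Lemma~\ref{lem:monoid len} gives $\ell(s_j\star w)\le\ell(x'\star(s_j\star w))=\ell(w)$; since $s_j\star w\in\{w,s_jw\}$ and $\ell(s_jw)=\ell(w)+1$ whenever $\ell(s_jw)>\ell(w)$, this forces $\ell(s_jw)<\ell(w)$, i.e.\ $j\in D_L(w)$ and $s_j\star w=w$. Consequently $w=x'\star w$, so $x'\in S$, and the induction hypothesis yields $\supp x'\subset D_L(w)$; as $\supp x=\supp(x'\star s_j)=\supp x'\cup\{j\}$ and $j\in D_L(w)$, we get $\supp x\subset D_L(w)$.

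It remains to address finiteness. For any $x\in S$, Lemma~\ref{lem:monoid len} gives $\ell(x)\le\ell(x\star w)=\ell(w)$, and since $I$ is finite $W(M)$ has only finitely many elements of length at most $\ell(w)$; hence $S$ is finite, so $W_{D_L(w)}(M)=W(M_{D_L(w)})$ is finite, that is, $D_L(w)\in\mathscr F(M)$. (Alternatively, once $S$ is known to be a finite subsemigroup of $(W(M),\star)$, Corollary~\ref{cor:max elts}\ref{cor:max elts.a} yields $\supp S=D_L(w)\in\mathscr F(M)$ directly.) The only genuinely non-routine point is the inductive step establishing $S\subset W_{D_L(w)}(M)$ — specifically, the use of Lemma~\ref{lem:monoid len} to exclude the possibility $\ell(s_jw)>\ell(w)$; everything else is bookkeeping with the length function and the additivity of $\supp$ under $\star$.
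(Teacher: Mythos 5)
Your proof is correct, and the two easy parts (that the set $S=\{x\,:\,x\star w=w\}$ is a submonoid containing $W_{D_L(w)}(M)$, and that $\ell(x)\le\ell(x\star w)=\ell(w)$ forces finiteness via Lemma~\ref{lem:monoid len}) coincide with the paper's. Where you genuinely diverge is the key inclusion $S\subset W_{D_L(w)}(M)$: the paper first uses finiteness of $S$ together with Corollary~\ref{cor:max elts}\ref{cor:max elts.a} to produce the idempotent $w_\circ^{\supp S}$ inside $S$, and then invokes Lemma~\ref{lem:char w_0 monoid} to absorb each $s_i$, $i\in\supp S$, concluding $\supp S\subset D_L(w)$; you instead run an induction on $\ell(x)$, peeling off a right descent $j\in D_R(x)$, writing $x=x'\star s_j$, and using Lemma~\ref{lem:monoid len} together with Proposition~\ref{prop:prod *} to rule out $\ell(s_jw)>\ell(w)$, so that $j\in D_L(w)$, $s_j\star w=w$, and $x'\in S$. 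Your route is more elementary — it does not use the absorption property, the classification of idempotents, or even the finiteness of $S$ to establish the inclusion — at the cost of a slightly longer hands-on induction; the paper's argument is shorter once Proposition~\ref{prop:absorb prop} and Corollary~\ref{cor:max elts} are available, and showcases the maximal-idempotent technique that recurs elsewhere in the paper. Both correctly conclude $D_L(w)\in\mathscr F(M)$, you from $S=W_{D_L(w)}(M)$ being finite, the paper from $D_L(w)\subset\supp S\in\mathscr F(M)$.
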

\begin{proof}
Let $G(w)=\{x\in W(M)\,:\, x\star w=w\}$
which is manifestly a submonoid of $(W(M),\star)$.
Let~$J(w)=\supp G(w)$.
By Lemma~\ref{lem:monoid len}, $\ell(w)=
\ell(x\star w)\ge \ell(x)$ for all~$x\in G(w)$, hence~$G(w)$ is finite. By Corollary~\partref{cor:max elts.a}, $J(w)\in\mathscr F(M)$.
By
Proposition~\ref{prop:prod *}, $\{s_i\,:\, i\in D_L(w)\}\subset G(w)$ whence~$W_{D_L(w)}(M)
\subset G(w)$ and in particular~$D_L(w)\in\mathscr F(M)$.
If $i\in J(w)$ then
$s_i\star w=s_i\star (w_\circ^{J(w)}\star w)=(s_i\star w_\circ^{J(w)})\star w=w_\circ^{J(w)}\star w=w$ by Lemma~\ref{lem:char w_0 monoid},
whence
$i\in D_L(w)$ by Proposition~\ref{prop:prod *}.
Thus, $J(w)\subset  D_L(w)$ and so~$G(w)\subset
W_{D_L(w)}(M)$.
\end{proof}
\begin{remark}
In view of Proposition~\ref{prop:absorb prop},
$\{x\in W(M)\,:\, x\star w=w\}=
\{x \in W(M)\,:\, \ell(x\star w)=\ell(w)\}$.
\end{remark}

\subsection{Divisibility, longest elements and Coxeter elements}\label{subs:w0J}
We say that $X\in\Br^+(M)$ is a left (respectively, right) divisor
of~$Y\in\Br^+(M)$ if $Y=XU$ (respectively, $Y=VX$) for some~$U\in\Br^+(M)$
(respectively, $V\in\Br^+(M)$). Since~$\Br^+(M)$ is cancellative,
such an element~$U$ (respectively, $V$), if exists, is unique
and will be denoted by~$(Y:X)_l$ (respectively, $(Y:X)_r$). The
following classical results will be often used in the sequel.
\begin{proposition}[\cite{BrSa}*{Lemma~5.1, Propositions~5.7,  Theorem~7.1}
and~\cite{Del}*{Theorem~4.21}]
\label{prop:fund elts BrSa}
Let~$J\in\mathscr F(M)$. Then
\begin{enmalph}
\item\label{prop:fund elts BrSa.a} $T_{w_\circ^J}$ is ${}^{op}$-invariant;
  \item\label{prop:fund elts BrSa.b} $T_{w_\circ^J}$ is the left and the right least common multiple
  of the~$T_j$, $j\in J$, that is, $T_{w_\circ^J}$ is left (respectively,
  right) divisible by all the $T_j$, $j\in J$ and is a left (respectively, right)
  divisor of every element of~$\Br^+(M)$ with that property;
\item\label{prop:fund elts BrSa.c'}
$X\in\Br^+_J(M)$ is left divisible by~$T_{w_\circ^J}$ if and
only if it is right divisible by~$T_{w_\circ^J}$;
\item\label{prop:fund elts BrSa.0}
If~$J=J_1\cup J_2$ with~$J_1$, $J_2$ orthogonal
then $T_{w_\circ^{J}}=T_{w_\circ^{J_1}}T_{w_\circ^{J_2}}$;
\item\label{prop:fund elts BrSa.c} There is a unique
involutive diagram automorphism~$\Sigma_J$ of~$\Br^+_J(M)$ such that
$X T_{w_\circ^J}=T_{w_\circ^J} \Sigma_J(X)$ for all~$X\in\Br^+_J(M)$;
\item\label{prop:fund elts BrSa.d}
The center of~$\Br^+_J(M)$ is generated by $T_{w_\circ^J}^k$ where
$k\in\{1,2\}$ is the order of~$\Sigma_J$;
\item\label{prop:fund elts BrSa.e}
$T_{w_\circ^J}$ is the unique element of~$\SQF^+(M)\cap \Br^+_J(M)$ of
maximal length and every square free element of~$\Br^+_J(M)$ is a
left and a right divisor of~$T_{w_\circ^J}$.
\end{enmalph}
\end{proposition}
The involution~$\Sigma_J$ for~$J\in\mathscr F(M)$ connected is
trivial except if~$M_J$ is of type~$A_n$, $n\ge 1$, $D_{n+1}$
with~$n$ even or~$E_6$. Note that~$\Br^+(D_{n+1})$ admits a
non-trivial diagram automorphism for all~$n\ge 3$, yet~$\Sigma$
is trivial if~$n$ is odd; likewise,
$\Br^+(F_4)$ admits a diagram automorphism, yet~$\Sigma$ is also trivial.

Given~$X\in \Br^+(M)$, define \plink{I(X)}$D_L(X)=\{ i\in I\,:\, \text{$T_i$ is
a left divisor of~$X$}\}$.
Let $i\in D_L(X)$. Then $X=T_i X'$ for some~$X'\in \Br^+(M)$
with~$\ell(X')=\ell(X)-1$ and so $$\pi_M^\star(X)=s_i\star \pi_M^\star(X')=
s_i\star s_i\star\pi^\star_M(X')=s_i\star \pi^\star_M(X),
$$
whence
$i\in D_L(\pi^\star_M(X))$ by Proposition~\ref{prop:prod *}.
Thus, $D_L(X)
\subset D_L(\pi^\star_M(X))
\in\mathscr F(M)$ by Lemma~\ref{lem:left desc}.
Then $X$ is left
divisible by $T_{w_\circ^{D_L(X)}}$ by Proposition~\partref{prop:fund elts BrSa.b}.
\begin{remark}
It should be noted that~$D_L(X)$ can be a proper subset of~$D_L(\pi^\star_M(X))$. For example, for~$X=T_1^3T_2^2 T_1 T_3^2T_2T_1\in \Br^+(A_3)$
we have~$D_L(X)=\{1\}$ while~$D_L(\pi^\star_{A_3}(X))=D_L(w_\circ^I)=I$. 
\end{remark}

Given~$X\in \Br^+(M)$, define inductively $D_0(X)=D_L(X)$ and $$
D_j(X)=D_L((X:\ascprod_{0\le k\le j-1} T_{w_\circ^{D_k(X)}})_l),\qquad  j\ge 1.
$$
Clearly, $D_k(X)=\emptyset$ for~$k\gg0$.
\begin{proposition}[\cite{BrSa}*{Theorem~6.3}]\label{prop:Normal form}
Let $X,Y\in\Br^+(M)$. Then
\begin{enmalph}
\item $X=\ascprod_{j\in\ZZ_{\ge 0}} T_{w_\circ^{D_j(X)}}$ (this expression for~$X$ is called its {\em normal form});
\item $X=Y$ if and only if~$D_j(X)=D_j(Y)$ for
all~$j\ge 0$.
\end{enmalph}
\end{proposition}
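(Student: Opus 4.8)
The plan is to prove part (a) by running the greedy left-division procedure that is built into the definition of the $D_j$, and then to read off part (b) almost immediately.

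First I would introduce an auxiliary sequence $X^{(0)}=X$ and $X^{(j+1)}=\bigl(X^{(j)}:T_{w_\circ^{D_L(X^{(j)})}}\bigr)_l$ for $j\ge0$. Each step is legitimate: by Lemma~\ref{lem:left desc} (applied exactly as in the paragraph preceding Proposition~\ref{prop:Normal form}) we have $D_L(X^{(j)})\in\mathscr F(M)$, and since every $T_i$ with $i\in D_L(X^{(j)})$ left-divides $X^{(j)}$, Proposition~\partref{prop:fund elts BrSa.b} gives that $T_{w_\circ^{D_L(X^{(j)})}}$ left-divides $X^{(j)}$, so the quotient exists and is unique by cancellativity of $\Br^+(M)$. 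By construction $X^{(j)}=T_{w_\circ^{D_L(X^{(j)})}}X^{(j+1)}$, so telescoping yields, for every $N\ge1$,
\begin{equation*}
X=\Bigl(\ascprod_{0\le j\le N-1}T_{w_\circ^{D_L(X^{(j)})}}\Bigr)X^{(N)}.
\end{equation*}

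The core bookkeeping step is then to show $D_L(X^{(j)})=D_j(X)$ for all $j$, by induction on $j$; the base case is the definition $D_0(X)=D_L(X)$. For the inductive step, the induction hypothesis rewrites the finite product $\ascprod_{0\le k\le j-1}T_{w_\circ^{D_k(X)}}$ as $\ascprod_{0\le k\le j-1}T_{w_\circ^{D_L(X^{(k)})}}$, and the telescoping identity above (with $N=j$) exhibits this as a left divisor of $X$ with quotient exactly $X^{(j)}$; hence $D_j(X)=D_L((X:\ascprod_{0\le k\le j-1}T_{w_\circ^{D_k(X)}})_l)=D_L(X^{(j)})$. Now $\ell(X^{(j+1)})=\ell(X^{(j)})-\ell(w_\circ^{D_L(X^{(j)})})$ and $\ell(w_\circ^{D_L(X^{(j)})})\ge1$ whenever $X^{(j)}\ne1$ (a nonempty word in the generators has one of them as a left divisor), so the lengths $\ell(X^{(j)})$ strictly decrease until they vanish; thus $X^{(N)}=1$, that is $D_j(X)=\emptyset$, for all $j\ge N$ once $N$ is large. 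Since $T_{w_\circ^\emptyset}=1$, the telescoping identity becomes $X=\ascprod_{j\ge0}T_{w_\circ^{D_j(X)}}$, which is part (a).

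For part (b), the ``if'' direction is immediate from (a). For ``only if'', if $X=Y$ then $D_0(X)=D_L(X)=D_L(Y)=D_0(Y)$, and cancellativity gives $X^{(1)}=Y^{(1)}$; iterating this (formally, induction on $\ell(X)$ applied to $X^{(1)}$) yields $X^{(j)}=Y^{(j)}$ and hence $D_j(X)=D_L(X^{(j)})=D_L(Y^{(j)})=D_j(Y)$ for all $j$. I do not expect a genuine obstacle here: the whole argument is one induction on $\ell(X)$ once the sequence $X^{(j)}$ is in place, and the only delicate point is the identity $D_L(X^{(j)})=D_j(X)$, which unwinds the recursive definition of $D_j$ via the telescoping of the $X^{(j)}$ together with the elementary quotient law $(X:AB)_l=((X:A)_l:B)_l$. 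All substantive inputs — cancellativity of $\Br^+(M)$, the finite-type statement $D_L(X)\in\mathscr F(M)$, and the least-common-multiple property of $T_{w_\circ^J}$ — are already available from Proposition~\ref{prop:fund elts BrSa} and Lemma~\ref{lem:left desc}.
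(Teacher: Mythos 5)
Your argument is correct. Note, however, that the paper does not prove this proposition at all: it is quoted from Brieskorn--Saito (\cite{BrSa}*{Theorem~6.3}), so there is no in-paper proof to compare against. Your greedy left-division scheme is the natural self-contained derivation, and every ingredient you invoke is available exactly where you say it is: the facts that $D_L(X)\subset D_L(\pi^\star_M(X))\in\mathscr F(M)$ and that $X$ is therefore left divisible by $T_{w_\circ^{D_L(X)}}$ are recorded in the paragraph preceding the proposition (Lemma~\ref{lem:left desc} together with the LCM property in Proposition~\ref{prop:fund elts BrSa}), and cancellativity of $\Br^+(M)$ is stated in \S\ref{subs:Br(M)W(M)}. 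Your bookkeeping identity $D_L(X^{(j)})=D_j(X)$ is the right way to make the recursive definition of $D_j$ legitimate, and the length argument both terminates the induction and incidentally justifies the paper's remark that $D_k(X)=\emptyset$ for $k\gg0$. Two small observations: the ``only if'' half of (b) is even more immediate than you make it, since $D_j(X)$ is defined intrinsically from the element $X$, so equal elements trivially have equal $D_j$'s (no cancellation or induction needed); and the cited theorem of Brieskorn--Saito is stronger than the statement reproduced here (it also characterizes which sequences of subsets arise as $(D_j(X))_{j\ge0}$), but that extra content is not claimed in the proposition and your proof correctly does not need it.
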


\begin{definition}\label{defn:weakly orthogonal}
Let~$M\in\Cox I$. We say that~$J,K\subset I$
are {\em weakly orthogonal} if 
$J\setminus K$ is orthogonal to~$K$ and $K\setminus J$ is orthogonal to~$J$. 
\end{definition}
In particular, every subset~$J$ of~$I$ is weakly
orthogonal to itself and if $J$, $K$ are orthogonal then they are weakly orthogonal.
The following Lemma is immediate from Proposition~\partref{prop:fund elts BrSa.0} and
Lemma~\partref{lem:orth factors.a}.
\begin{lemma}\label{lem:weakly orthogonal}
Let~$M\in\Cox I$, $J,K\in\mathscr F(M)$. If~$J$ and~$K$
are weakly orthogonal then $T_{w_\circ^J}T_{w_\circ^K}=
T_{w_\circ^K}T_{w_\circ^J}$. 
\end{lemma}

A {\em Coxeter element} of $\Br^+_J(M)$ or~$W(M)$
is any $C\in \Br^+_J(M)$ (respectively, $c\in W(M)$)
with~$\supp C=J$ (respectively, $\supp c=J$) and~$\ell(C)=|J|$
(respectively, $\ell(c)=|J|$). In the sequel, we will often consider
special Coxeter elements corresponding to an interval~$J=[a,b]\subset I$,
namely \plink{cab}$\cx ab=\ascprod_{a\le i\le b} s_i$, $\cxr ab=\dscprod_{a\le i\le b}s_i$, $\Cx ab=T_{\cx ab}$ and~$\Cxr ab=T_{\cxr ab}=(\Cx ab)^{op}$. We will use the convention that~$\cx ij=\cxr ij=1$ if~$i>j$
and similarly for~$\Cx ij$ and~$\Cxr ij$.

It is well-known (see e.g.~\cite{Bou}*{Ch. V, \S6})
that if~$J\in\mathscr F(M)$ then all Coxeter elements~$c\in W_J(M)$
are conjugate and of the same order~\plink{h(M)}$h(M_J)$, called the {\em Coxeter number}
of~$W_J(M)$. The Coxeter number is even for all finite types
except~$I_2(2m+1)$, $m>0$ and $A_{2m}$.
Note also that if~$J\subset I$ is self-orthogonal then $T_{w_\circ^J}$ is the unique Coxeter element of~$W_J(M)$.
The following is established in~\cite{BrSa}.
\begin{proposition}[\cite{BrSa}*{\S5.8}]\label{prop:Coxeter splitting}
Let~$M$ be a Coxeter matrix and let~$J\in\mathscr F(M)$. Then for any Coxeter element~$C\in\Br^+_J(M)$
\begin{enmalph}
\item\label{prop:Coxeter splitting.a}
$T_{w_\circ^J}^2=C^{h(M_J)}$;
\item\label{prop:Coxeter splitting.b}
If $\Sigma_J$ from Proposition~\partref{prop:fund elts BrSa.c} is trivial then~$h(M_J)$ is even and $T_{w_\circ^J}=
C^{h(M_J)/2}$;
\item\label{prop:Coxeter splitting.c}
If~$M_J$ is irreducible and~$J=J'\cup J''$
is a partition of~$J$ into disjoint non-empty self-orthogonal subsets then
$$T_{w_\circ^J}=\brd{T_{w_\circ^{J'}}T_{w_\circ^{J''}}}{h(M_J)}
=\brd{T_{w_\circ^{J''}}T_{w_\circ^{J'}}}{h(M_J)}.
$$
\end{enmalph}
\end{proposition}
\subsection{Parabolic elements and 
submonoids they generate}\label{subs:parab elts}
Given~$J\subset K\in\mathscr F(M)$, denote
\plink{wJ;K}$w_{J;K}=w_\circ^J w_\circ^K\in W_K(M)$. Such elements are called {\em
$K$-parabolic}. If~$M$ is of finite type, we abbreviate~$w_J:=w_{J;I}$ and 
call such elements parabolic.
Analogously, we call elements $T_{w_{J;K}}
$ of~$\Br^+_K(M)$ $K$-parabolic.
The following is immediate.
\begin{lemma}\label{lem:prod orth parab}
Let~$K,K'\in\mathscr F(M)$ be orthogonal.
Then $w_{J;K}\times w_{J';K'}=w_{J\cup J'; K\cup K'}$
and hence $T_{w_{J;K}}T_{w_{J';K'}}=T_{w_{J\cup J';K\cup K'}}$
for all~$J\subset K$, $J'\subset K'$.
\end{lemma}

Let~$K\in\mathscr F(M)$ and let~$K_1,\dots,K_r$, $r\ge 1$ be its connected components. 
We define~\plink{Psbm}$\mP(W_K(M),\star)$ (respectively,
$\mP(\Br^+_K(M))$ as the submonoid of~$(W_K(M),\star)$
(respectively, $\Br^+_K(M)$) generated by
the~$w_{J;K_i}$ (respectively, $T_{w_{J;K_i}}$),
$1\le i\le r$, $J\subset K_i$. 
It follows from Lemma~\ref{lem:prod orth parab}
that~$\mP(W_K(M),\star)$ coincides with the submonoid
of~$(W_K(M),\star)$ generated by the~$w_{J;K}$,
$J\subset K$. However, if~$K$ is not connected, $\mP(\Br^+_K(M))$
is rather different from the submonoid
of~$\Br^+_K(M)$ generated by the $T_{w_{J;K}}$,
$J\subset K$. For instance,
the former contains $T_{w_\circ^{K_1}}^{a_1}\cdots  T_{w_\circ^{K_r}}^{a_r}$ for all~$a_1,\dots,a_r\in\mathbb Z_{\ge 0}$
while the latter contains only~$T_{w_\circ^{K}}^a=
T_{w_\circ^{K_1}}^a\cdots T_{w_\circ^{K_r}}^a$, $a\in\mathbb Z_{\ge 0}$.

\begin{lemma}\label{lem:wK absorption}
Let~$J\in\mathscr F(M)$ and let~$K\subset J$.
Then
\begin{align*}
&s_i\star w_{K;J}=\begin{cases} w_{K;J},& i\in J\setminus K\\
 s_i\times w_{K;J},& i\in K\cup (I\setminus J).
 \end{cases}\\
 &w_{K;J}{}^{-1}\star s_i=\begin{cases} w_{K;J}{}^{-1},& i\in J\setminus K\\
 w_{K;J}{}^{-1}\times s_i,& i\in K\cup (I\setminus J).
 \end{cases}
 \end{align*}
Moreover, $\ell(ww_{K;J})=\ell(w)+\ell(w_{K;J})$
and $\ell(w_{K;J}{}^{-1}w)=
\ell(w_{K;J}^{-1})+\ell(w)$
for all $w\in W_K(M)$.
\end{lemma}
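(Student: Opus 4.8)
The plan is to reduce the whole statement to deciding, for each $i\in I$, whether $\ell(s_iw_{K;J})$ exceeds $\ell(w_{K;J})$ or falls below it, and then to read off the $\star$-products from Proposition~\ref{prop:prod *} and \eqref{eq:prod *}. I would prove the ``Moreover'' clause first, since it also settles the case $i\in K$ of the first displayed identity, and then deduce the second displayed identity from the first simply by applying the anti-involution~${}^{op}$.

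\textbf{Step 1 (the ``Moreover'' clause, and the case $i\in K$).} Let $w\in W_K(M)$. Since $K\in\mathscr F(M)$, I apply \eqref{eq:ell w w0} first inside $W_K(M)$ to get $\ell(ww_\circ^K)=\ell(w_\circ^K)-\ell(w)$, and then inside $W_J(M)$ (legitimate because $ww_\circ^K\in W_K(M)\subseteq W_J(M)$) to get $\ell(ww_{K;J})=\ell((ww_\circ^K)w_\circ^J)=\ell(w_\circ^J)-\ell(w_\circ^K)+\ell(w)=\ell(w_{K;J})+\ell(w)$; hence $w\perp w_{K;J}$. Replacing $w$ by $w^{-1}$ (still in $W_K(M)$) and inverting turns this into $w_{K;J}^{-1}\perp w$. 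Taking $w=s_i$ with $i\in K$ gives $\ell(s_iw_{K;J})=\ell(w_{K;J})+1>\ell(w_{K;J})$, so Proposition~\ref{prop:prod *} yields $s_i\star w_{K;J}=s_iw_{K;J}$, which is $s_i\times w_{K;J}$ since the length is additive.

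\textbf{Step 2 (the cases $i\in I\setminus J$ and $i\in J\setminus K$).} If $i\in I\setminus J$ then $i\notin\supp w_{K;J}$, because $\supp w_{K;J}\subseteq J$, so Lemma~\ref{lem:extend supp} gives $\ell(s_iw_{K;J})=\ell(w_{K;J})+1$, and again Proposition~\ref{prop:prod *} gives $s_i\star w_{K;J}=s_i\times w_{K;J}$. The case $i\in J\setminus K$ is the only delicate one, and the step I expect to be the main obstacle: here $i\notin\supp w_\circ^K$ (as $\supp w_\circ^K\subseteq K$ and $i\notin K$), so Lemma~\ref{lem:extend supp} gives $\ell(w_\circ^Ks_i)=\ell(w_\circ^K)+1$, and since $w_\circ^Ks_i\in W_J(M)$, \eqref{eq:ell w w0} inside $W_J(M)$ gives $\ell(w_\circ^J(w_\circ^Ks_i))=\ell(w_\circ^J)-\ell(w_\circ^K)-1=\ell(w_{K;J})-1$; since $w_{K;J}^{-1}=w_\circ^Jw_\circ^K$, inverting gives $\ell(s_iw_{K;J})=\ell(w_{K;J})-1<\ell(w_{K;J})$, whence $s_i\star w_{K;J}=w_{K;J}$ by Proposition~\ref{prop:prod *}. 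Passing to the inverse here -- so that $s_i$ lengthens $w_\circ^K$ on the \emph{right} instead of on the left, where it would shorten it only for $i\in K$ -- is the small trick that makes the length estimate work.

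\textbf{Step 3 (the second displayed identity).} The anti-involution ${}^{op}$ descends to $(W(M),\star)$, fixes every $s_i$, preserves $\ell$, and acts on $W(M)$ by $w\mapsto w^{-1}$; hence $(s_i\star w_{K;J})^{op}=w_{K;J}^{-1}\star s_i$, while $(w_{K;J})^{op}=w_{K;J}^{-1}$ and $(s_i\times w_{K;J})^{op}=w_{K;J}^{-1}\times s_i$ (length additivity being preserved under inversion). Applying ${}^{op}$ to each of the two cases of the first identity therefore yields the two cases of the second identity verbatim. Apart from the $i\in J\setminus K$ case of Step 2, everything is direct bookkeeping with \eqref{eq:ell w w0}, Lemma~\ref{lem:extend supp} and Proposition~\ref{prop:prod *}.
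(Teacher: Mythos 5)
Your proof is correct and follows essentially the same route as the paper: all three cases are settled by length computations using \eqref{eq:ell w w0}, Lemma~\ref{lem:extend supp} and Proposition~\ref{prop:prod *}, with the statement for $w_{K;J}^{-1}$ obtained by applying ${}^{op}$, and the ``Moreover'' clause proved by exactly the computation $\ell(ww_{K;J})=\ell(w_\circ^J)-\ell(ww_\circ^K)=\ell(w_{K;J})+\ell(w)$ that the paper uses. The only cosmetic differences are that you treat general $J$ directly rather than first reducing to $J=I$, and in the case $i\in J\setminus K$ you pass through $w_{K;J}^{-1}s_i$ and invert, whereas the paper applies the right-hand version of \eqref{eq:ell w w0} to $s_iw_{K;J}=(s_iw_\circ^K)w_\circ^J$ directly; both rest on the same facts.
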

\begin{proof}
We only prove the assertions for~$w_{K;J}$, as the result for~$w_{K;J}^{-1}$ follows by applying~${}^{op}$.
The first claim is immediate from Proposition~\ref{prop:prod *} and Lemma~\ref{lem:extend supp} for~$i\in I\setminus J$ so we may
assume without loss of generality that~$J=I$.
We have $s_iw_{K}=(s_iw_\circ^K)w_\circ^I$ and
so~$\ell(s_i w_K)=\ell(w_\circ^I)-\ell(s_i w_\circ^K)$ for all~$i\in I$
by~\eqref{eq:ell w w0}.
Since $\ell(s_iw_\circ^K)=\ell(w_\circ^K)-1$ if $i\in K$ and
$\ell(s_i w_\circ^K)=\ell(w_\circ^K)+1$ if $i\in I\setminus K$,
it follows that~$\ell(s_i w_K)=\ell(w_K)+1$ if~$i\in K$ and
$\ell(s_i w_K)=\ell(w_K)-1$ if~$i\in I\setminus K$.
It remains to apply Proposition~\ref{prop:prod *}.
Furthermore, given $w\in W_K(M)$ we have
$\ell(w w_\circ^K)=\ell(w_\circ^K)-\ell(w)$ by~\eqref{eq:ell w w0} hence $\ell(ww_K)=\ell(w_\circ^I)-\ell(ww_\circ^K)
=\ell(w_\circ^I)-\ell(w_\circ^K)+\ell(w)=\ell(w_K)+\ell(w)$.
\end{proof}

Quite surprisingly, it turns out (see~\cite{BK07}*{Proposition~3.20}, \cite{He09} and Proposition~\ref{prop:submonoid *}) that
$\mP(W_K(M),\star)=\{w_{J;K}\,:\, J\subset K\}$.  However,
the submonoid $\mP(\Br^+_K(M))$ of~$\Br^+(M)$
is manifestly infinite.

\section{General properties of homomorphisms
of Hecke and Artin monoids}\label{sec:Gen homs}

Throughout this chapter, we denote
standard generators of $(W(M'),\star)$
(respectively, of $\Br^+(M')$) corresponding
to a Coxeter matrix~$M'$ over~$I'$
by~$s'_i$ (respectively, $T'_i$), $i\in I'$ and so on.
We will often use the obvious fact that a homomorphism of Artin monoids extends to a homomorphism of their ambient Artin groups.

Let~\plink{A C H}$\mathscr A$ (respectively, $\mathscr C$, $\mathscr H$) be the category whose objects are Coxeter matrices
and morphisms are homomorphisms of corresponding Artin monoids (respectively, Coxeter groups, Hecke monoids).
Parabolic submonoids and subgroups are, naturally, subobjects in these categories. By 
Lemma~\ref{lem:free product}
all these categories admit finite products and coproducts via, respectively, $(M,M')\mapsto M\times M'$ and
$(M,M')\mapsto M\coprod M'$, $M\in\Cox I$, $M'\in\Cox{I'}$ (see Remark~\ref{rem:prod coprod}).

\subsection{Homomorphisms of Hecke monoids}\label{subs:Hecke homs}
Given $M'\in\Cox{I'}$, $M\in\Cox I$ and~$\phi\in\Hom_{\mathscr H}(M',M)$, we denote by \plink{[phi]}$[\phi]$
the map $I'\to \mathscr P(I)$ defined by $i\mapsto \supp\phi(s'_i)$, $i\in I'$ and extend it
to a map $[\phi]:\mathscr P(I')\to \mathscr P(I)$ via
$[\phi](J')=\bigcup_{j\in J'} [\phi](j')$, $J'\subset I'$.
We will usually abbreviate~$[\sigma]$ as~$\sigma$ for diagram automorphisms.
\begin{definition}\label{def:types heck hom}
We say that~$\phi\in\Hom_{\mathscr H}(M',M)$ is:
\begin{itemize}
\item[-] {\em disjoint} if $[\phi](i)\cap[\phi](j)=
\emptyset$ for all $i\not=j\in I'$;
\item[-] {\em fully supported} if $[\phi](I')=I$;
\end{itemize}
\end{definition}
\begin{lemma}\label{lem:[phi]comp}
Let~$M\in\Cox I$, $M'\in\Cox{I'}$ and~$M''\in\Cox{I''}$.
\begin{enmalph}   
\item\label{lem:[phi]comp.a}
$\supp\phi(x)=[\phi](\supp x)$
for all~$\phi\in\Hom_{\mathscr H}(M',M)$ and for
all~$x\in W(M')$;
\item\label{lem:[phi]comp.b}
$[\phi\circ\phi']=[\phi]\circ[\phi']$ as maps $\mathscr P(I'')\to
\mathscr P(I)$ for any~$\phi\in\Hom_{\mathscr H}(M',M)$
and~$\phi'\in\Hom_{\mathscr H}(M'',M')$;
\item
   \label{lem:[phi]comp.c}
   If~$\phi\in \Hom_{\mathscr H}(M',M)$ is
    disjoint and $[\phi](i)\not=\emptyset$ for all~$i\in I'$
    then~$[\phi]:\mathscr P(I')\to
    \mathscr P(I)$ is
    injective.
\end{enmalph}
\end{lemma}
\begin{proof}
Since~$\supp x\star y=\supp x\,\cup\,\supp y$ for all $x,y\in (W(M),\star)$, we have for all~$x'\in (W(M'),\star)$ $$\supp\phi(x')=\bigcup_{j\in\supp x'}
\supp \phi(s'_j)=\bigcup_{j\in\supp x'} [\phi](j)=
[\phi](\supp x'),$$
which proves~\ref{lem:[phi]comp.a}.
To prove part~\ref{lem:[phi]comp.b},
note that by part~\ref{lem:[phi]comp.a} we have 
for all~$x''\in W(M'')$
$$
[\phi\circ\phi'](\supp x'')=
\supp(\phi\circ\phi')(x'')
=[\phi](\supp\phi'(x''))
=[\phi]([\phi'](\supp x'')).
$$
Since $\supp: W(M'')\to \mathscr P(I'')$
is surjective, the assertion follows. 

To prove~\ref{lem:[phi]comp.c}, suppose that~$[\phi](J)=[\phi](J')$ for some~$J\not=J'$. We may assume, without loss of generality,
that $J'\not\subset J$. Let~$j\in J\setminus J'$. Then $\emptyset\not=[\phi](j)\subset [\phi](J)=[\phi](J')=\bigcup_{j\in J'} [\phi](j')$ which is a contradiction since~$[\phi](j)\cap[\phi](j')=\emptyset$ for all~$j'\in J'$.
\end{proof}

Our present aim is to describe~$\Hom_{\mathscr H}(M',M)$ for any Coxeter matrices~$M'$ and~$M$.
We begin with the following observation.
\begin{lemma}\label{lem:Hecke hom w0J}
Let $M\in\Cox I$, $M'\in\Cox{I'}$ and~$\phi\in\Hom_{\mathscr H}(M',M)$.
Then~$[\phi](J')\in\mathscr F(M)$ and $\phi(w_\circ^{J'})=w_\circ^{[\phi](J')}$
for all~$J'\in\mathscr F(M')$. In particular,
$[\phi](i)\in \mathscr F(M')$ for all~$i\in I'$.
\end{lemma}
\begin{proof}
Since~$w_\circ^{J'}$ is an idempotent in~$(W(M'),\star)$,
$\phi(w_\circ^{J'})=w_\circ^J$ for some~$J\in\mathscr F(M)$
by Corollary~\partref{cor:max elts.b}. Clearly~$J\subset [\phi](J')$.
Since
$s'_j\star w_\circ^{J'}=w_\circ^{J'}$ for all~$j\in J'$
by Lemma~\ref{lem:char w_0 monoid}, it follows that
$\phi(s'_j)\star w_\circ^J=w_\circ^J$ for all~$j\in J'$ and
so $[\phi](j)=\supp\phi(s'_j)\subset J$ for all~$j\in J'$ by Proposition~\ref{prop:absorb prop}. Thus, $[\phi](J')\subset J$. The last assertion follows
by taking~$J'=\{i\}$, $i\in I'$.
\end{proof}

Let~$J,K\in\mathscr F(M)\setminus\{\emptyset\}$. Note that~$J\cup K\in\mathscr F(M)$ if and only if
$$
G_{J,K}:=\la w_\circ^J,w_\circ^K\ra = \{
\brd{w_\circ^J\star w_\circ^K\star}{t},\brd{w_\circ^K\star w_\circ^J\star}{t}\,:\,
t\in\ZZ_{\ge 0}\}\subset (W_{J\cup K}(M),\star)
$$
is finite. Indeed, the forward direction is obvious while if~$G_{J,K}$
is finite then $\supp G_{J,K}=J\cup K\in\mathscr F(M)$ by Corollary~\partref{cor:max elts.a}.

We will now define a map~\plink{mu M}$\mu_M:\mathscr F(M)\times
\mathscr F(M)\to \mathbb Z_{>0}\cup\{\infty\}$.
If~$J\cup K\notin\mathscr F(M)$, set
$\mu_M(J,K)=\mu_M(K,J)=\infty$. Otherwise,
by Corollary~\partref{cor:max elts.a},
$G_{J,K}$ contains a unique element of maximal length, namely
$w_\circ^{J\cup K}$, and so either
$$w_\circ^{J\cup K}=\brd{w_\circ^J\star w_\circ^K\star}{m}
$$ or
$$
w_\circ^{J\cup K}=\brd{w_\circ^K\star w_\circ^J\star}{m}
$$
for some~$m> 0$.  Note that if~$m$ is even then
both equalities hold at the same time since~$w_\circ^{J\cup K}$,
$w_\circ^K$ and~$w_\circ^J$ are ${}^{op}$-invariant.
If say the first equality holds for~$m$ odd
then Lemma~\ref{lem:char w_0 monoid} implies that
$$
w_\circ^{J\cup K}=w_\circ^K\star w_\circ^{J\cup K}=
w_\circ^K\star
\brd{w_\circ^J\star w_\circ^K\star}{m}
=\brd{w_\circ^{K}\star w_\circ^{J}\star}{m+1}.
$$
Thus, for any~$J,K\in\mathscr F(M)$ such that~$J\cup K\in\mathscr F(M)$,
$$
\mu_M(J,K):=\min\{ m\in\ZZ_{>0}\,:\, \brd{w_\circ^J\star w_\circ^K\star}{m}=
w_\circ^{J\cup K}\}
$$
is well-defined. By the above, $|\mu_M(J,K)-\mu_M(K,J)|\le 1$.
\begin{example}
In~$(W(A_4),\star)$ we have
$$
s_3s_4s_3\star s_2s_4\star s_3s_4s_3=
w_\circ^{\{2,3,4\}},
$$
while
$$
s_2s_4\star s_3s_4s_3\star s_2s_4=
s_2s_3s_4s_3s_2,\qquad
(s_2s_4\star s_3s_4s_3)^{\star 2}=
w_\circ^{\{2,3,4\}}.
$$
Thus, $\mu_{A_4}(\{3,4\},\{2,4\})=3$,
$\mu_{A_4}(\{2,4\},\{3,4\})=4$.
\end{example}

For any~$M\in\Cox I$ and~$M'\in\Cox{I'}$
define\plink{LM'M}
$$
\Lambda(M',M):=\{\xi:I'\to \mathscr F(M)\,:\,
\max(\mu_M(\xi(i),\xi(j)),
\mu_M(\xi(j),\xi(i)))\le
m'_{ij},\,\forall\, i\not=j\in I'\}.
$$
\begin{theorem}\label{thm:Hom Heck Mon}
Let~$M'\in\Cox{I'}$, $M\in\Cox I$.
The assignments~$\phi\mapsto[\phi]$ define a bijection $\Hom_{\mathscr H}(M',M)\to \Lambda(M',M)$. 
\end{theorem}
\begin{proof}
We need the following
\begin{lemma}\label{lem:[phi]Lam M'M}
For any~$\phi\in\Hom_{\mathscr H}(M',M)$,
$[\phi]\in\Lambda(M',M)$.
\end{lemma}
\begin{proof}
Note first that~$[\phi]$ is a map~$I'\to \mathscr F(M)$ by Lemma~\ref{lem:Hecke hom w0J}.

Let~$i\not=j\in I'$.
If~$m'_{ij}=\infty$ or at least one of~$[\phi](i)$, $[\phi](j)$ is empty, then the inequality
\begin{equation}\label{eq:ineq Lam M'M}
\max(\mu_M([\phi](i),[\phi](j)),
\mu_M([\phi](j),[\phi](i)))\le m'_{ij}
\end{equation}
is trivial.
Otherwise, the submonoid $G'_{i,j}=\la s'_i,s'_j\ra$ of~$(W(M'),\star)$ is finite, its
longest element being $\brd{s'_i\star s'_j\star}{m'_{ij}}=
\brd{s'_j\star s'_i\star}{m'_{ij}}$. Then
$G_{i,j}=\phi(G'_{i,j})$ is a finite submonoid of~$(W(M),\star)$ contained
in~$G_{[\phi](i),[\phi](j)}$
and~$\supp G_{i,j}=[\phi](i)\cup [\phi](j)$. By Corollary~\partref{cor:max elts.a}, $[\phi](i)\cup [\phi](j)\in\mathscr F(M)$
and the longest element of~$G_{i,j}$ is
$$
w_\circ^{[\phi](i)\cup [\phi](j)}=\brd{w_\circ^{[\phi](i)}\star
w_\circ^{[\phi](j)}\star}{\mu_M([\phi](i),[\phi](j))}=\brd{w_\circ^{[\phi](j)}\star
w_\circ^{[\phi](i)}\star}{\mu_M([\phi](j),[\phi](i))}.
$$
Also, since $\brd{s'_i\star s'_j\star}{m'_{ij}}=
w_\circ^{\{i,j\}}$, by Lemma~\ref{lem:Hecke hom w0J} we have
$$
\phi(\brd{s'_i\star s'_j\star}{m'_{ij}})=
\phi(\brd{s'_j\star s'_i\star}{m'_{ij}})=
\phi(w_\circ^{\{i,j\}})=w_\circ^{[\phi](i)\cup
[\phi](j)}
$$
whence
$$
w_\circ^{[\phi](i)\cup [\phi](j)}=
\brd{w_\circ^{[\phi](i)}\star
w_\circ^{[\phi](j)}\star}{m'_{ij}}
=\brd{w_\circ^{[\phi](j)}\star
w_\circ^{[\phi](i)}\star}{m'_{ij}}.
$$
The inequality~\eqref{eq:ineq Lam M'M} is now immediate.
\end{proof}
The next step is to construct 
a map~$\Lambda(M',M)\to \Hom_{\mathscr H}(M',M)$.
\begin{lemma}\label{lem:Lam M'M HomM'M} 
For any~$\xi\in\Lambda(M',M)$,
the assignments $s'_i\mapsto w_\circ^{\xi(i)}$, $i\in I'$
define
\plink{Theta xi}$\Theta_\xi\in\Hom_{\mathscr H}(M',M)$.
\end{lemma}
\begin{proof}
Let~$\xi\in \Lambda(M',M)$.
By definition
of~$\mu_M$ we have for all $i\not=j\in I'$ with $m'_{ij}<\infty$
$$
\brd{w_\circ^{\xi(i)}\star w_\circ^{\xi(j)}\star}{m'_{ij}}=w_\circ^{\xi(i)\cup \xi(j)}\star x_{i,j}
$$
where
$$
x_{i,j}=\begin{cases}\brd{w_\circ^{\xi(i)}\star w_\circ^{\xi(j)}}{m'_{ij}-
\mu_M(\xi(i),\xi(j))},& \text{$\mu_M(\xi(i),\xi(j))$ is even,}\\
\brd{w_\circ^{\xi(j)}\star w_\circ^{\xi(i)}}{m'_{ij}-
\mu_M(\xi(i),\xi(j))},& \text{$\mu_M(\xi(i),\xi(j))$ is odd}.
\end{cases}
$$
Since $\supp x_{i,j}\subset \xi(i)\cup \xi(j)$, it follows from Lemma~\ref{lem:char w_0 monoid} that
$$
\brd{w_\circ^{\xi(i)}\star w_\circ^{\xi(j)}\star}{m'_{ij}}=w_\circ^{\xi(i)\cup \xi(j)}.
$$
Thus, $\brd{w_\circ^{\xi(i)}\star w_\circ^{\xi(j)}\star}{m'_{ij}}=
\brd{w_\circ^{\xi(j)}\star w_\circ^{\xi(i)}\star}{m'_{ij}}$.
\end{proof}
To finish the proof of Theorem~\ref{thm:Hom Heck Mon}, it remains to observe that 
$[\Theta_\xi]=\xi$ for all~$\xi\in\Lambda(M',M)$ while
$\Theta_{[\phi]}=\phi$ for all~$\phi\in\Hom_{\mathscr H}(M',M)$.
\end{proof}

\begin{corollary}\label{cor:tautological}
Let $M, M'\in\Cox I$ and suppose that
$m'_{ij}\ge m_{ij}$ for all~$i,j\in I$. Then
the assignments $s'_i\mapsto s_i$, $i\in I$,
define a homomorphism $(W(M'),\star)\to (W(M),\star)$.
\end{corollary}
We call such homomorphisms {\em tautological}.
For example, for any~$m'>m$ there is a
tautological homomorphism~$W(I_2(m'),\star)\to
W(I_2(m),\star)$.

\begin{definition}\label{defn:optial}
We say that~$\phi\in\Hom_{\mathscr H}(M',M)$ is {\em optimal} if for all~$i,j\in I'$ with~$[\phi](i)\not=[\phi](j)$
\begin{equation}\label{eq:optimal cond}
m'_{ij}=
\max(2,\mu_M([\phi](i),[\phi](j)),\mu_M([\phi](j),[\phi](i)).
\end{equation}
\end{definition}
\begin{proposition}\label{prop:optimal taut}
Every homomorphism of Hecke monoids can be written as a composition of a tautological homomorphism with an optimal one.
\end{proposition}
\begin{proof}
Let~$M'=(m'_{ij})_{i,j\in I'}\in\Cox{I'}$, $M\in\Cox I$ and let $
\phi\in\Hom_{\mathscr H}(M',M)$.
Let $M''=(m''_{i,j})_{i,j\in I'}$ with
$$
m''_{ij}=\begin{cases}m'_{ij},&[\phi](i)=[\phi](j),\\
\max(2,\mu_M([\phi](i),[\phi](j)),\mu_M([\phi](j),[\phi](i)))),&i\not=j
\end{cases}
$$
for all~$i,j\in I'$. Clearly, $M''\in\Cox{I'}$.
Since~$[\phi]\in\Lambda(M',M)$
by Theorem~\ref{thm:Hom Heck Mon}, 
$m'_{ij}\ge 
m''_{ij}$ for all~$i,j\in I'$. 
This yields a tautological homomorphism $\phi'\in\Hom_{\mathscr H}(M',M'')$. Since $[\phi]\in\Lambda(M'',M)$ by 
definition of~$M''$, 
by Lemma~\ref{lem:Lam M'M HomM'M} the assignments $s''_i\mapsto w_\circ^{[\phi](i)}$, $i\in I'$ define 
$\phi''\in\Hom_{\mathscr H}(M'',M)$, which
is clearly optimal. Finally, $(\phi''\circ\phi')(s'_i)=\phi''(s''_i)=
w_\circ^{[\phi](i)}
=\phi(s'_i)$ for all~$i\in I'$ and so
$\phi=\phi''\circ\phi'$.
\end{proof}

A very important class of homomorphisms of Hecke monoids are {\em parabolic projections}.
\begin{lemma}\label{lem:p J defn}\label{lem:proj w0}
Let~$J\subset I$. The assignments
$$
s_i\mapsto \begin{cases}1,&i\in I\setminus J,\\
s_i,&i\in J,
\end{cases}
$$
for all~$i\in I$,
define surjective homomorphism \plink{pJ}$p_J:(W(M),\star)\to (W_J(M),\star)$. Moreover, $p_J(w_\circ^K)=w_\circ^{J\cap K}$
for all~$K\in\mathscr F(M)$.
\end{lemma}
\begin{proof}
Define $\xi_J:I\to \mathscr F(I)$
by $\xi(i)=\{i\}$ if~$i\in J$ and~$\xi(i)=\emptyset$
if~$i\in I\setminus J$. Then~$\xi\in\Lambda(M,M_J)$
and~$p_J=\Theta_{\xi_J}$ in the notation of Lemma~\ref{lem:Lam M'M HomM'M}.
The second assertion follows from Lemma~\ref{lem:Hecke hom w0J} since~$[p_J](K)=\bigcup_{k\in K}\xi_J(k)=K\cap J$.
\end{proof}
Sometimes it is convenient to treat~$p_J$ as an endomorphism of~$(W(M),\star)$.
The following is immediate from the definition of~$p_J$.
\begin{lemma}\label{lem:p_J comp}
For any~$J,K\subset I$, $p_J\circ p_K=
p_{J\cap K}$.
\end{lemma}
\begin{lemma}\label{lem:parab prod}
Let~$J\subset I$ and suppose that~$J$ and~$I\setminus J$ are orthogonal. Then
$w=p_J(w)\times p_{I\setminus J}(w)$ for all~$w\in W(M)$.
\end{lemma}
\begin{proof}
We use induction on~$\ell(w)$, $w\in W(M)$,
the case~$\ell(w)=0$ being obvious. For the inductive step, write
$w=s_i\times w'$ with~$i\in I$, $\ell(w')=\ell(w)-1$. Then~$w=s_i\times p_J(w')\times
p_{I\setminus J}(w')$ by the induction hypothesis. If~$i\in J$ then
$s_i\times p_J(w')=p_J(s_i\star w')=p_J(s_iw')
=p_J(w)$ and~$p_{J\setminus I}(w')=p_{J\setminus I}(s_i)\star p_{J\setminus I}(w')=p_{J\setminus I}(s_i\star w')=
p_{J\setminus I}(w)$. If~$i\in I\setminus J$ then, since~$J$ and~$I\setminus J$ are orthogonal, $s_i$ commutes with~$p_J(w')$ and $p_J(w')=p_J(s_i)\star p_J(w')=p_J(s_i\star w')=p_J(w)$ while
$s_i\star p_{I\setminus J}(w')=p_{I\setminus J}(s_i \star w')=p_{I\setminus J}(w)$. Thus, we have 
$w=p_J(w)\star p_{I\setminus J}(w)=p_J(w)\times 
p_{I\setminus J}(w)$ by Lemma~\ref{lem:extend supp}.
\end{proof}

\subsection{Homomorphisms of Artin monoids}\label{subs:Artin homs}
Given $\wh M\in\Cox{\wh I}$, $M\in\Cox I$ and~$\Phi\in\Hom_{\mathscr A}(\wh M,M)$
we define,
as in~\S\ref{subs:Hecke homs}, 
$[\Phi]:\wh I\to\mathscr P(I)$,
$i\mapsto \supp\Phi(\wh T_i)$, $i\in \wh I$
and extend it to a map
\plink{[Phi]}$[\Phi]:\mathscr P(\wh I)\to\mathscr P(I)$
via $[\Phi](\wh J)=\bigcup_{j\in\wh J}[\Phi](j)$, $\wh J\subset \wh I$.
The definitions of disjoint and fully supported 
homomorphisms are the same as for Hecke monoids.
The following is proved exactly as Lemma~\ref{lem:[phi]comp}.
\begin{lemma}\label{lem:[Phi]comp}
Let~$M\in\Cox I$, $M'\in\Cox{I'}$ and~$M''\in\Cox{I''}$.
\begin{enmalph} 
\item\label{lem:[Phi]comp.a}
$\supp\Phi(x)=[\Phi](\supp x)$
for any~$\Phi\in\Hom_{\mathscr A}(M',M)$ and for
all~$x\in \Br^+(M')$;
\item\label{lem:[Phi]comp.b}
$[\Phi\circ\Phi']=[\Phi]\circ[\Phi']$ as maps $\mathscr P(I'')\to
\mathscr P(I)$ for any~$\Phi\in\Hom_{\mathscr A}(M',M)$
and~$\Phi'\in\Hom_{\mathscr A}(M'',M')$;
\item\label{lem:[Phi]comp.c}
\label{lem:elem Artin hom.a}
   If~$\Phi\in \Hom_{\mathscr A}(M',M)$ is
    disjoint and~$[\Phi](i)\not=\emptyset$ for all~$i\in\wh I$ then~$[\Phi]:\mathscr P(I')\to
    \mathscr P(I)$ is
    injective.
\end{enmalph}
\end{lemma}
The following is immediate from definitions.
\begin{lemma}\label{lem:fund hom}
Let~$M=(m)_{i,j\in I}$ be a Coxeter matrix, let~$\mathsf M$
be any multiplicative monoid
and let $X_i$, $i\in I$ be a collection of 
elements in~$\mathsf M$. The assignments 
$T_i\mapsto X_i$, $i\in I$ define 
a homomorphism of monoids $\Br^+(M)\to \mathsf M$
if and only if~$m_{ij}\in B(X_i,X_j)\cup\{\infty\}$
for all~$i\not=j\in I$.
\end{lemma}
\begin{example}\label{ex:char hom}
Let~$\wh M=(\wh m_{ij})_{i,j\in \wh I}\in \Cox{\wh I}$,
$M\in\Cox I$. Let~$\mathbf X=\{X_i\,:\,i\in\wh I\}\subset\Br^+(M)$, be a family of commuting elements satisfying $X_i=X_j$
whenever $\wh m_{ij}$ is odd, $i,j\in\wh I$. Then the assignments 
$\wh T_i\mapsto X_i$, $i\in\wh I$
define \plink{char hom}$\Xi_{\mathbf X}\in\Hom_{\mathscr A}(\wh M,M)$. We call such a homomorphism a {\em character homomorphism}. The most basic
example is the generalization~$\ell_{\mathbf d}$, $\mathbf d=(d_i)_{i\in\wh I}\in\ZZ_{\ge 0}^{\wh I}$ of 
the length homomorphism $\ell:\Br^+(\wh M)\to (\ZZ_{\ge 0},+)\cong \Br^+(A_1)$ defined by $\wh T_i\mapsto d_i$,
$i\in \wh I$ where $d_i=d_j$ whenever~$\wh m_{ij}$ is odd.
\end{example}

\begin{definition}\label{def:types}
Let~$\wh M=(\wh m_{ij})_{i,j\in\wh I}\in\Cox{\wh I}$ and let~$M\in\Cox I$.
We say that~$\Phi:\Hom_{\mathscr A}(\wh M,M)$ is:
\begin{itemize}
\item[-] {\em square free} if $\Phi(\wh T_i)\in \SQF^+(M)$
for all~$i\in\wh I$;
\item[-] {\em strongly square free} if $\Phi(\SQF^+(\wh M))\subset
\SQF^+(M)$.
\item[-] {\em optimal} if
$\wh m_{ij}=\min B(\Phi(\wh T_i),\Phi(\wh T_j))$ for all~$i,j\in\wh I$ such that~$\Phi(\wh T_i)\not=\Phi(\wh T_j)$ and~$B(\Phi(\wh T_i),\Phi(\wh T_j))$ is non-empty.
\end{itemize}
\end{definition}
We now establish some elementary properties of homomorphisms of Artin monoids.
\begin{lemma}\label{lem:elem Artin hom}
Let $M\in\Cox I$, $\wh M=(\wh m_{ij})_{i,j\in\wh I}\in\Cox{\wh I}$ and
let~$\Phi\in\Hom_{\mathscr A}(\wh M,M)$.
\begin{enmalph}
  \item\label{lem:elem Artin hom.b} If~$\wh M$ is of finite type then
    $\Phi$ is strongly square free if and only if $\Phi(\wh T_{w_\circ^{\wh I}})\in\SQF^+(M)$.
    \item\label{lem:elem Artin hom.b'} $\Phi$
    commutes with~${}^{op}$ if and only
    if all the~$\Phi(\wh T_i)$, $i\in\wh I$ are
    ${}^{op}$-invariant.
    \item\label{lem:elem Artin hom.c}
    $\ell(\Phi(\wh T_i))=\ell(\Phi(\wh T_j))$
    for all $i,j\in \wh I$ such that~$\wh m_{ij}$
    is odd. In particular, if $[\Phi](i)=\emptyset$
    for some~$i\in\wh I$ then~$[\Phi](j)=\emptyset$
    for all~$j\in\wh I$ such that~$\wh m_{ij}$ is odd.
\end{enmalph}
\end{lemma}
\begin{proof}
Part~\ref{lem:elem Artin hom.b} is
immediate from Proposition~\partref{prop:fund elts BrSa.e}.
Part~\ref{lem:elem Artin hom.b'} is obvious.
To prove~\ref{lem:elem Artin hom.c}, note that
$$
\brd{\Phi(\wh T_i)\Phi(\wh T_j)}{\wh m_{ij}}=
\Phi(\brd{\wh T_i\wh T_j}{\wh m_{ij}})=\Phi(\brd{\wh T_j\wh T_i}{\wh m_{ij}})=
\brd{\Phi(\wh T_j)\Phi(\wh T_i)}{\wh m_{ij}}
$$
implies, for~$\wh m_{ij}$ odd,
that $$
\tfrac12(\wh m_{ij}-1)(\ell(\Phi(\wh T_i))
+\ell(\Phi(\wh T_j)))+\ell(\Phi(\wh T_i))=
\tfrac12(\wh m_{ij}-1)(\ell(\Phi(\wh T_j))
+\ell(\Phi(\wh T_i)))+\ell(\Phi(\wh T_j)).
$$
The assertion is now immediate.
\end{proof}

\begin{lemma}\label{lem:blowup}
Let~$M\in\Cox I$ and let
$\wh M=(d_{ij}m_{ij})_{i,j\in I}$ where
$d_{ij}=d_{ji}\in\mathbb Z_{>0}\cup\{\infty\}$ and~$d_{ii}=1$, $i,j\in I$. Then~$\wh M\in\Cox I$ and
the assignments $\wh T_i\mapsto T_i$, $i\in I$
define a homomorphism $\Br^+(\wh M)\to \Br^+(M)$.
\end{lemma}
\begin{proof}
This is immediate from Lemma~\partref{lem:taut homs.a}.
\end{proof}
We call such homomorphisms {\em tautological}.
For example, for any~$d>0$, $m\ge 2$ there is a tautological homomorphism~$\Br^+(I_2(dm))\to
\Br^+(I_2(m))$.

\begin{lemma}\label{lem:factor homs}
Every homomorphism of Artin monoids is a
composition of a tautological homomorphism
with an optimal one.
\end{lemma}
\begin{proof}
Let $M=(m_{ij})_{i,j\in I}$, $\wh M=(\wh m)_{i,j\in\wh I}$ be Coxeter matrices
and let $\Phi\in\Hom_{\mathscr A}(\wh M,M)$.
Define~$\wh M'=(\wh m'_{ij})_{i,j\in\wh I}$ as follows.
If~$\Phi(\wh T_i)=\Phi(\wh T_j)$, set 
set~$\wh m'_{ij}=\wh m_{ij}$. Otherwise,
if~$B_{ij}:=B(\Phi(\wh T_i),\Phi(\wh T_j))$ is empty then~$\wh m_{ij}=\infty$ and we set~$\wh m'_{ij}=\infty$. If~$B_{ij}\not=\emptyset$, then, since~$\Br^+(M)$ is cancellative,
$B_{ij}=
\ZZ_{>0}\wh m'_{ij}$ for some~$\wh m'_{ij}\in\ZZ_{>0}$
by Lemma~\partref{lem:taut homs.b}.
Moreover, since~$\Phi(\wh T_i)\not=\Phi(\wh T_j)$, $m'_{ij}>1$.
Since~$\wh m_{ij}\in B_{ij}\cup\{\infty\}$ by Lemma~\ref{lem:fund hom}, we have $\wh m_{ij}=d_{ij}\wh m'_{ij}$ for some~$d_{ij}\in\ZZ_{>0}\cup\{\infty\}$. Thus, $\wh M'\in\Cox{I'}$,
the assignments $\wh T_i\mapsto \wh T'_i$, $i\in \wh I$ define a tautological $\Phi'\in\Hom_{\mathscr A}(\wh M,\wh M')$, and
the assignments $\wh T'_i\mapsto \Phi(\wh T_i)$, $i\in\wh I$, define an optimal
$\Phi''\in\Hom_{\mathscr A}(\wh M',M)$. By construction,
$\Phi=\Phi''\circ\Phi'$.
\end{proof}
Note the following useful and immediate fact.
\begin{lemma}\label{lem:cradicals}
Let~$\mathsf M$ be a monoid and suppose that
one of $X_1,X_2\in\mathsf M$ is
invertible that and~$(X_1X_2)^m$, $m\in\ZZ_{>0}$ is central in the submonoid they generate. Then the assignments~$T_i\mapsto X_i$, $i\in\{1,2\}$ define a homomorphism~$\Br^+(I_2(2m))\to 
\mathsf M$.
\end{lemma}
Since an Artin monoid embeds into the corresponding Artin group, this Lemma shows that factorizations of radicals of central elements give rise to homomorphisms of Artin monoids.

\subsection{Decorating homomorphisms from Artin
monoids}\label{subs:decor hom}
We will now discuss a machinery which allows us to produce new homomorphisms from Artin monoids to other monoids from existing ones. This construction will be used extensively in the sequel.

\begin{definition}\label{def:decoration}
Let~$M=(m_{ij})_{i,j\in I}$ be a Coxeter matrix, let~$\mathsf M$ be a multiplicative monoid and let~$\Phi:\Br^+(M)\to\mathsf M$ be a 
homomorphism of monoids. 
We say that
$\mathbf z=(z_i)_{i\in I}\in\mathsf M^I$ 
is a {\em decoration} of~$\Phi$ if 
the assignments~$T_i\mapsto \Phi(T_i)z_i$, $i\in I$
define a homomorphism~\plink{Phi z}$\Phi_{\mathbf z}:\Br^+(M)\to \mathsf M$.
\end{definition}
We will sometimes refer to~$\Phi_{\boldsymbol z}$
as a decorated companion of~$\Phi$. The following Lemma is immediate.
\begin{lemma}\label{lem:decs are invertible}
Let~$M\in\Cox I$, let $\mathsf M$ be a multiplicative
monoid and let~$\mathbf z=(z_i)_{i\in I}$ with all the~$z_i$ invertible. Then~$\mathbf z$ is a decoration of~$\Phi$ if and only if~$\mathbf z^{-1}=(z_i^{-1})_{i\in I}$ is a decoration of~$\Phi_{\mathbf z}$ and~$(\Phi_{\mathbf z})_{\mathbf z^{-1}}=\Phi$.
\end{lemma}
The following result provides a rather strong sufficient condition for the existence of a decoration of 
a given homomorphism, which will be used in multiple proofs later.
\begin{theorem}\label{thm:decoration sufficient}
Let~$M=(m_{ij})_{i,j\in I}$ be a Coxeter matrix, let~$\mathsf M$ be a multiplicative monoid and let~$\Phi:\Br^+(M)\to\mathsf M$ be a 
homomorphism of monoids. 
Suppose that for any~$i,j\in I$
we are given $z_{i,j}^{(k)}$, $k\in[1,m_{ij}]$
such that $z_{i,j}^{(1)}=z_{i,i}^{(1)}$ for all~$j\in I$
and for all~$i\not=j$ with~$m_{ij}<\infty$
\begin{enumerate}[label={$\arabic*^\circ.$},
ref={$\arabic*^\circ$}]
    \item\label{thm:decoration sufficient.1}
$z_{i,j}^{(k)}\Phi(T_j)=\Phi(T_j)z_{i,j}^{(k+1)}$
if~$k$ is odd while $z_{i,j}^{(k)}\Phi(T_i)=\Phi(T_i)z_{i,j}^{(k+1)}$ if~$k$
is even, $k\in[1,m_{ij}-1]$;
\item\label{thm:decoration sufficient.2} $z_{i,j}^{(m_{ij})}z_{j,i}^{(m_{ij}-1)}\cdots 
=z_{j,i}^{(m_{ij})}z_{i,j}^{(m_{ij}-1)}\cdots$.
\end{enumerate}
Then~$\mathbf z=(z_{i,i}^{(1)})_{i\in I}$ is a
decoration of~$\Phi$. Moreover, if~\ref{thm:decoration sufficient.1} is satisfied and~$\mathsf M$ is left 
cancellative then~$\mathbf z=(z_{i,i}^{(1)})_{i\in I}$
is a decoration of~$\Phi$ if and only if~\ref{thm:decoration sufficient.2} holds.
\end{theorem}
\begin{proof}
It suffices to prove the theorem for~$I$ with~$|I|=2$.
Let~$m=m_{ij}$, $\{i,j\}=I$ and assume that~$m<\infty$.
Abbreviate $t_i=\Phi(T_i)$, $i\in I$ and~$z_i^{(k)}=z_{i,j}^{(k)}$, $\{i,j\}=I$, $k\in[1,m]$. We will
use the convention that~$i+r=i$, $i\in I$ if~$r\in\ZZ$ is even and~$i+r=j$ if~$r$ is odd, $\{i,j\}=I$.
Using this convention,
the condition~\ref{thm:decoration sufficient.1}
can be written as
\begin{equation}\label{eq:decoration-1}
z_i^{(k)}\Phi(T_{i+k})=\Phi(T_{i+k})z_i^{(k+1)},\qquad 
i\in I,\,k\in[1,m-1],
\end{equation}
while~\ref{thm:decoration sufficient.2} becomes
\begin{equation}\label{eq:decoration-2}
    \dscprod_{1\le k\le m} z^{(k)}_{i-k}
    =\dscprod_{1\le k\le m} z^{(k)}_{i+r-k},\qquad i\in I,\,r\in\ZZ.
\end{equation}
\begin{lemma}\label{lem:decoration 1}
We have
$z_i^{(k)}\brd{t_{i+k}t_{i+k+1}}{l}=
\brd{t_{i+k}t_{i+k+1}}{l}z_i^{(k+l)}
$ for any~$i\in I$, $l\in[1,m]$ and $k\in[1,m-l]$.
\end{lemma}
\begin{proof}
We use induction on~$l$, the case~$l=1$ being just~\eqref{eq:decoration-1}.
For the inductive step, since  $\brd{t_jt_{j+1}}{r}=
t_j\,\brd{t_{j+1}t_{j+2}}{r-1}$ for any~$j\in I$, $r\in\ZZ_{>0}$, we have 
by~\eqref{eq:decoration-1} and the induction hypothesis
\begin{align*}
z_i^{(k)}\brd{t_{i+k}t_{i+k+1}}{l+1}&=
z_i^{(k)}t_{i+k}\brd{t_{i+k+1}t_{i+k+2}}{l}\\
&=t_{i+k}z_i^{(k+1)}\brd{t_{i+k+1}t_{i+k+2}}{l}
=\brd{t_{i+k}t_{i+k+1}}{l+1}z_i^{(k+l+1)}.\qedhere
\end{align*}
\end{proof}
\begin{lemma}\label{lem:decoration 2}
Let~$i\in I$. Then for any~$0\le l\le m$
\begin{align*}
\brd{(t_i z_i)(t_{i+1} z_{i+1})}{m}=\brd{(t_iz_i)(t_{i+1}z_{i+1})}{m-l}\,\brd{ t_{i+m-l}t_{i+m-l+1}}{l}\,
\dscprod_{1\le k\le l} z^{(k)}_{i+m-k}.
\end{align*}
\end{lemma}
\begin{proof}
We use induction on~$l$, the case~$l=0$ being trivial. 
For the inductive step we have by Lemma~\ref{lem:decoration 1} and the induction hypothesis
\begin{align*}
\brd{(t_i z_i)(t_{i+1} z_{i+1})}{m}
&=\brd{(t_iz_i)(t_{i+1}z_{i+1})}{m-l-1}
\, t_{i+m-l-1}z_{i+m-l-1}\,\brd{ t_{i+m-l}t_{i+m-l+1}}{l}\,
\dscprod_{1\le k\le l} z^{(k)}_{i+m-k}\\
&=\brd{(t_iz_i)(t_{i+1}z_{i+1})}{m-l-1}
\, \brd{ t_{i+m-l-1}t_{i+m-l}}{l+1}\,
z_{i+m-l-1}^{(l+1)}\dscprod_{1\le k\le l} z^{(k)}_{i+m-k}\\
&=\brd{(t_iz_i)(t_{i+1}z_{i+1})}{m-(l+1)}
\, \brd{ t_{i+m-(l+1)}t_{i+m-l}}{l+1}\,
\dscprod_{1\le k\le l+1} z^{(k)}_{i+m-k}.\qedhere
\end{align*}
\end{proof}
Using Lemma~\ref{lem:decoration 2} with~$m=l$ we obtain,
$$
\brd{(t_iz_i)(t_j z_j)}{m}=
\brd{t_i t_j}{m}\dscprod_{1\le k\le m} z^{(k)}_{i+m-k},
$$
where~$I=\{i,j\}$. Since
$\dscprod_{1\le k\le m} z^{(k)}_{i+m-k}=
\dscprod_{1\le k\le m} z^{(k)}_{i+k}=
\dscprod_{1\le k\le m} z^{(k)}_{j+k}$ by~\eqref{eq:decoration-2} while~$\Phi$ being a homomorphism yields~$\brd{t_it_j}{m}=
\brd{t_jt_i}m$, the first assertion follows.
The second assertion is now immediate.
\end{proof}

Note that if the~$\Phi(T_i)$, $i\in I$ are invertible, 
then the condition~\ref{thm:decoration sufficient.1} determines the $z_{i,j}^{(k)}$ uniquely as
\begin{equation}\label{eq:inv decoration}
z_{i,j}^{(k+1)}=\begin{cases}
    \Phi(T_i)^{-1} z_{i,j}^{(k)}\Phi(T_i),&\bar k=0,\\
    \Phi(T_j)^{-1} z_{i,j}^{(k)}\Phi(T_j),&\bar k=1    
\end{cases}
\end{equation}
for all~$i\not=j\in I$ with~$m_{ij}<\infty$ and
$k\in[1,m_{ij}-1]$ or, in the closed form,
$$
z_{i,j}^{(k)}=(\brd{\Phi(T_j)\Phi(T_i)}{k-1})^{-1}
z_{i,i}^{(1)}(\brd{\Phi(T_j)\Phi(T_i)}{k-1}).
$$
\begin{corollary}\label{cor:dec iff}
Suppose that~$\mathsf M$ is left cancellative and
the $\Phi(T_i)$, $i\in I$ are invertible.
Then~$\mathbf z=(z_{i,i}^{(1)})_{i\in I}$  is a decoration of~$\Phi$ if and only if
the condition~\ref{thm:decoration sufficient.2} of
Theorem~\ref{thm:decoration sufficient} holds
for the~$z_{i,j}^{(k)}$ defined by~\eqref{eq:inv decoration}.
\end{corollary}

\begin{example}\label{ex:embedded std}
Let~$M=B_2$ and let~$\mathsf M=\Br(A_n)$, or~$\Br(D_{n+1})$ with~$n$ even, or~$\Br(E_6)$.
The assignments
$\wh T_1\mapsto 1$, $\wh T_2\mapsto T_{w_\circ^{I}}$ define a homomorphism~$\Phi:\Br^+(M)\to \mathsf M$
(cf. Example~\ref{ex:char hom}). Let~$\sigma$ be the involutive diagram automorphism of~$\mathsf M$ and suppose that~$J$ and~$\sigma(J)$ are weakly orthogonal.
Set
$z_{1,1}^{(1)}=T_{w_\circ^J}$ and~$z_{2,2}^{(1)}=1$. Then using~\eqref{eq:inv decoration} and
Proposition~\partref{prop:fund elts BrSa.c}
we obtain 
$z_{1,2}^{(2)}=T_{w_\circ^{I}}^{-1}T_{w_\circ^J}T_{w_\circ^{I}}=T_{w_\circ^{\sigma(J)}}$,
$z_{1,2}^{(3)}=z_{1,2}^{(2)}$ and
$z_{1,2}^{(4)}=T_{w_\circ^J}$, while~$z_{2,1}^{(k)}=1$,
$1\le k\le 4$. Since~$T_{w_\circ^J}$
commutes with~$T_{w_\circ^{\sigma(J)}}$ by Lemma~\ref{lem:weakly orthogonal} and so
$$
z_{1,2}^{(4)}z_{2,1}^{(3)}z_{1,2}^{(2)}z_{2,1}^{(1)}
=T_{w_\circ^{J}}T_{w_\circ^{\sigma(J)}}=T_{w_\circ^{\sigma(J)}}T_{w_\circ^J}=z_{2,1}^{(4)}z_{1,2}^{(3)}z_{2,1}^{(2)}z_{1,2}^{(1)}.
$$
Thus, $\mathbf z=(z_{1,1}^{(1)},z_{2,2}^{(1)})$ is a decoration of~$\Phi$
and $\Phi_{\mathbf z}:\Br^+(B_2)\to\mathsf M$ is given by $\wh T_1\mapsto T_{w_\circ^J}$, $\wh T_2\mapsto T_{w_\circ^{I}}$ and hence a homomorphism to the respective Artin monoid.

More generally, let~$M\in\Cox I$ be of finite type.
Let~$\wh I$ be a finite set.
Fix a total order~$\prec$ on~$\wh I$ and 
let~$\{J_i\}_{i\in\wh I}$ be any collection of non-empty subsets of~$I$ such $J_i\subset J_k$ whenever~$i\prec k\in \wh I$ and~$\bigcup_{i\in\wh I} J_i=I$. Let~$\sigma_i=\Sigma_{J_i}$, $i\in\wh I$, in the notation of Proposition~\partref{prop:fund elts BrSa.c}.
Define~$\wh M=(\wh m_{ik})_{i,k\in\wh I}$ by $\wh m_{ii}=1$, $i\in\wh I$
and
$$
\wh m_{ik}=\wh m_{ki}=\begin{cases}
2,& \text{$J_i=J_k$ or~$J_i=\sigma_k(J_i)$},\\
4,& \text{$J_i\subsetneq J_k$ and 
$J_i\not=\sigma_k(J_i)$ are weakly orthogonal},\\
\infty,&\text{otherwise}
\end{cases}
$$
for all~$i\prec k\in\wh I$, $i\not=k$. Then~$\wh M\in\Cox{\wh I}$.
By the above, the assignments~$\wh T_i\mapsto T_{w_\circ^{J_i}}$, $i\in\wh I$
define a homomorphism~$\Br^+(\wh M)\to \Br^+(M)$.
\end{example}

The following Lemma provides perhaps the simplest yet important example of a decoration.
\begin{lemma}\label{lem:cent decor}
Let~$M\in\Cox I$, let $\mathsf M$ be a multiplicative monoid and let
$\Phi:\Br^+(M)\to\mathsf M$ be a homomorphism for some multiplicative monoid~$\mathsf M$. Let~$\mathbf z=(z_i)_{i\in I}\in\mathsf M^I$ where 
$z_iz_j=z_jz_i$ for all~$i,j\in I$,
$z_i=z_j$ if~$m_{ij}$ is odd and all the~$z_i$, $i\in I$
are in the centralizer of the image of~$\Phi$.
Then~$\mathbf z$ is a decoration of~$\Phi$.
\end{lemma}
\begin{proof}
Let $z_{i,j}^{(k)}=z_i$
for all~$i,j\in I$, $k\in[1,m_{ij}]$. Suppose that~$i\not=j$ and~$m_{ij}<\infty$.
The condition~\ref{thm:decoration sufficient.1} of Theorem~\ref{thm:decoration sufficient} is obviously satisfied, while $z_{i,j}^{(m_{ij})}z_{j,i}^{(m_{ij}-1)}\cdots=
z_i^{\lceil \frac12 m_{ij}\rceil} z_j^{\lfloor\frac12 m_{ij}\rfloor}$ which is manifestly symmetric in~$i$ and~$j$ if~$m_{ij}$ is even and equals to $z_i^{m_{ij}}=z_j^{m_{ij}}$ if~$m_{ij}$ is odd.
\end{proof}
\begin{example}\label{ex:cent decoration}
Let~$M\in\Cox I$. Then any collection $\mathbf z=(z_i)_{i\in I}$
of central elements of~$\Br^+(M)$ satisfying
$z_i=z_j$ whenever~$m_{ij}$ is odd is a decoration of~$\id\in\Hom_{\mathscr A}(M,M)$ and is manifestly optimal.
\end{example}

\subsection{Hecke and Coxeter type homomorphisms}\label{subs:Heck Cox hom}
Let~$\mathscr M$ be the category of monoids.
Let~$\mathscr{CM}$ be the category whose objects
are pairs
$(\mathsf M,\mathcal C)$ where~$\mathsf M$ is a monoid and~$\mathcal C\subset 
\mathsf M\times\mathsf M$
is a congruence relation (cf.~\S\ref{subs:monoids}). A morphism
$f:(\mathsf M,\mathcal C)\to (\mathsf M',\mathcal C')$
is a homomorphism of monoids~$f:\mathsf M\to\mathsf M'$
satisfying $(f\times f)(\mathcal C)\subset \mathcal C'$.
Then $\id_{(\mathsf M,\mathcal C)}=\id_{\mathsf M}$
for any congruence relation~$\mathcal C$ on~$\mathsf M$.
Clearly, if $f\in\Hom_{\mathscr{CM}}((\mathsf M,\mathcal C),(\mathsf M',\mathcal C'))$ and
$f'\in\Hom_{\mathscr{CM}}((\mathsf M',\mathcal C'),(\mathsf M'',\mathcal C''))$ then~$(f'\circ f
\times f'\circ f)(\mathcal C)=(f'\times f')\circ (f\times f)(\mathcal C)\subset (f'\times f')(\mathcal C')
\subset \mathcal C''$ and so~$\mathscr{CM}$ is indeed a 
category.

\begin{proposition}\label{prop:induced hom of monoids}
The assignments 
\begin{alignat*}{2}
&(\mathsf M,\mathcal C)\mapsto 
\mathsf M/\mathcal C, &\qquad & (\mathsf M,\mathcal C)\in\mathscr{CM},\\ 
&f\mapsto \overline f, && f\in\Hom_{\mathscr{CM}}(
(\mathsf M,\mathcal C),(\mathsf M',\mathcal C')),
\end{alignat*}
where 
$\overline f([x]_{\mathcal C})=[f(x)]_{\mathcal C'}$, $ x\in \mathsf M$,
define a functor~$\mathsf F:\mathscr{CM}\to \mathscr M$.
Furthermore,  canonical homomorphisms $\pi_{\mathcal C}:\mathsf M\to \mathsf M/\mathcal C$ provide a natural transformation from the forgetful functor
$\mathscr{CM}\to \mathscr M$
to~$\mathsf F$.
\end{proposition}
\begin{proof}
We need the following
\begin{lemma}\label{lem:bar f well def}
Let~$\mathsf M$, $\mathsf M'$ be monoids,
$f\in\Hom_{\mathscr M}(\mathsf M,\mathsf M')$
and let~$\mathcal C\subset\mathsf M\times \mathsf M$,
$\mathcal C'\subset \mathsf M'\times\mathsf M'$ be congruence relations. Then~$\overline f$ is a well-defined map $\mathsf M/\mathcal C\to\mathsf M'/\mathcal C'$ if and only if~$f\in\Hom_{\mathscr{CM}}((\mathsf M,\mathcal C),(\mathsf M',\mathcal C'))$.
\end{lemma}
\begin{proof}
It suffices to note that $\overline f$ is
well-defined if and only if for any $x,x'\in\mathsf M$,
$(x,x')\in\mathcal C$
implies that $(f(x),f(x'))\in\mathcal C'$.
\end{proof}
Let~$f\in\Hom_{\mathscr{CM}}((\mathsf M,\mathcal C),
(\mathsf M',\mathcal C'))$. Then~$\bar f$
is well-defined by Lemma~\ref{lem:bar f well def} and
for all $x,x'\in\mathsf M$
\begin{align*}
\overline f([x]_{\mathcal C}[x']_{\mathcal C})&=\overline f([xx']_{\mathcal C})=[f(xx')]_{\mathcal C'}
=[f(x)f(x')]_{\mathcal C'}
=[f(x)]_{\mathcal C'}[f(x')]_{\mathcal C'}
=\overline f([x]_{\mathcal C})
\overline f([x']_{\mathcal C}),
\end{align*}
that is~$\overline f\in\Hom_{\mathscr M}(\mathsf M/\mathcal C,\mathsf M'/\mathcal C')$. Clearly,
$\overline{\id_{(\mathsf M,\mathcal C)}}=\id_{\mathsf M/\mathcal C}$. 

Let~$f'\in\Hom_{\mathscr{CM}}((\mathsf M',\mathcal C'),
(\mathsf M'',\mathcal C''))$. By Lemma~\ref{lem:bar f well def}, 
$\overline{f'\circ f}$
is a well-defined homomorphism of monoids $\mathsf M/\mathcal C\to\mathsf M''/\mathcal C''$ and for all~$x\in\mathsf M$
\begin{equation*}
\overline{f'\circ f}([x]_{\mathcal C})=[(f'\circ f)(x)]_{\mathcal C''}
=[f'(f(x))]_{\mathcal C''}=
\overline{f'}([f(x)]_{\mathcal C'})=
\overline{f'}(
\overline f([x]_{\mathcal C})),
\end{equation*}
and so~$\overline{f'\circ f}=\overline{f'}\circ \overline f$.
Finally, to prove the last assertion it suffices to note that the diagram 
$$
    \begin{tikzcd}[ampersand replacement=\&]
	{\mathsf M} \&\& {\mathsf M'}\\ 
	{\mathsf M/\mathcal C} \&\& {\mathsf M'/\mathcal C'}
	\arrow["f", from=1-1, to=1-3]
	\arrow["{\pi_{\mathcal C}}", from=1-1, to=2-1]
	\arrow["{\pi_{\mathcal C'}}", from=1-3, to=2-3]
	\arrow["\overline f", from=2-1, to=2-3]
\end{tikzcd}
    $$
commutes for every~$f\in\Hom_{\mathscr{CM}}((\mathsf M,
\mathcal C),(\mathsf M',\mathcal C'))$
by definition of~$\bar f$.
\end{proof}

Given monoids~$\mathsf M$, $\mathsf M'$ with respective congruence relations~$\mathcal C$, $\mathcal C'$ and~$f\in\Hom_{\mathscr M}(\mathsf M,\mathsf M')$
we say that~$f$ induces $g\in\Hom_{\mathscr M}(\mathsf M/\mathcal C,\mathsf M'/\mathcal C')$ if 
$f\in\Hom_{\mathscr{CM}}((\mathsf M,\mathcal C),
(\mathsf M',\mathcal C'))$ and~$g=\mathsf F(f)$.

Recall that~$W(M)$ (respectively, $(W(M),\star)$)
is the quotient of~$\Br^+(M)$ by the minimal congruence relation containing the $(T_i^2,1)$ (respectively, 
the $(T_i^2,T_i)$) for all~$i\in I$. 
\begin{definition}\label{defn:Heck Coxeter}
We say that~$\Phi\in\Hom_{\mathscr A}(\wh M,M)$ is
\begin{itemize}
    \item[-] a {\em Coxeter type} homomorphism if
    $\Phi$ induces a homomorphism of groups \plink{barPhi}$\overline\Phi:W(\wh M)\to W(M)$;
    \item[-] a {\em Hecke type} homomorphism if 
    it induces $\overline\Phi_\star\in\Hom_{\mathscr H}(\wh M,M)$;
    \item[-] a {\em Coxeter-Hecke type} homomorphism if it is both a Coxeter and a
    Hecke type homomorphism;
    \item[-] {\em standard} if it is a square free Hecke type homomorphism.
\end{itemize}
\end{definition}
The following is an immediate consequence of Proposition~\ref{prop:induced hom of monoids}.
\begin{corollary}\label{cor:cat AH AC}
\begin{enmalph}
    \item \label{cor:cat AH AC.a}
    Coxeter matrices together with Coxeter (respectively, Hecke) type homomorphisms of 
    the corresponding Artin monoids form a subcategory~\plink{AH AC}$\mathscr{AC}$ (respectively, $\mathscr{AH}$) of~$\mathscr A$.
    \item \label{cor:cat AH AC.b}
    The assignments $\Phi\mapsto \overline\Phi$ for all
    $\Phi\in\Hom_{\mathscr{AC}}(\wh M,M)$
    (respectively, $\Psi\mapsto \overline\Psi_\star$ for all $\Psi\in\Hom_{\mathscr{AH}}(\wh M,M)$),
    $\wh M,M\in\mathscr A$
define a functor~\plink{H C}$\mathsf H:\mathscr{AH}\to\mathscr H$ (respectively, $\mathsf C:\mathscr{AC}\to\mathscr C$). 
\item \label{cor:cat AH AC.c}
$\mathsf H(\Phi)\circ\pi^\star_{\wh M}=
\pi^\star_M\circ \Phi$ (respectively, 
$\mathsf C(\Psi)\circ\pi_{\wh M}=
\pi_M\circ \Psi$)
for every~$\Phi\in\Hom_{\mathscr{AH}}(\wh M,M)$, $\Psi\in\Hom_{\mathscr{AC}}(\wh M,M)$, $\wh M,M\in\mathscr A$.
\end{enmalph} 
\end{corollary}

Before providing examples, we establish a simple criterion for a homomorphism of Artin monoids to be of Coxeter or Hecke type. 
Let~$\wh M\in\Cox{\wh I}$ and~$M\in\Cox I$. Since $\pi_{\wh M}|_{\SQF^+(\wh M)}=\pi^\star_{\wh M}|_{\SQF^+(\wh M)}$ is a bijection~$\SQF^+(\wh M)\to W(\wh M)$, given
{\em any} map~$\Phi:\Br^+(\wh M)\to \Br^+(M)$ we obtain maps
$\tilde\Phi=\pi_M\circ\Phi\circ \pi_{\wh M}^{-1}:W(\wh M)\to W(M)$ and~$\tilde\Phi_\star=\pi^\star_M\circ 
\Phi\circ\pi_{\wh M}^{-1}:(W(\wh M),\star)\to 
(W(M),\star)$.
\begin{proposition}\label{prop:elem prop Coxeter Hecke}
Let~$\wh M\in\Cox{\wh I}$, $M\in\Cox I$ and let~$\Phi\in\Hom_{\mathscr A}(\wh M,M)$. 
\begin{enmalph}
    \item\label{prop:elem prop Coxeter Hecke.a} $\Phi$ is a Coxeter type homomorphism if and only if
    $\pi_M(\Phi(\wh T_i))$ is an involution for every~$i\in\wh I$. In particular, if~$\Phi$
    is square free, then $\Phi$ is a Coxeter type homomorphism
    if and only if
    $\Phi(\wh T_i)=T_{w_i}$ where each~$w_i\in W(M)$,
    $i\in\wh I$
    is an involution.
    \item\label{prop:elem prop Coxeter Hecke.b} $\Phi$ is a Hecke type homomorphism if and only if $[\Phi](i)\in\mathscr F(M)$ and
    $\pi^\star_M(\Phi(\wh T_i))=w_\circ^{[\Phi](i)}$ for each~$i\in\wh I$. In particular, if~$\Phi$
    is square free then~$\Phi$ is a Hecke type
    homomorphism if and only if~$[\Phi](i)\in\mathscr F(M)$ and~$\Phi(\wh T_i)=
    T_{w_\circ^{[\Phi](i)}}$ for each~$i\in \wh I$. 
    \item \label{prop:elem prop Coxeter Hecke.b'}
    All standard homomorphisms are of
    Coxeter-Hecke type.
    \item\label{prop:elem prop Coxeter Hecke.c}
    Square free Coxeter or Hecke type homomorphisms
    commute with~${}^{op}$.
 \end{enmalph}
 \end{proposition}
\begin{proof}
Let~$\Phi\in\Hom_{\mathscr A}(\wh M,M)$ be of Coxeter type and let~$\overline\Phi$
be the induced homomorphism~$W(\wh M)\to W(M)$. By Lemma~\ref{prop:induced hom of monoids},
$\overline\Phi\circ\pi_{\wh M}=\pi_{M}\circ\Phi$ and so
$$
\overline\Phi(w)=\pi_M(\Phi(\wh T_w)),\qquad w\in W(\wh M).
$$
Likewise, if~$\Phi$ is of Hecke type,
$$
\overline\Phi(w)=\pi^\star_M(\Phi(\wh T_w)),\qquad w\in W(\wh M).
$$
Since~$\overline\Phi$ (respectively, $\overline\Phi_\star$) is a homomorphism, it follows that~$\overline\Phi(\wh s_i)$ (respectively, $\overline\Phi_\star(\wh s_i)$) is an involution (respectively, an idempotent) for every~$i\in\wh I$.
The forward direction in part~\ref{prop:elem prop Coxeter Hecke.a} is now obvious, while for part~\ref{prop:elem prop Coxeter Hecke.b} it remains 
to apply Corollary~\partref{cor:max elts.b}.

To prove the converse in parts~\ref{prop:elem prop Coxeter Hecke.a} and~\ref{prop:elem prop Coxeter Hecke.b}, we show that~$\tilde\Phi$ and~$\tilde\Phi_\star$ defined above are
homomorphisms
$W(\wh M)\to W(M)$ (respectively, $(W(\wh M),\star)\to (W(M),\star)$) provided that the~$\pi_M(\Phi(\wh T_i))$
(respectively, $\pi^\star_M(\Phi(\wh T_i))$), $i\in\wh I$ are involutions (respectively, idempotents).
For, note
the following obvious
\begin{lemma}\label{lem:hom on gens}
Let~$\mathsf M$, $\mathsf M'$ be monoids and let~$S$
be a set of generators for~$\mathsf M$. The following
are equivalent for a map
$f:\mathsf M\to \mathsf M'$.
\begin{enmroman}
\item $f$ is a homomorphism of monoids;
\item\label{lem:hom on gens.b} $f(sx)=f(s)f(x)$ for all~$s\in S$, $x\in\mathsf M$.
\end{enmroman}
\end{lemma}
Let~$w\in W(\wh M)$, $i\in\wh I$. If~$\wh s_iw=\wh s_i\times w=\wh s_i\star w$ then by Theorem~\partref{thm:Tits.b}
$$
\tilde\Phi(\wh s_i w)=\pi_M(\Phi(\wh T_{\wh s_iw}))
=\pi_M(\Phi(\wh T_i\wh T_w))
=\pi_M(\Phi(\wh T_{i}))\pi_M(\Phi(\wh T_w))
=\tilde\Phi(\wh s_i)\tilde\Phi(w),
$$
and similarly for~$\tilde\Phi_\star$. Otherwise,
write~$w=\wh s_i\times w'$ for some~$w'\in W(M')$. By the above,
$\tilde\Phi(w)=\tilde\Phi(\wh s_i)\tilde\Phi(w')$ and so
$$
\tilde\Phi(\wh s_iw)=\tilde\Phi(w')=\tilde\Phi(\wh s_i)^2\tilde\Phi(w')=\tilde\Phi(\wh s_i)\tilde\Phi(w).
$$
Likewise,
$\tilde \Phi_\star(w)=\tilde\Phi_\star(\wh s_i)\star \tilde\Phi_\star(w')$ whence
$$
\tilde\Phi_\star(\wh s_i\star w)=\tilde\Phi_\star(w)=\tilde\Phi_\star(\wh s_i)\star \tilde\Phi_\star(w')=\tilde\Phi_\star(\wh s_i)^{\star 2}\star\tilde\Phi_\star(w
')=\tilde\Phi_\star(\wh s_i)\star\tilde\Phi_\star(w).
$$
Thus, the condition~\ref{lem:hom on gens.b} from Lemma~\ref{lem:hom on gens} is satisfied.

Since~$\tilde\Phi$ is a homomorphism~$W(\wh M)\to W(M)$ and~$\tilde\Phi\circ\pi_{\wh M}=\pi_M\circ\Phi$ on
generators of~$\Br^+(\wh M)$,
it follows that $\tilde\Phi\circ\pi_{\wh M}=\pi_M\circ\Phi$. Therefore,
the assignments $\pi_{\wh M}(T)\mapsto \pi_M(\Phi(T))$,
$T\in\Br^+(\wh M)$ yield a well-defined homomorphism
$W(\wh M)\to W(M)$ and so~$\Phi$ is of Coxeter type.
The argument for~$\tilde\Phi_\star$ is similar.

Since a standard homomorphism is square free and satisfies~$\Phi(\wh T_i)=T_{w_\circ^{[\Phi](i)}}$ for all~$i\in\wh I$, part~\ref{prop:elem prop Coxeter Hecke.b'} follows from parts~\ref{prop:elem prop Coxeter Hecke.a} and~\ref{prop:elem prop Coxeter Hecke.b}. 
Part~\ref{prop:elem prop Coxeter Hecke.c} is immediate from parts~\ref{prop:elem prop Coxeter Hecke.a} and~\ref{prop:elem prop Coxeter Hecke.b} together with Lemmata~\partref{lem:elem Artin hom.b'} and~\ref{lem:can image op inv}.
\end{proof}
\begin{remark}
As a byproduct, we conclude that $\Phi\in\Hom_{\mathscr A}(\wh M,M)$ is of Coxeter (respectively, Hecke) type if and only if~$\tilde\Phi$ (respectively, $\tilde\Phi_\star$)
is a homomorphism of Coxeter groups (respectively, Hecke monoids) and in that case it identifies 
with the induced homomorphism~$\overline\Phi$ (respectively, $\overline\Phi_\star$).
\end{remark}

\begin{example}\label{ex:non sqf non Cox}
One checks (it requires approximately 270 applications of braid relations) that
the assignments
$\wh T_1\mapsto T_1T_2T_1T_4$, $\wh T_2\mapsto (T_2T_3)^2$
define a homomorphism $\Br^+(I_2(10))\to \Br^+(A_4)$ which
is of Hecke type since~$\pi^\star_{A_4}(T_1T_2T_1T_4)=w_{\circ}^{\{1,2,4\}}$
and $\pi^\star_{A_4}((T_2T_3)^2)=s_3\star s_2\star s_3\star s_3=w_\circ^{\{2,3\}}$ but not of Coxeter type since $\pi_{M}((T_2T_3)^2)=s_3 s_2$
and hence is not an involution.
\end{example}

\begin{example}\label{ex: sqf cox not Hecke}
The assignments
$\wh T_i\mapsto T_{2i}T_{2i-1}T_{2i+1}T_{2i}$,
$i\in [1,n]$ define a strongly square free Coxeter type
homomorphism $\Br^+(A_n)\to \Br^+(A_{2n+1})$
(see~Theorem~\ref{thm:monomial brd}). 
However, this homomorphism is not of Hecke type since
$\pi^\star_{A_{2n+1}}(T_2 T_1 T_3 T_2)=s_2s_1s_3 s_2\not=w_\circ^{[1,3]}$.
\end{example}

\begin{example}
Let~$M$ be of finite irreducible type and let~$I=I_1\sqcup I_2$ be any partition of~$I$
into disjoint non-empty subsets.
Choose Coxeter elements~$C_j$, $j\in\{1,2\}$ in~$\Br^+_{I_j}(M)$. Then $C_1 C_2$, $C_2 C_1$ are Coxeter elements in~$\Br^+(M)$. Let~$m=h(M)/2$ if~$T_{w_\circ^I}$ is central in~$\Br^+(M)$ (in which case $h(M)$ is even) and~$m=h(M)$
otherwise. By Proposition~\ref{prop:Coxeter splitting},
$(C_1C_2)^{2m}=(C_2C_1)^{2m}=T_{w_\circ}^{2m/h(M)}$,
whence the assignments $\wh T_i\mapsto C_i$,
$i\in \{1,2\}$ define $\Phi\in\Hom_{\mathscr A}(I_2(2m),M)$ which 
is manifestly square free and, unless~$I_1$ and~$I_2$ are
self-orthogonal, is neither of Hecke nor of Coxeter type. If~$h(M)$ is even, then~$\Phi$ is also strongly square free.
\end{example}
\begin{example}
For all~$n\ge 2$, $0\le k\le n-2$ the
assignments~$\wh T_1\mapsto T_{w_{[1,k];[1,n+k]}}$,
$\wh T_2\mapsto T_{w_{[k+2,n-1];[k+2,2n-1]}}$ define 
a homomorphism~$\Br^+(B_2)\to \Br^+(A_{2n-1})$  which is square free but neither of
Hecke nor of Coxeter type (see Corollary~\ref{cor:strange homs}).
\end{example}

\begin{example}\label{ex:not sqf not cox not Hecke}
The assignments $\wh T_1\mapsto T_1T_2T_3T_2T_1T_3$,
$\wh T_2\mapsto T_2T_3T_4T_3T_2T_4$ define a homomorphism
$\Phi:\Br^+(I_2(5))\to\Br^+(A_4)$ which is not square free and is neither Hecke,
nor Coxeter type. Indeed, $\pi_M(\Phi(\wh T_1))=
s_1s_2s_3s_2s_1s_3=s_1s_3s_2s_1$ which is not an involution
since its square equals~$s_1s_2s_1s_3$,
while $\pi^\star_M(\Phi(\wh T_1))=s_1\star s_2\star s_3\star s_2\star s_1
\star s_3=
s_1s_2s_3s_2s_1\not=w_\circ^{[1,3]}$.
\end{example}
\begin{example}\label{ex:char hom heck}
The character homomorphism~$\Xi_{\mathbf X} \in\Hom_{\mathscr A}(\wh M,M)$
from Example~\ref{ex:char hom} is of Hecke type if and only if~$\pi^\star_M(X_i)$, $i\in\wh I$
is a family of commuting idempotents in $(W(M),\star)$. In particular, this happens if~$\pi^\star_M(X_i)=w_\circ^J$ for all~$i\in\wh I$ where $J\subset I$. 
\end{example}
\begin{example}\label{ex:shift by center}
Let~$M\in\Cox I$ be of finite type and 
let~$\mathbf z=(z_i)_{i\in I}$ be
as in Example~\ref{ex:cent decoration}.
Then~$\id_{\mathbf z}$ is an endomorphism of~$\Br^+(M)$ of Hecke type. 
Indeed, it suffices to verify that for~$M$ irreducible in which case $z_i=T_{w_\circ^I}^{a_i}$ for some~$a_i\in\ZZ_{\ge 0}$ by Proposition~\partref{prop:fund elts BrSa.d}. Then
$\pi^\star_M(z_i T_i)=w_\circ^{I}$ for all~$i\in I$ such that~$a_i>0$ and~$\pi^\star_M(z_iT_i)=s_i$ otherwise. 
\end{example}
\begin{example}
Homomorphisms from Example~\ref{ex:embedded std} are
of Coxeter-Hecke type.
\end{example}
\begin{lemma}\label{lem:T_w0 divides image}
Let $\Phi\in\Hom_{\mathscr{AH}}(\wh M,M)$ and let~$J\in\mathscr F(\wh M)$. Then
\begin{enmalph}
    \item\label{lem:T_w0 divides image.b}
    $[\Phi](J)\in\mathscr F(M)$ and $\overline\Phi_\star(w_\circ^J)
    =w_\circ^{[\Phi](J)}$.
    \item\label{lem:T_w0 divides image.a} If $\Phi$ is square free then
    $\Phi(\wh T_{w_\circ^J})=
    T_{w_\circ^{[\Phi](J)}}u=\Sigma_{[\Phi](J)}(u) T_{w_\circ^{[\Phi](J)}}$ for some~$u\in\Br^+_{[\Phi](J)}(M)$.
\end{enmalph}
\end{lemma}
\begin{proof}
Denote~$w=\overline\Phi_\star(w_\circ^J)$.
By Corollary~\partref{cor:cat AH AC.c}, $w=\pi^\star_M(\Phi(\wh T_{w_\circ^J}))$ whence
$\supp w=[\Phi](J)$. Since~$w_\circ^J$
is an idempotent in~$(W(\wh M),\star)$, it follows
that~$w$ is an idempotent in~$(W(M),\star)$ and
so $\supp w=[\Phi](J)\in\mathscr F(M)$ and~$w=w_\circ^{[\Phi](J)}$ by Corollary~\partref{cor:max elts.b}.

To prove part~\ref{lem:T_w0 divides image.a}, let~$j\in J$. Then~$\wh T_{w_\circ^J}$ is left divisible by~$\wh T_j$ by Proposition~\partref{prop:fund elts BrSa.b} whence
$\Phi(\wh T_{w_\circ^J})\in\Br^+_{[\Phi](J)}(M)$ is left divisible
by $\Phi(\wh T_j)$. Since~$\Phi$ is square free, $\Phi(\wh T_j)=T_{w_\circ^{[\Phi](j)}}$ by Proposition~\partref{prop:elem prop Coxeter Hecke.b}
and so,
again by Proposition~\partref{prop:fund elts BrSa.b},
is left divisible by all the~$T_i$, $i\in[\Phi](j)$.
Therefore, $\Phi(\wh T_{w_\circ^J})$ is left divisible by all the~$T_i$ with~$i\in[\Phi](J)=\bigcup_{j\in J}[\Phi](j)$. Since~$[\Phi](J)\in\mathscr F(M)$ by part~\ref{lem:T_w0 divides image.b}, $\Phi(\wh T_{w_\circ^J})$
is
left divisible by~$T_{w_\circ^{[\Phi](J)}}$ by Proposition~\partref{prop:fund elts BrSa.b}. The second equality follows
by Proposition~\partref{prop:fund elts BrSa.c}.
\end{proof}

Note some additional properties of
standard homomorphisms.
\begin{lemma}\label{lem:sqf Hecke hom}
Let~$\wh M=(\wh m_{ij})_{i,j\in\wh I}\in\Cox{\wh I}$, $M\in\Cox I$ and let~$\Phi\in\Hom_{\mathscr A}(\wh M,M)$ be
standard. Then
\begin{enmalph}
   \item\label{lem:sqf Hecke hom.a} $\Phi$ is uniquely determined by~$[\Phi]:\wh I\to \mathscr F(M)$;
    \item\label{lem:sqf Hecke hom.b}
    If~$\wh m_{ij}$, $i\not=j\in\wh I$ is odd then
    $\ell(w_\circ^{[\Phi](i)})=\ell(w_\circ^{[\Phi](j)})$;
    \item\label{lem:sqf Hecke hom.c}
    If~$\wh m_{ij}$, $i,j\in\wh I$ is even then
$(T_{w_\circ^{[\Phi](i)}}T_{w_\circ^{[\Phi](j)}})^{\wh m_{ij}}$ is ${}^{op}$-invariant;
    \item\label{lem:sqf Hecke hom.d}
    If~$\Phi$ is strongly square free and injective then~$\overline\Phi_\star$ is injective.
\end{enmalph}
\end{lemma}
\begin{proof}
Part~\ref{lem:sqf Hecke hom.a} is immediate from Proposition~\partref{prop:elem prop Coxeter Hecke.b}. Part~\ref{lem:sqf Hecke hom.b}
follows from Lemma~\partref{lem:elem Artin hom.c}
and Proposition~\partref{prop:elem prop Coxeter Hecke.b}.
Part~\ref{lem:sqf Hecke hom.c} follows from Proposition~\partref{prop:elem prop Coxeter Hecke.c}.
To prove part~\ref{lem:sqf Hecke hom.d}, suppose that $
\overline\Phi_\star(\wh w_1)=\overline\Phi_\star(\wh w_2)$ for
some~$\wh w_1,\wh w_2\in W(\wh M)$. Then $\pi^\star_M(\Phi(\wh T_{\wh w_1}))=\pi^\star_M(\Phi(\wh T_{\wh w_2}))$. Since~$\Phi$
is strongly square free, $\Phi(\wh T_{\wh w_i})=T_{w_i}$ for some~$w_i\in W(M)$, $i\in\{1,2\}$.
Since $\pi^\star_M|_{\SQF^+(M)}$ is a bijection,
it follows that $w_1=w_2$. Then~$\wh w_1=\wh w_2$ by
injectivity of~$\Phi$.
\end{proof}
In some cases, we can reconstruct a Coxeter or Hecke type homomorphism of Artin monoids from its ``shadow''.
\begin{lemma}\label{lem:lifting to Cox-Hecke}
Let~$\wh M=(\wh m_{ij})_{i,j\in\wh I}\in\Cox{\wh I}$, $M\in\Cox I$ and 
suppose that the $X_i\in\Br^+(M)$, $i\in\wh I$
satisfy 
\begin{enumerate}[label={$\arabic*^\circ.$},ref=
{$\arabic*^\circ$}]
    \item\label{lem:lifting to Cox-Hecke.1} $\brd{X_iX_j}{\wh m_{ij}}\in\SQF^+(M)$ for 
    all $i,j\in\wh I$ with~$\wh m_{ij}<\infty$;
    \item\label{lem:lifting to Cox-Hecke.2} The assignments $\wh s_i\mapsto\pi_M(X_i)$, $i\in \wh I$ define a homomorphism of Coxeter groups $W(\wh M)\to W(M)$
    
    or

\noindent
    the assignments $\wh s_i\mapsto\pi^\star_M(X_i)$, $i\in\wh I$ define a homomorphism of Hecke monoids $(W(\wh M),\star)\to (W(M),\star)$.
\end{enumerate}
Then the assignments $\wh T_i\mapsto X_i$, $i\in\wh I$
define a homomorphism $\Br^+(\wh M)\to\Br^+(M)$.
\end{lemma}
\begin{proof}
Let~$i\not=j\in\wh I$ with~$\wh m_{ij}<\infty$. Since 
$$\brd{\pi_M(X_i)\pi_M(X_j)}{\wh m_{ij}}=\pi_M(\brd{X_iX_j}{\wh m_{ij}})
$$ 
and~$\pi_M|_{SQF^+(M)}$ is a bijection onto~$W(M)$, the assumption~\ref{lem:lifting to Cox-Hecke.2} implies that~$\wh m_{ij}\in B(X_i,X_j)$. It remains to apply Lemma~\ref{lem:fund hom}. The argument in the Hecke version is identical.
\end{proof}

Let~$M$ be a Coxeter matrix over~$I$.
While for Hecke monoids the parabolic
projection
$p_J:(W(M),\star)\to (W_J(M),\star)$ is well-defined for any~$J\subset I$, in the framework
of Artin monoids and Coxeter groups analogous homomorphisms exist only in special cases.
\begin{proposition}\label{prop:parab proj Artin}
Let~$M=(m_{ij})_{i,j\in I}$ be a Coxeter matrix and
let~$J\subset I$. The assignments
$$
T_i\mapsto \begin{cases}
T_i,&i\in J,\\
1,&i\in I\setminus J,
\end{cases}
$$
define a surjective Hecke type homomorphism \plink{PJ}$P_J:\Br^+(M)\to\Br_J^+(M)$ if and only
if $m_{ij}$ is even for all $j\in J$, $i\in I\setminus J$.
In particular, if $J$ and~$I\setminus J$ are orthogonal
then both~$P_J$ and~$P_{I\setminus J}$ are homomorphisms
of respective Artin monoids and $T=P_J(T)P_{I\setminus J}(T)$ for
all~$T\in\Br^+(M)$. Moreover, the same assertions hold for Coxeter groups.
\end{proposition}
\begin{proof}
It follows from Lemma~\partref{lem:elem Artin hom.c} that if~$P_J$
is a homomorphism then~$m_{ij}$ is even for all~$i\in I\setminus J$, $j\in J$. 
For the converse, note that, for all~$i\in I\setminus J$, $j\in J$ 
we have $\brd{T_i\cdot 1}{m_{ij}}=\brd{1\cdot T_i}{m_{ij}}=
T_i^{\frac12m_{ij}}$ since~$m_{ij}$ is 
even and so~$P_J$ is a homomorphism.

If~$J$ and~$I\setminus J$ are orthogonal then $m_{ij}=2$ for all $i\in I\setminus J$, $j\in J$ and so both~$P_J$ and~$P_{I\setminus J}$ are well-defined homomorphisms.
The last statement is proved exactly as Lemma~\ref{lem:parab prod}.
\end{proof}

For irreducible Coxeter matrices of finite type, the only non-trivial examples of parabolic projections of Artin monoids are $P_{[1,n-1]}\in\Hom_{\mathscr A}(B_n,A_{n-1})$, 
$P_{\{1,2\}}\in\Hom_{\mathscr A}(F_4,A_2)$,
$P_{\{n\}}\in \Hom_{\mathscr A}(B_n,A_1)$
and $P_{\{i\}}\in\Hom_{\mathscr A}(I_2(2m),A_1)$, $m\ge 2$,
$i\in\{1,2\}$.
We establish properties of the first two in Propositions~\ref{prop:PJ Bn An-1} and~\ref{prop:PJ F4 A2}.

\begin{remark}\label{rem:prod coprod}
For any Coxeter matrices $M$, $M'$,
canonical morphisms $M\times M'\to M$
and~$M\times M'\to M'$ in either of categories~$\mathscr A$, $\mathscr C$ and~$\mathscr H$ involve parabolic projections in
an obvious way. 
\end{remark}

The following Lemma allows us to reduce the study of Hecke type homomorphisms of Artin monoids to that of fully supported ones with a connected codomain.
\begin{lemma}\label{lem:diagonal}
Let $M\in\Cox I$, $\wh M\in\Cox{\wh I}$ and let $\Phi\in\Hom_{\mathscr A}(\wh M,M)$
be of Hecke type (respectively, of Coxeter type, square free). Let~$J\subset
\Phi(\wh I)$ and suppose that~$J$ and~$\Phi(\wh I)\setminus J$ are orthogonal. Then $P_J\circ\Phi$ is of Hecke type (respectively, of Coxeter type, square free).

Conversely, given pairwise orthogonal $J_1,\dots,J_k\subset I$ and
Hecke (respectively, Coxeter, square free) homomorphisms
$\Phi_t:\Br^+(\wh M)\to \Br^+_{J_t}(M)$, the map
$\Phi:\Br^+(\wh M)\to\Br^+(M)$,
$T\mapsto \Phi_1(T)\cdots\Phi_k(T)$, $T\in\Br^+(\wh M)$, is a Hecke (respectively,
Coxeter, square free) homomorphism~$\Br^+(\wh M)\to\Br^+(M)$.
\end{lemma}
\begin{proof}
Observe that~$\pi^\star_{M_J}\circ P_J=p_J\circ \pi^\star_M$ and
$\pi_{M_J}\circ P_J=\tilde p_J\circ \pi_M$ where~$\tilde p_J:W(M)\to W_J(M)$ is the Coxeter group counterpart of~$P_J$. Thus, if $\pi^\star_M(\Phi(\wh T_i))$
(respectively, $\pi_M(\Phi(\wh T_i))$)
is
an idempotent (respectively, an involution) then so is~$\pi^\star_{M_J}(
P_J\circ \Phi(\wh T_i))$ (respectively, $\pi_{M_J}(P_J\circ \Phi(\wh T_i))$).
Furthermore, if $X_i=\Phi(\wh T_i)\in \SQF^+(M)$
then, since
$X_i=P_J(X_i)P_{I\setminus J}(X_i)$ by Proposition~\ref{prop:parab proj Artin}, it follows from Lemma~\ref{lem:sq free fact} that~$P_J(X_i)=P_J\circ\Phi(\wh T_i)$ is square free.

For the converse, since images of the~$\Phi_t$, $1\le t\le k$ commute in~$\Br^+(M)$, it follows that $\Phi(TT')=\prod_{1\le t\le k}\Phi_t(TT')
=\prod_{1\le t\le k}\Phi_t(T)\prod_{1\le k\le t}\Phi_t(T')=
\Phi(T)\Phi(T')$ and so~$\Phi$ is indeed a homomorphism. Since
the product of commuting idempotents (respectively, involutions)
is again an idempotent (respectively, an involution), it follows that
if all the~$\Phi_t$ are Hecke (respectively, Coxeter) then so is~$\Phi$.
Finally, since the product of square free elements
from~$\Br^+_{J}(M)$ and~$\Br^+_{K}(M)$ where~$J$ and~$K$ are orthogonal is obviously square free, if $\Phi_t(T_i)\in \SQF^+(M)\cap \Br^+_{J_t}(M)$ for all~$1\le t\le k$ then $\Phi(T_i)\in \SQF^+(M)$.
\end{proof}

We now discuss faithfulness and fullness of functors~$\mathsf H$
and~$\mathsf C$.

\begin{example}\label{ex:non-faithful}
Let~$M\in\Cox I$, $|I|>1$ be of finite type and irreducible. Let~$\mathbf z=(z_i)_{i\in I}$
where all~$z_i$ are central and not equal to~$1$ with
$z_i=z_j$ whenever~$m_{ij}$ is odd,
and let~$\mathbf z^2=(z_i^2)_{i\in I}$.
Then both~$\mathbf z$, $\mathbf z^2$ are decorations
of~$\id\in\Hom_{\mathscr A}(M,M)$ and
$\id_{\mathbf z}\not=\id_{\mathbf z^2}$. 
Yet 
$\overline{\id_{\mathbf z}}_\star(s_i)=w_\circ^I=
\overline{\id_{\mathbf z^3}}_\star(s_i)$ for all~$i\in I$. Thus, $\mathsf H$ is not faithful.

Similarly, if~$M$ is as above and not of type
$A_n$, $D_{n+1}$ with~$n$ odd or~$E_6$, let
$\mathbf z=(z_i)_{i\in I}$ where~$z_i=T_{w_\circ^I}$
and let~$\mathbf z^3=(z_i^3)_{i\in I}$. Then
$\overline{\id_{\mathbf z}}(s_i)=w_\circ^I=
\overline{\id_{\mathbf z^3}}(s_i)$,
yet~$\id_{\mathbf z}\not=\id_{\mathbf z^3}$.
Thus, $\mathsf C$ is not faithful either.
\end{example}

We say that~$\phi\in\Hom_{\mathscr H}(\wh M,M)$
is (square free) {\em liftable}
if there exists a (square free)
$\Phi\in\Hom_{\mathscr A}(\wh M,M)$ of Hecke type such that
$\overline\Phi_\star=\phi$. The corresponding notions
for homomorphisms of Coxeter groups are defined similarly.
Note that if~$\phi$
is square free liftable then~$\Phi(\wh T_{ w})=T_{\phi(w)}$
for all~$w\in W(\wh M)$.
\begin{lemma}\label{lem:non-lift-parab}
Let~$M\in\Cox I$.
For any~$J\subset I$, the parabolic projection $p_J:(W(M),\star)\to (W_J(M),\star)$ is liftable if and only if~$P_J$ is a well-defined homomorphism $\Br^+(M)\to\Br^+_J(M)$. 
\end{lemma}
\begin{proof}
If~$P_J$ is well-defined then, clearly, $p_J=\overline{(P_J)}_\star$. 
Suppose that~$P_J$ is not well-defined and $p_J=\overline{\Phi}_\star$ for some homomorphism $\Phi:\Br^+(M)\to \Br^+_J(M)$. By Proposition~\ref{prop:parab proj Artin}, there exist $i\in I\setminus J$, $j\in J$ such that~$m_{ij}\ge 3$ is odd. Since~$1=p_J(s_i)=\pi^\star_M(\Phi(T_i))$, it follows that~$[\Phi](i)=\emptyset$. Then by Lemma~\partref{lem:elem Artin hom.c}
we have~$[\Phi](j)=\emptyset$, whence 
$\phi(s_j)=\pi^\star(\Phi(T_j))=1$ which is a contradiction.
\end{proof}
\begin{example}\label{ex:non-liftable}
By Lemma~\ref{lem:non-lift-parab},
a parabolic projection~$p_J:(W(A_n),\star)\to (W_J(A_n),\star)$ with~$J\not=\emptyset,[1,n]$ is non-liftable.
\end{example}
\begin{example}\label{ex:non-liftable-1}
\label{ex:H3 D5}
The assignments
$$
s'_1\mapsto s_4,\quad s'_2\mapsto s_1s_3,\quad s'_3\mapsto s_2 s_5
$$
define a homomorphism of Hecke monoids $\phi:(W(H_3),\star)\to (W(D_5),\star)$. Indeed, $\phi$ is the composition of the homomorphism $(W(H_3),\star)\to
(W(D_6),\star)$ induced by the standard
homomorphism~\eqref{eq:unfold F4 E6} of respective Artin monoids with the
parabolic projection~$p_{[2,6]}:(W(D_6),\star)\to (W_{[2,6]}(D_6),\star)\cong (W(D_5),\star)$.
This homomorphism is easily seen to be
injective on every parabolic submonoid~$W_J(H_3)$ with~$|J|=2$. However,
$$
\phi(s'_2s'_1s'_3s'_2s'_3s'_2s'_1s'_2s'_3s'_2s'_3)=
\phi(s'_2s'_1s'_3s'_2s'_3s'_2s'_3s'_1s'_2s'_3s'_2s'_3)
$$
while both words are reduced in $W(H_3)$ and have different lengths.

The homomorphism~$\phi$ is not liftable. Indeed, if~$\phi=\overline\Phi_\star$ for some~$\Phi\in\Hom_{\mathscr A}(H_3,D_5)$ then $\Phi(T'_1)=T_4^{a_4}$,
$\Phi(T'_2)=T_1^{a_1}T_3^{a_3}$ and~$\Phi(T'_3)=T_2^{a_2}T_5^{a_5}$ where
$a_i\in\mathbb Z_{>0}$, $1\le i\le 5$
and $a_4=a_1+a_3=a_2+a_5$ by Lemma~\partref{lem:elem Artin hom.c}. Then the relation~$T'_1T'_2T'_1=T'_2T'_1T'_2$ in~$\Br^+(H_3)$
yields $T_4^{a_1+a_3}T_1^{a_1}T_3^{a_3}T_4^{a_1+a_3}
=T_1^{a_1}T_3^{a_3}T_4^{a_1+a_3}T_1^{a_1}T_3^{a_3}$
whence, since $\Br^+(D_5)$ is cancellative,
$$T_4^{a_1+a_3}T_3^{a_3}T_4^{a_1+a_3}
=T_3^{a_3}T_4^{a_1+a_3}T_3^{a_3}T_1^{a_1}.
$$
This forces~$a_1=0$ which is a contradiction.
\end{example}
\begin{example}
Let~$M=\tilde C_2$ (see Example~\ref{ex:affine}). Then~$p_{\{1,3\}}$ is liftable by Lemma~\ref{lem:non-lift-parab}. However, for any~$d_1,d_3\in\ZZ_{>0}$,
the assignments $T_1\mapsto T_1^{d_1}$,
$T_2\mapsto 1$, $T_3\mapsto T_3^{d_2}$
define a homomorphism $\Phi:\Br^+(M)\to \Br^+_{\{1,3\}}(M)$ such that~$\overline\Phi_\star=p_{\{1,3\}}$.
\end{example}
Thus, $\mathsf H$ is neither full nor faithful.

\begin{example}
The homomorphism of Artin monoids from Example~\ref{ex:non sqf non Cox} induces a homomorphism
of Hecke monoids~$(W(I_2(10)),\star)\to
(W(A_4),\star)$, $s'_1\mapsto s_1s_2s_1 s_4$,
$s'_2\mapsto s_2s_3s_2$
which is, therefore, liftable. However, it
is not square free liftable
since the canonical image of $X=((T_1T_2T_1T_4)(T_2T_3T_2))^5$
in~$W(A_4)$ is equal to $s_2s_1s_3s_2s_4$ which is not an involution. Then by Lemma~\ref{lem:can image op inv}, $X$
is not ${}^{op}$-invariant which contradicts Lemma~\partref{lem:sqf Hecke hom.c}.
\end{example}

\begin{remark}\label{rem:C not full}
Since every Coxeter group embeds into a symmetric group
in many different ways, it is highly unlikely that~$\mathsf C$ is full. For example, the left multiplication in~$S_3$ defines a homomorphism
$S_3=W(A_2)\to W(A_5)=S_6$ given by $\wh s_1\mapsto 
(1,2)(3,4)(5,6)=s_1s_3s_5$, $\wh s_2\mapsto(1,3)(2,5)(4,6)=s_2s_1s_4s_3s_2s_5s_4$. This
homomorphism does not have a straightforward
lifting to a homomorphism~$\Br^+(A_2)\to\Br^+(A_5)$,
and there are no reasons to expect it to have a more
sophisticated one.
\end{remark}

\begin{remark}
It is easy to see that strongly square free homomorphisms of Artin monoids also form a category.
\end{remark}

\subsection{Parabolic homomorphisms of Artin monoids}\label{subs:parab hom Artin}
We now introduce parabolic
homomorphism for Artin monoids and study their general properties.
\begin{definition}\label{defn:parabhom}
Let~$\wh M$, $M$ be Coxeter matrices.
We say that~$\Phi\in\Hom_{\mathscr A}(\wh M,M)$
is {\em parabolic} if $\Phi(\mP(\Br^+_J(\wh M)))\subset \mP(\Br^+_{[\Phi](J)}(M))$ for 
every maximal~$J\in\mathscr F(\wh M)$.
We say that~$\Phi$ is 
{\em strict parabolic} if~$\Phi(\wh T_{w_{\wh J;\wh K}})=T_{w_{J;K}}$ for any
maximal~$\wh K\in\mathscr F(\wh M)$
where~$K=[\Phi](\wh K)$ and~$J\subset K$ depends on~$\wh J$.
\end{definition}
It is immediate that homomorphism
from Example~\ref{ex:shift by center} are non-strict parabolic. 

\begin{lemma}\label{lem:parab sqf}
A strict parabolic homomorphism of Artin monoids of finite type is
strongly square free.
\end{lemma}
\begin{proof}
Since~$\wh T_{w_\circ^{\wh I}}$ is parabolic,
it follows that its image is a square
free element $T_{w_J}$ for some~$J\subset I$.
The assertion follows from Lemma~\partref{lem:elem Artin hom.b}.
\end{proof}
\begin{lemma}\label{lem:non-empty}
Let~$\wh M\in\Cox{\wh I}$, $M\in\Cox I$ be of finite type. Suppose that~$\wh M$ is irreducible and let~$\Phi\in\Hom_{\mathscr A}(\wh M,M)$ be strict parabolic. Then either~$\Phi$ is trivial or~$[\Phi](i)\not=\emptyset$ for all~$i\in\wh I$.
\end{lemma}
\begin{proof}
Suppose that~$[\Phi](i)=\emptyset$ for 
some~$i\in\wh I$. Since~$\wh M$ is irreducible, there is~$j\in\wh I$ such that~$\wh m_{ij}>2$.
If~$\wh m_{ij}$ is odd then~$[\Phi](j)=\emptyset$ by Lemma~\partref{lem:elem Artin hom.c}.
Otherwise, 
$$
\Phi(\wh T_{w_\circ^{\{i,j\}}})=
\Phi(\brd{\wh T_i\wh T_j}{\wh m_{ij}})
=\brd{\Phi(\wh T_i)\Phi(\wh T_j)}{\wh m_{ij}}=
\Phi(\wh T_j)^{\frac12 \wh m_{ij}}.
$$
Since~$\Phi$ is strongly square free by Lemma~\ref{lem:parab sqf}, it follows that~$[\Phi](j)=\emptyset$. Thus, $[\Phi](j)=\emptyset$ for all~$j\in\wh I$ 
in the connected component of~$i$ in~$\Gamma(\wh M)$, which is all of~$\Gamma(\wh M)$ since~$\wh M$ is irreducible. Therefore, $\Phi$ is trivial.
\end{proof}

\begin{proposition}\label{prop:parabolic<->w0J}
Let~$\wh M\in\Cox{\wh I}$, $M\in\Cox I$ and let~$\Phi\in\Hom_{\mathscr A}(\wh M,M)$ be
of Hecke type. Then~$\Phi$ is
strict parabolic if and only if~$\Phi$ is disjoint square free and~$\Phi(\wh T_{w_\circ^J})=T_{w_\circ^{[\Phi](J)}}$ for all~$J\subset\wh I$. In particular, a strict parabolic
Hecke type homomorphism
maps multiparabolic elements to multiparabolic elements.
\end{proposition}
\begin{remark}
This fails for parabolic homomorphisms which are not Hecke. For instance, for the homomorphism~$\Phi^{(2)}_n$ from Theorem~\partref{thm:monomial brd.a} we have $\Phi^{(2)}_n(\wh T_{w_\circ^{[1,n]}})=T_{w_{[1,2n+1]_2}}\not=T_{w_\circ^{[1,2n+1]}}$.
\end{remark}
\begin{proof}
By Lemma~\ref{lem:diagonal}, it suffices to consider the case when~$\Phi$ is fully supported.
We need the following
\begin{lemma}\label{lem:parabolic->w0J}
Let~$\Phi$ be a fully supported strict parabolic Hecke type homomorphism. Then $
\Phi(\wh T_{w_\circ^J})=T_{w_\circ^{[\Phi](J)}}$ for all~$J\subset I$.
\end{lemma}
\begin{proof}
Note first that, since~$w_{\emptyset}=w_\circ^{\wh I}$, we must
have~$\Phi(\wh T_{w_\circ^{\wh I}})=T_{w_J}$ for some~$J\subset I$.
Since~$\Phi$ is strongly square free by Lemma~\ref{lem:parab sqf},
by Lemma~\partref{lem:T_w0 divides image.a}, $\Phi(\wh T_{w_\circ^{\wh I}})=
T_{w_\circ^{[\Phi](\wh I)}}u=
T_{w_\circ^I}u$ for some~$u\in\Br^+(M)$; moreover,
we conclude that~$u=1$.
Since~$\ell(T_{w_J})<\ell(T_{w_\circ^I})$ for all~$J\not=\emptyset$, it follows
that~$\Phi(\wh T_{w_\circ^{\wh I}})=T_{w_\emptyset}=T_{w_\circ^I}$.

Let~$K\subset I$.
Since $\wh T_{w_\circ^I}=\wh T_{w_\circ^K}\wh T_{w_K}$ we have
$T_{w_\circ^I}=\Phi(\wh T_{w_\circ^K})T_{w_{K'}}$
for some~$K'\subset I$. Since~$T_{w_\circ^I}=T_{w_\circ^{K'}}T_{w_{K'}}$
and Artin monoids are cancellative, it follows that
$\Phi(\wh T_{w_\circ^K})=T_{w_\circ^{K'}}$ which
is contained in~$\Br^+_{[\Phi](K)}(M)$ and
is left divisible by~$T_{w_\circ^{[\Phi](K)}}$
by Lemma~\partref{lem:T_w0 divides image.a}. By Proposition~\ref{prop:fund elts BrSa} this forces~$K'=[\Phi](K)$.
\end{proof}
\begin{lemma}\label{lem:parab ->disjoint}
Suppose that~$\Phi\in\Hom_{\mathscr A}(\wh M,M)$ satisfies~$\Phi(\wh T_{w_\circ^J})=T_{w_\circ^{[\Phi](J)}}$ for all~$J\in\mathscr F(M)$,
$|J|\le 2$.
Then~$\Phi$ is disjoint.
\end{lemma}
\begin{proof}
Suppose that~$J=[\Phi](i)\cap [\Phi](j)\not=\emptyset$ for some~$i\not=j\in\wh I$. Since $w_\circ^{[\Phi](i)}=
w_i^{-1}\times w_\circ^J$ and
$w_\circ^{[\Phi](j)}=w_\circ^J\times w_j$ where
$w_k=w_{J;[\Phi](k)}$, $k\in\{i,j\}$, we have
$$
T_{w_\circ^{[\Phi](i)}}T_{w_\circ^{[\Phi](j)}}=
T_{w_i}^{op} T_{w_\circ^J}^2 T_{w_j}.
$$
It follows that~$\Phi(\wh T_{w_\circ^{\{i,j\}}})=\Phi(\brd{\wh T_i\wh T_j}{\wh m_{ij}})$
is not square free and hence cannot be equal to~$T_{w_\circ^K}$
for any~$K\in\mathscr F(M)$.
\end{proof}
The forward
direction Proposition~\ref{prop:parabolic<->w0J} follows from Lemmata~\ref{lem:parabolic->w0J} and~\ref{lem:parab ->disjoint}.
To establish the converse, we need the following
\begin{lemma}\label{lem:parab preserving}
Suppose that~$\Phi\in\Hom_{\mathscr A}(\wh M,M)$ satisfies $\Phi(\wh T_{w_\circ^J})=T_{w_\circ^{[\Phi](J)}}$
for all~$J\in\mathscr F(\wh M)$. Then~$\Phi$
is disjoint, of Coxeter-Hecke type,  
$\Phi(\wh T_{w_{J; K}})=
T_{w_{[\Phi](J);[\Phi](K)}}$
and also $\bar\Phi(w_{J; K})
=\bar\Phi_\star(w_{J; K})=w_{[\Phi](J);[\Phi](K)}$
for all~$J\subset K\in\mathscr F(\wh M)$.
\end{lemma}
\begin{proof}
The assumption that~$\Phi(\wh T_{w_\circ^J})=T_{w_\circ^{[\Phi](J)}}$
for any~$J\in\mathscr F(M)$ implies that~$\Phi$ square
free and of Hecke type and hence of Coxeter type. By Lemma~\ref{lem:parab ->disjoint}, $\Phi$ is disjoint.
Since~$\Phi(\wh T_{w_\circ^J})=T_{w_\circ^{[\Phi](J)}}$ for any~$J
\in\mathscr F(\wh M)$ and~$\wh T_{w_{J;K}}=\wh T_{w_\circ^J}^{-1}\wh T_{w_\circ^K}$ in~$\Br(\wh M)$,
it follows that~$\Phi(\wh T_{w_{J;K}})=
T_{w_\circ^{[\Phi](J)}}^{-1}T_{w_\circ^{[\Phi](K)}}=
T_{w_{[\Phi](J);[\Phi](K)}}$. Then by Corollary~\partref{cor:cat AH AC.c}, $\overline\Phi(w_{J;K})=\pi_M(\Phi(\wh T_{w_{J;K}}))
=\pi_M(T_{w_{[\Phi](J);[\Phi](K)}})=
w_{[\Phi](J);[\Phi](K)}$. The identity for~$\overline\Phi_\star$ is proved similarly.
\end{proof}
This completes the proof of the converse.
The last assertion in Proposition~\ref{prop:parabolic<->w0J} is immediate from Lemma~\ref{lem:parab preserving}.
\end{proof}

We now classify all strict parabolic Hecke type
homomorphisms of Artin monoids of finite
type.
\begin{theorem}\label{thm:artin parab}\label{thm:adm finite class}
Let~$\wh M\in\Cox{\wh I}$, $M\in\Cox I$ be irreducible and 
of finite type. The following homomorphisms
$\Phi:\Br^+(\wh M)\to \Br^+(M)$
are strict parabolic, of Hecke type and injective: 
\begin{enmalph}
\item\label{thm:adm finite class.unfold}
For~$\wh M=B_n$, $n\ge 2$, 
\begin{alignat}{4}
&M=A_{2n-1}:\label{eq:unfold Bn A2n-1}
&\qquad&\Phi(\wh T_i)=T_i T_{2n-i},&\quad &i\in [1,n-1],&\quad& 
\Phi(\wh T_n)=T_n,
\\
&M=A_{2n}:\label{eq:unfold Bn A2n}
&& \Phi(\wh T_i)=T_i T_{2n+1-i},&& i\in [1,n-1],&& \Phi(\wh T_n)=T_n T_{n+1}T_n,
\\
&M=D_{n+1}:\label{eq:unfold Bn Dn+1}
&& \Phi(\wh T_i)=T_i,&& i\in [1,n-1],&& \Phi(\wh T_n)=T_n T_{n+1};
\end{alignat}
\item\label{thm:adm finite class.unfold F4}
For~$\wh M=F_4$, $M=E_6$ 
\begin{align}
\label{eq:unfold F4 E6}
\Phi(\wh T_1)=T_1 T_5,\quad \Phi(\wh T_2)
=T_2 T_4,\quad \Phi(\wh T_3)=T_3,\quad 
\Phi(\wh T_4)=T_6
\end{align}
\item \label{thm:adm finite class.odd}
For~$\wh M=I_2(2m+1)$, $m>0$, $M=A_{2m}$
$$\Phi(\wh T_i)=\prod_{j\in [1,2m+1-i]_2} T_j, \qquad i\in\{1,2\};
$$
\item\label{thm:adm finite class.even}
For $\wh M=I_2(2m)$, $m>1$,
any~$M$ with~$h(M)=2m$ and any
partition $I=I_1\sqcup I_2$ of~$I$ into
non-empty self-orthogonal subsets 
$$
\Phi(\wh T_j)=T_{w_\circ^{I_j}}=
\prod_{i\in I_j} T_i, \qquad j\in\{1,2\};
$$
\item\label{thm:adm finite class.I8 F4} For~$\wh M=I_2(8)$, $M=F_4$,
$$
\Phi(\wh T_1)=T_1 T_4,\quad \Phi(\wh T_2)=T_2 T_3 T_2;
$$

\item\label{thm:adm finite class.H3}
For~$\wh M=H_3$, $M=D_6$, 
\begin{equation}\label{eq:unfold H3D6}
\Phi(\wh T_1)=T_1T_5,\quad \Phi(\wh T_2)=T_2T_4,\quad \Phi(\wh T_3)=T_3T_6;
\end{equation}

\item\label{thm:adm finite class.H4}
For~$\wh M=H_4$, $M=E_8$,
\begin{equation}\label{eq:unfold H4E8}
\Phi(\wh T_1)=T_1T_7,\quad \Phi(\wh T_2)=T_2T_6,\quad 
\Phi(\wh T_3)=T_3T_5,\quad \Phi(\wh T_4)=
T_4T_8.
\end{equation}
\end{enmalph}
The homomorphisms from parts~\ref{thm:adm finite class.unfold} and~\ref{thm:adm finite class.unfold F4} are isomorphisms onto submonoids of~$\Br^+(M)$ fixed by respective diagram automorphisms.
Moreover, every non-trivial strict parabolic Hecke type homomorphism is a composition of the above ones, diagram automorphisms and natural
inclusions of parabolic submonoids.  
\end{theorem}
\begin{proof}
In view of Lemma~\ref{lem:diagonal}, it 
suffices to classify fully supported
strict parabolic Hecke type homomorphisms for irreducible~$\wh M$ and~$M$. By
Proposition~\ref{prop:parabolic<->w0J}, 
if~$\Phi$ is such a homomorphism then 
it is square free, disjoint and satisfies 
$\Phi(\wh T_i)=T_{w_\circ^{[\Phi](i)}}$
for all~$i\in \wh I$ and 
$\Phi(\wh T_{w_\circ^{\{i,j\}}})=T_{w_\circ^{[\Phi](\{i,j\})}}$
for all~$i\not=j\in\wh I$. If~$\Phi$
is non-trivial then~$[\Phi](i)\not=\emptyset$ for all~$i\in\wh I$
by Lemma~\ref{lem:non-empty}. Such homomorphisms 
were classified in~\cites{Cri,God,Cas,Mue} where they are called LCM homomorphisms; in particular,
the list of such homomorphisms, up to
compositions with diagram automorphisms,
coincides with the one provided in the Theorem. By~\cite{Cri}, all LCM
homomorphisms are injective. The image of homomorphisms from parts~\ref{thm:adm finite class.unfold} and~\ref{thm:adm finite class.unfold F4} is clearly contained in the submonoid of~$\Br^+(M)$ fixed by the respective diagram automorphism, which is isomorphic to the respective~$\Br^+(\wh M)$ by~\cite{Cri1}.
\end{proof}
\begin{remark}
Curiously, homomorphisms~\eqref{eq:unfold H3D6} and~\eqref{eq:unfold H4E8} are liftings of homomorphisms of respective Coxeter groups studied in~\cite{BBO'C} in the framework of continuous crystals.
\end{remark}

\subsection{Submonoids of Hecke monoids generated by parabolic elements}
Note the following result (\cites{BK07,He09}).
\begin{proposition}\label{prop:submonoid *}
Let~$J\in\mathscr F(M)$. Then $\mP(W_J(M),\star)=\{ w_{J';J}\,:\, J'\subset J\}$ and is 
abelian. More precisely, for any
$J',J''\subset J$
there exists a unique \plink{*J}$J'\star_J J''=J''\star_J J'\subset J'\cap J''$ such that $w_{J';J}
\star w_{J'';J}=w_{J'\star_J J'';J}$.
\end{proposition}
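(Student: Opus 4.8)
The plan is to reduce at once to the case $J=I$ and then to establish, in this order: (1)~that $\mP(W_I(M),\star)$ coincides with $\{w_{J';I}\,:\,J'\subseteq I\}$, i.e.\ that this set is closed under~$\star$; (2)~that the $K$ produced by $w_{J';I}\star w_{J'';I}=w_{K;I}$ satisfies $K\subseteq J'\cap J''$; and (3)~that $K$ is symmetric in $J',J''$, which yields commutativity and makes $J'\star_J J''$ well defined. The reduction is harmless: by Proposition~\ref{prop:Bruhat order.a} and Proposition~\ref{prop:prod *} the strong Bruhat order and the operation~$\star$ on $W_J(M)=W(M_J)$ are the restrictions of the corresponding structures on $W(M)$, and $w_{J';J}$ is formed the same way in $W(M_J)$ as in $W(M)$ for every $J'\subseteq J$. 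So we may assume $M$ is of finite type.

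For (1), fix $J',J''\subseteq I$. By Proposition~\ref{prop:Bruhat order *.b} write $w_{J';I}\star w_{J'';I}=u\times w_{J'';I}$ with $u\le w_{J';I}$ in the Bruhat order. From $u\,w_{J'';I}=(u w_\circ^{J''})w_\circ^{I}$ together with $\ell(u\,w_{J'';I})=\ell(u)+\ell(w_{J'';I})$ one gets $\ell(u w_\circ^{J''})=\ell(w_\circ^{J''})-\ell(u)$, and a short descent argument (each left multiplication by a letter of a reduced word for~$u$ must strictly drop length) then forces $u\in W_{J''}(M)$; iterating Proposition~\ref{prop:Bruhat order *.b} identifies $u$ with the Bruhat maximum of $W_{J''}(M)\cap\downarrow w_{J';I}$, that is, with the parabolic projection $p_{J''}(w_{J';I})$. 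The decisive point, and the main obstacle, is now that this element is itself $J''$-parabolic: $u=w_{K;J''}$ for some $K\subseteq J''$. This is the content of~\cites{BK07,He09}, and also a special case of Theorem~\ref{thm:main thm 1}; a self-contained proof would, I expect, go by induction on $|I|$, peeling a simple reflection off $w_\circ^{J'}$ and tracking the effect on $p_{J''}(w_{J';I})$ through the absorption rules of Lemma~\ref{lem:wK absorption}. Granting it, $u w_\circ^{J''}=w_{K;J''}w_\circ^{J''}=w_\circ^{K}$, so $w_{J';I}\star w_{J'';I}=u\,w_{J'';I}=(u w_\circ^{J''})w_\circ^{I}=w_\circ^{K}w_\circ^{I}=w_{K;I}$, which proves~(1) (and $K\subseteq J''$ is automatic).

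For (2): by Lemma~\ref{lem:wK absorption}, $s_i\star w_{J';I}=w_{J';I}$ for every $i\in I\setminus J'$, so $W_{I\setminus J'}(M)$ is contained in $\{x\in W_I(M)\,:\,x\star(w_{J';I}\star w_{J'';I})=w_{J';I}\star w_{J'';I}\}$, which by Lemmata~\ref{lem:left desc} and~\ref{lem:wK absorption} equals $W_{I\setminus K}(M)$; hence $I\setminus J'\subseteq I\setminus K$, i.e.\ $K\subseteq J'$, and combined with $K\subseteq J''$ this gives $K\subseteq J'\cap J''$.

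For (3): the anti-involution~${}^{op}$, which on $W_I(M)$ is $w\mapsto w^{-1}$, and the diagram automorphism~$\sigma$ of $W_I(M)$ with $w_\circ^{I}s_i w_\circ^{I}=s_{\sigma(i)}$ both pass to $(W_I(M),\star)$, and $w_{L;I}^{op}=w_{L;I}^{-1}=w_{\sigma(L);I}=\sigma(w_{L;I})$ for all $L\subseteq I$. Applying~${}^{op}$ and then~$\sigma$ to $w_{J';I}\star w_{J'';I}=w_{K;I}$ gives $w_{\sigma(J'');I}\star w_{\sigma(J');I}=w_{\sigma(K);I}=w_{\sigma(J');I}\star w_{\sigma(J'');I}$; since $\sigma$ permutes the subsets of~$I$, this says precisely that $\star$ is commutative on $\{w_{L;I}\}$, so $K$ is symmetric in $J',J''$. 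Finally $L\mapsto w_{L;I}$ is injective because $L=I\setminus D_L(w_{L;I})$ by Lemma~\ref{lem:wK absorption}, so the subset $K=:J'\star_J J''$ is uniquely determined; this completes the argument.
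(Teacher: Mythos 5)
Your framing gets the routine parts right: the reduction to $J=I$, the descent argument showing that the element $u$ with $w_{J';I}\star w_{J'';I}=u\times w_{J'';I}$ lies in $W_{J''}(M)$, the containment $K\subseteq J'\cap J''$ via Lemmata~\ref{lem:left desc} and~\ref{lem:wK absorption}, and commutativity/uniqueness via ${}^{op}$ and the diagram automorphism are all fine (the identification of $u$ with $p_{J''}(w_{J';I})$ needs the order-reversal trick $x\mapsto xw_\circ^{J''}w_\circ^I$ on $W_{J''}$, but that is fixable). The genuine gap is the step you yourself call the main obstacle and then ``grant'': that this $u$ is $J''$-parabolic. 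That claim \emph{is} the hard content of the proposition, and neither of your proposed sources closes it inside this paper. Invoking Theorem~\ref{thm:main thm 1} (i.e.\ Theorem~\ref{thm:pJwK is w0KstarJw0J}) is circular: its very statement uses the operation $\star_L$ whose existence is what Proposition~\ref{prop:submonoid *} asserts, and its proof in Section~\ref{sec:Parab proj} (the set $\mathscr G$, Lemma~\ref{lem:eq for w0 J*K}, Lemma~\ref{lem:induction}, Proposition~\ref{prop:LCM hom good}) relies on this proposition throughout. And \cite{BK07}*{Proposition~2.30} covers only the crystallographic types $A$--$G$, so $I_2(m)$, $H_3$ and $H_4$ are left open; the sketched ``induction on $|I|$, peeling a simple reflection off $w_\circ^{J'}$'' is not carried out, and there is no evidence it closes easily --- the authors evidently found no uniform argument.

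For comparison, the paper's proof, after the same reduction to $J=I$, is type-by-type: it quotes \cite{BK07} for types $A$ through $G$, disposes of $I_2(m)$ by a direct computation, and handles $H_3$, $H_4$ by transporting the problem along the strict parabolic injective Hecke homomorphisms $\Br^+(H_3)\to\Br^+(D_6)$ and $\Br^+(H_4)\to\Br^+(E_8)$ from Theorem~\ref{thm:artin parab}: by Lemma~\ref{lem:parab preserving} and Lemma~\partref{lem:sqf Hecke hom.d} the induced map $\overline\Phi_\star$ is an injective homomorphism of Hecke monoids sending $w_J$ to $w_{[\Phi](J)}$, so the already-known statement in $D_6$, $E_8$ pulls back after a finite check that $[\Phi](J)\star_I[\Phi](J')$ is again of the form $[\Phi](J'')$. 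If you want to keep your framework, the honest fix is either to cite \cite{He09} for the non-crystallographic cases (the introduction states the remaining types are treated there) or to reproduce such a case analysis; as written, the key closure statement is assumed rather than proved.
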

\begin{proof}
We may assume, without loss of generality, that~$J=I$. For~$M$ of types
$A$ through $G$ this result was proven in~\cite{BK07}*{Proposition~2.30}. For~$M$ of type~$I_2(m)$,
the $\star$ product of any two parabolic elements
is easily seen to be equal to~$w_\circ^I=w_{\emptyset}$.
Finally, if~$M$ is of type~$H_3$ or~$H_4$
then using the injective fully supported Hecke type homomorphism
$\Phi:\Br^+(H_3)\to\Br^+(D_6)$ (respectively,
$\Phi:\Br^+(H_4)\to\Br^+(E_8)$) which is parabolic
by Theorem~\ref{thm:artin parab},
we have $\overline{\Phi}_\star(w_J\star w_{J'})=
\overline{\Phi}_\star(w_J)\star
\overline{\Phi}_\star(w_{J'})=w_{[\Phi](J)}\star
w_{[\Phi](J')}=w_{[\Phi](J)\star_I [\Phi](J')}$
by Lemma~\ref{lem:parab preserving}. It is easy
to check, for example using our Python program for
computations in Hecke monoids, that
$[\Phi](J)\star_I [\Phi](J')=[\Phi](J'')$
for some~$J''\subset \wh I$. Since~$\Phi$
is strict parabolic and injective, $\overline\Phi_\star$
is injective by Lemma~\partref{lem:sqf Hecke hom.d} and
$w_{[\Phi](J'')}=
\overline\Phi_\star(w_{J''})$ by Lemma~\ref{lem:parab preserving} whence $w_J\star w_{J'}=
w_{J''}$.
\end{proof}
Note that, in general $J\star_I J'\not=J\star_K J'$ for $J,J'\subset K\subsetneq I$. For example, if~$M=A_n$, $I=[1,n]$, $K=[1,m]$, $1\le m<n$, $J=[a,b],
J'=[a',b']\subset K$ then
 $[a,b]\star_I [a',b']=[a+a'-1,b+b'-n]$
 while $[a,b]\star_K [a',b']=[a+a'-1,b+b'-m]$ (see Corollary~\ref{cor:A J*K})
 which are equal if and only if~$b-a+b'-a'<m-1$ in which case both
 $J\star_I J'$ and~$J\star_K J'$ are empty sets.

\begin{lemma}\label{lem:hom parab submonoid}
Let~$\Phi:\Br^+(\wh M)\to\Br^+(M)$ be a strict parabolic
Hecke type homomorphism. Let~$J',J''\subset J\in\mathscr F(\wh M)$. Then $[\Phi](J'\star_J J'')
=[\Phi](J')\star_{[\Phi](J)}[\Phi](J'')$.
\end{lemma}
\begin{proof}
By Lemma~\ref{lem:parab preserving}, $\overline\Phi_\star$ is
a homomorphism of Hecke monoids and $\overline\Phi_\star(w_{K;J})=
w_{[\Phi](K);[\Phi](J)}$ for any~$K\subset J\in\mathscr F(\wh M)$. Then by Proposition~\ref{prop:submonoid *}
\begin{align*}
\overline\Phi_\star(w_{J';J}\star w_{J'';J})&=
\overline\Phi_\star(w_{J';J})\star\overline\Phi_\star(w_{J'';J})
\\
&=w_{[\Phi](J');[\Phi](J)}\star w_{[\Phi](J'');[\Phi](J)}\\
&=w_{[\Phi](J')\star_{[\Phi](J)}[\Phi](J'');[\Phi](J)}.
\end{align*}
On the other hand, since $w_{J';J}\star w_{J'';J}=
w_{J'\star_J J'';J}$ by Proposition~\ref{prop:submonoid *},
$$
\overline\Phi_\star(w_{J';J}\star w_{J'';J})=
w_{[\Phi](J'\star_J J''); [\Phi](J)}.
$$
The assertion is now immediate.
\end{proof}
\begin{lemma}\label{lem:hecke parab induces parab}
Let~$\wh M\in\Cox{\wh I}$, $M\in\Cox I$.
If~$\Phi\in\Hom_{\mathscr A}(\wh M,M)$
is parabolic and of Hecke type then~$\overline\Phi_\star\in\Hom_{\mathscr H}(\wh M,M)$
is also parabolic.
\end{lemma}
\begin{proof}
Assume, for simplicity, that~$\Phi$
is fully supported and~$\wh I$, $I$
are irreducible and of finite type.
Let~$\wh J\subset \wh I$. Since~$\Phi$
is parabolic,
$\Phi(T_{w_{\wh J}})=T_{w_{J_1}}\cdots T_{w_{J_k}}$ for some~$J_1,\dots,J_k\subset I$.
Then $\overline\Phi_\star(w_{\wh J})
=w_{J_1}\star\cdots\star w_{J_k}
=w_{J_1\star_I\cdots\star_I J_k}$
by Corollary~\partref{cor:cat AH AC.c}
and Proposition~\ref{prop:submonoid *}.
\end{proof}
\subsection{Multiparabolic elements}\label{subs:multiparab}
Given $J\in\mathscr F(M)$ and a collection
$J_1,\dots,J_k$ of pairwise disjoint subsets of~$J$,
define \plink{multiparab}$w_{J_1,\dots,J_k;J}:=w_\circ^{J_1}\cdots w_\circ^{J_k}w_\circ^J
\in W_J(M)$.
We call such elements {\em $J$-multiparabolic}.
Clearly, if $J_t$ and~$J_{t+1}$ are orthogonal for
some $1\le t<k$ then
$w_{J_1,\dots,J_k;J}=w_{J_1,\dots,J_t\cup J_{t+1},\dots, J_k;J}$. In particular, 
if $w_{J;K}$, $J\subset K\in\mathscr F(M)$ is a $K$-parabolic element then
$w_{J;K}=w_{J_1,\dots J_k;K}$ where $J_1,\dots,J_k$ are connected components of~$J$. 
If~$M$ is of finite type, we abbreviate $w_{J_1,\dots,J_k}:=w_{J_1,\dots,J_k;I}$
and call them multiparabolic elements.

Since the~$J_t$, $1\le t\le k$ are pairwise disjoint, $\ell(w_\circ^{J_1}\cdots w_\circ^{J_k})=
\sum_{1\le t\le k} \ell(w_\circ^{J_t})$ by Lemma~\ref{lem:extend supp} and so
$\ell(w_{J_1,\dots,J_k;J})=\ell(w_\circ^J)-\sum_{1\le t\le k}
\ell(w_\circ^{J_t})$ by~\eqref{eq:ell w w0} whence
$$
w_\circ^J=w_\circ^{J_k}\times\cdots\times w_\circ^{J_1}\times w_{J_1,\dots,J_k;J}
$$
and
$$
T_{w_\circ^J}=\Big(\dscprod_{1\le t\le k}T_{w_\circ^{J_t}} \Big) T_{w_{J_1,\dots,J_k;J}}=
T_{w_\circ^{J_k}\times\cdots\times w_\circ^{J_1}}T_{w_{J_1,\dots,J_k;J}}.
$$
In particular, $T_{w_{J_1,\dots,J_k;J}}=
T_{w_\circ^{J_k}\times\cdots\times w_\circ^{J_1}}{}^{-1}
T_{w_\circ^J}=\big(\ascprod_{1\le s\le k}T_{w_\circ^{J_s}}^{-1}\big)T_{w_\circ^J}$
in~$\Br(M)$.
The proof of the following Lemma is thus similar to that of Lemma~\ref{lem:parab preserving} and is omitted.
\begin{lemma}\label{lem:multiparab preserving}
Suppose that~$\Phi\in\Hom_{\mathscr A}(\wh M,M)$ satisfies $\Phi(\wh T_{w_\circ^J})=T_{w_\circ^{[\Phi](J)}}$
for all~$J\in\mathscr F(\wh M)$. Then
for any~$J\in\mathscr F(\wh M)$ and any collection $J_1,\dots,J_k\subset J$ of pairwise disjoint
subsets of~$J$, $\Phi(\wh T_{w_{J_1,\dots,J_k; J}})=
T_{w_{[\Phi](J_1),\dots,[\Phi](J_k);[\Phi](J)}}$
and also $\bar\Phi(w_{J_1,\dots,J_k; J})
=\bar\Phi_\star(w_{J_1,\dots,J_k; J})=w_{[\Phi](J_1),\dots,[\Phi](J_k);[\Phi](J)}$.
\end{lemma}

We will now enumerate multiparabolic elements
in all finite types.
First, denote $$
\mathbf P_{k}(M)=\{
w_\circ^{J_1}\cdots w_\circ^{J_k}\,:\,
\text{the $J_i$, $1\le i\le k$ are pairwise disjoint
and connected}\}
$$
and set~$\mathbf P_0(M)=\{1\}$, $\mathbf P_{-1}(M)=
\emptyset$.
Clearly, $\mathbf P_k(M)=
\emptyset$ if~$k>|I|$. Let~$\mathbf P(M)=\bigcup_{k\ge 0} \mathbf P_k(M)$. Evidently, the assignment
$w\mapsto ww_\circ^I$, $w\in\mathbf P(M)$ is a bijection
onto the set of multiparabolic elements in~$W(M)$.

Denote $\mathbf p_M(t)=\sum_{k\ge 0}
|\mathbf P_k(M)|t^k$. Clearly, if $I=I_1\cup\cdots\cup I_r$ where the $I_t$, $1\le j\le r$ are pairwise orthogonal then $\mathbf p_M(t)=\prod_{1\le j\le r}
\mathbf p_{M_{I_j}}(t)$.
\begin{theorem}
\label{thm:count multipar}
For~$M$ irreducible and of finite type,
$$
\mathbf p_M(t)=(1+t)(2\mathbf p_{M_J}(t)-\mathbf p_{M_{J'}}(t)),
$$
where~$J=I\setminus \{i\}$, $J'=I\setminus \{i,j\}$
with
$$
(i,j)=
\begin{cases}(1,2),&
\text{$M$ is not of type~$E$}\\
(n-1,n-2),&
\text{$M$ is of type~$E_n$, $n\in\{6,7,8\}$}
\end{cases}
$$
and
$$
\mathbf p_{A_0}(t)=1,\qquad \mathbf p_{A_1}(t)=1+t.
$$
In particular, $$
|\mathbf P(M)|=\mathbf p_M(1)=\begin{cases}
a_{|I|}, & \text{$M$ is not of type~$D$, $E$},\\
d_{|I|}, & \text{$M$ is of type~$D$},\\
e_6=4d_5-2a_4=856,& \text{$M$ is of type~$E_6$},\\
e_7=4e_6-2d_5=2928,&\text{$M$ is of type~$E_7$},\\
e_8=4e_7-2e_6=10000,&\text{$M$ is of type~$E_8$},
\end{cases}
$$
where
\begin{align}\label{eq:a_n binom}
a_n&=\sum_{0\le k\le \lfloor\frac12 n\rfloor}\binom{n}{2k}2^{n-k},\qquad n\ge 0,\\
\intertext{and}
d_n&=2 a_{n-1}+\sum_{0\le k\le \lfloor\frac12(n-1)\rfloor-1}\binom {n-2}{2k+1} 2^{n-k},\qquad n\ge 2.\label{eq:d_n binom}
\end{align}

\end{theorem}
\begin{proof}
Set~$\mathbf p_{A_0}(t)=1$.
For~$M=A_1$ we have $\mathbf P_0(M)=\{1\}$,
$\mathbf P_1(M)=\{s_1\}$ and so $\mathbf p_{A_1}(t)=1+t$. If~$|I|=2$ and~$m_{12}>2$ then~$\mathbf P_0(M)=\{1\}$,
$\mathbf P_1(M)=\{s_1,s_2,w_\circ^I\}$, $\mathbf P_2(M)=\{s_2s_1,
s_1s_2\}$ and so~$\mathbf p_M(t)=1+3t+2t^2=
(1+t)(1+2t)=(1+t)(2\mathbf p_{A_1}(t)-\mathbf p_{A_0}(t))$. Therefore, the recursion starts.

For the inductive step, note
that, by the choice of~$J$ and~$J'$,
$\mathbf P_k(M)$, $k> 0$
is the disjoint union of the following sets
\begin{align*}
\mathbf P_k(M)^{(0)}&=\{ w\in \mathbf P_k(M)\,:\,
\supp w\subset J\},\\
\mathbf P_k(M)^{(1)}&=\{ w s_i\,:\, w\in \mathbf P_{k-1}(M),\, \supp w\subset J\},\\
\mathbf P_k(M)^{(2)}&=\{ s_i w\,:\, w\in
\mathbf P_{k-1}(M),\, \{j\}\subset \supp w\subset J\},\\
\mathbf P_k(M)^{(3)}&=\{ w=w_\circ^{J_1}\cdots
w_\circ^{J_k}\in \mathbf P_k(M)\,:\,
\text{$\{i,j\}\subset J_{r(i)}$ for some~$1\le r(i)\le k$}\}.
\end{align*}
Note that~$r(i)$ in the definition of~$\mathbf P_k(M)^{(3)}$ is unique since all the~$J_p$, $1\le p
\le k$ are disjoint.
Clearly, $|\mathbf P_k(M)^{(0)}|=|\mathbf P_{k}(M_J)|$,
$|\mathbf P_k(M)^{(1)}|=|\mathbf P_{k-1}(M_J)|$ and~$|\mathbf P_k(M)^{(2)}|
=|\mathbf P_{k-1}(M_J)\setminus \mathbf P_{k-1}(M_{J'})|$. Finally, $|\mathbf P_k(M)^{(3)}|
=|\mathbf P_k(M_J)\setminus \mathbf P_k(M_{J'})|$. Indeed, define $f:\mathbf P_k(M)^{(3)}
\to \mathbf P_k(M_J)\setminus \mathbf P_k(M_{J'})$
by
$$
w_\circ^{J_1}\cdots w_\circ^{J_k}\mapsto
w_\circ^{J'_1}\cdots w_\circ^{J'_k},
\qquad J'_p=J_p\setminus \{i\},\,1\le p\le k
$$
and $g:\mathbf P_k(M_J)\setminus \mathbf P_k(M_{J'})\to \mathbf P_k(M)$ by
$$
w_\circ^{J'_1}\cdots w_\circ^{J'_k}\mapsto
w_\circ^{J_1}\cdots w_\circ^{J_k},
$$
where~$J_p=J'_p$ if~$j\notin J'_p$ and~$J_p=J'_p\cup\{i\}$ for the unique~$1\le p\le k$ such that~$j\in J_p$. Then, clearly, the image of~$g$ is contained
in~$\mathbf P_k(M)^{(3)}$ and
$f$ and~$g$ are inverses of each other. Thus,
$$
|\mathbf P_k(M)|=2(|\mathbf P_k(M_J)|+
|\mathbf P_{k-1}(M_J)|)-(|\mathbf P_k(M_{J'})|+
|\mathbf P_{k-1}(M_{J'})|)
$$
and so
$$
\mathbf p_M(t)=(1+t)(2 \mathbf p_{M_J}(t)-\mathbf p_
{M_{J'}}(t)).
$$
For all types but~$D$ and~$E$ this gives~$\mathbf p_M(t)=\mathbf p_{A_n}(t)$, $n=|I|$.
In particular, $|\mathbf P(M)|=a_n$ where~$a_n=|\mathbf P(A_n)|$ satisfies the recursion
\begin{equation}\label{eq:rec a_n}
a_0=1,\quad a_1=2,\quad a_n=4a_{n-1}-2a_{n-2},\qquad n\ge 2.
\end{equation}
Using elementary linear algebra we obtain from~\eqref{eq:rec a_n} that
$$
a_n=\tfrac12( (2+\sqrt2)^n+(2-\sqrt2)^n),\qquad n\ge 0.
$$
The binomial formula~\eqref{eq:a_n binom} is now immediate.

For type~$D_{n+1}$, the recursion yields
\begin{align*}
\mathbf p_{D_3}(t)&=\mathbf p_{A_3}(t)=(1+t)(2(1+t)(1+2t)-(1+t))=(1+t)^2(1+4t),
\\
\mathbf p_{D_4}(t)&=(1+t)(2\mathbf p_{D_3}(t)-
\mathbf p_{A_1\times A_1}(t))=
(1+t)^3(2(1+4t)-1))=(1+t)^3(1+8t)
\end{align*}
and $\mathbf p_{D_{n+1}}(t)=(1+t)(2\mathbf p_{D_n}(t)-
\mathbf p_{D_{n-1}}(t))$, $n\ge 4$. It follows that
$d_{n+1}=|\mathbf P(D_{n+1})|$ satisfies the recursion
$$
d_2=4,\quad d_3=20,\quad d_n = 4 d_{n-1}-2d_{n-2},\qquad n\ge 4.
$$
Therefore,
\begin{align*}
d_n&=(2+3\sqrt2)(2+\sqrt 2)^{n-2}+(2-3\sqrt 2)(2-\sqrt 2)^{n-2}\\
&=2 a_{n-1}+2\sqrt2((2+\sqrt 2)^{n-2}-(2-2\sqrt 2)^{n-2}),
\end{align*}
which immediately yields~\eqref{eq:d_n binom}. Finally,
since $\mathbf p_{E_6}(t)=
(1+t)(2\mathbf p_{D_5}(t)-\mathbf p_{A_4}(t))$,
$\mathbf p_{E_7}(t)=(1+t)(2\mathbf p_{E_6}(t)-
\mathbf p_{D_5}(t))$ and~$\mathbf p_{E_8}(t)=
(1+t)(2\mathbf p_{E_7}(t)-\mathbf p_{E_6}(t))$, the remaining assertions follow.
\end{proof}
\begin{remark}\label{rem:multipar combinatorics}
The sequence~$a_n$, $n\ge 0$ coincides, up to a shift,
with the sequence \OEIS{A006012} of the number of evil-avoiding permutations, which are the permutations avoiding patterns $2413$, $4132$, $4213$
and $3214$, in $S_{n}$ (\cite{KW22}). It is also the sequence of the numbers of {\em rectangular permutations}, that is, permutations that avoid patterns 2413, 2431, 4213, and 4231, in $S_{n}$, (see \cite{Bie17}*{Theorem 8}, \cite{CFF21}*{Corollary 8}). 
Half of the numbers of multiparabolic elements in $W(A_n)$ relates to the number of order-consecutive partitions of $n$ (\cites{CR15, HM95}) and the sequence~\OEIS{A007052}. However, none of these sets is directly connected with multiparabolic elements.
\end{remark}

Recall (see e.g.~\cite{Cheby}*{\S\S1.2.1,1.2.2}) that {\em Chebyshev polynomials} $T_n(x)$ (respectively, $U_n(x)$) of the
first (respectively second) kind satisfy the
same recursion $R_n(x)=2x R_n(x)-R_{n-1}(x)$, $n\ge 2$
with initial conditions~$T_0(x)=U_0(x)=1$ and
$T_1(x)=x$, $U_1(x)=2x$. We set~$U_{-1}(x)=0=T_{-1}(x)$.
We will now express both infinite families of polynomials~$\mathbf p_M(t)$
in terms of Chebyshev polynomials.
\begin{proposition}\label{prop:Chebyshev}
For all~$n\ge 0$
$$
\mathbf p_{A_n}(t)=(1+t)^{\lceil\frac n2\rceil} %
\begin{cases}
T_{\frac n2}(2t+1),&\text{$n$ is even},\\
U_{\lfloor \frac n2\rfloor}(2t+1)-U_{\lfloor\frac n2\rfloor-1}(2t+1),&\text{$n$ is odd}
\end{cases}
$$
while for~$n\ge 2$
$$
\mathbf p_{D_{n+1}}(t)=(1+t)^{\lceil \frac n2\rceil+1}\begin{cases}
2 T_{\frac n2}(2t+1)-T_{\frac n2-1}(2t+1),&\text{$n$ is even}\\
2 U_{\lfloor\frac n2\rfloor}(2t+1)-3 U_{
\lfloor\frac n2\rfloor-1}(2t+1)+
U_{\lfloor \frac n2\rfloor-2}(2t+1),&\text{$n$ is odd},
\end{cases}
$$

\end{proposition}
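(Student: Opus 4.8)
The plan is to reduce everything to the two recursions already established in the proof of Theorem~\ref{thm:count multipar}: namely $\mathbf p_{A_n}(t)=(1+t)(2\mathbf p_{A_{n-1}}(t)-\mathbf p_{A_{n-2}}(t))$ for $n\ge 2$ with $\mathbf p_{A_0}(t)=1$, $\mathbf p_{A_1}(t)=1+t$; and $\mathbf p_{D_{n+1}}(t)=(1+t)(2\mathbf p_{D_n}(t)-\mathbf p_{D_{n-1}}(t))$ for $n\ge 4$ with base cases $\mathbf p_{D_3}(t)=(1+t)^2(1+4t)$, $\mathbf p_{D_4}(t)=(1+t)^3(1+8t)$ recorded there. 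Writing $x=2t+1$, the idea is to pull out the predicted power of $(1+t)$ and show that what remains satisfies the common three-term Chebyshev recursion $R_{m+1}(x)=2xR_m(x)-R_{m-1}(x)$.

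First I would set $\mathbf p_{A_n}(t)=(1+t)^{\lceil n/2\rceil}f_n(t)$. Since $\lceil(n-1)/2\rceil=\lceil n/2\rceil$ and $\lceil(n-2)/2\rceil=\lceil n/2\rceil-1$ when $n$ is even, whereas $\lceil(n-1)/2\rceil=\lceil(n-2)/2\rceil=\lceil n/2\rceil-1$ when $n$ is odd, substituting and cancelling $(1+t)^{\lceil n/2\rceil}$ turns the recursion into the parity-dependent one
$$
f_n=\begin{cases}2(1+t)f_{n-1}-f_{n-2},& n\text{ even},\\ 2f_{n-1}-f_{n-2},& n\text{ odd}.\end{cases}
$$
I would then split into the even- and odd-index subsequences $g_m:=f_{2m}$ and $h_m:=f_{2m+1}$, so that the two displayed relations read $g_{m+1}=2(1+t)h_m-g_m$ and $h_m=2g_m-h_{m-1}$. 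Eliminating the $h$'s (respectively the $g$'s) between two consecutive instances and using $4(1+t)-2=2x$ collapses both into $g_{m+1}=2x\,g_m-g_{m-1}$ and $h_{m+1}=2x\,h_m-h_{m-1}$. Matching the first two terms — $g_0=1=T_0(x)$, $g_1=\mathbf p_{A_2}(t)/(1+t)=2t+1=T_1(x)$, and $h_0=1$, $h_1=\mathbf p_{A_3}(t)/(1+t)^2=4t+1=2x-1$, which are the first two values of $\bigl(U_m(x)-U_{m-1}(x)\bigr)_{m\ge0}$ (recall $U_{-1}=0$) — and noting that both $(T_m(x))$ and $(U_m(x)-U_{m-1}(x))$ satisfy the Chebyshev recursion, I conclude $g_m=T_m(2t+1)$ and $h_m=U_m(2t+1)-U_{m-1}(2t+1)$, which are exactly the two cases of the type-$A$ formula.

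The type-$D$ case would go through identically, now with $\mathbf p_{D_{n+1}}(t)=(1+t)^{\lceil n/2\rceil+1}f^D_n(t)$: the parity-split recursion for $f^D_n$ holds for $n\ge 4$, and the subsequences $g^D_m:=f^D_{2m}$, $h^D_m:=f^D_{2m+1}$ satisfy $g^D_{m+1}=2x\,g^D_m-g^D_{m-1}$ and $h^D_{m+1}=2x\,h^D_m-h^D_{m-1}$ for $m\ge 2$. I would read off the needed base values from $\mathbf p_{D_3},\mathbf p_{D_5}$ (giving $g^D_1=4t+1$, $g^D_2=16t^2+14t+1$) and from $\mathbf p_{D_4},\mathbf p_{D_6}$ (giving $h^D_1=8t+1$, $h^D_2=32t^2+20t+1$), the polynomials $\mathbf p_{D_5},\mathbf p_{D_6}$ being obtained from the $D$-recursion. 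A short computation shows these agree for $m=1,2$ with $2T_m(2t+1)-T_{m-1}(2t+1)$ and with $2U_m(2t+1)-3U_{m-1}(2t+1)+U_{m-2}(2t+1)$ respectively; since each of these closed forms is a fixed linear combination of Chebyshev polynomials and hence obeys the Chebyshev recursion, the equality extends to all $m\ge 1$, and translating back through the powers of $(1+t)$ gives the two cases of the type-$D$ formula. I expect the only real fussiness to be the ceiling/floor bookkeeping in the exponent of $(1+t)$ and keeping careful track of the index ranges on which each recursion is valid; there is no genuine conceptual obstacle.
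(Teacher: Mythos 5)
Your argument is correct and follows essentially the same route as the paper: factor out the predicted power of $(1+t)$, observe that what remains satisfies the Chebyshev three-term recursion within each parity class (the paper's relation $Q_n=2(1+2t)Q_{n-2}-Q_{n-4}$), and match the first values against $T_m$, $U_m-U_{m-1}$ and the stated combinations. The only cosmetic difference is in type $D$, where the paper writes $\mathbf p_{D_{n+1}}$ as $(1+t)\mathbf p_{A_n}$ plus a remainder $R_n$ obeying the same recursion, while you verify the closed forms directly from the values for $D_3,\dots,D_6$; both reduce to the same base-case check.
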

\begin{proof}
It is easy to see from the recursion in Theorem~\ref{thm:count multipar} that
$\mathbf p_{A_n}(t)=(1+t)^{\lceil \frac n2\rceil}Q_n(t)$
where~$Q_n(t)\in\mathbb Z[t]$ and satisfies
$Q_0(t)=Q_1(t)=1$, $Q_2(t)=1+2t$, $Q_3(t)=1+4t$ and for~$n\ge 4$
\begin{align*}
Q_n(t)&=2(1+t)^{1-\overline n} Q_{n-1}(t)-Q_{n-2}(t)\\
&=2(1+t)^{1-\overline n}(2(1+t)^{\overline n} Q_{n-2}(t)
-Q_{n-3}(t))-Q_{n-2}(t)
\end{align*}
Since $2(1+t)^{1-\overline n}Q_{n-3}(t)=Q_{n-2}(t)+Q_{n-4}(t)$,
it follows that
\begin{equation}\label{eq:same parity rec}
Q_n(t)=2(1+2t)Q_{n-2}(t)-Q_{n-4}(t).
\end{equation}
Let~$\tilde Q_n(x)=Q_{n}(\frac12(x-1))$, $n\ge 0$. Then $\tilde Q_0(x)=1=\tilde Q_1$, $\tilde Q_2(x)=x$,
$\tilde Q_3(x)=2x-1$, and
$$
\tilde Q_n(x)=2 x \tilde Q_{n-2}(x)-\tilde Q_{n-4}(x),\qquad n\ge 4.
$$
It follows immediately that $\tilde Q_{2k}(x)=T_k(x)$
for all~$k\ge 0$. To prove the second identity note
that $U_k(x)-U_{k-1}(x)$ satisfy the same recursion as
the~$U_k$, $k\ge 0$ but with the initial data
$U_0(x)-U_{-1}(x)=1$ and~$U_1(x)-U_0(x)=2x-1$ and
thus $\tilde Q_{2k+1}(x)=U_k(x)-U_{k-1}(x)$, $k\ge 0$.

For the~$D$ series, it follows from Theorem~\ref{thm:count multipar} that
$\mathbf p_{D_{n+1}}(t)=(1+t)\mathbf p_{A_n}(t)+(1+t)^{\lceil\frac n2\rceil+1}R_{n}(t)$, $n\ge 1$ (here we regard~$W(D_2)$
as $W(A_1)\times W(A_1)$)
where the $R_n$, $n\ge1$ satisfy the same recursion~\eqref{eq:same parity rec} with~$R_1(t)=0$, $R_2(t)=2t$, $R_3(t)=4t$ and~$R_4(t)=6t+8t^2$.
It remains to express these initial conditions
in terms of Chebyshev polynomials in~$2t+1$.
\end{proof}

\begin{lemma}\label{lem:len prop wJ K}
Let $K\in \mathscr F(M)$, $J\subset K$ and
let~$J_1,\dots,J_k\subset J$ be pairwise disjoint. Then
$$
w_{J_1,\dots,J_k;K}=w_{J_1,\dots,J_k;J}\times w_{J;K}.
$$
\end{lemma}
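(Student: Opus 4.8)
The plan is to reduce the claim to two independent observations: an identity in the Coxeter group $W(M)$ and an additivity of lengths.

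First I would record the group-level identity. Directly from the definitions in \S\ref{subs:multiparab} and \S\ref{subs:parab elts}, and using that $w_\circ^J$ is an involution,
$$
w_{J_1,\dots,J_k;J}\cdot w_{J;K}=\bigl(w_\circ^{J_1}\cdots w_\circ^{J_k}\,w_\circ^J\bigr)\bigl(w_\circ^J\,w_\circ^K\bigr)=w_\circ^{J_1}\cdots w_\circ^{J_k}\,w_\circ^K=w_{J_1,\dots,J_k;K}.
$$
By the definition of $\times$, the lemma then follows once we show that $\ell(w_{J_1,\dots,J_k;K})=\ell(w_{J_1,\dots,J_k;J})+\ell(w_{J;K})$.

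For this length additivity, the cleanest argument invokes Lemma~\ref{lem:wK absorption}: applying it with the roles of $J$ and $K$ interchanged (its hypothesis $K\subset J\in\mathscr F(M)$ becomes our $J\subset K\in\mathscr F(M)$, and its element $w_{K;J}=w_\circ^K w_\circ^J$ becomes our $w_{J;K}$), we obtain $w\perp w_{J;K}$ for every $w\in W_J(M)$. Since $w_{J_1,\dots,J_k;J}=w_\circ^{J_1}\cdots w_\circ^{J_k}w_\circ^J$ lies in $W_J(M)$, taking $w=w_{J_1,\dots,J_k;J}$ yields exactly the desired equality of lengths, hence $w_{J_1,\dots,J_k;J}\cdot w_{J;K}=w_{J_1,\dots,J_k;J}\times w_{J;K}$, and combining with the displayed identity finishes the proof. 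Alternatively, one may compute the lengths directly from the formulas established in \S\ref{subs:multiparab}: $\ell(w_{J_1,\dots,J_k;J})=\ell(w_\circ^J)-\sum_{t}\ell(w_\circ^{J_t})$, the same identity with $K$ in place of $J$ (valid because $J_t\subset J\subset K$ are pairwise disjoint), and $\ell(w_{J;K})=\ell(w_\circ^K)-\ell(w_\circ^J)$ from~\eqref{eq:ell w w0}; adding the first and last of these gives the middle one.

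There is no genuine difficulty in this argument; the only point demanding a little care is keeping track of the $J$/$K$ notation when quoting Lemma~\ref{lem:wK absorption}, which is stated with the opposite inclusion $K\subset J$. Everything else is a one-line computation.
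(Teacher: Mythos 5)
Your proof is correct and follows essentially the same route as the paper: the same group identity $w_{J_1,\dots,J_k;J}\,w_{J;K}=w_{J_1,\dots,J_k;K}$ (using that $w_\circ^J$ is an involution), together with the length additivity, which the paper establishes by exactly the direct computation via \eqref{eq:ell w w0} that you offer as your alternative. Your primary route, quoting the final clause of Lemma~\ref{lem:wK absorption} with the roles of $J$ and $K$ interchanged and $w=w_{J_1,\dots,J_k;J}\in W_J(M)$, is a legitimate shorthand for that same computation (the clause itself is proved from \eqref{eq:ell w w0}), so there is no gap.
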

\begin{proof}
Note that
\begin{align*}
w_{J_1,\dots,J_k;K}&=w_\circ^{J_1}
\cdots w_\circ^{J_k} w_\circ^K =
(w_\circ^{J_1}
\cdots w_\circ^{J_k}w_\circ^J)(w_\circ^Jw_\circ^K)=w_{J_1,\dots,J_k;J}w_{J;K}
\end{align*}
while by~\eqref{eq:ell w w0}
\begin{align*}
\ell(w_{J_1,\dots,J_k;K})&=\ell(w_\circ^K)-\sum_{1\le t\le k}\ell(w_\circ^{J_t})
=\ell(w_\circ^K)-\ell(w_\circ^J)+\ell(w_\circ^J)-\sum_{1\le t\le k}\ell(w_\circ^{J_t})\\
&=\ell(w_{J;K})+\ell(w_{J_1,\dots,J_k;J}).\qedhere
\end{align*}
\end{proof}

\section{Light homomorphisms of Hecke and Artin monoids}
\label{sec:Parab proj}

In this section we describe a category of homomorphisms which unifies parabolic projections,
natural inclusions of parabolic submonoids and
tautological homomorphisms. We also prove that
all such homomorphisms in finite types are parabolic.

\subsection{Light homomorphisms of Hecke monoids}\label{subs:parab proj}
Tautological homomorphisms, parabolic projections and natural inclusions of parabolic submonoids belong to a larger class of homomorphisms.
\begin{definition}\label{defn:light}
Let~$M'$ and~$M$ be Coxeter matrices
over respective index sets~$I'$, $I$.
We say that~$\phi\in\Hom_{\mathscr H}(M',M)$
is {\em light} if
$|[\phi](i)|\le 1$ for all~$i\in I'$.
\end{definition}
The following is immediate.
\begin{lemma}\label{lem:light cat}
A composition of light homomorphisms of Hecke monoids is again light. In other words, Coxeter matrices and light homomorphisms of respective Hecke monoids form a subcategory of~$\mathscr H$.
\end{lemma}

Clearly, parabolic projections, tautological homomorphisms and natural inclusions are light.
We now describe another class of surjective light homomorphisms.
\begin{definition}\label{defn:foldable}
Let~$\varpi:I\to J$ be a surjective map.
We say that a Coxeter matrix~$M$ over~$I$ is {\em foldable along~$\varpi$} if
 $m_{ii'}=m_{ii''}$ for all
 $i,i',i''\in I$ with $\varpi(i')=\varpi(i'')\not=
\varpi(i)$.
\end{definition}
Note that any group~$G$ of automorphisms of~$\Gamma(M)$ 
gives rise to a map~$\varpi_G:I\to I/G$
such that~$M$ is foldable along~$\varpi_G$.

If~$M$ is foldable along~$\varpi$,
define~\plink{MII}$M^{\varpi}$ to be the matrix over~$J$ with~$(M^\varpi)_{jj}=1$, $j\in J$ and
$(M^{\varpi})_{jj'}=m_{ii'}$
for any~$i\in \varpi^{-1}(j)$, $i'\in \varpi^{-1}(j')$, $j\not=j'\in J$.
Clearly, $M^{\varpi}$
is a Coxeter matrix.
\begin{lemma}\label{lem:Heck orb fold}
Let~$\varpi:I\to J$ be surjective and let~$M\in\Cox I$
be foldable along~$\varpi$.
The
assignments~$s_i\mapsto s^{\varpi}_{\varpi(i)}$, $i\in I$, where the~$s^\varpi_j$, $j\in J$ are the
generators of~$(W(M^{\varpi}),\star)$,
define a surjective light optimal \plink{phi fold}$\mathbf f_{\varpi}\in\Hom_{\mathscr H}(M,M^{\varpi})$.
\end{lemma}
\begin{proof}
We may regard~$\varpi$
as a map~$I\to\mathscr F(M^{\varpi})$ in an obvious way. Then
$\varpi\in \Lambda(M,M^{\varpi})$ by definition of~$M^\varpi$ and so
$\mathbf f_{\varpi}:=\Theta_{\varpi}
\in\Hom_{\mathscr H}(M,M^\varpi)$.
The condition~\eqref{eq:optimal cond} is evidently satisfied.
\end{proof}
We refer to~$\mathbf f_{\varpi}$ as
the {\em folding along~$\varpi$}.
\begin{example}
Let~$M\in\Cox I$ and let~$\varpi$ be 
the unique map~$I\to\{1\}$. Then~$M$
is foldable along~$\varpi$, $M^\varpi=A_1$ and $\mathbf f_\varpi(s_i)=s^\varpi_1$ for all~$i\in I$.
\end{example}
\begin{example}\label{ex:light projection}
Let~$M=D_{n+1}$ and define~\plink{varpi (n,n+1)}$\varpi_{(n,n+1)}:[1,n+1]\to [1,n]$ by $\varpi_{(n,n+1)}(i)=i-\delta_{i,n+1}$,
$i\in [1,n+1]$.
Then~$M$ is foldable along~$\varpi=\varpi_{(n,n+1)}$,
$M^{\varpi}=A_n$
and, identifying~$W(M^\varpi)$ with~$W_{[1,n]}(M)$, we have $\mathbf f_{\varpi}(s_i)=
s_{i-\delta_{i,n+1}}$,
$i\in [1,n+1]$.

Similarly, if~$M=D_4$, define \plink{varpi (1,3,4)}$\varpi_{(1,3,4)}:[1,4]\to\{1,2\}$ by
$\varpi_{(1,3,4)}(i)=1$, $i\in\{1,3,4\}$ and~$\varpi_{(1,3,4)}(2)=2$. Then~$M$ is foldable along~$\varpi=\varpi_{(1,3,4)}$, $M^{\varpi}=A_2$ and, identifying~$W(M^\varpi)$ with~$W_{\{1,2\}}(M)$ we have $\mathbf f_\varpi(s_i)=s_1$, $i\in\{1,3,4\}$, $\mathbf f_\varpi(s_2)=s_2$.
\end{example}
\begin{example}
Let~$M\in\Cox I$
and suppose that~$I=I_1\sqcup I_2$
where 
$m_{ij}=m\ge 3$ for all~$i\in I_1$,
$j\in I_2$. Define $\varpi:I\to\{1,2\}$
by $\varpi(i)=j$ provided that~$i\in I_j$.
Then~$M$ is foldable
along $\varpi$ and
$M^{\varpi}=I_2(m)$.
\end{example}

Since tautological homomorphisms are light 
and for any light homomorphism~$\phi$ of Hecke monoids $\phi_{op}$ is also light, 
to describe all light homomorphisms it suffices to describe all optimal ones.
\begin{proposition}\label{prop:classify light}
Every optimal light homomorphism of Hecke monoids can be canonically presented as a composition of a parabolic projection, the
folding along a surjective map and a natural inclusion.
\end{proposition}
\begin{proof}
Let~$M\in\Cox I$, $M'\in\Cox{I'}$ and let $\phi:\Hom_{\mathscr H}(M',M)$
be light. Let~$I'_s=\{
i\in I'\,:\, |[\phi](i)|=s\}$, $s\in \{0,1\}$. Then
$\phi=\phi|_{W_{I'_1}(M)}\circ p_{I'_1}$. Furthermore,
if~$\phi$ is not surjective then, since~$\phi$ is light, its image
is~$(W_J(M),\star)$ for some~$J\subset I$ and so we can write
$\phi$ as a composition of a surjective
light homomorphism with the natural
inclusion~$\iota_J$. Therefore, it remains to describe light
homomorphisms with~$I'_0=\emptyset$
which are optimal and surjective.

For such a homomorphism,
$|[\phi](i)|=1$ for all~$i\in I'$ and so
we can regard~$[\phi]$ as a surjective map~$I'\to I$.
By Theorem~\partref{thm:Hom Heck Mon},
$$
m'_{ij}\ge \max(\mu_M([\phi](i),[\phi](j)),\mu_M([\phi](j),[\phi](i)))
=m_{[\phi](i)[\phi](j)},\qquad i\not=j\in I'.
$$
Since~$\phi$ is optimal, it follows
that~$m'_{ij}=m_{[\phi](i)[\phi](j)}$
for all~$i,j\in\wh I'$ such that~$[\phi](i)\not=[\phi](j)$.
Thus, $M'$ is foldable along~$[\phi]$, $M'{}^{[\phi]}=M$
and~$\phi=\mathbf f_{[\phi]}$.
\end{proof}
It turns out that every surjective homomorphism of Hecke monoids has a ``light core'', that is,
restricts to a surjective light homomorphism from a maximal parabolic submonoid.
\begin{lemma}\label{lem:surj hom Heck bas}
Let~$M\in\Cox I$, $M'\in\Cox{I'}$ and
let~$\phi\in\Hom_{\mathscr H}(M',M)$ be surjective. Then
there exists a unique maximal subset $J'=J'(\phi)$ of~$I'$ such that
$\phi|_{W_{J'}(M')}$ is surjective and light.
\end{lemma}
\begin{proof}
Let~$i\in I$. Since~$\phi$ is surjective, $G_i:=\phi^{-1}(s_i)\not=\emptyset$ and
is a subsemigroup of $(W(M'),\star)$.
Then for any~$x\in G_i$ we have
$\{i\}=\supp\phi(x)=[\phi](\supp x)$
by Lemma~\partref{lem:[phi]comp.a}
and so~$[\phi](j)\in \{\{i\},\emptyset\}$ and
$\{j\in \supp G_i\,:\, [\phi](j)=\{i\}\}$ is non-empty. Let~$J'=\bigcup_{i\in I}\supp G_i$.
It follows that~$\phi|_{W_{J'}(M')}$
is surjective and~$\phi(s'_j)\in\{s_i\,:\, i\in I\}\cup\{1\}$ for all~$j\in J'$. It remains to observe that every~$S\subset I'$ with the same properties is contained in~$J'$.
\end{proof}
\begin{example}
Let~$M'$ be a Coxeter matrix over~$I'$
with $m'_{ij}=m>2$ for some~$i\not=j\in I'$.
Then the assignments $s'_i\mapsto s_1$, $s'_j\mapsto s_2$ and $s'_k\mapsto w_\circ^{\{1,2\}}=\brd{s_1s_2}m$, $k\in I'\setminus\{i,j\}$ define a surjective~$\phi\in\Hom_{\mathscr H}(M',I_2(m))$
with~$J'=J'(\phi)=\{i,j\}$. Indeed,
define~$\xi:I'\to \mathscr P(\{1,2\})$ by
$\xi(i)=\{1\}$, $\xi(j)=\{2\}$ and~$\xi(k)=\{1,2\}$,
$k\in I'\setminus\{i,j\}$. Then
$\mu_{I_2(m)}(\{s\},\{1,2\})=
\mu_{I_2(m)}(\{1,2\},\{s\})=2$ for $s\in\{1,2\}$
by Lemma~\ref{lem:char w_0 monoid},
and so~$\xi\in\Lambda(M',I_2(m))$.
Clearly, $\phi=\Theta_\xi$.
\end{example}

Note that every homomorphism of Hecke monoids canonically extends to a surjective one.
Indeed, the free product of
light homomorphisms is again light
by Lemma~\ref{lem:free product}, that is the light category admits coproducts. The following Corollary
is an immediate consequence of Lemma~\ref{lem:free product} and
general properties of free products.
\begin{corollary}\label{cor:surject lift}
For any homomorphism~$\phi:(W(M'),\star)\to (W(M),\star)$, the canonical homomorphism
$\wh\phi:=\phi\textstyle\coprod\iota_{\tilde I}:(W(M\textstyle\coprod M_{\tilde I}),\star)\to (W(M),\star)$ is surjective,
where~$\tilde I=\{ i\in I\,:\,
s_i\notin \phi(W(M'))\}$.
\end{corollary}

Clearly, $\wh\phi=\phi$ if~$\phi$ is surjective.
If~$\phi$ is optimal then it is easy
to see that
\begin{equation}
((M'\textstyle\coprod M_{\tilde I})_{opt})_{ij}
=\begin{cases}
m'_{ij},& \{i,j\}\subset I',\\
m_{ij},&\{i,j\}\subset \tilde I,\\
\max(2,\mu_M([\phi](i),\{j\}),
\mu_M(\{j\},[\phi](i))),& i\in I',\,j\in \tilde I.
\end{cases}\label{eq:ext to surj}
\end{equation}

\subsection{Light homomorphisms of Artin monoids}\label{subs:parab proj Artin}
Similarly to the case of Hecke monoids,
we now introduce the notion of light
homomorphisms of Artin monoids.
\begin{definition}\label{defn:light Artin}
Let~$\wh M\in\Cox{\wh I}$, $M\in\Cox I$.
We say that~$\Phi\in\Hom_{\mathscr A}(\wh M,M)$ is {\em light}
if~$|[\Phi](i)|\le 1$ for all~$i\in \wh I$.
\end{definition}
A light homomorphism is manifestly of Coxeter-Hecke type.
As before, a composition of light homomorphisms is again light, as well as their free product.
Clearly, natural inclusions of parabolic submonoids,
parabolic projections,
tautological homomorphisms and diagram automorphisms are light. Also, if we write a light homomorphism
$\Phi$ as $\Phi''\circ\Phi'$ where~$\Phi''$ is optimal and~$\Phi'$ is tautological (Lemma~\ref{lem:factor homs}) then~$\Phi''$ is also light.

The following is immediate.
\begin{lemma}\label{lem:H functor light}
Let~$\wh M\in\Cox{\wh I}$, $M\in\Cox I$ and let
$\Phi\in\Hom_{\mathscr A}(\wh M,M)$ be light. Then~$\overline\Phi_\star$ is light.
In other words, the functor~$\mathsf H$ restricts to the functor~$\mathsf H_{light}$ from the light Artin category to the light Hecke category.
\end{lemma}

Like homomorphisms of Hecke monoids,
surjective homomorphisms of Artin
monoids always have a ``light core''.
\begin{lemma}\label{lem:surj hom Artin bas}
Let~$\wh M\in\Cox{\wh I}$, $M\in\Cox I$ and
let~$\Phi:\Br^+(\wh M)\to \Br^+(M)$ be a surjective homomorphism of monoids. Then
there exists a unique maximal~$\wh J=\wh J(\Phi)\subset \wh I$ such that
$\Phi|_{\Br^+_{\wh J}(\wh M)}$ is surjective and light.
\end{lemma}
The proof is exactly the same
as that of Lemma~\ref{lem:surj hom Heck bas} and is omitted.

Another class of light homomorphisms is obtained using the construction along
the lines of Lemma~\ref{lem:Heck orb fold}.
\begin{lemma}\label{lem:orbits fold}
Let~$\varpi:I\to J$ be surjective and suppose that~$M$ is foldable along~$\varpi$. The
assignments~$T_i\mapsto T^{\varpi}_{\varpi(j)}$, $i\in I$, where the~$T^\varpi_j$, $j\in J$
are generators of~$\Br^+(M^\varpi)$,
define a surjective light homomorphism of monoids \plink{F wpi}$\mathbf F_{\varpi}:\Br^+(M)\to \Br^+(M^{\varpi})$. Moreover, $\overline{(\mathbf F_\varpi)_\star}=\mathbf f_\varpi$.
\end{lemma}
\begin{proof}
Let~$i\not=i'\in I$ with~$m_{ii'}<\infty$. If~$\varpi(i)=\varpi(i')$
then $\brd{T^\varpi_{\varpi(i)}T^\varpi_{\varpi(i')}}{m_{ii'}}=(T^\varpi_{\varpi(i)})^{m_{ii'}}=
\brd{T^\varpi_{\varpi(i')}T^\varpi_{\varpi(i)}}{m_{ii'}}.
$
Otherwise, $(M^\varpi)_{\varpi(i)\varpi(i')}=m_{ii'}$
and so $\brd{T^{\varpi}_{\varpi(i)}T^{\varpi}_{
\varpi(i')}}{m_{ii'}}
=\brd{T^{\varpi}_{\varpi(i')}T^{\varpi}_{\varpi(i)}}{m_{ii'}}$. The last assertion is obvious.
\end{proof}
\begin{example}
For any~$M\in\Cox I$, $\ell:\Br^+(M)\to (\ZZ_{\ge 0},+)\cong\Br^+(A_1)$
identifies with~$\mathbf F_\varpi$ where~$\varpi$ is the unique map~$I\to\{1\}$.
\end{example}
\begin{example}
Homomorphisms from Example~\ref{ex:light projection} lift
to homomorphisms of the corresponding Artin monoids.
\end{example}

\begin{example}\label{ex:affine}
Using Lemma~\ref{lem:orbits fold} we can obtain more homomorphisms similar to those discussed in Example~\ref{ex:1.6}. 
Consider affine Coxeter graphs labeled as follows:
\begin{alignat*}{8}
&\tilde B_n: \dynkin[extended,o/.append style={fill=black},labels={0,1,2,n-1,n},Coxeter,edge length=1cm] B{**...**},&\quad & n\ge 3, %
&\qquad &\tilde C_n: \dynkin[extended,o/.append style={fill=black},labels={0,1,n-1,n},Coxeter,edge length=1cm] C{*...**},&\quad & n\ge 2,&\\
&\tilde D_{n+1}: \dynkin[extended,o/.append style={fill=black},labels={0,1,2,n-1,n,n+1},label directions={,,left,right,,},edge length=1cm] D{**...***}, &&n\ge 3,
&
&\tilde G_2: \dynkin[extended,o/.append style={fill=black},edge length=1cm] G[1]2
\end{alignat*}
We have the following unfolding homomorphisms
$$
\begin{array}{c|c|l}
\wh M&M&\multicolumn{1}{c}{\Phi}\\
\hline
\hline
\vphantom{\tilde{\tilde B}}\tilde B_n& \tilde D_{n+1}&
\wh T_i\mapsto T_i,\,i\in[0,n-1],\, \wh T_n\mapsto T_nT_{n+1}\\
\hline
\vphantom{\tilde{\tilde C}}\tilde C_n& \tilde B_{n+1}& \wh T_0\mapsto T_0T_1,\,T_i\mapsto T_{i+1},\, i\in[1,n]\\
\hline
\vphantom{\tilde{\tilde C}}\tilde G_2& \tilde D_{4}& \wh T_0\mapsto T_0,\,\wh T_1\mapsto T_2,\, \wh T_2\mapsto T_1T_3T_4\\
\hline
\end{array}
$$
while Lemma~\ref{lem:orbits fold} yields the following homomorphisms
$$
\begin{array}{c|c|l}
M&M^\varpi&\multicolumn{1}{c}{\mathbf F_\varpi}\\
\hline
\hline
\vphantom{\tilde{\tilde B}}\tilde B_n& B_n & T_i\mapsto T^{\varpi}_{i+\delta_{i,0}},\,i\in[0,n]\\
\hline
\vphantom{\tilde{\tilde B}}\tilde D_{n+1}& D_{n+1} &
T_i\mapsto T^{\varpi}_{n+1-i+\delta_{i,n+1}},\,i\in[0,n+1]\\
\hline
\vphantom{\tilde{\tilde B}}\tilde D_4& A_3 &  T_0\mapsto T^{\varpi}_1,\,  T_2\mapsto T^{\varpi}_2,\,T_i\mapsto T^{\varpi}_3,\, i\in\{1,3,4\}\\
\hline
\end{array}
$$
Their compositions yield non-standard homomorphisms of the type discussed in Example~\ref{ex:1.6}, namely
\begin{alignat}{3}
&\Br^+(\tilde C_n)\to
\Br^+(\tilde B_{n+1})\to \Br^+(B_{n+1}),&\qquad  &\wh T_i\mapsto T_{i+1}^{i+\delta_{i,0}},&\quad  &i\in [0,n],\nonumber\\
&\Br^+(\tilde B_n)\to \Br^+(\tilde D_{n+1})\to \Br^+(D_{n+1}),& &\wh T_i\mapsto T_{n+1-i}^{1+\delta_{i,n}},&&  i\in [0,n],\nonumber\\
&\Br^+(\tilde G_2)\to \Br^+(\tilde D_4)\to
\Br^+(A_3),& &\wh T_i\mapsto T_{i+1}^{1+2\delta_{i,3}},&& i\in [0,3].\label{eq:elem Tits affine}
\end{alignat}
In addition, the non-standard
homomorphism  $\Br^+(\tilde C_2)\to \Br^+(A_3)$
defined by $T'_i\mapsto T_i^{1+\delta_{i,2}}$, $i\in[1,3]$ is the composition of a standard
homomorphism $\Br^+(\tilde C_2)\to \Br^+(\tilde A_3)$,
$T'_0\mapsto T_0$, $T'_1\mapsto T_1T_3$, $T'_2\mapsto T_2$, and the standard homomorphism $
\Br^+(\tilde A_3)\to\Br^+(A_3)$, $T'_0\mapsto T_1$,
$T'_1\mapsto T_2$, $T'_2\mapsto T_3$, $T'_3\mapsto T_2$,
where Coxeter graph %
of type~$\tilde A_3$ are labeled as follows
$$
\qquad
\begin{dynkinDiagram}[name=upper,
text style/.style={scale=0.8},Coxeter,root radius=0.07,expand labels={0,1},label directions={above,above},edge length=1cm]A2
\node (current) at ($(upper root 1)+(0,-1cm)$) {};
\dynkin[at=(current),name=lower,expand labels={3,2},text style/.style={scale=0.8},Coxeter,root radius=0.07,edge length=1cm,label directions={below,below}]A2
\begin{pgfonlayer}{Dynkin behind}
\foreach \i in {1,...,2}%
{%
\draw[/Dynkin diagram]
($(upper root \i)$)
-- ($(lower root \i)$);%
}%
\end{pgfonlayer}
\end{dynkinDiagram}
$$
\end{example}

\subsection{Tits homomorphisms}\label{subs:Tits homs}
We now study a particular class of 
light homomorphisms which includes those 
discussed in Examples~\ref{ex:1.6} and~\ref{ex:affine}. 

Given a Coxeter matrix~$M=(m_{ij})_{i,j\in I}$
and~$\mathbf d\in \ZZ_{>0}^I$, define~\plink{M(d)}$M(\mathbf d)=(m_{ij}(\mathbf d))_{i,j\in I}$ by
\begin{equation}
m_{ij}(\mathbf d)=\begin{cases}
m_{ij},&\text{$m_{ij}\le 2$ or~$d_id_j=1$},\\
2d_id_j,&m_{ij}=3,\,d_id_j\in\{2,3\},\\
\infty,&\text{otherwise}.\label{eq:Tits cover}
\end{cases}
\end{equation}
Clearly~$M(\mathbf d)\in\Cox I$.
\begin{theorem}\label{thm:Tits homomorphisms}
Let~$M,\wh M\in\Cox I$ let~$\mathbf d\in \mathbb Z_{>0}^I$. The
assignments $\wh T_i\mapsto T_i^{d_i}$, $i\in I$ define an optimal
\plink{T d}$\mathbf T_{\mathbf d}\in\Hom_{\mathscr A}(\wh M,M)$
if and only if~$\wh M=M(\mathbf d)$.
\end{theorem}
\begin{proof}
We may assume, without loss of generality,
that~$I=\{1,2\}$, and so $\wh M=I_2(\wh m)$, $M=I_2(m)$, $m,\wh m\in\mathbb Z_{\ge 2}\cup\{\infty\}$, 
and that~$d_1\ge d_2$.

Suppose that~$\wh M=M(\mathbf d)$.
First we prove that the assignments $\wh T_i\mapsto
T_i^{d_i}$, $i\in \{1,2\}$ define a homomorphism
$\Br^+(I_2(\wh m))\to \Br^+(I_2(m))$.
We only need to consider the case when
$m=3$ and~$d_1=d\in \{2,3\}$, $d_2=1$,
the other cases being obvious.
\begin{lemma}\label{lem:finite elem Tits}
For~$d\in\{2,3\}$, the assignments $\wh T_1\mapsto T_1^d$, $\wh T_2\mapsto T_2$ define 
an optimal homomorphism~$\Br^+(I_2(2d))\to\Br^+(A_2)$.
\end{lemma}
\begin{proof}
Let $M'=A_3$ if~$d=2$ and~$M'=D_4$ if~$d=3$ and let~$I'=[1,d+1]$ be its index set.
Then $h(M')=2d$
and $I'=(I'\setminus\{2\})\cup\{2\}$ is
a partition of~$I'$ into (self-orthogonal) orbits of a suitable diagram automorphism. By Theorem~\partref{thm:adm finite class.even}, the assignments
$\wh T_1\mapsto \prod_{k\in I'\setminus \{2\}}T_k$,
$\wh T_2\mapsto T_2$ define a homomorphism
$\Br^+(I_2(2d))\to \Br^+(M')$. Furthermore, $M'$
is foldable along the map $\varpi:I'\to\{1,2\}$,
$\varpi(i)=1$, $i\in I'\setminus\{2\}$, $\varpi(2)=2$,
and the corresponding homomorphism
$\Br^+(M')\to\Br^+_{\{1,2\}}(M')\cong \Br^+(A_2)$
maps $T_i$, $i\in I'\setminus\{2\}$ to~$T_1$
and $T_2$ to itself. Their composition
is the desired homomorphism $\Br^+(I_2(2d))\to
\Br^+(A_2)$.
To prove that it is optimal, note that~$2d\in B(T_1^d,T_2)$ and so~$k=\min B(T_1^d,T_2)$ must
divide~$2d$ by Lemma~\partref{lem:taut homs.b}. Yet $T_1^d$
and~$T_2$ do not commute for otherwise
we would have~$T_2T_1^{d+1}=T_1^d T_2 T_1=T_2 T_1 T_2^d$
which by the cancellativity of~$\Br^+(A_2)$ yields a non-existent relation $T_1^d=T_2^d$. Thus, $k>2$. Since~$k$
is even by Lemma~\partref{lem:elem Artin hom.c} and~$2d\in\{4,6\}$ it follows that~$k=2d$.
\end{proof}

It remains to show that~$\mathbf T_{\mathbf d}$
is optimal, which also amounts
to proving the converse. This is obvious for the
case when~$d_1=d_2=1$ or~$m_{ij}=2$ 
and has already been proven in Lemma~\ref{lem:finite elem Tits} for~$d=d_1d_2\in\{2,3\}$ and~$m=3$.
If~$d_2>1$, then
by~\cite{CP} the submonoid of~$\Br^+(I_2(m))$
generated by $T_1^{d_1}$, $T_2^{d_2}$ is free
and so~$B(T_1^{d_1},T_2^{d_2})=\emptyset$.

It remains to consider the case when~$d=d_1>d_2=1$ and either~$m_{ij}>3$ or~$d>3$.
\begin{proposition}\label{prop:key converse Tits}
Let~$m>2$ and suppose that~$\mathbf T_{(d,1)}
\in\Hom_{\mathscr A}(I_2(\wh m),I_2(m))$ with~$d\ge 2(1+\delta_{m,3})$.
Then~$\wh m=\infty$.
\end{proposition}
\begin{proof}
Let $q\in\mathbb R\setminus\{0\}$,
$z\in\mathbb C\setminus\{0\}$.
Define
$$
\check T_1=\begin{pmatrix}1-q^2&q\\q&0\end{pmatrix},\qquad \check T_2(z)=\begin{pmatrix}0&q z\\q z^*&1-q^2\end{pmatrix},
$$
where~$z^*$ denotes the complex conjugate of~$z$.
\begin{lemma}
Let~$\zeta_l$, $l\ge 3$ be an $l$th primitive complex root of unity.
Then for any~$q\in\mathbb R\setminus\{0\}$,
the assignments $T_1\mapsto \check T_1$,
$T_2\mapsto \check T_2(\zeta_l)$, $i\in\{1,2\}$ define a representation of~$\Br^+(I_2(l))$ on~$\mathbb C^2$. Moreover, in this representation
the matrix of $T^{op}$ is the adjoint of that of~$T\in\Br^+(I_2(l))$ with respect to the
standard hermitian product on~$\mathbb C^2$.
\end{lemma}
\begin{proof}
We have
\begin{align}
\brd{\check T_1\check T_2(z)}n&=q^{n-1}
\begin{cases}
\begin{pmatrix}
q z^*{}^{\frac n2}&(1-q^2)(1+z)r_{\frac n2-1}(z)\\
0&q z^{\frac n2}
\end{pmatrix},& \bar n=0,\\
\\
\begin{pmatrix}
(1-q^2)(r_{\lfloor \frac n2\rfloor}(z)+r_{\lfloor \frac n2\rfloor-1}(z))&q z^*{}^{\lfloor \frac n2\rfloor}\\
q z^{\lfloor \frac n2\rfloor}&0
\end{pmatrix},&\bar n=1,
\end{cases}\label{eq:T1T2(z)}\\
\intertext{and}
\brd{\check T_2(z)\check T_1}n&=q^{n-1}
\begin{cases}
\begin{pmatrix}
q z^{\frac n2}&0\\
(1-q^2)(1+z^*)r_{\frac n2-1}(z)&q z^*{}^{\frac n2}
\end{pmatrix},& \bar n=0,\\
\\
\begin{pmatrix}
0&q z{}^{\lceil \frac n2\rceil}\\
q z^*{}^{\lceil \frac n2\rceil}&
(1-q^2)(r_{\lfloor \frac n2\rfloor}(z)+z z^*r_{\lfloor \frac n2\rfloor-1}(z))
\end{pmatrix},&\bar n=1,
\end{cases}\label{eq:T2(z)T1}
\end{align}
where
\begin{equation}\label{eq: r_k defn}
r_k(z)=\sum_{0\le j\le k}z^{k-j}z^*{}^j=
\begin{cases}
(k+1) z^k,&z\in\mathbb R,\\
\dfrac{z^{k+1}-z^*{}^{k+1}}{z-z^*},&z\in\mathbb C\setminus\mathbb R.
\end{cases}
\end{equation}
Note that $r_k(z^*)=r_k(z)=r_k(z)^*$ and
\begin{equation}\label{eq:rec r_k}
z r_k(z)+z^*{}^{k+1}=r_{k+1}(z).
\end{equation}
Indeed, \eqref{eq:T1T2(z)}
clearly holds for~$n=1$.
If~$n=2k\ge 2$ then by the induction hypothesis
\begin{align*}
\brd{\check T_1\check T_2(z)}{n}&=q^{2k-2}
\begin{pmatrix}
(1-q^2)(r_{k-1}(z)+r_{k-2}(z))&q z^*{}^{k-1}\\
q z^{k-1}&0
\end{pmatrix}\check T_2(z)\\
&=q^{2k-1}
\begin{pmatrix}
q z^*{}^k &(1-q^2)(z (r_{k-1}(z)+r_{k-2}(z))+z^*{}^{k-1})\\0&q z^k
\end{pmatrix}
\\
&=q^{2k-1}
\begin{pmatrix}
q z^*{}^k &(1-q^2)(1+z)r_{k-1}(z)\\0&q z^k
\end{pmatrix},
\end{align*}
where we used~\eqref{eq:rec r_k}. If~$n=2k+1$, $k>0$ then by the induction hypothesis
and~\eqref{eq:rec r_k}
\begin{align*}
\brd{\check T_1\check T_2(z)}n&=q^{2k-1}
\begin{pmatrix}
q z^*{}^{k}&(1-q^2)(1+z)r_k(z)\\
0 &qz^k
\end{pmatrix}\check T_1\\
&=q^{2k}\begin{pmatrix}
(1-q^2)(z^*{}^k+(1+z)r_{k-1}(z)&q z^*{}^k\\
q z^k&0
\end{pmatrix}\\
&=q^{2k}\begin{pmatrix}(1-q^2)(r_k(z)+r_{k-1}(z))&q z^*{}^k\\
q z^k&0
\end{pmatrix}.
\end{align*}
The identity~\eqref{eq:T2(z)T1}
is proved similarly.

Suppose now that~$z=\zeta_l$. Then~$z^*=z^{-1}$ and
$z^*{}^{\lceil \frac l2\rceil}=z^{\lfloor \frac l2\rfloor}$. Also, if~$l=2s$, $s\ge 2$ then~$r_{s-1}(z)=0$ by~\eqref{eq: r_k defn},
while for~$l=2s+1$, $s\ge 1$,
$$
r_{s}(z)+r_{s-1}(z)=\frac{z^{s+1}-z^{-s-1}+z^s-z^{-s}}{z-z^{-1}}=\frac{z^{-s}(z^{l}-1)+z^s(1-z^{-l})}{z-z^{-1}}=0.
$$
The identity~$\brd{\check T_1\check T_2(z)}{l}=
\brd{\check T_2(z)\check T_1}{l}$ is now immediate
from~\eqref{eq:T1T2(z)} and~\eqref{eq:T2(z)T1}.
The last assertion follows since $\check T_1$ is symmetric and real, while the complex conjugate of the transpose of~$\check T_2(z)$
equals~$\check T_2(z)$
for any~$z\in\mathbb C$.
\end{proof}
Denote $p_d=(1-(-q^2)^d)/(1+q^2)$, $d\ge 0$. Then $p_0=0$, $p_1=1$ and
\begin{equation}\label{eq:p_d rec}
p_{d+1}=(1-q^2)p_d+q^2 p_{d-1},\quad d\ge 1.
\end{equation}
We set~$p_{-1}=q^{-2}$ so that the above recursion holds for all~$d\ge 0$.
Note that all the~$p_d$, $d\ge 1$ are polynomials in~$q$ with
the leading term~$(-q^2)^{d-1}$. An easy
induction on~$d$ yields, using~\eqref{eq:p_d rec},
$$
\check T_1^d=\begin{pmatrix}
                     p_{d+1}&q p_d\\
                     q p_d &q^2 p_{d-1},
\end{pmatrix}
$$
whence
\begin{align*}
\check T_1^d \check T_2(\zeta_l)&=\begin{pmatrix}
                          q^2 \zeta_l^{-1}p_d& q((1-q^2)p_d+\zeta_l p_{d+1})\\ q^3 \zeta_l^{-1} p_{d-1}&q^2((1-q^2)p_{d-1}+\zeta_l p_d)
                         \end{pmatrix}\\
                         &=\begin{pmatrix}
                            q^2 \zeta_l^{-1}p_d&q ((1+\zeta_l)p_{d+1}-q^2 p_{d-1})\\ q^3 \zeta_l^{-1}p_{d-1}&q^2((1+\zeta_l)p_d-q^2 p_{d-2})
                           \end{pmatrix}.
\end{align*}
Since $\det \check T_1=\det\check T_2(\zeta_l)=-q^2$,
the characteristic polynomial of~$\check T_1^d \check T_2(\zeta_l)$ is
$$
t^2-q^2\tau_d t+(-q^2)^{d+1},\qquad \tau_d:=(\zeta_l+1+\zeta_l^{-1})p_{d-1}-q^2 p_{d-2},
$$
and so the eigenvalues of~$\check T_1^d \check T_2(\zeta_l)$ are
\begin{equation}\label{eq:lam T_1^d T_2}
\lambda_\pm(q)=\tfrac12 q^2\Big(\tau_d\pm\sqrt{\tau_d^2-4(-q^2)^{d-1}}\Big).
\end{equation}
Note that~$\zeta_l+\zeta_l^{-1}\in\mathbb R$ and so~$\tau_d\in\mathbb R$ for all~$q\in\mathbb R\setminus\{0\}$.
\begin{lemma}\label{lem:generic eigenvalues}
Let~$l\ge 3$ and~$d\ge 2(1+\delta_{l,3})$. Then for generic~$q\in\mathbb R\setminus\{0,\pm1\}$, $\lambda_+(q)^n\not=\lambda_-(q)^n$ for all~$n\ge 1$.
\end{lemma}
\begin{proof}
Abbreviate $\zeta=\zeta_l$.
Suppose first that~$l=3$ and so $\zeta+1+\zeta^{-1}=0$. Then $\tau_d=-q^2 p_{d-2}$ and, since~$d>3$, we can write
$$
\lambda_\pm(q)=-\tfrac12 q^4\Big(p_{d-2}\mp\sqrt{p_{d-2}^2-4(-q^2)^{d-3}}\Big).
$$
Since~$p_{d-2}^2-4(-q^2)^{d-3}$ is a polynomial in~$q$ with the leading term $q^{4(d-3)}$, $\lambda_+(q)\not=\lambda_-(q)$
for a generic~$q$.

Suppose that $\lambda_+(q)\not=\lambda_-(q)$ and that
$\lambda_+(q)^n=\lambda_-(q)^n$ for some~$n>1$. Then
\begin{equation}\label{eq:generic eigenvalues 1}
\sum_{k\ge 0} \binom{n}{2k+1} p_{d-2}^{n-2k-1} (p_{d-2}^2-4(-q^2)^{d-3})^k=0.
\end{equation}
Each summand in the left hand side of~\eqref{eq:generic eigenvalues 1} is a
polynomial in~$q$ with the leading term
$\binom{n}{2k+1}(-q^2)^{(d-3)(n-1)}$. Therefore, the sum in~\eqref{eq:generic eigenvalues 1}
is a polynomial in~$q$ with the leading term $(2(-q^2)^{d-3})^{n-1}$. Thus, $\lambda_+(q)^n\not=\lambda_-(q)^n$, $n\ge 1$ for all but countably many~$q\in\mathbb R\setminus\{0,\pm1\}$.

Suppose that~$l>3$ and so~$\zeta+1+\zeta^{-1}\not=0$. If~$d=2$, $\tau_d=\zeta+1+\zeta^{-1}$, $\tau_d^2-4(-q^2)^{d-1}=(\zeta+1+\zeta^{-1})^2+4q^2>0$ for all real~$q$.
If~$d=3$, $\tau_d=(\zeta+1+\zeta^{-1})1-q^2(\zeta+2+\zeta^{-1})$ and so
$$
\tau^2_d-4(-q^2)^{d-1}=(z+4)z q^4-2 (z+1)(z+2)q^2+(z+1)^2,\qquad z=\zeta+\zeta^{-1},
$$
which is clearly a non-zero polynomial in~$q$. If~$d>3$ then the degree of~$\tau_d^2$ as a polynomial in~$q$ is $4(d-2)>2(d-1)$ hence the degree of~$\tau_d^2-4(-q^2)^{d-1}$ is $4(d-2)$.
Thus, $\lambda_+(q)\not=\lambda_-(q)$ for generic~$q\in\mathbb R\setminus\{0,\pm1\}$.

Now, suppose that~$\lambda_+(q)\not=\lambda_-(q)$.
Then~$\lambda_+(q)^n=\lambda_-(q)^n$ for some~$n\ge 2$ if and only if
\begin{equation}\label{eq:pwrs eig}
\sum_{k\ge 0}\binom{n}{2k+1} \tau_d^{n-2k-1}(\tau_d^2-4(-q^2)^{d-1})^k=0.
\end{equation}
If~$d=2$ then the left hand side is equal to
$$
\sum_{k\ge 0}\binom{n}{2k+1} (\zeta+1+\zeta^{-1})^{n-2k-1}(\zeta+1+\zeta^{-1}+4q^2)^k
$$
which is equal to $2(\zeta+1+\zeta^{-1})$ if~$n=2$ and is a polynomial in~$q$ otherwise, with the leading term~$2^{n-1}q^{n-1}$ if~$n$ is odd and~$n(\zeta+1+\zeta^{-1})2^{n-2}q^{n-2}$
if~$n>2$ is even.

If~$d=3$ then the left hand side of~\eqref{eq:pwrs eig} becomes
\begin{equation}\label{eq:lhs pwrs eig}
\sum_{k\ge 0}\binom{n}{2k+1} (z+1)^{n-2k-1}((z+4)z q^4-2 (z+1)(z+2)q^2+(z+1)^2)^k,\qquad z=\zeta+\zeta^{-1}.
\end{equation}
For~$n=2$, this equals~$2(z+1)\not=0$. Suppose that~$n>2$. If~$l=4$ and so~$z=0$,
\eqref{eq:lhs pwrs eig} is a polynomial in~$q$ with the leading term $(2(z+1)(z+2))^{\frac12(n-1)}q^{n-1}$ if~$n$ is odd
and $-n (z+1)^{\frac12 n}(2(z+2))^{\frac12 n-1}q^{n-2}$ if~$n$ is even. If~$l>4$ then
\eqref{eq:lhs pwrs eig} is a polynomial in~$q$ with the leading term $(z(z+4))^{\frac12(n-1)}q^{2(n-1)}$ if~$n$ is odd
and~$n(z+1)(z(z+4))^{\frac12n-1} q^{2(n-2)}$ if~$n$ is even. In either case, it is a non-zero polynomial in~$q$.

Finally, if~$d>3$, each summand in the left hand side of~\eqref{eq:pwrs eig} is a polynomial in~$q$ with the leading term $\binom{n}{2k+1}(-q^2)^{2(d-2)(n-1)}$.
Thus, the leading term of the sum is $(2(-q^2)^{d-2})^{n-1}$ and so the sum is a non-zero polynomial in~$q$.

Thus, for~$l>3$ and~$d\ge 2$, the left hand side of~\eqref{eq:pwrs eig} is a non-zero polynomial in~$q$, whence 
$\lambda_+(q)^n\not=\lambda_-(q)^n$ for all~$n\ge 1$ for all but countably many~$q\in\mathbb R\setminus\{0,\pm1\}$. 
\end{proof}

Suppose that~$\wh m$ is even. Then~$\wh T_2$
commutes with $(\wh T_1\wh T_2)^{\frac12\wh m}$ and so
$T_2=\mathbf T_{(d,1)}(\wh T_2)$ must commute with~$(T_1^d T_2)^{\frac12\wh m}=\mathbf T_{(d,1)}(
(\wh T_1\wh T_2)^{\frac12\wh m})$ in~$\Br^+(I_2(m))$.
Therefore, for any~$q\in\mathbb R\setminus\{0\}$,
$\check T_2(\zeta_m)$ must commute with
$(\check T_1^d T_2(\zeta_m))^{\frac12\wh m}$

 By Lemma~\ref{lem:generic eigenvalues}, we can choose~$q\in\mathbb R\setminus\{0,\pm1\}$ so that~$(\check T_1^d \check T_2(\zeta_m))^n$ has two distinct eigenvalues~$\lambda_\pm(q)^n$ for all~$n\ge 1$. Then
 $\ker((\check T_1^d\check T_2(\zeta_m))^n-
 \lambda_\pm(q)^n\id_{\mathbb C^2})=\ker(\check T_1^d\check T_2(\zeta_m)-
 \lambda_\pm(q)\id_{\mathbb C^2})$ for all~$n\ge 1$.
 Since both~$\check T_2(\zeta_m)$ and~$(\check T_1^d\check T_2(\zeta_m))^{\frac12\wh m}$ are diagonalizable and
 commute, they are simultaneously diagonalizable, which by the above also implies that $\check T_2(\zeta_m)$
 and $\check T_1^d \check T_2(\zeta_m)$ are
 simultaneously diagonalizable and, therefore, must commute. Yet by~\eqref{eq:p_d rec}
\begin{align*}
 \check T_2(\zeta_m)&(\check T_1^d\check T_2(\zeta_m))-(\check T_1^d\check T_2(\zeta_m))\check T_2(\zeta_m)
 =
 q^2 p_d\begin{pmatrix}
  \zeta_m-\zeta_m^{-1} & (q-q^{-1}) (1+\zeta_m) \\
 (q^{-1}-q) (1+\zeta_m^{-1}) & \zeta_m^{-1}-\zeta_m
 \end{pmatrix},
 \end{align*}
which is manifestly non-zero for~$q\in\mathbb R\setminus\{0,\pm1\}$,
as $\zeta_m$ is an $m$th primitive root
of unity with~$m\ge 3$.

Finally, suppose that~$\wh m$ is odd.
Then the composition of~$\mathbf T_{(d,1)}$ with the tautological 
homomorphism~$\Br^+(I_2(2\wh m))\to\Br^+(I_2(\wh m))$
yields~$\mathbf T_{(d,1)}\in\Hom_{\mathscr A}(I_2(2\wh m),I_2(m))$
which, as we just proved, does not exist.

Thus, $\wh m=\infty$.
\end{proof}
This completes the proof of Theorem~\ref{thm:Tits homomorphisms}.
\end{proof}
\begin{lemma}\label{lem:lifts of parab projections}
Let~$M=(m_{ij})_{i,j\in I}$ be a Coxeter matrix, $J\subset I$.
Assume that~$p_J$ is liftable, that each connected component of~$\Gamma_J(M)$ has at least two vertices and that~$m_{ij}<\infty$ for all~$i,j\in J$. Then~$P_J$
is the only~$\Phi\in\Hom_{\mathscr A}(M,M_J)$
such that~$\overline\Phi_\star=p_J$.
\end{lemma}
\begin{proof}
We may assume, without loss of generality, that~$J$ is connected and~$|J|>1$. Let~$\Phi$ be a homomorphism $\Br^+(M)\to\Br^+_J(M)$ and with~$\overline\Phi_\star=p_J$.
Then~$[\Phi](i)=\emptyset$ for all~$i\in I\setminus J$ and~$[\Phi](j)=\{j\}$ for all~$j\in J$. Thus,
$\Phi(T_i)=1$ if~$i\in I\setminus J$ and~$\Phi(T_i)=T_i^{d_i}$, $d_i\in\ZZ_{>0}$ if~$i\in J$. 

Let~$i\not=j\in J$ and suppose that~$\max(d_i,d_j)>1$.
Note that if~$m_{ij}$ is odd then~$d_i=d_j$ by Lemma~\partref{lem:elem Artin hom.c}
and so~$\min(d_i,d_j)>1$. Suppose that~$m_{ij}>2$. 
If~$\min(d_i,d_j)>1$ then~$B(T_i^{d_i},T_j^{d_j})=\emptyset$
by~\cite{CP} which is a contradiction since~$m_{ij}<\infty$. In particular,
$m_{ij}\not=3$.
Similarly,
if~$m_{ij}>3$ and~$\min(d_i,d_j)=1$ then
$B(T_i^{d_i},T_j^{d_j})=\emptyset$
by Proposition~\ref{prop:key converse Tits} which again contradicts the 
assumption that~$m_{ij}<\infty$.
Thus, if~$m_{ij}>2$ then~$d_i=d_j=1$.

Finally, if~$m_{ij}=2$ and, say, $d_i>1$ then, since~$J$ is connected,
there exists~$i'\in J$ with~$m_{ii'}\ge 3$ which leads to 
a contradiction by the above. Thus, $d_j=1$ for all~$j\in J$, that is, $\Phi=P_J$.
\end{proof}
Given~$\mathbf d\in \ZZ_{>0}^I$, let~$I_{>1}(\mathbf d)=\{i\in I\,:\, d_i>1\}$.
In reference to Tits conjecture, we call
an optimal $\mathbf T_{\mathbf d}\in\Hom_{\mathscr A}(M(\mathbf d),M)$, $\mathbf d=(d_i)_{i\in I}\in\ZZ_{>0}^I$ satisfying $\mathbf T_{\mathbf d}(\wh T_i)=T_i^{d_i}$, $i\in I$
a {\em Tits homomorphism}.
We call a Tits homomorphism~$\mathbf T_{\mathbf d}$ {\em elementary} if~$I_{>1}(\mathbf d)=\{i\}$
for some~$i\in I$
and denote such a homomorphism by~\plink{T i,d}$\mathbf T_{i,d}$, $i\in I$, $d\in\mathbb Z_{>1}$.

\begin{proposition}\label{prop:elementary Tits}
Let~$M\in\Cox I$, $\mathbf d=(d_i)_{i\in I}\in\ZZ_{>0}^I$
and let $\mathbf T_{\mathbf d}\in\Hom_{\mathscr A}(M(\mathbf d),M)$ be a Tits homomorphism.
Then for any total order $i_1<\cdots<i_n$ on~$I_{>1}(\mathbf d)$ there is a unique sequence of Coxeter matrices $M_0=M, M_1,\dots, M_n, M_{n+1}=
M(\mathbf d)$ over~$I$ such that
$\mathbf T_{\mathbf d}=\mathbf T_{i_1,d_{i_1}}\circ \cdots\circ \mathbf T_{i_n, d_{i_n}}$ where $\mathbf T_{i_t,d_{i_t}}\in\Hom_{\mathscr A}(M_{t+1},M_t)$.
Thus, every Tits homomorphism is a composition of elementary Tits homomorphisms.
\end{proposition}
\begin{proof}
The argument is by induction on~$|I_{>1}(\mathbf d)|$, the case~$|I_{>1}(\mathbf d)|=1$ being trivial. For the inductive step, we need the following 
\begin{lemma}\label{lem:Tits factorization}
Let~$i\in I_{>1}(\mathbf d)$, 
$\mathbf d_i=(d_j^{\delta_{i,j}})_{j\in I},
\mathbf d^{(i)}=(d_j^{1-\delta_{i,j}})_{j\in I}
\in\ZZ_{>0}^I$ and let~$M^{(i)}=M(\mathbf d_i)$.  
Then $M^{(i)}(\mathbf d^{(i)})=
M(\mathbf d)$.
\end{lemma}
\begin{proof}
Write $M^{(i)}=(m^{(i)}_{jk})_{j,k\in I}$. Then 
$m^{(i)}_{jk}=m_{jk}$, $j,k\not=i$ and 
$$
m^{(i)}_{ij}=m^{(i)}_{ji}=\begin{cases}
             m_{ij},& m_{ij}\le 2,\\
             2d_i,& m_{ij}=3,\, d_i\in\{2,3\},\,d_j=1,\\
             \infty,& \text{otherwise}.
            \end{cases}
$$
Note that, since~$m_{jk}=m^{(i)}_{jk}$,
$j,k\not=i$, we only need to show that
$m_{ij}(\mathbf d)=m^{(i)}_{ij}(\mathbf d^{(i)})$
for~$j\not=i\in I$. By~\eqref{eq:Tits cover}, we have 
$m^{(i)}_{ij}=m_{ij}(\mathbf d)=2d_i$ if~$d_j=1$, $d_i\in\{2,3\}$ and~$m_{ij}=3$. Since~$d^{(i)}_j=
d^{(i)}_i=1$, we have~$m^{(i)}_{ij}(\mathbf d^{(i)})=m^{(i)}_{ij}$ by~\eqref{eq:Tits cover} and so $m_{ij}(\mathbf d)=m^{(i)}_{ij}(\mathbf d^{(i)})$.
Likewise, $m^{(i)}_{ij}=m_{ij}(\mathbf d)=2$ if~$m_{ij}=2$ and so~$m^{(i)}_{ij}(\mathbf d^{(i)})=2=m_{ij}(\mathbf d)$. Finally, if~$d_j=d^{(i)}_j>1$ then~$m^{(i)}_{ij}=\infty$ and
so~$m^{(i)}_{ij}(\mathbf d^{(i)})=\infty$. But in that case, as 
$d_id_j>3$, 
$m_{ij}(\mathbf d)=\infty$ by~\eqref{eq:Tits cover}. 
\end{proof}
It follows from Lemma~\ref{lem:Tits factorization}
and Theorem~\ref{thm:Tits homomorphisms}
that~$\mathbf T_{i,d_i}\in\Hom_{\mathscr A}(M^{(i)},M)$,
$\mathbf T_{\mathbf d^{(i)}}\in\Hom_{\mathscr A}(M(\mathbf d),M^{(i)})$
are Tits homomorphisms. By 
construction, $\mathbf T_{\mathbf d}=
\mathbf T_{i,d_i}\circ \mathbf T_{\mathbf d^{(i)}}$.
Since $|I_{>1}(\mathbf d^{(i)})|=|I_{>1}(\mathbf d)|-1$,
the induction hypothesis applies to~$\mathbf T_{\mathbf d^{(i)}}$ and so it admits
the desired factorization. The uniqueness follows 
from Theorem~\ref{thm:Tits homomorphisms}.
\end{proof}

The following extends the famous {\em Tits conjecture} and its generalization proved in~\cite{CP}.
\begin{conjecture}\label{conj:gen Tits}
All Tits homomorphisms~$\mathbf T_{\mathbf d}$ 
are injective.
\end{conjecture}
By Proposition~\ref{prop:elementary Tits}, it suffices to prove the Conjecture for elementary Tits homomorphisms. The main result of~\cite{CP}
proves the above for all~$\mathbf d\in\mathbb Z_{>1}^I$.
We now provide some supporting evidence.
The following is immediate.
\begin{corollary}\label{cor:left factor Tits}
Let~$M\in\Cox I$, $\mathbf d\in\ZZ_{>0}^I$
Let
$\mathbf T_{\mathbf d}\in\Hom_{\mathscr A}(M(\mathbf d),M)$
be a Tits homomorphism
and let~$\mathbf T_{\mathbf d}=\mathbf T_{i_1,d_{i_1}}\circ\cdots \circ \mathbf T_{i_n,d_{i_n}}$ be any factorization as in Proposition~\ref{prop:elementary Tits}. If~$\mathbf T_{\mathbf d}$ is injective then so is
$\mathbf T_{i_r,d_{i_r}}\circ 
\cdots \circ \mathbf T_{i_n,d_{i_n}}$ 
for any $2\le r\le n$.
\end{corollary}
\begin{example}
Let~$M=A_3$, $\mathbf d=(d,d,d)$, $d\in\{2,3\}$. Then~$M(\mathbf d)=\left(\begin{smallmatrix}
    1&\infty&2\\
    \infty&1&\infty\\
    2&\infty&1
\end{smallmatrix}\right)$
and $\mathbf T_{\mathbf d}\in\Hom_{\mathscr A}(M(\mathbf d),M)$ is injective by~\cite{CP}. It
factorizes into 
the following chain of elementary Tits homomorphisms
$$
\begin{dynkinDiagram}[expand labels={1,2,3}]A3
\dynkinEdgeLabel{1}{2}{\infty}
\dynkinEdgeLabel{2}{3}{\infty}
\end{dynkinDiagram}
\xrightarrow{\mathbf T_{2,d}}
\begin{dynkinDiagram}[expand labels={1,2,3}]A3
\dynkinEdgeLabel{1}{2}{2d}
\dynkinEdgeLabel{2}{3}{2d}
\end{dynkinDiagram}
\xrightarrow{\mathbf T_{1,d}}
\begin{dynkinDiagram}[expand labels={1,2,3}]A3
\dynkinEdgeLabel{2}{3}{2d}
\end{dynkinDiagram}
\xrightarrow{\mathbf T_{3,d}}\\
\dynkin A3
$$
whence~$\mathbf T_{2,d}$ and~$\mathbf T_{1,d}\circ
\mathbf T_{2,d}=\mathbf T_{(d,d,1)}$ are injective. 
\end{example}

\begin{proposition}\label{prop:partial Tits}
Let~$m\in\{3,4,6\}$. Then~$\Phi_d\in\Hom_{\mathscr A}(I_2(\infty),
I_2(m))$ defined by $\Phi_d(\wh T_1)=T_1^d$, $
\Phi(\wh T_2)=T_2$ is injective
if and only if~$d\ge 2(1+\delta_{3,m})$, that is,
if and only if~$\Phi_d=\mathbf T_{1,d}=
\mathbf T_{(d,1)}$ is an (elementary) Tits homomorphism.
\end{proposition}
\begin{proof}
Suppose first that~$m=3$ and so~$I_2(m)=A_2$.
It is well-known (and easy to verify) that 
the assignments 
$$
T_1\mapsto \begin{pmatrix}1&u\\
0&1
\end{pmatrix},\qquad T_2\mapsto \begin{pmatrix}1&0\\
-u^{-1}&1
\end{pmatrix},\qquad u\in\mathbb C\setminus\{0\}
$$
define a homomorphism~$\rho_u:\Br(A_2)\to SL(2,\mathbb C)$.
Then~$\rho_u(T_1^d)=\left(\begin{smallmatrix}1&du\\0&1\end{smallmatrix}\right)$. By~\cite{CJR}*{Theorem~2} the 
subgroup of~$SL(2,\CC)$ generated
by $\left(\begin{smallmatrix}
1&\alpha\\
0&1
\end{smallmatrix}\right)$ and
$\left(\begin{smallmatrix}
1&0\\
\beta&1
\end{smallmatrix}\right)$, $\alpha,\beta\in\CC$
is free if and only if~$|\alpha\beta|,
|\alpha\beta\pm 2|\ge 2$. Since~$\alpha=du$ with $d\ge 4$ and~$\beta=-u^{-1}$
obviously satisfy these conditions, 
the subgroup of~$SL(2,\mathbb C)$ generated 
by $\rho_u(T_1^d)$, $d\ge 4$, and~$\rho_u(T_2)$ is free. Thus,
the subgroup of~$\Br(A_2)$ and hence the submonoid of~$\Br^+(A_2)$ generated by~$T_1^d$, $d\ge 4$
and~$T_2$ are free.

If~$m=4$ or~$m=6$ then the 
factorization of~$\mathbf T_{(d,\frac m2)}\in\Hom_{\mathscr A}(I_2(\infty),A_2)$
as~$\mathbf T_{(d,\frac m2)}=\mathbf T_{2,\frac m2}\circ \mathbf T_{1,d}$ yields the injectivity
of~$\mathbf T_{1,d}$ by Corollary~\ref{cor:left factor Tits}.

Conversely, if~$d=1$ then the homomorphism~$\Phi_d$ is tautological and hence not injective as 
$\brd{T_1T_2}m=\brd{T_2T_1}m$. 
If~$m=3$ and~$d\in\{2,3\}$ then the homomorphism
$\Br^+(I_2(\infty))\to \Br^+(I_2(m))$ is 
again not injective as $\brd{T_1^d T_2}{2d}=
\brd{T_2T_1^{d}}{2d}$. 
\end{proof}

The following Proposition generalizes Examples~\ref{ex:1.6} and~\ref{ex:affine}.
\begin{proposition}\label{prop:Tits standard}
Every Tits homomorphism is a composition of
standard homomorphisms.
\end{proposition}
\begin{proof}
By Proposition~\ref{prop:elementary Tits}, it suffices to prove the assertion for elementary Tits homomorphisms. We need the following 
\begin{lemma}\label{lem:elem Tits std}
Let~$d\in\ZZ_{>1}$, $M\in\Cox I$, $i\in I$
and let~$\mathbf d=(d^{\delta_{i,j}})_{j\in I}$. Let~$\wh I=(I\setminus\{i\})
\sqcup S$ where~$S$ is any set with~$|S|=d$. Define~$\varpi:\wh I\to I$ 
and~$\wh M=(\wh m_{jk})_{j,k\in\wh I}$ by
\begin{align*}
&\varpi(j)=\begin{cases}
j,&j\in I\setminus\{i\},\\
i,&j\in S,
\end{cases}\\
&\wh m_{jk}=\begin{cases}
2-\delta_{j,k},&\varpi(j)=\varpi(k),\\
m_{\varpi(j)\varpi(k)},&\text{otherwise}.
\end{cases}
\end{align*}
for all $j,k\in\wh I$. 
Then
\begin{enmalph}
    \item\label{lem:elem Tits std.a} $\wh M\in\Cox{\wh I}$ is foldable along~$\varpi$ and~$\wh M^{\varpi}=M$;
    \item\label{lem:elem Tits std.b} 
    the assignments $T_k\mapsto \prod_{j\in\varpi^{-1}(k)}\wh T_j$, $k\in I$, define a standard homomorphism
    $\Phi\in\Hom_{\mathscr A}(M(\mathbf d),\wh M)$;
    \item\label{lem:elem Tits std.c} $\mathbf T_{i,d}=\mathbf F_\varpi\circ \Phi\in\Hom_{\mathscr A}(M(\mathbf d),M)$.
\end{enmalph}
\end{lemma}
\begin{proof}
Part~\ref{lem:elem Tits std.a} is obvious. 
To prove~\ref{lem:elem Tits std.b}, note first that the product~$\prod_{s\in S}\wh T_s$ is well-defined since $\wh m_{st}=2$ for all~$s\not=t\in S$. Since~$m(\mathbf d)_{jk}=m_{jk}=\wh m_{jk}$ if $j,k\in I\setminus\{i\}$,
it suffices
to prove that the assignments
\begin{equation}\label{eq:restr}
T_i\mapsto \wh T_{w_\circ^S}=\prod_{s\in S}\wh T_s,\quad T_j\mapsto \wh T_j
\end{equation}
define 
a homomorphism $\Br^+_{\{i,j\}}(M(\mathbf d))\cong \Br^+(I_2(m))\to 
\Br^+(\wh M)$ where
$$
m=m(\mathbf d)_{ij}=\begin{cases}
2,&m_{ij}=2,\\
2d,&m_{ij}=3,\,d\in\{2,3\},\\
\infty,&\text{otherwise}.
\end{cases}
$$
If~$m_{ij}=2$ then $\wh m_{sj}=m_{ij}=2$ for all~$s\in S$
and so
$\prod_{s\in S}\wh T_s$ commutes with~$\wh T_j$. 
If~$m_{ij}=3$ and~$d\in\{2,3\}$ then the assignments~\eqref{eq:restr} define 
a homomorphism $\Br^+(I_2(2d))\to \Br^+(D_{d+1})$
(see the proof of Lemma~\ref{lem:finite elem Tits}). The remaining case is obvious. Finally, $\Phi$ is
standard as~$\prod_{s\in S}\wh T_s=\wh T_{w_\circ^S}$
since all the $\wh T_s$, $s\in S$ commute.

Finally, we have~$\mathbf F_\varpi\circ\Phi(T_k)=T_k$, $k\in I\setminus\{i\}$
and~$\mathbf F_\varpi\circ\Phi(T_i)=\mathbf F_\varpi(\prod_{s\in S}\wh T_s)=T_i^d$
which proves~\ref{lem:elem Tits std.c}.
\end{proof}
Since both~$\Phi$ and~$\mathbf F_\varpi$ are standard,
the assertion follows for elementary Tits homomorphisms.
\end{proof}
\begin{remark}
The elementary Tits homomorphisms listed in Example~\ref{ex:affine} exhaust fully supported elementary Tits homomorphisms in
$\Hom_{\mathscr A}(\wh M,M)$ where~$\wh M$ is of affine type and~$M$ is of finite or affine type.
\end{remark}

\subsection{Factorization of light homomorphisms}\label{subs:factor light}
An analogue of Proposition~\ref{prop:classify light} for Artin monoids turns out to be more involved. 

\begin{theorem}\label{thm:classify light Artin}
Every light homomorphism of Artin monoids is a composition of one or more of the following
\begin{itemize}
    \item[-] a tautological homomorphism;
    \item[-] a parabolic projection;
    \item[-] a natural inclusion;
    \item[-] a Tits homomorphism;
    \item[-] a diagram automorphism;
    \item[-] a folding along a surjective map of index sets of Coxeter matrices.
\end{itemize}
In particular, every light homomorphism of Artin monoids is a composition of standard homomorphisms.
\end{theorem}

\begin{proof}
Let~$\wh M=(\wh m_{ij})_{i,j\in \wh I}\in\Cox{\wh I}$, $M\in\Cox I$ and let~$\Phi\in\Hom_{\mathscr A}(\wh M,M)$ be light.
By Lemma~\ref{lem:factor homs} we may assume that~$\Phi$ is optimal. 

Let~$\wh I_s=\{i\in \wh I\,:\,|[\Phi](i)|=s\}$, $s\in\{0,1\}$. Note first that~$P_{\wh I_1}$ is a well-defined homomorphism. Indeed, since~$\Phi\in\Hom_{\mathscr A}(\wh M,M)$, by Lemma~\partref{lem:elem Artin hom.c} if~$i\in\wh I_0$ then~$j\in\wh I_0$ for all~$j\in\wh I$ with~$\wh m_{ij}$ odd. Then~$\Phi=\Phi|_{\wh I_1}\circ P_{\wh I_1}$,
so we may assume without loss of generality that~$\wh I_0=\emptyset$. Furthermore, if~$\Phi$ is not fully supported, we can write it as a composition of a
fully supported light homomorphism with a natural inclusion.

Thus, we are assuming that~$\Phi$ is optimal, fully supported and~$|[\Phi](i)|=1$ for all~$i\in \wh I$. In particular, $[\Phi]$ can be regarded as a surjective map $\wh I\to I$.
Furthermore, by Lemma~\ref{lem:diagonal} we may assume that both~$M$ and~$\wh M$ are irreducible.

Given~$i\in \wh I$, define~$d_i\in\ZZ_{>0}$ by
$\Phi(\wh T_i)=T_{[\Phi](i)}^{d_i}$. Given $j\in I$ and~$d\in\ZZ_{>0}$, let
$\wh I(\Phi,j,d)=\{ i\in[\Phi]^{-1}(j)\,:\,d_i=d\}$. Clearly, 
$\wh I(\Phi,j,d)=\emptyset$ for all
but finitely many~$d\in\ZZ_{>0}$.
Let~$N(\Phi)=|\{i\in\wh I\,:\,d_i>1\}|$.
\begin{lemma}\label{lem:all powers 1}
Suppose that~$N(\Phi)=0$. Then 
$\wh M$ is foldable along~$[\Phi]$,
$M=\wh M^{[\Phi]}$ and~$\Phi=\mathbf F_{[\Phi]}$.
\end{lemma}
\begin{proof}
Since~$N(\Phi)=0$, $d_i=1$ for all~$i\in\wh I$.  
By the optimality of~$\Phi$, $\wh m_{ij}=
m_{[\Phi](i)[\Phi](j)}$ for all $i,j\in\wh I$ such that~$[\Phi](i)\not=[\Phi](j)$. The assertion is now immediate. 
\end{proof}
\begin{lemma}\label{lem:class light Artin 2}
Let $k\in I$, $d\in\ZZ_{>1}$
and~$i\in \wh I(\Phi,k,d)$.  
Set~$\widetilde I=
(\wh I\setminus\wh I(\Phi,k,d))\cup\{i\}$ and  define~$\varpi=\varpi_{k,d}:\wh I\to \widetilde I$
and~$\widetilde M=(\widetilde m_{i'i''})_{i',i''\in\widetilde I}$
by 
\begin{align*}
\varpi(j)&=\begin{cases}
j,&j\in\wh I\setminus\wh I(\Phi,k,d),\\
i,&j\in\wh I(\Phi,k,d),
\end{cases}
\\
\widetilde m_{i'i''}&=\begin{cases}
3,&i\in\{i',i''\},\,
m_{[\Phi](i')[\Phi](i'')}=3,\,d_{i'}d_{i''}\in\{2,3\},\\
\wh m_{i'i''},&\text{otherwise}
\end{cases}
\end{align*}
for all~$j\in\wh I$ and  for all~$i',i''\in\widetilde I$.
Then 
\begin{enmalph}
\item\label{lem:class light Artin 2.i} $\wh M$ is foldable along~$\varpi$;
\item\label{lem:class light Artin 2.ii} $\widetilde M\in\Cox{\widetilde I}$ and
$\wh M^\varpi=\widetilde M(\widetilde{\mathbf d})$ where~$\widetilde{\mathbf d}=(\widetilde d_j)_{j\in\widetilde I}$
with $\widetilde d_j=d^{\delta_{i,j}}$, $j\in\widetilde I$;
\item\label{lem:class light Artin 2.iii} The assignments $\widetilde T_j\mapsto T_{[\Phi](j)}^{d_j^{1-\delta_{i,j}}}$, $j\in\widetilde I$ define
$\widetilde\Phi\in\Hom_{\mathscr A}(\widetilde M,M)$ with~$N(\widetilde\Phi)<N(\Phi)$;
\item\label{lem:class light Artin 2.iv} $\Phi=\widetilde \Phi\circ\mathbf T_{i,d}\circ\mathbf F_\varpi$.
\end{enmalph}
\end{lemma}
\begin{proof}
Let~$j\in \wh I\setminus \wh I(\Phi,k,d)$ and let~$l=[\Phi](j)$.
We claim that for all~$i'\in \wh I(\Phi,k,d)$
\begin{equation}
\label{eq:class step 0c}
\wh m_{i'j}=\begin{cases}
2,&m_{kl}\le 2,\\
2d,&m_{kl}=3,\,d_j=1,\,d\in\{2,3\},\\
\infty,&\text{otherwise}.
\end{cases}
\end{equation}
Indeed, if~$l=k$ then, since~$\Phi(\wh T_{j})=T_k^{d_{j}}\not=T_k^{d}=\Phi(\wh T_{i'})$, $\wh m_{i'j}=2$ by the optimality of~$\Phi$. Suppose that~$l\not=k$. 
Then the restriction of~$\Phi$ to~$\Br^+_{\{i',j\}}(\wh M)\cong \Br^+(I_2(\wh m_{i'j}))$ is a homomorphism
to~$\Br^+_{\{k,l\}}(M)\cong 
\Br^+(I_2(m_{kl}))$. 
If~$m_{kl}=\infty$
then clearly
$\wh m_{i'j}=\infty$. 
If~$m_{kl}=2$ then, in particular, $\Phi(\wh T_{i
})=T_k^d\not=T_l^{d_j}=\Phi(\wh T_j)$ and so~$\wh m_{i'j}=2$ by the optimality of~$\Phi$.
Suppose that~$2<m_{kl}<\infty$.
If~$d_ {j}>1$ (respectively, if $d_{j}=1$
and either $d>3$ or~$m_{kl}>3$) then
$\wh m_{i'j}=\infty$ by~\cite{CP} (respectively, Proposition~\ref{prop:key converse Tits}). 
Finally, if $d_{j}=1$, $d\in\{2,3\}$ and~$m_{kl}=3$
then $\wh m_{i'j}=2d$ by the optimality of~$\Phi$ and Proposition~\ref{prop:key converse Tits}. 

Part~\ref{lem:class light Artin 2.i} is immediate from~\eqref{eq:class step 0c}.

The first assertion in~\ref{lem:class light Artin 2.ii} is obvious. Let~$j,j'\in\widetilde I$.
If~$i\notin\{j,j'\}$ then $\widetilde d_j=\widetilde d_{j'}=1$ and so
$\widetilde M(\widetilde{\mathbf d})_{jj'}=\widetilde m_{jj'}=\wh m_{jj'}=(\wh M^\varpi)_{jj'}$ by~\eqref{eq:Tits cover}. Suppose that, say, $j'=i\not=j$. Then~$\widetilde d_j=1$
and so by~\eqref{eq:Tits cover}
$$
\widetilde M(\widetilde{\mathbf d})_{ij}=\begin{cases}
\widetilde m_{ij},&\widetilde m_{ij}=2,\\
2d,&\widetilde m_{ij}=3,\,d\in\{2,3\},\\
\infty,&\text{otherwise}.
\end{cases}
$$
By definition of~$\widetilde M$, $\widetilde m_{ij}=2$
implies that~$\wh m_{ij}=2=(\wh M^\varpi)_{ij}$. If~$\widetilde m_{ij}=3$ then, since~$\wh m_{ij}\not=3$ by~\eqref{eq:class step 0c}, we must have
$m_{[\Phi](i)[\Phi](j)}=3$, $d_j=1$, $d\in\{2,3\}$ and then~$\wh m_{ij}=2d$ by~\eqref{eq:class step 0c}.
Finally, suppose that~$\widetilde m_{ij}>3$.
Then~$\widetilde m_{ij}=\wh m_{ij}$ by definition of~$\widetilde M$ which forces both of them to be equal to~$\infty$ by~\eqref{eq:class step 0c}. Thus, $\wh M^\varpi=\widetilde M(\widetilde{\mathbf d})$.

To prove~\ref{lem:class light Artin 2.iii}, we only need to show that 
for any~$j\in\widetilde I\setminus\{i\}$, the
assignments 
\begin{equation}\widetilde T_i\mapsto T_k,\quad \widetilde T_j\mapsto T_{l}^{d_j},\quad  l=[\Phi](j),\label{eq:Phi tild}
\end{equation}
define a homomorphism
$\Br^+_{\{i,j\}}(\widetilde M)\cong \Br^+(I_2(\widetilde m_{ij}))\to \Br^+_{\{k,l\}}(M)$.
If~$l=k$ then $\wh m_{ij}=2$ by~\eqref{eq:class step 0c},
$\widetilde m_{ij}=\wh m_{ij}=2$ and, obviously, $T_j T_j^{d_k}=
T_j^{d_k}T_j$. Suppose that~$l\not=k$
and so~$\Br^+_{\{k,l\}}(M)\cong\Br^+(I_2(m_{kl}))$.
If~$\widetilde m_{ij}=3$ then, as in the proof of~\ref{lem:class light Artin 2.ii} above, $m_{kl}=3$, $d_j=1$
and~\eqref{eq:Phi tild} defines an isomorphism of respective parabolic submonoids.
Otherwise, $\widetilde m_{ij}=\wh m_{ij}\in\{2,\infty\}$. In either case, \eqref{eq:Phi tild} defines a homomorphism of respective parabolic submonoids. By construction, $N(\Phi')
\le N(\Phi)-1<N(\Phi)$.

To prove~\ref{lem:class light Artin 2.iv}, note that for~$j\in \wh I\setminus\wh I(\Phi,k,d)$ we get $$(\widetilde\Phi\circ\mathbf T_{i,d}\circ\mathbf F_\varpi)(\wh T_j)=(\widetilde\Phi\circ\mathbf T_{i,d})(\wh T_j)=\widetilde\Phi(\widetilde T_j)=T_{[\Phi](j)}^{d_j}=\Phi(\wh T_j),$$ 
while 
for~$j\in\wh I(\Phi,k,d)$,
$$
(\widetilde\Phi\circ\mathbf T_{i,d}\circ\mathbf F_\varpi)(\wh T_j)
=(\widetilde\Phi\circ\mathbf T_{i,d})(\wh T_i)=
\widetilde \Phi(\widetilde T_i^{d})
=\widetilde \Phi(\widetilde T_i)^{d_i}
=T_{k}^{d_i}=\Phi(\wh T_j).
$$
This completes the proof of Lemma~\ref{lem:class light Artin 2}.
\end{proof}

Note that~$\widetilde\Phi$ obtained from~$\Phi$
by Lemma~\ref{lem:class light Artin 2} does not have to be optimal. However, by Lemma~\ref{lem:factor homs}, we can always factor out a tautological homomorphism. Thus, by applying Lemma~\ref{lem:class light Artin 2} repeatedly, we conclude that~$\Phi=\Phi'\circ\Phi''$ where~$\Phi'$ is optimal with $N(\Phi')=0$ and~$\Phi''$ is a composition of elementary Tits homomorphisms, foldings along surjective maps of index sets and tautological homomorphisms. But then~$\Phi'$ is a folding by Lemma~\ref{lem:all powers 1}.

All light homomorphisms listed in Theorem~\ref{thm:classify light Artin}, with the exceptions of Tits homomorphisms, are standard, while 
Tits ones are compositions of standard homomorphisms by Proposition~\ref{prop:Tits standard}.
\end{proof}
\begin{example}
Let $d\in\{2,3\}$, $m\in\ZZ_{>2}\cup\{\infty\}$ and 
let~$\wh M=\left(\begin{smallmatrix}
    1&\infty&2d&2d\\
    \infty&1&2&2\\
    2d&2&1&m\\
    2d&2&m&1
\end{smallmatrix}\right)$. 
It is easy to see that the assignments $\wh T_1\mapsto T_1$, $\wh T_2\mapsto T_2^{d'}$, $\wh T_3\mapsto T_2^{d}$,
$\wh T_4\mapsto T_2^{d}$, 
$d'\in\ZZ_{>3}$, define an optimal homomorphism $\Br^+(\wh M)\to \Br^+(A_2)$, which factorizes as
\begin{multline*}
\begin{dynkinDiagram}[name=upper,label directions={above}]A1
\node (current) at ($(upper root 1)+(-1cm,-1cm)$) {};
\dynkin[at=(current),name=left,labels={2},label directions={below}]A1
\node (current) at ($(left root 1)+(1cm,0cm)$) {};
\dynkin[at=(current),name=right,labels={3,4},label directions={below,below}]A2
\dynkinEdgeLabel{1}{2}{m}
\begin{pgfonlayer}{Dynkin behind}
\draw[/Dynkin diagram,edge]
($(upper root 1)$) -- ($(left root 1)$);
\draw[/Dynkin diagram,edge]
($(upper root 1)$) -- ($(right root 1)$);
\draw[/Dynkin diagram,edge]
($(upper root 1)$) -- ($(right root 2)$);
\end{pgfonlayer}
\draw[draw=none] (left root 1) to
                node[auto,%
                inner sep=0.8,%
                /Dynkin diagram/text style,%
                /Dynkin diagram/edge label]
                {\(\pgfkeys{/Dynkin diagram/label macro*=\infty}\)}%
                (upper root 1);%
\draw[draw=none] (right root 1) to
                node[auto,%
                inner sep=1,%
                /Dynkin diagram/text style,%
                /Dynkin diagram/edge label]
                {\(\pgfkeys{/Dynkin diagram/label macro*={2d}}\)}%
                (upper root 1);%
\draw[draw=none] (upper root 1) to
                node[auto,%
                inner sep=0.8,%
                /Dynkin diagram/text style,%
                /Dynkin diagram/edge label]
                {\(\pgfkeys{/Dynkin diagram/label macro*={2d}}\)}%
                (right root 2);%
\end{dynkinDiagram}
\xrightarrow{\mathbf F_{\varpi_{[1,4]}}}
\begin{dynkinDiagram}[labels={2,1,3}]A3
\dynkinEdgeLabel{1}{2}{\infty}
\dynkinEdgeLabel{2}{3}{2d}
\end{dynkinDiagram}
\xrightarrow{
\mathbf T_{2,d'}\circ\mathbf T_{3,d}}
\begin{dynkinDiagram}[labels={2,1,3}]A3
\dynkinEdgeLabel{1}{2}{\infty}
\end{dynkinDiagram}
\xrightarrow{\tau}
\dynkin[labels={2,1,3}]A3
\\
\xrightarrow{\mathbf F_{\varpi_{[1,3]}}}
\dynkin[labels={2,1}]A2,
\end{multline*}
where~$\varpi_{[1,k]}:[1,k]\to [1,k-1]$, $k>1$ is defined by $\varpi(i)=i-\delta_{i,k}$,
$i\in[1,k]$ and~$\tau$ is the tautological homomorphism.
\end{example}
\begin{example}
Let~$M=(m_{ij})_{i,j\in I}\in\Cox I$ and let~$\mathbf d=(d_i)_{i\in I}\in\ZZ_{\ge 0}^I$ with~$d_i=d_j$ whenever~$m_{ij}$
is odd. Then the character homomorphism~$\Xi_\mathbf X:\Br^+(M)\to\Br^+(A_1)$ with~$\mathbf X=(T_1^{d_i})_{i\in I}$ factorizes as follows. Let~$\sim_M$ be the transitive closure of the relation on~$I$ defined by~$i\sim_M j$ whenever~$m_{ij}$
is odd, $i,j\in I$; this is manifestly an equivalence relation. Let~$\underline I$ be the set of equivalence 
classes for~$\sim_M$ and denote the class of~$i\in I$
by~$\underline i$. Let~$\tilde M=(\tilde m_{ij})_{i,j\in I}$
with~$\tilde m_{ij}=m_{ij}$ if~$m_{ij}$ is odd and~$\tilde m_{ij}=2$
if~$m_{ij}$ is even, $i,j\in I$. Note that~$\tilde M$
is foldable along the canonical map~$\varpi:I\to\underline I$, $i\mapsto \underline i$, $i\in I$
with~$\tilde M^\varpi$ being the 
product of copies of~$A_1$ indexed by~$\underline I$
(cf. Lemma~\partref{lem:free product.a}). Then
$\Xi_{\mathbf X}=\mathbf F_{\underline\varpi}\circ 
\mathbf T_{\underline{\mathbf d}}\circ \mathbf F_{\varpi}
\circ\tau$, where~$\tau:\Br^+(M)\to\Br^+(\tilde M)$
is the tautological homomorphism, $\underline{\mathbf d}=
(d_{\underline i})_{\underline i\in\underline I}$
with~$d_{\underline i}=d_i$, $i\in I$, and~$\underline\varpi$
is the unique map~$\underline I\to \{1\}$.
\end{example}
\begin{proposition}
\label{prop:types of light}
The following exhausts
optimal fully supported light homomorphisms between Artin monoids of irreducible finite types
\begin{enmalph}

\item\label{prop:types of light.PBnAn-1} $P_{[1,n-1]}:\Br^+(B_n)\to \Br^+(A_{n-1})$;
\item\label{prop:types of light.PF4A2}  $P_{[1,2]}:\Br^+(F_4)\to \Br^+(A_2)$;
\item\label{prop:types of light.PBnA1}
$P_{\{n\}}:\Br^+(B_n)\to\Br^+(A_1)$;
\item\label{prop:types of light.PI2A1}
$P_{\{1\}}:\Br^+(I_2(2m))\to \Br^+(A_1)$;
\item\label{prop:types of light.FDn+1An} $\mathbf F_{\varpi_{(n,n+1)}}:
\Br^+(D_{n+1})\to \Br^+(A_n)$, $n\ge 2$, where 
$\varpi_{(n,n+1)}:[1,n+1]\to[1,n]$ is defined by~$\varpi_{(n,n+1)}(i)=i-\delta_{i,n+1}$, $i\in[1,n+1]$;
\item\label{prop:types of light.FD4A2} $\mathbf F_{\varpi_{(1,3,4)}}:
\Br^+(D_4)\to \Br^+(A_2)$ where 
$\varpi_{(1,3,4)}:[1,4]\to\{1,2\}$
is defined by $\varpi_{(1,3,4)}(i)=1$, $i\in\{1,3,4\}$, $\varpi_{(1,3,4)}(2)=2$;
\item\label{prop:types of light.A1A1}
$\mathbf T_{1,d}:\Br^+(A_1)\to\Br^+(A_1)$, $T_1\mapsto T_1^d$, $d\in\ZZ_{>1}$;
\item\label{prop:types of light.BnAn} $\mathbf T_{n,2}:\Br^+(B_n)\to\Br^+(A_n)$, $\wh T_i\mapsto T_i^{1+\delta_{i,n}}$, $i\in[1,n]$;
\item\label{prop:types of light.G2A2} $\mathbf T_{2,3}:\Br^+(G_2)\to \Br^+(A_2)$, $\wh T_1\mapsto T_1$, $T_2\mapsto T_2^3$;
\item\label{prop:types of light.len} The length
homomorphism~$\ell:\Br^+(M)\to (\ZZ_{\ge0},+)\cong
\Br^+(A_1)$.
\end{enmalph}

The only tautological homomorphisms
of irreducible Artin monoids of finite type are those from $\Br^+(I_2(dm))$ to~$\Br^+(I_2(m))$, 
$d>0$, $m\ge 3$.

All other light homomorphisms of 
Artin monoids of finite type are obtained as compositions of the above ones, up to natural inclusions, parabolic projections onto connected components, direct products
and diagram automorphisms.
\end{proposition}
\begin{proof}
It suffices to verify that the above list exhausts all ``elementary'' light homomorphisms listed in Theorem~\ref{thm:classify light Artin}. 
 By
Propositions~\ref{prop:parab proj Artin}, \ref{prop:types of light.PBnAn-1}--\ref{prop:types of light.PI2A1} are the only parabolic projections existing in finite irreducible types. 
By Proposition~\ref{prop:key converse Tits}, \ref{prop:types of light.A1A1}--\ref{prop:types of light.G2A2} exhaust
all elementary Tits homomorphisms in finite types.
By inspection, the only existing foldings with irreducible codomain are those in~\ref{prop:types of light.FDn+1An}, \ref{prop:types of light.FD4A2}
and \ref{prop:types of light.len}.
\end{proof}

\subsection{Light homomorphisms of Artin monoids are parabolic}\label{subs:light Artin->parabolic}
We now have all necessary ingredients to prove the following
\begin{theorem}\label{thm:light->parabolic}
Let~$\wh M$, $M$ be Coxeter matrices of finite type
and let~$\Phi\in\Hom_{\mathscr A}(\wh M,M)$ be light. Then
$\Phi$ is parabolic.
\end{theorem}
\begin{proof}
It suffices to prove the Theorem for
homomorphisms listed in Proposition~\ref{prop:types of light}.
\begin{proposition}\label{prop:PJ Bn An-1}
$P_{[1,n-1]}\in\Hom_{\mathscr A}(B_n,A_{n-1})$ is parabolic.
\end{proposition}
\begin{proof}
Let~$M=B_n$, $I=[1,n]$. 
We need the following
\begin{lemma}\label{lem:P Bn An-1 w0J}
Let~$J\subset [1,n]$ and let~$J'$
be the connected component of~$J$
containing~$n$ if~$n\in J$ and set~$J'=\emptyset$ if~$n\notin J$. Then
$$
P_{[1,n-1]}(T_{w_\circ^J})=T_{w_\circ^{J\setminus J'}}T_{w_\circ^{J'\setminus\{n\}}}^2
=T_{w_\circ^{J\setminus \{n\}}}T_{w_\circ^{J'\setminus\{n\}}}
=T_{w_\circ^{J'\setminus \{n\}}}T_{w_\circ^{J\setminus\{n\}}}.
$$
\end{lemma}
\begin{proof}
Since~$J\setminus J'$ and~$J'\setminus\{n\}$
are orthogonal and~$J\setminus\{n\}=(J\setminus J')
\cup (J'\setminus\{n\})$ the second and the third
equalities follow from the first. Clearly, it suffices
to prove the first one
for~$J=[a,b]$, $1\le a\le b\le n$. If~$b<n$
then the assertion is obvious.
Since 
$T_{w_\circ^{[a,n]}}=\Cx an{}^{h(M_{[a,n]})/2}=\Cx an{}^{n-a+1}
$
for all
$1\le a\le n$ by Proposition~\partref{prop:Coxeter splitting.b}, it follows that
$P_{[1,n-1]}(T_{w_\circ^{[a,n]}})=
\Cx a{(n-1)}{}^{n-a+1}=\Cx a{(n-1)}{}^{h(M_{[a,n-1]})}=T_{w_\circ^{[a,n-1]}}^2$ by Proposition~\partref{prop:Coxeter splitting.a}.
\end{proof}
Let~$J\subset I$. Then in~$\Br(M)$ we have 
$T_{w_{J}}=T_{w_\circ^J}^{-1}T_{w_\circ^I}$, whence 
by Lemma~\ref{lem:P Bn An-1 w0J} and Proposition~\partref{prop:fund elts BrSa.c}
\begin{align*}
P_{[1,n-1]}(T_{w_J})&=T_{w_\circ^{J\setminus\{n\}}}^{-1}
T_{w_\circ^{J'\setminus\{n\}}}^{-1} T_{w_\circ^{[1,n-1]}}^2
=T_{w_\circ^{J\setminus\{n\}}}^{-1}
T_{w_\circ^{[1,n-1]}}
T_{w_\circ^{\sigma(J'\setminus\{n\})}}^{-1}
T_{w_\circ^{[1,n-1]}}\\&
=T_{w_{J\setminus\{n\};[1,n-1]}}T_{w_{\sigma(J'\setminus\{n\});[1,n-1]}},
\end{align*}
where~$\sigma$ is the diagram automorphism of~$\Br^+_{[1,n-1]}(M)\cong \Br^+(A_n)$. Therefore,
$P_{[1,n-1]}(T_{w_J})
\in\mP(\Br^+_{[1,n-1]}(M))$.
\end{proof}

\begin{proposition}\label{prop:PJ F4 A2}
$P_{\{1,2\}}\in\Hom_{\mathscr A}(F_4,A_2)$ is parabolic.
\end{proposition}
\begin{proof}
Let~$M=F_4$.
We need the following
\begin{lemma}\label{lem:PJ F4 A2 w0J}
Let~$J\subset [1,4]$ and let~$J'$
be the connected component of~$J$ not contained in~$\{3,4\}$.
Then
$$
P_{\{1,2\}}(T_{w_\circ^J})=
\begin{cases}
T_1,&J'=\{1\},\\
T_{w_\circ^{\{1,2\}}}^{2^{k-2}},&J'=[1,k],\,2\le k\le 4,\\
T_2^{k-1},&J'=[2,k],\,2\le k\le 4.\\
\end{cases}
$$
\end{lemma}
\begin{proof}
Clearly, $J$ has at most two connected components. If $J'$, $J''$ are connected components of~$J$ with~$J''\subset\{3,4\}$ then~$P_{\{1,2\}}(T_{w_\circ^J})=P_{\{1,2\}}(T_{w_\circ^{J'}})$.

The assertion for~$J'\subset[1,3]$ was proven in Lemma~\ref{lem:P Bn An-1 w0J}. 
By Proposition~\partref{prop:Coxeter splitting.b}, $T_{w_\circ^{[1,4]}}=
\Cx14{}^{h(M)}=\Cx14{}^6$ and so
$P_{\{1,2\}}(T_{w_\circ^{[1,4]}})=
(T_1T_2)^6=T_{w_\circ^{[1,2]}}^4$
as $(T_1T_2)^3=T_{w_\circ^{\{1,2\}}}^2$.
Since~$T_{w_\circ^{[2,k]}}=
\Cx2k{}^{k-1}$, $2\le k\le 4$ 
(this is obvious for~$2\le k\le 3$ and follows from Proposition~\partref{prop:Coxeter splitting.b} for~$k=4$), it follows that~$P_{\{1,2\}}(T_{w_\circ^{[2,k]}})=T_2^{k-1}$, $2\le k\le 4$.
\end{proof}
We have $T_{w_J}=T_{w_\circ^J}^{-1}T_{w_\circ^I}$
in~$\Br(M)$ and so~$P_{\{1,2\}}(T_{w_J})=P_{\{1,2\}}(T_{w_\circ^{J'}})^{-1}T_{w_\circ^{\{1,2\}}}^4$ by Lemma~\ref{lem:PJ F4 A2 w0J}. 
If~$J'=[1,k]$, $2\le k\le 4$ then
$P_{\{1,2\}}(T_{w_J})=T_{w_\circ^{\{1,2\}}}^{4-2^{k-2}}\in\mP(\Br^+_{\{1,2\}}(M))$. If~$J'=[2,k]$, $2\le k\le 4$ then since $T_i^{-1}T_{w_\circ^{\{1,2\}}}=
T_{w_\circ^{\{1,2\}}}\sigma(T_i)^{-1}$,
$i\in\{1,2\}$ where~$\sigma$ be the diagram automorphism of~$\Br^+_{\{1,2\}}(M)\cong\Br^+(A_2)$,
\begin{align*}
P_{\{1,2\}}(T_{w_J})&=
T_2^{1-k}T_{w_\circ^{\{1,2\}}}^4
=
\big(\ascprod_{0\le r\le k-2}\sigma^{r}(T_2)^{-1}T_{w_\circ^{\{1,2\}}}\big)
T_{w_\circ^{\{1,2\}}}^{5-k}
\\
&=\big(\ascprod_{0\le r\le k-2}T_{w_{\{2-\overline r\};\{1,2\}}}\big)
T_{w_\emptyset;\{1,2\}}^{5-k}\in \mP(\Br^+_{\{1,2\}}(M)).
\end{align*}
Finally, if~$J'=\{1\}$, $P_{\{1,2\}}(T_{w_J})=T_1^{-1}T_{w_\circ^{\{1,2\}}}^4=T_{w_{\{1\};\{1,2\}}}T_{w_{\emptyset;{\{1,2\}}}}^3\in\mP(\Br^+_{\{1,2\}}(M))$.
\end{proof}
The homomorphisms from Proposition~\partref{prop:types of light.PBnA1} and~\partref{prop:types of light.PI2A1} are obviously parabolic
as the image of~$T_{w_\circ^J}$, $J\subset I$ under the corresponding homomorphism is always~$T_1^{d(J)}$
for some~$d(J)\in\ZZ_{\ge0}$ with
$d(I)=h(M)/2$ and~$d(J)\le d(I)$, $J\subsetneq I$.

Next we consider foldings listed in Proposition~\partref{prop:types of light.FDn+1An}\ref{prop:types of light.FD4A2}.
\begin{proposition}\label{prop:Dn+1 An parab}
Let~$M=D_{n+1}$, $n\ge 2$.
Then~$\mathbf F_{\varpi_{(n,n+1)}}:\Br^+(M)\to \Br^+_{[1,n]}(M)
\cong\Br^+(A_n)$
is a parabolic homomorphism.
\end{proposition}
\begin{proof}
Let~$\sigma$ be the diagram automorphism of~$\Br^+(M)$ corresponding
to the permutation $(n,n+1)$ of~$I$.
We need the following
\begin{lemma}\label{lem:Dn+1 An w0}
Let~$J\subset [1,n+1]$. Then
$$
\mathbf F_{\varpi_{(n,n+1)}}(T_{w_\circ^J})=
\begin{cases}
T_{w_\circ^J},& J\subset [1,n],\\
T_{w_\circ^{\sigma(J)}},& J\subset\sigma([1,n]),\\
T_{w_\circ^{J\setminus J'}}T_{w_\circ^{J'\setminus \{n+1\}}}^2=
T_{w_\circ^{J\setminus \{n+1\}}}
T_{w_\circ^{J'\setminus \{n+1\}}}\\
\quad =
T_{w_\circ^{J'\setminus \{n+1\}}}
T_{w_\circ^{J\setminus \{n+1\}}}
,&\{n,n+1\}\subset J,\\
\end{cases}
$$
where~$J'$ is the maximal interval $[i,n+1]$ contained in~$J$.
\end{lemma}
\begin{proof}
The first two cases are obvious. To prove the last,
since~$J\setminus J'$ and~$J'$ are orthogonal,
it suffices to prove the first equality and hence
to consider the case when~$J=J'$. If~$J=\{n,n+1\}$
then $T_{w_\circ^J}=T_nT_{n+1}$ and so
~$\mathbf F_{\varpi_{(n,n+1)}}(T_{w_\circ^J})=T_n^2$.
Otherwise, $J=[i,n+1]$ for some~$1\le i\le n-1$ and~$T_{w_\circ^J}$ is 
central in~$\Br^+_J(M)$ if~$n-i$ is even. If~$n-i$ is odd then~$T_j T_{w_\circ^J}=T_{w_\circ^J}T_j$
for all~$j\in [i,n-1]$ while
$T_j T_{w_\circ^J}=T_{w_\circ^J}T_{2n+1-j}$, $j\in\{n,n+1\}$. It follows that
$\mathbf F_{\varpi_{(n,n+1)}}(T_{w_\circ^{[i,n+1]}})$ is central in~$\Br^+_{[i,n]}(M)\cong \Br^+(A_{n+1-i})$. Since~$\ell(\mathbf F_{\varpi_{(n,n+1)}}(T))=\ell(T)$
for all~$T\in\Br^+(M)$,
we conclude that~$\ell(\mathbf F_{\varpi_{(n,n+1)}}(T_{w_\circ^{[i,n+1]}}))=
\ell(T_{w_\circ^{[i,n+1]}})=
(n+1-i)(n+2-i)=\ell(T_{w_\circ^{[i,n]}}^2)$. 
Since~$T_{w_\circ^{[i,n]}}^2$
generates the center of~$\Br^+_{[i,n]}(M)$ by Proposition~\partref{prop:fund elts BrSa.d}, it follows that
$\mathbf F_{\varpi_{(n,n+1)}}(T_{w_\circ^{[i,n+1]}})=T_{w_\circ^{[i,n]}}^2$.
\end{proof}
We claim that for any~$J\subset I$,
\begin{equation}\label{eq:Dn+1 An parab I}
\mathbf F_{\varpi_{(n,n+1)}}(T_{w_{J}})=\begin{cases}
T_{w_{J;[1,n]}}
T_{w_{\emptyset;[1,n]}},
&J\subset[1,n],\\
T_{w_{\sigma(J);[1,n]}}
T_{w_{\emptyset;[1,n]}},
&J\subset\sigma([1,n]),\\
T_{w_{J\setminus\{n+1\};[1,n]}}T_{w_{\sigma_{[1,n]}(J'\setminus\{n+1\});[1,n]}},&
\{n,n+1\}\subset J,
\end{cases}
\end{equation}
where~$\sigma_{[1,n]}$ is the diagram automorphism
of~$\Br^+_{[1,n]}(M)\cong \Br^+(A_n)$. Indeed,
we have, in $\Br(M)$, $T_{w_J}=T_{w_\circ^J}^{-1}
T_{w_\circ^I}$. If~$J\subset [1,n]$ we have 
by Lemma~\ref{lem:Dn+1 An w0}
$$
\mathbf F_{\varpi_{(n,n+1)}}(T_{w_J})=T_{w_\circ^J}^{-1} T_{w_\circ^{[1,n]}}^2
=T_{w_{J;[1,n]}}T_{w_\circ^{[1,n]}}.
$$
The case when~$J\subset \sigma([1,n])$ is proven similarly. Finally, if~$\sigma(J)=J$ then by Lemma~\ref{lem:Dn+1 An w0} and Proposition~\partref{prop:fund elts BrSa.c}
\begin{align*}
\mathbf F_{\varpi_{(n,n+1)}}(T_{w_J})&=T_{w_\circ^{J\setminus\{n+1\}}}^{-1}
T_{w_\circ^{J'\setminus \{n+1\}}}^{-1}
T_{w_\circ^{[1,n]}}^2
=T_{w_\circ^{J\setminus\{n+1\}}}^{-1}T_{w_\circ^{[1,n]}}
T_{w_\circ^{\sigma_{[1,n]}(J'\setminus \{n+1\})}}^{-1}
T_{w_\circ^{[1,n]}}\\
&=T_{w_{J\setminus\{n+1\};[1,n]}}T_{w_{\sigma_{[1,n]}(J'\setminus\{n+1\});[1,n]}}.
\end{align*}

Therefore, $\mathbf F_{\varpi_{(n,n+1)}}(T_{w_J})\in\mP(\Br^+_{[1,n]}(M))$
for all~$J\subset I$.
\end{proof}
\begin{proposition}\label{prop:D4 A2}
Let~$M=D_4$. Then~$\mathbf F_{\varpi_{(1,3,4)}}:
\Br^+(M)\to\Br^+(A_2)$ is a parabolic homomorphism.
\end{proposition}
\begin{proof}
Note that~$\mathbf F_{\varpi_{(1,3,4)}}=\mathbf F_{\varpi}\circ 
\mathbf F_{\varpi_{(3,4)}}$ where~$\varpi:[1,3]\to \{1,2\}$ is defined by $\varpi(1)=\varpi(3)=1$, $\varpi(2)=2$. Both homomorphisms are parabolic
by Proposition~\ref{prop:Dn+1 An parab}.
\end{proof}
The next class of homomorphisms to consider are elementary Tits homomorphisms from Proposition~\partref{prop:types of light.A1A1}\ref{prop:types of light.BnAn}\ref{prop:types of light.G2A2}. The first is obviously parabolic, while 
$\mathbf T_{n,2}\in
\Hom_{\mathscr A}(B_n,A_n)$ and~$\mathbf T_{1,3}\in\Hom_{\mathscr A}(G_2,A_2)$ are compositions 
of an unfolding (respectively, $\Br^+(B_n)\to \Br^+(D_{n+1})$ from~\eqref{eq:unfold Bn Dn+1} and 
$\Br^+(G_2)\to\Br^+(D_4)$ from Theorem~\partref{thm:adm finite class.even}), which is parabolic by
Theorem~\ref{thm:adm finite class}, with a 
folding (respectively, $\mathbf F_{\varpi_{(n,n+1)}}$ and
$\mathbf F_{\varpi_{(1,3,4)}}$)
whose parabolicity has already been established.

The length homomorphism is trivially parabolic. 

It remains to consider tautological homomorphisms
$\Phi:\Br^+(I_2(dm))\to \Br^+(I_2(m))$,
$m>2$, $d\ge 2$. Let~$I=\{1,2\}$.
We have $\Phi(\wh T_{w_\circ^I})
=\Phi(\brd{\wh T_1\wh T_2}{dm})=
\brd{T_1T_2}{dm}=(\brd{T_1T_2}{m})^d
=T_{w_\circ^I}^d$ by Lemma~\partref{lem:taut homs.a}.
Thus, $\Phi(\wh T_{w_{\emptyset}})\in 
\mP(\Br^+(I_2(m)))$. The remaining non-identity parabolic 
elements can be written, in~$\Br(I_2(dm))$, as 
$\wh T_{w_{\{i\}}}=\wh T_i^{-1}\wh T_{w_\circ^I}$,
$i\in \{1,2\}$ and so
$\Phi(\wh T_{w_{\{i\}}})=T_i^{-1}T_{w_\circ^I}^d
=T_{w_{\{i\}}}T_{w_{\emptyset}}^{d-1}\in 
\mP(\Br^+(I_2(m)))$.
\end{proof}

\subsection{Parabolic projections of Hecke monoids}
In this section, we fix a Coxeter matrix~$M$ over an index set~$I$ and
abbreviate $W=W(M)$ and~$W_J=W_J(M)$ for~$J\subset I$. The main
result of this section is
the following
\begin{theorem}\label{thm:pJwK is w0KstarJw0J}
For any $J \subset I$, $K\in\mathscr F(M)$, we have $p_J(w_{K}) = w_{ J\star_I K; J}$.  In particular, $p_J$ is a parabolic homomorphism.
\end{theorem}

\subsubsection{Reduction to connected subsets and corank one}
In this proof, we
will abbreviate $\mathscr F=\mathscr F(M)$ which in this case coincides with~$\mathscr P(I)$,
$J\star K=J\star_I K$ for $J,K\subset I$ and~$w_\circ=w_\circ^I$.

Define
$$
\plink{Good}\mathscr G=\mathscr G(M)=\{ (J,K)\in \mathscr F(M)\times \mathscr F(M)\,:\, p_J(w_K)=w_{J\star K;J}\}.
$$
Obviously, proving Theorem~\ref{thm:pJwK is w0KstarJw0J} amounts to proving that $\mathscr G=\mathscr P(I)\times\mathscr P(I)$.
Note that while $J\star K=K\star J$ for all $I,K\subset I$,
$p_J(w_K)$ and~$p_K(w_J)$ belong to different submonoids of~$W$
and do not need to be equal. Thus, $(J,K)\in\mathscr G$
does not immediately imply that~$(K,J)\in\mathscr G$.

The following proposition
is one of key ingredients of our proof, since it allows us
to consider only~$(J,K)\in \mathscr P(I)\times\mathscr P(I)$ such that both~$J$ and~$K$ are connected.
\begin{proposition} \label{prop:only connected}
\begin{enmalph}
\item\label{prop:only connected.a} Let $J',J''\subset I$ be orthogonal and let $K\subset I$.
If $(J',K),(J'',K)\in\mathscr G $ then $(J'\cup J'',K)
\in\mathscr G $.
\item\label{prop:only connected.b} Let $J\subset I$ and let $K',K''\subset I$ be orthogonal.
If $(J,K'),(J,K'')\in\mathscr G $ then $(J,K'\cup K'')\in
\mathscr G $.
\end{enmalph}
\end{proposition}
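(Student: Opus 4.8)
The plan is to prove both parts by reducing, via the structure of the Hecke monoid multiplication and the behavior of parabolic projections on products, to statements about how $w_\circ^J$ and the $\star$-operation interact with orthogonal decompositions. Throughout I work with $L=I$ as the excerpt already allows, so $\mathscr F=\mathscr P(I)$, every subset is of finite type, and $w_{K}=w_\circ^K w_\circ$.

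For part~\ref{prop:only connected.a}: suppose $J',J''\subset I$ are orthogonal and $(J',K),(J'',K)\in\mathscr G$. Since $J'$ and $J''$ are orthogonal, $W_{J'\cup J''}=W_{J'}\times W_{J''}$ and $w_\circ^{J'\cup J''}=w_\circ^{J'}\times w_\circ^{J''}$, so by Lemma~\ref{lem:extend supp} and Proposition~\ref{prop:prod *} the homomorphism $p_{J'\cup J''}$ factors as a ``product'' of $p_{J'}$ and $p_{J''}$ in a precise sense: for any $w\in W$ one has $p_{J'\cup J''}(w)\in W_{J'\cup J''}$, and I would like to say $p_{J'\cup J''}(w)=p_{J'}(w)\times p_{J''}(w)$ where $p_{J'}(w)\in W_{J'}$, $p_{J''}(w)\in W_{J''}$ commute and their product is a reduced (direct) product because their supports are orthogonal. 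The cleanest route: observe that $p_{J'}=p_{J'}\circ p_{J'\cup J''}$ and similarly for $J''$, so $p_{J'}(w)$ and $p_{J''}(w)$ are just the ``$W_{J'}$-component'' and ``$W_{J''}$-component'' of $p_{J'\cup J''}(w)$ under the splitting $W_{J'\cup J''}=W_{J'}\times W_{J''}$; hence $p_{J'\cup J''}(w)=p_{J'}(w)\,p_{J''}(w)$ always. Applying this to $w=w_K$ and using the hypotheses $(J',K),(J'',K)\in\mathscr G$ gives $p_{J'\cup J''}(w_K)=w_{J'\star K;J'}\,w_{J''\star K;J''}$. It then remains to identify this with $w_{(J'\cup J'')\star K;J'\cup J''}$. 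For this I would use that $\star_I$ is compatible with orthogonal decompositions in the sense $(J'\cup J'')\star K=(J'\star K)\cup(J''\star K)$ — which follows from Proposition~\ref{prop:submonoid *} together with $J'\star K\subset J'$, $J''\star K\subset J''$ and the fact that $w_{J'\star K;J'}$, $w_{J''\star K;J''}$ have orthogonal supports, so their $\star$-product in $W$ (which is the direct product) is a parabolic element of $W_{J'\cup J''}$; combined with $w_{A;J'}\times w_{B;J''}=w_{A\cup B;J'\cup J''}$ for $A\subset J'$, $B\subset J''$ orthogonal, this finishes the computation.

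For part~\ref{prop:only connected.b}: suppose $K',K''$ are orthogonal and $(J,K'),(J,K'')\in\mathscr G$. Here I use that $w_\circ^{K'\cup K''}=w_\circ^{K'}\times w_\circ^{K''}$ and that $w_{K'\cup K''}=w_\circ^{K'\cup K''}w_\circ=w_\circ^{K'}\times w_\circ^{K''}\times w_\circ$, so $\ell(w_{K'\cup K''})=\ell(w_\circ)-\ell(w_\circ^{K'})-\ell(w_\circ^{K''})$. The key identity I want is $w_{K'\cup K''}=w_{K';I}\star w_{K'';I}$ in $(W,\star)$: indeed, by Lemma~\ref{lem:len prop wJ K} or directly, $w_{K';I}\star w_{K'';I}$ should equal $w_{K'\star K'';I}$ by Proposition~\ref{prop:submonoid *}, and $K'\star K''=K'\cap K''=\emptyset$ is false in general — so instead I should argue that $w_\circ^{K'}\perp w_\circ^{K''}\perp w_\circ$ forces, via $\star$-absorption (Lemma~\ref{lem:wK absorption} and Proposition~\ref{prop:absorb prop}), $w_{K';I}\star w_{K'';I}=w_\circ^{K'}w_\circ^{K''}w_\circ=w_{K'\cup K'';I}$ exactly when $K',K''$ are orthogonal, which is precisely our hypothesis. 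Then apply $p_J$, which is a homomorphism of Hecke monoids: $p_J(w_{K'\cup K''})=p_J(w_{K';I})\star p_J(w_{K'';I})=w_{J\star K';J}\star w_{J\star K'';J}$ using both hypotheses, and by Proposition~\ref{prop:submonoid *} this equals $w_{(J\star K')\star_J(J\star K'');J}$. The final step is the set-theoretic identity $(J\star K')\star_J(J\star K'')=J\star(K'\cup K'')$ for orthogonal $K',K''$; I would deduce this by applying the same homomorphism/absorption reasoning inside $W_J$ — or simply by uniqueness of the parabolic element realizing a given $\star$-product, matching supports on both sides using $J\star(K'\cup K'')\subset J\star K'\cap J\star K''$ and a length count.

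The main obstacle I anticipate is the bookkeeping around the difference between $\star_I$ and $\star_J$ and making the orthogonality hypotheses do exactly the right work: the excerpt explicitly warns that $J\star_I K\neq J\star_K K$ in general, so I must be careful that every use of Proposition~\ref{prop:submonoid *} is in the correct ambient monoid and that the identities like $(J'\cup J'')\star_I K=(J'\star_I K)\cup(J''\star_I K)$ and $(J\star_I K')\star_J(J\star_I K'')=J\star_I(K'\cup K'')$ are genuinely consequences of orthogonality rather than wishful thinking. I expect the safest proof of these set identities is not by direct combinatorics but by the abstract principle: $w_{A;N}\star w_{B;N}=w_{C;N}$ with $C\subset A\cap B$ uniquely determined, so it suffices to exhibit a parabolic element with the claimed support whose $\star$-product decomposition matches, and uniqueness does the rest. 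Everything else — the factorization $p_{J'\cup J''}=$ ``product of $p_{J'}$ and $p_{J''}$'', the absorption lemmas, the length counts — is routine given Proposition~\ref{prop:prod *}, Proposition~\ref{prop:absorb prop}, Lemma~\ref{lem:wK absorption}, and Proposition~\ref{prop:submonoid *}.
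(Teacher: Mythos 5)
Part~\ref{prop:only connected.b} of your argument has a fatal gap: the ``key identity'' $w_{K'}\star w_{K''}=w_\circ^{K'}w_\circ^{K''}w_\circ=w_{K'\cup K''}$ is false whenever $K''\neq\emptyset$, orthogonality notwithstanding. By Lemma~\ref{lem:monoid len}, $\ell(w_{K'}\star w_{K''})\ge\ell(w_{K'})=\ell(w_\circ)-\ell(w_\circ^{K'})$, whereas $\ell(w_{K'\cup K''})=\ell(w_\circ)-\ell(w_\circ^{K'})-\ell(w_\circ^{K''})$ is strictly smaller; indeed Proposition~\ref{prop:submonoid *} gives $w_{K'}\star w_{K''}=w_{K'\star K''}$ with $K'\star K''\subset K'\cap K''=\emptyset$, so the product is just $w_\circ$. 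Since $\star$-products of parabolic elements only shrink the index set, $w_{K'\cup K''}$ can never be produced as a $\star$-product of $w_{K'}$ and $w_{K''}$, and no amount of absorption (Lemma~\ref{lem:wK absorption}, Proposition~\ref{prop:absorb prop}) repairs this. This is exactly why the paper does not push a product decomposition through $p_J$ in part~\ref{prop:only connected.b}, but switches to the strong Bruhat order: $w_{K'\cup K''}$ is the maximum of $\downarrow w_{K'}\cap\downarrow w_{K''}$ (Lemma~\ref{lem:wKL}), parabolic projections are Bruhat-monotone (Lemma~\ref{lem:proj comparison}), hence $\downarrow p_J(w_{K'\cup K''})=\downarrow p_J(w_{K'})\cap\downarrow p_J(w_{K''})$ (Lemma~\ref{lem:wKL proj}), and then the hypotheses together with $(J\star K')\cup(J\star K'')=J\star(K'\cup K'')$ from \cite{He09}*{Lemma~6} identify this intersection with $\downarrow w_{J\star(K'\cup K'');J}$. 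You have no substitute for this order-theoretic machinery, so your part~\ref{prop:only connected.b} does not go through.

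Part~\ref{prop:only connected.a} is essentially the paper's argument, and your derivation of $p_{J'\cup J''}(w)=p_{J'}(w)\times p_{J''}(w)$ via the splitting $W_{J'\cup J''}=W_{J'}\times W_{J''}$ and $p_{J'}=p_{J'}\circ p_{J'\cup J''}$ is a legitimate (and slicker) alternative to the paper's induction on $\ell(w)$. However, your computation only shows $p_J(w_K)=w_{(J'\star K)\cup(J''\star K);J}$, i.e.\ that the projection is \emph{some} parabolic element of $W_J$; membership in $\mathscr G$ requires its index set to be $J\star K$, which is defined through the product $w_J\star w_K$ in $(W,\star)$. The identity $(J'\cup J'')\star K=(J'\star K)\cup(J''\star K)$ does not follow from Proposition~\ref{prop:submonoid *} plus uniqueness of parabolic decompositions, since nothing in your argument relates $p_J(w_K)$ to $w_J\star w_K$; the paper takes this distributivity over orthogonal unions as an external input (\cite{He09}*{Lemma~6}), and you must either cite it or prove it. The same objection applies to the identity $(J\star K')\star_J(J\star K'')=J\star(K'\cup K'')$ invoked at the end of your part~\ref{prop:only connected.b}.
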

\begin{proof}
To prove part~\ref{prop:only connected.a}, denote
$J=J'\cup J''$. Since~$(J',K)\in\mathscr G$,
we have $p_{J'}(w_K)=w_{J'\star K;J'}=w_\circ^{J'\star K}w_\circ^{J'}$ and similarly for~$J''$.
Then by Lemmata~\ref{lem:p_J comp} and~\ref{lem:parab prod}
\begin{align*}
p_{J}(w_K)=p_{J'}(p_J(w_K))\times p_{J''}(p_J(w_K))=
p_{J'}(w_K)\times p_{J''}(w_K)
&=w_\circ^{J'\star K}w_\circ^{J'}\times w_\circ^{J''\star K}w_\circ^{J''}
\end{align*}
Since $S\star K\subset S$ for any $S\subset I$ and $J',J''$ are orthogonal, it follows that
$w_\circ^{J'}w_\circ^{J''\star K}=w_\circ^{J''\star K}w_\circ^{J'}$ and so $p_J(w_K)=w_\circ^{J'\star K}w_\circ^{J''\star K}w_\circ^J$.
Since $J'\star K\subset J'$ and~$J''\star K\subset J''$,
$J'\star K$ and $J''\star K$ are orthogonal whence
$(J'\star K)\cup (J''\star K)=J\star K$ by \cite{He09}*{Lemma~6}
and $w_\circ^{J'\star K}w_\circ^{J''\star K}=w_\circ^{J\star K}$.
Therefore,
$p_J(w_K)=w_\circ^{J\star K}w_\circ^J=w_{J\star K;J}$, which
proves part~\ref{prop:only connected.a}.

To prove part~\ref{prop:only connected.b} more work is needed.
\begin{lemma}\label{lem:wKL}
Let $K_1$, $K_2$ be orthogonal subsets of~$I$. Then $\downarrow w_{K_1}\cap \downarrow w_{K_2}=
\downarrow w_{K_1\cup K_2}$, that is, $w_{K_1\cup K_2}$ is the unique maximal element of
$\downarrow w_{K_1}\cap \downarrow w_{K_2}$.
\end{lemma}
\begin{proof}
Let $K=K_1\cup K_2$. Since~$K_1$ and~$K_2$ are orthogonal,
$w_\circ^K=w_\circ^{K_1}\times w_\circ^{K_2}$. In particular,
$w_\circ^{K_1},w_\circ^{K_2}\le w_\circ^K$ by Proposition~\partref{prop:Bruhat order.a}.

Since $w<w'$ if and only if $w'w_\circ^I<ww_\circ^I$ by Proposition~\partref{prop:Bruhat order.c} and
$w_S=w_\circ^S w_\circ$ for any $S\subset I$, the assertion is equivalent to
$$
\uparrow w_\circ^{K_1} \cap \uparrow w_\circ^{K_2} = \uparrow w_\circ^{K}.
$$
By the above, $w_\circ^K\le u$ implies $w_\circ^{K_s}\le u$, $s\in\{1,2\}$
and so
$\uparrow w_\circ^K\subset \uparrow w_\circ^{K_1}\cap \uparrow w_\circ^{K_2}$.

Conversely, suppose that $u\in \uparrow w_\circ^{K_1}\cap \uparrow w_\circ^{K_2}$.
Write~$u=s_{i_1}\cdots s_{i_r}$ where~$r=\ell(u)$ and~$i_1,\dots,i_r\in I$.
By
Proposition~\partref{prop:Bruhat order.a},
there exist $J_1,J_2\subset [1,r]$ such that $\ell(w_\circ^{K_p})=|J_p|$ and
$w_\circ^{K_p}=\ascprod_{t\in J_p} s_{i_t}$, $p\in \{1,2\}$. Note
that since~$K_1\cap K_2=\emptyset$, $J_1\cap J_2=\emptyset$. Furthermore, since~$K_1$ and~$K_2$ are orthogonal
$$\ascprod_{t\in J_1\cup J_2} s_{i_t}=
\big(\ascprod_{t\in J_1}s_{i_t}\big)\big( \ascprod_{t\in J_2} s_{i_t}\big)=
w_\circ^{K_1}w_\circ^{K_2}=
w_\circ^K
$$
and
so~$w_\circ^K\le u$ by Proposition~\partref{prop:Bruhat order.a}. Thus, $u\in\uparrow w_\circ^K$.
\end{proof}
We now show that parabolic projections are compatible with the strong Bruhat order.
\begin{lemma}\label{lem:proj comparison}
Let $w\le w'\in W(M)$ in the strong Bruhat order. Then $p_J(w)\le p_{J'}(w')$ for any $J\subset
J'\subset I$.
\end{lemma}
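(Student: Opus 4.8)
The plan is to deduce the statement from the subword characterisation of the strong Bruhat order (Proposition~\partref{prop:Bruhat order.a}) together with the explicit effect of $p_J$ on reduced words, and the whole argument rests on one auxiliary monotonicity of the Demazure ($\star$) product. As the first step I would record that claim: for any $j_1,\dots,j_m\in I$ and any subset $L=\{\ell_1<\cdots<\ell_q\}\subseteq\{1,\dots,m\}$ one has
$$
s_{j_{\ell_1}}\star\cdots\star s_{j_{\ell_q}}\ \le\ s_{j_1}\star\cdots\star s_{j_m}
$$
in the strong Bruhat order. It suffices to treat deletion of a single index $t$ and then iterate: writing $A$ and $B$ for the two partial $\star$-products so that $s_{j_1}\star\cdots\star s_{j_m}=A\star s_{j_t}\star B$ while the product with $t$ removed is $A\star B$, one uses $1\le s_{j_t}$ (the empty subexpression of the reduced word $s_{j_t}$, cf. Proposition~\partref{prop:Bruhat order.a}) and Proposition~\partref{prop:Bruhat order *.a} to get $B=1\star B\le s_{j_t}\star B$, and then a second application of Proposition~\partref{prop:Bruhat order *.a} with $A\le A$ gives $A\star B\le A\star s_{j_t}\star B$.

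Next I would reduce the statement to reduced words. Fix a reduced expression $w'=s_{i_1}\cdots s_{i_k}$. Since $w\le w'$, Proposition~\partref{prop:Bruhat order.a} supplies indices $1\le a_1<\cdots<a_r\le k$ with $w=s_{i_{a_1}}\cdots s_{i_{a_r}}$ a reduced expression. As every prefix of a reduced word is reduced, an induction based on Proposition~\ref{prop:prod *} shows that inside the Hecke monoid $(W(M),\star)$ these expressions coincide with the Demazure products
$$
w=s_{i_{a_1}}\star\cdots\star s_{i_{a_r}},\qquad w'=s_{i_1}\star\cdots\star s_{i_k}.
$$
Applying the Hecke-monoid homomorphisms $p_J$ and $p_{J'}$ (Lemma~\ref{lem:proj w0}) and discarding the factors $p_J(s_i)=1$ for $i\notin J$, respectively $p_{J'}(s_i)=1$ for $i\notin J'$, one obtains that $p_J(w)$ is the ordered $\star$-product of those $s_{i_{a_\ell}}$ with $i_{a_\ell}\in J$, while $p_{J'}(w')$ is the ordered $\star$-product of those $s_{i_t}$ with $i_t\in J'$.

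Finally, since $J\subseteq J'$, every position $a_\ell$ that contributes to $p_J(w)$ satisfies $i_{a_\ell}\in J\subseteq J'$, hence also occurs among the positions $t$ contributing to $p_{J'}(w')$; in other words $p_J(w)$ is an ordered $\star$-product over a subsequence of the index set defining $p_{J'}(w')$. The monotonicity of the first step then gives $p_J(w)\le p_{J'}(w')$, and by the last sentence of Proposition~\partref{prop:Bruhat order.a} this comparison may be read in $W_{J'}(M)$ or in $W(M)$ interchangeably, so there is no ambiguity. I expect the only genuinely substantive point to be the monotonicity of Step~1 — that deleting simple reflections from a Demazure product cannot increase it in the Bruhat order; everything after that is bookkeeping with the homomorphism property of $p_J$ and the subword criterion.
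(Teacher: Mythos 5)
Your proof is correct: the deletion-monotonicity claim follows exactly as you say from $1\le s_{j_t}$ and Proposition~\partref{prop:Bruhat order *.a}, reduced subwords give honest $\star$-factorizations by Proposition~\ref{prop:prod *} (or Theorem~\ref{thm:Tits}), and since $J\subset J'$ the positions surviving in $p_J(w)$ form a subsequence of those surviving in $p_{J'}(w')$, so the monotonicity lemma closes the argument. This is organized differently from the paper's proof, which splits the statement in two: first $p_J(w)\le p_J(w')$ for fixed $J$, by reducing $w\le w'$ to a chain of covering relations via Proposition~\partref{prop:Bruhat order.b} and then doing a case analysis on a single omitted letter (whether $i_t\in J$ and whether it is absorbed, using Proposition~\ref{prop:prod *}); and second $p_J(w)\le p_{J'}(w)$ for fixed $w$, by induction on $\ell(w)$. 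Your route replaces the covering-chain reduction, the case analysis, and the separate length induction by one uniform subword argument built on a single clean lemma (deleting $\star$-factors cannot increase the Demazure product in the strong Bruhat order), which is implicit in, but not isolated by, the paper; the ultimate ingredients are the same, namely Propositions~\partref{prop:Bruhat order.a} and~\partref{prop:Bruhat order *.a}. What your version buys is brevity and the simultaneous treatment of the change in the element and the change in the parabolic set; what the paper's version buys is that its intermediate statements ($p_J$ is order-preserving for fixed $J$, and $p_J(w)\le p_{J'}(w)$) are recorded along the way.
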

\begin{proof}
First, we prove that $p_J(w)\le p_J(w')$ for all $w\le w'$, $J\subset I$. The assertion is obvious if~$w=w'$.
By Proposition~\partref{prop:Bruhat order.b}, $w<w'$ implies that there exists a chain $w=u_0<\cdots<u_k=w'$ with
$k=\ell(w')-\ell(w)$. Thus, it suffices to prove the assertion for $w<w'$ with $\ell(w')=\ell(w)+1$.

Write $w'=s_{i_1}\times \cdots\times s_{i_r}$ with $r=\ell(w')$.
Since $\ell(w)=\ell(w')-1$ and by Proposition~\partref{prop:Bruhat order.a} every reduced expression for~$w'$ contains a reduced expression for~$w$, there exists $1\le t\le r$ such that $w'=u\times s_{i_t}\times v$
and $w=u\times v$ with $u=s_{i_1}\times\cdots\times s_{i_{t-1}}$ and $v=s_{i_{t+1}}\times\cdots \times s_{i_r}$. Then
$$
p_J(w')=p_J(u)\star p_J(s_{i_t})\star p_J(v),\quad p_J(w)=p_J(u)\star p_J(v).
$$
If $i_t\in I\setminus J$ then $p_J(s_{i_t})=1$ and so $p_J(w)=
p_J(w')$. If~$i_t\in J$ and so $p_J(s_{i_t})=s_{i_t}$ there
are two possibilities. If
$\ell(p_J(u)s_{i_t})<\ell(p_J(u))$
then by Proposition~\ref{prop:prod *}
$p_J(u)\star p_J(s_{i_t})=p_J(u)$ and so again~$p_J(w)=p_J(w')$.
Otherwise, $p_J(u)\star p_J(s_{i_t})=p_J(u)\times s_{i_t}$ with $\ell(p_J(u)s_{i_t})>\ell(p_J(u))$ and so $p_J(u)< p_J(u)\times s_{i_t}$. By Proposition~\partref{prop:Bruhat order *.a},
$p_J(w)\le p_J(w')$.

It remains to prove that $p_J(w)\le p_{J'}(w)$ for all $w\in W(M)$, $J\subset J'\subset I$.
Indeed, then we have $p_J(w)\le p_J(w')\le p_{J'}(w')$ for all $w\le w'$, $J\subset J'\subset I$.
The argument is by induction on~$\ell(w)$. For $\ell(w)=0$ there is nothing to prove while
for $w=s_i$, $i\in I$ either $p_J(w)=p_{J'}(w)$ or $p_J(w)=1$, $p_{J'}(w)=s_i$ and
the assertion holds.

Suppose that $\ell(w)>1$ and write $w=u\times s_i$ for some~$i\in I$, $u\in W(M)$. Then
$p_K(w)=p_K(u)\star p_K(s_i)$ for any~$K\subset I$. Since $p_J(u)\le p_{J'}(u)$ by the induction hypothesis and $p_J(s_i)\le p_{J'}(s_i)$ by the induction base, we have $p_J(w)=p_J(u)\star p_J(s_i)\le p_{J'}(u)\star p_{J'}(s_i)=
p_{J'}(w)$ by~Proposition~\partref{prop:Bruhat order *.a}.
\end{proof}

\begin{lemma}\label{lem:wKL proj}
Let $K_1,K_2\subset I$ be orthogonal and let $J\subset I$. Then
$$\downarrow p_J(w_{K_1\cup K_2})=\downarrow p_J(w_{K_1})\cap \downarrow p_J(w_{K_2}).$$
\end{lemma}
\begin{proof}
By Lemmata~\ref{lem:proj comparison} and~\ref{lem:wKL},
$\downarrow p_J(w_{K_1\cup K_2})\subset \downarrow p_J(w_{K_1})\cap\downarrow p_J(w_{K_2})$.

Let $u\in \downarrow p_J(w_{K_1})\cap \downarrow p_J(w_{K_2})$. In particular, $u\in W_J$ and so
$u=p_J(u)$. Then by Lemma~\ref{lem:proj comparison} we have
$$
u=p_J(u)\le p_J(w_{K_t})\le p_I(w_{K_t})=w_{K_t},\qquad t\in\{1,2\}.
$$
Therefore, $u\in \downarrow w_{K_1}\cap \downarrow w_{K_2}=
\downarrow w_{K_1\cup K_2}$
by Lemma~\ref{lem:wKL}. Then $u=p_J(u)\le p_J(w_{K_1\cup K_2})$ by Lemma~\ref{lem:proj comparison}.
\end{proof}

We now have all ingredients to finish our proof of part~\ref{prop:only connected.b} of Proposition~\ref{prop:only connected}. Suppose that
$K',K''\subset I$ are orthogonal and that $(J,K'),(J,K'')\in\mathscr G$.
Let $K=K'\cup K''$.
Then $p_J(w_{K'})=w_{J\star K';J}$, $p_J(w_{K''})=w_{J\star K'';J}$
and so $\downarrow p_J(w_K)=\downarrow w_{J\star K';J}\cap \downarrow w_{J\star K'';J}$.
But since $J\star K'$, $J\star K''$
are orthogonal subsets of~$J$ and $(J\star K')\cup (J\star K'')=J\star K$ by~\cite{He09}*{Lemma~6}, applying Lemmata~\ref{lem:len prop wJ K} and~\ref{lem:wKL} to~$W_J$ we conclude that
$$\downarrow w_{J\star K;J}=\downarrow w_{J\star K';J}\cap \downarrow w_{J\star K'';J}=
\downarrow p_J(w_K).$$
Thus, $(J,K)\in\mathscr G$ and part~\ref{prop:only connected.b} is proven.
\end{proof}

\begin{lemma}\label{lem:eq for w0 J*K}
Let $J,K\subset I$. Then $
w_\circ^J=w_\circ^{J\star K}\star p_J(w_K)$ and $w_{J\star K;J}\le p_J(w_K)$. In particular, if $J\star K=\emptyset$ then
$(J,K)\in\mathscr G$.
\end{lemma}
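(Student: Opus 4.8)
\textbf{Proof plan for Lemma~\ref{lem:eq for w0 J*K}.}
The first identity should follow directly from applying the parabolic projection $p_J$ to the defining equation $w_\circ = w_\circ^K \star w_K$ (equivalently $w_\circ^I = w_\circ^K \times w_{K;I}$, recalling $w_K = w_\circ^K w_\circ$). Since $p_J$ is a homomorphism of Hecke monoids by Lemma~\ref{lem:p J defn}, and $p_J(w_\circ^I) = w_\circ^{J\cap I} = w_\circ^J$, $p_J(w_\circ^K) = w_\circ^{J\cap K}$ by the same lemma, we get $w_\circ^J = w_\circ^{J\cap K} \star p_J(w_K)$. The key point is then to replace $J\cap K$ by $J\star K$: this uses the absorption property (Proposition~\ref{prop:absorb prop}) together with the fact that $J\star K \subset J\cap K$ and that $w_\circ^{J\cap K}\star w_\circ^{J\star K} = w_\circ^{J\cap K}$ (as $J\star K\subset J\cap K\in\mathscr F(M)$, so $w_\circ^{J\star K}$ is absorbed by $w_\circ^{J\cap K}$ on either side by Lemma~\ref{lem:char w_0 monoid}). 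More precisely, I expect to argue that $w_\circ^{J\cap K}\star p_J(w_K) = w_\circ^{J\cap K}\star w_\circ^{J\star K}\star p_J(w_K)$, and that $w_\circ^{J\star K}\star p_J(w_K)$ already has the right left descent set; alternatively, use Proposition~\ref{prop:submonoid *} which tells us $p_J(w_K) \in \mP(W_J(M),\star)$ is some $w_{J';J}$, and then compute $w_\circ^{J\cap K}\star w_{J';J} = w_\circ^J$ forces $J'\subset J\cap K$, but we want the sharper $J' = J\star K$ only for the full theorem — here we just need $w_\circ^{J\star K}\star p_J(w_K)=w_\circ^J$ which follows because $w_\circ^{J\star K}$ is left-absorbed into $w_\circ^{J\cap K}$.

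Actually the cleanest route: from $w_\circ^J = w_\circ^{J\cap K}\star p_J(w_K)$ and $w_\circ^{J\cap K} = w_\circ^{J\cap K}\star w_\circ^{J\star K}$ (Lemma~\ref{lem:char w_0 monoid}, since $J\star K \subseteq J\cap K$), we get $w_\circ^J = w_\circ^{J\cap K}\star w_\circ^{J\star K}\star p_J(w_K)$. Now I would like to cancel $w_\circ^{J\cap K}$ on the left, but Hecke monoids are not cancellative, so instead I argue about lengths: by Lemma~\ref{lem:monoid len}, $\ell(w_\circ^J) \geq \ell(w_\circ^{J\star K}\star p_J(w_K)) \geq \ell(w_\circ^{J\cap K}\star w_\circ^{J\star K}\star p_J(w_K)) = \ell(w_\circ^J)$, wait — that inequality goes the wrong way. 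The correct approach is to directly verify $w_\circ^J = w_\circ^{J\star K}\star p_J(w_K)$ by checking the left descent set: every $i\in J\star K$ satisfies $s_i\star w_\circ^{J\star K} = w_\circ^{J\star K}$, so $s_i\star(w_\circ^{J\star K}\star p_J(w_K)) = w_\circ^{J\star K}\star p_J(w_K)$; and for $i\in J\setminus(J\star K)$ we need $s_i$ to also fix the product, which follows from $w_\circ^J = w_\circ^{J\cap K}\star p_J(w_K) = w_\circ^{J\cap K}\star w_\circ^{J\star K}\star p_J(w_K)$ having left descent set all of $J$ — combined with Proposition~\ref{prop:absorb prop} applied to $w_\circ^{J\star K}\star p_J(w_K)$ versus $w_\circ^{J\cap K}$, since $\ell(w_\circ^{J\cap K}\star(w_\circ^{J\star K}\star p_J(w_K))) = \ell(w_\circ^J)$ and $\supp(w_\circ^{J\star K}\star p_J(w_K)) \supseteq \supp p_J(w_K)$, we conclude the product equals $w_\circ^{J\star K}\star p_J(w_K)$ provided $\ell(w_\circ^{J\star K}\star p_J(w_K)) = \ell(w_\circ^J)$, which in turn follows since $p_J(w_K) = w_{J';J}$ for some $J'$ and $w_\circ^{J\star K}\star w_{J';J}$ has length... this is getting circular, so I will instead simply use Lemma~\ref{lem:wK absorption}: since $J\star K\subset J'$ isn't guaranteed, but in any case $p_J(w_K)\in\mP(W_J(M),\star)$ and I can compute using Proposition~\ref{prop:submonoid *} that $w_\circ^{J\cap K}\star w_{J';J} = w_{(J\cap K)\star_J J';J} = w_{\emptyset;J}=w_\circ^J$ iff $(J\cap K)\star_J J' = \emptyset$, hence $w_\circ^{J\star K}\star w_{J';J} = w_{(J\star K)\star_J J';J}$ and $(J\star K)\star_J J' \subseteq (J\cap K)\star_J J' = \emptyset$, giving the identity.

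For the second claim, $w_{J\star K;J}\leq p_J(w_K)$ in the strong Bruhat order: apply Lemma~\ref{lem:proj comparison} with the observation that $w_{J\star K;I}\leq w_K$ — indeed $w_{J\star K;I} = w_\circ^{J\star K}w_\circ^I$ and $w_K = w_\circ^K w_\circ^I$, and since $w_\circ^{J\star K}\leq w_\circ^K$ (as $J\star K\subseteq K\cap J\subseteq K$, by Proposition~\ref{prop:Bruhat order.a}), Proposition~\ref{prop:Bruhat order.c} gives $w_{J\star K;I} = w_\circ^{J\star K}w_\circ^I \leq $ — no wait, Proposition~\ref{prop:Bruhat order.c} says $w<w'$ iff $w'w_\circ < w w_\circ$, so $w_\circ^{J\star K} < w_\circ^K$ gives $w_\circ^K w_\circ < w_\circ^{J\star K}w_\circ$, i.e. $w_K < w_{J\star K;I}$, the opposite. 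Instead I should compare $w_{J\star K;J}$ with $p_J(w_K)$ directly: both lie in $W_J$, and the first identity says $w_\circ^{J\star K}\star p_J(w_K) = w_\circ^J = w_\circ^{J\star K}\times w_{J\star K;J}$ (by Lemma~\ref{lem:wK absorption}, $w_\circ^{J\star K}\perp w_{J\star K;J}$ in $W_J$). By Proposition~\ref{prop:Bruhat order *.b}, $w_\circ^{J\star K}\star p_J(w_K)$ is the maximal element of $\{uv : u\leq w_\circ^{J\star K}, v\leq p_J(w_K)\}$, and since $w_\circ^{J\star K}\times w_{J\star K;J}$ is one such product (with $u = w_\circ^{J\star K}$), we need $w_{J\star K;J}\leq p_J(w_K)$ — this follows from the "moreover" part of Proposition~\ref{prop:Bruhat order *.b}: $w_\circ^{J\star K}\star p_J(w_K) = w_\circ^{J\star K}\times u'$ for some $u'\leq p_J(w_K)$, and by length count and the factorization $w_\circ^J = w_\circ^{J\star K}\times w_{J\star K;J}$ with uniqueness of such reduced factorizations, $u' = w_{J\star K;J}$. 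The main obstacle throughout is the non-cancellativity of Hecke monoids, which forces all arguments to go through length functions, descent sets, absorption (Proposition~\ref{prop:absorb prop}), and the Bruhat-order characterization of $\star$ (Proposition~\ref{prop:Bruhat order *}) rather than direct algebraic manipulation; the final sentence "if $J\star K = \emptyset$ then $(J,K)\in\mathscr G$" is then immediate since $w_{J\star K;J} = w_{\emptyset;J} = w_\circ^J$ and also $p_J(w_K)\leq w_\circ^J$ trivially, forcing $p_J(w_K) = w_\circ^J = w_{J\star K;J}$.
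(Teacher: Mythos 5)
There is a genuine gap: your proof of the first identity is circular. Every route you attempt to upgrade the (correct, but weak) identity $w_\circ^J=w_\circ^{J\cap K}\star p_J(w_K)$ to $w_\circ^J=w_\circ^{J\star K}\star p_J(w_K)$ ends up invoking the claim that $p_J(w_K)=w_{J';J}$ for some $J'\subset J$, i.e.\ that $p_J(w_K)$ lies in $\mP(W_J(M),\star)$. Proposition~\ref{prop:submonoid *} does not say this: it only asserts that $\star$-products of parabolic elements \emph{of $W_J$} are again parabolic; that projections $p_J$ of parabolic elements of $W$ land in $\mP(W_J(M),\star)$ is exactly the content of Theorem~\ref{thm:pJwK is w0KstarJw0J}, which Lemma~\ref{lem:eq for w0 J*K} is a tool for proving (the introduction stresses that the equation $w_\circ^J=w_\circ^{J\cap K}\star x$ has many non-parabolic solutions $x$, which is why the theorem is non-trivial). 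A secondary error compounds this: the computation $w_\circ^{J\cap K}\star w_{J';J}=w_{(J\cap K)\star_J J';J}$ misapplies Proposition~\ref{prop:submonoid *}, since the left factor there must be the parabolic element $w_{J\cap K;J}=w_\circ^{J\cap K}w_\circ^J$, not the longest element $w_\circ^{J\cap K}$; already for $J=K=I$ the two sides differ.

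The paper's argument avoids all of this by working upstairs in $W$: from $w_K\star w_J=w_{J\star K}$ (a legitimate use of Proposition~\ref{prop:submonoid *} in $W_I$) one gets $w_\circ=w_\circ^{J\star K}\star w_K\star w_J$, hence $w_\circ^J=((w_\circ^{J\star K}\star w_K)\star w_J)w_J^{-1}$, and then the cited result \cite{K14}*{Proposition~6} gives $w_\circ^J\le w_\circ^{J\star K}\star w_K$ in the Bruhat order; applying $p_J$ and Lemma~\ref{lem:proj comparison} yields $w_\circ^J\le w_\circ^{J\star K}\star p_J(w_K)$, and maximality of $w_\circ^J$ in $W_J$ forces equality. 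Your derivation of the second claim from the first via the ``moreover'' clause of Proposition~\partref{prop:Bruhat order *.b} (writing $w_\circ^{J\star K}\star p_J(w_K)=w_\circ^{J\star K}\times u'$ with $u'\le p_J(w_K)$ and cancelling in the group to get $u'=w_{J\star K;J}$) is sound and is a reasonable substitute for the paper's second use of \cite{K14}*{Proposition~6}, but it only becomes available once the first identity is established non-circularly; the same goes for your deduction of the final ``in particular'' statement.
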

\begin{proof}
We have $w_\circ=w_\circ^{J\star K}\times w_{J\star K}=w_\circ^{J\star K}\star w_K\star w_J$. Since $w_\circ w_J^{-1}=w_\circ^J$ we obtain
$$
w_\circ^J=((w_\circ^{J\star K}\star w_K)\star w_J)w_J^{-1}.
$$
By~\cite{K14}*{Proposition~6}\footnote{In~\cite{K14}*{Proposition~6} the left-sided version
is proven. The right-sided version is proven similarly and is left to the reader as an exercise.}, this implies that $w_\circ^J\le w_\circ^{J\star K}\star w_K$.
Since $J\star K\subset J$, applying $p_J$ to both sides we obtain by Lemma~\ref{lem:proj comparison}
$$
w_\circ^J\le w_\circ^{J\star K}\star p_J(w_K).
$$
As $w_\circ^J$ is the unique maximal element of $W_J$ in the strong Bruhat order, the first assertion follows.
To prove the second, note that by Lemma~\ref{lem:len prop wJ K} we now have
$w_{J\star K;J}=w_\circ^{J\star K}(w_\circ^{J\star K}\star p_J(w_K))$ and
so $w_{J\star K;J}\le p_J(w_K)$ by~\cite{K14}*{Proposition~6}.
Since~$w_{\emptyset;J}=w_\circ^J$ and~$w_\circ^{\emptyset}=1$, the
last assertion is now trivial.
\end{proof}
In particular, $(\emptyset,K),(J,\emptyset)\in\mathscr G$ for all~$J,K\subset I$. Also, since $w_I=1$, $I\star J=J$ for all~$J\subset I$. Now, $p_J(w_I)=1=w_{J;J}$ for all~$J\subset I$
and $p_I(w_K)=w_K=w_{I\star K;K}$ for all~$K\subset I$. Thus,
$(J,I),(I,K)\in\mathscr G$ for all~$J,K\subset I$.
From now on, we assume that $J,K$ are proper non-empty subsets of~$I$.

The following Lemma allows us to use induction on (connected) subgraphs of the Coxeter graph of~$W$.
\begin{lemma}\label{lem:induction}
Suppose that
Theorem~\ref{thm:pJwK is w0KstarJw0J} is proven for~$W_{J}$ for some~$J\subsetneq I$. Then
$(J,K)\in\mathscr G$ for some~$K\subset I$ implies that $(J',K)\in\mathscr G$
for all~$J'\subset J$.
\end{lemma}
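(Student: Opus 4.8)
The plan is to factor the projection $p_{J'}$ through $W_J$ and then invoke the standing hypothesis. First I would note that for $J'\subset J\subset I$ we have $p_{J'}=p^J_{J'}\circ p_J$ as homomorphisms $(W,\star)\to(W_{J'},\star)$, where $p^J_{J'}$ denotes the parabolic projection of the Hecke monoid $(W_J,\star)$ onto $(W_{J'},\star)$; indeed both sides are homomorphisms of monoids agreeing on the generators $s_i$, $i\in I$ (this is just the composability of light homomorphisms, cf.\ Lemma~\ref{lem:light cat}, together with the fact that the Hecke monoid of $W_J$ coincides with the parabolic submonoid $(W_J,\star)$ of $(W,\star)$). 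Now suppose $(J,K)\in\mathscr G$, so that $p_J(w_K)=w_{J\star K;J}$, where $w_K=w_{K;I}$; this is the parabolic element $w_{S;J}$ of $W_J$ attached to $S:=J\star K\subset J$. Applying $p^J_{J'}$ and using that Theorem~\ref{thm:pJwK is w0KstarJw0J} holds for $W_J$ (with $L=J$, $K''=S$), we obtain $p_{J'}(w_K)=p^J_{J'}(w_{S;J})=w_{J'\star_J S;J'}=w_{J'\star_J(J\star K);J'}$.

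It then remains to establish the purely combinatorial identity $J'\star_J(J\star K)=J'\star K$ for $J'\subset J\subset I$ and $K\subset I$ (with $\star=\star_I$ unless decorated). This I would deduce inside the abelian monoids $\mP(W)$ and $\mP(W_J)$. By Lemma~\ref{lem:len prop wJ K}, $w_{T;I}=w_{T;J}\times w_{J;I}$ for every $T\subset J$, so in $(W,\star)$ the product $w_{T;J}\star w_{J;I}$ equals $w_{T;I}$; hence, using Proposition~\ref{prop:submonoid *} in $W$ and in $W_J$ and associativity of $\star$,
\begin{align*}
w_{J'\star K;I}&=w_{J';I}\star w_{K;I}=(w_{J';J}\star w_{J;I})\star w_{K;I}=w_{J';J}\star(w_{J;I}\star w_{K;I})\\
&=w_{J';J}\star w_{J\star K;I}=w_{J';J}\star\bigl(w_{J\star K;J}\star w_{J;I}\bigr)=\bigl(w_{J';J}\star w_{J\star K;J}\bigr)\star w_{J;I}\\
&=w_{J'\star_J(J\star K);J}\star w_{J;I}=w_{J'\star_J(J\star K);I},
\end{align*}
where the product $w_{J';J}\star w_{J\star K;J}$ is taken in $(W_J,\star)$. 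Since $T\mapsto w_{T;I}$ is injective on $\mathscr P(I)$ (as $\supp w_\circ^T=T$), this forces $J'\star_J(J\star K)=J'\star K$, and combining with the previous paragraph gives $p_{J'}(w_K)=w_{J'\star K;J'}$, i.e.\ $(J',K)\in\mathscr G$, as desired.

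I expect the only delicate point to be the bookkeeping in the displayed computation: one must track that the $\star$'s in the first, second and sixth lines are products in $(W,\star)$ while $w_{J';J}\star w_{J\star K;J}$ is a product in $(W_J,\star)$, and that passing between the two is legitimate because $W_J$ is a parabolic submonoid of $(W,\star)$ and because each $w_{T;J}$ with $T\subset J$ satisfies $w_{T;J}\perp w_{J;I}$, so that the $\star$ with $w_{J;I}$ collapses to $\times$ via Lemma~\ref{lem:len prop wJ K}. Everything else is formal; note in particular that the identity $J'\star_J(J\star K)=J'\star K$ itself does not use the hypothesis $(J,K)\in\mathscr G$ — it is the factorization $p_{J'}=p^J_{J'}\circ p_J$ that makes the hypothesis for $W_J$ applicable, and it is there that $(J,K)\in\mathscr G$ enters.
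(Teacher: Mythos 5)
Your proof is correct and follows essentially the same route as the paper: factor $p_{J'}=p^J_{J'}\circ p_J$, use the hypothesis $(J,K)\in\mathscr G$ to write $p_J(w_K)=w_{J\star K;J}$, apply Theorem~\ref{thm:pJwK is w0KstarJw0J} for $W_J$, and conclude via the identity $J'\star_J(J\star K)=J'\star K$. The only divergence is that the paper simply cites this last identity from~\cite{He09}*{Lemmata~4 and~7}, whereas you derive it yourself from Proposition~\ref{prop:submonoid *} and Lemma~\ref{lem:len prop wJ K}; your derivation (and the injectivity of $T\mapsto w_{T;I}$ used at the end) is correct.
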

\begin{proof}
Since $J'\subset J$, we have $p_{J'}(w)=p_{J'}(p_J(w))$ for all $w\in W$. Since $(J,K)\in\mathscr G$, $p_J(w_K)=w_{J\star K;J}$.
Now, since Theorem~\ref{thm:pJwK is w0KstarJw0J} holds for~$W_J$, $p_{J'}(p_J(w_K))=w_{J'\star_J (J\star K);J'}$.
Since $J'\star_J(J\star K)=J'\star K$ by~\cite{He09}*{Lemmata~4 and~7}, the assertion follows.
\end{proof}

\begin{lemma}\label{eq:good products}
Let~$J,K,L\subset I$ and suppose that~$(J,K),(J,L)\in
\mathscr G$. Then $(J,K\star L)\in \mathscr G$.
\end{lemma}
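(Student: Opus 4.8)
The statement to be proved is that $(J,K),(J,L)\in\mathscr G$ implies $(J,K\star L)\in\mathscr G$, where all sets are (proper, non-empty) subsets of $I$ and $\star=\star_I$. By Proposition~\ref{prop:only connected}\ref{prop:only connected.b} we may assume that $K$, $L$ and $J$ are connected, but the key reduction is really that we are allowed to work inside $(W,\star)$ and exploit the characterisation of $w\star w'$ as the unique maximal element of $\{uu' : u\le w,\, u'\le w'\}$ in the strong Bruhat order (Proposition~\ref{prop:Bruhat order *}\ref{prop:Bruhat order *.b}). The plan is: first record that by hypothesis $p_J(w_K)=w_{J\star K;J}$ and $p_J(w_L)=w_{J\star L;J}$; since $p_J$ is a homomorphism of Hecke monoids, $p_J(w_K\star w_L)=p_J(w_K)\star p_J(w_L)=w_{J\star K;J}\star w_{J\star L;J}$. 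By Proposition~\ref{prop:submonoid *} applied to the Hecke monoid $(W_J,\star)$, this equals $w_{(J\star K)\star_J(J\star L);J}$. On the other hand, by Proposition~\ref{prop:submonoid *} applied to $(W,\star)$ we have $w_K\star w_L=w_{K\star L}$, so $p_J(w_K\star w_L)=p_J(w_{K\star L})$. Combining, $(J,K\star L)\in\mathscr G$ will follow once we show the set identity
$$
(J\star K)\star_J(J\star L)=J\star(K\star L).
$$

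\textbf{The combinatorial identity.} This is the heart of the matter, and it is exactly the kind of statement about the operation $\star_L$ that He proves in~\cite{He09}. Concretely, the plan is to invoke the results of~\cite{He09} already cited in the excerpt: Lemma~\ref{lem:induction} uses that $J'\star_J(J\star K)=J'\star K$ (this is~\cite{He09}*{Lemmata~4 and~7}), and the proof of Proposition~\ref{prop:only connected}\ref{prop:only connected.b} uses $(J\star K')\cup(J\star K'')=J\star(K'\cup K'')$ for orthogonal $K',K''$ (\cite{He09}*{Lemma~6}). For the present identity I would argue as follows. Put $A=J\star K$ and $B=J\star L$; both are subsets of $J$, so the left-hand side is $A\star_J B$. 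Using the ``associativity/transitivity'' of the relative $\star$-operation from~\cite{He09} (the same package that gives $J'\star_J(J\star K)=J'\star K$), together with the commutativity of $\star$ (Proposition~\ref{prop:submonoid *}), one rewrites $A\star_J B=(J\star K)\star_J(J\star L)=J\star K\star L$. A clean way to see this without re-deriving He's combinatorics is to translate everything back to parabolic elements: in $(W,\star)$, $w_{J\star K}=p_J(w_K)\star$(something) by Lemma~\ref{lem:eq for w0 J*K}; more directly, $w_{A;J}\star w_{B;J}=w_{A\star_J B;J}$ in $W_J$, and applying $\iota_J$ and multiplying on the right by $w_{J}$ identifies this with an element of the parabolic submonoid of $(W,\star)$ whose support-type is $A\star_J B$. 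Matching lengths via \eqref{eq:ell w w0} and Lemma~\ref{lem:len prop wJ K}, and using that $w_{K\star L}=w_K\star w_L$, pins down $A\star_J B=J\star(K\star L)$.

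\textbf{An alternative, more self-contained route.} If one prefers to avoid citing the finer lemmas of~\cite{He09}, the identity can be obtained order-theoretically. By Lemma~\ref{lem:proj comparison} and Lemma~\ref{lem:wKL proj}-style arguments, $\downarrow p_J(w_{K\star L})=\downarrow p_J(w_K)\cap\downarrow\cdots$ — but more usefully, $p_J(w_K\star w_L)$ is the unique maximal element of $\{p_J(uv):u\le w_K,\ v\le w_L\}$; since $p_J$ is a homomorphism this is $p_J(w_K)\star p_J(w_L)=w_{A;J}\star w_{B;J}$, and one then identifies $\downarrow(w_{A;J}\star w_{B;J})$ inside $W_J$ with $\{$lower set of $w_{A\star_J B;J}\}$ by Proposition~\ref{prop:submonoid *}. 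Meanwhile $p_J(w_{K\star L})$ has $\downarrow$ equal, by $(J,K\star L)$ needing to be verified, to the lower set of $w_{J\star(K\star L);J}$; uniqueness of maximal elements forces $A\star_J B=J\star(K\star L)$. Either way, the main obstacle is precisely this set-theoretic identity for iterated $\star$-operations relative to nested parabolics; once it is in hand, the homomorphism property of $p_J$ (Lemma~\ref{lem:p J defn}) and Proposition~\ref{prop:submonoid *} close the argument immediately. I expect the cleanest writeup to cite~\cite{He09}*{Lemmata~4, 6 and~7} for the identity, exactly as the surrounding lemmas in this section already do.
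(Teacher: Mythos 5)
Your main argument is exactly the paper's proof: use that $w_{K\star L}=w_K\star w_L$ and that $p_J$ is a homomorphism to get $p_J(w_{K\star L})=w_{J\star K;J}\star w_{J\star L;J}=w_{(J\star K)\star_J(J\star L);J}$ by Proposition~\ref{prop:submonoid *}, and then conclude via the identity $(J\star K)\star_J(J\star L)=(J\star K)\star L=J\star(K\star L)$ from \cite{He09}*{Lemmata~4 and~7}. This is correct and coincides with the paper's argument (the preliminary reduction to connected subsets and the sketched order-theoretic alternative are unnecessary but harmless).
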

\begin{proof}
We have
\begin{align*}
p_J(w_{K\star L})&=p_J(w_K)\star p_J(w_L)
=w_{J\star K; J}\star w_{J\star L; J}\\
&=w_{(J\star K)\star_J (J\star L); J}.
\end{align*}
By~\cite{He09}*{Lemmata~4 and~7} we have
$$
(J\star K)\star_J (J\star L)=(J\star K)\star L=
J\star (K\star L),
$$
and so~$p_J(w_{K\star L})=w_{J\star (K\star L); J}$ that is
$(J,K\star L)\in \mathscr G$.
\end{proof}

\subsubsection{Homomorphisms}
The following result allows us to significantly reduce the number of case-by-case arguments in proving Theorem~\ref{thm:pJwK is w0KstarJw0J}.
\begin{proposition}\label{prop:LCM hom good}
Let~$\wh M\in\Cox{\wh I}$, $M\in\Cox I$ be of finite type. Let $\Phi\in\Hom_{\mathscr A}(\wh M,M)$
be strictly parabolic and of Hecke type.
If~$(\wh J,\wh K)\in \mathscr G(\wh M)$ then $([\Phi](\wh J),[\Phi](\wh K))\in \mathscr G(M)$. Conversely, if
$\overline\Phi_\star$ is injective and
$([\Phi](\wh J),[\Phi](\wh K))\in\mathscr G(M)$ then $(\wh J,\wh K)
\in\mathscr G(\wh M)$.
\end{proposition}
\begin{proof}
We need the following
\begin{lemma}\label{lem:prj hom}\label{lem:diag aut proj}
Let~$\phi\in\Hom_{\mathscr H}(M',M)$ be disjoint. Then for any~$J'\subset I'$,
$\phi\circ p_{J'}=p_{[\phi](J')}\circ \phi$. In particular,
if~$\sigma$ is a diagram automorphism of~$(W(M),\star)$ then
$\sigma\circ p_J=p_{\sigma(J)}\circ\sigma$ for all~$J\subset I$.
\end{lemma}
\begin{proof}
It suffices to prove that $\phi(p_{J'}(s_i))=
p_{[\phi](J')}(\phi(s_i))$ for all~$i\in I'$.
By Lemma~\ref{lem:Hecke hom w0J},
$$
\phi(p_{J'}(s_i))=\begin{cases}
1,&i\in I'\setminus J'\\
w_\circ^{[\phi](i)},&i\in J'.
\end{cases}
$$
On the other hand, $p_{[\phi](J')}(\phi(s_i))=
p_{[\phi](J')}(w_\circ^{[\phi](i)})=
w_\circ^{[\phi](i)\cap [\phi](J')}$ by Lemma~\ref{lem:proj w0}.
If~$i\in J'$ then $[\phi](i)\cap[\phi](J')=[\phi](i)$.
Otherwise, since~$[\phi](i)\cap[\phi](j)=\emptyset$ for all~$i\not=j$, $[\phi](i)\cap [\phi](J')=\emptyset$. In either case, the assertion follows.
\end{proof}
We have
\begin{alignat*}{3}
\overline\Phi_\star(p_{\wh J}(w_{\wh K}))&=
\overline\Phi_\star(w_{\wh J\star_{\wh I}\wh K; \wh J})&\qquad &
\text{since $(\wh J,\wh K)\in\mathscr G(\wh M)$}\\
&=w_{[\Phi](\wh J\star_{\wh I}\wh K);[\Phi](\wh J)}&&\text{by Lemma~\ref{lem:parab preserving}}\\
&=w_{[\Phi](\wh J)\star_I [\Phi](\wh K));[\Phi](J)}&&\text{by Lemma~\ref{lem:hom parab submonoid}}.
\end{alignat*}
On the other hand, since~$\overline\Phi_\star\circ p_{\wh J}=
p_{[\Phi](\wh J)}\circ\overline \Phi_\star$ by Lemma~\ref{lem:prj hom}, we have by Lemma~\ref{lem:parab preserving}
\begin{equation*}
\overline\Phi_\star(p_{\wh J}(w_{\wh K}))=
p_{[\Phi](\wh J)}(w_{[\Phi](\wh K)}).
\end{equation*}
Thus, $p_{[\Phi](\wh J)}(w_{[\Phi](\wh K)})=w_{[\Phi](\wh J)\star_I [\Phi](\wh K));[\Phi](J)}$. Conversely,
\begin{alignat*}{3}
\overline\Phi_\star(p_{\wh J}(w_{\wh K}))&=
p_{[\Phi](\wh J)}(\overline\Phi_\star(w_{\wh K}))&&\text{by Lemma~\ref{lem:prj hom}}\\
&=p_{[\Phi](\wh J)}(w_{[\Phi](\wh K)})&&\text{by Lemma~\ref{lem:parab preserving}}\\
&=w_{[\Phi](\wh J)\star_I [\Phi](\wh K));[\Phi](J)}&\qquad&\text{since $([\Phi](\wh J),[\Phi](\wh K))\in\mathscr G(M)$}\\
&=w_{[\Phi](\wh J\star_{\wh I}\wh K);[\Phi](J)}&&\text{by Lemma~\ref{lem:hom parab submonoid}}\\
&=\overline\Phi_\star(w_{\wh J\star_{\wh I}\wh K;\wh J})&&
\text{by Lemma~\ref{lem:parab preserving}}.
\end{alignat*}
Since~$\overline\Phi_\star$ is injective, it follows that
$p_{\wh J}(w_{\wh K})=w_{\wh J\star_{\wh I}\wh K;\wh J}$.
\end{proof}
\begin{corollary}\label{cor:diag aut good}
Let~$\sigma$ be a diagram automorphism of~$W$ and the corresponding permutation of~$I$. Then
\begin{enmalph}
    \item\label{cor:diag aut good.a} $(J,K)\in \mathscr G$ if and only if~$(\sigma(J),\sigma(K))\in\mathscr G$;
    \item\label{cor:diag aut good.b} Suppose that~$J\subset I$ satisfies~$(J,K)\in\mathscr G$ for all~$K\subset I$. Then $(\sigma(J),K)\in\mathscr G$ for all~$K\subset I$;
    \item\label{cor:diag aut good.c} Suppose that~$K\subset I$ satisfies~$(J,K)\in\mathscr G$ for all~$J\subset I$. Then~$(J,\sigma(K))\in\mathscr G$ for all~$J\subset I$.
\end{enmalph}
\end{corollary}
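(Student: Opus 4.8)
The plan is to derive all three parts from part~\ref{cor:diag aut good.a} together with one compatibility property of the diagram automorphism $\sigma$. First I would collect the elementary facts about $\sigma$ viewed as an automorphism of $(W,\star)$ (recall that diagram automorphisms factor through to the Hecke monoid). Since $\sigma$ permutes the generators $s_i$ and hence preserves lengths, it carries $W_J$ onto $W_{\sigma(J)}$ and therefore maps the longest element $w_\circ^J$ to $w_\circ^{\sigma(J)}$; consequently $\sigma(w_{J;K})=\sigma(w_\circ^J w_\circ^K)=w_\circ^{\sigma(J)}w_\circ^{\sigma(K)}=w_{\sigma(J);\sigma(K)}$ for all $J\subset K\subset I$, and in particular $\sigma(w_K)=w_{\sigma(K)}$. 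Applying the monoid automorphism $\sigma$ to the identity $w_{J;I}\star w_{K;I}=w_{J\star_I K;I}$ of Proposition~\ref{prop:submonoid *}, and comparing with the same identity for $\sigma(J),\sigma(K)$, gives $w_{\sigma(J\star_I K);I}=w_{\sigma(J)\star_I\sigma(K);I}$; since the assignment $J'\mapsto w_{J';I}$ is injective (one recovers $J'=\supp(w_{J';I}\,w_\circ^I)$), this yields $\sigma(J\star_I K)=\sigma(J)\star_I\sigma(K)$. Finally, Lemma~\ref{lem:diag aut proj} already provides $\sigma\circ p_J=p_{\sigma(J)}\circ\sigma$.

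With these in hand, part~\ref{cor:diag aut good.a} becomes a one-line computation: if $(J,K)\in\mathscr G$, so that $p_J(w_K)=w_{J\star_I K;J}$, then applying $\sigma$ and invoking the identities above rewrites the left-hand side as $p_{\sigma(J)}(w_{\sigma(K)})$ and the right-hand side as $w_{\sigma(J)\star_I\sigma(K);\sigma(J)}$, which is exactly the assertion $(\sigma(J),\sigma(K))\in\mathscr G$. The reverse implication follows by running the same argument with the diagram automorphism $\sigma^{-1}$ in place of $\sigma$.

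Parts~\ref{cor:diag aut good.b} and~\ref{cor:diag aut good.c} are then purely formal. For part~\ref{cor:diag aut good.b}, given an arbitrary $K\subset I$ write $K=\sigma(\sigma^{-1}(K))$; the hypothesis supplies $(J,\sigma^{-1}(K))\in\mathscr G$, and part~\ref{cor:diag aut good.a} upgrades this to $(\sigma(J),K)\in\mathscr G$. Part~\ref{cor:diag aut good.c} is the mirror image, applying part~\ref{cor:diag aut good.a} to the pair $(\sigma^{-1}(J),K)\in\mathscr G$. I do not expect a genuine obstacle here; the only step requiring slight care is the compatibility $\sigma(J\star_I K)=\sigma(J)\star_I\sigma(K)$, where it is essential that $\sigma$ is an automorphism of the Hecke monoid $(W,\star)$ and not merely of the Coxeter group $W$, and that distinct subsets of $I$ yield distinct $I$-parabolic elements.
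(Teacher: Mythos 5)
Your proof is correct and follows essentially the same route as the paper: apply the Hecke-monoid automorphism $\sigma$ to $p_J(w_K)=w_{J\star_I K;J}$, using $\sigma\circ p_J=p_{\sigma(J)}\circ\sigma$ (Lemma~\ref{lem:diag aut proj}), $\sigma(w_{K;J})=w_{\sigma(K);\sigma(J)}$ and the compatibility $\sigma(J\star_I K)=\sigma(J)\star_I\sigma(K)$, with parts \ref{cor:diag aut good.b} and \ref{cor:diag aut good.c} obtained formally because $\sigma$ permutes the subsets of~$I$. The only (harmless) divergence is that you deduce $\sigma(J\star_I K)=\sigma(J)\star_I\sigma(K)$ directly from Proposition~\ref{prop:submonoid *} and injectivity of $J'\mapsto w_{J';I}$, where the paper instead invokes Lemma~\ref{lem:hom parab submonoid}.
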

\begin{proof}
Since~$\sigma(w_\circ)=w_\circ$, $\sigma(w_J)=w_{\sigma(J)}$ while
$\sigma(w_{K;J})=w_{\sigma(K);\sigma(J)}$ for all~$K\subset J\subset I$. We have
\begin{alignat*}{3}
p_{\sigma(J)}(w_{\sigma(K)})&=p_{\sigma(J)}(\sigma(w_K))\\
&=\sigma(p_J(w_K))&&\text{by Proposition~\ref{prop:LCM hom good}}\\
&=\sigma(w_{J\star K;J})&&\text{since $(J,K)\in\mathscr G$}\\
&=w_{\sigma(J\star K);\sigma(J)}\\
&=w_{\sigma(J)\star\sigma(K);\sigma(J)}&\qquad &\text{by Lemma~\ref{lem:hom parab submonoid}}.
\end{alignat*}
This proves part~\ref{cor:diag aut good.a}. Parts~\ref{cor:diag aut good.b} and~\ref{cor:diag aut good.c} follow from part~\ref{cor:diag aut good.a} since~$\sigma$ induces a bijection on~$\mathscr F(M)$.
\end{proof}

\subsubsection{Proof of Theorem~\ref{thm:pJwK is w0KstarJw0J} in
rank 2}
Let~$m=m_{12}=m_{21}$.
By Corollary~\partref{cor:diag aut good.c} it
suffices to prove that~$(J,\{1\})\in\mathscr G$ for~$J\in\{\{1\},\{2\}\}$. Since~$w_{\{i\}}=\brd{s_j\times s_i\times }{m-1}$
where~$\{i,j\}=\{1,2\}$,
it follows that $w_{\{1\}}\star w_{\{1\}}=w_\circ=
w_{\{1\}}\star w_{\{2\}}$ that is, $\{1\}\star \{1\}=\emptyset=\{1\}\star\{2\}$.
Thus, $(J,\{1\})\in\mathscr G$ for all~$J\subsetneq I$.\hfill\qedsymbol

\subsubsection{Proof of Theorem~\ref{thm:pJwK is w0KstarJw0J} for type \texorpdfstring{$A_n$}{An}}
We have, for any  $1\le a\le b\le n$,
\begin{equation}
w_\circ^{[a,b]}=\ascprodst_{a\le k\le b} \cxr ak=
\dscprodst_{a\le k\le b} \cx ak=
\ascprodst_{a\le k\le b} \cxr kb=
\dscprodst_{a\le k\le b} \cx kb.\label{eq:w0 type A}
\end{equation}
We need the following
\begin{proposition}\label{prop:wK explicit}
For any $1\le i\le j\le k\le l\le n$,
$$
w_{[j,k];[i,l]}
=\cx il{}^{\star(j-i)}\star \cxr il{}^{\star (l-k)}
= \cxr il{}^{\star (l-k)}\star\cx il{}^{\star(j-i)}.
$$
\end{proposition}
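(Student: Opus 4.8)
The plan is to establish the formula $w_{[j,k];[i,l]}=\cx il^{\star(j-i)}\star\cxr il^{\star(l-k)}=\cxr il^{\star(l-k)}\star\cx il^{\star(j-i)}$ by a double induction, peeling one simple reflection off $[j,k]$ at a time from each end, using the absorption properties of parabolic elements (Lemma~\ref{lem:wK absorption}) together with the explicit product formulae~\eqref{eq:w0 type A} for $w_\circ^{[a,b]}$ in type~$A$. First I would reduce to the case $[i,l]=I=[1,n]$ by replacing $M$ with $M_{[i,l]}$, which is of type $A_{l-i+1}$; this is legitimate because $w_{[j,k];[i,l]}$ and the right-hand side only involve generators $s_t$ with $t\in[i,l]$, and the $\star$-product restricted to $W_{[i,l]}$ is the Hecke monoid product of $W(M_{[i,l]})$. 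So it suffices to prove $w_{[j,k]}=\cx1n^{\star(j-1)}\star\cxr1n^{\star(n-k)}=\cxr1n^{\star(n-k)}\star\cx1n^{\star(j-1)}$ in $(W(A_n),\star)$ for $1\le j\le k\le n$.

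The base case is $[j,k]=[1,n]$, where both exponents vanish and $w_{[1,n]}=w_\circ^{[1,n]}w_\circ^{[1,n]}=1$, matching the empty $\star$-products. For the inductive step I would show two reduction identities: $w_{[j,k]}=\cx1n\star w_{[j-1,k]}$ when $j>1$, and $w_{[j,k]}=\cxr1n\star w_{[j,k+1]}$ when $k<n$ — and symmetrically that $w_{[j-1,k]}\star\cx1n$, resp. $w_{[j,k+1]}\star\cxr1n$, recovers $w_{[j,k]}$ on the other side, which is what is needed to move the factor past. Concretely, for the first identity I would use $w_{[j,k]}=w_\circ^{[j,k]}w_\circ^{[1,n]}$ and $w_{[j-1,k]}=w_\circ^{[j-1,k]}w_\circ^{[1,n]}$, together with the factorization $w_\circ^{[j-1,k]}=\cx{(j-1)}{k}\cdot w_\circ^{[j,k]}$ (a special case of the ``telescoping'' identities for longest elements, of the kind used in~\eqref{eq:B_n telescope 0}), and then analyze how $\cx1n$ acts: by~\eqref{eq:prod *} and the length computation $\ell(s_t w_{[j,k]})$ via~\eqref{eq:ell w w0} (exactly as in the proof of Lemma~\ref{lem:wK absorption}), left-multiplication by $s_1\star\cdots\star s_n$ successively absorbs the reflections $s_k,s_{k+1},\dots,s_{n}$ (which decrease length, since $k,\dots,n$ lie outside the ``ascending'' part) while genuinely prepending $s_1,\dots,s_{j-2}$ and finally $s_{j-1}$, landing on $w_{[j-1,k]}$; a careful bookkeeping of which indices are in the left descent set of $w_{[j,k]}$ makes this precise. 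Combining the two reductions and the induction hypothesis, $w_{[j,k]}=\cx1n\star\bigl(\cx1n^{\star(j-2)}\star\cxr1n^{\star(n-k)}\bigr)=\cx1n^{\star(j-1)}\star\cxr1n^{\star(n-k)}$, and likewise from the other side one gets $\cxr1n^{\star(n-k)}\star\cx1n^{\star(j-1)}$; the equality of the two expressions then follows because $w_{[j,k]}$ is well defined independently of the order in which the reductions are applied, or alternatively by a direct argument that $\cx1n$ and $\cxr1n$ ``$\star$-commute'' in the relevant range.

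The main obstacle I anticipate is the length bookkeeping in the single-step absorption: verifying that left-multiplying $w_{[j,k]}$ by the Coxeter element $\cx1n=s_1\star\cdots\star s_n$ produces exactly $w_{[j-1,k]}$ requires knowing precisely the left descent set $D_L(w_{[j,k]})$ and how it evolves after prepending each $s_t$ in turn. I would handle this by first proving the companion formula $w_{[j,k]}=\cx1n^{\star(j-1)}\star w_\circ^{[1,k]}=\cxr1n^{\star(n-k)}\star w_\circ^{[j,n]}$ — note $w_\circ^{[1,k]}=w_{[j,k]}$ reduces to $w_\circ^{[j,k]}w_\circ^{[1,k]}$ when... — actually more cleanly by the observation that $w_{[j,k]}=w_{\emptyset;[j,k]}^{-1}\cdot(\text{something})$ is awkward, so instead I would directly compute $D_L(w_{[j,k]})$ from the reduced word $w_{[j,k]}=\cx{j}{n}\star\cxr{j}{n-1}\star w_{[j,k]\cap[j,n-1]}\cdots$; the cleanest route is to establish, by a short induction using~\eqref{eq:w0 type A}, the explicit reduced expression $w_{[j,k]}=\bigl(\dscprod_{j\le t\le k}\cxr{t}{n}\bigr)\bigl(\ascprod_{t}\cdots\bigr)$ type formula and read off descents from it. Once the single-step identity is in hand, the rest of the argument is a routine double induction on $(j-1)+(n-k)$.
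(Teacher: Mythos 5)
There is a genuine gap at the heart of your argument: the two one-step identities $w_{[j,k]}=\cx1n\star w_{[j-1,k]}$ and $w_{[j,k]}=\cxr1n\star w_{[j,k+1]}$ carry the entire content of the proposition, and you never actually prove them. You defer them to ``careful bookkeeping'' of descent sets, and the sketch you do give is wrong in its details. First, the factorization $w_\circ^{[j-1,k]}=\cx{(j-1)}{k}\cdot w_\circ^{[j,k]}$ is false as written (already for $[j-1,k]=[1,2]$ one gets $\cx12\,w_\circ^{\{2\}}=s_1$); the correct length-additive statements from~\eqref{eq:w0 type A} are $w_\circ^{[j-1,k]}=\cxr{(j-1)}{k}\times w_\circ^{[j,k]}=w_\circ^{[j,k]}\times\cx{(j-1)}{k}$. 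Second, the absorption pattern you describe is backwards: when one computes $s_1\star\cdots\star s_n\star w_{[j,k]}$ from the right, Lemma~\ref{lem:wK absorption} shows that $s_n,\dots,s_{k+1}$ are absorbed, then $s_k,s_{k-1},\dots,s_j$ are all \emph{genuinely prepended} (a length count via~\eqref{eq:ell w w0} gives exactly $k-j+1$ length increases, matching $\ell(w_{[j+1,k]})-\ell(w_{[j,k]})$), and finally $s_{j-1},\dots,s_1$ are absorbed; your claim that $s_k$ is absorbed and that $s_1,\dots,s_{j-1}$ are the prepended letters does not even account for all $n$ generators. Finally, your ``main obstacle'' paragraph trails off (the companion formula is abandoned mid-sentence and the proposed reduced expression contains an ellipsis), so the key lemma remains a plan, not a proof.

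The fix is exactly the mechanism the paper uses, and it requires no descent-set tracking at all. From $w_\circ^{[j,k]}=\cxr jk\times w_\circ^{[j+1,k]}$ one gets the length-additive identity $w_{[j+1,k];[i,l]}=\cx jk\times w_{[j,k];[i,l]}$; then Lemma~\ref{lem:wK absorption} lets you pad the Coxeter factor on both sides, $w_{[j+1,k];[i,l]}=\cx i{(j-1)}\star w_{[j+1,k];[i,l]}=\cx ik\star w_{[j,k];[i,l]}=\cx il\star w_{[j,k];[i,l]}$, which is the one-step identity for the $\cx il$-factors; the case $j=i$ (your identity in $k$) is handled the same way in Lemma~\ref{lem:*powers of coxeter}, giving $\cxr il{}^{\star(l-k)}=w_{[i,k];[i,l]}$, and the second expression in the proposition then follows by applying the diagram automorphism of $W_{[i,l]}$ rather than by a separate induction. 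Your reduction to $[i,l]=[1,n]$, the base case, and the observation that the two expressions agree because both reductions land on the same element $w_{[j,k]}$ are all fine; it is only the single-step absorption — the actual mathematical content — that is missing and mis-sketched.
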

\begin{proof}
First we prove the Proposition for the case when either $i=j$ or $k=l$.
\begin{lemma}\label{lem:*powers of coxeter}
For all $1\le a\le b\le n$, $k\ge 0$ we have
\begin{align*}
&\cxr ab{}^{\star k}=w_{[a,b-k];[a,b]}=w_{[a+k,b];[a,b]}{}^{-1},
\\
&\cx ab{}^{\star k}=w_{[a,b-k];[a,b]}{}^{-1}=w_{[a+k,b];[a,b]}.
\end{align*}
\end{lemma}
\begin{proof}
We only prove the equality  $\cxr{a}b{}^{\star k}=w_{[a,b-k];[a,b]}$. The first equality
in the second line is proved similarly, while the remaining
equalities are obtained by applying the diagram automorphism
of~$W_{[a,b]}$.
The argument is by induction on~$k$, the case~$k=0$ being trivial.
For the inductive step, note that if $0\le k\le b-a$ then $w_\circ^{[a,b-k]}=\cx a{(b-k)}\times
w_\circ^{[a,b-(k+1)]}$ by~\eqref{eq:w0 type A}
and so $w_{[a,b-(k+1)];[a,b]}=\cxr a{(b-k)}\times w_{[a,b-k];[a,b]}$. Now, by the induction hypothesis
\begin{align*}
\cxr ab{}^{\star (k+1)}&=\cxr ab\star w_{[a,b-k];[a,b]}\\
&=\cxr{(b-k+1)}b
\star \cxr a{(b-k)}\star  w_{[a,b-k];[a,b]}\\&=
\cxr{(b-k+1)}b\star w_{[a,b-(k+1)];[a,b]}\\
&=w_{[a,b-(k+1)];[a,b]}.
\end{align*}
The last equality follows from Lemma~\ref{lem:wK absorption} since
$[b-k+1,b]\subset [a,b]\setminus[a,b-k-1]$.

In particular, we proved that $\cxr ab{}^{\star (b-a+1)}=w_\circ^{[a,b]}$.
Since $\cxr ab\star w_\circ^{[a,b]}=w_\circ^{[a,b]}$ by Lemma \ref{lem:char w_0 monoid},
it follows that $\cxr ab{}^{\star k}=w_\circ^{[a,b]}=w_{\emptyset;[a,b]}$ for all~$k\ge b-a+1$.
\end{proof}
To treat the general case, we use induction on~$j-i$
to show that
\begin{equation}\label{eq:wKexplicit inter}
w_{[j,k];[i,l]}
=\cx il{}^{\star(j-i)}\star w_{[i,k];[i,l]}.
\end{equation}
Once~\eqref{eq:wKexplicit inter} is established, the Proposition follows by Lemma~\ref{lem:*powers of coxeter}.

The case $j=i$ is trivial.
For the inductive step, note that by~\eqref{eq:w0 type A} for $j\le k$
$$
w_\circ^{[j,k]}=
w_\circ^{[j+1,k]}\times \cx jk=\cxr jk\times w_\circ^{[j+1,k]}
$$
and so
\begin{equation}\label{eq:w0step}
w_{[j+1,k];[i,l]}=\cx jk\times w_{[j,k];[i,l]}.
\end{equation}
Then
\begin{alignat*}{3}
w_{[j+1,k];[i,l]}&=\cx i{j-1}\star w_{[j+1,k];[i,l]}&&
\text{by Lemma~\ref{lem:wK absorption}}
\\
&=\cx i{(j-1)}\star \cx jk\times w_{[j,k];[i,l]}&\qquad&\text{by \eqref{eq:w0step}}\\
&=\cx ik\star w_{[j,k];[i,l]}\\
&=\cx il\star w_{[j,k];[i,l]}&&\text{by
Lemma~\ref{lem:wK absorption}}\\
&=\cx il{}^{\star (j-i+1)}\star w_{[i,k];[i,l]}&&\text{by the induction
hypothesis.}
\end{alignat*}
The inductive step is proven. The second equality is obtained from the first one using the diagram automorphism of~$W_{[i,l]}$.
\end{proof}
As an immediate byproduct, we obtain the following
\begin{corollary}[cf.~\cite{He09}]\label{cor:A J*K}
Let $J=[a',b']$, $K=[a,b]$, $1\le a\le b\le n$, $1\le a'\le b'\le n$.
Then $J\star_I K=[a+a'-1,b+b'-n]$.
\end{corollary}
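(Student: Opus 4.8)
The plan is to reduce everything to explicit manipulation of $\star$-powers of the two distinguished Coxeter elements $\cx 1n$ and $\cxr 1n$ of $W_{[1,n]}(A_n)=W$, using Proposition~\ref{prop:wK explicit} in both directions. Since $A_n$ is of finite type, $I=[1,n]\in\mathscr F(M)$, and it suffices (as observed at the start of this subsection) to treat $L=I$, so that $J\star_I K$ is the unique subset with $w_J\star w_K=w_{J\star_I K;I}$, which exists by Proposition~\ref{prop:submonoid *}.

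First I would apply Proposition~\ref{prop:wK explicit} with $i=1$, $l=n$ to write $w_J=w_{[a',b'];[1,n]}=\cx 1n^{\star(a'-1)}\star\cxr 1n^{\star(n-b')}$ (first form) and $w_K=w_{[a,b];[1,n]}=\cxr 1n^{\star(n-b)}\star\cx 1n^{\star(a-1)}$ (second form); note all four exponents are $\ge 0$ since $1\le a\le b\le n$ and $1\le a'\le b'\le n$. Multiplying and collecting the two adjacent $\cxr 1n$-factors via the obvious identity $x^{\star p}\star x^{\star q}=x^{\star(p+q)}$ gives
\[
w_J\star w_K=\cx 1n^{\star(a'-1)}\star\cxr 1n^{\star(2n-b-b')}\star\cx 1n^{\star(a-1)}.
\]
By Lemma~\ref{lem:*powers of coxeter}, both $\cxr 1n^{\star(2n-b-b')}$ and $\cx 1n^{\star(a-1)}$ are $[1,n]$-parabolic elements, hence commute under $\star$ by Proposition~\ref{prop:submonoid *}; moving $\cx 1n^{\star(a-1)}$ past the $\cxr 1n$-power and again using $x^{\star p}\star x^{\star q}=x^{\star(p+q)}$ yields
\[
w_J\star w_K=\cx 1n^{\star(a+a'-2)}\star\cxr 1n^{\star(2n-b-b')},
\]
with $a+a'-2\ge 0$ and $2n-b-b'\ge 0$.

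Finally I would recognize the right-hand side. If $1\le a+a'-1\le b+b'-n\le n$, then $\cx 1n^{\star(a+a'-2)}\star\cxr 1n^{\star(n-(b+b'-n))}=w_{[a+a'-1,\,b+b'-n];[1,n]}$ directly by Proposition~\ref{prop:wK explicit} (the hypotheses $1\le a+a'-1\le b+b'-n\le n$ hold since $a,a'\ge 1$ and $b,b'\le n$), so $J\star_I K=[a+a'-1,b+b'-n]$, matching the claimed interval. The remaining (degenerate) case $a+a'-1> b+b'-n$ is where the claimed interval is empty; here I would argue that $\cx 1n^{\star(a+a'-2)}\star\cxr 1n^{\star(2n-b-b')}=w_\circ$. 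Indeed, by Lemma~\ref{lem:*powers of coxeter} this product is $w_{[a+a'-1,\,n]}\star w_{[1,\,b+b'-n]}$, where a factor degenerates to $w_\circ=w_{\emptyset;I}$ as soon as $a+a'-1>n$ or $b+b'-n\le 0$ (in which case $w_\circ$ absorbs the other factor by Lemma~\ref{lem:char w_0 monoid}), and otherwise $[a+a'-1,n]\cap[1,b+b'-n]=\emptyset$, so Proposition~\ref{prop:submonoid *} forces the product to lie in $w_{L;I}$ with $L=\emptyset$, i.e. to equal $w_\circ$. Thus $J\star_I K=\emptyset=[a+a'-1,b+b'-n]$, consistent with the interval convention. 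The only real subtlety is this degenerate bookkeeping—keeping track of exactly when the Coxeter powers collapse to $w_\circ$—and it is handled cleanly by the ``empty intersection'' argument via Proposition~\ref{prop:submonoid *}; the rest is the routine exponent arithmetic indicated above.
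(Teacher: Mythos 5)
Your proposal is correct and follows essentially the same route as the paper: both express $w_J$ and $w_K$ via Proposition~\ref{prop:wK explicit} as $\star$-powers of $\cx1n$ and $\cxr1n$, merge the exponents using the commutativity of parabolic elements, and read off the result from Proposition~\ref{prop:wK explicit} again. Your explicit treatment of the degenerate case $a+a'-1>b+b'-n$ (via Lemma~\ref{lem:*powers of coxeter} and Proposition~\ref{prop:submonoid *}) is a careful elaboration of bookkeeping the paper leaves implicit, not a different argument.
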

\begin{proof}
We have, by Proposition~\ref{prop:wK explicit}
\begin{align*}
w_K\star w_J&=\cx1n{}^{\star(a-1)}\star \cxr1n{}^{\star(2n-b-b')}
\star \cx1n{}^{(a'-1)}\\
&=\cx1n{}^{\star(a+a'-2)}\star \cxr1n{}^{\star(n-(b+b'-n))},
\end{align*}
which, again by Proposition~\ref{prop:wK explicit}, is equal to
$w_{[a+a'-1,b+b'-n]}$.
\end{proof}
\begin{proof}[Proof of Theorem~\ref{thm:pJwK is w0KstarJw0J},
$W$ of type~$A_n$]
By Proposition~\ref{prop:only connected}, it suffices to
prove that~$(J,K)\in\mathscr G$ for
$J=[a',b']$, $1\le a'\le b'\le n$ and $K=[a,b]$, $1\le a\le b\le n$.
Since
$$
w_K=\cx1n{}^{\star(a-1)}\star \cxr1n{}^{\star(n-b)},
$$
by Proposition~\ref{prop:wK explicit} and
Corollary~\ref{cor:A J*K}
we have
\begin{equation*}
p_J(w_K)=\cx{a'}{b'}{}^{\star(a-1)}\star
\cxr{a'}{b'}{}^{\star(n-b)}=w_{[a+a'-1,b+b'-n];[a',b']}
=w_{J\star K;J}.\qedhere
\end{equation*}
\end{proof}

\subsubsection{Proof of Theorem~\ref{thm:pJwK is w0KstarJw0J} for type \texorpdfstring{$B_n$}{Bn}}

Let~$\Phi:\Br^+(B_n)\to\Br^+(A_{2n-1})$ be the injective Coxeter-Hecke type
homomorphism from~\eqref{eq:unfold Bn A2n-1}. Let~$\wh I=[1,n]$,
$I=[1,2n-1]$.
Note the following
\begin{lemma}[cf.~\cite{He09}\footnote{We provide a proof since there is a
misprint in~\cite{He09} in the second case}]\label{lem:B J*K}
Let $J=[a',b']$, $K=[a,b]$, $1\le b\le b'\le n$.
Then
$$
J\star_{\wh I} K=\begin{cases}
\emptyset,&b'<n,\\
[a+a'-1,b-a'+1],&b<b'=n,\\
[a+a'-1,n],&b=b'=n.
\end{cases}
$$
\end{lemma}
\begin{proof}
Note that $[\Phi]([a,b])=[a,b]\sqcup [2n-b,2n-a]$ if~$1\le a\le b<n$ while
$[\Phi]([a,n])=[a,2n-a]$, $1\le a\le n$, and that the intervals $[a,b]$, $[2n-b,2n-a]$
are orthogonal subsets of~$I$.

If~$b,b'<n$,
we have by~\cite{He09}*{Lemma~6} and by Corollary~\ref{cor:A J*K}
\begin{align*}
[\Phi](J\star_{\wh I} K)&=([a',b']\star_I [a,b])\cup
([a',b']\star_I [2n-b,2n-a])\cup\\
&\qquad ([a,b]\star_I [2n-b',2n-a'])
\cup ([2n-b',2n-a']\star_I [2n-b,2n-a])\\
&=[a+a'-1,b+b'-2n+1]\cup [2n-b+a'-1,b'-a+1]\\
&\qquad
\cup [a+2n-b'-1,b-a'+1]\cup [4n-b-b'-1,2n-a-a'+1].
\end{align*}
All these intervals are empty
since $b-a,b'-a'\le n-2$. Since~$[\Phi](i)\not=\emptyset$ for all~$i\in\wh I$, it follows that
$J\star_{\wh I} K=\emptyset$.

If~$b<b'=n$ then again by \cite{He09}*{Lemma~6} and Corollary~\ref{cor:A J*K},
\begin{align*}
[\Phi](J\star_{\wh I} K)&=([a',2n-a']\star_I [a,b])\cup
([a',2n-a']\star_I [2n-b,2n-a]\\
&=[a+a'-1,b-a'+1]\cup [2n+a'-b-1,2n-a-a'+1]
\\
&=[\Phi]([a+a'-1,b-a'+1]).
\end{align*}
Since~$\Phi$ is disjoint and~$[\Phi](i)\not=\emptyset$ for all~$i\in\wh I$, $J\star_{\wh I}K=
[a+a'-1,b-a'+1]$ by Lemma~\partref{lem:elem Artin hom.a}.
Finally, if $b=b'=n$,
\begin{align*}
[\Phi](J\star_{\wh I} K)&=[a',2n-a']\star [a,2n-a]\\
&=[a+a'-1,2n-a-a'+1]\\
&=[\Phi]([a+a'-1,n]),
\end{align*}
and it remains to apply Lemma~\partref{lem:elem Artin hom.a}.
\end{proof}
\begin{proof}[Proof of Theorem~\ref{thm:pJwK is w0KstarJw0J},
$W$ of type~$B_n$]
Since~$\Phi$ is injective and strongly square free,
$\overline\Phi_\star$ is injective by
Lemma~\partref{lem:sqf Hecke hom.d}.
The assertion is
immediate from Proposition~\ref{prop:LCM hom good}.
\end{proof}

\subsubsection{Proof of Theorem~\ref{thm:pJwK is w0KstarJw0J} for type \texorpdfstring{$D_{n+1}$}{Dn+1}}
Let~$\sigma$ be the diagram automorphism of~$W$ corresponding to the permutation~$(n,n+1)$ (cf.~\eqref{eq:diag aut}).
The following is easily checked
\begin{gather}
w_\circ = {\ascprodst_{1\le i\le n}}
\cx i{(n+1)}\times\cxr i{(n-1)}\label{eq:w0 D}\\
\label{eq:w1,n D}
w_{[1,n]}={\ascprodst_{1\le i\le n}}\sigma^i(\cxr{i}{n})
=\sigma(\cxr1n \times w_{[2,n];[2,n+1]})
\\
\intertext{
and}
\label{eq:w i,n+1 D}
w_{[i,n+1]}={\dscprodst_{1\le j\le i-1}}
(\cx j{(n+1)}\times\cxr j{(n-1)}),\qquad 1\le i\le n-1.
\end{gather}

\begin{proposition}\label{prop:1,n*K D}
Let $J=[1,n]$ and let $K\subset I$ be connected. We have
$$
J\star K=\begin{cases}
[1,n]_2,& K=J,\\
[1,n-1]_2,&K=\sigma(J),\\
[i,n-i+1],&K=[i,n+1],\,1\le i\le n-1,\\
\emptyset,&\text{otherwise}.
\end{cases}
$$
\end{proposition}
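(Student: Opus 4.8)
The plan is to compute $J \star K$ for $J = [1,n]$ and connected $K \subset I = [1,n+1]$ by translating the problem into type $A$ via the unfolding homomorphism $\Phi \colon \Br^+(D_{n+1}) \to \Br^+_{[1,n]}(D_{n+1}) \cong \Br^+(A_{n-1})$ from Example~\ref{ex:light projection} on the Hecke side, or alternatively by a direct computation using the explicit expressions~\eqref{eq:w0 D}, \eqref{eq:w1,n D}, \eqref{eq:w i,n+1 D}. Since $J \star K = \supp(w_J \star w_K) \setminus$ nothing; more precisely $w_J \star w_K = w_\circ^{J\star K} w_\circ$ and $J \star K \subset J \cap K$ by Proposition~\ref{prop:submonoid *}, the target set is a subset of $[1,n]$, which is exactly where the folding homomorphism $\phi_{\boldsymbol I} \colon (W(D_{n+1}),\star) \to (W(A_n),\star)$ from Example~\ref{ex:light projection} is relevant. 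The cleanest route: use Lemma~\ref{lem:hom parab submonoid} for a strict parabolic Hecke homomorphism. We have $\overline\Phi_\star$ for the unfolding $\Br^+(D_{n+1}) \to$ something is not directly available, so instead I would reduce to type $A_n$ computations already done in the previous subsection.

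First I would recall that $w_{[1,n]} = w_\circ^{[1,n]} w_\circ$ and observe that $[1,n]$ is connected of type $A_n$ inside $D_{n+1}$, and that $w_\circ = w_\circ^{[1,n]} w_{[1,n]}$ with $\ell$ additive. The key computation is $w_{[1,n]} \star w_K$ for each connected $K$. I would split into the stated cases. For $K = J = [1,n]$: here $w_J \star w_J = (w_\circ^{[1,n]} w_\circ)^{\star 2}$, and since $w_\circ^{[1,n]}$ is an idempotent absorbing everything in $W_{[1,n]}$ on the left, $w_J \star w_J = w_\circ^{[1,n]} \star w_\circ \star w_\circ = w_\circ^{[1,n]} \star w_\circ$; but I need to identify this with $w_\circ^{[1,n]_2} w_\circ$. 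The point is that $w_\circ^{[1,n]} \star w_\circ = w_{J\star J} w_\circ$ where $J \star J$ must be computed: applying the folding $\phi_{\boldsymbol I}$ (which is a homomorphism by Lemma~\ref{lem:Heck orb fold}) sends $w_J = w_{[1,n];D_{n+1}}$ to a parabolic element of $W(A_n)$, and using $[1,n] \star_{A_n} [1,n] = [1,n]_2$ from the type $A$ result (Corollary~\ref{cor:A J*K} with $a=a'=1$, $b=b'=n$ gives $[1, 2n-n] = [1,n]$... wait, that is $[1,n]$, not $[1,n]_2$). So the folding approach needs care: the relevant identity is rather $w_{[1,n]}^{\star 2}$ in $D_{n+1}$, not in $A_n$. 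I would instead use the symmetrized Burau representation or the explicit reduced-word manipulations from~\eqref{eq:w1,n D}: $w_{[1,n]} = \sigma(\cxr1n \times w_{[2,n];[2,n+1]})$, and iterate, reducing $w_{[1,n]}^{\star k}$ to parabolic elements by the $\star$-identities of Lemma~\ref{lem:wK absorption} and Proposition~\ref{prop:wK explicit} applied to the parabolic subgroup $W_{[2,n+1]} \cong W(D_n)$, which gives an induction on $n$.

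For the case $K = \sigma(J) = \sigma([1,n])$, by Corollary~\partref{cor:diag aut good.a}-style reasoning (applying $\sigma$), $w_J \star w_{\sigma(J)} = w_J \star \sigma(w_J)$, and a parallel computation yields $[1,n-1]_2$; the shift by one in the parity set compared to the $K=J$ case comes from the fact that $\sigma$ exchanges the two ``ends'' $n, n+1$ and the Coxeter element structure of $W_{[1,n]}$ relative to $W_{[1,n+1]}$ picks out an odd-indexed rather than the full parity class. For $K = [i,n+1]$, $1 \le i \le n-1$: here $w_K = w_{[i,n+1]}$ is given by~\eqref{eq:w i,n+1 D}, so $w_J \star w_K = w_{[1,n]} \star w_{[i,n+1]}$, and since $w_{[i,n+1]} = {\dscprodst_{1\le j\le i-1}}(\cx j{(n+1)} \times \cxr j{(n-1)})$ involves only generators $s_1, \dots, s_{n+1}$ in a Coxeter-element-like pattern, I would compute $p_{[1,n]}(w_{[i,n+1]})$ first (which by Proposition~\ref{prop:1,n*K D}'s claim equals $w_{[i,n-i+1];[1,n]}$, i.e. $J \star K = [i,n-i+1]$), then verify this is consistent with the $\star$-product formula. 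This $K=[i,n+1]$ case I expect to handle by the type $A$ result applied to $W_{[1,n]}$ together with tracking how $s_{n+1}$ acts, treating $w_{[i,n+1]}$ in $D_{n+1}$ as ``almost'' a type $B$ longest element. The remaining ``otherwise'' case (connected $K$ not of the above three forms, e.g. $K$ is an interval $[a,b]$ with $b \le n$, or $K \subset [2,n+1]$ not containing both $n,n+1$): here $K \cap [1,n]$ has small enough support that $J \star K = \emptyset$, which follows because $\ell(w_J \star w_K) = \ell(w_J) = \ell(w_\circ^{[1,n]} w_\circ)$ already — i.e. $w_K$ gets absorbed — and this I would verify by the length/absorption argument of Proposition~\ref{prop:absorb prop} combined with the type $A$ formula $[1,n] \star_{A} [a,b] = \emptyset$ whenever $(b-a) + (n-1) < n-1$, adjusted for $D_{n+1}$.

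The main obstacle will be the first two cases ($K = J$ and $K = \sigma(J)$), specifically pinning down the exact parity class $[1,n]_2$ versus $[1,n-1]_2$ and showing $w_{[1,n]}^{\star 2} = w_\circ^{[1,n]_2} w_\circ$ rather than some other parabolic element. The natural tool is induction on $n$ using the embedding $W_{[2,n+1]} \cong W(D_n)$ and the recursive structure of~\eqref{eq:w1,n D}, but one must be careful that the diagram automorphism $\sigma$ does not commute with this embedding in the naive way, so the inductive hypothesis has to be stated for both $w_{[1,n]}$ and its $\sigma$-twist simultaneously. An alternative that might be cleaner is to use the faithful reflection representation of $W(D_{n+1})$ (as in the type $B_n$ argument via $W(B_n) \hookrightarrow GL_n$) and compute the action of $w_{[1,n]}^{\star k}$ directly on the standard basis, identifying the fixed subspace to read off the support of the resulting parabolic element; I would try this second approach if the inductive bookkeeping becomes unwieldy.
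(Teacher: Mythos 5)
Your eventual fallback plan for the two hard cases (rank induction via the recursion~\eqref{eq:w1,n D} and the parabolic $W_{[2,n+1]}\cong W(D_n)$, with $D_3=A_3$ as base, treating $J$ and $\sigma(J)$ together) is indeed the route the paper takes in Lemma~\ref{lem:1,n*1,n D}, and you correctly diagnosed that the folding to $A_n$ cannot work since $[1,n]\star_{A_n}[1,n]=[1,n]\neq[1,n]_2$. But the proposal as written contains incorrect steps and omits the mechanism that makes the induction run. The opening computation $w_J\star w_J=w_\circ^{[1,n]}\star w_\circ\star w_\circ=w_\circ^{[1,n]}\star w_\circ$ is false: the factorization $w_J=w_\circ^Jw_\circ$ is length-\emph{subtractive} ($\ell(w_\circ^Jw_\circ)=\ell(w_\circ)-\ell(w_\circ^J)$), so $w_J\neq w_\circ^J\star w_\circ$ (the latter is just $w_\circ$ by Lemma~\ref{lem:char w_0 monoid}); if that manipulation were legitimate it would give $J\star J=\emptyset$, contradicting the statement. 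What actually drives the paper's induction are the parity-sensitive commutation rules coming from the (near-)centrality of $w_\circ$ --- $w_\circ$ central for $n$ odd, $ww_\circ=w_\circ\sigma(w)$ for $n$ even --- which yield identities such as $w_{\sigma(J)}\star\cxr1n=\cx2{(n+1)}\star w_{\sigma(J)}$ and their $\sigma$-twists, allowing one to push a Coxeter factor through $w_{\sigma(J)}$, apply the inductive hypothesis inside $W_{[2,n+1]}$, and then absorb the leftover factor into $w_\circ^{[2,n+1]}$ or $w_\circ$. This is precisely the bookkeeping you flag as ``the main obstacle,'' and it is absent from your sketch.

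Two of the remaining cases also rest on faulty reductions. For the ``otherwise'' case, $J\star K=\emptyset$ means $w_J\star w_K=w_\emptyset=w_\circ$, whose length exceeds $\ell(w_J)$; so it is not an absorption statement $\ell(w_J\star w_K)=\ell(w_J)$ --- if $w_K$ were absorbed you would get $J\star K=J$, not $\emptyset$. The paper instead writes $w_K=w_{K;K'}\star w_{K'}$ for a maximal $K'\in\{[1,n-1],[2,n],\sigma([2,n])\}$ containing $K$ and shows by telescoping (Lemma~\ref{lem:wK absorption} together with the already-established values of $J\star J$ and $J\star\sigma(J)$) that the product climbs all the way up to $w_\circ$. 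For $K=[i,n+1]$, your plan to ``compute $p_{[1,n]}(w_K)$ first'' and read off $J\star K$ is circular at this stage: the identity $p_J(w_K)=w_{J\star K;J}$ is Theorem~\ref{thm:pJwK is w0KstarJw0J}, which this proposition is a step towards, and in the paper the projection statement for $J=[1,n]$ is proved only afterwards (Proposition~\ref{prop:proj 1n D}). One must compute $w_{[i,n+1]}\star w_{\sigma(J)}$ directly, which the paper does by induction on $i$ using $w_{[i,n+1]}=\cx{(i-1)}{(n+1)}\times\cxr{(i-1)}{(n-1)}\times w_{[i-1,n+1]}$ from~\eqref{eq:w i,n+1 D}. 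The alternative you mention via the reflection representation is plausible in principle but is likewise not carried out.
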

\begin{proof}
The argument is rather long, and we split it into several
Lemmata for the reader's convenience.
\begin{lemma}\label{lem:1,n*1,n D}
$\sigma(J)\star \sigma(J)=\sigma([1,n]_2)$ and
$\sigma(J)\star J=[1,n-1]_2$, $n\ge 3$.
\end{lemma}
\begin{proof}
We use induction on~$|I|$.
The induction base is the type $A_3$ ($n=2$), which identifies with $D_3$. To make the notation consistent with $D_{n+1}$ series, we label the nodes of the Coxeter graph of type~$A_3$ as follows
\begin{equation}\label{eq:A3 as D3}
\dynkin[text style/.style={scale=1},Coxeter,root radius=0.07,expand labels={2,1,3},edge length=1.2cm]D3.
\end{equation}
Then $J=\{1,2\}$, $\sigma(J)=\{1,3\}$.
Using Corollary~\ref{cor:A J*K}, we obtain
$J\star \sigma(J)=\{1\}=[1,1]_2$ and $\sigma(J)\star \sigma(J)=\{3\}=\sigma([1,2]_2)$.

For the inductive step, assume first that $n>2$ is odd.
Then $w_\circ$ is central in~$W$ and
since $w_{\sigma(J)}=w_\circ w_\circ^{\sigma(J)}$,
it follows from Lemma~\ref{lem:wK absorption} that $w_{\sigma(J)}\star s_n=w_{\sigma(J)}$ while
\begin{align}w_{\sigma(J)}\star s_i&=w_\circ^{\sigma(J)}w_\circ s_i=
w_\circ^{\sigma(J)}s_i w_\circ \nonumber\\
&=\begin{cases}
s_{n+1-i} w_{\sigma(J)},& 2\le i\le n-1,\\
s_{n+1} w_{\sigma(J)},&i=1,\\
s_1 w_{\sigma(J)},&i=n+1,
\end{cases}\nonumber\\
&=\begin{cases}
s_{n+1-i}\star w_{\sigma(J)},& 2\le i\le n-1,\\
s_{n+1}\star w_{\sigma(J)},&i=1,\\
s_1 \star w_{\sigma(J)},&i=n+1.
\end{cases}
\label{eq: wsJ si D}
\end{align}
Therefore,
$$
w_{\sigma(J)}\star \cxr1n=\cx{2}{(n+1)}\star w_{\sigma(J)},
$$
where we used that $s_n\star w_{\sigma(J)}=w_{\sigma(J)}$
by Lemma~\ref{lem:wK absorption}.
Using~\eqref{eq:w1,n D} and the induction hypothesis
we obtain
\begin{align*}
w_{\sigma(J)}\star w_{\sigma(J)}&=\cx2{(n+1)}\star \cxr1n\star w_{[2,n];[2,n+1]}\star w_{[2,n];[2,n+1]}\\
&=\cx2{(n+1)}\times \cxr1n\times w_{[2,n]_2;[2,n+1]}.
\end{align*}
Now, $W_{[2,n+1]}$ is
of type~$D_n$ and so
$w_\circ^{[2,n+1]}$ satisfies $s_i w_\circ^{[2,n+1]}=
w_\circ^{[2,n+1]}s_{\sigma(i)}$ for $i\in[2,n+1]$. As $n$ is odd, $[2,n]_2=[3,n]_2$, and so
\begin{align*}
w_{\sigma(J)}\star w_{\sigma(J)}&=\cx2{(n+1)}\times \cxr1{(n-1)}\times w_\circ^{[2,n+1]}w_\circ^{\sigma([3,n]_2)}\\
&=s_1 w_\circ^{[1,n+1]}w_\circ^{\sigma([3,n]_2)}=s_1 w_\circ^{\sigma([3,n]_2)} w_\circ
=w_\circ^{\sigma([1,n]_2)}w_\circ\\
&=w_{\sigma([1,n])_2}.
\end{align*}

If~$n>2$ is even then $w w_\circ=w_\circ \sigma(w)$, $w\in W$ and so for $1\le i\le n$ we have by Lemma~\ref{lem:wK absorption}
\begin{align}
w_{\sigma(J)}\star s_i&=w_\circ w_\circ^J\star s_i
=w_\circ w_\circ^Js_i
=w_\circ s_{n+1-i}w_\circ^J=s_{\sigma(n+1-i)}w_{\sigma(J)}\nonumber\\&
=s_{\sigma(n+1-i)}\star w_{\sigma(J)},\label{eq: wsJ si D even}
\end{align}
whence, as $s_n\star w_{\sigma(J)}=w_{\sigma(J)}$ by Lemma~\ref{lem:wK absorption},
$$
w_{\sigma(J)}\star \cxr1n=\sigma(\cx1n)\star w_{\sigma(J)}
=\sigma(\cx1n)\star s_n\star w_{\sigma(J)}=
\cx1{(n+1)}\star w_{\sigma(J)}
$$
Applying the induction hypothesis we obtain
$$
w_{\sigma(J)}\star w_{\sigma(J)}=
\cx{1}{(n+1)}\times \cxr1{(n-1)}\times w_{[2,n]_2;[2,n+1]}.
$$
Now, $w_\circ^{[2,n+1]}$ is central in~$W_{[2,n+1]}$ and
so
\begin{align*}
w_{\sigma(J)}\star w_{\sigma(J)}&=
\cx{1}{(n+1)}\times\cxr1{(n-1)}\times w_\circ^{[2,n+1]}
w_\circ^{[2,n]_2}\\
&=
w_\circ
w_\circ^{[2,n]_2}=
w_\circ^{\sigma([2,n]_2)}w_\circ
=w_{\sigma([1,n]_2)},
\end{align*}
since $[2,n]_2=[1,n]_2$ in this case.

The argument for $w_J\star w_{\sigma(J)}=w_{\sigma(J)}\star w_J$ is similar. If~$n>2$ is odd, we have by~\eqref{eq: wsJ si D}
$$
w_{\sigma(J)}\star \sigma(\cxr1n)=\cx1{(n+1)}\star w_{\sigma(J)}
$$
and so by~\eqref{eq:w1,n D} and by the induction hypothesis,
\begin{align*}
w_{\sigma(J)}\star w_J&=\cx1{(n+1)}\star \cxr1n\star w_{[2,n]_2;
[2,n+1]}\star \sigma(w_{[2,n]_2;[2,n+1]})\\
&=\cx1{(n+1)}\times \cxr1{(n-1)}\times w_{[2,n-1]_2}\\
&=\cx1{(n+1)}\times \cxr1{(n-1)}\times w_\circ^{[2,n+1]}
w_\circ^{[2,n-1]_2}\\
&=w_\circ^{[1,n+1]}w_\circ^{[2,n-1]_2}=w_\circ^{[2,n-1]_2}w_\circ^{[1,n+1]}\\
&=w_{[1,n-1]_2},
\end{align*}
as $n-1$ is even and so $[1,n-1]_2=[2,n-1]_2$. Similarly,
for $n>2$ even we obtain, using~\eqref{eq: wsJ si D even}
and $w_{\sigma(J)}\star s_{n+1}=w_{\sigma(J)}$,
$$
w_{\sigma(J)}\star \sigma(\cxr1n)=
w_{\sigma(J)}\star \cxr1{(n-1)}
=\cxr2{(n+1)} \star w_{\sigma(J)}
$$
whence, as in this case $[2,n-1]_2=[3,n-1]_2$,
\begin{align*}
w_{\sigma(J)}\star w_J&=\cx2{(n+1)}\star \cxr1n\star w_{[2,n]_2;
[2,n+1]}\star \sigma(w_{[2,n]_2;[2,n+1]})\\
&=\cx2{(n+1)}\times \cxr1{(n-1)}\times w_{[2,n-1]_2}\\
&=\cx2{(n+1)}\times \cxr1{(n-1)}\times w_\circ^{[2,n+1]}
w_\circ^{[3,n-1]_2}\\
&=s_1 w_\circ^{[1,n+1]}w_\circ^{[3,n-1]_2}=s_1 w_\circ^{[3,n-1]_2}w_\circ^{[1,n+1]}\\
&=w_{[1,n-1]_2}.\qedhere
\end{align*}
\end{proof}
\begin{lemma}\label{lem:1,n*i,j}
If $K\subsetneq J$ or $K\subsetneq \sigma(J)$ then
$w_J\star w_K=w_\circ$, that is, $J\star K=\emptyset$.
\end{lemma}
\begin{proof}
Suppose first that $K\subsetneq J$. Then
either $K\subset [1,n-1]$ or $K\subset [2,n]$ and so, by Lemma~\ref{lem:len prop wJ K},
either $w_K=w_{K;[1,n-1]}\star w_{[1,n-1]}$ or $w_K=w_{K;[2,n]}\star w_{[2,n]}$.
Thus, by Lemma~\ref{lem:char w_0 monoid} it suffices to prove that $w_{[1,n-1]}\star w_J=w_\circ$ and $w_{[2,n]}\star w_J=w_\circ$.

Using Lemmata~\ref{lem:wK absorption}, \ref{lem:len prop wJ K},  and~\ref{lem:1,n*1,n D}, we obtain
\begin{align*}
w_{[2,n]}\star w_J&=w_{[2,n];[1,n]}\star w_J\star w_J\\
&=\cx1n\star w_{[1,n]_2}=\cx1{(n-1)}\star s_n w_{[1,n]_2}=\cx1{(n-1)}\star w_{[1,n-2]_2}\\
&=\cx1{(n-2)}\star w_{[1,n-2]_2}.
\end{align*}
Continuing this way, we obtain $w_{[2,n]}\star w_J=w_\emptyset=w_\circ$. The computation for~$w_{[1,n-1]}$ is similar,
albeit a bit longer as it depends on the parity of~$n$,
and is omitted.

It remains to consider the case when $K=\sigma([i,n])$ for some~$2\le i\le n$. The same considerations as above show that it suffices to consider $K=\sigma([2,n])$. Then, by Lemmata~\ref{lem:len prop wJ K}, \ref{lem:wK absorption} and~\ref{lem:1,n*1,n D},
\begin{align*}
w_K\star w_J&=\sigma(w_{[2,n],[1,n]})\star w_{\sigma(J)}\star w_J=\sigma(\cx1n)\star w_{[1,n-1]_2}\\
&=\cx1{(n-1)}\star w_{[1,n-1]_2}=\cx1{(n-2)}\star s_{n-1}w_{[1,n-1]_2}=\cx1{(n-2)}\star w_{[1,n-3]_2}\\
&=\cx1{(n-3)}\star w_{[1,n-3]_2}.
\end{align*}
Continuing this way, we obtain $w_K\star w_J=w_\circ$.
\end{proof}
The last remaining case is
\begin{lemma}
We have $J\star [i,n+1]=[i,n+1-i]$, $1\le i\le n-1$.
\end{lemma}
\begin{proof}
We use induction on~$i$. The induction base is trivial
as $[1,n+1]=I$.
For the inductive step, note that $w_{[i,n+1]}=
\cx{(i-1)}{(n+1)}\times\cxr{(i-1)}{(n-1)}\times w_{[i-1,n+1]}$, $i>2$.
Therefore, using the induction hypothesis and Lemma~\ref{lem:wK absorption} we obtain for
$i\le (n+3)/2$
\begin{align*}
w_{[i,n+1]}\star w_{\sigma(J)}&=\cx{(i-1)}{(n+1)}\star\cxr{(i-1)}{(n-1)}\star w_{[i-1,n+1]}\star w_{\sigma(J)}\\
&=\cx{(i-1)}{(n+1)}\star\cxr{(i-1)}{(n-1)}\star w_{[i-1,n+2-i]}\\
&=\cx{(i-1)}{(n+1)}\star \cxr{(n+3-i)}{(n-1)}\star
\cxr{(i-1)}{(n+2-i)} w_{[i-1,n+2-i]}\\
&=\cx{(i-1)}{(n+1)}\star \cxr{(n+3-i)}{(n-1)}\star
w_{[i-1,n+1-i]}\\
&=\cx{(i-1)}{(n+1)}\star
w_{[i-1,n+1-i]}\\
&=\cx{(i-1)}{(n+1-i)}w_{[i-1,n+1-i]}\\
&=w_{[i,n+1-i]}.
\end{align*}
If $i>(n+3)/2$ then $[i-1,n+2-i]$ is empty, that is $w_{[i-1,n+2-i]}=w_\circ$, and so $w_{[i,n+1]}\star w_{\sigma(J)}=
\cx{(i-1)}{(n+1)}\star\cxr{(i-1)}{(n-1)}\star w_\circ=w_\circ$, that is $[i,n+1]\star \sigma(J)=\emptyset$. But then $i>(n+3)/2>(n+1)/2$ and so $[i,n+1-i]$ is also empty.
\end{proof}
This exhausts all connected~$K\subset I$.
\end{proof}

\begin{proposition}\label{prop:proj 1n D}
For any connected~$K\subset I$, $([1,n],K)\in\mathscr G$.
\end{proposition}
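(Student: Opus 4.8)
\textbf{Proof proposal for Proposition~\ref{prop:proj 1n D}.}
The plan is to combine Proposition~\ref{prop:1,n*K D}, which computes $[1,n]\star K$ for every connected $K\subset I$, with a direct computation of the left-hand side $p_{[1,n]}(w_K)$ in each of the corresponding cases, and then to promote the connected case to arbitrary $K$ via Proposition~\ref{prop:only connected}\ref{prop:only connected.b}. Since $p_{[1,n]}$ is a homomorphism of Hecke monoids and, by Lemma~\ref{lem:eq for w0 J*K}, $w_{J\star K;J}\le p_J(w_K)$ with $w_\circ^J=w_\circ^{J\star K}\star p_J(w_K)$, it suffices in each case to show that $p_{[1,n]}(w_K)$ has the correct length, namely $\ell(p_{[1,n]}(w_K))=\ell(w_{[1,n]\star K;[1,n]})$; then the absorption property (Proposition~\ref{prop:absorb prop}) forces equality. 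Thus the whole proof reduces to a length computation in $W_{[1,n]}(D_{n+1})\cong W(A_{n-1})$ together with the explicit formulas \eqref{eq:w0 D}, \eqref{eq:w1,n D}, \eqref{eq:w i,n+1 D} for the relevant parabolic elements of $W(D_{n+1})$.

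First I would dispose of the trivial observations: $([1,n],I)\in\mathscr G$ and $([1,n],\emptyset)\in\mathscr G$ by Lemma~\ref{lem:eq for w0 J*K}, and $([1,n],K)\in\mathscr G$ whenever $[1,n]\star K=\emptyset$ by the same lemma. By Proposition~\ref{prop:1,n*K D} this already handles every connected $K$ except $K=[1,n]$, $K=\sigma([1,n])$, and $K=[i,n+1]$ for $1\le i\le n-1$. For $K=\sigma([1,n])$ I would use $w_K=w_\circ w_\circ^{\sigma([1,n])}$ and project: since $p_{[1,n]}$ kills $s_{n+1}$ and sends $w_\circ^{D_{n+1}}$ to $w_\circ^{A_{n-1}}$ and $w_\circ^{\sigma([1,n])}$ (which lies in $W_{[1,n-1]}\times W_{\{n+1\}}$) to $w_\circ^{[1,n-1]}$, a short computation with \eqref{eq:w1,n D} gives $p_{[1,n]}(w_{\sigma([1,n])})=w_{[1,n-1]_2;[1,n]}$, matching $[1,n]\star\sigma([1,n])=[1,n-1]_2$. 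The case $K=[1,n]$ is similar but one must be careful about the parity of $n$, exactly as in Lemma~\ref{lem:1,n*1,n D}; here $p_{[1,n]}(w_{[1,n]})$ should come out to $w_{[1,n]_2;[1,n]}$. For $K=[i,n+1]$, I would apply $p_{[1,n]}$ to \eqref{eq:w i,n+1 D}: each factor $\cx j{(n+1)}\times\cxr j{(n-1)}$ projects, after deleting the letter $s_{n+1}$, to $\cx jn\star\cxr j{(n-1)}$ (using Proposition~\ref{prop:prod *} to absorb the repeated $s_n$), and telescoping these in $W_{[1,n]}\cong W(A_{n-1})$ via Proposition~\ref{prop:wK explicit} and Corollary~\ref{cor:A J*K} should yield $w_{[i,n+1-i];[1,n]}$, matching Proposition~\ref{prop:1,n*K D}. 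In each subcase I only need to verify the length count and then invoke Proposition~\ref{prop:absorb prop}.

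Having established $([1,n],K)\in\mathscr G$ for all connected $K$, the general case follows: an arbitrary $K\in\mathscr F(M)=\mathscr P(I)$ is a disjoint union of its connected components, which are pairwise orthogonal, so Proposition~\ref{prop:only connected}\ref{prop:only connected.b} applies repeatedly to give $([1,n],K)\in\mathscr G$. I expect the main obstacle to be the parity bookkeeping in the cases $K=[1,n]$ and $K=[i,n+1]$: the Coxeter number of $A_{n-1}$ and the structure of $[1,n]_2$ versus $[2,n]_2$ depend on $\bar n$, and one has to track how the diagram automorphism $\sigma$ of $W_{[2,n+1]}(D_{n+1})$ interacts with the projection, so the telescoping identities must be carried out with care for both parities. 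The computations are routine given Proposition~\ref{prop:wK explicit}, \eqref{eq:w0 D}--\eqref{eq:w i,n+1 D}, and the absorption lemmas, but they are the part where a sign or an index error would be easy to make.
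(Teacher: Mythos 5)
Your overall architecture is the paper's: restrict to the three nontrivial families $K=[1,n]$, $K=\sigma([1,n])$, $K=[i,n+1]$ via Proposition~\ref{prop:1,n*K D} (the empty-product cases being covered by Lemma~\ref{lem:eq for w0 J*K}), then compute the projections from \eqref{eq:w1,n D}, \eqref{eq:w i,n+1 D} and the type-$A$ results; your treatment of $K=[i,n+1]$ is essentially the paper's induction on $i$ written as a telescope, and is fine. The genuine gap is in the cases $K=[1,n]$ and $K=\sigma([1,n])$. The one concrete step you propose there is unsound: $p_J$ is a homomorphism of \emph{Hecke monoids}, so it can only be pushed through length-additive factorizations, whereas $w_{\sigma(J)}=w_\circ^{\sigma(J)}w_\circ$ is length-subtractive; projecting it factorwise would give $w_\circ^{[1,n-1]}\star w_\circ^{[1,n]}=w_\circ^{[1,n]}$, not $w_{[1,n-1]_2;[1,n]}$. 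Moreover $\sigma([1,n])=[1,n-1]\cup\{n+1\}$ is connected in $D_{n+1}$ (the node $n+1$ is adjacent to $n-1$), so your claim that $w_\circ^{\sigma([1,n])}$ lies in $W_{[1,n-1]}\times W_{\{n+1\}}$ is false (the identity $p_J(w_\circ^{\sigma(J)})=w_\circ^{[1,n-1]}$ holds anyway by Lemma~\ref{lem:proj w0}, but it does not determine $p_J(w_{\sigma(J)})$). The unexplained ``short computation with \eqref{eq:w1,n D}'' is exactly where the paper has to work: it runs an induction on the rank, using the statement for the $D_n$-parabolic $W_{[2,n+1]}$ to evaluate $p_{[2,n]}(w_{[2,n];[2,n+1]})$ and its $\sigma$-twist before concluding with Proposition~\ref{prop:1,n*K D}; your proposal offers no substitute for that step. (Minor: $W_{[1,n]}(D_{n+1})$ is of type $A_n$, not $A_{n-1}$, and ``equal lengths plus $w_{J\star K;J}\le p_J(w_K)$ forces equality'' follows from the Bruhat order being graded by length, not from the absorption property.)

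Ironically, your own length reduction would close precisely these two cases if you carried it out on the reduced word \eqref{eq:w1,n D}: $w_{[1,n]}=\ascprodst_{1\le i\le n}\sigma^i(\cxr{i}{n})$ contains exactly $\lceil n/2\rceil$ letters $s_{n+1}$ (and $w_{\sigma([1,n])}$ exactly $\lfloor n/2\rfloor$), so deleting them yields a word whose $\star$-product is $p_{[1,n]}(w_K)$ of length at most $\binom{n+1}{2}-\lceil n/2\rceil=\ell(w_{[1,n]_2;[1,n]})$, respectively $\binom{n+1}{2}-\lfloor n/2\rfloor=\ell(w_{[1,n-1]_2;[1,n]})$; combined with the lower bound from Lemma~\ref{lem:eq for w0 J*K} this forces equality with no rank induction. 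But this count is not in your write-up, and it does not rescue the family you did sketch correctly: for $K=[i,n+1]$ the word \eqref{eq:w i,n+1 D} with its $i-1$ letters $s_{n+1}$ removed has length exceeding $\ell(w_{[i,n+1-i];[1,n]})$ by $(i-1)(i-2)$, so for $i\ge 3$ one genuinely needs the $\star$-reduction (your telescoping, or the paper's induction on $i$). Finally, the closing reduction to disconnected $K$ is unnecessary, since the proposition only concerns connected $K$.
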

\begin{proof}
Let $J=[1,n]$. By Proposition~\ref{prop:1,n*K D} we only
need to consider the cases when $K=J$, $K=\sigma(J)$
and $K=[i,n+1-i]$, $i\le (n+1)/2$.

First, we use induction on rank of~$W$ to prove that $(J,J),(J,\sigma(J))\in\mathscr G$.
The case $n=2$ is actually type~$A_3$. Labeling the Coxeter graph  as in~\eqref{eq:A3 as D3}, we obtain $w_{\{1,2\}}=s_3 s_1 s_2$
$w_{\sigma(\{1,2\})}=s_2s_1s_3$ and so
$p_{\{1,2\}}(w_{\{1,2\}})=s_1 s_2=w_{\{2\};\{1,2\}}$,
$p_{\{1,2\}}(w_{\{1,3\}})=s_2s_1=w_{\{1\};\{1,2\}}$.
For the inductive step,
we have by~\eqref{eq:w1,n D}
\begin{align*}
p_{J}(w_{J})&=p_{J}(\sigma(\cxr1n))
\star p_{J}(\sigma(w_{[2,n],[2,n+1]}))\\
&=\cxr1{(n-1)}\star p_{[2,n]}(w_{\sigma([2,n]);[2,n+1]})\\
&=\cxr1{(n-1)}\times w_{[2,n-1]_2;[2,n]},
\\
\intertext{while}
p_{J}(\sigma(w_{J}))&=p_{J}(\cxr1n)
\star p_{J}(w_{[2,n];[2,n+1]})\\
&=\cxr1n\times w_{[2,n]_2;[2,n]}
\end{align*}
Now, $w_{[2,n-1]_2;[2,n]}=w_\circ^{[2,n]}w_\circ^{[3,n]_2}$
and so
$$
p_{J}(w_{J})=\cxr1{(n-1)}w_\circ^{[2,n]}
w_\circ^{[3,n]_2}=s_n w_\circ^{J}
w_\circ^{[3,n]_2}=w_\circ^{[1,n]_2}w_\circ^{J}=
w_{J\star J;J}
$$
where we used Proposition~\ref{prop:1,n*K D}.
Similarly,
\begin{equation*}
p_{J}(\sigma(w_{J}))=\cx1n w_\circ^{[2,n]}
w_\circ^{[2,n]_2}=w_\circ^{J}w_\circ^{[2,n]_2}
=w_\circ^{[1,n-1]_2}w_\circ^{J}
=w_{J\star\sigma(J);J}
\end{equation*}
also by Proposition~\ref{prop:1,n*K D}.

Now we use induction on~$i$ to prove that $(J,[i,n+1])\in\mathscr G$ for $1\le i\le (n+1)/2$. The induction base is
trivial as $[1,n+1]=I$.
For the inductive step, observe that, by~\eqref{eq:w i,n+1 D}, $w_{[i,n+1]}=\cx{(i-1)}{(n+1)}\times \cxr{(i-1)}{(n-1)}\times w_{[i-1,n+1]}$ for $i>1$.
Since by Lemma~\ref{lem:wK absorption}, $s_j\star w_{[a,n+1]}=w_{[a,n+1]}$, $1\le j<a$, we have
\begin{align*}
w_{[i,n]}&=\cx{1}{(i-2)}\star w_{[i,n]}=
\cx1{(n+1)}\star\cxr{(i-1)}{(n-1)}\star w_{[i-1,n+1]} \\
&=\cx1{(n+1)}\star\cxr1{(n-1)}\star w_{[i-1,n+1]}.
\end{align*}
Therefore,
\begin{align*}
p_J(w_{[i,n+1]})&=
\cx{1}{n}\star\cxr{1}{(n-1)}\star w_{[i-1,n+2-i],[1,n]}\\
&=\cx{1}n\star \cxr1n\star w_{[i-1,n+2-i],[1,n]}\\
&=w_{[2,n-1],[1,n]}\star w_{[i-1,n+2-i],[1,n]}\\
&=w_{[i,n-i+1],J}=w_{J\star [i,n+1],J},
\end{align*}
where we used Proposition~\ref{prop:wK explicit}, Corollary~\ref{cor:A J*K} and Proposition~\ref{prop:1,n*K D}.
\end{proof}

It remains to prove that $([2,n+1],K)\in\mathscr G$ for all
connected~$K\subset I$.
\begin{lemma}\label{lem:2,n+1*K D}
Let~$K\subset I$ be connected. Then for $J=[2,n+1]$
$$
J\star K=
\begin{cases}
[i+1,n+1],&K=[i,n+1],\,1\le i\le n-1\\
[i+1,j-1],&K=[i,j],\,1\le i\le j\le n-1\\
[i+1,n-1],&\text{$K=[i,n]$ or $K=\sigma([i,n])$, $1\le i\le n$}\\
\end{cases}
$$
\end{lemma}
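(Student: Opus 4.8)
The plan is to compute $J\star K=K\star J$ for $J=[2,n+1]$ and every connected $K\subset I$, arguing along the same lines as in the proof of Proposition~\ref{prop:1,n*K D}. Throughout I would use that $\supp(u\star v)=\supp u\cup\supp v$ together with $J\star K\subset J\cap K$ (Proposition~\ref{prop:submonoid *}) to cut down in advance which intervals can occur. A first reduction: since $\sigma=(n,n+1)$ fixes $J=[2,n+1]$ and a diagram automorphism is an automorphism of $(W,\star)$ (so $\sigma(A)\star\sigma(B)=\sigma(A\star B)$, exactly as in the proof of Corollary~\ref{cor:diag aut good}), one has $J\star\sigma([i,n])=\sigma(J\star[i,n])$; as $J\star[i,n]$ will turn out to be the interval $[i+1,n-1]\subset[2,n-1]$, which is $\sigma$-fixed, the case $K=\sigma([i,n])$ reduces to the case $K=[i,n]$. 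Thus there are three families to treat: $K=[i,j]$ with $j\le n-1$, $K=[i,n+1]$ with $i\le n-1$, and $K=[i,n]$.

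For the first two families I would transfer the answer from type~$B$. The standard unfolding $\Phi\colon\Br^+(B_n)\to\Br^+(D_{n+1})$ of~\eqref{eq:unfold Bn Dn+1} is a fully supported strict parabolic Hecke homomorphism with $[\Phi](i)=\{i\}$ for $i\in[1,n-1]$ and $[\Phi](n)=\{n,n+1\}$; in particular $[2,n+1]=[\Phi]([2,n])$, $[i,j]=[\Phi]([i,j])$ for $j\le n-1$, and $[i,n+1]=[\Phi]([i,n])$. Applying Lemma~\ref{lem:hom parab submonoid} with $\wh J=\wh I=[1,n]$ gives $[\Phi]([2,n]\star_{\wh I}K')=[2,n+1]\star_I[\Phi](K')$, so it suffices to read off $[2,n]\star_{\wh I}[i,j]$ and $[2,n]\star_{\wh I}[i,n]$ from Lemma~\ref{lem:B J*K} and apply $[\Phi]$: one gets $[2,n]\star_{\wh I}[i,j]=[i+1,j-1]$ (the subcase $b<b'=n$), whence $[\Phi]([i+1,j-1])=[i+1,j-1]$; and $[2,n]\star_{\wh I}[i,n]=[i+1,n]$ (the subcase $b=b'=n$), whence $[\Phi]([i+1,n])=[i+1,n+1]$. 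The boundary case $i=1$ of the second family ($K=I$, so $w_K=1$ and $J\star K=J$) is immediate. Alternatively, both families can be done directly inside $W_{[1,n]}\cong W(A_n)$: using~\eqref{eq:w i,n+1 D} and the absorption identities of Lemma~\ref{lem:wK absorption} one peels Coxeter elements off $w_{[i,n+1]}$ until only factors supported in $[2,n+1]$ survive, and then Proposition~\ref{prop:wK explicit} and Corollary~\ref{cor:A J*K} finish the computation.

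The remaining family $K=[i,n]$ is the main obstacle, and here I would imitate Lemmata~\ref{lem:1,n*1,n D}--\ref{lem:1,n*i,j}. For $i$ close to $n$ (where the predicted answer $[i+1,n-1]$ is empty) one argues as in Lemma~\ref{lem:1,n*i,j}: since $[i,n]\subsetneq[1,n]$ one has $w_{[i,n]}=w_{[i,n];[1,n]}\star w_{[1,n]}$ (Lemma~\ref{lem:len prop wJ K}), so by Lemma~\ref{lem:char w_0 monoid} it is enough to show $w_{[1,n]}\star w_{[2,n+1]}$ already absorbs everything down to $w_\circ$; since $[1,n]\star[2,n+1]=[2,n-1]$ by Proposition~\ref{prop:1,n*K D}, this is again a Coxeter-element telescoping. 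For general $i$ the cleanest route is a direct induction on the rank $n$, peeling Coxeter elements off $w_{[2,n+1]}$ via~\eqref{eq:w1,n D} and tracking, exactly as in the proof of Lemma~\ref{lem:1,n*1,n D}, whether $w_\circ$ is central (odd $n$) or satisfies $w w_\circ=w_\circ\sigma(w)$ (even $n$); this parity split is reflected in whether the relevant ``even‑gap'' interval is $[1,\cdot]_2$ or $[2,\cdot]_2$. The bookkeeping in this last family — the parity‑dependent inductive identities, and the interplay between $[1,n]$‑parabolic factors and elements of the ambient $W$ — is where essentially all the work lies; after the transfer through $\Phi$ the first two families are just a restatement of the type‑$A$/type‑$B$ results already established.
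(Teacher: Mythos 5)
Your treatment of the $\sigma$-invariant cases is sound and is essentially the paper's own argument: for $K=[i,j]$ with $j\le n-1$ and for $K=[i,n+1]$ both $w_J$ and $w_K$ are $\sigma$-invariant, and the computation is transferred to type~$B_n$ through the unfolding~\eqref{eq:unfold Bn Dn+1} (the paper phrases this via the isomorphism onto the $\sigma$-fixed submonoid from Theorem~\partref{thm:adm finite class.unfold}, you phrase it via Lemma~\ref{lem:hom parab submonoid}; the content is identical), and the reduction of $K=\sigma([i,n])$ to $K=[i,n]$ is exactly the paper's closing remark.

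The gap is in the case $K=[i,n]$, which is the only part of the lemma that needs new work. Your shortcut for $i$ close to $n$ does not go through: writing $w_{[i,n]}=w_{[i,n];[1,n]}\star w_{[1,n]}$ and appealing to Lemma~\ref{lem:char w_0 monoid} only helps if $w_{[1,n]}\star w_J$ were equal to $w_\circ$, but by Proposition~\ref{prop:1,n*K D} it equals $w_{[2,n-1]}\neq w_\circ$ for $n\ge 3$, so the absorption trick of Lemma~\ref{lem:1,n*i,j} does not apply and you are left having to prove $w_{[i,n];[1,n]}\star w_{[2,n-1]}=w_\circ$, i.e.\ essentially the original claim. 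For general $i$ you offer only a sketch of an induction on the rank $n$ ``as in Lemma~\ref{lem:1,n*1,n D}'' (and the identity~\eqref{eq:w1,n D} you propose to peel is a formula for $w_{[1,n]}$, not for $w_{[2,n+1]}$), without supplying the inductive step --- which is precisely where all the content lies, as you yourself note. The paper's argument is shorter and shaped differently: it inducts on $i$, the base case $i=1$ being $[2,n+1]\star[1,n]=[1,n]\star[2,n+1]=[2,n-1]$ from Proposition~\ref{prop:1,n*K D} together with commutativity of $\star$ on parabolic subsets, and the inductive step writes $w_{[i,n]}=w_{[i,n];[i-1,n]}\star w_{[i-1,n]}$ by Lemma~\ref{lem:len prop wJ K}, applies the induction hypothesis to $w_{[i-1,n]}\star w_J$, and concludes with a short type-$A$ computation, $\cx i{(n-1)}\star w_{[i,n-1]}=w_\circ^{[i+1,n-1]}w_\circ=w_{[i+1,n-1]}$, using Lemma~\ref{lem:wK absorption}. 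Supplying an inductive step of this kind (or an equivalent explicit computation) is what your proposal is missing.
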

\begin{proof}
Note that~$J$ is $\sigma$-invariant.
In the first two cases, $K=\sigma(K)$ and, since~$w_\circ$
is $\sigma$-invariant,
$w_K$, $w_J$ are $\sigma$-invariant. Yet the set of~$\sigma$-invariant elements in~$(W,\star)$ is isomorphic to~$(W(B_n),
\star)$ by Theorem~\ref{thm:adm finite class} and Lemma~\partref{lem:sqf Hecke hom.d}, and so we can apply Lemma~\ref{lem:B J*K}.

To prove the assertion for~$K=[i,n]$ we use induction on~$i$. The case $i=1$ has already been established in Proposition~\ref{prop:1,n*K D}. For the inductive step,
note that for $i>1$, $w_{[i,n]}=w_{[i,n];[i-1,n]}\star w_{[i-1,n]}$
by Lemma~\ref{lem:len prop wJ K},
whence
\begin{align*}
w_{[i,n]}\star w_J&=w_{[i,n];[i-1,n]}\star w_{[i-1,n]}\star w_J=\cx in\star w_{[i,n-1]}\\
&=\cx i{(n-1)}\star w_{[i,n-1]}=\cx i{(n-1)}w_\circ^{[i,n-1]}w_\circ\\
&=w_\circ^{[i+1,n-1]}w_\circ\\
&=w_{[i+1,n-1]},
\end{align*}
where we used Lemma~\ref{lem:wK absorption} and
the induction hypothesis, as well as the fact that $[i-1,n]$ and $[i,n-1]$ are of type~$A$. The result for~$K=\sigma([i,n])$ is now immediate.
\end{proof}
\begin{proposition}\label{prop:proj 2n+1 D}
Let $J=[2,n+1]$. Then $(J,K)\in\mathscr G$ for all connected~$K\subset I$.
\end{proposition}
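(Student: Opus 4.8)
The plan is to organize the proof around the three families of connected subsets $K\subset I$ that appear in Lemma~\ref{lem:2,n+1*K D}, exploiting that the diagram automorphism $\sigma$ interchanging $n$ and $n+1$ fixes $J=[2,n+1]$ setwise; in particular $p_J\circ\sigma=\sigma\circ p_J$ by Lemma~\ref{lem:diag aut proj}. Since $\sigma(\sigma([i,n]))=[i,n]$, Corollary~\partref{cor:diag aut good.a} reduces the case $K=\sigma([i,n])$ to the case $K=[i,n]$, so it suffices to deal with the $\sigma$-invariant connected subsets ($K=[i,n+1]$ for $1\le i\le n-1$, and $K=[i,j]$ for $1\le i\le j\le n-1$) together with $K=[i,n]$ for $1\le i\le n$.

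For a $\sigma$-invariant connected $K$ I would invoke the unfolding homomorphism $\Phi\colon\Br^+(B_n)\to\Br^+(D_{n+1})$ of~\eqref{eq:unfold Bn Dn+1}, which is strictly parabolic, Hecke and injective by Theorem~\partref{thm:adm finite class.unfold}, hence strongly square free by Lemma~\ref{lem:parab sqf}, so that $\overline\Phi_\star$ is injective by Lemma~\partref{lem:sqf Hecke hom.d}. Since $[\Phi](k)=\{k\}$ for $k\le n-1$ and $[\Phi](n)=\{n,n+1\}$, one has $[\Phi]([2,n])=J$, and choosing $\wh K=[i,n]$ when $K=[i,n+1]$ and $\wh K=[i,j]$ when $K=[i,j]\subset[1,n-1]$ gives $[\Phi](\wh K)=K$. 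Since the type-$B_n$ case of Theorem~\ref{thm:pJwK is w0KstarJw0J} is already available, $([2,n],\wh K)\in\mathscr G(B_n)$, and then $(J,K)=([\Phi]([2,n]),[\Phi](\wh K))\in\mathscr G$ by Proposition~\ref{prop:LCM hom good}. In particular this establishes $(J,[2,n+1])\in\mathscr G$.

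For $K=[i,n]$ with $2\le i\le n$ I would descend to the parabolic subgroup $W_{[2,n+1]}\cong W(D_n)$. As $[i,n]\subset[2,n+1]$, Lemma~\ref{lem:len prop wJ K} gives $w_{[i,n]}=w_{[i,n];[2,n+1]}\times w_{[2,n+1]}$, and applying the homomorphism $p_J$, which restricts to the identity on $W_{[2,n+1]}=W_J$, yields $p_J(w_{[i,n]})=w_{[i,n];[2,n+1]}\star p_J(w_{[2,n+1]})=w_{[i,n];[2,n+1]}\star w_{[3,n+1];[2,n+1]}$, using $(J,[2,n+1])\in\mathscr G$ and $J\star_I[2,n+1]=[3,n+1]$ from Lemma~\ref{lem:2,n+1*K D}. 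By Proposition~\ref{prop:submonoid *} this equals $w_{[i,n]\star_{[2,n+1]}[3,n+1];[2,n+1]}$, and applying Lemma~\ref{lem:2,n+1*K D} inside $W_{[2,n+1]}$—where $[i,n]$ is a path to a fork tip and $[3,n+1]$ is the $D_{n-1}$ subgroup obtained by deleting the first spine node (in low rank $W_{[2,n+1]}$ is of type $A$, and one uses Corollary~\ref{cor:A J*K} instead)—one obtains $[i,n]\star_{[2,n+1]}[3,n+1]=[i+1,n-1]=J\star_I[i,n]$, so $(J,[i,n])\in\mathscr G$.

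It remains to handle the base case $K=[1,n]$, which does not fit the previous argument since $[1,n]\not\subset J$. Here I would start from~\eqref{eq:w1,n D}: $w_{[1,n]}=\sigma(\cxr1n\times w_{[2,n];[2,n+1]})$. Applying $p_J$ and using $p_J\circ\sigma=\sigma\circ p_J$ gives $p_J(w_{[1,n]})=\sigma\bigl(p_J(\cxr1n)\star p_J(w_{[2,n];[2,n+1]})\bigr)=\sigma\bigl(\cxr2n\star w_{[2,n];[2,n+1]}\bigr)$, since $p_J$ kills $s_1$ in the reduced word $\cxr1n=s_n\cdots s_1$ and fixes $w_{[2,n];[2,n+1]}\in W_J$. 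As $\cxr2n\in W_{[2,n]}$ we have $\cxr2n\perp w_{[2,n];[2,n+1]}$ by Lemma~\ref{lem:wK absorption}, so the $\star$-product is the ordinary product $\cxr2n\,w_\circ^{[2,n]}w_\circ^{[2,n+1]}$; and since $\cxr2n=w_{[2,n-1];[2,n]}=w_\circ^{[2,n-1]}w_\circ^{[2,n]}$ by Lemma~\ref{lem:*powers of coxeter}, the two copies of $w_\circ^{[2,n]}$ cancel, leaving $w_\circ^{[2,n-1]}w_\circ^{[2,n+1]}=w_{[2,n-1];[2,n+1]}$, which is $\sigma$-invariant. Hence $p_J(w_{[1,n]})=w_{[2,n-1];[2,n+1]}=w_{J\star_I[1,n];J}$ by Lemma~\ref{lem:2,n+1*K D}, so $(J,[1,n])\in\mathscr G$ and the proof is complete. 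I expect the main obstacle to be the bookkeeping in the last two paragraphs: justifying the reuse of Lemma~\ref{lem:2,n+1*K D} inside the parabolic subgroup $W_{[2,n+1]}$ with the correct identification of its type and the relabelling of the fork tips, and carrying out the Coxeter-element and absorption manipulations coming from~\eqref{eq:w1,n D} without indexing slips.
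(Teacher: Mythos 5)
Your argument is correct and, for two of the three cases, runs parallel to the paper: the $\sigma$-invariant $K$ are handled through the unfolding $\Br^+(B_n)\to\Br^+(D_{n+1})$ and the already-proven type-$B$ case (the paper compresses this into one sentence, but it is the same transfer via Proposition~\ref{prop:LCM hom good}, and only the forward direction is needed, so your injectivity remarks are superfluous), the case $K=\sigma([i,n])$ is reduced to $K=[i,n]$ exactly as in the paper via Corollary~\ref{cor:diag aut good} and $\sigma(J)=J$, and your base computation for $K=[1,n]$ from~\eqref{eq:w1,n D} is essentially the paper's $i=1$ computation. Where you genuinely diverge is $K=[i,n]$ with $i\ge 2$: the paper runs an induction on $i$, writing $w_{[i,n]}=\cx{(i-1)}n\times w_{[i-1,n]}$ and doing explicit absorption manipulations with Lemma~\ref{lem:wK absorption}, whereas you factor $w_{[i,n]}=w_{[i,n];J}\times w_J$, apply $p_J$, use the already-established $(J,J)\in\mathscr G$ together with Proposition~\ref{prop:submonoid *} inside $W_J$, and reduce everything to the single star-product identity $[i,n]\star_J[3,n+1]=[i+1,n-1]=J\star_I[i,n]$. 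That identity is correct, but two bookkeeping points should be made explicit: (i) you invoke Lemma~\ref{lem:2,n+1*K D} at rank $D_n$ inside $W_J$ (after relabelling $k\mapsto k-1$, under which $[3,n+1]$ becomes that lemma's ``$[2,n]$'' and $[i,n]$ a path to a tip, i.e.\ its third case), which is legitimate because that lemma is a pure star-product statement proved uniformly in the rank and independently of Theorem~\ref{thm:pJwK is w0KstarJw0J}; (ii) for $n=3$ the parabolic $W_J$ is of type $A_3$ with the fork node in the middle, so the relabelled $[3,n+1]$ is a \emph{disconnected} two-element set and Corollary~\ref{cor:A J*K} alone does not apply -- one also needs the orthogonal-union rule $J'\star(K_1\cup K_2)=(J'\star K_1)\cup(J'\star K_2)$ for orthogonal $K_1,K_2$ (\cite{He09}*{Lemma~6}), after which the computation indeed returns $\emptyset=J\star_I[i,3]$. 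In sum, your route trades the paper's hands-on Coxeter-element induction for structural star-product calculus in the parabolic subgroup, at the cost of the rank-shifted reuse of Lemma~\ref{lem:2,n+1*K D} and the low-rank degenerate case, which the paper's direct induction avoids.
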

\begin{proof}
If $K=\sigma(K)$ the assertion follows from the result in type~$B$ and Corollary~\ref{cor:diag aut good}. Thus, the only case to consider is that of $K=[i,n]$.
We use induction on~$i$. For~$i=1$ we have by~\eqref{eq:w1,n D}
\begin{align*}
p_{J}(w_{[1,n]})&=p_{J}(\sigma(\cxr1n))\star
p_{J}(\sigma(w_{[2,n];J}))=\sigma(\cxr2n)\star \sigma(w_{[2,n];J})\\
&=\sigma(\cxr2n\star w_{[2,n];J})=\sigma(\cxr2n w_{[2,n];J})\\
&=\sigma( \cxr2n w_\circ^{[2,n]}w_\circ^{J})=\sigma( w_\circ^{[2,n-1]}w_\circ^{J})\\
&=w_{[2,n-1];J}\\
&=w_{J\star [1,n];J}
\end{align*}
by Lemma~\ref{lem:2,n+1*K D}.
For~$i>1$, write $w_{[i,n]}=w_{[i,n];[i-1,n]}\times w_{[i-1,n]}=\cx{(i-1)}n\times w_{[i-1,n+1]}$ using Lemma~\ref{lem:len prop wJ K}. Then using the induction hypothesis and
Lemma~\ref{lem:wK absorption} we obtain
\begin{align*}
p_{J}(w_{[i,n]})&=p_{J}(\cx{(i-1)}n)\star
p_{J}(w_{[i-1,n];J})\\
&=\cx{\max(2,i-1)}n\star w_{[i,n-1];J}\\
&=\cx{\max(2,i-1)}{(n-1)}\star w_{[i,n-1];J}.
\end{align*}
If $i=2$ then we obtain
\begin{align*}
p_{J}(w_{[2,n]})&=\cx{2}{(n-1)}\star  w_{[2,n-1];J}=\cx{2}{(n-1)}w_\circ^{[2,n-1]}w_\circ^{J}=w_\circ^{[3,n-1]}w_\circ^{J}\\
&=w_{[3,n-1];J}\\
\intertext{while for $i>2$}
p_{J}(w_{[i,n]})&=\cx{(i-1)}{(n-1)}\star  w_{[i,n-1];J}=s_{i-1}\star\cx{i}{(n-1)}w_\circ^{[i,n-1]}w_\circ^{J}\\
&=s_{i-1}\star w_\circ^{[i+1,n-1]}w_\circ^{J}\\
&=w_{[i+1,n-1];J}.
\end{align*}
In either case, we obtain $p_J(w_K)=w_{J\star K;J}$
by Lemma~\ref{lem:2,n+1*K D}.
\end{proof}

\begin{proof}[Proof of Theorem~\ref{thm:pJwK is w0KstarJw0J},
$W$ of type~$D$]
By Lemma~\ref{lem:induction}, we only need to prove
that $(J,K)\in\mathscr G$ for all connected $K\subset I$ and
for all connected~$J\subset I$ with~$|J|=n$, that is for
$J\in\{[1,n],\sigma([1,n]),[2,n+1]\}$.
For~$J=[1,n]$, Theorem~\ref{thm:pJwK is w0KstarJw0J} for $W_J$
has already been proven since~$W_{[1,n]}$ is of type~$A$, while for~$J=[2,n+1]$
we can use induction~$|J|$ of~$W$,
the induction base being $D_3=A_3$. The result for~$J=\sigma([1,n])$ follows from that for~$J=[1,n]$ by Lemma~\ref{lem:diag aut proj}.
The assertion now follows from Propositions~\ref{prop:proj 1n D}
and~\ref{prop:proj 2n+1 D}.
\end{proof}

\subsubsection{Proof of Theorem~\ref{thm:pJwK is w0KstarJw0J} for
exceptional types}
By Theorem~\partref{thm:adm finite class.unfold}\ref{thm:adm finite class.H3}\ref{thm:adm finite class.H4}, Proposition~\ref{prop:LCM hom good}, Lemma~\partref{lem:sqf Hecke hom.d}, and Corollary~\ref{cor:diag aut good} it remains to
prove Theorem~\ref{thm:pJwK is w0KstarJw0J} for type~$E_n$,
$n\in\{6,7,8\}$.

\begin{proof}[Proof of Theorem~\ref{thm:pJwK is w0KstarJw0J},
$W$ of type~$E$]
First, let $W$ be of type~$E_6$ and let~$\sigma$ be
its diagram automorphism (cf.~\eqref{eq:diag aut}).
By Lemma~\ref{lem:induction}, it suffices to consider
all $J\subset I$
with~$|J|=5$, that is,
$$
J\in \mathscr J_{E_6}=\{ [1,5],\,[2,6], \sigma([2,6])\}
$$
and all connected~$K\subset I$ with $J\star K\not=\emptyset$.
Note that Theorem~\ref{thm:pJwK is w0KstarJw0J} has
already been proven for~$W_J$
with these~$J$ since $W_{[1,5]}$ is of type~$A_5$ while~$W_{[2,6]}$ and~$W_{\sigma([2,6])}$ are of type~$D_5$.
By Corollary~\ref{cor:diag aut good}, the assertion
for~$\sigma([2,6])$ follows from that for~$[2,6]$.

Using a Python program we developed for computations in Hecke monoids, we obtain
$J\star K=\emptyset$
for all connected~$K\subsetneq I$ and~$J\in \mathscr J_{E_6}$
except
\begin{alignat*}{3}
&[1,5]\star [2,6]=\{3,5\},
&\qquad &[2,6]\star \{2,3,4,6\}=
\{3\},\\
&[2,6]\star [2,6]=\{3, 4, 6\},&&[2,6]\star \sigma([2,6])=[2,4]
\end{alignat*}
and the products obtained from the above by applying~$\sigma$.
We have
\begin{align*}
w_{[1,5]}&=s_6s_3s_2s_1s_4s_3s_2s_5s_4s_3s_6s_3s_2s_1s_4s_3s_2s_5s_4s_3s_6,\\
w_{[2,6]}&=
s_1s_2s_3s_4s_5s_6s_3s_2s_1s_4s_3s_2s_6s_3s_4s_5,\\
w_{\{2,3,4,6\}}&=
s_1s_2s_3s_4s_5s_4s_3s_2s_1s_6s_3s_2s_1s_4s_3s_2s_5s_4s_6s_3s_2s_1s_4s_5
\end{align*}
and so
\begin{align*}
p_{[1,5]}(w_{[2,6]})
&=s_1s_2s_1s_3s_2s_4s_3s_2s_1s_5s_4s_3s_2
=w_\circ^{[1,3]}s_1 \cxr14\cxr15 s_1
=s_3 w_\circ^{[1,5]}s_1\\
&=w_{\{3,5\};[1,5]},\\
p_{[2,6]}(w_{[1,5]})&=s_2s_3s_4s_3s_2s_5s_4s_3s_6s_3s_2s_4s_3s_5s_4s_6s_3s_2
=s_2s_3s_4s_3s_2s_5s_4s_3 w_\circ^{[2,5]}w_\circ^{[2,6]}\\
&=\cx25 w_\circ^{[2,5]}s_4s_5s_3s_4 w_\circ^{[2,6]}
=w_\circ^{[3,5]}s_4s_5s_3s_4 w_\circ^{[2,6]}
=w_\circ^{\{3,5\}}w_\circ^{[2,6]}\\
&=w_{\{3,5\};[2,6]},\\
p_{[2,6]}(w_{[2,6]})&=
s_2s_3s_4s_5s_4s_3s_2s_6s_3s_2s_4s_3s_5s_6=
w_\circ^{[3,4]}w_\circ^{[2,6]}\cx24\\
&=w_\circ^{[3,4]}s_6 s_3 s_4 w_\circ^{[2,6]}
=w_\circ^{\{3,4,6\}}w_\circ^{[2,6]}\\
&=w_{\{3,4,6\};[2,6]},\\
p_{[2,6]}(w_{\{2.3,4,6\}})&=s_2s_3s_4s_3s_2s_5s_4s_3s_2s_6s_3s_2s_4s_3s_5s_4s_6s_3s_2\\
&=s_2 s_3 s_4 s_3 s_2 w_\circ^{[2,4]} w_\circ^{[2,6]}
=s_3 w_\circ^{[2,6]}\\
&=w_{\{3\};[2,6]},\\
p_{[2,6]}(w_{\sigma([2,6])})&=
s_5s_4s_3s_2s_6s_3s_2s_4s_3s_5s_4s_6s_3s_2=
w_\circ^{[2,4]}w_\circ^{[2,6]}\\
&=w_{[2,4];[2,6]}.
\end{align*}
This completes the proof of Theorem~\ref{thm:pJwK is w0KstarJw0J}
for~$W$ of type~$E_6$.

For type~$E_7$, we only need to consider pairs $(J,K)$
with~$J$ connected and of cardinality 6, that is
$$
J\in\mathscr J_{E_7}=\{[1,6],[2,7],[1,5]\cup\{7\}\},
$$
which are, respectively, of types $A_6$, $D_6$ and~$E_6$,
and $K\subset I$ connected such that~$J\star K\not=\emptyset$, which are
\begin{alignat*}{3}
&([1,5]\cup\{7\})\star ([1,4]\cup\{7\})=\{3\},&\qquad
&([1,5]\cup\{7\})\star ([2,5]\cup\{7\})=\{3\},\\
&([1,5]\cup\{7\})\star ([1,5]\cup\{7\})=[2,4]\cup\{7\},
&&([1,5]\cup\{7\})\star [2,7]=
[2,4],\\
&[2,7]\star [2,7]=\{4,6,7\}.
\end{alignat*}
We have
\begin{align*}
w_{[1,5]\cup\{7\}}&=s_6s_5s_4s_3s_2s_1s_7s_3s_2s_4s_3s_5s_4s_6s_5s_7s_3s_2s_1s_4s_3s_2s_7s_3s_4s_5s_6,\\
w_{[2,7]}&=s_1s_2s_3s_4s_5s_6s_7s_3s_2s_1s_4s_3s_2s_5s_4s_3s_7s_3s_2s_1s_4s_3s_2s_5s_4s_3s_6s_5s_4s_7s_3s_2s_1,\\
w_{[1,4]\cup\{7\}}&=s_5s_4s_3s_2s_1s_6s_5s_4s_3s_2s_1s_7s_3s_2s_1s_4s_3s_2s_5s_4s_3s_6s_5s_4s_7s_3s_2s_1s_4s_3s_2s_5s_4s_3\times \\
&\qquad s_7s_3s_2s_4s_3s_5s_4s_6s_5,\\
w_{[2,5]\cup\{7\}}&=
s_1s_2s_3s_4s_5s_6s_5s_4s_3s_2s_1s_7s_3s_2s_1s_4s_3s_2s_5s_4s_3s_6s_5s_7s_3s_2s_1s_4s_3s_2s_5s_4s_3s_6\times \\
&\qquad s_5s_4s_7s_3s_2s_1s_4s_5s_6,
\end{align*}
and so for~$J=[1,5]\cup \{7\}$
\begin{align*}
p_J(w_{[1,4]\cup\{7\}})&=s_1s_2s_1s_3s_2s_4s_3s_2s_1s_5s_4s_3s_2s_1s_7s_3s_2s_1s_4s_3s_2s_5s_4s_3s_7s_3s_2s_1s_4s_3s_2s_5s_4s_3s_7\\
&=s_1s_2s_1s_3s_2w_\circ^{[1,3]}w_\circ^{J}
=s_3 w_\circ^J\\
&=w_{\{3\};J},\\
p_J(w_{[2,4]\cup\{7\}})&=s_1s_2s_1s_3s_2s_4s_3s_2s_1s_5s_4s_3s_2s_1s_7s_3s_2s_1s_4s_3s_2s_5s_4s_3s_7s_3s_2s_1s_4s_3s_2s_5s_4s_3s_7\\
&=s_1 s_2 s_1 s_3 s_2 w_\circ^{[1,3]}w_\circ^J\\
&=w_{\{3\};J},\\
p_J(w_J)&=s_1s_2s_3s_4s_5s_4s_3s_2s_1s_7s_3s_2s_1s_4s_3s_2s_5s_4s_7s_3s_2s_1s_4s_5\\
&=w_\circ^{[2,4]}w_\circ^J s_7 s_3 s_4 s_2 s_3 s_7
=w_\circ^{[2,4]}s_7 s_3 s_2 s_4 s_3 s_7 w_\circ^J
=w_\circ^{[2,4]\cup\{7\}}w_\circ^J\\
&=w_{[2,4]\cup\{7\};J},\\
p_J(w_{[2,7]})
&=s_1s_2s_3s_4s_5s_4s_3s_2s_1s_7s_3s_2s_1s_4s_3s_2s_5s_4s_3s_7s_3s_2s_1s_4s_3s_2s_5s_4s_3s_7\\
&=w_{[2,4]; J}
\end{align*}
while for~$J=[2,7]$
\begin{align*}
p_J(w_{[1,5]\cup\{7\}})&=s_5s_4s_3s_2s_6s_5s_4s_3s_2s_7s_3s_2s_4s_3s_5s_4s_6s_5s_7s_3s_2s_4s_3s_7\\
&=w_{[2,4];[2,7]},\\
p_J(w_J)&=s_2s_3s_2s_4s_3s_5s_4s_3s_2s_6s_5s_4s_3s_7s_3s_2s_4s_3s_5s_4s_6s_5s_7s_3s_2s_4s_3\\
&=s_4 w_\circ^{[2,5]}w_\circ^{[2,6]} \cx25 s_7 w_\circ^J=s_4 \cxr26\cx25 s_7 w_\circ^J=s_4s_6s_7 w_\circ^J\\
&=w_{\{4,6,7\};J}.
\end{align*}
Finally, let~$W$ be of type~$E_8$. The connected~$J\subset I$
with~$|J|=7$ are
$$
J\in\mathscr J_{E_8}=\{[1,7],\,[2,8],\,
[1,6]\cup\{8\}\},
$$
which are, respectively, of type~$A_7$, $D_7$ and~$E_7$ and so,
in particular, Theorem~\ref{thm:pJwK is w0KstarJw0J} has already been established for~$W_J$ with~$J\in \mathscr J_{E_8}$.
We only need to consider connected~$K\subset I$ such that~$J\star K\not=\emptyset$ for some~$J\in\mathscr J_{E_8}$, which happens only for $J=K=[1,6]\cup\{8\}$
and
$$
J\star J=[2,4]\cup \{8\}.
$$
We have
\begin{align*}
w_J&=s_7s_6s_5s_4s_3s_2s_1s_8s_3s_2s_4s_3s_5s_4s_6s_5s_7s_6s_8s_3s_2s_1s_4s_3s_2s_5s_4s_3s_8s_3s_2s_1s_4s_3s_2\times\\
&\qquad
s_5s_4s_3s_6s_5s_4s_7s_6s_5s_8s_3s_2s_1s_4s_3s_2s_8s_3s_4s_5s_6s_7
\end{align*}
and
\begin{align*}
p_J(w_J)&=s_1s_2s_3s_4s_5s_4s_3s_2s_1s_6s_5s_4s_3s_2s_1s_8s_3s_2s_1s_4s_3s_2s_5s_4s_3s_6s_5s_4s_8s_3s_2s_1s_4s_3\times\\
&\qquad s_2s_5s_4s_3s_6s_5s_4s_8s_3s_2s_1s_4s_3s_5s_4s_6s_5\\
&=\cx15\cxr14 w_\circ^{[1,5]}w_\circ^J s_8s_3s_4s_2s_3s_8\\
&=w_\circ^{[2,4]}s_8s_3s_4s_2s_3s_8 w_\circ^J=
w_\circ^{[2,4]\cup\{8\}} w_\circ^J\\
&=w_{[2,4]\cup\{8\};J}.
\end{align*}
This completes the proof of Theorem~\ref{thm:pJwK is w0KstarJw0J}.
\end{proof}

\begin{remark}\label{rem:surj counterex}
The restriction of~$p_J$ to~$\mP(W(M),\star)$ needs not be surjective. For example, if $M=A_4$
and~$J=[1,3]$ then $\{1,3\}\subset J$ is not equal to
$J\star K$ for any~$K\subset [1,4]$ and
so $w_{\{1,3\};J}\not=p_J(w_K)$ for
any~$K\subset [1,4]$. Indeed,
by Corollary~\ref{cor:A J*K}, $J\star K$ is an interval for any connected~$K$, $J\star K=\{1\}$
if and only if~$K=[1,2]$ and $J\star K=\{3\}$
if and only if~$K=[2,3]$. Yet $[1,2]$ and~$[2,3]$
are not orthogonal.
\end{remark}

\subsection{Light homomorphisms of Hecke monoids are parabolic}\label{subs:light->parab}
We can now prove the following
\begin{theorem}\label{thm:light->parab}
Any light homomorphism of Hecke monoids is parabolic. 
\end{theorem}
\begin{proof}
In view of Proposition~\ref{prop:classify light},
Theorem~\ref{thm:pJwK is w0KstarJw0J} and
Lemma~\ref{lem:[phi]comp},
it remains to prove that~$\phi$ is parabolic when~$\phi$ is either a folding along some surjective map
or is tautological. Note that 
since a restriction of a light $\phi\in\Hom_{\mathscr H}(M',M)$ 
to any parabolic submonoid of~$W(M')$ is again light,
it suffices to consider the case when~$M'$ is of finite type and irreducible.

The only foldings in finite types with irreducible~$M'$ are
are $\mathbf f_{\varpi_{(n,n+1)}}:(W(D_{n+1}),\star)\to (W(A_n),\star)$
and $\mathbf f_{\varpi_{(1,3,4)}}:(W(D_4),\star)\to (W(A_2),\star)$ where $\varpi_{(n,n+1)}:[1,n+1]\to[1,n]$ and~$\varpi_{(1,3,4)}:
[1,4]\to [1,2]$ are defined as in~\S\ref{subs:light Artin->parabolic}. Since~$\mathbf f_{\varpi_{(n,n+1)}}=\overline{(\mathbf F_{\varpi_{(n,n+1)}})}_\star$, $\mathbf f_{\varpi_{(1,3,4)}}=\overline{(\mathbf F_{\varpi_{(1,3,4)}})}_\star$ and
$\mathbf F_{\varpi_{(n,n+1)}}$ (respectively,
$\mathbf F_{\varpi_{(1,3,4)}}$) is parabolic by
Proposition~\ref{prop:Dn+1 An parab}
(respectively, \ref{prop:D4 A2}), $\mathbf f_{\varpi_{(n,n+1)}}$ and~$\mathbf f_{\varpi_{(1,3,4)}}$ are parabolic by Lemma~\ref{lem:hecke parab induces parab}.

We now turn our attention to tautological homomorphisms. Note that if~$M=I_2(m)$ then
for any~$m'>m$ we have a tautological
$\phi\in\Hom_{\mathscr H}(I_2(m'),I_2(m))$.
Since
$$
\phi(\brd{s'_is'_j}{k})=\brd{s_i\star s_j\star}{k},
\qquad k\le m',
$$
and non-identity parabolic elements $w_{J;\{1,2\}}$
in~$I_2(m')$
correspond to~$k\in \{m'-1,m'\}\ge m$,
it follows that
$\phi(w_{J})=w_\circ^{\{1,2\}}$ and so~$\phi$ is parabolic.

Suppose now that~$|I|>2$ and that~$M'$ is irreducible. Note that if~$\phi\in\Hom_{\mathscr H}(M',M)$ is tautological
then~$\Gamma(M)$ is obtained from~$\Gamma(M')$
by either decreasing some labels or removing edges.

Suppose that the underlying graphs of~$\Gamma(M')$ and~$\Gamma(M)$ are isomorphic.
Then the only possibilities are:
\begin{enumerate}[label={$\arabic*^\circ$.},
ref={$\arabic*^\circ$}]
\item\label{taut-case-BnAn}
$M'=B_n$, $M=A_n$ and so~$\phi$
the composition of the unfolding $(W(B_n),\star)\to
(W(A_{2n-1}),\star)$ induced by~\eqref{eq:unfold Bn A2n-1} with~$p_{[1,n]}:(W(A_{2n-1}),\star)\to (W_{[1,n]}(A_{2n-1}),\star)\cong (W(A_n),\star)$
and hence is parabolic
by Proposition~\ref{prop:parabolic<->w0J}
and Theorem~\ref{thm:pJwK is w0KstarJw0J};

\item\label{taut-case-F4A4}
$M'=F_4$, $M=A_4$ and so~$\phi$ is the composition of the unfolding $(W(F_4),\star)\to (W(E_6),\star)$ induced by~\eqref{eq:unfold F4 E6}
with the parabolic projection $p_{\{1,2,3,6\}}:(W(E_6),\star)\to (W_{\{1,2,3,6\}}(E_6),\star)\cong (W(A_4),\star)$
hence the assertion holds in this case
by Proposition~\ref{prop:parabolic<->w0J}
and Theorem~\ref{thm:pJwK is w0KstarJw0J};

\item\label{taut-case-HnBn}
$M'=H_n$, $M=B_n$, $n\in\{3,4\}$.
We claim
that~$\phi(w_{J})=w_\circ^{[1,n]}$ for all~$J\subsetneq I$. By Lemma~\ref{lem:len prop wJ K},
it suffices to prove
the claim for~$J$ with~$|J|=n-1$.

If~$n=3$, let~$c=s_1s_3s_2$. Then~$w_\circ^{[1,3]}=c^{\times 5}$ in~$W(H_3)$ and so
\begin{align*}
&w_{\{1,2\}}=(s_1 s_2 s_3 s_2)\times c^{\times 3},
\quad w_{\{1,3\}}=s_2 \times c^{\times 4},\\
&w_{\{2,3\}}=s_1s_2s_3s_2s_1s_3s_2s_3s_2s_1.
\end{align*}
Since~$c^{\star 3}=w_\circ^{[1,3]}$ in~$(W(B_3),\star)$, the claim is obvious
for~$J\in\{\{1,2\},\{1,3\}\}$
while
\begin{align*}
\phi(w_{\{2,3\}})&=s'_1\star s'_2\star s'_3\star s'_2\star s'_1\star s'_3\star s'_2\star s'_3\star s'_2\star s'_1
\\
&=s'_1\star s'_2\star s'_3\star s'_2\star s'_1\star s'_2\star s'_3\star s'_2\star s'_3\star s'_1\\
&=s'_1\star s'_2\star s'_1\star s'_3\star s'_2\star s'_1\star s'_3\star s'_2\star s'_3\star s'_1
=w_\circ^{[1,3]}\star s'_1=w_\circ^{[1,3]}.
\end{align*}
Similarly, for~$n=4$ we have
\begin{align*}
&w_{[1,3]}=(w_\circ^{[1,3]}\cx14^3)\times \cx14^{\times 12}=s_4s_3s_4s_2s_3s_4\times \cx14^{\times 12},\\
&w_{\{1,2,4\}}=
s_3s_2s_1\times \cxr14^{\times 12}\times s_4s_3s_2s_4s_3,\\
&w_{\{1,3,4\}}=
s_2s_1s_3s_2\times \cxr14^{\times10}
\times \cxr24\times\cxr14\times\cxr24\\
&w_{\{2,3,4\}}=\cx13\times (\cxr14\times
\cxr24\times \cxr14)^{\times 3}\times \cxr24
\times\cxr34\times\cxr14.
\end{align*}
Since~$\cxr14^{\star 4}=w_\circ^{[1,4]}$ in~$(W(B_4),\star)$, the claim is immediate
for~$J\not=\{2,3,4\}$ with~$|J|=3$. Since
$\cxr14\star\cxr24\star\cxr14^{\star 2}\star \cxr24
=w_\circ^{[1,4]}$ in~$(W(B_3),\star)$,
the claim follows for~$J=\{2,3,4\}$ as well.

\item\label{taut-case-HnAn}
$M'=H_n$, $M=A_n$, $n\in\{3,4\}$.
Then~$\phi$ is the composition
of a homomorphism from~\ref{taut-case-HnBn} with
the respective homomorphism from~\ref{taut-case-BnAn}
and hence is parabolic.
\end{enumerate}

It remains to consider the case when~$\Gamma(M)$
is obtained from~$\Gamma(M')$ by removing some
edges. Let~$I_1,\dots,I_r$ be vertex sets 
of connected components of~$\Gamma(M)$. 
The following is immediate.
\begin{lemma}\label{lem:taut pJ commute}
Let~$M,M'\in\Cox I$ and let~$\phi\in\Hom_{\mathscr H}(M',M)$
be tautological. Let~$J\subset I$ and let 
$p_J:(W(M),\star)\to (W_J(M),\star)$,
$p'_J:(W(M'),\star)\to (W_J(M'),\star)$ be
respective parabolic projections. 
Then $p_J\circ \phi=\phi\circ p'_J$.
\end{lemma}
Let~$J\subset I$. Then by Lemmata~\ref{lem:parab prod} and~\ref{lem:taut pJ commute} and Theorem~\ref{thm:pJwK is w0KstarJw0J} 
\begin{align*}\phi(w_J)=
\prodst_{1\le i\le r} p_{I_i}(\phi(w_J))=
\prodst_{1\le i\le r}\phi(p'_{I_i}(w_J))
=\prodst_{1\le i\le r}\phi(w_{J\star I_i;I_i})
\end{align*}
where~$J\star I_i$, $1\le i\le r$ is taken in~$W(M')$. Since the restriction of~$\phi$ to~$W_{I_i}(M')$, $1\le i\le k$ is light and hence parabolic, as $\Gamma_{I_i}(M')$ and~$\Gamma_{I_i}(M)$ are connected, it follows that $\phi(w_{J\star I_i;I_i})=
w_{J'_i;I_i}$, $J'_i\subset I_i$ and so~$\phi(w_J)=
w_{J'_1\cup\cdots\cup J'_r}$ by Lemma~\ref{lem:prod orth parab}.
\end{proof}
\begin{remark}
Let~$\varpi_{(n,n+1)}:[1,n+1]\to[1,n]$ be
as in~\S\ref{subs:light Artin->parabolic}. It follows from~\eqref{eq:Dn+1 An parab I}
\begin{equation}\label{eq:explicit Dn+1 An parab}
\phi(w_J)=\begin{cases}
w_{J\star_I [1,n];[1,n]},&\{n,n+1\}\subset J,\\
w_{\emptyset;[1,n]},&\text{otherwise}.
\end{cases}
\end{equation}
Note that if~$\{n,n+1\}\not\subset J$,
$J\star_I [1,n]$ does not have to be empty (see Proposition~\ref{prop:1,n*K D}).
\end{remark}

\subsection{Remarks on multiparabolic elements}
\label{subs:finale}
We end this chapter with two conjectures
about multiparabolic elements which were verified in numerous examples.
\begin{conjecture}\label{conj:pJ multiparab}
Let~$M$ be a Coxeter matrix over~$I$. Then
for any~$J\subset I$ and
$K_1,\dots,K_r\subset K\in\mathscr F(M)$
we have $p_J(w_{K_1,\dots,K_r;K})=w_{K'_1,\dots,K'_r;K\cap J}$
for some~$K'_1,\dots,K'_r\subset K\cap J$. Thus,
parabolic projections of Hecke monoids map
multiparabolic elements to multiparabolic elements.
\end{conjecture}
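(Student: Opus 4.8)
The plan is to adapt, step by step, the proof of Theorem~\ref{thm:pJwK is w0KstarJw0J}. \emph{Reductions.} Since the Hecke monoid of a reducible Coxeter matrix is a free product (Lemma~\ref{lem:free product}) and parabolic projections are compatible with this decomposition, we may assume $M$ is irreducible; and since $p_J(w)=p_{J\cap K}(w)$ for $w\in W_K$, we may replace $M$ by $M_K$ and assume $K\in\mathscr F(M)=\mathscr P(I)$, i.e.\ $K=I$ is of finite type. Splitting each $K_t$ into its connected components and merging orthogonal consecutive blocks (\S\ref{subs:multiparab}) we may assume every $K_t$ is connected. By Lemma~\ref{lem:pJ1J2K equals to pJ1K pJ2K}, if $J=J'\sqcup J''$ with $J'\perp J''$ then $p_J(w_{K_1,\dots,K_r})=p_{J'}(w_{K_1,\dots,K_r})\times p_{J''}(w_{K_1,\dots,K_r})$, which is a concatenation of multiparabolic data once each factor is known, so we may take $J$ connected; and, exactly as in Lemma~\ref{lem:induction}, induction on $|I|$ together with $p_J=p_J\circ p_{J_1}$ for $J\subseteq J_1\subsetneq I$ reduces us to connected $J$ with $|J|=|I|-1$. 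For this last step one actually needs a sharpened form of the conjecture in which the number of blocks (after normalization) is controlled, so that the inductive hypothesis applies to the output; I would prove that sharpened version.

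\emph{Transfer along strict parabolic Hecke homomorphisms.} I would establish the multiparabolic analogue of Proposition~\ref{prop:LCM hom good}: if $\Phi:\Br^+(\wh M)\to\Br^+(M)$ is a strict parabolic Hecke homomorphism then $\Phi$ is disjoint (Proposition~\ref{prop:parabolic<->w0J}), $\overline\Phi_\star$ sends $w_{\wh K_1,\dots,\wh K_r;\wh K}$ to $w_{[\Phi](\wh K_1),\dots,[\Phi](\wh K_r);[\Phi](\wh K)}$ by Lemma~\ref{lem:parab preserving}, $\overline\Phi_\star\circ p_{\wh J}=p_{[\Phi](\wh J)}\circ\overline\Phi_\star$ by Lemma~\ref{lem:prj hom}, and $\overline\Phi_\star$ is injective (Lemma~\ref{lem:parab sqf} and Lemma~\ref{lem:sqf Hecke hom}); hence $\overline\Phi_\star\bigl(p_{\wh J}(w_{\wh K_1,\dots,\wh K_r;\wh K})\bigr)=p_{[\Phi](\wh J)}(w_{[\Phi](\wh K_1),\dots,[\Phi](\wh K_r);[\Phi](\wh K)})$ and the conjecture for $M$ transfers to $\wh M$, provided one checks that a multiparabolic lying in the image of $\overline\Phi_\star$ is the $\overline\Phi_\star$-image of a multiparabolic, which for the standard unfoldings follows from the diagram-automorphism fixed-point description as in the type $B$ case of Theorem~\ref{thm:pJwK is w0KstarJw0J}. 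Using the unfoldings $\Br^+(B_n)\to\Br^+(A_{2n-1})$ (see~\eqref{eq:unfold Bn A2n-1}), $\Br^+(F_4)\to\Br^+(E_6)$, $\Br^+(H_3)\to\Br^+(D_6)$ and $\Br^+(H_4)\to\Br^+(E_8)$, this settles types $B$, $F_4$, $H_3$, $H_4$; together with a direct computation in rank $2$ (there are only finitely many multiparabolics in a dihedral group, and $p_J$ of each is immediate) we are reduced to types $A$, $D$ and $E$ with connected corank-one $J$.

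\emph{Types $A$, $D$, $E$.} Here $M=A_n$, $D_{n+1}$ or $E_n$ and $K=I$, so the $K_t$ are disjoint connected subsets of $I$ and $J$ is one of the two or three connected subsets of size $|I|-1$. In type $A$ I would prove the analogue of Proposition~\ref{prop:wK explicit}: although for $r\ge2$ the element $w_{K_1,\dots,K_r;[a,b]}$ is no longer a $\star$-product of the Coxeter elements $\cx ab,\cxr ab$, it does admit a canonical $\star$-factorization into \emph{parabolic} elements $w_{M_1;L_1}\star\dots\star w_{M_\ell;L_\ell}$ indexed by a chain of subintervals $L_1,\dots,L_\ell\subseteq[a,b]$ (the first non-parabolic instance being $w_{\{1\},\{2\};[1,3]}=w_{\{1\};[1,2]}\star w_{\{2\};[2,3]}$); applying Theorem~\ref{thm:pJwK is w0KstarJw0J} termwise turns each factor into a parabolic element of $W_{J\cap L_t}$, and running the factorization backwards inside $W_{J\cap[a,b]}$ reassembles the product into a multiparabolic. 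Type $D$ I would then derive from type $A$ exactly as in the proof of Theorem~\ref{thm:pJwK is w0KstarJw0J} for $D_{n+1}$ — via the unfolding~\eqref{eq:unfold Bn Dn+1}, identity~\eqref{eq:w1,n D}, and a case split according to whether the subsets involved are fixed by the order-two diagram automorphism — and type $E_n$, $n\in\{6,7,8\}$, by a finite machine verification for the corank-one $J$ and the finitely many admissible $K_1,\dots,K_r$ with $p_J$-image of positive length, as was done for $r=1$.

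The hard part is the type $A$ factorization. For parabolic elements Proposition~\ref{prop:wK explicit} worked because $\{w_{J';[a,b]}\}$ is closed under $\star$; for $r\ge2$ multiparabolics this fails, and one must discover and prove the finer factorization into parabolics over a chain of subintervals, which seems to require a new family of length and commutation identities generalizing Lemmata~\ref{lem:*powers of coxeter} and~\ref{lem:wK absorption}. A secondary difficulty is the bookkeeping of the subsets $K'_t$ needed to run the rank induction, together with the auxiliary facts flagged above (the description of multiparabolics in the image of $\overline\Phi_\star$ for the standard unfoldings, and — if one routes the argument through $w_{K_1,\dots,K_r;K}=w_{K_1,\dots,K_r;L}\times w_{L;K}$ with $L=\bigcup_t K_t$ — the statement that a multiparabolic followed on the right under $\star$ by a parabolic is again multiparabolic, which is a special case of Conjecture~\ref{conj:multiparab submon}). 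All of these are routine or known in the $r=1$ situation and are expected to carry over.
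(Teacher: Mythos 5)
The statement you are proving is Conjecture~\ref{conj:pJ multiparab}, which the paper itself leaves open (it is stated in \S\ref{subs:finale} as verified only in examples), so there is no proof in the paper to measure you against; your submission must therefore stand on its own, and as it stands it is a research plan rather than a proof. The reductions you describe (to $K=I$ of finite type, to connected blocks, to connected corank-one $J$, and the transfer along the unfoldings of Theorem~\ref{thm:adm finite class} via Proposition~\ref{prop:LCM hom good}, Lemma~\ref{lem:parab preserving} and Lemma~\ref{lem:prj hom}) are sensible adaptations of the proof of Theorem~\ref{thm:pJwK is w0KstarJw0J}, but every step that carries the actual content is deferred: the ``sharpened'' statement needed to run the rank induction is not formulated, the claim that a $\sigma$-invariant multiparabolic element of $(W(M),\star)$ is the image under $\overline\Phi_\star$ of a multiparabolic element of $(W(\wh M),\star)$ is asserted but not proved (for $r=1$ this came for free from Lemma~\ref{lem:hom parab submonoid}; for $r\ge 2$ there is no analogue in the paper), and one branch of your argument explicitly leans on an analogue of the other open Conjecture~\ref{conj:multiparab submon}.

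The decisive gap is the type~$A$ step, which you yourself call the hard part: the proposed $\star$-factorization of $w_{K_1,\dots,K_r;[a,b]}$ into parabolic elements over a chain of subintervals is neither precisely stated nor proved, and the one identity you offer as evidence is false. Indeed, $w_{\{1\},\{2\};[1,3]}=s_1s_2w_\circ^{[1,3]}=s_2s_3s_2s_1$ (one-line notation $4132$), whereas $w_{\{1\};[1,2]}\star w_{\{2\};[2,3]}=(s_2s_1)\star(s_3s_2)=s_2s_1s_3s_2$ (one-line $3412$); these are distinct elements of the same length. (A correct factorization of this particular element is $w_{\{1\},\{2\};[1,3]}=w_{\emptyset;[2,3]}\times w_{\emptyset;\{1\}}$, which already shows that the intervals and the attached subsets in any such factorization behave differently from what your formula suggests.) Until you formulate and prove the correct generalization of Proposition~\ref{prop:wK explicit} for $r\ge 2$ — together with the length and absorption identities extending Lemmata~\ref{lem:*powers of coxeter} and~\ref{lem:wK absorption} that it would require — the argument does not get off the ground even in type~$A$, and the $D$ and $E$ cases, which you build on top of it, remain open as well.
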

\begin{conjecture}\label{conj:multiparab submon}
Let~$\tilde \mP(M)$ be the set of multiparabolic elements
in~$W(M)$. Then $\tilde \mP(M)\star \tilde \mP(M)$
is a submonoid of~$(W(M),\star)$.
\end{conjecture}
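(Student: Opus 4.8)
I will treat Conjecture~\ref{conj:multiparab submon} as a theorem to be proved for $M$ of finite type (for $M$ not of finite type the set $\tilde\mP(M)$ of multiparabolic elements is not even defined, and the naive extension using $K$-multiparabolic elements for all $K\in\mathscr F(M)$ already fails in rank two, e.g. for $\tilde A_1$). Two reductions come first. Since $1=w_{I;I}\in\tilde\mP(M)$, the set $A:=\tilde\mP(M)\star\tilde\mP(M)$ contains both the identity and $\tilde\mP(M)$, so it is a submonoid if and only if it is $\star$-closed; and by associativity $\star$-closedness of $A$ follows from the single assertion that $x\star y\star z\in A$ for all $x,y,z\in\tilde\mP(M)$ — i.e. that triple $\star$-products of multiparabolic elements are already double $\star$-products — because, given $x_1\star x_2,\ y_1\star y_2\in A$, applying this twice rewrites $x_2\star y_1\star y_2$ and then $x_1\star(\,\cdots\,)$. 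Secondly, if $I=I_1\sqcup\cdots\sqcup I_t$ is the decomposition into connected (hence pairwise orthogonal) components then $(W(M),\star)\cong\prod_j(W(M_{I_j}),\star)$, since $s_i\star s_k=s_is_k$ whenever $i,k$ lie in different $I_j$; splitting each $J_\ell$ occurring in $w_{J_1,\dots,J_k;I}$ along the components and using $w_\circ^{J_\ell}=\prod_jw_\circ^{J_\ell\cap I_j}$, one sees that $\tilde\mP(M)$ is identified with $\prod_j\tilde\mP(M_{I_j})$ and $A$ with $\prod_j(\tilde\mP(M_{I_j})\star\tilde\mP(M_{I_j}))$, so it suffices to treat irreducible $M$. (Note that the ``depth $\le 1$'' part $\{w_{J;I}:J\subseteq I\}=\mP(W(M),\star)$ of $\tilde\mP(M)$ is already a submonoid by Proposition~\ref{prop:submonoid *}, so the new content concerns the deeper multiparabolic elements.)

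For irreducible $M$ I would argue type by type. In rank two, $\tilde\mP(I_2(m))$ is finite — its elements are $1$, $w_\circ$ and the ``prefixes'' $\brd{s_is_j}{t}$ of $w_\circ$ — and the Bruhat poset of $W(I_2(m))$ is simple enough that every $\star$-product can be written down explicitly (with a mild dependence on the parity of $m$) and $\star$-closedness of $A$ is read off directly. In type $A_n$ I would extend Proposition~\ref{prop:wK explicit} from parabolic to multiparabolic elements, obtaining a closed expression for $w_{K_1,\dots,K_r;[1,n]}$ in terms of the Coxeter-type elements $\cx ij$, $\cxr ij$ and then for the $\star$-product of two, and of three, such elements; I expect this bookkeeping to be governed by the order-consecutive partitions of $[1,n]$ alluded to in Remark~\ref{rem:multipar combinatorics}, which should make the $\star$-closedness of $A$ transparent.

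Types $B_n$ and $D_{n+1}$ would be handled the same way, leaning on the explicit parabolic-product computations of Section~\ref{sec:Parab proj} (Proposition~\ref{prop:1,n*K D}, Lemma~\ref{lem:B J*K} and the telescoping identities in their proofs) and transporting identities along the standard unfoldings \eqref{eq:unfold Bn A2n-1}--\eqref{eq:unfold Bn Dn+1} via Lemma~\ref{lem:parab preserving} where convenient; however, since this transport recovers only the $\sigma$-fixed part, a genuine computation in types $B$ and $D$ appears unavoidable. Finally $E_6,E_7,E_8,F_4,H_3,H_4$ are finite and can be checked by direct computation in the Hecke monoid; for $H_3$ and $H_4$ one may instead transfer the question into $D_6$ and $E_8$ along the injective strict parabolic Hecke homomorphisms of Theorem~\ref{thm:artin parab}, using injectivity of the induced Hecke-monoid homomorphism (Lemma~\partref{lem:sqf Hecke hom.d}) together with Lemma~\ref{lem:parab preserving}.

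The hard part is the classical-type analysis: there is no uniform formula for $\star$-products of multiparabolic elements, so producing the explicit descriptions in types $B_n$ and $D_{n+1}$, and then verifying that the resulting set is $\star$-closed, will likely require the same sort of lengthy telescoping identities as in \S\ref{subs:forward direction}. A related difficulty is that the reduction to triple products gives no a priori control over $\supp(x\star y\star z)$ beyond $\supp x\cup\supp y\cup\supp z$, so one really needs the explicit forms in order to exhibit the two multiparabolic factors.
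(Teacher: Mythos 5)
The statement you are addressing is presented in the paper as Conjecture~\ref{conj:multiparab submon}, not as a theorem: the authors give no proof and only report computational verification for $W(A_n)$ with $n\le 8$, $W(D_{n+1})$ with $n\le 6$, $W(E_6)$ and $W(E_7)$. Your two preliminary reductions are sound and worth recording: the reduction of $\star$-closedness of $\tilde\mP(M)\star\tilde\mP(M)$ to the single assertion that every triple $\star$-product of multiparabolic elements is a double $\star$-product, and the reduction to irreducible~$M$ via $(W(M),\star)\cong\prod_j(W(M_{I_j}),\star)$ together with $\tilde\mP(M)\cong\prod_j\tilde\mP(M_{I_j})$ (padding the sequences $J_1,\dots,J_k$ by empty sets to equalize lengths across components). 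The observations that the exceptional finite types reduce to a finite check and that $H_3$, $H_4$ transfer into $D_6$, $E_8$ along the injective strict parabolic Hecke homomorphisms of Theorem~\ref{thm:artin parab} via Lemmata~\ref{lem:parab preserving} and~\partref{lem:sqf Hecke hom.d} are also correct.

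The core of the argument, however, is missing. For the infinite families $A_n$, $B_n$, $D_{n+1}$ you only describe what you \emph{would} do: extend Proposition~\ref{prop:wK explicit} to a closed formula for $w_{K_1,\dots,K_r;[1,n]}$, compute double and triple $\star$-products of such elements, and verify closure, with the hope that order-consecutive partitions organize the bookkeeping. None of this is carried out (``I expect'', ``will likely require''), and it is exactly this content that separates the conjecture from a theorem: the depth-one analogue, Proposition~\ref{prop:submonoid *}, already required the nontrivial results of~\cites{BK07,He09}, and the example at the end of~\S\ref{subs:finale} shows that the number of genuinely new pairwise products one must adjoin to $\tilde\mP(M)$ grows quickly ($2,13,53,176,510$ for $A_4,\dots,A_8$), so the requisite combinatorics is not transparent even in type~$A$. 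As written, your text is a research plan with the decisive steps deferred rather than a proof; the statement remains open, and your reductions, while valid, do not by themselves establish it in any new case beyond what the paper already reports.
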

As before, since the standard unfoldings preserve multiparabolic elements and are injective, we only needs to consider
$W(A_n)$, $n\ge 1$, $W(D_{n+1})$, $n\ge 4$ and~$E_n$, $n\in\{6,7,8\}$.
The second conjecture was verified for~$W(A_n)$
with~$n\le 8$, $W(D_{n+1})$, $n\le 6$, $W(E_6)$ and~$W(E_7)$ (in the last case~$\tilde \mP(M)=\tilde \mP(M)\star \tilde \mP(M)$ and hence is a submonoid).
\begin{example}\label{ex:multipar submon}
By Theorem~\ref{thm:count multipar}, there are $68$ multiparabolic elements in~$W(A_4)$. The only $\star$-products of
multiparabolic elements which are not multiparabolic
are $w_{[2,4], \{1\}}\star w_{\{1\}, [2,4]}=s_1s_2s_3s_2s_4=
s_2s_3s_4s_3s_2w_\circ$ and $w_{[1,3],\{4\}}\star w_{\{4\},[1,3]}=s_2s_4s_3s_2s_1=s_1s_2s_3s_2s_1w_\circ$. The
set~$\tilde \mP(W)$ together with these two
elements is a submonoid of~$(W(A_4),\star)$.
For~$W(A_5)$ (respectively, $W(A_6)$, $W(A_7)$, $W(A_8)$)
we need $13$ (respectively, $53$, $176$, $510$)
additional pairwise products of multiparabolic
elements to obtain a submonoid of~$(W(A_n),\star)$.
\end{example}

\section{Standard homomorphisms}\label{sec:SQF AB}
We begin by classifying all disjoint fully supported standard homomorphisms $\Br^+(\wh M)
\to \Br^+(M)$ where~$\wh M$ is irreducible and of finite type
and~$M$ is of type~$A_n$ or~$B_n$. In view of Lemma~\ref{lem:factor homs}, we will only consider
optimal homomorphisms. 

\subsection{Two families of homomorphisms in type~\texorpdfstring{$B$}{B}} 
We will often use the following Lemma.
\begin{lemma}\label{lem:I2m iff cnd}
Let~$m>1$ and~$M\in\Cox I$. Let~$X_1,X_2\in \Br^+(M)$ be ${}^{op}$-invariant. Then the 
assignments $\wh T_i\mapsto X_i$, $i\in \{1,2\}$ define a homomorphism
$\Br^+(I_2(2m))\to \Br^+(M)$ if and only if $(X_1X_2)^m$ is ${}^{op}$-invariant.
\end{lemma}
\begin{proof}
These assignments define a homomorphism~$\Br^+(I_2(2m))\to\Br^+(M)$ if and only if 
$$
(X_1X_2)^m=\brd{X_1X_2}{2m}=\brd{X_2X_1}{2m}=(X_2X_1)^m.
$$
Since~$X_1$ and~$X_2$ are ${}^{op}$-invariant, this happens if and only if $((X_1X_2)^m)^{op}=(X_2X_1)^m=(X_1X_2)^m$.
\end{proof}
We begin by constructing infinite families
of disjoint Coxeter type parabolic and of
standard homomorphisms $\Br^+(I_2(2m))\to \Br^+(B_n)$
for~$2\le m\le n$.
\begin{proposition}  \label{prop:admissible hom from BrI22m to BrBn}
Let $2\le m\le n$.
\begin{enmalph}
\item\label{prop:admissible hom from BrI22m to BrBn.a} 
The assignments 
$$
\wh T_1\mapsto T_{w_\circ^{[1,m-1]_2}},\quad
\wh T_2\mapsto T_{w_\circ^{[1,m-2]_2}}\Cx mn\Cxr m{(n-1)}
$$
define a disjoint Coxeter type strict parabolic~$\Phi\in\Hom_{\mathscr A}(I_2(2m),B_n)$.
\item\label{prop:admissible hom from BrI22m to BrBn.b}
The assignments
$$
\wh T_1\mapsto T_{w_\circ^{[1,m-1]_2}},\quad
\wh T_2\mapsto T_{w_\circ^{[1,m-2]_2\cup [m,n]}}
$$
define a disjoint standard $\wh\Phi\in\Hom_{\mathscr A}(I_2(2m),B_n)$, which is strict parabolic if and only if~$m=n$.
\end{enmalph}
\end{proposition}
\begin{proof}
Abbreviate $T_{(i,n+1)}:=\Cxr in\Cxr i{(n-1)}$, $1\le 
i\le n$. Note that since $w_\circ^{[i,n]}
=\cx in\times \cxr i{(n-1)}\times w_\circ^{[i+1,n]}=
w_\circ^{[i+1,n]}\times \cx in\times \cxr i{(n-1)}$,
for all~$1\le i\le n-1$,
\begin{equation}\label{eq:B_n telescope 0}
T_{w_\circ^{[i,n]}}=T_{(i,n+1)}T_{w_\circ^{[i+1,n]}}= T_{w_\circ^{[i+1,n]}}T_{(i,n+1)}.
\end{equation}
We need the following
\begin{lemma}\label{lem:B2 I22m BrBn main id}
For all $2\le m\le n$ we have in~$\Br(B_n)$
\begin{equation}
( T_{w_\circ^{[1,m-1]_2}} T_{w_\circ^{[1,m-2]_2}} T_{(m,n+1)})^m  
=T_{w_\circ^{[m+1,n]}}^{-1} T_{w_\circ^{[1,n]}}=T_{w_\circ^{[1,n]}}
T_{w_\circ^{[m+1,n]}}^{-1} .\label{eq:prep I2m Bn}
\end{equation}
\end{lemma}
\begin{proof}
Since $m\ell(T_{w_\circ^{[1,m-1]_2}} T_{w_\circ^{[1,m-2]_2}} T_{(m,n+1)})=
(2n-m)m=n^2-(n-m)^2=\ell(T_{w_\circ^{[1,n]}})-
\ell(T_{w_\circ^{[m+1,n]}}),
$
it suffices to
prove that
\begin{equation}\label{eq:weyl grp id}
(w_\circ^{[1,m-1]_2}w_\circ^{[1,m-2]_2}\cx mn\cxr m{(n-1)})^m w_\circ^{[m+1,n]}=w_\circ^{[1,n]}
\end{equation}
in~$W(B_n)$. For that, since the reflection representation of~$W(B_n)$
on an~$n$-dimensional Euclidean space is faithful (see e.g.~\cite{Bou}*{Ch.~IV, \S4.4, Corollaire~2}),
it suffices to show that the left hand
side of~\eqref{eq:weyl grp id} acts as~$-1$ in the reflection representation since~$w_\circ^{[1,n]}$ acts this way.
Indeed, recall (see e.g.~\cite{Bou}*{Ch.~VI, \S4.5}) that~$W(B_n)$
is isomorphic to the semi-direct product of~$S_n$ with~$(\mathbb Z_2)^n$.
Let
$\epsilon_1,\dots,\epsilon_n$ be the standard basis of~$\mathbb R^n$.
Then~$S_n$ acts by permutations of the~$\epsilon_i$ and the~$i$th copy
of~$\mathbb Z_2$ acts as $\epsilon_j\mapsto (-1)^{\delta_{i,j}}\epsilon_j$,
$1\le j\le n$.
Then
\begin{equation}\label{eq:B refl 1}
w_\circ^{[m+1,n]}(\epsilon_i)=\begin{cases}
    \epsilon_i,&1\le i\le m\\
    -\epsilon_i,&m+1\le i\le n.
    \end{cases}
\end{equation}
while $\cx mn\cxr m{n-1}(\epsilon_i)=(-1)^{\delta_{i,m}}\epsilon_i$,
$1\le i\le n$. Since~$w_\circ^{[1,m-1]_2}w_\circ^{[1,m-2]_2}$
is a Coxeter element in~$W_{[1,m-1]}(B_n)\cong W(A_{m-1})$,
it identifies with a cycle of length~$m=h(A_{m-1})$ permuting all the
$\epsilon_1,\dots,\epsilon_{m}$.
It follows that
\begin{equation}\label{eq:B refl 2}
(w_\circ^{[1,m-1]_2}w_\circ^{[1,m-2]_2}\cx mn\cxr m{(n-1)})^m(\epsilon_i)=
\begin{cases}-\epsilon_i,&1\le i\le m\\
\epsilon_i,&m+1\le i\le n.
\end{cases}
\end{equation}
Together~\eqref{eq:B refl 1} and~\eqref{eq:B refl 2} imply that
the left hand side of~\eqref{eq:weyl grp id} acts as~$-1$.
\end{proof}
By Proposition~\ref{prop:fund elts BrSa}, $T_{w_\circ^{[1,n]}}$ is central 
in~$\Br(B_n)$ and $T_{w_\circ^{[1,n]}}$, $T_{w_\circ^{[m+1,n]}}$ are ${}^{op}$-invariant.
It follows that the left hand side of~\eqref{eq:prep I2m Bn}
is ${}^{op}$-invariant. Since~$T_{(m,n+1)}$ as  
well as the $T_{w_\circ^{[1,m-i]_2}}$, $i\in\{1,2\}$ are ${}^{op}$-invariant 
and $T_{(m,n+1)}$ commutes with~$T_{w_\circ^{[1,m-2]_2}}$, it 
follows that~$T_{w_\circ^{[1,m-2]_2}}T_{(m,n+1)}$ is ${}^{op}$-invariant. Since~$T_{w_\circ^{[1,n-1]_2}}$ is also ${}^{op}$-invariant,
it follows from Lemma~\ref{lem:I2m iff cnd} that
the assignments 
in part~\ref{prop:admissible hom from BrI22m to BrBn.a} define a homomorphism $\Phi:\Br^+(I_2(2m))\to\Br^+(B_n)$ which is 
of Coxeter type by Proposition~\partref{prop:elem prop Coxeter Hecke.a}. Moreover, by~\eqref{eq:prep I2m Bn},
$\Phi(\wh T_{w_\circ^{\{1,2\}}})=T_{w_{[m+1,n]}}$ 
and
$\Phi(\wh T_{w_{\{1\}}})=T_{w_\circ^{[1,m-1]_2}}^{-1}
T_{w_\circ^{[m+1,n]}}^{-1}T_{w_\circ^{[1,n]}}
=T_{w_{[1,m-1]_2\cup[m+1,n]}}$.
Finally, by~\eqref{eq:B_n telescope 0}, we have in~$\Br(B_n)$
\begin{align*}
\Phi(\wh T_{w_{\{2\}}})=
T_{w_\circ^{[1,m-2]_2}}^{-1}T_{(m,n+1)}^{-1}T_{w_\circ^{[m+1,n]}}^{-1}T_{w_\circ^{[1,n]}}
=T_{w_\circ^{[1,m-2]_2}}T_{w_\circ^{[m,n]}}^{-1}T_{w_\circ^{[1,n]}}
=T_{w_{[1,m-2]_2\cup[m,n]}}.
\end{align*}
Therefore, $\Phi$ is strict parabolic and the proof of part~\ref{prop:admissible hom from BrI22m to BrBn.a} is complete.

To prove part~\ref{prop:admissible hom from BrI22m to BrBn.b}, let~$z_1=1$ and~$z_2=T_{w_\circ^{[m+1,n]}}$.
By~\eqref{eq:B_n telescope 0}, $T_{w_\circ^{[1,m-2]_2\cup[m,n]}}=\Phi(\wh T_2)z_2$. 
We claim that~$\mathbf z=(z_1,z_2)$ is a decoration
of~$\Phi$ and so~$\wh\Phi=\Phi_{\mathbf z}
\in\Hom_{\mathscr A}(I_2(2m),B_n)$. Since~$z_1=1$,
by Lemma~\ref{lem:cent decor}
it suffices to prove that~$z_2$ commutes with~$\Phi(\wh T_i)$, $i\in\{1,2\}$ which is obvious for~$i=1$ and follows from~\eqref{eq:B_n telescope 0} for~$i=2$.

The second assertion in part~\ref{prop:admissible hom from BrI22m to BrBn.b} follows since~$m\ge 2$ and so
$\wh \Phi(\brd{\wh T_1\wh T_2}{2m})=T_{w_\circ^{[1,n]}}$
if and only if~$m=n$.
\end{proof}

\subsection{Key result}\label{subs:Key res}
Fix~$n>1$. We abbreviate~\plink{Brn}$\Br^+_{n+1}:=\Br^+(A_n)$, $\Br_{n+1} :=\Br(A_n)$ and~$\pi_n:=\pi_{A_n}$. Let~$\sigma$
be the diagram automorphism of~$\Br^+_{n+1}$, the corresponding
permutation of~$I=[1,n]$ being $\sigma(i)=n+1-i$, $i\in [1,n]$.
Note that if~$J=[a,b]\subset I$ satisfies~$\sigma(J)=J$ then
$b=n+1-a$ and so $n-|J|=2(a-1)$ is even.

Our present goal is to prove the following theorem which generalizes
the classical result from~\cite{BrSa}*{\S5.8} (cf. Theorem~\partref{thm:adm finite class.odd}%
\ref{thm:adm finite class.even} and
Proposition~\ref{prop:Coxeter splitting}).
\begin{theorem}\label{thm:main thm adm}
Let~$K\subsetneq I$ be an interval with~$|K|>1$
and let~$I'(K)\sqcup I''(K)$ be the unique partition of~$I\setminus K$ into self-orthogonal subsets such that~$I'(K)$
and~$K$ are orthogonal. Then  the assignments
$$
\wh T_1\mapsto T_{w_\circ^{I'(K)\cup K}},\qquad \wh T_2\mapsto T_{w_\circ^{I''(K)}}
$$
define an optimal (disjoint standard) homomorphism $\Br^+(I_2(2m(K)))\to \Br^+_{n+1}$
where
$$
m(K)=\begin{cases}
\frac12(n-|K|)+1,& \sigma(K)=K,\\
n-|K|+2,&\text{otherwise.}
\end{cases}
$$

Conversely, suppose that~$\Phi:\Br^+(I_2(N))\to \Br^+_{n+1}$ is an optimal disjoint standard homomorphism such that~$[\Phi](i)\not=\emptyset$,
$i\in \wh I=\{1,2\}$. Then
\begin{enmalph}
\item \label{thm:main them adm.a}
either both $[\Phi](1)$ and~$[\Phi](2)$ are self-orthogonal, or
exactly one of them contains a unique connected component
of rank~$>1$.

\item  \label{thm:main them adm.b} Suppose that
$[\Phi](1)$ and~$[\Phi](2)$ are self-orthogonal.
Then~$n=N-1$ and~$\Phi$ is the
homomorphism from Theorem~\partref{thm:adm finite class.odd} or~\ref{thm:adm finite class.even}.

\item\label{thm:main them adm.c} Suppose that precisely one
of the~$[\Phi](i)$, $i\in\{1,2\}$ contains a unique connected component~$K$
with~$|K|>1$. Then~$N=2m(K)$.
\end{enmalph}
\end{theorem}

\subsection{Transpositions in braid monoids}
Given $i\le j\in [1,n]$, denote \plink{T(i,j)}$T_{(i,j+1)}$ the unique square free element of~$\Br^+_{n+1}$
corresponding to the transposition~$(i,j+1)$ in~$S_{n+1}$ which identifies with~$W(A_n)$.
We use the convention that $T_{(i,j)}=1$ if~$i\ge j$.

\begin{proposition}\label{prop:elem prop transp}
Let $i\le j\in [1,n]$.
\begin{enmalph}
\item \label{prop:elem prop transp.a}$\ell(T_{(i,j+1)})=2(j-i)+1$.
    \item \label{prop:elem prop transp.b}
    $T_{(i,j+1)}=T_i T_{(i+1,j+1)}T_i=T_j T_{(i,j)} T_j$.
    \item \label{prop:elem prop transp.c}
    $T_{(i,j+1)}=\Cx ij\Cxr i{(j-1)}=\Cx i{(j-1)}\Cxr ij=
    \Cxr ij\Cx{(i+1)}j=\Cxr{(i+1)}j\Cx ij$.
    In particular, $T_{(i,j+1)}$ is ${}^{op}$-invariant.
    \item\label{prop:elem prop transp.e} $T_{k}T_{(i,j+1)}=T_{(i,j+1)}T_k$ for all $k\in [1,n]\setminus \{i-1,i,j,j+1\}$.
    \item\label{prop:elem prop transp.f}
    Let $k\le l\in[1,n]$. Then $T_{(k,l+1)}$ commutes with $T_{(i,j+1)}$
    provided that either $i<k$, $l<j$, or $l<i-1$ or~$j+1<k$.
    \item \label{prop:elem prop transp.g}
    $T_{w_\circ^{[i,j]}}=\prod_{0\le k\le \frac12(j-i)} T_{(i+k,j+1-k)}$.
\end{enmalph}
\end{proposition}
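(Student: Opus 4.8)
The plan is to derive every part from Theorem~\ref{thm:Tits}, used in the form ``$T_uT_v=T_{uv}$ if and only if $\ell(uv)=\ell(u)+\ell(v)$'' together with $\ell(T_w)=\ell(w)$ for $T_w\in\SQF^+(A_n)$; iterating this, $T_{u_1}\cdots T_{u_k}=T_{u_1\cdots u_k}$ whenever $\ell(u_1\cdots u_k)=\sum_t\ell(u_t)$, since then every partial product is automatically length-additive. All combinatorics then takes place in $W(A_n)\cong S_{n+1}$ with $s_k\leftrightarrow(k,k+1)$ and $T_{(i,j+1)}$ the square free lift of $(i,j+1)$. For part (a) one simply counts that the transposition $(i,j+1)$ has $2(j-i)+1$ inversions in one-line notation, so $\ell(T_{(i,j+1)})=\ell((i,j+1))=2(j-i)+1$. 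For part (b), in $S_{n+1}$ one has the conjugation identities $(i,j+1)=s_i\,(i+1,j+1)\,s_i=s_j\,(i,j)\,s_j$, and by part (a) each right-hand side is a product whose lengths add to $1+(2(j-i)-1)+1=2(j-i)+1=\ell((i,j+1))$; two applications of Theorem~\ref{thm:Tits}(b) then give $T_{(i,j+1)}=T_iT_{(i+1,j+1)}T_i=T_jT_{(i,j)}T_j$. (Equivalently one may run an induction on $j-i$ with base $T_{(i,i+1)}=T_i$.)

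For part (c), the four words $s_i\cdots s_j\,s_{j-1}\cdots s_i$, $s_i\cdots s_{j-1}\,s_j\cdots s_i$, $s_j\cdots s_i\,s_{i+1}\cdots s_j$ and $s_j\cdots s_{i+1}\,s_i\cdots s_j$ are each readily checked --- by induction on $j-i$ via part (b), or by tracking the action on $\{1,\dots,n+1\}$ --- to represent $(i,j+1)$; since each has length $2(j-i)+1=\ell((i,j+1))$ it is reduced, and Theorem~\ref{thm:Tits}(b) converts each reduced word into the corresponding factorization $T_{(i,j+1)}=\Cx ij\Cxr i{(j-1)}=\Cx i{(j-1)}\Cxr ij=\Cxr ij\Cx{(i+1)}j=\Cxr{(i+1)}j\Cx ij$. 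The ${}^{op}$-invariance is then immediate from Lemma~\ref{lem:can image op inv}, as $(i,j+1)$ is an involution; alternatively, apply ${}^{op}$ (which reverses words) to one of the palindromic expressions just obtained.

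For parts (e) and (f) the key point I would isolate as a lemma is: if $\tau_1=(a,b)$ and $\tau_2=(c,d)$ are transpositions of $S_{n+1}$ with disjoint support that are non-crossing --- i.e.\ neither $a<c<b<d$ nor $c<a<d<b$ --- then $T_{\tau_1}T_{\tau_2}=T_{\tau_2}T_{\tau_1}$ in $\Br^+_n$. Indeed, disjoint supports give $\tau_1\tau_2=\tau_2\tau_1$ as permutations, and a direct inversion count in one-line notation (disjointness together with non-crossing is exactly what makes no inversion of $\tau_1$ an inversion of $\tau_2$) shows $\ell(\tau_1\tau_2)=\ell(\tau_1)+\ell(\tau_2)$, whence $T_{\tau_1}T_{\tau_2}=T_{\tau_1\tau_2}=T_{\tau_2\tau_1}=T_{\tau_2}T_{\tau_1}$ by Theorem~\ref{thm:Tits}(b). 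Part (f) is this lemma applied to $\tau_1=(i,j+1)$, $\tau_2=(k,l+1)$: each of ``$i<k$ and $l<j$'', ``$l<i-1$'' and ``$j+1<k$'' places $\{k,l+1\}$ either strictly nested inside $\{i,j+1\}$ or strictly separated from it. Part (e) is the special case $\tau_2=(k,k+1)=s_k$, the hypothesis $k\notin\{i-1,i,j,j+1\}$ being precisely the condition that $\{k,k+1\}\cap\{i,j+1\}=\emptyset$ in a non-crossing configuration; in the separated cases $k<i-1$ or $k>j+1$ one may instead just observe that $T_k$ commutes with every letter of a reduced word for $T_{(i,j+1)}$.

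For part (g), the permutation $\prod_{0\le k\le\frac12(j-i)}(i+k,j+1-k)$ is the reversal of the block $\{i,i+1,\dots,j+1\}$, that is $w_\circ^{[i,j]}$; the transpositions $(i+k,j+1-k)$ have pairwise disjoint support and are pairwise nested, hence non-crossing, so iterating the length-additivity above shows $\ell(w_\circ^{[i,j]})=\sum_k\ell((i+k,j+1-k))$, which one checks equals $\binom{j-i+2}{2}$. Repeated use of Theorem~\ref{thm:Tits}(b) then yields $T_{w_\circ^{[i,j]}}=\prod_{0\le k\le\frac12(j-i)}T_{(i+k,j+1-k)}$. I do not anticipate a genuine obstacle: the only real work is the inversion bookkeeping underlying the ``disjoint non-crossing transpositions commute'' lemma (which powers parts (e), (f), (g)) and pinning down the $S_{n+1}$-conventions so that the ascending and descending products $\Cx{}{}$, $\Cxr{}{}$ indeed match the asserted transpositions and cycles.
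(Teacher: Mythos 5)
Your proof is correct, and for parts (a), (b), (c) and (g) it follows essentially the same route as the paper: inversion count, the conjugation identities $(i,i+1)(i+1,j+1)(i,i+1)=(i,j+1)=(j,j+1)(i,j)(j,j+1)$ with length additivity, induction from (b), and the identity $w_\circ^{[i,j]}=\prod_k(i+k,j+1-k)$ together with $\ell(w_\circ^{[i,j]})=\binom{j-i+2}{2}=\sum_k\ell((i+k,j+1-k))$ and Theorem~\ref{thm:Tits}. The genuine difference is in (e)--(f): the paper handles the interior case $k\in[i+1,j-1]$ of (e) by a braid-word computation, namely Lemma~\ref{lem:comm cox} ($T_k\Cx ij=\Cx ij T_{k-1}$, $T_{k-1}\Cxr ij=\Cxr ij T_k$), writes $T_{(i,j+1)}=\Cx ij\Cxr i{(j-1)}$ and pushes $T_k$ through, then gets (f) as an immediate consequence of (e); you instead isolate a Coxeter-level lemma --- two disjoint, non-crossing transpositions have length-additive product in $S_{n+1}$, so their square-free lifts commute by Theorem~\ref{thm:Tits}(b) --- which yields (e) and (f) uniformly (and also feeds (g)). Your lemma is correct (the inversion count does add in both the nested and the separated configuration, and fails exactly in the crossing one), and it is arguably more conceptual since it never manipulates braid words; the paper's choice buys the explicit relation of Lemma~\ref{lem:comm cox}, which is reused later (e.g.\ in the proof of Lemma~\ref{lem:comm I}), so it is not redundant there. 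One small imprecision: in (g) you say you ``iterate'' the two-transposition lemma to get $\ell(w_\circ^{[i,j]})=\sum_k\ell((i+k,j+1-k))$; strictly, the pairwise lemma does not immediately give additivity for a product of several transpositions, but this is harmless because the direct computation you also cite, $\ell(w_\circ^{[i,j]})=\binom{j-i+2}{2}=\sum_k(2(j-i-2k)+1)$, is exactly what the paper uses and settles the point.
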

\begin{proof}
Since~$\ell((i,j+1))=2(j-i)+1$,
part~\ref{prop:elem prop transp.a} is obvious. Since
$(i,i+1)(i+1,j+1)(i,i+1)=(i,j+1)=(j,j+1)(i,j)(j,j+1)$
in~$S_{n+1}$ and
$\ell((i,j+1))=2+\ell((i+1,j+1))=2+\ell((i,j))$, part~\ref{prop:elem prop transp.b} follows. Part~\ref{prop:elem prop transp.c} follows from~\ref{prop:elem prop transp.b} by a straightforward induction.
The assertion of~\ref{prop:elem prop transp.e} is obvious
for $k\in [1,i-2]\cup[j+2,n]$. To prove it for $k\in[i+1,j-1]$, we
need the following
\begin{lemma}\label{lem:comm cox}
Let $i< j\in [1,n]$ and $k\in[i+1,j]$. Then $T_k\Cx ij=\Cx ij T_{k-1}$ and $T_{k-1}\Cxr ij=\Cxr ij T_k$.
\end{lemma}
\begin{proof}
We have
\begin{align*}
T_k \Cx ij&=\Cx i{(k-2)}T_k T_{k-1}T_k \Cx{(k+1)}j%
=\Cx i{(k-2)}T_{k-1}T_k T_{k-1}\Cx{(k+1)}j%
=\Cx ij T_{k-1}.
\end{align*}
The second identity is obtained from the first by applying~${}^{op}$.
\end{proof}
Thus, given $k\in [i+1,j-1]$ we have
$$
T_k T_{(i,j+1)}=T_k \Cx ij\Cxr i{(j-1)}=\Cx ij T_{k-1}\Cxr i{(j-1)}
=\Cx ij\Cxr i{(j-1)}T_k=T_{(i,j+1)}T_k.
$$
Part~\ref{prop:elem prop transp.f} is immediate from part~\ref{prop:elem prop transp.e}. Finally, since $w_\circ^{[i,j]}
=\prod_{0\le k\le \frac12(j-i)} (i+k,j+1-k)$ and
$\ell(w_\circ^{[i,j]})=\binom{j-i+2}2=\sum_{0\le k\le \frac12(j-i)}
(2(j-i-2k)+1)=\sum_{0\le k\le \frac12(j-i)}\ell((i+k,j+1-k))$,
part~\ref{prop:elem prop transp.g} follows.
\end{proof}

For $J=\{j_1 <j_2< \cdots <j_m\} \subset [1,n+1]$, set $$
\plink{TJ}T_J = \tilde \tau_1(J)\tilde \tau_0(J),
\qquad \tilde\tau_k(J)=\prod_{1\le r\le m\,:\, \bar{r}=k} T_{(j_r,j_{r+1})}
$$
By Proposition~\partref{prop:elem prop transp.e},
$\tilde\tau_k(J)$, $k\in\{0,1\}$ are well-defined. Note that
\begin{equation}\label{eq:len TJ}
\ell(T_J)=2\sum_{1\le r\le m-1} (j_{r+1}-j_r)-m+1=2(\max J-\min J)-|J|+1.
\end{equation}
We also set for $k\in\{0,1\}$\plink{tauk(J)}
\begin{equation}\label{eq:tau_i(J) defn}
\tau_k(J) = \prod_{1\le r\le m\,:\, \bar{r}=k} T_{w_\circ^{[j_r,j_{r+1}-1]}}.
\end{equation}
In particular, it follows from Proposition~\partref{prop:elem prop transp.g} that
\begin{equation}\label{eq:tilde tau tau}
\tau_k(J)=\tilde\tau_k(J) X_k(J),\qquad X_k(J)=\prod_{1\le r\le m\,:\,\bar r=k} T_{w_\circ^{[j_r+1,j_{r+1}-2]}},\qquad k\in\{0,1\}.
\end{equation}
Clearly, $X_1(J)$ and~$X_0(J)$ commute and also
commute with the~$\tilde\tau_k(J)$, $k\in\{0,1\}$.
The following Lemma is obvious
\begin{lemma}\label{lem:all disj TJ}
Up to renumbering of the generators, every fully supported disjoint standard homomorphism $\Psi:\Br^+(I_2(N))\to \Br^+_{n+1}$
satisfies $\Psi(\wh T_r)=\tau_{\bar r}(J)$, $r\in\{1,2\}$
for some~$\{1,n+1\}\subset J\subset [1,n+1]$;
\end{lemma}
The following Lemma is crucial for proving Theorem~\ref{thm:main thm adm}.
\begin{lemma}\label{lem:TJ hom}
The following are equivalent for~$J\subset[1,n+1]$
and~$m\ge 1$;
\begin{enmalph}
\item\label{lem:TJ hom.a}
$T_J^m$ is ${}^{op}$-invariant; 
\item\label{lem:TJ hom.b}
The assignments~$\wh T_k\mapsto \tilde\tau_{\bar k}(J)$, $k\in\{1,2\}$ define a homomorphism 
$\Br^+(I_2(2m))\to
\Br^+_{n+1}$;
\item\label{lem:TJ hom.c} The assignments 
$\wh T_k\mapsto \tau_{\bar k}(J)$, $k\in\{1,2\}$
define a homomorphism $\Br^+(I_2(2m))\to
\Br^+_{n+1}$.
\end{enmalph}
\end{lemma}
\begin{proof}
Assertions~\ref{lem:TJ hom.a} and~\ref{lem:TJ hom.b} 
are equivalent by Lemma~\ref{lem:I2m iff cnd}.
Suppose that the assignments in part~\ref{lem:TJ hom.b}
define~$\tilde\Phi\in\Hom_{\mathscr A}(I_2(2m),A_n)$. 
Then by Lemma~\ref{lem:cent decor}, $\mathbf z=(X_1(J),X_0(J))$ is a decoration of~$\tilde\Phi$
and then~$\tilde\Phi_{\mathbf z}(\wh T_r)=\tau_{\bar r}(J)$,
$r\in\{1,2\}$ by~\eqref{eq:tilde tau tau}. Finally, suppose that
the assignments in~\ref{lem:TJ hom.c} define~$\Phi\in\Hom_{\mathscr A}(I_2(2m),A_n)$. Extend it
to a homomorphism $\Br^+(I_2(2m))\to \Br_{n+1}$. Then
$\mathbf z^{-1}=(X_1(J)^{-1},X_0(J)^{-1})$ is a decoration of~$\Phi$ by Lemma~\ref{lem:cent decor}, and
$\Phi_{\mathbf z^{-1}}(\wh T_r)=\tilde\tau_{\bar r}(J)$,
$r\in\{1,2\}$.
\end{proof}

\subsection{Symmetries and conjugation}
By Lemma~\ref{lem:TJ hom}, to prove Theorem~\ref{thm:main thm adm}
we need to find a necessary and sufficient condition
for~$T_J^m$, $\{1,n+1\}\subset J\subset [1,n+1]$, $m\in\mathbb Z_{>0}$, to be~${}^{op}$-invariant.

Given $J\subset [1,n+1]$, let $\tilde\sigma(J)=\{n+2-j\,:\,j\in J\}$.
\begin{lemma}\label{lem:diag aut TJ}
Let $J\subset [1,n+1]$ and let $\sigma$ be the diagram automorphism
of~$\Br^+_{n+1}$. Then
$$
\sigma(T_J)=\begin{cases}
T_{\tilde\sigma(J)},& \text{$|J|$ is even},\\
T_{\tilde\sigma(J)}{}^{op},& \text{$|J|$ is odd}.
\end{cases}
$$
In particular, $T_J^m$ is ${}^{op}$-invariant if and only if
$T_{\tilde\sigma(J)}^m$ is ${}^{op}$-invariant.
\end{lemma}
\begin{proof}
Write $J=\{j_1,\dots,j_k\}$ where $j_1<j_2<\cdots<j_{k-1}<j_k$. Then $\tilde\sigma(J)=\{n+2-j_k,n+2-j_{k-1},\cdots,n+2-j_2,n+2-j_1\}$ and
$$
T_J=\prod_{1\le l\le k/2} T_{(j_{2l-1},j_{2l})} \prod_{1\le l\le (k-1)/2} T_{(j_{2l},j_{2l+1})}.
$$
Suppose first that~$k=2m$. Then
$$
T_J=\prod_{1\le l\le m} T_{(j_{2l-1},j_{2l})} \prod_{1\le l\le m-1} T_{(j_{2l},j_{2l+1})}
$$
Since $\sigma(T_{(a,b)})=T_{(n+2-b,n+2-a)}$
$$
\sigma(T_J)=\prod_{1\le l\le m} T_{(n+2-j_{2l},n+2-j_{2l-1})} \prod_{1\le l\le 2m-1} T_{(n+2-j_{2l+1},n+2-j_{2l})}=T_{\tilde\sigma(J)}.
$$
If~$k=2m+1$ then
\begin{align*}
\sigma(T_J)&=\prod_{1\le l\le m} \sigma(T_{(j_{2l-1},j_{2l})}) \prod_{1\le l\le m} \sigma(T_{(j_{2l},j_{2l+1})})\\&=
\prod_{1\le l\le m} T_{(n+2-j_{2l},n+2-j_{2l-1},j_{2l})} \prod_{1\le l\le m} T_{(n+2-j_{2l+1},n+2-j_{2l})}=T_{\tilde\sigma(J)}{}^{op}.\qedhere
\end{align*}
\end{proof}

Our next goal is to show that all the~$T_J$ with~$J$ of the same
cardinality are conjugate in~$\Br_{n+1}$ (eventually we will also see that the converse is true).
\begin{proposition}\label{prop:conj}
Let $J\subset [1,n+1]$ and let
$j\in J$ with $\min J<j<\max J$ and $j-1\notin J$. Then in~$\Br_{n+1}$
$$
T_{(J\setminus\{j\})\cup\{j-1\}}=T_{j-1}^{\epsilon} T_J T_{j-1}^{-\epsilon},
$$
where $\epsilon=(-1)^{|J\cap [1,j]|+1}$.
\end{proposition}
\begin{proof}
We need the following
\begin{lemma}\label{lem:conj}
Let $i,j,k\in [1,n+1]$ with $i<j-1$ and $j<k$. Then
in~$\Br_n$ we have
$$
T_{(j-1,k)}T_{(i,j-1)}=T_{j-1} T_{(j,k)}T_{(i,j)}T_{j-1}^{-1}.
$$
and
$$
T_{(i,j-1)}T_{(j-1,k)}=T_{j-1}^{-1} T_{(i,j)}T_{(j,k)}T_{j-1}.
$$
\end{lemma}
\begin{proof}
Using Proposition~\partref{prop:elem prop transp.b}
we obtain
$$
T_{(j-1,k)}T_{(i,j-1)}
=T_{j-1} T_{(j,k)}T_{j-1} T_{(i,j-1)}
=T_{j-1} T_{(j,k)} T_{(i,j)}T_{j-1}^{-1}.
$$
The second identity follows from the first by applying~${}^{op}$.
\end{proof}
Write $J=\{j_1,\dots,j_m\}$ where~$j_1<\cdots<j_m$ and
$j_k=j$. In particular, $|J\cap [1,j]|=k$.
Since~$\min J<j<\max J$, $2\le k\le m-1$ and so $j_{k-1}\le j-2$
as $j-1\notin J$.
Let~$J'=(J\setminus \{j\})\cup \{j-1\}$.
Suppose first that~$k$ is odd and so~$\epsilon=(-1)^{k+1}=1$. Then
$T_J=X T_{(j,j_{k+1})} T_{(j_{k-1},j)} X'$ and
$T_{J'}=X T_{(j-1,j_{k+1})} T_{(j_{k-1},j-1)} X'$ where
$$
X=\prod_{\substack{t\in[1,m]\setminus\{k\}\\\bar t=1}}
T_{(j_t,j_{t+1})},\qquad
X'=\prod_{\substack{t\in[1,m]\setminus\{k-1\}\\ \bar t=0}}
T_{(j_t,j_{t+1})}.
$$
By Proposition~\partref{prop:elem prop transp.e}, $T_{j-1}$ commutes with~$X$ and~$X'$. Then
by Lemma~\ref{lem:conj}
\begin{align*}
T_{J'}&=X T_{(j-1,j_{k+1})} T_{(j_{k-1},j-1)} X'\\
&=X T_{j-1}T_{(j,j_{k+1})} T_{(j_{k-1},j)}T_{j-1}^{-1} X'\\
&=T_{j-1}XT_{(j,j_{k+1})} T_{(j_{k-1},j)}X'T_{j-1}^{-1}=
T_{j-1} T_J T_{j-1}^{-1}.
\end{align*}
Similarly, if $k$ is even, $T_J=Y T_{(j_{k-1},j)}
T_{(j,j_{k+1})}Y'$ and~$T_{J'}=Y T_{(j_{k-1},j-1)}
T_{(j-1,j_{k+1})}Y'$ where
$$
Y=\prod_{\substack{t\in[1,m]\setminus\{k-1\}\\t\equiv 1\pmod 2}}
T_{(j_t,j_{t+1})},\qquad
Y'=\prod_{\substack{t\in[1,m]\setminus\{k\}\\ t\equiv 0\pmod 2}}
T_{(j_t,j_{t+1})}.
$$
In particular, $T_{j-1}$ commutes with~$Y$ and~$Y'$ and~$\epsilon=(-1)^{k+1}=-1$.
Using Lemma~\ref{lem:conj} we obtain
\begin{align*}
T_{J'}&=Y T_{(j_{k-1},j-1)} T_{(j-1,j_{k+1})} X'\\
&=YT_{j-1}^{-1} T_{(j_{k-1},j)} T_{(j,j_{k+1})}T_{j-1} Y'\\
&=T_{j-1}^{-1}YT_{(j_{k-1},j)} T_{(j,j_{k+1})}Y'T_J T_{j-1}.\qedhere
\end{align*}
\end{proof}

Given~$J\subset[1,n+1]$, denote \plink{gJ}$g(J)=|\{ j\in J\,:\,\min J<j<\max J,
\,j-1\notin J\}|$. For example, if $J=[1,n+1]$ then~$g(J)=0$
and $g([1,a]\cup[b+1,n+1])=1$ for all~$1\le a<b\le n$. Denote
$$
\plink{Cij(a)}\Cx ij^{(a)}=\ascprod_{i\le k\le j}T^a_k,\qquad \Cxr ij^{(a)}=\dscprod_{i\le k\le j}T^a_k,\qquad a\in\mathbb Z.
$$
\begin{corollary}\label{cor:conj J}
Let $J=\{j_0,\dots,j_{m+1}\}\subset [1,n+1]$ with $1=j_0<\cdots<j_{m+1}=n+1$. Then
$$
U(J) T_J U(J)^{-1} = T_{[1,m+1]\cup \{n+1\}}
$$
where\plink{U(J)}
\begin{equation}\label{eq:U(J) defn}
U(J)=\dscprod_{k\in[1,m]} \Cx{(k+1)}{(j_k-1)}^{((-1)^k)}.
\end{equation}
In particular, if $J,J'\subset [1,n+1]$ satisfy $|J|=|J'|$, $\min J=\min J'$ and~$\max J=\max J'$ then
$T_J$ and~$T_{J'}$ are conjugate in~$\Br_n$; thus,
all $\{1,n+1\}\subset J,J'\subset [1,n+1]$ with $|J|=|J'|$ are conjugate in~$\Br_n$.
\end{corollary}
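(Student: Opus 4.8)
The plan is to read off the conjugation identity from Proposition~\ref{prop:conj} by a telescoping procedure that slides the elements of $J$ leftwards into their packed positions while tracking the conjugating element, and then to deduce the last two assertions by passing to a parabolic subgroup.

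Write $J=\{1=j_0<j_1<\dots<j_m<j_{m+1}=n+1\}$. If $m=0$ there is nothing to prove, since $U(J)$ is then an empty product and $T_J=T_{[1,1]\cup\{n+1\}}$. For $m\ge 1$ I would proceed in $m$ successive phases. On entering phase $k$ (for $k=1,\dots,m$) the current index set will be $\{1,2,\dots,k\}\cup\{j_k,j_{k+1},\dots,j_m,n+1\}$, whose $(k+1)$-st smallest element is $j_k$, and phase $k$ brings that element down from $j_k$ to $k+1$ one unit at a time. To replace its value $j$ (with $k+2\le j\le j_k$) by $j-1$ I invoke Proposition~\ref{prop:conj}: its hypotheses hold because $1$ and $n+1$ remain the minimum and maximum of the current set and $j-1\notin\{1,\dots,k\}$ (as $j\ge k+2$) while $j-1<j_{k+1}$; it contributes the left conjugator $T_{j-1}^{\epsilon}$ with $\epsilon=(-1)^{|J\cap[1,j]|+1}=(-1)^{(k+1)+1}=(-1)^{k}$, a sign which is \emph{constant} throughout the phase. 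Composing the steps of phase $k$ therefore conjugates the current element out by $C_k:=T_{k+1}^{(-1)^k}T_{k+2}^{(-1)^k}\cdots T_{j_k-1}^{(-1)^k}=\Cx{(k+1)}{(j_k-1)}^{((-1)^k)}$ on the left (and $C_k^{-1}$ on the right), with the convention that $C_k=1$ when $j_k=k+1$.

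After phase $m$ the index set equals $\{1,\dots,m+1,n+1\}=[1,m+1]\cup\{n+1\}$, and since phase $k$ is carried out ``outside'' phases $1,\dots,k-1$, the total conjugator is $C_mC_{m-1}\cdots C_1=\dscprod_{k\in[1,m]}\Cx{(k+1)}{(j_k-1)}^{((-1)^k)}=U(J)$; this is exactly the claimed identity $U(J)T_JU(J)^{-1}=T_{[1,m+1]\cup\{n+1\}}$. For the ``in particular'' part I would set $a=\min J=\min J'$ and $b=\max J=\max J'$: only the generators $T_a,\dots,T_{b-1}$ occur in $T_J$ and $T_{J'}$, so both lie in the parabolic subgroup $\Br_{[a,b-1]}(A_n)\cong\Br(A_{b-a})$, and applying the identity just proved inside that parabolic (after the order-preserving relabeling $[a,b]\to[1,b-a+1]$ of the point set) exhibits $T_J$ and $T_{J'}$ as both conjugate in $\Br_n$ to $T_{\{a,a+1,\dots,a+|J|-2,b\}}$, hence to each other; the final claim is the case $a=1$, $b=n+1$. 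I expect the only delicate point to be the sign bookkeeping --- confirming that the exponent produced by Proposition~\ref{prop:conj} stays constant within each phase and that the phases reassemble into the alternating pattern $(-1)^k$ built into $U(J)$ --- while the hypothesis checks and the telescoping itself are routine.
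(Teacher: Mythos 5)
Your argument is correct and is essentially the paper's proof: the paper also reduces to iterated applications of Proposition~\ref{prop:conj}, closing one gap at a time with conjugator $U_k(J)=\Cx{(k+1)}{(j_k-1)}^{((-1)^k)}$ and the same sign computation $\epsilon=(-1)^{|J^{\mathrm{cur}}\cap[1,j]|+1}=(-1)^k$, merely organizing the telescoping as an induction on $g(J)$ rather than as explicit phases. Your parabolic-relabeling argument for the ``in particular'' statement matches what the paper leaves implicit.
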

\begin{proof}
Abbreviate~$J_m=[1,m+1]\cup\{n+1\}$ and $U_k(J)=\Cx{(k+1)}{(j_k-1)}^{((-1)^k)}$.
The argument is by induction on $g(J)$. Note that
$g(J)=|\{k\in[1,m]\,:\, j_{k-1}<j_k-1\}|$.
If $g(J)=0$ then $J=J_{n}$ and~$U(J)=1$.

For the inductive step, let $k>0$ be minimal such that $j_{k-1}<
j_k-1$. Then $j_s=s+1$ for all $0\le s<k$ and so
$U_s(J)=1$ for all~$1\le s<k$. By Proposition~\ref{prop:conj}, $U_k(J) T_J U_k(J)^{-1}
=T_{J'}$ where~$J'=J\setminus \{j_k\}\cup \{k+1\}$. Since
$g(J')=g(J)-1$, $U(J')T_{J'}U(J')^{-1}=T_{J_m}$ by the induction
hypothesis. It remains to observe that
$U_s(J')=1$ for all~$1\le s\le k$ and $U_s(J)=U_s(J')$ for all~$k+1\le s\le m$, whence $U(J')U_k(J)=U(J)$.
\end{proof}
\begin{corollary}\label{cor:TJ coxeter}
Let $J\subset [1,n+1]$. Then~$\pi_n(T_J)\in S_{n+1}$ is a cycle of length~$|J|$ and, in particular,
has order~$|J|$. Moreover, if $T_J^N$ is ${}^{op}$-invariant then
$|J|$ divides~$2N$.
\end{corollary}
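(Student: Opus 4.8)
The plan is to first identify $\pi_n(T_J)$ as a permutation of $[1,n+1]=\{1,\dots,n+1\}$ and then deduce the divisibility statement. Write $J=\{j_1<j_2<\cdots<j_m\}$. Since $T_J=\tilde\tau_1(J)\tilde\tau_0(J)$ is a product of the transposition elements $T_{(j_r,j_{r+1})}$, $1\le r\le m-1$, and $\pi_n(T_{(a,b)})=(a,b)$ by definition of $T_{(a,b)}$, applying the homomorphism $\pi_n$ gives $\pi_n(T_J)=\bigl(\prod_{\bar r=1}(j_r,j_{r+1})\bigr)\bigl(\prod_{\bar r=0}(j_r,j_{r+1})\bigr)$ in $S_{n+1}$. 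The transpositions appearing here are exactly the "consecutive" transpositions $(j_1,j_2),(j_2,j_3),\dots,(j_{m-1},j_m)$ along the chain, with those of odd index collected to the left and those of even index to the right; this is precisely the standard factorization of the $m$-cycle $(j_1\,j_2\,\cdots\,j_m)$ (or its inverse, depending on the parity convention—either way a cycle of length $m=|J|$). Thus $\pi_n(T_J)$ is an $|J|$-cycle and has order exactly $|J|$. Alternatively, and perhaps cleaner, I would invoke Corollary~\ref{cor:conj J}: all $T_{J'}$ with $\{1,n+1\}\subset J'\subset[1,n+1]$ and $|J'|=|J|$ are conjugate in $\Br_n$, so after conjugation one may assume $J'=[1,m+1]\cup\{n+1\}$, for which the cycle structure of $\pi_n(T_{J'})$ is immediate by inspection; and the general case reduces to this one since for arbitrary $J$ one first applies the diagram automorphism $\sigma$ and Proposition~\partref{prop:elem prop transp} to relate $T_J$ to a $T_{J'}$ with $\min J'=1$, $\max J'=n+1$ (or handles $\min J>1$, $\max J<n+1$ by the obvious embedding into a smaller $\Br^+_{[\min J,\,\max J-1]}$, noting $\ell$ and $\pi_n$ are compatible with this embedding).

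For the divisibility claim, suppose $T_J^N$ is ${}^{op}$-invariant. Then $\pi_n(T_J^N)=\pi_n(T_J)^N$ is an involution in $S_{n+1}$ by Lemma~\ref{lem:can image op inv} (since $\pi_n=\pi_{A_n}$ and the canonical anti-involution on $W(A_n)$ is $w\mapsto w^{-1}$). So $\pi_n(T_J)^N$ has order $1$ or $2$, hence $\pi_n(T_J)^{2N}=1$, and since $\pi_n(T_J)$ has order exactly $|J|$ this forces $|J|\mid 2N$.

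I expect the only mildly delicate point to be the bookkeeping in identifying $\pi_n(T_J)$ as a single cycle from the split product $\tilde\tau_1(J)\tilde\tau_0(J)$—one must check that interleaving the odd-indexed and even-indexed consecutive transpositions along the chain $j_1,\dots,j_m$ really does compose to an $m$-cycle and not to something with more fixed points or a different cycle type. This is a standard fact about "zig-zag" products of adjacent transpositions and can be verified by a direct induction on $m$, or sidestepped entirely by the conjugation argument via Corollary~\ref{cor:conj J} together with the trivial base case $J=[1,m+1]\cup\{n+1\}$. Everything else is routine: $\pi_n$ is a homomorphism, the anti-involution statement is Lemma~\ref{lem:can image op inv}, and order-of-a-cycle facts in $S_{n+1}$ are elementary.
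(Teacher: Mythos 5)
Your proposal is correct and, in its preferred form (conjugating to $J=[1,m+1]\cup\{n+1\}$ via Corollary~\ref{cor:conj J}, reading off the cycle there, and then using Lemma~\ref{lem:can image op inv} to get $\pi_n(T_J)^{2N}=1$ and hence $|J|\mid 2N$), is essentially the paper's own proof. The alternative direct argument you sketch is also sound, since the interleaved product of the consecutive transpositions is just a Coxeter element of the symmetric group on $J$ and hence an $|J|$-cycle, but it is not needed.
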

\begin{proof}
By Corollary~\ref{cor:conj J} it suffices to prove the first assertion for~$J=J_m=[1,m+1]\cup\{n+1\}$, $1\le m\le n-2$.
It is easy to check that
$$
\pi_n(T_{J_m})=\begin{cases}
(1,2,4,\dots,m-1,m+1,n+1,m,m-2,\dots,3),&\text{$m$ is odd}\\
(1,2,4,\dots,m-2,m,n+1,m+1,m-1,\dots,3),&\text{$m$ is even}.
\end{cases}
$$
In either case, $\pi_n(T_{J_m})$ is a cycle of length~$m+2=|J_m|$.
By Lemma~\ref{lem:can image op inv}, if~$T_J^N$ is ${}^{op}$-invariant then
$\pi_n(T_J^N)=\pi_n(T_J)^N$ is an involution. Thus, $\pi_n(T_J)^{2N}=1$ and so the order of~$\pi_n(T_J)$ divides~$2N$.
\end{proof}

\subsection{Forward direction}\label{subs:forward direction}
Our present aim is to establish the forward direction of Theorem~\ref{thm:main thm adm}.
\begin{theorem}\label{thm:adm I2m}
For any~$\{1,n+1\}\subset J\subset [1,n+1]$ with~$g(J)=1$,
the assignments $\wh T_r\mapsto \tau_{\overline r}(J)$, $r\in\{1,2\}$
define an optimal fully supported disjoint standard homomorphism
$\Br^+(I_2(2m))\to \Br^+_{n+1}$  where
$$
m=m(J)=\begin{cases}
|J|/2,& J=\tilde\sigma(J),\\
|J|,& \text{otherwise}.
\end{cases}
$$
\end{theorem}
\begin{proof} Since~$g(J)=1$, we can write $J=[1,a]\cup [b+1,n+1]$ where
$1\le a<b\le n-1$.

Suppose first that $J=\tilde\sigma(J)=\{ n+2-j\,:\,j\in J\}$. This forces $b=n+1-a$.
Applying one of the unfolding homomorphisms~\eqref{eq:unfold Bn A2n-1} or~\eqref{eq:unfold Bn A2n}, depending on the parity of~$n$, to~\eqref{eq:prep I2m Bn}
we obtain in~$\Br_{n+1}$
$$
(T_{w_\circ^{[1,a-1]_2\cup \sigma([1,a-1]_2)}} T_{w_\circ^{[1,a-2]_2\cup
\sigma([1,a-2]_2)}}T_{(a,n+2-a)})^{a} = T_{w_\circ^{[1,n]}} T_{w_\circ^{[a+1,n-a]}}^{-1}
$$
with~$\sigma(i)=n+1-i$, $1\le i\le n$,
which immediately yields the following
\begin{corollary}\label{cor:symm even J}
Let~$\tilde J_m=[1,m]\cup \tilde\sigma([1,m])$ with $2m<n+1$. Then~$|\tilde J_m|=2m$ and
$T_{\tilde J_m}^m=T_{w_\circ^{[1,n]}} T_{w_\circ^{[m+1,n-m]}}^{-1}$ in~$\Br_{n+1}$.
\end{corollary}
Since $T_{w_\circ^{[m+1,n-m]}}$ commutes with~$T_{w_\circ^{[1,n]}}$
and both are~${}^{op}$-invariant by Proposition~\ref{prop:fund elts BrSa}
\ref{prop:fund elts BrSa.a}, by Lemma~\ref{lem:TJ hom}
this establishes the first case in Theorem~\ref{thm:adm I2m}.

To establish the second case, we need to prove the following
\begin{proposition}\label{prop:g J=1}
Let $J=[1,a]\cup[b+1,n+1]$, $1\le a<b\le n$. Then~$T_J^{|J|}=T_{w_\circ^{[1,n]}}^2 T_{w_\circ^{[a+1,b-1]}}^{-2}$ in~$\Br_n$.
\end{proposition}

The following Lemma allows one to reduce Proposition~\ref{prop:g J=1}
to a specially chosen~$J$.
\begin{lemma}\label{lem:move bubble}
Let  $[a,b],[a',b']\subset [1,n+1]\setminus\{1,n+1\}$ with~$b-a=b'-a'>1$.
Let~$J=[1,n+1]\setminus[a,b]$, $J'=[1,n+1]\setminus [a',b']$. Then
$T_J^{N}=T_{w_\circ^{[1,n]}}^2
T_{w_\circ^{[a,b-1]}}^{-2}$ for some~$N\ge 1$ if and only if
$T_{J'}^{N}=T_{w_\circ^{[1,n]}}^2 T_{w_\circ^{[a',b'-1]}}^{-2}$.
\end{lemma}
\begin{proof}
It suffices to prove the Lemma when $a'=a+1$.  We have, by Proposition~\ref{prop:conj},
$$
T_{J'}=\begin{cases}
\Cx ab T_J \Cx ab^{-1},&\text{$a$ is even}\\
\Cxr ab^{-1} T_J \Cxr ab,&\text{$a$ is odd}.
\end{cases}
$$
Since
$$
\Cx ab T_{w_\circ^{[a,b-1]}}^2=T_{w_\circ^{[a,b]}} T_{w_\circ^{[a,b-1]}}
=T_{w_\circ^{[a+1,b]}} T_{w_\circ^{[a,b]}}
=T_{w_\circ^{[a+1,b]}}^2 \Cx ab,
$$
it follows that
$$
\Cx ab T_{w_\circ^{[a,b-1]}}^{-2} \Cx ab^{-1}=T_{w_\circ^{[a+1,b]}}^{-2}
=\Cxr ab^{-1} T_{w_\circ^{[a,b-1]}}^{-2} \Cxr ab,
$$
where the second equality is obtained from the first by applying~${}^{op}$.
Since~$T_{w_\circ^{[1,n]}}^2$ is central in~$\Br^+_{n+1}$ by Proposition~\partref{prop:fund elts BrSa.d}, the assertion
is now immediate.
\end{proof}

Suppose first that~$|J|$ is even with~$|J|=2m$. By Corollary~\ref{cor:symm even J}, $T_{\tilde J_m}^{2m}=T_{w_\circ^{[1,n]}}^2 T_{w_\circ^{[m+1,n-1]}}^{-2}$. Thus, Proposition~\ref{prop:g J=1} for~$|J|$ even is proved.

We will now obtain a convenient expression for
$T_{w_\circ^{[1,n]}}^2 T_{w_\circ^{[r+1,n-1]}}^{-2}$ for
$1\le r\le n$.
For that we need more properties of the~$T_{(a,b+1)}$, $a\le b\in[1,n]$.
\begin{lemma}\label{lem:transp 1}
For any $2\le a\le b$,
$
T_{(a,b+1)}T_{(a+1,b+1)}T_{a-1}T_a T_{(a+1,b+1)}
$
is ${}^{op}$-invariant.
\end{lemma}
\begin{remark}
It is easy to check that for~$b>a$ the canonical image of the above element of~$\Br^+_{n+1}$ in~$W(A_n)$ is
$(a-1,b+1)$ which is of length~$2(b-a)+3$
while the original element of~$\Br^+_{n+1}$
has length~$6(b-a)+1$. 
\end{remark}
\begin{proof}
Let $X=T_{(a,b+1)}T_{(a+1,b+1)}T_{a-1}T_a T_{(a+1,b+1)}$.
For~$b=a$ we have $X=T_a T_{a-1} T_a$ which is manifestly
${}^{op}$-invariant. Thus, we may assume, without loss of generality, that $b>a$.

For simplicity, let $a=2$. Thus, we claim that
$$
T_{(2,b+1)}T_{(3,b+1)}T_1 T_2 T_{(3,b+1)}=T_{(3,b+1)}
T_2 T_1 T_{(3,b+1)}T_{(2,b+1)}.
$$
Since $T_{(3,b+1)}=\Cx 3{(b-1)}T_b \Cxr3{(b-1)}$,
and the $T_j$ with $i<j<k-1$ commute with $T_{(i,k)}$ by Proposition~\partref{prop:elem prop transp.e}, we have
$$
X=\Cx3{(b-1)}T_{(2,b+1)}\Cxr3b \Cx1b\Cxr3{(b-1)}
$$
while
$$
X^{op}=\Cx 3{(b-1)} \Cxr1b \Cx3b T_{(2,b+1)}\Cxr3{(b-1)}.
$$
Since~$\Br^+_{n+1}$ is cancellative, it suffices to prove that
$$
T_{(2,b+1)}\Cxr3b \Cx1b=
\Cxr1b \Cx3b T_{(2,b+1)}.
$$
Since $T_{(2,b+1)}=\Cxr2b\Cx3{b}$ and $\Cxr1b=\Cxr2b T_1$, the above equality
follows once we establish that
\begin{equation}\label{eq:interm transp 1}
\Cx3b\Cxr3b \Cx1b=T_1\Cx3b T_{(2,b+1)}.
\end{equation}
But the left hand side of~\eqref{eq:interm transp 1} is equal to
\begin{equation*}
\Cx3b\Cxr3b \Cx1b=\Cx3b\Cxr3b T_1\Cx2b=T_1\Cx3b\Cxr3b\Cx2b
=T_1 \Cx3bT_{(2,b+1)}.\qedhere
\end{equation*}
\end{proof}
\begin{lemma}\label{lem:transp 2}
For all $2\le a<b\le n$ we have
$$
T_{(a,b+1)} T_{a-1} T_{(a,b+1)}T_{(a+1,b+1)}T_a= T_{(a-1,b+1)}T_{(a,b+1)}T_{(a+1,b+1)}
$$
\end{lemma}
\begin{proof}
Since
$$
T_{(a,b+1)} T_{a-1}=\Cxr{a}b \Cx{(a+1)}b T_{a-1}=
\Cxr{(a-1)}b \Cx{(a+1)}{b}.
$$
and~$\Br^+_{n+1}$ is cancellative, the assertion is equivalent to
$$
\Cx{(a+1)}b T_{(a,b+1)}T_{(a+1,b+1)}T_a=
\Cx{a}b T_{(a,b+1)}T_{(a+1,b+1)}.
$$
Now,
\begin{align*}
\Cx{a}{b}T_{(a,b+1)}=\Cx{a} b T_a T_{(a+1,b+1)} T_a 
&=T_a T_{a+1} T_a \Cx{(a+2)}b T_{(a+1,b+1)} T_a\\
&=T_{a+1}\Cx ab T_{(a+1,b+1)}T_a.
\end{align*}
Suppose we proved that
\begin{equation}\label{eq:eq transp 2}
\Cx{a}{b}T_{(a,b+1)}=\Cx{(a+1)}k \Cx ab T_{(k,b+1)}\Cxr{a}{(k-1)}.
\end{equation}
for some~$k> a$ (the case~$k=a+1$ was established above).
Then
\begin{align*}
\Cx{a}{b}T_{(a,b+1)}&=\Cx{(a+1)}k \Cx ab T_k T_{(k+1,b+1)}\Cxr{a}{k}\\
&=\Cx{(a+1)}{k} \Cx a{(k-1)}(T_k T_{k+1} T_k)\Cx{(k+2)}b T_{(k+1,b+1)}\Cxr{a}{k}\\
&=\Cx{(a+1)}{k} \Cx a{(k-1)}(T_{k+1} T_{k} T_{k+1})
\Cx{(k+2)}bT_{(k+1,b+1)}\Cxr{a}{k}\\
&=\Cx{(a+1)}{(k+1)} \Cx abT_{(k+1,b+1)}\Cxr{a}{k}.
\end{align*}
Thus, \eqref{eq:eq transp 2} holds for all~$k\ge a+1$. In particular,
for~$k=b$ we obtain
$$
\Cx{a}{b}T_{(a,b+1)}=\Cx{(a+1)}{b} \Cx abT_{(b,b+1)}\Cxr{a}{b-1}
=\Cx{(a+1)}{b} \Cx ab\Cxr{a}{b}.
$$
Therefore, it suffices to prove that
$$
T_{(a,b+1)}T_{(a+1,b+1)}T_a=\Cx ab\Cxr{a}{b}T_{(a+1,b+1)}
$$
which, since $T_{(a,b+1)}=\Cx ab\Cxr a{(b-1)}$, is equivalent to
$$
\Cxr{a}{(b-1)}T_{(a+1,b+1)}T_a=\Cxr{a}{b}T_{(a+1,b+1)}.
$$
By Proposition~\ref{prop:elem prop transp} we obtain
\begin{align*}
\Cxr{a}{(b-1)}&T_{(a+1,b+1)}T_a=\Cxr{(a+1)}{(b-1)}T_a T_{(a+1,b+1)}T_a=\Cxr{(a+1)}{(b-1)}T_{(a,b+1)}\\
&=T_{(a,b+1)}\Cxr{(a+1)}{(b-1)}
=\Cxr ab\Cx{(a+1)}b \Cxr{(a+1)}{(b-1)}%
=\Cxr ab T_{(a+1,b+1)}.\qedhere
\end{align*}
\end{proof}

\begin{lemma}\label{lem:cox transp exchage}We have for all $r\in[1,n-1]$
\begin{equation}\label{eq:cox transp exchange}
\Big(\ascprod_{i\in[1,r+1]} T_{(i,n+1)}\Big) \Cx1{r}=\Cx1r
T_{(r+1,n+1)}\Big(\ascprod_{i\in[1,r]} T_{(i,n+1)}\Big)
\end{equation}
\end{lemma}
\begin{proof}
Suppose we proved that for some~$i\in[1,r]$
\begin{equation}\label{eq:cox transp exchange'}
\ascprod_{j\in[1,r+1]} T_{(j,n+1)}\Cx1r=
\Cx1{(i-1)} T_{(i,n+1)}\ascprod_{j\in[1,r+1]\setminus\{i\}} T_{(j,n+1)}\Cx ir
\end{equation}
(for~$i=1$ this is trivial). Then
\begin{align*}
\ascprod_{j\in[1,r+1]} T_{(j,n+1)}\Cx1r&=\Cx1{i} T_{(i+1,n+1)}T_i\ascprod_{j\in[1,r+1]\setminus\{i\}} T_{(j,n+1)}\Cx ir\\
&=\Cx1{i} T_{(i+1,n+1)}\ascprod_{j\in[1,i-1]}T_{(j,n+1)} T_i T_{(i+1,n+1)}T_i \ascprod_{j\in[i+2,r+1]} T_{(j,n+1)}\Cx{(i+1)} r\\
&=\Cx1{i} T_{(i+1,n+1)}\ascprod_{j\in[1,r+1]\setminus \{i+1\}} T_{(j,n+1)} \Cx{(i+1)} r.
\end{align*}
Thus, \eqref{eq:cox transp exchange'} holds for all~$i\in[1,r+1]$. But for~$i=r+1$ this is precisely~\eqref{eq:cox transp exchange}.
\end{proof}
Denote
\begin{equation}\label{eq:Z_r defn}
Z_r:=\Big(\ascprod_{i\in[1,r+1]} T_{(i,n+1)}\Big) \Cx1{r}T_{(r+1,n+1)}=\Cx1r
T_{(r+1,n+1)}\Big(\ascprod_{i\in[1,r+1]} T_{(i,n+1)}\Big),
\end{equation}
the equality following from~\eqref{eq:cox transp exchange}.
\begin{proposition}\label{prop:Z_r is central}
For all $r\in[1,n-2]$ we have
$$
Z_r T_{w_\circ^{[r+2,n-1]}}^2=T_{w_\circ^{[1,n]}}^2.
$$
\end{proposition}
\begin{proof}
We have
$$
\ell(Z_r)=\sum_{1\le i\le r+1}(2(n-i)+1)+2n-r-1=(r+2)(2n-r-1).
$$
and so
$$
\ell(Z_r T_{w_\circ^{[r+2,n-1]}}^2)=(n-r-1)(n-r-2)+(r+2)(2n-r-1)=n(n+1)
=\ell(T_{w_\circ^{[1,n]}}^2).
$$
Since~$T_{w_\circ^{[1,n]}}^2$ generates the center of~$\Br^+_{n+1}$
(cf. Proposition~\partref{prop:fund elts BrSa.d}),
it suffices to prove that $Z_r T_{w_\circ^{[r+2,n-1]}}^2$
is central in~$\Br^+_{n+1}$.
\begin{lemma}\label{lem:comm I}
We have $T_i Z_r=Z_r T_i$ for all $i\in [1,n]\setminus \{r+1,n\}$.
\end{lemma}
\begin{proof}
By Proposition~\partref{prop:elem prop transp.c}, the $T_j$, $j\in [r+2,n-1]$ commute with the $T_{(i,n+1)}$,
$i\in[1,r+1]$. It follows that
$T_i Z_r=Z_r T_i$ for all~$i\in [r+2,n-1]$.

Since~$\Br^+_{n+1}$ embeds into~$\Br_{n+1}$ (cf.~\cites{BrSa,Del,Par}),
it suffices to prove that~$T_i^{-1} Z_r T_i=Z_r$ for all~$i\in [1,r]$.

Let~$i\in [1,r-1]$. By Proposition~\ref{prop:elem prop transp}, $T_i^{-1}$ commutes with
the $T_{(j,n+1)}$ for all $1\le j\le i-1$ and $T_i^{-1} T_{(i,n+1)}=T_{(i+1,n+1)}T_i$. Therefore, %
\begin{multline*}
T_i^{-1} Z_r T_i = \Big(\ascprod_{j\in[1,i-1]} T_{(j,n+1)} \Big) T_{(i+1,n+1)}T_i T_{(i+1,n+1)}T_{(i+2,n+1)}\\
\Big(\ascprod_{j\in[i+3,r+1]} T_{(j,n+1)}\Big)\Cx1r T_i T_{(r+1,n+1)}.
\end{multline*}
Since $\Cx1r T_i=T_{i+1}\Cx1r$ by Lemma~\ref{lem:comm cox}, we obtain
\begin{align*}
T_i^{-1} &Z_r T_i = \Big(\ascprod_{j\in[1,i-1]} T_{(j,n+1)} \Big) T_{(i+1,n+1)}T_i T_{(i+1,n+1)}T_{(i+2,n+1)}T_{i+1}\\
&\mskip200mu
\Big(\ascprod_{j\in[i+3,r+1]} T_{(j,n+1)}\Big)\Cx1r T_{(r+1,n+1)}\\
&= \Big(\ascprod_{j\in[1,i-1]} T_{(j,n+1)} \Big) T_{(i,n+1)}T_{(i+1,n+1)}T_{(i+2,n+1)}
\Big(\ascprod_{j\in[i+3,r+1]} T_{(j,n+1)}\Big)\Cx1r T_{(r+1,n+1)}\\
&=Z_r,
\end{align*}
where we used Lemma~\ref{lem:transp 2}.

Let~$i=r$. Then
\begin{align*}
T_r^{-1} Z_r T_r&=\ascprod_{i\in[1,r-1]} T_{(i,n+1)} T_{(r+1,n+1)}T_r T_{(r+1,n+1)} \Cx1r T_{(r+1,n+1)} T_r\\
&=\ascprod_{i\in[1,r-1]}T_{(i,n+1)} T_{(r+1,n+1)}T_r T_{(r+1,n+1)} \Cx1{(r-1)} T_{(r,n+1)}\\
&=\ascprod_{i\in[1,r-1]}T_{(i,n+1)} \Cx1{(r-2)} T_{(r+1,n+1)}T_r T_{r-1} T_{(r+1,n+1)} T_{(r,n+1)}\\
&=\ascprod_{i\in[1,r-1]}T_{(i,n+1)} \Cx1{(r-2)} T_{(r,n+1)} T_{(r+1,n+1)}T_{r-1} T_r T_{(r+1,n+1)}\\
&=\ascprod_{i\in[1,r+1]}T_{(i,n+1)} \Cx1r T_{(r+1,n+1)}=Z_r,
\end{align*}
by Lemma~\ref{lem:transp 1}.
\end{proof}
\begin{lemma}\label{lem:comm II}
We have $T_i Z_r T_{(r+2,n)}^2=Z_r T_{(r+2,n)}^2 T_i$ for $i\in\{r+1,n\}$.
\end{lemma}
\begin{proof}
Since $T_{r+1}$ and~$T_n$ commute with $T_{(a,b)}$ for all $r+2<a<b<n-1$, $T_{w_\circ}^{[r+1,n]}=\prod_{1\le i\le \frac12(n-r)+1} T_{(r+i,n+2-i)}$ by Proposition~\ref{prop:elem prop transp} while $T_j T_{w_\circ^{[r+1,n]}}=T_{w_\circ^{[r+1,n]}}
T_{n+r+1-j}$ for all $j\in [r+1,n]$,
we conclude that
$$
T_{r+1} T_{(r+1,n+1)}T_{(r+2,n)}=T_{(r+1,n+1)}T_{(r+2,n)} T_n
$$
and
$$
T_n T_{(r+1,n+1)}T_{(r+2,n)}=T_{(r+1,n+1)}T_{(r+2,n)} T_{r+1}.
$$
Furthermore,
note that by Proposition~\partref{prop:elem prop transp.f}, $T_{(r+2,n)}$ commutes with all factors in
the formula defining~$Z_r$.

We have
\begin{align*}
T_{r+1} Z_r T_{(r+2,n)}^2 &=\ascprod_{i\in[1,r]} T_{(i,n+1)} T_{r+1} T_{(r+1,n+1)}T_{(r+2,n)} \Cx1r  T_{(r+1,n+1)}T_{(r+2,n)}\\
&=\ascprod_{i\in[1,r]} T_{(i,n+1)} T_{(r+1,n+1)}T_{(r+2,n)} T_n\Cx1r  T_{(r+1,n+1)}T_{(r+2,n)}\\
&=\ascprod_{i\in[1,r]} T_{(i,n+1)} T_{(r+1,n+1)}T_{(r+2,n)} \Cx1r T_n  T_{(r+1,n+1)}T_{(r+2,n)}\\
&=Z_r T_{(r+2,n)}^2 T_{r+1},
\\
\intertext{and}
T_{n} Z_r T_{(r+2,n)}^2 &=\Cx1r T_n T_{(r+1,n+1)}T_{(r+2,n)}\ascprod_{i\in[1,r]} T_{(i,n+1)} T_{(r+1,n+1)}T_{(r+2,n)} \\
&=\Cx1r T_{(r+1,n+1)}T_{(r+2,n)}T_{r+1}\ascprod_{i\in[1,r]} T_{(i,n+1)} T_{(r+1,n+1)}T_{(r+2,n)} \\
&=\Cx1r T_{(r+1,n+1)}T_{(r+2,n)}\ascprod_{i\in[1,r]}
T_{(i,n+1)} T_{r+1} T_{(r+1,n+1)}T_{(r+2,n)}\\
&=\Cx1r T_{(r+1,n+1)}T_{(r+2,n)}\ascprod_{i\in[1,r]}
T_{(i,n+1)} T_{(r+1,n+1)}T_{(m+2,n)}T_n\\
&=Z_r T_{(r+2,n)}^2 T_n.\qedhere
\end{align*}
\end{proof}
Together, Lemmata~\ref{lem:comm I} and~\ref{lem:comm II} imply that
$Z_r T_{w_\circ^{[r+2,n-1]}}^2$ is central
in~$\Br^+_{n}$, which completes the proof of Proposition~\ref{prop:Z_r is central}.
\end{proof}
\begin{proposition}\label{prop:TJm power central}
For all~$m\in [1,n-2]$,
$T_{[1,m+1]\cup\{n+1\}}^{m+2}=Z_m$.
\end{proposition}
\begin{proof}
The assertion follows immediately from Proposition~\ref{prop:Z_r is central} for $m$ even. Throughout the rest of this proof, we assume that~$m$ is odd. Let~$J_m=[1,m+1]\cup\{n+1\}$.
\begin{lemma}\label{lem:1st identity}
We have
$$
T_{J_m}^{m+1}=\dscprod_{j\in [1,m]_2} T_{(j,n+1)}\ascprod_{j\in [2,m+1]_2} T_{(j,n+1)}.
$$
\end{lemma}
\begin{proof}
First we prove by descending induction that for all $k\in [1,m]_2$,
\begin{equation}\label{eq:1st rewrite}
\begin{split}
T_{J_m}^{m+1}=\dscprod_{i\in [k,m]_2} &(\Cx im T_{(m+1,n+1)}) \Big(\ascprod_{j\in[k,m]_2} T_{w_\circ^{[1,j-2]_2}} T_{w_\circ^{[2,j-1]_2}}\Big)\\
&(T_{w_\circ^{[1,m]_2}}T_{w_\circ^{[2,m-1]_2}}T_{(m+1,n+1)})^{\frac{m+k}2}.
\end{split}
\end{equation}
Indeed, since the $T_i$, $i\in [1,m-1]$ commute with $T_{(m+1,n+1)}$
by Proposition~\partref{prop:elem prop transp.e}, we have
$$
T_{J_m}^{m+1}=T_m T_{(m+1,n+1)} (T_{w_\circ^{[1,m-2]_2}}T_{w_\circ^{[2,m-1]_2}}) (T_{w_\circ^{[1,m]_2}}T_{w_\circ^{[2,m-1]_2}}T_{(m+1,n+1)})^{m},
$$
which is~\eqref{eq:1st rewrite} with~$k=m$.

For the inductive step we have
\begin{align*}
T_{J_m}^{m+1}&=\dscprod_{i\in [k,m]_2} (\Cx im T_{(m+1,n+1)}) \Big(\ascprod_{j\in[k,m]_2} T_{w_\circ^{[1,j-2]_2}} T_{w_\circ^{[2,j-1]_2}}\Big)\\
&\mskip200mu(T_{w_\circ^{[1,m]_2}}T_{w_\circ^{[2,m-1]_2}} T_{(m+1,n+1)})^{\frac{m+k}2} \\
&=\dscprod_{i\in [k,m]_2} (\Cx im T_{(m+1,n+1)}) \Big(\ascprod_{j\in[k,m]_2} T_{w_\circ^{[1,j-4]_2}} T_{w_\circ^{[2,j-3]_2}} T_{j-2} T_{j-1} \Big)
T_m
T_{(m+1,n+1)}\\&\mskip200muT_{w_\circ^{[1,m-2]_2}}T_{w_\circ^{[2,m-1]_2}}(T_{w_\circ^{[1,m]_2}}T_{w_\circ^{[2,m-1]_2}} T_{(m+1,n+1)})^{\frac{m+k}2-1} \\
&=\dscprod_{i\in [k,m]_2} (\Cx im T_{(m+1,n+1)}) \Cx{(k-2)}mT_{(m+1,n+1)}\Big(\ascprod_{j\in[k-2,m-2]_2} T_{w_\circ^{[1,j-2]_2}} T_{w_\circ^{[2,j-1]_2}}\Big)
\\&\mskip200muT_{w_\circ^{[1,m-2]_2}}T_{w_\circ^{[2,m-1]_2}} (T_{w_\circ^{[1,m]_2}}T_{w_\circ^{[2,m-1]_2}} T_{(m+1,n+1)})^{\frac{m+k}2-1} \\
&=\dscprod_{i\in [k-2,m]_2} (\Cx im T_{(m+1,n+1)})\Big(\ascprod_{j\in[k-2,m]_2} T_{w_\circ^{[1,j-2]_2}} T_{w_\circ^{[2,j-1]_2}}\Big) \\
&\mskip200mu(T_{w_\circ^{[1,m]_2}}T_{w_\circ^{[2,m-1]_2}} T_{(m+1,n+1)})^{\frac{m+k-2}2}.
\end{align*}
This proves~\eqref{eq:1st rewrite}. Taking~$k=1$ yields
\begin{equation}\label{eq:1st rewrite final}
\begin{split}
T_{J_m}^{m+1}=\dscprod_{i\in [1,m]_2} (\Cx im T_{(m+1,n+1)}) &\Big(\ascprod_{j\in[1,m-2]_2} T_{w_\circ^{[1,j]_2}} T_{w_\circ^{[2,j+1]_2}}\Big)\\
&(T_{w_\circ^{[1,m]_2}}T_{w_\circ^{[2,m-1]_2}} T_m T_{(m+1,n+1)})^{\frac{m+1}2}.
\end{split}
\end{equation}
The next step is to show that for all $k\in[1,m]_2$,
\begin{equation}\label{eq:2nd rewrite}\begin{split}
(T_{w_\circ^{[1,m]_2}}T_{w_\circ^{[2,m-1]_2}} T_m T_{(m+1,n+1)})^{\frac{m+1}2}=
\Big(\dscprod_{j\in[k,m]_2} T_{w_\circ^{[1,j]_2}}T_{w_\circ^{[2,j-1]_2}}\Big) T_{(m+1,n+1)}\\\ascprod_{j\in[k+1,m-1]_2}(\Cxr jm T_{(m+1,n+1)})
(T_{w_\circ^{[1,m]_2}}T_{w_\circ^{[2,m-1]_2}} T_{(m+1,n+1)})^{\frac{k-1}2}.
\end{split}
\end{equation}
Again, we use descending induction on~$k$, the case~$k=m$ being trivial. For the inductive step, we have
\begin{align*}
(&T_{w_\circ^{[1,m]_2}}T_{w_\circ^{[2,m-1]_2}} T_m T_{(m+1,n+1)})^{\frac{m+1}2}=
\Big(\dscprod_{j\in[k,m]_2} T_{w_\circ^{[1,j]_2}}T_{w_\circ^{[2,j-1]_2}}\Big) T_{(m+1,n+1)}\\&\mskip50mu\ascprod_{j\in[k+1,m-1]_2}(\Cxr jm T_{(m+1,n+1)})
(T_{w_\circ^{[1,m]_2}}T_{w_\circ^{[2,m-1]_2}} T_{(m+1,n+1)})^{\frac{k-1}2}\\
&=
\Big(\dscprod_{j\in[k,m]_2} T_{w_\circ^{[1,j]_2}}T_{w_\circ^{[2,j-1]_2}}\Big) T_{(m+1,n+1)}\ascprod_{j\in[k+1,m-1]_2}(\Cxr jm T_{(m+1,n+1)})
 \\
&\mskip50muT_{w_\circ^{[1,k-2]_2}} T_{w_\circ^{[2,k-3]_2}}T_{w_\circ^{[k,m]_2}} T_{w_\circ^{[k-1,m]_2}} T_{(m+1,n+1)}
(T_{w_\circ^{[1,m]_2}}T_{w_\circ^{[2,m-1]_2}} T_{(m+1,n+1)})^{\frac{k-3}2}\\
&=
\Big(\dscprod_{j\in[k-2,m]_2} T_{w_\circ^{[1,j]_2}}T_{w_\circ^{[2,j-1]_2}}\Big) T_{(m+1,n+1)}\ascprod_{j\in[k+1,m-1]_2}(\Cxr jm T_{(m+1,n+1)})\\
&\mskip50mu
\Big(\ascprod_{i\in [k,m]_2} T_i T_{i-1}\Big)
T_{(m+1,n+1)}
(T_{w_\circ^{[1,m]_2}}T_{w_\circ^{[2,m-1]_2}} T_{(m+1,n+1)})^{\frac{k-3}2}\\
&=
\Big(\dscprod_{j\in[k-2,m]_2} T_{w_\circ^{[1,j]_2}}T_{w_\circ^{[2,j-1]_2}}\Big) T_{(m+1,n+1)}\ascprod_{j\in[k+1,m-1]_2}(\Cxr{(j-2)}m T_{(m+1,n+1)})\\
&\mskip50mu
T_m T_{m-1} T_{(m+1,n+1)}
(T_{w_\circ^{[1,m]_2}}T_{w_\circ^{[2,m-1]_2}} T_{(m+1,n+1)})^{\frac{k-3}2}\\
&=
\Big(\dscprod_{j\in[k-2,m]_2} T_{w_\circ^{[1,j]_2}}T_{w_\circ^{[2,j-1]_2}}\Big) T_{(m+1,n+1)}\ascprod_{j\in[k-1,m-1]_2}(\Cxr{j}m T_{(m+1,n+1)})\\
&\mskip50mu
(T_{w_\circ^{[1,m]_2}}T_{w_\circ^{[2,m-1]_2}} T_{(m+1,n+1)})^{\frac{k-3}2}.
\end{align*}
In particular, for~$k=1$ we obtain
\begin{equation}\label{eq:2nd rewrite final}
\begin{split}
T_{J_m}^{m+1}&=\dscprod_{i\in [1,m]_2} (\Cx im T_{(m+1,n+1)}) \Big(\ascprod_{j\in[1,m-2]_2} T_{w_\circ^{[1,j]_2}} T_{w_\circ^{[2,j+1]_2}}\Big)\\
&\Big(\dscprod_{j\in[1,m]_2} T_{w_\circ^{[1,j]_2}}T_{w_\circ^{[2,j-1]_2}}\Big) T_{(m+1,n+1)}\ascprod_{j\in[2,m-1]_2}(\Cxr jm T_{(m+1,n+1)}).
\end{split}
\end{equation}
Next we claim that
$$
\Big(\ascprod_{j\in[1,m-2]_2} T_{w_\circ^{[1,j]_2}} T_{w_\circ^{[2,j+1]_2}}\Big)
\Big(\dscprod_{j\in[1,m]_2} T_{w_\circ^{[1,j]_2}}T_{w_\circ^{[2,j-1]_2}}\Big)=T_{w_\circ^{[1,m]}}.
$$
We use induction on odd~$m$, the case~$m=1$ being trivial. Note that, since
$$
T_{w_\circ^{[1,r]}}=T_{w_\circ^{[1,r-1]}}\Cxr1r
$$
and $T_{w_\circ^J}$, $J\subset[1,n]$, is ${}^{op}$-invariant by~\cite{BrSa}*{Lemma~5.1}, it follows that
\begin{equation}\label{eq:2-side w0}
T_{w_\circ^{[1,m]}}=T_{w_\circ^{[1,m-1]}}\Cxr1m=\Cx1{(m-1)}T_{w_\circ^{[1,m-2]}}\Cxr1m.
\end{equation}
Then
\begin{align*}
\Big(\ascprod_{j\in[1,m-2]_2} &T_{w_\circ^{[1,j]_2}} T_{w_\circ^{[2,j+1]_2}}\Big)
\Big(\dscprod_{j\in[1,m]_2} T_{w_\circ^{[1,j]_2}}T_{w_\circ^{[2,j-1]_2}}\Big)\\
&=\Big(\ascprod_{j\in[1,m-2]_2} T_{w_\circ^{[1,j-2]_2}} T_{j}T_{j+1} T_{w_\circ^{[2,j-1]_2}}\Big)
\Big(\dscprod_{j\in[1,m]_2} T_{w_\circ^{[1,j-2]_2}}T_j T_{j-1} T_{w_\circ^{[2,j-3]_2}}\Big)\\
&=\Cx1{(m-1)}\Big(\ascprod_{j\in[1,m-4]_2} T_{w_\circ^{[1,j]_2}} T_{w_\circ^{[2,j+1]_2}}\Big)
\Big(\dscprod_{j\in[1,m-2]_2} T_{w_\circ^{[1,j]_2}}T_{w_\circ^{[2,j-1]_2}}\Big)\Cxr1m\\
&=\Cx1{(m-1)}T_{w_\circ^{[1,m-2]}}\Cxr1m,
\end{align*}
where we used the induction hypothesis and the convention that~$T_0=1$. It remains to use~\eqref{eq:2-side w0}.
Thus,
$$
T_{J_m}^{m+1}=\dscprod_{i\in [1,m]_2} (\Cx im T_{(m+1,n+1)}) T_{w_\circ^{[1,m]}} T_{(m+1,n+1)}\ascprod_{j\in[2,m-1]_2}(\Cxr jm T_{(m+1,n+1)}).
$$
Applying the diagram automorphism of the submonoid $\Br^+_{[1,m]}(A_n)\cong \Br^+_{m+1}$ of~$\Br^+_{n+1}$, we obtain from~\eqref{eq:2-side w0}
$$
T_{w_\circ^{[1,m]}}=\Cxr2{m}T_{w_\circ^{[3,m]}}\Cx1m.
$$
Since~$T_{w_\circ^{[1,m]}}$ is ${}^{op}$-invariant by Proposition~\partref{prop:fund elts BrSa.a}, this is also equal to
$$
T_{w_\circ^{[1,m]}}=\Cxr1{m}T_{w_\circ^{[1,m-2]}}\Cx2m.
$$
A straightforward induction now yields
$$
T_{w_\circ^{[1,m]}}=\Big(\ascprod_{j\in[1,m]_2}\Cxr jm\Big)\Big(\dscprod_{j\in[2,m-1]_2}\Cx jm\Big).
$$
Thus,
\begin{equation}\label{eq:3rd rewrite}
\begin{split}
T_{J_m}^{m+1}&=\dscprod_{i\in [1,m]_2} (\Cx im T_{(m+1,n+1)}) \Big(\ascprod_{j\in[1,m]_2}
\Cxr jm\Big)\\&\qquad\Big(\dscprod_{j\in[2,m-1]_2}\Cx jm\Big)
T_{(m+1,n+1)} \ascprod_{j\in[2,m-1]_2}(\Cxr jm T_{(m+1,n+1)}).
\end{split}
\end{equation}
Suppose we proved that, for some $k\in[1,m-2]_2$,
\begin{align}\label{eq:3rd rewrite final}
T_{J_m}^{m+1}&=\dscprod_{i\in [k,m]_2} (\Cx im T_{(m+1,n+1)}) \dscprod_{i\in [1,k-2]_2} T_{(j,n+1)}
\Big(\ascprod_{j\in[k,m]_2}
\Cxr jm\Big)\\&\qquad\Big(\dscprod_{j\in[k+1,m-1]_2}\Cx jm\Big)
\ascprod_{j\in[2,k-1]_2} T_{(j,n+1)} T_{(m+1,n+1)} \ascprod_{j\in[k+1,m-1]_2}(\Cxr jm T_{(m+1,n+1)}),\nonumber
\end{align}
the case~$k=1$ being just~\eqref{eq:3rd rewrite}. Since $\Cx im T_{(m+1,n+1)}\Cxr im=T_{(i,n+1)}$, $1\le i\le m$ and the $\Cx jm$, $i<j\le m$ commute with $T_{(i,n+1)}$, $1\le i\le m+1$,
we obtain
\begin{align*}
T_{J_m}^{m+1}&=\dscprod_{i\in [k+2,m]_2} (\Cx im T_{(m+1,n+1)}) \Cx km T_{(m+1,n+1)}\Cxr km \dscprod_{i\in [1,k-2]_2} T_{(j,n+1)}
\ascprod_{j\in[k+2,m]_2}
\Cxr jm\\&\qquad\Big(\dscprod_{j\in[k+3,m-1]_2}\Cx jm\Big)
\ascprod_{j\in[2,k-1]_2} T_{(j,n+1)}
\Cx{(k+1)}m T_{(m+1,n+1)}\Cxr{(k+1)}m\\
&\mskip300mu\ascprod_{j\in[k+3,m-1]_2}(\Cxr jm T_{(m+1,n+1)})\\
&=\dscprod_{i\in [k+2,m]_2} (\Cx im T_{(m+1,n+1)}) \dscprod_{i\in [1,k]_2} T_{(j,n+1)}
\ascprod_{j\in[k+2,m]_2}
\Cxr jm\\&\qquad\Big(\dscprod_{j\in[k+3,m-1]_2}\Cx jm\Big)
\ascprod_{j\in[2,k+1]_2} T_{(j,n+1)} T_{(m+1,n+1)} \ascprod_{j\in[k+3,m-1]_2}(\Cxr jm T_{(m+1,n+1)}).
\end{align*}
Thus, the identity~\eqref{eq:3rd rewrite final} holds for all~$k\in[1,m]_2$.
Taking~$k=m$ yields the assertion.
\end{proof}
\begin{lemma}\label{lem:2nd identity}
We have
\begin{equation}\label{eq:2nd identity}
T_{J_m}^{m+1} T_{w_\circ^{[1,m]_2}} T_{w_\circ^{[2,m-1]_2}}=\Big(\ascprod_{i\in[1,m+1]} T_{(i,n+1)}\Big) \Cx1m%
\end{equation}
\end{lemma}
\begin{proof}
The argument is by induction on odd~$m$, the case~$m=1$ being immediate from Lemma~\ref{lem:1st identity}. For the inductive step, by Lemma~\ref{lem:1st identity} we are proving that
$$
\dscprod_{j\in [1,m]_2} T_{(j,n+1)}\ascprod_{j\in [2,m+1]_2} T_{(j,n+1)}T_{w_\circ^{[1,m]_2}} T_{w_\circ^{[2,m-1]_2}}=\ascprod_{i\in[1,m+1]} T_{(i,n+1)} \Cx1m.
$$
The left hand side is equal to
\begin{align*}
T_{(m,n+1)} &\dscprod_{j\in [1,m-2]_2} T_{(j,n+1)}\ascprod_{j\in [2,m-1]_2} T_{(j,n+1)} T_{w_\circ^{[1,m-2]_2}} T_{w_\circ^{[2,m-3]_2}} T_{(m+1,n+1)}T_m T_{m-1}\\
&=T_{(m,n+1)} \ascprod_{i\in[1,m-1]} T_{(i,n+1)} \Cx1{(m-2)} T_{(m+1,n+1)}T_m T_{m-1}.
\end{align*}
where we used the induction hypothesis. Since $$
T_{(m-1,n+1)}T_{(m,n+1)}T_{(m+1,n+1)}=T_{(m,n+1)}T_{m-1} T_{(m,n+1)}T_{(m+1,n+1)}T_m
$$
by Lemma~\ref{lem:transp 2},
the right hand side equals to
\begin{multline*}
\ascprod_{i\in[1,m-2]} T_{(i,n+1)} T_{(m,n+1)}T_{m-1} T_{(m,n+1)}T_{(m+1,n+1)}T_m \Cx1m\\=\ascprod_{i\in[1,m-2]} T_{(i,n+1)} T_{(m,n+1)}T_{m-1} T_{(m,n+1)}
\Cx1{(m-1)} T_{(m+1,n+1)}T_m T_{m-1}.
\end{multline*}
Since the braid monoid is cancellative, it suffices to prove that
\begin{equation}
\label{eq:equiv id for reorder}
T_{(m,n+1)} \ascprod_{i\in[1,m-1]} T_{(i,n+1)} \Cx1{(m-2)}=\ascprod_{i\in[1,m-2]} T_{(i,n+1)} T_{(m,n+1)}T_{m-1} T_{(m,n+1)}
\Cx1{(m-1)}.
\end{equation}
Write $T_{(m,n+1)}=\Cx m{(n-1)}\Cxr mn$. Since~$\Cx m{(n-1)}$ commutes with the~$T_{(i,n+1)}$, $i\in [1,m-2]_2$, \eqref{eq:equiv id for reorder} is equivalent to
\begin{equation}
\label{eq:equiv id for reorder'}
\Cxr mn\ascprod_{i\in[1,m-1]} T_{(i,n+1)} \Cx1{(m-2)}=\ascprod_{i\in[1,m-2]} T_{(i,n+1)} \Cxr mn T_{m-1} T_{(m,n+1)}
\Cx1{(m-1)}.
\end{equation}
Since $\Cxr{(m-1)}n T_{(m,n+1)}=\Cxr{(m-1)}n\Cx mn\Cxr m{(n-1)}=T_{(m-1,n+1)}\Cxr m{(n-1)}$, that identity is equivalent to
\begin{equation}
\label{eq:equiv id for reorder''}
\Cxr mn\ascprod_{i\in[1,m-1]} T_{(i,n+1)} \Cx1{(m-2)}=\ascprod_{i\in[1,m-1]} T_{(i,n+1)} \Cxr m{(n-1)}
\Cx1{(m-1)}.
\end{equation}
Since $T_{(m-1,n+1)}$ commutes with $\Cxr m{(n-1)}$, we can rewrite the right hand side as
\begin{align*}
\ascprod_{i\in[1,m-2]} T_{(i,n+1)} &\Cxr m{(n-1)} T_{(m-1,n+1)}\Cx1{(m-1)}\\
&=\ascprod_{i\in[1,m-2]} T_{(i,n+1)} \Cxr m{(n-1)} T_{m-1} T_{(m,n+1)}T_{m-1}\Cx1{(m-1)}
\\
&=\ascprod_{i\in[1,m-2]} T_{(i,n+1)} \Cxr{(m-1)}{(n-1)} T_{(m,n+1)}T_{m-1}\Cx1{(m-1)}\\
&=\ascprod_{i\in[1,m-2]} T_{(i,n+1)} \Cxr{(m-1)}{(n-1)} T_{(m,n+1)}\Cx1{(m-1)}T_{m-2}.
\end{align*}
Therefore, \eqref{eq:equiv id for reorder''} is equivalent to
\begin{equation}
\label{eq:equiv id for reorder'''}
\Cxr mn\ascprod_{i\in[1,m-1]} T_{(i,n+1)} \Cx1{(m-3)}=\ascprod_{i\in[1,m-2]} T_{(i,n+1)} \Cxr{(m-1)}{(n-1)} T_{(m,n+1)}\Cx1{(m-1)}.
\end{equation}
Suppose we proved that~\eqref{eq:equiv id for reorder''} is equivalent to
\begin{equation}
\label{eq:equiv id for reorder'v}
\Cxr mn\ascprod_{j\in[1,m-1]} T_{(j,n+1)} \Cx1{(i-1)}=\ascprod_{j\in[1,i]} T_{(j,n+1)} \Cxr{(i+1)}{(n-1)} \ascprod_{j\in[i+2,m]} T_{(j,n+1)}\Cx1{(m-1)}.
\end{equation}
for some~$i\in[3,m-2]$.
We can rewrite the right hand side of~\eqref{eq:equiv id for reorder'v} as
\begin{align*}
\ascprod_{j\in[1,i-1]}& T_{(j,n+1)} \Cxr{(i+1)}{(n-1)} T_{(i,n+1)} \ascprod_{j\in[i+2,m]} T_{(j,n+1)}\Cx1{(m-1)}\\
&=\ascprod_{j\in[1,i-1]} T_{(j,n+1)} \Cxr{i}{(n-1)} T_{(i+1,n+1)} T_i \ascprod_{j\in[i+2,m]} T_{(j,n+1)}\Cx1{(m-1)}\\
&=\ascprod_{j\in[1,i-1]} T_{(j,n+1)} \Cxr{i}{(n-1)} T_{(i+1,n+1)} \ascprod_{j\in[i+2,m]} T_{(j,n+1)}T_i \Cx1{(m-1)}\\
&=\ascprod_{j\in[1,i-1]} T_{(j,n+1)} \Cxr{i}{(n-1)} T_{(i+1,n+1)} \ascprod_{j\in[i+2,m]} T_{(j,n+1)}\Cx1{(i-2)}T_i T_{i-1} T_i \Cx{(i+1)}{(m-1)}\\
&=\ascprod_{j\in[1,i-1]} T_{(j,n+1)} \Cxr{i}{(n-1)} T_{(i+1,n+1)} \ascprod_{j\in[i+2,m]} T_{(j,n+1)}\Cx1{(m-1)}T_{i-1}
\end{align*}
whence~\eqref{eq:equiv id for reorder'''} is equivalent to
$$
\Cxr mn\ascprod_{j\in[1,m-1]} T_{(j,n+1)} \Cx1{(i-2)}=\ascprod_{j\in[1,i-1]} T_{(j,n+1)} \Cxr{i}{(n-1)} \ascprod_{j\in[i+1,m]} T_{(j,n+1)}\Cx1{(m-1)}.
$$
Thus, \eqref{eq:equiv id for reorder'''} is equivalent to~\eqref{eq:equiv id for reorder'v} for all~$i\in[2,m-1]$. Taking~$i=2$
we conclude that~\eqref{eq:equiv id for reorder'''} is equivalent to
\begin{equation}\label{eq:last reduction}
\Cxr mn\ascprod_{j\in[1,m-1]} T_{(j,n+1)}=T_{(1,n+1)} \Cxr{2}{(n-1)} \ascprod_{j\in[3,m]} T_{(j,n+1)}\Cx1{(m-1)}.
\end{equation}
We now rewrite the right hand side of this identity as
\begin{align*}
T_{(1,n+1)} &\Cxr{2}{(n-1)} \ascprod_{j\in[3,m]} T_{(j,n+1)}\Cx1{(m-1)}\\
&=\Cxr1n\Cx 2n\Cxr2{(n-1)}\ascprod_{j\in[3,m]} T_{(j,n+1)}\Cx1{(m-1)}
=\Cxr1n \ascprod_{j\in[2,m]} T_{(j,n+1)}\Cx1{(m-1)}\\
&=\Cxr2n T_{(1,n+1)} \ascprod_{j\in[3,m]} T_{(j,n+1)}\Cx2{(m-1)}.
\end{align*}
Suppose we proved that for some~$i\in [2,m-1]$
\begin{align*}
T_{(1,n+1)} &\Cxr{2}{(n-1)} \ascprod_{j\in[3,m]} T_{(j,n+1)}\Cx1{(m-1)}=
\Cxr in \ascprod_{j\in[1,m]\setminus\{i\}} T_{(j,n+1)}\Cx i{(m-1)}.
\end{align*}
Then
\begin{align*}
T_{(1,n+1)} &\Cxr{2}{(n-1)} \ascprod_{j\in[3,m]} T_{(j,n+1)}\Cx1{(m-1)}\\
&=\Cxr{(i+1)} n \ascprod_{j\in[1,i-1]} T_{(j,n+1)} T_i T_{(i+1,n+1)} T_i \ascprod_{j\in[i+2,m]} T_{(j,n+1)}\Cx{(i+1)}{(m-1)}\\
&=\Cxr{(i+1)} n \ascprod_{j\in[1,i-1]} T_{(j,n+1)} T_{(i,n+1)} \ascprod_{j\in[i+2,m]} T_{(j,n+1)}\Cx{(i+1)}{(m-1)}\\
&=\Cxr{(i+1)}n \ascprod_{j\in[1,m]\setminus\{i+1\}} T_{(j,n+1)}\Cx{(i+1)}{(m-1)}.
\end{align*}
Therefore,
$$
T_{(1,n+1)} \Cxr{2}{(n-1)} \ascprod_{j\in[3,m]} T_{(j,n+1)}\Cx1{(m-1)}=\Cxr mn \ascprod_{j\in[1,m-1]} T_{(j,n+1)},
$$
which is the left-hand side of~\eqref{eq:last reduction}.
\end{proof}
Since $T_{J_m}=T_{w_\circ^{[1,m]_2}}T_{w_\circ^{[2,m-1]_2}} T_{(m+1,n+1)}$,
the assertion is immediate from Lemma~\ref{lem:2nd identity} and
the definition~\eqref{eq:Z_r defn} of~$Z_m$.
\end{proof}

Proposition~\ref{prop:g J=1} for~$J$ with~$g(J)=1$ and~$|J|$ odd follows from Proposition~\ref{prop:TJm power central} by Lemma~\ref{lem:move bubble}.

Thus, if $J=[1,a]\cup [b+1,n+1]$ with $1\le a<b\le n$,
$T_J^{|J|}$ is the product of two commuting ${}^{op}$-invariant
elements of~$\Br_n$ and, therefore, is~${}^{op}$-invariant.
It remains to apply Lemma~\ref{lem:TJ hom}. This completes the proof of
Theorem~\ref{thm:adm I2m}.
\end{proof}

\begin{proof}[Proof of forward direction in Theorem~\ref{thm:main thm adm}]
Since~$g(J)=1$, $J=[1,a]\cup [b+1,n+1]$ for some~$1\le a<b\le n+1$.
Let~$K=[a,b]$. Then, in the notation of Theorem~\ref{thm:main thm adm},
\begin{align*}
I'(K)&=[1,a-2]_2\cup \{ r\in [b+1,n+1]\,:\,
\overline{r-b}=0\},\\
I''(K)&=I\setminus (I'(K)\cup K)
=[1,a-1]_2\cup \{ r\in [b+1,n+1]\,:\, \overline{r-b}=1\},
\end{align*}
and so $\tau_{\overline a}(J)=T_{w_\circ^{I'(K)\cup K}}$,
$\tau_{\overline{a-1}}(J)=T_{w_\circ^{I''(K)}}$.
If~$\sigma(K)=K$, that is, $b=n+1-a$ then
$m(K)=\frac12(n-|K|)+1=a$ while~$|J|=2a$ and so~$m(K)=|J|/2$.
Otherwise, $m(K)=n-|K|+2=n-b+a+1=|J|$. Then
Theorem~\ref{thm:adm I2m} yields the desired homomorphism.
\end{proof}

Given $J\subset [1,n+1]$, $J=\{j_1,\dots,j_m\}$ with~$j_1<\cdots<j_m$, let $\Br^+_{n+1}[J]$ be the submonoid of~$\Br^+_{n+1}$
generated by the $T_{(j_k,j_{k+1})}$, $1\le k\le m-1$.
\begin{corollary}\label{cor:centrailty}
Let~$\{1,n+1\}\subset J\subset [1,n+1]$ with~$g(J)=1$. Then~$T_J^{|J|}$
is central in~$\Br^+_{n+1}[J]$.
\end{corollary}
\begin{proof}
Let~$J=[1,a]\cup[b+1,n+1]$, $1\le a<b\le n$. Then~$\Br^+(J)$
is generated by the $T_i$, $i\in [1,n]\setminus [a,b]$
and by $T_{(a,b+1)}$. By Proposition~\ref{prop:g J=1},
$T_J^{|J|}=T_{w_\circ^{[1,n]}}^2 T_{w_\circ^{[a+1,b-1]}}^{-2}$.
Since $T_{w_\circ^{[1,n]}}^2$ is central in~$\Br^+_{n+1}$
and $T_{w_\circ^{[a+1,b-1]}}^2$ commutes with the $T_i$,
$i\in [1,a-1]\cup[b+1,n]=[1,n]\setminus [a,b]$ and with $T_{(a,b+1)}$
by Proposition~\ref{prop:elem prop transp}, the assertion follows.
\end{proof}
\begin{proposition}\label{prop:Cox hom from TJ}
Let~$\{1,n+1\}\subset J\subset [1,n+1]$ with~$g(J)=1$ and let~$m=m(J)$ be 
as in Theorem~\ref{thm:adm I2m}. Then the
assignments $\wh T_r\mapsto \tilde\tau_{\overline r}(J)$, $r\in\{1,2\}$ define an optimal 
parabolic Coxeter type homomorphism~$\Phi:
\Br^+(I_2(2m))\to \Br^+_{n+1}$.
\end{proposition}
\begin{proof}
These assignments define a homomorphism
by Theorem~\ref{thm:adm I2m} and Lemma~\ref{lem:TJ hom}.
Since 
$\pi_n(\tilde\tau_r(J))$, $r\in\{0,1\}$ are manifestly involutions,
being products of commuting transpositions,
$\Phi$ is of Coxeter type by Proposition~\partref{prop:elem prop Coxeter Hecke.a}
and is optimal by Corollary~\ref{cor:TJ coxeter}.

The parabolicity of~$\Phi$ when $\tilde\sigma(J)=J$
follows from Proposition~\partref{prop:admissible hom from BrI22m to BrBn.a} and 
from that of the standard unfolding~$\Br^+(B_n)\to 
\Br^+(A_N)$, $n=\lfloor \frac12 N\rfloor$ (cf.~\eqref{eq:unfold Bn A2n}, \eqref{eq:unfold Bn A2n-1}).

Suppose now that~$\tilde\sigma(J)\not=J$.
Write $J=[1,a]\cup[b+1,n+1]$, $1\le a<b\le n$. 
By Proposition~\ref{prop:g J=1}, we have
in~$\Br_n$
\begin{align*}
\Phi(\wh T_{w_\circ^{\{1,2\}}})&=T_J^{|J|}
=T_{w_\circ^{[1,n]}}^2 T_{w_\circ^{[a+1,b-1]}}^{-2}
=T_{w_\circ^{[a+1,b-1]}}^{-1}
T_{w_\circ^{[1,n]}}
T_{w_\circ^{\sigma([a+1,b-1])}}^{-1}T_{w_\circ^{[1,n]}}\\
&=T_{w_{[a+1,b-1]}}T_{w_{\sigma([a+1,b-1])}}.
\end{align*}
Furthermore, let~$J_0=[1,a-2]_2\cup\{ r\in [b+1,n+1]\,:\,
\overline r=\overline b\}$ and let~$J_1=
[1,a-1]_2\cup ([b+1,n+1]\setminus J_0)$. Then in~$\Br_n$
\begin{align*}
\Phi(\wh T_{\{2-\overline a\}})&=
\tilde\tau_{\overline a}(J)^{-1}
T_{w_\circ^{[a+1,b-1]}}^{-1}
T_{w_\circ^{[1,n]}}
T_{w_\circ^{\sigma([a+1,b-1])}}^{-1}T_{w_\circ^{[1,n]}}\\
&=T_{w_\circ^{J_{\overline a}}}^{-1}T_{(a,b+1)}^{-1}
T_{w_\circ^{[a+1,b-1]}}^{-1}
T_{w_\circ^{[1,n]}}
T_{w_\circ^{\sigma([a+1,b-1])}}^{-1}T_{w_\circ^{[1,n]}}\\
&=T_{w_\circ^{J_{\overline a}\cup [a,b]}}^{-1}
T_{w_\circ^{[1,n]}}
T_{w_\circ^{\sigma([a+1,b-1])}}^{-1}T_{w_\circ^{[1,n]}}
=T_{w_{J_{\overline a}\cup [a,b]}}T_{w_{\sigma([a+1,b-1])}},\\
\intertext{while}
\Phi(\wh T_{\{1+\overline a\}})&=
\tilde\tau_{1-\overline a}(J)^{-1}
T_{w_\circ^{[a+1,b-1]}}^{-1}
T_{w_\circ^{[1,n]}}
T_{w_\circ^{\sigma([a+1,b-1])}}^{-1}T_{w_\circ^{[1,n]}}\\
&=T_{w_\circ^{J_{1-\overline a}}}^{-1}
T_{w_\circ^{[a+1,b-1]}}^{-1}
T_{w_\circ^{[1,n]}}
T_{w_\circ^{\sigma([a+1,b-1])}}^{-1}T_{w_\circ^{[1,n]}}
=T_{w_{J_{1-\overline a}\cup [a+1,b-1]}}T_{w_{\sigma([a+1,b-1])}}.\qedhere
\end{align*}
\end{proof}

\subsection{Symmetrized Burau representation and the converse}\label{subs:Burau} We now prove the converse in Theorem~\ref{thm:main thm adm}. The key ingredient is the following
\begin{theorem}\label{thm:adm I2m converse }
Let~$\{1,n+1\}\subset J\subset [1,n+1]$ with~$g(J)>1$. Then $T_J^m$ is not ${}^{op}$-invariant
for all~$m\in\mathbb Z_{>0}$.
\end{theorem}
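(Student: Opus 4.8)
The plan is to prove the contrapositive in a quantitative form: if $T_J^m$ is ${}^{op}$-invariant then its canonical image in $W(A_n)\cong S_{n+1}$ must be an involution, and I will show this forces $g(J)\le 1$. Rather than work directly in $\Br^+_n$, the natural tool is a linear representation that (i) detects the ${}^{op}$-anti-involution faithfully enough on the relevant elements and (ii) can be computed explicitly on the $T_{(j_r,j_{r+1})}$. The title of the subsection (``Symmetrized Burau representation'') indicates the intended route: pass to the Burau representation $\beta\colon\Br_n\to GL_{n+1}(\mathbb Z[t^{\pm1}])$ (reduced or unreduced), which sends $T_i$ to the elementary matrix acting nontrivially only in coordinates $i,i+1$, and observe that ${}^{op}$ corresponds to the transpose anti-automorphism with respect to a symmetric bilinear form (the Squier form). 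Concretely, there is a symmetric matrix $S=S(t)$ with $\beta(T^{op})=S\,\beta(T)^{\mathsf t}\,S^{-1}$ for all $T$; hence $T_J^m$ being ${}^{op}$-invariant implies $\beta(T_J^m)$ is self-adjoint for the form $S$, i.e. $S\beta(T_J)^m = (\beta(T_J)^m)^{\mathsf t}S = S\beta(T_J^{op})^m = S(S^{-1}\beta(T_J)^{\mathsf t}S)^m$. Comparing eigen-data of $\beta(T_J)$ and $\beta(T_J)^{\mathsf t}$ with respect to $S$ will be the engine.

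\textbf{Key steps.} First I would record, for $\{1,n+1\}\subset J=\{j_1<\dots<j_k\}$ (with $j_1=1$, $j_k=n+1$), an explicit closed form for $\beta(T_J)$ using Proposition~\ref{prop:elem prop transp}: since $T_J=\tilde\tau_1(J)\tilde\tau_0(J)$ is a product of Burau matrices of ``big transpositions'' $T_{(j_r,j_{r+1})}$ supported on the intervals $[j_r,j_{r+1}]$, the matrix $\beta(T_J)$ is block-structured along the partition of $[1,n+1]$ induced by $J$; by Corollary~\ref{cor:conj J} all $T_J$ with $|J|=k$ are conjugate in $\Br_n$, so I may reduce to the model $J_k=[1,k-1]\cup\{n+1\}$ — except that conjugation in $\Br_n$ does not preserve ${}^{op}$-invariance, so instead I should keep track of how $\beta(U(J))$ from~\eqref{eq:U(J) defn} transports the Squier form, i.e. compute $S' = \beta(U(J))^{-\mathsf t}\,S\,\beta(U(J))^{-1}$ and reduce the ${}^{op}$-invariance of $T_J^m$ to self-adjointness of $\beta(T_{J_k})^m$ with respect to the (no longer ``standard'') form $S'$. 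Second, I would compute the characteristic polynomial and the eigenvalue/eigenvector data of $\beta(T_{J_k})$: because $\pi_n(T_{J_k})$ is a single $k$-cycle (Corollary~\ref{cor:TJ coxeter}) acting trivially on the complementary $n+1-k$ coordinates, $\beta(T_{J_k})$ should be conjugate to (a Burau-deformed $k$-cycle) $\oplus$ (identity block of size $n-k$), with the nontrivial block having eigenvalues that are $t$-deformed $k$-th roots of unity together with one eigenvalue $t$-related to $\det$. Third — the crux — I would show that self-adjointness of $\beta(T_{J_k})^m$ for the form $S'$ forces $S'$ to be, up to scalar, the standard Squier form in suitable coordinates, and then read off from the off-diagonal ``interaction'' pattern of $S'$ versus that of the standard form that the number of ``gaps'' in $J$, i.e. $g(J)$, must be $\le 1$; equivalently, for $g(J)\ge 2$ the two blocks of ``frozen'' coordinates interact with the moving block in incompatible ways, making $S\beta(T_J)^m$ manifestly non-symmetric for every $m$.

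\textbf{Alternative and main obstacle.} If the representation-theoretic bookkeeping above proves unwieldy, a purely combinatorial alternative is available via the already-proven forward direction: for $g(J)\le 1$ we have the exact formulas $T_J^{|J|}=T_{w_\circ^{[1,n]}}^2 T_{w_\circ^{[a+1,b-1]}}^{-2}$ (Proposition~\ref{prop:g J=1}) and its symmetric analogue (Corollary~\ref{cor:symm even J}); by Corollary~\ref{cor:TJ coxeter}, if $T_J^m$ is ${}^{op}$-invariant then $|J|\mid 2m$, so it suffices to analyze $T_J^{|J|}$ and $T_J^{|J|/2}$ and show directly, by examining the normal form (Proposition~\ref{prop:Normal form}) or by projecting to $W(A_n)$ and using that $\pi_n(T_J)^m$ must be an involution, that a second gap obstructs ${}^{op}$-invariance — the point being that with two gaps $T_J$ ``entangles'' three separated blocks and no power can land in the ${}^{op}$-invariant locus $\{T : \pi_n(T)\text{ involution}\}\cap(\text{central-ish elements})$. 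The main obstacle in either approach is the same: controlling powers of $T_J$ when $J$ has several gaps, where the clean telescoping identities of \S\ref{subs:forward direction} (Lemmata~\ref{lem:transp 1}, \ref{lem:transp 2}, \ref{lem:move bubble}, Propositions~\ref{prop:Z_r is central}, \ref{prop:TJm power central}) no longer apply; I expect the symmetrized-Burau argument to be the cleanest way to package ``several gaps $\Rightarrow$ permanently non-${}^{op}$-invariant'' into a single eigenvalue/form incompatibility, so that is the route I would develop first, falling back on the $W(A_n)$-projection plus normal-form analysis only for the low-gap base cases.
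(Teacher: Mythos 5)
You have correctly located the framework the paper uses (a symmetrized Burau representation in which ${}^{op}$ becomes transposition with respect to a symmetric bilinear form, together with the conjugation $U(J)T_JU(J)^{-1}=T_{[1,m]\cup\{n+1\}}$ of Corollary~\ref{cor:conj J} and the divisibility constraint of Corollary~\ref{cor:TJ coxeter}), but your ``crux'' step is not the right mechanism and, as stated, would fail. Self-adjointness of $\beta(T_{J_k})^m$ with respect to the transported form $S'$ does not force $S'$ to be ``up to scalar the standard form in suitable coordinates'' --- a single operator admits many invariant symmetric forms, and nothing in your sketch explains how the number of gaps $g(J)$ would be read off from an ``interaction pattern'' of $S'$. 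The actual engine is Lemma~\ref{lem:orth eigenspaces}: if $X\in\Br_n$ is ${}^{op}$-invariant, then eigenspaces of $X$ for distinct eigenvalues are orthogonal for the \emph{fixed} standard form. One keeps the form fixed, diagonalizes the model $T_{[1,m]\cup\{n+1\}}^{m+1}$ explicitly (Proposition~\ref{prop:TJ eigenvectors}, eigenvalues $1$, $q^{2(n+1)}$, $q^{2|J|}$), pulls two eigenvectors back through $U(J)^{-1}$, and shows by a long explicit computation (Lemmata~\ref{lem:TJ|J| eigenvectors}, \ref{lem:Euclid jm-1<n}, \ref{lem:Euclid jm-1=n}) that their inner product is a nonzero Laurent polynomial in $q$ precisely when $g(J)>1$; this is where $g(J)>1$ enters quantitatively, via the combinatorial parameters $\beta_\pm(J)$, and it is the bulk of the proof. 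Your proposal contains no candidate for this non-orthogonality witness, nor any indication of how $g(J)>1$ would be used beyond the vague ``incompatible interaction'' of blocks.

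Your fallback route is also not viable. The forward-direction formulas (Proposition~\ref{prop:g J=1}, Corollary~\ref{cor:symm even J}) are proved only for $g(J)\le 1$ and say nothing about $T_J^{|J|}$ when $g(J)>1$, so they cannot be ``analyzed'' in the case at hand; and the projection to $W(A_n)$ cannot detect the obstruction at all, since $\pi_n(T_J)$ is a $|J|$-cycle (Corollary~\ref{cor:TJ coxeter}) whose power $\pi_n(T_J)^m$ \emph{is} an involution or the identity whenever $|J|$ divides $2m$ --- exactly the powers you must rule out. This is why the paper needs the finer linear-algebraic invariant (eigenspace orthogonality under the symmetric form) rather than the Coxeter-group image or the normal form. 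Minor points: in the paper's normalization the complementary block of $T_{[1,m]\cup\{n+1\}}$ acts by $q^2$, not the identity, and $\det(t\,\id_V-T_J)=(t-1)(t^{|J|}-q^{2(n+1)})(t-q^2)^{n-|J|}$; and your reduction to powers $T_J^{k|J|}$ (via squaring when $|J|$ is even) is indeed how the paper starts, so that part of your plan is sound.
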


To prove this Theorem, we use representation theory of braid monoids.
Let~$\kk$ be a field of characteristic zero and let~$q\in\kk^\times$
which is not a root of unity.
Let \plink{ei}$\{e_i\}_{1\le i\le n+1}$ be the standard basis of~$V=\kk^{n+1}$. Define~$T_i\in\End V$
by
$$
T_i(e_j)=e_j-(q \delta_{i,j}-\delta_{i+1,j})(q e_i-e_{i+1})
$$
for all $j\in [1,n+1]$, $i\in[1,n]$. It easy to see that the~$T_i$
are invertible with
$$
T_i^{-1}(e_j)=e_j-(\delta_{i,j}-q^{-1}\delta_{i+1,j})(e_i-q^{-1}e_{i+1}).
$$
\begin{proposition}\label{prop:Burau}
The operators $T_i$, $i\in [1,n]$ provide a representation of~$\Br_n$ on~$V$. Moreover, $T_i^2=(1-q^{2})T_i+q^{2}\id_V$, $T_i^{-1}=q^{-2} T_i+(1-q^{-2})\id_V$ and for any $T\in\Br_n$, the matrix of $T^{op}$
in the standard basis~$\{e_i\}_{1\le i\le n+1}$ is the transpose of the matrix of~$T$.
\end{proposition}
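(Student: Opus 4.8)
The plan is to reduce the entire statement to a handful of explicit small‑matrix computations, exploiting the fact that each $T_i$ is supported on the two coordinates $e_i,e_{i+1}$. First I would record the action of a generator: substituting $j=i$, $j=i+1$, and $j\notin\{i,i+1\}$ into the defining formula gives $T_i(e_i)=(1-q^2)e_i+q e_{i+1}$, $T_i(e_{i+1})=q e_i$, and $T_i(e_j)=e_j$ otherwise. Hence, in the standard basis, $T_i$ is the identity on $\Span(e_k:k\ne i,i+1)$ and on $\Span(e_i,e_{i+1})$ has the \emph{symmetric} matrix $\bigl(\begin{smallmatrix}1-q^2&q\\ q&0\end{smallmatrix}\bigr)$. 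This is precisely the (unreduced) Burau representation of $\Br_n=\Br(A_n)$, symmetrized.

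Next I would check the defining relations of $\Br^+(A_n)$. When $|i-j|\ge 2$ the supports $\{i,i+1\}$ and $\{j,j+1\}$ are disjoint coordinate subspaces on which the two operators act independently, so $T_iT_j=T_jT_i$. When $j=i+1$, both $T_iT_{i+1}T_i$ and $T_{i+1}T_iT_{i+1}$ act as the identity off $\Span(e_i,e_{i+1},e_{i+2})$, and on this three‑dimensional subspace I would verify $T_iT_{i+1}T_i=T_{i+1}T_iT_{i+1}$ by a direct $3\times3$ computation, both sides reducing to the same explicit matrix. This yields the homomorphism from $\Br^+(A_n)$. I would then prove the quadratic identity $T_i^2=(1-q^2)T_i+q^2\id_V$ by squaring the $2\times2$ block, and use it to verify algebraically that $\bigl(q^{-2}T_i+(1-q^{-2})\id_V\bigr)T_i=\id_V$; thus each $T_i$ is invertible with the stated inverse, so the representation extends to the group $\Br_n$. (The displayed closed form for $T_i^{-1}(e_j)$ is then automatic, or may be read off directly.)

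For the assertion about ${}^{op}$, the key point is that every generator has a symmetric matrix and satisfies $T_i^{op}=T_i$; since the inverse of an invertible symmetric matrix is symmetric, each $T_i^{\pm1}$ has symmetric matrix $M_i^{\pm1}$. For a word $T=T_{i_1}^{\varepsilon_1}\cdots T_{i_k}^{\varepsilon_k}$ in $\Br_n$ we have $T^{op}=T_{i_k}^{\varepsilon_k}\cdots T_{i_1}^{\varepsilon_1}$, whose matrix is $M_{i_k}^{\varepsilon_k}\cdots M_{i_1}^{\varepsilon_1}=(M_{i_k}^{\varepsilon_k})^{\mathsf T}\cdots(M_{i_1}^{\varepsilon_1})^{\mathsf T}=(M_{i_1}^{\varepsilon_1}\cdots M_{i_k}^{\varepsilon_k})^{\mathsf T}$, i.e.\ the transpose of the matrix of $T$; this works regardless of the left/right convention, since passing to ${}^{op}$ and transposing a product both reverse the order of factors.

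The only mildly laborious step is the $3\times3$ verification of the braid relation; all the remaining work is the $2\times2$ bookkeeping above and a one‑line argument about transposes, so I do not expect any genuine obstacle.
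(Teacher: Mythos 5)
Your proposal is correct and follows essentially the same route as the paper: direct verification of the braid and commutation relations in coordinates, the quadratic identity $T_i^2=(1-q^2)T_i+q^2\id_V$ yielding the stated inverse, and the observation that each generator's matrix is symmetric so that transposition reverses products exactly as ${}^{op}$ does. The only cosmetic difference is that you package the braid relation as a single $3\times 3$ matrix identity, whereas the paper checks it coefficientwise on basis vectors.
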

\begin{proof}
The operators~$T_i$ are easily seen to be modified Burau operators
(\cite{Bu}).
Clearly, $T_iT_j(e_k)=T_jT_i(e_k)$, $1\le k\le n+1$,
if~$|i-j|>1$. Since
\begin{align*}
T_i T_{i+1}T_i(e_j)&=T_i T_{i+1}(e_j-(\delta_{i,j}-q^{-1}\delta_{i+1,j})(e_i-q^{-1}e_{i+1})\\
&=T_i(e_j-(\delta_{i,j}-q^{-1}\delta_{i+1,j})e_i
-(\delta_{i+1,j}-q^{-1}\delta_{i+2,j})e_{i+1}
\\&\qquad+(q^{-2}\delta_{i,j}+q^{-1}(1-q^{-2})\delta_{i+1,j}-
q^{-2}\delta_{i+2,j})e_{i+2})\\
&=e_j-(\delta_{i,j}-q^{-2}\delta_{i+1,j})e_i-(1-q^{-2})(\delta _{i+1,j}-q^{-1}\delta _{i+2,j})e_{i+1}\\
&\qquad+(q^{-2}\delta_{i,j}+q^{-1}(1-q^{-2})\delta_{i+1,j}-
q^{-2}\delta_{i+2,j})e_{i+2})
\\
\intertext{and}
T_{i+1}T_iT_{i+1}(e_j)&=T_{i+1}T_i(e_j-(\delta_{i+1,j}-q^{-1}\delta_{i+2,j})(e_{i+1}-q^{-1}e_{i+2}))\\
 &=T_{i+1}(e_j-(\delta _{i,j}-q^{-2}\delta _{i+2,j})e_i+(q^{-1}\delta _{i,j}-\delta _{i+1,j}+q^{-1}(1-q^{-2})\delta _{i+2,j})e_{i+1}\\
&\qquad+q^{-1}(\delta_{i+1,j}-q^{-1}\delta_{i+2,j})e_{i+2})\\
 &=e_j-(\delta _{i,j}-q^{-2}\delta _{i+2,j})e_i
 -(1-q^{-2})(\delta _{i+1,j}-q^{-1}\delta _{i+2,j})e_{i+1}\\
 &\qquad+(q^{-2}\delta_{i,j}+q^{-1}(1-q^{-2})\delta_{i+1,j}-
 q^{-2}\delta_{i+2,j})e_{i+2}),
\end{align*}
for all $i\in[1,n-1]$, $j\in[1,n+1]$, it follows that all $T_i T_{i+1} T_i=T_{i+1}T_i T_{i+1}$.
Furthermore,
\begin{align*}
T_i^2(e_j)&=T_i(e_j-(q \delta_{i,j}-\delta_{i+1,j})(q e_i-e_{i+1})%
=e_j-(q\delta_{i,j}-\delta_{i+1,j})(1-q^{2})(q e_i-e_{i+1})\\
&=(1-q^2)T_i(e_j)+q^{2}e_j.
\end{align*}
The identity for~$T_i^{-1}$ is obvious. The last assertion follows since the matrix of~$T_i$ with respect to the
standard basis is obviously symmetric and thus the restriction of the transpose to the image of~$\Br_n$ in~$\End V$ coincides with~${}^{op}$.
\end{proof}

Given $i,j\in[1,n+1]$, define \plink{v[i,j]}$v_{[i,j]}=\sum_{i\le t\le j} q^t e_t$.

\begin{lemma}%
\label{lem:fixed point}
For all~$i\le j\in[1,n+1]$, $T(v_{[i,j]})=v_{[i,j]}$
for all $T\in\Br_n$ with $\supp(T)\subset [i,j-1]$. In particular, $T(v_{[1,n+1]})=v_{[1,n+1]}$ for all~$T\in\Br_n$.
\end{lemma}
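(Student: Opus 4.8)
The plan is to reduce the statement to a single computation on the standard generators $T_k$. First I would record the elementary principle that the set of vectors in $V$ fixed by a collection $S$ of invertible operators is stable under products and inverses, hence is fixed by the whole subgroup of invertible operators generated by $S$. Now if $T\in\Br_n$ satisfies $\supp(T)\subset[i,j-1]$, then $T$ lies in the parabolic subgroup $\Br_{[i,j-1]}(A_n)$, which is generated by the $T_k$ with $k\in[i,j-1]$; so by the principle just stated it suffices to prove $T_k(v_{[i,j]})=v_{[i,j]}$ for every single $k\in[i,j-1]$.

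For such a $k$ we have $i\le k<k+1\le j$, so both indices $k$ and $k+1$ belong to $[i,j]$. The defining formula for the Burau operator gives $T_k(e_\ell)=e_\ell$ whenever $\ell\notin\{k,k+1\}$, so $T_k$ already fixes the summand $\sum_{t\in[i,j]\setminus\{k,k+1\}}q^t e_t$ of $v_{[i,j]}$ term by term, and it remains to treat the two-dimensional piece $q^k e_k+q^{k+1}e_{k+1}$. From the same formula one reads off $T_k(e_k)=(1-q^2)e_k+qe_{k+1}$ and $T_k(e_{k+1})=qe_k$, whence
$$
T_k\bigl(q^k e_k+q^{k+1}e_{k+1}\bigr)=q^k(1-q^2)e_k+q^{k+1}e_{k+1}+q^{k+2}e_k=q^k e_k+q^{k+1}e_{k+1},
$$
the $q^{k+2}$ contributions cancelling. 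This establishes the first assertion, and the ``in particular'' follows by specializing $i=1$, $j=n+1$: then $[i,j-1]=[1,n]$ contains $\supp(T)$ for every $T\in\Br_n$, and $v_{[1,n+1]}$ is fixed.

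I do not expect any real obstacle here; the one point requiring a little care is that the statement concerns the Artin \emph{group} $\Br_n$ rather than just the monoid $\Br^+_n$, so the reduction to generators must be phrased so that inverses are allowed — which is why I isolate the observation that a common fixed vector of invertible operators is automatically fixed by the generated group. It is also worth noting, as the computation makes transparent, that the line $\kk v_{[i,j]}$ is $T_k$-stable for \emph{every} $k$, but the eigenvalue equals $1$ precisely when $k\in[i,j-1]$ (for $k=i-1$ or $k=j$ only one of $e_k,e_{k+1}$ lies in $[i,j]$, and the eigenvalue is a power of $q\ne 1$); this explains why the support hypothesis is essential.
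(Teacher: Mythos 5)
Your proof is correct and follows essentially the same route as the paper: reduce to the generators $T_k$ with $k\in[i,j-1]$ and check directly from the Burau formula that each such $T_k$ fixes $v_{[i,j]}$ (the paper does this in one line as the cancellation $-q^{k+1}(qe_k-e_{k+1})+q^{k+1}(qe_k-e_{k+1})$, which is exactly your computation written compactly); your explicit remark that fixed vectors of invertible operators are fixed by the generated group, so that inverses cause no trouble, is a fine way to make the reduction precise. One caveat: your closing side remark is false — for $k=i-1$ or $k=j$ the line $\kk v_{[i,j]}$ is \emph{not} $T_k$-stable, since e.g. $T_{i-1}(v_{[i,j]})=v_{[i,j]}+q^{i+1}e_{i-1}-q^{i}e_i$ involves $e_{i-1}$, which lies outside the span of $v_{[i,j]}$; the correct point is simply that such $T_k$ fail to fix $v_{[i,j]}$, which already explains why the support hypothesis is needed. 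This does not affect the proof itself.
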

\begin{proof}
It suffices to prove that~$T_k(v_{[i,j]})=v_{[i,j]}$ for all~$k\in [i,j-1]$. We have
\begin{equation*}
T_k(v_{[i,j]})=v_{[i,j]}-q^{k+1}(q e_k-e_{k+1})+q^{k+1}(q e_k-e_{k+1})=v_{[i,j]}.\qedhere
\end{equation*}
\end{proof}
Denote \plink{ui}$u_i=e_i-q^{-1}e_{i+1}$, $i\in [1,n]$ and let~$w^{(a)}_{[i,j]}=\sum_{i\le t\le j}q^{a t}u_t$, $a\in\mathbb Z$.
It is immediate from the definitions that for all~$i\in [1,n]$,
$T^{\pm1}_i(u_k)=u_k$ if $k\in[1,n+1]\setminus\{i-1,i,i+1\}$ while
\begin{equation}\label{eq:ind base cox w0}
\begin{array}{ll}
T^{\pm 1}_i(u_{i-1})=u_{i-1}+q^{\pm1} u_i,&  i\in [2,n],\\
T^{\pm 1}_i(u_i)=-q^{\pm 2} u_i,&i\in [1,n],\\
T^{\pm 1}_{i}(u_{i+1})=q^{\pm1} u_i+u_{i+1},&i\in[1,n-1].
\end{array}
\end{equation}

\begin{lemma}\label{lem:rev Cox act}
Let $i\le j\in [1,n]$, $k\in[1,n]$ and~$\epsilon\in\{1,-1\}$. Then
$$
\Cxr ij^{(\epsilon)}
(u_k)=\begin{cases}
u_k,&k\in[1,n]\setminus [i-1,j+1],\\
q^{\epsilon}u_{k-1}+\delta_{k,j+1}u_{j+1},&k\in[i+1,j+1],\\
q^{\epsilon(1-i)} w^{(\epsilon)}_{[i-1,j]},&k=i-1>0,\\
-q^{\epsilon(2-i)} w^{(\epsilon)}_{[i,j]},&k=i.
\end{cases}
$$
\end{lemma}
\begin{proof}
Since~$\supp(\Cxr{i}{ j}^{(\epsilon)})=[i,j]$, the assertion is obvious for~$k\in[1,n+1]\setminus [i-1,j+1]$. To prove it for~$k\in[i-1,j+1]$ we use induction on~$j-i$.
The induction base $j-i=0$ is~\eqref{eq:ind base cox w0}.
For the inductive step, for~$k\in [i+1,j]$ we have
\begin{align*}
\Cxr{i}{ j}^{(\epsilon)}(u_k)&=\Cxr{(k-1)}{ j}^{(\epsilon)}\Cxr{i}{(k-2)}^{(\epsilon)}(u_k)=\Cxr{k}{ j}^{(\epsilon)}T^{\epsilon}_{k-1}(u_k)
=\Cxr{k}{ j}^{(\epsilon)}(q^{\epsilon} u_{k-1}+u_{k})\\
&=\Cxr{(k+1)}{ j}^{(\epsilon)}(q^{\epsilon}(u_{k-1}+q^{\epsilon}u_k)-q^{2\epsilon} u_k)
=q^{\epsilon}u_{k-1}
\end{align*}
while for~$k=j+1$,
$$
\Cxr{i}{ j}^{(\epsilon)}(u_{j+1})=T_j^{\epsilon}(u_{j+1})=q^{\epsilon}u_j+u_{j+1}.
$$
For~$k=i-1$, using~\eqref{eq:ind base cox w0} and the induction hypothesis we obtain
\begin{align*}
\Cxr{i}{ j}^{(\epsilon)}(u_{i-1})&=
u_{i-1}+q^{\epsilon} \Cxr{(i+1)}{ j}^{(\epsilon)}(u_i)%
=u_{i-1}+q^{\epsilon(1-i)}w^{(\epsilon)}_{[i,j]}=q^{\epsilon(1-i)}w^{(\epsilon)}_{[i-1,j]}.
\end{align*}
Finally, for~$k=i$, \eqref{eq:ind base cox w0} and the induction hypothesis yield
\begin{align*}
\Cxr{i}{ j}^{(\epsilon)}(u_i)&=-q^{2\epsilon}\Cxr{(i+1)}{ j}^{(\epsilon)}(u_i)=-q^{\epsilon(2-i)} w^{(\epsilon)}_{[i,j]}.\qedhere
\end{align*}
\end{proof}
\begin{corollary}\label{cor:cox w +-}
Let $i\le j,k\le l\in [1,n]$ and $\epsilon,\epsilon'\in\{1,-1\}$. Then
$$
\Cxr ij^{(\epsilon)}(w^{(\epsilon')}_{[k,l]})=
\begin{cases}
w^{(\epsilon')}_{[k,l]},&k>j+1,\\
q^{\epsilon+\epsilon'}w^{(\epsilon')}_{[k-1,\min(l-1,j)]}+w^{(\epsilon')}_{[j+1,l]},&k\in[i+1,j+1],\\
q^{\epsilon+\epsilon'} w^{(\epsilon')}_{[i,\min(l-1,j]]}-
q^{2\epsilon+(\epsilon'-\epsilon)i}w^{(\epsilon)}_{[i,j]}
+w^{(\epsilon')}_{[j+1,l]},&k=i,\\
w^{(\epsilon')}_{[k,i-1]}+
q^{\epsilon+\epsilon'}w^{(\epsilon')}_{[i,\min(l-1,j)]}+w^{(\epsilon')}_{[j+1,l]}\\
\quad+q^{(\epsilon'-\epsilon)i}(q^{\epsilon-\epsilon'}-q^{2\epsilon}) w^{(\epsilon)}_{[i,j]},&k<i.
\end{cases}
$$
\end{corollary}
\begin{proof}
The first two cases are immediate from Lemma~\ref{lem:rev Cox act}. If~$k=i$,
\begin{align*}
\Cxr ij^{(\epsilon)}(w^{(\epsilon')}_{[i,l]})
&=q^{\epsilon' i}\Cxr ij^{(\epsilon)}(u_i)+q^{\epsilon+\epsilon'}w^{(\epsilon')}_{[i,\min(l-1,j)]}+w^{(\epsilon')}_{[j+1,l]}\\
&=-q^{(\epsilon'-\epsilon)i+2\epsilon} w^{(\epsilon)}_{[i,j]}+
q^{\epsilon+\epsilon'}w^{(\epsilon')}_{[i,\min(l-1,j)]}+w^{(\epsilon')}_{[j+1,l]}.
\end{align*}
Finally, if $k<i$,
\begin{align*}
\Cxr ij^{(\epsilon)}(w^{(\epsilon')}_{[k,l]})&=
w^{(\epsilon')}_{[k,i-2]}+q^{\epsilon'(i-1)}\Cxr ij^{(\epsilon)}(u_{i-1})-q^{(\epsilon'-\epsilon)i+2\epsilon} w^{(\epsilon)}_{[i,j]}+
q^{\epsilon+\epsilon'}w^{(\epsilon')}_{[i,\min(l-1,j)]}+w^{(\epsilon')}_{[j+1,l]}\\
&=
w^{(\epsilon')}_{[k,i-2]}+q^{(\epsilon'-\epsilon)(i-1)}
w^{(\epsilon)}_{[i-1,j]}-q^{(\epsilon'-\epsilon)i+2\epsilon} w^{(\epsilon)}_{[i,j]}+
q^{\epsilon+\epsilon'}w^{(\epsilon')}_{[i,\min(l-1,j)]}+w^{(\epsilon')}_{[j+1,l]}\\
&=w^{(\epsilon')}_{[k,i-1]}+
q^{(\epsilon'-\epsilon)i}(q^{\epsilon-\epsilon'}-q^{2\epsilon}) w^{(\epsilon)}_{[i,j]}+
q^{\epsilon+\epsilon'}w^{(\epsilon')}_{[i,\min(l-1,j)]}+w^{(\epsilon')}_{[j+1,l]}.\qedhere
\end{align*}
\end{proof}

\begin{lemma}\label{lem:Tw0ij act ui}
For all $i\le j\in[1,n]$, $k\in[1,n]$, $\epsilon\in\{1,-1\}$
\begin{equation}
T_{w_\circ^{[i,j]}}^{\epsilon}(u_k)=\begin{cases}
u_k,&k\in[1,n]\setminus[i-1,j+1],\\
-q^{\epsilon(j-i+2)}u_{i+j-k},&k\in[i,j],\\
q^{\epsilon(j+1)}w^{(-\epsilon)}_{[i,j+1]},&k=j+1,\\
q^{-\epsilon(i-1)}w^{(\epsilon)}_{[i-1,j]},&k=i-1.
\end{cases}\label{eq:Tw0 act uk}
\end{equation}
In particular,
\begin{equation}\label{eq:Tw0^2 eigen}\{ u_k\,:\, k\in[i,j]\}\subset
\ker(T_{w_\circ^{[i,j]}}^2-q^{2(j-i+2)}\id_V)
\end{equation}
and
\begin{equation}\label{eq:Tw0 eigen}
w^{(\epsilon)}_{[k,l]}\in\ker(T_{w_\circ^{[i,j]}}-\id_V),\qquad [i-1,j+1]\subset [k,l]\subset [1,n+1].
\end{equation}
\end{lemma}
\begin{proof}
The first case in~\eqref{eq:Tw0 act uk} is obvious. For the remaining cases, we use induction on~$j-i$.
The case $j=i$ has already been established in~\eqref{eq:ind base cox w0}.

Note that $T_{w_\circ^{[i,j]}}^{\epsilon}=T_{w_\circ^{[i,j]}}^{\epsilon}\Cxr{i}{ j}^{(\epsilon)}$. Indeed,
$T_{w_\circ^{[i,j]}}=T_{w_\circ^{[i,j]}}\Cxr ij$ while, since $T_{w_\circ^{[i,j]}}$ is
${}^{op}$-invariant by~\cite{BrSa}*{Lemma~5.1}, $T_{w_\circ^{[i,j]}}^{-1}=(\Cx ij T_{w_\circ^{[i,j-1]}})^{-1}=T_{w_\circ^{[i,j-1]}}^{-1}\Cx ij^{-1}$.

For the inductive step, for~$k\in[i+1,j]$
we have by Lemma~\ref{lem:rev Cox act} and the induction hypothesis
\begin{align*}
T_{w_\circ^{[i,j]}}^{\epsilon}(u_k)&=T_{w_\circ^{[i,j-1]}}^{\epsilon}\Cxr{i}{ j}^{(\epsilon)}(u_k)=q^{\epsilon} T_{w_\circ^{[i,j-1]}}^{\epsilon}(u_{k-1})%
=-q^{\epsilon(j-i+2)}u_{i+j-k},
\end{align*}
while for~$k=i$
\begin{align}
T_{w_\circ^{[i,j]}}^{\epsilon}(u_i)&=T_{w_\circ^{[i,j-1]}}^{\epsilon}\Cxr{i}{ j}^{(\epsilon)}(u_i)=-q^{\epsilon(2-i)} T_{w_\circ^{[i,j-1]}}^{\epsilon}(w^{(\epsilon)}_{[i,j]})
\nonumber\\
&=q^{\epsilon(2-i)}\sum_{i\le t\le j-1}q^{\epsilon(t-i+j+1)}u_{i+j-1-t}-q^{\epsilon(2+j-i)} T_{w_\circ^{[i,j-1]}}^{\epsilon}(u_j)
\nonumber\\
&=q^{\epsilon(2+2j-i)} w^{(-\epsilon1)}_{[i,j-1]}-q^{\epsilon(2+2j-i)} w^{(-\epsilon1)}_{[i,j]}%
=-q^{\epsilon(j-i+2)}u_{j}.\label{eq:Tw0ij ui}
\end{align}
Furthermore,
\begin{align*}
T_{w_\circ^{[i,j]}}^{\epsilon}(u_{j+1})&=T_{w_\circ^{[i,j-1]}}^{\epsilon}\Cxr{i}{ j}^{(\epsilon)}(u_{j+1})=T_{w_\circ^{[i,j-1]}}^{\epsilon}(q^{\epsilon} u_j+u_{j+1})\\
&=q^{\epsilon(j+1)}w^{(-\epsilon1)}_{[i,j]}+u_{j+1}=q^{\epsilon(j+1)} w^{(-\epsilon1)}_{[i,j+1]}.
\end{align*}
Finally, by Lemma~\ref{lem:rev Cox act}
\begin{align*}
T_{w_\circ^{[i,j]}}^{\epsilon}(u_{i-1})&=T_{w_\circ^{[i,j-1]}}^{\epsilon}\Cxr{i}{ j}^{(\epsilon)}(u_{i-1})
=q^{-\epsilon(i-1)} T_{w_\circ^{[i,j-1]}}^{\epsilon}(w^{(\epsilon)}_{[i-1,j]}).%
\end{align*}
As we already established in~\eqref{eq:Tw0ij ui},
$$
T_{w_\circ^{[i,j-1]}}^{\epsilon}(w^{(\epsilon)}_{[i,j]})=q^{\epsilon j} u_j.
$$
Using the induction hypothesis, we obtain
\begin{align*}
T_{w_\circ^{[i,j]}}^{\epsilon}(u_{i-1})&=T_{w_\circ^{[i,j-1]}}^{\epsilon}(u_{i-1})+q^{\epsilon(j-i+1)}u_j=q^{-\epsilon(i-1)}(w^{(\epsilon)}_{[i-1,j-1]}+q^{\epsilon j}u_j)%
=q^{-\epsilon(i-1)}w^{(\epsilon)}_{[i-1,j]}.
\end{align*}

The inclusion in~\eqref{eq:Tw0^2 eigen} is immediate. It is also clear that it suffices to prove~\eqref{eq:Tw0 eigen} for~$k=i-1$, $l=j+1$. Note that
$$
T_{w_\circ^{[i,j]}}^{-\epsilon}(u_{j+1})=q^{-\epsilon(j+1)}w^{(\epsilon)}_{[i-1,j+1]}-q^{-\epsilon(j-i+2)} u_{i-1}
$$
while
$$
T_{w_\circ^{[i,j]}}^{\epsilon}(u_{i-1})=q^{-\epsilon(i-1)}w^{(\epsilon)}_{[i-1,j+1]}-q^{\epsilon(j-i+2)}u_{j+1}
$$
whence
$$
w^{(\epsilon)}_{[i-1,j+1]}=q^{\epsilon(i-1)} T_{w_\circ^{[i,j]}}^{\epsilon}(u_{i-1})+q^{\epsilon(j+1)}u_{j+1}=
q^{\epsilon(j+1)}T_{w_\circ^{[i,j]}}^{-\epsilon 1}(u_{j+1})+q^{\epsilon(i-1)}u_{i-1}
$$
and so
$$
T_{w_\circ^{[i,j]}}^{\epsilon}(w^{(\epsilon)}_{[i-1,j+1]})=q^{\epsilon(j+1)}u_{j+1}+q^{\epsilon(i-1)}T_{w_\circ^{[i,j]}}^{\epsilon}(u_{i-1})
=w^{(\epsilon)}_{[i-1,j+1]},
$$
which immediately yields~\eqref{eq:Tw0 eigen}.
\end{proof}

\begin{lemma}\label{lem:Transp action}
For all $1\le i<j\le n$, $k\in[1,n]$,
$$
T_{(i,j+1)}(u_k)=\begin{cases}
				u_k,& k\in [1,n+1]\setminus [i-1,j+1],\\
				q^2 u_k,&k\in [i+1,j-1],\\
				-q^{j+2} w^{(-1)}_{[i,j-1]},&k=j,\\
				q^{j+1} w^{(-1)}_{[i,j+1]},&k=j+1,\\
				-q^{2-i} w^{(1)}_{[i+1,j]},&k=i,\\
				q^{1-i} w^{(1)}_{[i-1,j]},&k=i-1.
                 \end{cases}
$$
\end{lemma}
\begin{proof}
The assertion is obvious when $k\in[1,n+1]\setminus[i,j+1]$. For~$k\in [i-1,j+1]$
we use induction on~$j-i$. If~$j=i+1$, $T_{(i,j+1)}=T_{w_\circ^{[i,i+1]}}$
and the assertion follows from Lemma~\ref{lem:Tw0ij act ui}.

For the inductive step, recall that $T_{(i,j+1)}=T_i T_{(i+1,j+1)}T_i=T_j T_{(i,j)}T_j$.
Suppose first that~$k\in[i+2,j-1]$. Then
$$
T_{(i,j+1)}(u_k)=T_i T_{(i+1,j+1)}(u_k)=q^2 T_i(u_k)=q^2 u_k,
$$
while for~$k=i+1$ by~\eqref{eq:ind base cox w0} and the induction hypothesis
\begin{align*}
T_{(i,j+1)}(u_{i+1})&=T_i T_{(i+1,j+1)}(q u_i+u_{i+1})=T_i( q^{1-i}w^{(1)}_{[i,j]}-q^{1-i}w^{(1)}_{[i+2,j]})\\
&=T_i(q u_i+q^2 u_{i+1})=-q^3 u_i+q^2(q u_i+u_{i+1})=q^2 u_{i+1}.
\end{align*}
Furthermore, since $j>i+1$ and so $T_i(u_k)=u_k$ for~$k\in \{j,j+1\}$,
\begin{align*}
T_{(i,j+1)}(u_{j+1})%
&=q^{j+1} T_i(w^{(-1)}_{[i+1,j+1]})%
=q^{j-i} T_i(u_{i+1})+q^{j+1}w^{(-1)}_{[i+2,j+1]}\\
&=q^{j-i}(q u_i+u_{i+1})+q^{j+1}w^{(-1)}_{[i+2,j+1]}%
=q^{j+1} w^{(-1)}_{[i,j+1]},
\intertext{while}
T_{(i,j+1)}(u_j)&=-q^{j+2} T_i(w^{(-1)}_{[i+1,j-1]})=-q^{j+1-i} T_i(u_{i+1})-q^{j+2} w^{(-1)}_{[i+2,j-1]}\\
&=-q^{j+1-i}(q u_i+u_{i+1})-q^{j+2} w^{(-1)}_{[i+2,j-1]}=-q^{j+2} w^{(-1)}_{[i,j-1]}.
\end{align*}
Finally, for~$k\in \{i-1,i\}$, $T_j(u_k)=u_k$ and so
\begin{align*}
T_{(i,j+1)}(u_i)&=-q^{2-i} T_j ( w^{(1)}_{[i+1,j-1]})=-q^{2-i} w^{(1)}_{[i+1,j-2]}-q^{j+1-i} T_j(u_{j-1})\\
&=-q^{2-i} w^{(1)}_{[i+1,j-2]}-q^{j+1-i} (q u_{j-1}+u_j)%
=-q^{2-i} w^{(1)}_{[i+1,j]},
\intertext{while}
T_{(i,j+1})(u_{i-1})&=q^{1-i} T_j (w^{(1)}_{[i-1,j-1]})=q^{1-i}w^{(1)}_{[i-1,j-2]}+q^{j-i}T_j(u_{j-1})%
=q^{1-i}w^{(1)}_{[i-1,j]}.\qedhere
\end{align*}
\end{proof}

Now we describe eigenspaces of~$T_J^{|J|}$ for a special choice of~$J$.
\begin{proposition}\label{prop:TJ eigenvectors}
Let $m\in [1,n-1]$ and let~$J=[1,m]\cup \{n+1\}$.
Then
\begin{enmalph}
\item\label{prop:TJ eigenvectors.a} $\{ u_i\,:\,i\in[m+1,n-1]
\subset \ker(T_J-q^{2}\id_V)$;
\item\label{prop:TJ eigenvectors.b}
$\{u_i\,:\,i\in[1,m-1]\}\cup \{w^{(\epsilon)}_{[m,n]}\,:\,\epsilon\in\{1,-1\}\}$
is a basis of~$\ker(T_J^{|J|}-q^{2(n+1)}\id_V)$;
\item\label{prop:TJ eigenvectors.c} $T_J^{|J|}$ is diagonalizable on~$V$,
$\det(t\id_V-T_J^{|J|})=(t-1)(t-q^{2(n+1)})^{m+1}(t-q^{2m})^{n-m-1}$
and $\det(t\id_V-T_J)=(t-1)(t^{m+1}-q^{2(n+1)})(t-q^{2m})^{n-m-1}$.
\end{enmalph}
\end{proposition}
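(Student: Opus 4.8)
The plan is to reduce the whole statement to the action of a product of two ``longest element'' braids, whose eigenvectors are already pinned down by Lemma~\ref{lem:Tw0ij act ui}. The first step is the operator identity
\[
T_J^{|J|}=T_{w_\circ^{[1,n]}}^{2}\,T_{w_\circ^{[m+1,n-1]}}^{-2}\qquad\text{in }\Br_n ,
\]
valid because $|J|=m+1$. For $m\ge 2$ this is Proposition~\ref{prop:TJm power central} applied to $[1,m]\cup\{n+1\}=[1,(m-1)+1]\cup\{n+1\}$ together with Proposition~\ref{prop:Z_r is central} at $r=m-1$; the boundary case $m=1$, where $J=\{1,n+1\}$ and $T_J=T_{(1,n+1)}$, is checked directly (equivalently, the proof of Proposition~\ref{prop:Z_r is central} applies verbatim at $r=0$, since $Z_0=T_{(1,n+1)}^2$). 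Because $T_{w_\circ^{[1,n]}}^{2}$ is central in $\Br^+_n$ by Proposition~\partref{prop:fund elts BrSa.d}, the two factors commute and can be diagonalized simultaneously.

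Next I would feed the vectors from Lemma~\ref{lem:Tw0ij act ui} into this identity. By \eqref{eq:Tw0^2 eigen} (with $i=1$, $j=n$) every $u_k$, $k\in[1,n]$, and hence each $w^{(\epsilon)}_{[m,n]}$, lies in $\ker(T_{w_\circ^{[1,n]}}^{2}-q^{2(n+1)}\id_V)$, while $v_{[1,n+1]}$ is fixed by all of $\Br_n$ by Lemma~\ref{lem:fixed point}. For the second factor: $u_k$ with $k\in[1,m-1]$ is fixed by $T_{w_\circ^{[m+1,n-1]}}$ because its support misses $[m,n]$ (first case of Lemma~\ref{lem:Tw0ij act ui}); $w^{(\epsilon)}_{[m,n]}$ is fixed by $T_{w_\circ^{[m+1,n-1]}}$ by \eqref{eq:Tw0 eigen}, since $[m,n]=[(m+1)-1,(n-1)+1]$; and $u_k$ with $k\in[m+1,n-1]$ lies in $\ker(T_{w_\circ^{[m+1,n-1]}}^{2}-q^{2(n-m)}\id_V)$ by \eqref{eq:Tw0^2 eigen}. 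Combining, $T_J^{|J|}$ acts as $q^{2(n+1)}$ on $\Span\big(\{u_k:k\in[1,m-1]\}\cup\{w^{(1)}_{[m,n]},w^{(-1)}_{[m,n]}\}\big)$, as $q^{2(n+1)-2(n-m)}=q^{2(m+1)}$ on $\Span\{u_k:k\in[m+1,n-1]\}$, and as $1$ on $\kk v_{[1,n+1]}$. Part~(a), which concerns $T_J$ itself, I would prove directly: $T_J=\tilde\tau_1(J)\tilde\tau_0(J)$ only involves $T_1,\dots,T_{m-1}$ and $T_{(m,n+1)}$; the first $m-1$ of these fix $u_k$ for $k\in[m+1,n-1]$, while $T_{(m,n+1)}(u_k)=q^{2}u_k$ there by Lemma~\ref{lem:Transp action}, so $T_J(u_k)=q^{2}u_k$.

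To finish parts~(b) and~(c), I would check that the $n+1$ vectors $\{u_k:k\in[1,m-1]\cup[m+1,n-1]\}\cup\{w^{(1)}_{[m,n]},w^{(-1)}_{[m,n]},v_{[1,n+1]}\}$ are linearly independent --- a brief inspection of leading coefficients in the standard basis $e_1,\dots,e_{n+1}$ --- hence a basis of $V$. Then the three eigenspaces above have dimensions exactly $m+1$, $n-m-1$, $1$, the eigenvalues $q^{2(n+1)},q^{2(m+1)},1$ are pairwise distinct since $q$ is not a root of unity and $1\le m\le n-1$, and so $T_J^{|J|}$ is diagonalizable with $\det(t\id_V-T_J^{|J|})=(t-1)(t-q^{2(n+1)})^{m+1}(t-q^{2(m+1)})^{n-m-1}$; the degenerate case $m=n-1$ (empty interval $[m+1,n-1]$) is included automatically. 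For the characteristic polynomial of $T_J$ itself: the $q^{2(n+1)}$-eigenspace $W$ of $T_J^{|J|}$ is $T_J$-invariant because $T_J$ commutes with $T_J^{|J|}$, and $(T_J|_W)^{m+1}=q^{2(n+1)}\id_W$, so the minimal polynomial of $T_J|_W$ divides the squarefree polynomial $t^{m+1}-q^{2(n+1)}$; together with $T_J|_{\Span\{u_k:k\in[m+1,n-1]\}}=q^{2}\id$ and $T_J(v_{[1,n+1]})=v_{[1,n+1]}$ this forces $\det(t\id_V-T_J)=(t-1)\,p(t)\,(t-q^{2})^{n-m-1}$ with $p\mid t^{m+1}-q^{2(n+1)}$ and $\deg p=m+1$, whence $p(t)=t^{m+1}-q^{2(n+1)}$.

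The main obstacle I anticipate is the very last point: showing that $T_J|_W$ realizes \emph{all} $(m+1)$-st roots of $q^{2(n+1)}$, i.e.\ that $W$ is a single cyclic block for $T_J$ rather than splitting into proper $T_J^{|J|}$-invariant pieces whose eigenvalue multiplicities one cannot control a priori. I would settle this either by exhibiting a cyclic vector for $T_J|_W$ (a short explicit computation of $T_J,T_J^2,\dots$ on, say, $u_1$ or $w^{(1)}_{[m,n]}$) or by a specialization argument: at $q=1$ the modified Burau operators become the permutation representation and $\pi_n(T_J)$ is an $(m+1)$-cycle by Corollary~\ref{cor:TJ coxeter}, so the eigenvalues of $T_J|_W$ specialize to the full set of $(m+1)$-st roots of unity, each simple, and this persists for generic $q$; the length formula \eqref{eq:len TJ} together with $\det(T_i)=-q^2$ also pins down $\det(T_J|_W)$ as an independent check. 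A secondary, purely bookkeeping obstacle is verifying the operator identity at $m=1$ and handling the empty intervals uniformly.
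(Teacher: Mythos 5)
Your proposal is correct and follows the paper's own route: the key identity $T_J^{|J|}=T_{w_\circ^{[1,n]}}^{2}T_{w_\circ^{[m+1,n-1]}}^{-2}$ (which you could cite directly as Proposition~\ref{prop:g J=1} with $a=m$, $b=n$, valid uniformly for $m\in[1,n-1]$, rather than reassembling it from Propositions~\ref{prop:TJm power central} and~\ref{prop:Z_r is central} and treating $m=1$ separately), the Burau eigenvector computations of Lemmata~\ref{lem:Tw0ij act ui} and~\ref{lem:Transp action}, linear independence of the $m+1$ vectors in the $q^{2(n+1)}$-eigenspace, and a dimension count over the three distinct eigenvalues. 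Two remarks. First, your exponents are the right ones: $T_J^{|J|}$ acts by $q^{2(m+1)}$ on the $u_i$, $i\in[m+1,n-1]$, and $T_J$ by $q^{2}$ there, consistent with part~(a) and with Corollary~\ref{cor:TJ eigenvectors}; the exponent $q^{2m}$ appearing in the statement is a misprint. Second, the point you flag as the main obstacle---that $(T_J|_W)^{m+1}=q^{2(n+1)}\id_W$ on the $(m+1)$-dimensional space $W$ only bounds the minimal polynomial and does not by itself force the characteristic polynomial of $T_J|_W$ to be $t^{m+1}-q^{2(n+1)}$---is precisely the step the paper passes over as ``immediate''; your specialization argument does close it (the characteristic polynomial of $T_J$ has coefficients in $\mathbb Z[q^{\pm1}]$, its monic degree-$(m+1)$ factor specializes at $q=1$ to $t^{m+1}-1$ because $\pi_n(T_J)$ is an $(m+1)$-cycle by Corollary~\ref{cor:TJ coxeter}, so its discriminant cannot vanish identically and all $m+1$ roots occur simply), and an explicit cyclic vector for $T_J|_W$ would work just as well.
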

\begin{proof}
Let $i\in[m+1,n-1]$. Then $T_{(m,n+1)}(u_i)=q^2 u_i$ by
Lemma~\ref{lem:Transp action} and $T_j(u_i)=u_i$ for all~$j\in [1,m-1]$ by~\eqref{eq:ind base cox w0}.
Since $T_J$ is the product of~$T_{(m,n+1)}$ with the $T_j$, $j\in[1,m-1]$,
part~\ref{prop:TJ eigenvectors.a} follows.

We now prove~\ref{prop:TJ eigenvectors.b}.
By Proposition~\ref{prop:g J=1}, $T_J^{|J|}=T_{w_\circ^{[1,n+1]}}^2 T_{w_\circ^{[m+1,n-1]}}^{-2}$.
By Lemma~\ref{lem:Tw0ij act ui}, $T_{w_\circ^{[1,n]}}^2(u_k)=
q^{2(n+1)}u_k$ for all~$k\in [1,n]$, while $T_{w_\circ^{[m+1,n-1]}}(u_k)=u_k$,
$k\in[1,m-1]$ and $T_{w_\circ^{[m+1,n-1]}}(w^{(\pm1)}_{[m,n]})=w^{(\pm1)}_{[m,n]}$. Therefore,
$\{u_i\,:\,i\in [1,m-1]\}\cup\{w^{(1)}_{[m,n]},w^{(-1)}_{[m,n]}\}\subset \ker(T_J^{|J|}-q^{2(n+1)}\id_V)$.

Next we claim that $\{u_i\,:\,i\in [1,m-1]\}\cup\{w^{(1)}_{[m,n]},w^{(-1)}_{[m,n]}\}$
is linearly independent. Indeed, since
$w^{(-1)}_{[m,n]}=q^{-m}e_m-q^{-n-1}e_{n+1}$, $w^{(-1)}_m$ is not
contained in the span of the~$u_i$, $i\in [1,m-1]$, which are manifestly linearly independent. Since
the coefficient of~$e_i$, $i\in[m+1,n]$ in~$w^{(1)}_{[m,n]}$
is $q^i-q^{i-2}\not=0$, while the $u_i$, $i\in[1,m-1]$
and~$w^{(-1)}_{[m,n]}$ are contained in the span of~$\{ e_j\,:\,
j\in[1,m]\cup\{n+1\}\}$,
it follows that $w^{(1)}_{[m,n]}$, is not
in the span of~$\{ u_i\,:\,i\in[1,m-1]\}\cup\{w^{(-1)}_m\}$.

In particular, $\dim\ker(T_J^{|J|}-q^{2(n+1)}\id_V)\ge m+1$.
By part~\ref{prop:TJ eigenvectors.a}, $\dim
\ker(T_J^{|J|}-q^{2(m+1)})\ge n-m-1$. By Lemma~\ref{lem:fixed point}, $\dim\ker(T_J^{|J|}-\id_V)\ge 1$. Since $(n-m-1)+(m+1)+1=n+1=\dim V$,
and the sum of $\ker(T_J^{|J|}-\lambda\id_V)$ with
$\lambda\in\{1,q^{2(m+1)},q^{2(n+1)}\}$ is direct,
we conclude that
all these inequalities are in fact equalities.
Therefore, $\{ u_i\,:\, i\in[m+1,n]\}$ is a basis
of~$\ker(T_J^{|J|}-q^{2(m+1)}\id_V)$,
$\{u_i\,:\, i\in [1,m-1]\}\cup\{w^{(1)}_{[m,n]},w^{(-1)}_{[m,n]}\}$ is a
basis of~$\ker(T_J^{|J|}-q^{2(n+1)}\id_V)$ and
$v_{[1,n+1]}$ spans~$\ker(T_J^{|J|}-\id_V)$. The remaining assertions are
now immediate.
\end{proof}
The following is an immediate consequence of Proposition~\ref{prop:TJ eigenvectors} and Corollary~\ref{cor:conj J}.
\begin{corollary}\label{cor:TJ eigenvectors}
Let $\{1,n+1\}\subset J\subset [1,n+1]$ with $|J|\le n$. Then
$T_J^{|J|}$ is diagonalizable on~$V$ and
$$
\det(t\id_V-T_J^{|J|})=(t-1)(t-q^{2(n+1)})^{|J|}(t-q^{2|J|})^{n-|J|},
$$
while
$$
\det(t\id_V-T_J)=(t-1)(t^{|J|}-q^{2(n+1)})(t-q^{2})^{n-|J|}.
$$
In particular, $T_J$ is diagonalizable on~$V$ provided that~$\kk$ contains all $|J|$th roots of~$q^{2(n+1)}$. Finally,
$T_J$ is conjugate to~$T_{J'}$ in~$\Br_n$
if and only if~$|J|=|J'|$.
\end{corollary}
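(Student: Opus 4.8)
The plan is to deduce the whole statement from Proposition~\ref{prop:TJ eigenvectors} and Corollary~\ref{cor:conj J} by transporting the situation to the distinguished set $J_0:=[1,|J|-1]\cup\{n+1\}$. Since $\{1,n+1\}\subset J$ and $2\le|J|\le n$, the integer $m:=|J|-1$ lies in $[1,n-1]$, and $J_0$ satisfies $\min J_0=\min J=1$, $\max J_0=\max J=n+1$, $|J_0|=|J|$. By Corollary~\ref{cor:conj J} there is an explicit $U\in\Br_n$ with $T_{J_0}=UT_JU^{-1}$, so applying the representation of $\Br_n$ on $V$ from Proposition~\ref{prop:Burau} makes $T_J$ and $T_{J_0}$, and hence also $T_J^{|J|}$ and $T_{J_0}^{|J|}$, conjugate operators on $V$. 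Conjugate operators share characteristic polynomials and the property of being diagonalizable, so the two determinant formulas and the diagonalizability of $T_J^{|J|}$ follow from Proposition~\partref{prop:TJ eigenvectors.c} for $J_0$ after substituting $m=|J|-1$; moreover Proposition~\partref{prop:TJ eigenvectors.a}, Proposition~\partref{prop:TJ eigenvectors.b} and Lemma~\ref{lem:fixed point} identify the three eigenvalues of $T_J^{|J|}$ as $1$, $q^{2|J|}$ and $q^{2(n+1)}$, with respective multiplicities $1$, $n-|J|$ and $|J|$.

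Next I would promote diagonalizability of $T_J^{|J|}$ to that of $T_J$ when $\kk$ contains all $|J|$-th roots of $q^{2(n+1)}$. Write $V=\bigoplus_{\mu}\ker(T_J^{|J|}-\mu\,\id_V)$ over $\mu\in\{1,q^{2|J|},q^{2(n+1)}\}$; these three scalars are pairwise distinct because $q$ is not a root of unity, using $1\le|J|$ and $1\le n+1-|J|$. On the one-dimensional summand $\ker(T_J^{|J|}-\id_V)$ the operator $T_J$ is the identity by Lemma~\ref{lem:fixed point}. On $\ker(T_J^{|J|}-q^{2|J|}\,\id_V)$, which has dimension $n-|J|$, the $n-|J|$ linearly independent genuine $q^2$-eigenvectors of $T_J$ obtained by applying $U^{-1}$ to the vectors $u_i$, $i\in[m+1,n-1]$, of Proposition~\partref{prop:TJ eigenvectors.a} force $T_J$ to act there as $q^2\,\id$. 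On the $|J|$-dimensional summand $\ker(T_J^{|J|}-q^{2(n+1)}\,\id_V)$ the operator $T_J$ satisfies $x^{|J|}=q^{2(n+1)}$, hence its minimal polynomial there divides the separable polynomial $t^{|J|}-q^{2(n+1)}$, which splits over $\kk$ by hypothesis; so $T_J$ is diagonalizable on that summand too, and therefore on $V$.

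Finally, for the conjugacy criterion: if $|J|=|J'|$ then by Corollary~\ref{cor:conj J} both $T_J$ and $T_{J'}$ are conjugate in $\Br_n$ to $T_{[1,|J|-1]\cup\{n+1\}}$, hence to each other. Conversely, if $T_J$ and $T_{J'}$ are conjugate in $\Br_n$ then their images in $\End V$ have the same characteristic polynomial, and by the formula $\det(t\,\id_V-T_J)=(t-1)(t^{|J|}-q^{2(n+1)})(t-q^2)^{n-|J|}$ just established, the multiplicity of the eigenvalue $q^2$ equals $n-|J|$ — here one uses $q^2\ne 1$ and $(q^2)^{|J|}=q^{2|J|}\ne q^{2(n+1)}$ (as $|J|\le n$ and $q$ is not a root of unity), so $q^2$ is not a root of $t^{|J|}-q^{2(n+1)}$ — whence $|J|$ is recovered from the characteristic polynomial and $|J|=|J'|$. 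I expect the only delicate point to be the second paragraph: the passage from $T_J^{|J|}$ to $T_J$ genuinely needs the explicit eigenvectors of Proposition~\ref{prop:TJ eigenvectors} rather than a bare roots-of-unity argument (the relevant roots of unity need not lie in $\kk$), together with the extra hypothesis on $\kk$; everything else is bookkeeping with distinctness of powers of $q$, which is where "$q$ is not a root of unity" is used throughout.
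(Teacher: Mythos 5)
Your argument is correct and is exactly the route the paper intends: the statement is presented there as an immediate consequence of Proposition~\ref{prop:TJ eigenvectors} and Corollary~\ref{cor:conj J}, i.e.\ transport of the eigenvalue data for $T_{[1,|J|-1]\cup\{n+1\}}$ via the explicit conjugation $U(J)$. Your second paragraph (identity on the fixed line, $q^{2}$ on the $(n-|J|)$-dimensional block via $U^{-1}u_i$, and the separable polynomial $t^{|J|}-q^{2(n+1)}$ on the remaining block) and the recovery of $|J|$ from the multiplicity of the eigenvalue $q^{2}$ just spell out the details the paper leaves implicit, so no changes are needed.
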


Let~\plink{<.|.>}$\la\cdot\vert\cdot\ra:V\tensor V\to \kk$ be the standard symmetric bilinear form defined by $\la e_i\vert e_j\ra=\delta_{i,j}$, $i,j\in [1,n+1]$. Then
\begin{equation}\label{eq:Euclid ui uj}
\la u_i\vert u_j\ra
=\delta_{i,j}(1+q^{-2})-q^{-1}(\delta_{i+1,j}+\delta_{i,j+1}),\qquad
i,j\in [1,n].
\end{equation}
Note the following
\begin{lemma}\label{lem:orth eigenspaces}
Let $X\in \Br_n$ be ${}^{op}$-invariant and let $\lambda\not=\mu\in\kk$.
Then $$\la\ker(X-\lambda\id_V)\,|\,\ker(X-\mu\id_V)\ra=\{0\}.
$$
\end{lemma}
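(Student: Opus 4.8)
The statement to prove is a standard fact about symmetric (self-adjoint) operators: eigenvectors for distinct eigenvalues are orthogonal. The only wrinkle is that $X$ need not be literally symmetric as a matrix — it is ${}^{op}$-invariant, and by Proposition~\ref{prop:Burau} the matrix of $X^{op}$ is the transpose of the matrix of $X$, so ${}^{op}$-invariance of $X$ means exactly that the matrix of $X$ in the standard basis $\{e_i\}$ is symmetric. Hence $X$ is self-adjoint with respect to $\la\cdot\vert\cdot\ra$, and the classical argument applies verbatim.

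\begin{proof}
Since~$X$ is ${}^{op}$-invariant, by Proposition~\ref{prop:Burau} the matrix of~$X=X^{op}$ in the standard basis~$\{e_i\}_{1\le i\le n+1}$ equals its own transpose. As~$\la\cdot\vert\cdot\ra$ is the standard symmetric bilinear form for which~$\{e_i\}$ is an orthonormal basis, this means~$\la Xv\vert w\ra=\la v\vert Xw\ra$ for all~$v,w\in V$. Now let~$v\in\ker(X-\lambda\id_V)$ and~$w\in\ker(X-\mu\id_V)$. Then
$$
\lambda\la v\vert w\ra=\la Xv\vert w\ra=\la v\vert Xw\ra=\mu\la v\vert w\ra,
$$
so~$(\lambda-\mu)\la v\vert w\ra=0$. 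Since~$\lambda\not=\mu$, we conclude~$\la v\vert w\ra=0$, which proves the claim.
\end{proof}

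The plan above is essentially the whole proof; there is no real obstacle, only the bookkeeping observation that ${}^{op}$-invariance translates into self-adjointness via the last assertion of Proposition~\ref{prop:Burau}. If one wanted to avoid even invoking that the matrix is symmetric, one could argue directly that for any~$T\in\Br_n$ and any~$v,w$ one has~$\la Tv\vert w\ra=\la v\vert T^{op}w\ra$ (checking it on generators~$T_i$, whose matrices are manifestly symmetric, and extending by the anti-homomorphism property of~${}^{op}$), and then specialize to~$T=X=X^{op}$; but since Proposition~\ref{prop:Burau} already records exactly this, the short argument suffices.
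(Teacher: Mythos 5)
Your proof is correct and follows the same route as the paper: the paper likewise notes that the adjoint of $X$ with respect to $\la\cdot\vert\cdot\ra$ is $X^T$, which coincides with $X^{op}$ by Proposition~\ref{prop:Burau}, so ${}^{op}$-invariance gives self-adjointness, and then invokes the standard orthogonality argument that you spelled out explicitly.
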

\begin{proof}
The adjoint operator of~$X\in \End V$ with respect to~$\la\cdot|\cdot\ra$ is~$X^T$ which coincides with~$X^{op}$ for~$X\in\Br_n$ by Proposition~\ref{prop:Burau}. The assertion is now standard.
\end{proof}
\begin{proof}[Proof of Theorem~\ref{thm:adm I2m converse }]
By Corollary~\ref{cor:TJ coxeter}, $T_J^{k}$ is not ${}^{op}$-invariant
unless~$|J|$ divides~$2k$. Note that if~$|J|$ is even and
$X=T_J^{k|J|/2}$ is ${}^{op}$-invariant then so is~$X^2=T_J^{k|J|}$.
Thus, it suffices to prove that~$T_J^{k|J|}$ is not ${}^{op}$-invariant
for all~$k\in\mathbb Z_{>0}$.
By Lemma~\ref{lem:orth eigenspaces} and Corollary~\ref{cor:TJ eigenvectors}, it suffices to prove that
for all~$k\in\mathbb Z_{>0}$
$$
\la \ker(T_J^{k|J|}-q^{2k(n+1)}\id_V)\,\vert\,\ker(T_J^{k|J|}-q^{2k|J|}\id_V)\ra\not=\{0\}.
$$
Since~$\ker(T_J^{k|J|}-\lambda^k)=
\ker(T_J^{|J|}-\lambda)$ for all~$\lambda\in\kk$ by Corollary~\ref{cor:TJ eigenvectors}, it suffices to prove that
$$
\la \ker(T_J^{|J|}-q^{2(n+1)}\id_V)\,\vert\,\ker(T_J^{|J|}-q^{2|J|}\id_V)\ra\not=\{0\}.
$$
Let~$m=|J|-1$.
Since~$U(J)T_JU(J)^{-1}=T_{[1,m]\cup\{n+1\}}$ by Corollary~\ref{cor:conj J},
where~$U(J)$ is as defined in~\eqref{eq:U(J) defn}, it suffices to
prove the following
\begin{proposition}\label{prop:red}
Let~$\{1,n+1\}\subset J\subset [1,n+1]$ with~$g(J)>1$. Then
$$
\la U(J)^{-1}(u)\,\vert\, U(J)^{-1}(v)\ra\not=0
$$
for some~$u\in \ker(T_{[1,m]\cup\{n+1\}}^{m+1}-q^{2(m+1)}\id_V)$,
$v\in \ker(T_{[1,m]\cup\{n+1\}}^{m+1}-q^{2(n+1)}\id_V)$,
where~$m=|J|-1$.
\end{proposition}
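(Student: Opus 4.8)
The plan is to prove Proposition~\ref{prop:red} by an explicit computation in the Burau module $V$, choosing the eigenvectors as concretely as possible. Write $J=\{j_0<j_1<\dots<j_m\}$ with $j_0=1$, $j_m=n+1$, so that by~\eqref{eq:U(J) defn} and Corollary~\ref{cor:conj J} we have $U(J)=\dscprod_{k\in[1,m-1]}\Cx{(k+1)}{(j_k-1)}^{((-1)^k)}$ and hence
$$
U(J)^{-1}=\ascprod_{k\in[1,m-1]}\Cxr{(k+1)}{(j_k-1)}^{((-1)^{k+1})}.
$$
Since $g(J)>1$ one checks $|J|\le n-1$, so $m+1\le n-1$ and $m\ge 3$; thus by Proposition~\ref{prop:TJ eigenvectors} the vector $u_{m+1}$ lies in $\ker(T_{[1,m]\cup\{n+1\}}^{m+1}-q^{2(m+1)}\id_V)$ and $u_1$ lies in $\ker(T_{[1,m]\cup\{n+1\}}^{m+1}-q^{2(n+1)}\id_V)$. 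So it is natural to take $u=u_{m+1}$, $v=u_1$ (if this particular pair should fail, one replaces $u_1$ by $w^{(1)}_{[m,n]}$ or $u_{m+1}$ by another $u_i$), and the whole claim reduces to showing $\langle U(J)^{-1}u_{m+1}\,\vert\,U(J)^{-1}u_1\rangle\ne 0$.

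First I would compute $U(J)^{-1}u_1$: among the factors $\Cxr{(k+1)}{(j_k-1)}^{((-1)^{k+1})}$, only the one with $k=1$ acts nontrivially on $u_1$ (the $k$-th factor affects $u_\ell$ only for $\ell\in[k,j_k]$), so by Lemma~\ref{lem:rev Cox act} we get $U(J)^{-1}u_1=q^{-1}w^{(1)}_{[1,j_1-1]}$ when $j_1>2$, and $U(J)^{-1}u_1=u_1$ when $j_1=2$. Next I would compute $U(J)^{-1}u_{m+1}$ by pushing $u_{m+1}$ through the chain $\Cxr{m}{(j_{m-1}-1)}^{(\pm1)},\dots,\Cxr{2}{(j_1-1)}^{(\pm1)}$ (applied in that right-to-left order), using Lemma~\ref{lem:rev Cox act} on the $u_\ell$ that appear and Corollary~\ref{cor:cox w +-} on the $w^{(\pm)}_{[k,l]}$ that get created; the effect of each genuine gap $j_k>j_{k-1}+1$ is to drag the bottom of the support leftwards past index $j_{k-1}$, so that because $g(J)>1$ the result develops a nonzero component reaching down to indices around $j_1$, overlapping the support of $w^{(1)}_{[1,j_1-1]}$. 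Finally I would extract the few surviving coefficients and evaluate the pairing via the Gram formula~\eqref{eq:Euclid ui uj}, verifying that the result is a nonzero multiple of a power of $q^2-1$ (as happens in the smallest cases, where $U(J)^{-1}$ collapses to a single $T_k^{\pm1}$ and the pairing is exactly $q^2-1$, which is nonzero since $q$ is not a root of unity).

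The main obstacle is this middle step: controlling a long composition of twisted reversed-Coxeter operators applied to $u_{m+1}$ through the $w^{(\pm)}$-calculus, and then identifying precisely which terms of $U(J)^{-1}u_{m+1}$ pair nontrivially against $w^{(1)}_{[1,j_1-1]}$, keeping track of coefficients well enough to conclude non-cancellation. To make this tractable I would first establish a bubble-normalization reduction in the spirit of Lemma~\ref{lem:move bubble}: moving, shrinking to size one, and merging all but two of the components (``bubbles'') of $[1,n+1]\setminus J$ preserves the truth value of the conclusion — this follows from the conjugation identities already used in the $g(J)=1$ case together with the centrality statements in Proposition~\ref{prop:fund elts BrSa}~\ref{prop:fund elts BrSa.a},\ref{prop:fund elts BrSa.d} — so it suffices to treat a single normal form with exactly two size-one bubbles, for which $U(J)^{-1}$ is short and the pairing is a one-line computation. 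I expect the proof of that normalization lemma, rather than the final evaluation of the form, to be where the real work lies.
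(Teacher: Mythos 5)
Your setup is fine as far as it goes: the identification of the eigenvectors via Proposition~\ref{prop:TJ eigenvectors}, the formula for $U(J)^{-1}$ as an ascending product of the $\Cxr{(k+1)}{(j_k-1)}^{((-1)^{k+1})}$, and the computation $U(J)^{-1}(u_1)=q^{-1}w^{(1)}_{[1,j_1-1]}$ (it agrees with the paper's Lemma~\ref{lem:TJ|J| eigenvectors}, case $r=1$) are all correct. But the proof has two genuine gaps. First, your primary test pair $(u_{m+1},u_1)$ does not work in general, so the problem is not merely one of bookkeeping: take $J=\{1,2,4,6,7\}\subset[1,7]$ (so $n=6$, $m=4$, $g(J)=2$). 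Then $U(J)^{-1}=T_3^{-1}\,T_5T_4$ acting after the trivial factor for $k=1$, one finds $U(J)^{-1}(u_1)=u_1$ and $U(J)^{-1}(u_5)=u_3+qu_4$, and $\la u_1\,\vert\,u_3+qu_4\ra=0$. So the choice of eigenvectors must itself depend on the shape of $J$; this is exactly why the paper pairs $u_{m-1}$ against $u_{j_{m-1}}$ when $j_{m-1}<n$ and against $w^{((-1)^m)}_{[m+1,n-1]}$ when $j_{m-1}=n$, and then has to prove nonvanishing of the resulting Laurent polynomials (via the value at $q=1$ in Lemma~\ref{lem:Euclid jm-1<n} and a residue/leading-term analysis in Lemma~\ref{lem:Euclid jm-1=n}). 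None of that case analysis or non-cancellation argument is present in your plan, and you acknowledge the ``middle step'' is open.

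Second, the device you propose to avoid that computation --- a ``bubble-normalization'' reducing to two size-one gaps --- is not obtainable from the ingredients you cite, and is where your argument actually breaks. Lemma~\ref{lem:move bubble} works only in the $g(J)=1$ situation because there the conjectured value of $T_J^{|J|}$ is an explicit element ($T_{w_\circ^{[1,n]}}^2T_{w_\circ^{[a,b-1]}}^{-2}$) which transforms coherently under the single conjugation moving the bubble by one step; there is no analogous statement for the property you need to preserve. Shrinking a bubble changes the ambient monoid $\Br^+_n$ altogether, so it is not implemented by any conjugation; merging bubbles changes $|J|$ (hence the exponent) and $g(J)$; and even sliding a bubble via Proposition~\ref{prop:conj} replaces $T_J$ by $T_{j-1}^{\pm1}T_JT_{j-1}^{\mp1}$, under which neither ${}^{op}$-invariance of $T_J^N$ nor orthogonality of eigenspaces with respect to $\la\cdot\vert\cdot\ra$ is preserved, because the form is not invariant under the Burau action (the adjoint of $T\in\Br_n$ is $T^{op}$, not $T^{-1}$). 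Indeed, if such a normalization preserved the truth value of the conclusion while merging gaps, it would be in tension with the theorem itself, since for $g(J)=1$ the two eigenspaces are orthogonal by Lemma~\ref{lem:orth eigenspaces}. So the reduction lemma you defer to is precisely the missing content, and the paper's route --- the explicit formulas of Lemma~\ref{lem:TJ|J| eigenvectors} for $U(J)^{-1}$ applied to carefully chosen eigenvectors, followed by the two pairing computations --- does the work that your proposal leaves undone.
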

\begin{proof}
Write
$J=\{j_0=1<j_1<\cdots<j_{m-1}<j_m=n+1\}$ and define\plink{betapm}
\begin{align*}
&\beta_-(J)=\max\{k\in[0,m-1]\,:\,j_k=k+1\}+1,\\
&\beta_+(J)=\min\{k\in[1,m-1]\,:\,j_t=j_{m-1}-m+t+1\}.
\end{align*}
Thus,
$$
J=[1,\beta_-(J)]\cup \{j_{\beta_-(J)},\dots,j_{\beta_+(J)-1}\}
\cup [j_{\beta_+(J)},j_{m-1}]\cup\{n+1\}.
$$
with~$j_{\beta_-(J)}\ge \beta_-(J)+2$ and~$j_{\beta_+(J)-1}\le j_{\beta_+(J)}-2$.
Note also that~$g(J)>1$ implies that~$|J|\le n-1$.

Given~$s\in\mathbb Z$ let \plink{qs}$q_s=q^{(-1)^s}$. We have
\begin{equation}\label{eq: q r+s}
q_{r+s}=q^{(-1)^{r+s}}=q_r^{(-1)^s},\qquad r,s\in\mathbb Z.
\end{equation}
\begin{lemma}\label{lem:TJ|J| eigenvectors}
Let~$J=\{j_0=1<j_1<\cdots<j_{m-1}<j_m=n+1\}\subset [1,n+1]$ where $m=|J|-1\le n-2$.
\begin{enmalph}
\item\label{lem:TJ|J| eigenvectors.a}
For all $r\in[1,m-1]$,
\begin{equation}\label{eq:UJ-1 ur}
U(J)^{-1}(u_{r})=\sum_{\beta_-(J)\le k\le r-1}
\sum_{j_{k-1}\le t\le j_k-1} q_r^{\overline{r-k}}
(q_r^{k-t}-q_r^{t-k})u_t
+\sum_{j_{r-1}\le t\le j_r-1} q_r^{r-t}u_t,
\end{equation}

\item\label{lem:TJ|J| eigenvectors.b} Suppose that~$g(J)>1$ and
$j_{m-1}<n$. Then~$j_{m-1}\ge m+1$ and
$$
U(J)^{-1}(u_{j_{m-1}})=\sum_{j_{\beta_+(J)}-1 \le t\le j_{m-1}} q_m^{\overline{j_{m-1}-t}} u_t.
$$
\item\label{lem:TJ|J| eigenvectors.c} If $j_{m-1}=n$ then
$$
U(J)^{-1}(w^{((-1)^{m+1})}_{[m+1,n-1]})=\sum_{\beta_-(J)\le k\le \beta_+(J)}
q_{m+1}^{m-k-\overline{m-k}} w^{((-1)^{m+1})}_{[j_{k-1}+\delta_{k,\beta_-(J)},j_k-1-\delta_{k,\beta_+(J)}]}.
$$
\end{enmalph}
\end{lemma}
\begin{proof}
Note that~$k+1\le j_k\le j_{k+1}-1$, $0\le k\le m-1$, and
so if~$j_k=k+1$ for some~$k>0$ then $j_s=s+1$ for all~$0\le s\le k$. %
In this proof we abbreviate $\beta_\pm=\beta_\pm(J)$. By definition of~$\beta_-$,
$$
U(J)^{-1}=\ascprod_{\beta_-\le k\le m-1} \Cxr{(k+1)}{(j_k-1)}^{((-1)^{k+1})}.
$$
Denote the right hand side of~\eqref{eq:UJ-1 ur} by~$S(J,r)$.
Then by Lemma~\ref{lem:rev Cox act}
\begin{align*}
U(J)^{-1}(u_r)&=\Big(\ascprod_{\beta_-\le k\le m-1} \Cxr{(k+1)}{(j_k-1)}^{((-1)^{k+1})}\Big)(u_r)=\Big(\ascprod_{\beta_-\le k\le r} \Cxr{(k+1)}{(j_k-1)}^{((-1)^{k+1})}\Big)(u_r).
\end{align*}
If~$r<\beta_-$ then~$U(J)^{-1}(u_r)=u_r$ while
$$
S(J,r)=\sum_{j_{r-1}\le t\le j_r-1} q_r^{r-t}u_t=\sum_{r\le t\le r} q_r^{r-t}u_t=u_r.
$$
Suppose that
$r\ge \beta_-$. Then by Lemma~\ref{lem:rev Cox act}
\begin{align*}
U(J)^{-1}(u_r)&=\Big(\ascprod_{\beta_-\le k\le r-1} \Cxr{(k+1)}{(j_k-1)}^{((-1)^{k+1})}\Big)\Cxr{(r+1)}{(j_r-1)}^{((-1)^{r+1})}(u_r)\\
&=\Big(\ascprod_{\beta_-\le k\le r-1} \Cxr{(k+1)}{(j_k-1)}^{((-1)^{k+1})}\Big)(\sum_{r\le t\le j_r-1} q_r^{r-t} u_t).
\end{align*}
If~$\beta_-=r$ then~$r=j_{r-1}$ and
$$
S(J,r)=\sum_{j_{r-1}\le t\le j_r-1} q_r^{r-t}u_t=\sum_{r\le t\le j_r-1} q_r^{r-t}u_t=U(J)^{-1}(u_r).
$$
If~$\beta_-<r$ then, again by Lemma~\ref{lem:rev Cox act}
\begin{align*}
U(&J)^{-1}(u_r)=\Big(\ascprod_{\beta_-\le k\le r-2} \Cxr{(k+1)}{(j_k-1)}^{((-1)^{k+1})}\Big)\Cxr{r}{(j_{r-1}-1)}^{((-1)^{r})}(\sum_{r\le t\le j_r-1} q_r^{r-t} u_t)\\
&=\Big(\ascprod_{\beta_-\le k\le r-2} \Cxr{(k+1)}{(j_k-1)}^{((-1)^{k+1})}\Big)\Big(\sum_{r+1\le t\le j_{r-1}-1} q_r^{r+1-t} u_{t-1}\\&\qquad+
q_r^{r-j_{r-1}-1}(q_r u_{j_{r-1}-1}+u_{j_{r-1}})+\sum_{j_{r-1}+1\le t\le j_r-1} q_r^{r-t}u_t%
-q_r^{2-r}\sum_{r\le t\le j_{r-1}-1}q_r^{t} u_t\Big)\\
&=\Big(\ascprod_{\beta_-\le k\le r-2} \Cxr{(k+1)}{(j_k-1)}^{((-1)^{k+1})}\Big)\Big(\sum_{r\le t\le j_{r-1}-1} q_r(q_r^{r-1-t}-q_r^{t-r+1}) u_t%
+\sum_{j_{r-1}\le t\le j_r-1} q_r^{r-t}u_t\Big).
\end{align*}
We claim that
\begin{align*}
U(J)^{-1}(u_r)&=\Big(\ascprod_{\beta_-\le s\le k} \Cxr{(s+1)}{(j_s-1)}^{((-1)^{s+1})}\Big)(\sum_{k+2\le t\le j_{k+1}-1} q_r^{\overline{r-k-1}}(q_r^{k+1-t}-q_r^{t-k-1}) u_t\\
&\qquad +\sum_{k+2\le l\le r-1} \sum_{j_{l-1}\le t\le j_l-1} q_r^{\overline{r-l}}
(q_r^{l-t}-q_r^{t-l})u_t
+\sum_{j_{r-1}\le t\le j_r-1} q_r^{r-t}u_t
\end{align*}
for all $\beta_--1\le k\le r-2$. The case~$k=r-2$ has already been established. For the inductive step, we obtain, using Lemma~\ref{lem:rev Cox act},
\begin{align*}
&\Cxr{(k+1)}{(j_k-1)}^{((-1)^{k+1})}\Big(\sum_{k+2\le t\le j_{k+1}-1} q_r^{\overline{r-k-1}}(q_r^{k+1-t}-q_r^{t-k-1}) u_t\Big)
\\&=\sum_{k+2\le t\le j_k-1} q^{(-1)^{k+1}}q_r^{\overline{r-k-1}}(q_r^{k+1-t}-q_r^{t-k-1}) u_{t-1}+\sum_{j_k+1\le t\le j_{k+1}-1} q_r^{\overline{r-k-1}}(q_r^{k+1-t}-q_r^{t-k-1}) u_t\\
&\qquad+q_r^{\overline{r-k-1}}(q_r^{k+1-j_k}-q_r^{j_k-k-1})(q^{(-1)^{k+1}} u_{j_k-1}+u_{j_k})\\
&=\sum_{k+2\le t\le j_k} q^{(-1)^{k+1}}q_r^{\overline{r-k-1}}(q_r^{k+1-t}-q_r^{t-k-1}) u_{t-1}
+\sum_{j_k\le t\le j_{k+1}-1} q_r^{\overline{r-k-1}}(q_r^{k+1-t}-q_r^{t-k-1}) u_t.
\end{align*}
Since $(-1)^{k+1}+(-1)^r \overline{r-k-1}=(-1)^r ((-1)^{k+1-r}+\overline{r-k-1})=(-1)^r \overline{r-k}$, we obtain
\begin{align*}
\Cxr{(k+1)}{(j_k-1)}^{((-1)^{k+1})}&\Big(\sum_{k+2\le t\le j_{k+1}-1} q_r^{\overline{r-k-1}}(q_r^{k+1-t}-q_r^{t-k-1}) u_t\Big)
\\
&=\sum_{k+1\le t\le j_k-1} q_r^{\overline{r-k}}(q_r^{k-t}-q_r^{t-k}) u_t+\sum_{j_k\le t\le j_{k+1}-1} q_r^{\overline{r-k-1}}(q_r^{k+1-t}-q_r^{t-k-1}) u_t,
\end{align*}
and so
\begin{align*}
U(J)^{-1}(u_r)&=\Big(\ascprod_{\beta_-\le s\le k-1} \Cxr{(s+1)}{(j_s-1)}^{((-1)^{s+1})}\Big)\Big(\sum_{k+1\le t\le j_{k}-1} q_r^{\overline{r-k}}(q_r^{k-t}-q_r^{t-k}) u_t\\
&\qquad +\sum_{k+1\le l\le r-1} \sum_{j_{l-1}\le t\le j_l-1} q_r^{\overline{r-l}}
(q_r^{l-t}-q_r^{t-l})u_t
+\sum_{j_{r-1}\le t\le j_r-1} q_r^{r-t}u_t\Big).
\end{align*}
Taking~$k=\beta_--1$ and noting that $j_{\beta_--1}=\beta_-$, we obtain
\begin{align*}
U(J)^{-1}(u_r)&=\sum_{\beta_-+1\le t\le j_{\beta_-}-1} q_r^{\overline{r-\beta_-}}(q_r^{\beta_--t}-q_r^{t-\beta_-}) u_t\\
&\qquad +\sum_{\beta_-+1\le l\le r-1} \sum_{j_{l-1}\le t\le j_l-1} q_r^{\overline{r-l}}(q_r^{l-t}-q_r^{t-l})u_t
+\sum_{j_{r-1}\le t\le j_r-1} q_r^{r-t}u_t\\
&=\sum_{\beta_-\le t\le j_{\beta_-}-1} q_r^{\overline{r-\beta_-}}(q_r^{\beta_--t}-q_r^{t-\beta_-}) u_t\\
&\qquad +\sum_{\beta_-+1\le l\le r-1} \sum_{j_{l-1}\le t\le j_l-1} q_r^{\overline{r-l}}(q_r^{l-t}-q_r^{t-l})u_t
+\sum_{j_{r-1}\le t\le j_r-1} q_r^{r-t}u_t\\
&=\sum_{j_{\beta_--1}\le t\le j_{\beta_-}-1} q_r^{\overline{r-\beta_-}}(q_r^{\beta_--t}-q_r^{t-\beta_-}) u_t\\
&\qquad +\sum_{\beta_-+1\le l\le r-1} \sum_{j_{l-1}\le t\le j_l-1} q_r^{\overline{r-l}}(q_r^{l-t}-q_r^{t-l})u_t
+\sum_{j_{r-1}\le t\le j_r-1} q_r^{r-t}u_t\\
&=S(J,r).
\end{align*}
Part~\ref{lem:TJ|J| eigenvectors.a} is proven.

To prove part~\ref{lem:TJ|J| eigenvectors.b}, we use
induction on~$m$. Since~$g(J)>1$, the induction base is~$m=2$, that is $J=\{1,j,n+1\}$ for some~$2<j<n$.
Then $U(J)^{-1}=\Cxr2{(j-1)}$ and $U(J)^{-1}(u_j)=q u_{j-1}+u_j=q_2 u_{j-1}+u_j$
by Lemma~\ref{lem:rev Cox act}.

Suppose the claim is proven for all $J'$ with~$|J'|=m+1$, $m\ge 2$ and that~$J$ with~$g(J)>1$ satisfies~$|J|=m+2$. Then by Lemma~\ref{lem:rev Cox act}
\begin{align*}
U(J)^{-1}(u_{j_m})&=U(J\setminus\{j_m\})^{-1}\Cxr{(m+1)}{(j_m-1)}^{((-1)^{m+1})}(u_{j_m})
=U(J\setminus\{j_m\})^{-1}(q_{m+1} u_{j_m-1}+u_{j_m}).
\end{align*}
If~$j_{m-1}<j_m-1$ then $U(J\setminus\{j_m\})^{-1}(q_{m+1} u_{j_m-1}+u_{j_m})=q_{m+1} u_{j_m-1}+u_{j_m}$. But in that case~$\beta_+(J)=m$ and the formula in~\ref{lem:TJ|J| eigenvectors.b} holds. Otherwise,
$j_{m-1}=j_m-1$. If~$g(J\setminus\{j_m\})=1$ then, as $j_{m-1}<n$, it follows
that $j_i=i+1$, $0\le i\le m$ which contradicts the assumption~$g(J)>1$.
Thus, $g(J\setminus\{j_m\})>1$, $\beta_+(J\setminus\{j_m\})=\beta_+
\le m-1$ and
and so by the induction hypothesis,
\begin{align*}
U(J)^{-1}(u_{j_m})&=%
U(J\setminus\{j_m\})^{-1}(q_m^{-1} u_{j_{m-1}})+u_{j_m}=q_m^{-1}\sum_{j_{\beta_+}-1\le t\le j_{m-1}} q_m^{\overline{j_{m-1}-t}}u_t+u_{j_m}\\
&=q_m^{-1}\sum_{j_{\beta_+}-1\le t\le j_{m}-1} q_m^{1-\overline{j_{m}-t}}u_t+u_{j_m}=\sum_{j_{\beta_+}-1\le t\le j_m} q_{m+1}^{\overline{j_m-t}}u_t.
\end{align*}
To prove part~\ref{lem:TJ|J| eigenvectors.c}, abbreviate~$\epsilon=(-1)^{m+1}$;
thus, $q_{m+1}=q^{\epsilon}$. First we claim that for all
$\beta_+\le k\le m-1$,
$\beta_+\le k\le m-1$,
\begin{align*}
\ascprod_{k\le t\le m-1} \Cxr{(t+1)}{(j_t-1)}^{((-1)^{t+1})}(
w^{(\epsilon)}_{[m+1,n-1]})&=
\ascprod_{k\le t\le m-1} \Cxr{(t+1)}{(n-m+t)}^{((-1)^{t+1})}(
w^{(\epsilon)}_{[m+1,n-1]})\\
&=
q^{\epsilon(m-k-\overline{m-k})}
w^{(\epsilon)}_{[k+1,n-m-1+k]}
\end{align*}
Indeed, for~$k=m-1$ we have $\Cxr{m}{(n-1)}^{(-\epsilon)}(w^{(\epsilon)}_{[m+1,n-1]})=
w^{(\epsilon)}_{[m,n-2]}$
by Corollary~\ref{cor:cox w +-}.
For the inductive step, again by Corollary~\ref{cor:cox w +-}
\begin{align*}
\Cxr{(k+1)}{(n-m+k)}^{((-1)^{k+1})}(q^{\epsilon(m-k-1-\overline{m-k-1})}
w^{(\epsilon)}_{[k+2,n-m+k]})
&=q^{\epsilon(m-k+(-1)^{k-m}-\overline{m-k-1})} w^{(\epsilon)}_{[k+1,n-m+k-1]}\\
&=q^{\epsilon(m-k+(-1)^{k-m}-\overline{m-k-1})} w^{(\epsilon)}_{[k+1,n-m+k-1]},
\end{align*}
since $(-1)^{k-m}-\overline{m-k-1}=-\overline{m-k}$.

Taking~$k=\beta_+$, we obtain
$$
\ascprod_{\beta_+\le t\le m-1} \Cxr{(t+1)}{(j_t-1)}^{((-1)^{t+1})}(w^{(\epsilon)}_{[m+1,n-1]}
=
q^{\epsilon(m-\beta_+-\overline{m-\beta_+})}
w^{(\epsilon)}_{[\beta_++1,n-m-1+\beta_+]}.
$$
Since $n-m+\beta_+=j_{\beta_+}-1$, we obtain
$$
\ascprod_{\beta_+\le t\le m-1} \Cxr{(t+1)}{(j_t-1)}^{((-1)^{t+1})}(w^{(\epsilon)}_{[m+1,n-1]}=q^{\epsilon(m-\beta_+-\overline{m-\beta_+})}
w^{(\epsilon)}_{[\beta_++1,j_{\beta_+}-2]}.
$$
Finally, we prove that for all $\beta_-\le k\le \beta_+$,
\begin{align}
\ascprod_{k\le t\le \beta_+} \Cxr{(t+1)}{(j_t-1)}^{((-1)^{t+1})}(w^{(\epsilon)}_{[m+1,n-1]})&=
q^{\epsilon(m-k-\overline{m-k})} w^{(\epsilon)}_{[k+1,j_k-1-\delta_{k,\beta_+}]}\nonumber\\
&\quad +
\sum_{k+1\le t\le \beta_+} q^{\epsilon(m-t-\overline{m-t})}w^{(\epsilon)}_{[j_t,j_{t+1}-1-\delta_{t,\beta_+}]}.\label{eq:intermed UJ wm+1n-1}
\end{align}
Indeed, the case~$k=\beta_+$ has already been established. For~$k<\beta_+$ note that by definition of~$\beta-+$, $j_k<j_{k+1}-\delta_{k+1,\beta_+}$, $\beta_-+1\le k\le \beta_+-1$.
Thus,
\begin{align*}
\Cxr{(k+1)}{(j_k-1)}^{(-1)^{k+1}}&(q^{\epsilon(m-k-1-\overline{m-k-1})} w^{(\epsilon)}_{[k+2,j_{k+1}-1-\delta_{k+1,\beta_+}]})\\&
=q^{\epsilon((-1)^{k-m}+m-k-\overline{m-k-1})} w^{(\epsilon)}_{[k+1,j_k-1]}+w^{(\epsilon)}_{[j_k,j_{k+1}-1-\delta_{k+1,\beta_+}]}\\
&=q^{\epsilon(m-k-\overline{m-k})} w^{(\epsilon)}_{[k+1,j_k-1]}+w^{(\epsilon)}_{[j_k,j_{k+1}-1-\delta_{k+1,\beta_+}]}.
\end{align*}
Since $\Cxr{(k+1)}{(j_k-1)}^{(-1)^{k+1}}(w^{(\epsilon)}_{[j_t,j_{t+1}-1-\delta_{t,\beta_+}]})=w^{(\epsilon)}_{[j_t,j_{t+1}-1-\delta_{t,\beta_+}]}$,
$k+1\le t\le \beta_+-1$, \eqref{eq:intermed UJ wm+1n-1} follows. It remains to observe that this implies the assertion since $j_{\beta_--1}=\beta_-$.
\end{proof}
\begin{lemma}\label{lem:Euclid jm-1<n}
Let~$\{1,n+1\}\subset J\subset [1,n+1]$ with~$g(J)>1$. Suppose that
$j=\max(J\setminus\{n+1\})<n$. Then
$\la U(J)^{-1}(u_{m-1})\,\vert\,U(J)^{-1}(u_j)\ra\not=0$.
\end{lemma}
\begin{proof}
Let~$m=|J|-1$ and write~$J=\{j_0=1<j_1<\cdots<j_{m-1}=j<j_m=n+1\}$.
Note that by Lemma~\partref{lem:TJ|J| eigenvectors.b},
$U(J)^{-1}(u_j)$ is contained in the span of the $u_t$, $t\in[j_{\beta_+}-1,
j]$ which is orthogonal to all the $u_s$, $s\in [1,j_{\beta_+}-3]$. Thus,
we can consider $U(J)^{-1}(u_{m-1})$ modulo $V'=\sum_{1\le s\le
j_{\beta_+}-3} \kk u_s$.

Suppose first that~$\beta_+(J)=m-1$. Then~$j_{m-2}\le j_{m-1}-2$, hence
by Lemma~\ref{lem:TJ|J| eigenvectors}
$U(J)^{-1}(u_{m-1})=q_m^{j-m-1}u_{j-2}+q_m^{j-m}u_{j-1}\pmod{V'}$,
$U(J)^{-1}(u_j)=q_m u_{j-1}+u_j$ and so by~\eqref{eq:Euclid ui uj}
\begin{align*}
\langle U(J)^{-1}(u_{m-1})\,\vert\,U(J)^{-1}(u_j)\rangle&=\langle q_m^{j-m-1}u_{j-2}+q_m^{j-m}u_{j-1},q_m u_{j-1}+u_j\rangle\\
&=q_m^{j-m}( -2q^{-1}+q_m(1+q^{-2}))=q^{(-1)^m(j-m)-1}(q_m^2-1),
\end{align*}
which is manifestly a non-zero Laurent polynomial in~$q$.

Suppose now that~$\beta_+=\beta_+(J)<m-1$.
Then, in particular, $j_{m-2}=j-1$ and
\begin{align*}
U(J)^{-1}(&u_{m-1})=\sum_{\beta_+\le k\le m-2}\sum_{j_{k-1}\le t\le j_k-1} q_{m-1}^{\overline{m-1-k}}(q_{m-1}^{k-t}-q_{m-1}^{t-k})u_t
+q_{m-1}^{m-j}u_{j-1}\pmod{V'}\\
&=\sum_{j_{\beta_+-1}\le t\le j_{\beta_+}-1} q_{m-1}^{\overline{m-1-\beta_+}}(q_{m-1}^{\beta_+-t}-q_{m-1}^{t-\beta_+})u_t\\
&\quad+\sum_{\beta_++1\le k\le m-2}q_{m-1}^{\overline{m-1-k}}(q_{m-1}^{k-j_{k-1}}-q_{m-1}^{j_{k-1}-k})u_{j_{k-1}}
+q_{m-1}^{m-j}u_{j-1}\pmod{V'}
\\
&=q_{m-1}^{\overline{m-1-\beta_+}}(q_{m-1}^{m+1-j}-q_{m-1}^{j-m-1})u_{j-m+\beta_+-1}
\\
&\quad+(q_{m-1}^{m-j}-q_{m-1}^{j-m})\sum_{\beta_+\le k\le m-2}q_{m-1}^{\overline{m-1-k}}u_{j-m+k}
+q_{m-1}^{m-j}u_{j-1}\pmod{V'}\\
&=q_{m}^{-\overline{m-1-\beta_+}}(q_{m}^{j-m-1}-q_{m}^{m+1-j})u_{j-m+\beta_+-1}
\\
&\quad+(q_{m}^{j-m}-q_{m}^{m-j})\sum_{j-m+\beta_+\le t\le j-2}q_{m}^{-\overline{j-t-1}}u_t
+q_{m}^{j-m}u_{j-1}\pmod{V'}.
\end{align*}
Then, using~\eqref{eq:Euclid ui uj}, we obtain
\begin{align*}
\langle U(&J)^{-1}(u_{m-1})\,\vert\,U(J)^{-1}(u_{j_{m-1}})\rangle =
\langle q_{m}^{-\overline{m-1-\beta_+}}(q_{m}^{j-m-1}-q_{m}^{m+1-j})u_{j-m+\beta_+-1}
\\
&\qquad+(q_{m}^{j-m}-q_{m}^{m-j})\sum_{j-m+\beta_+\le t\le j-2}q_{m}^{-\overline{j-t-1}}u_t
+q_{m}^{j-m}u_{j-1}\,\vert\,
\sum_{j-m+\beta_+ \le t\le j} q_m^{\overline{j-t}} u_t\rangle\\
&=-q_{m}^{2\overline{m-\beta_+}-1}(q_{m}^{j-m-1}-q_{m}^{m+1-j})q^{-1}\\
&\qquad+(q_{m}^{j-m}-q_{m}^{m-j})\langle \sum_{j-m+\beta_+\le t\le j-2}q_{m}^{-\overline{j-t-1}}u_t
+q_{m}^{j-m}u_{j-1}\,\vert\,\sum_{j-m+\beta_+ \le t\le j} q_m^{\overline{j-t}} u_t\rangle\\
&=-q_{m}^{2\overline{m-\beta_+}-1}(q_{m}^{j-m-1}-q_{m}^{m+1-j})q^{-1}+q_m^{j-m+1}(1+q^{-2})-2q^{-1} q_m^{j-m}\\\
&\qquad+(q_{m}^{j-m}-q_{m}^{m-j})\langle \sum_{j-m+\beta_+\le t\le j-2}q_{m}^{-\overline{j-t-1}}u_t
\,\vert\,\sum_{j-m+\beta_+ \le t\le j} q_m^{\overline{j-t}} u_t\rangle\\
&=-q^{-1}(q_{m}^{2\overline{m-\beta_+}}(q_{m}^{j-m-2}-q_{m}^{m-j})-q_m^{j-m}(q_m^2-1))\\\
&\qquad+(q_{m}^{j-m}-q_{m}^{m-j})\langle \sum_{j-m+\beta_+\le t\le j-2}q_{m}^{-\overline{j-t-1}}u_t\,
\vert\,\sum_{j-m+\beta_+ \le t\le j-1} q_m^{\overline{j-t}} u_t\rangle\\
&=-q^{-1}(q_{m}^{2\overline{m-\beta_+}}(q_{m}^{j-m-2}-q_{m}^{m-j})-q_m^{j-m}(q_m^2-1))\\\
&\qquad+(q_{m}^{j-m}-q_{m}^{m-j})q_m^{-1}\langle \sum_{j-m+\beta_+\le t\le j-2}q_{m}^{\overline{j-t}}u_t\,
\vert\,\sum_{j-m+\beta_+ \le t\le j-1} q_m^{\overline{j-t}} u_t\rangle\\
&=-q^{-1}(q_{m}^{2\overline{m-\beta_+}}(q_{m}^{j-m-2}-q_{m}^{m-j})-q_m^{j-m}(q_m^2-1)+q_m^{j-m}-q_m^{m-j})\\\
&\qquad+(q_{m}^{j-m}-q_{m}^{m-j})q_m^{-1}\langle \sum_{j-m+\beta_+\le t\le j-2}q_{m}^{\overline{j-t}}u_t\,
\vert\,\sum_{j-m+\beta_+ \le t\le j-2} q_m^{\overline{j-t}} u_t\rangle\\
&=-q^{-1}(q_{m}^{2\overline{m-\beta_+}}(q_{m}^{j-m-2}-q_{m}^{m-j})-q_m^{j-m}(q_m^2-1)+q_m^{j-m}-q_m^{m-j})\\\
&\qquad+(q_{m}^{j-m}-q_{m}^{m-j})q_m^{-1} \Big((1+q^{-2})\sum_{j-m+\beta_+\le t\le j-2} q_m^{2\overline{j-t}}
-2 q^{-1} \sum_{j-m+\beta_+\le t\le j-3} q_m^{\overline{j-t}+\overline{j-t+1}}\Big)\\
&=-q^{-1}\Big(q_{m}^{2\overline{m-\beta_+}}(q_{m}^{j-m-2}-q_{m}^{m-j})-q_m^{j-m}(q_m^2-1)%
+(q_m^{j-m}-q_m^{m-j})(2(m-\beta_+)-3))\Big)\\
&\qquad+(q_{m}^{j-m}-q_{m}^{m-j})q_m^{-1} (1+q^{-2})(q_m^2 \lfloor\tfrac12(m-\beta_+-1)\rfloor+\lfloor\tfrac12(m-\beta_+)\rfloor).
\end{align*}
Note that, since~$g(J)>1$, $j<m$ and so $q_m^{j-m}-q_m^{m-j}\not=0$. We can rewrite the above as
$$
\langle U(J)^{-1}(u_{m-1})\,\vert\, U(J)^{-1}(u_{j_{m-1}})\rangle =q_m^{j-m} Q^+_{m-\beta_+,(-1)^m}(q)-q_m^{m-j}Q^-_{m-\beta_+,(-1)^m}(q),
$$
where~$Q^\pm_{r,\epsilon}(q)\in\mathbb Z[q,q^{-1}]$, $r\in\mathbb Z$, $\epsilon\in\{1,-1\}$, are defined by
\begin{align*}
&Q^+_{r,\epsilon}(q)=-q^{-1}(q^{2\epsilon (\overline{r}-1)}-q^{2\epsilon}+2(r-1))+(1+q^{-2})(q^\epsilon\lfloor\tfrac12(r-1)\rfloor+q^{-\epsilon}\lfloor\tfrac12 r\rfloor),
\\
&Q^-_{r,\epsilon}(q)=-q^{-1}(q^{2\epsilon \overline{r}}+2(r-1)-1)+(1+q^{-2})(q^\epsilon \lfloor\tfrac12(r-1)\rfloor+q^{-\epsilon}\lfloor\tfrac12 r\rfloor).
\end{align*}
Since~$1+q^{-2}=q^{-1}(q^\epsilon+q^{-\epsilon})$, $\epsilon\in\{1,-1\}$, we have
\begin{align*}
q &Q^+_{r,\epsilon}(q)=q^{2\epsilon}(1+\lfloor\tfrac12(r-1)\rfloor)-q^{2\epsilon(\overline r-1)}+q^{-2\epsilon}\lfloor\tfrac12 r\rfloor+
2(1-r)+\lfloor\tfrac12(r-1)\rfloor+\lfloor\tfrac12 r\rfloor\\
&=q^{2\epsilon}(1+\lfloor\tfrac12(r-1)\rfloor)+q^{-2\epsilon}\lfloor\tfrac12 r\rfloor+
1-r-q^{2\epsilon(\bar r-1)}\\
&=q^{2\epsilon}(\lfloor\tfrac12r\rfloor+\bar r)+q^{-2\epsilon}\lfloor\tfrac12 r\rfloor+
1-2\lfloor\tfrac12 r\rfloor-\bar r-q^{2\epsilon(\bar r-1)}\\
&=\lfloor\tfrac12 r\rfloor(q^\epsilon-q^{-\epsilon})^2
+\bar r(q^{2\epsilon}-1)+1-q^{2\epsilon(\bar r-1)}%
=(q-q^{-1})(\lfloor\tfrac12 r\rfloor(q-q^{-1})+\epsilon q^{\epsilon(2\bar r-1)}).
\end{align*}
Similarly,
\begin{align*}
q Q^-_{r,\epsilon}(q)&=(q-q^{-1})^2\lfloor\tfrac12 r\rfloor+1-q^{2\epsilon \bar r}
+(1-\bar r)(1-q^{2\epsilon})\\
&=(q-q^{-1})( \lfloor\tfrac12 r\rfloor(q-q^{-1})-\epsilon q^\epsilon).
\end{align*}
Thus, $Q^\pm_{r,\epsilon}(q)/(1-q^{-2})\in \mathbb Z[q,q^{-1}]$ and
equals~$\pm \epsilon$ at~$q=1$.
It follows that
$$\la U(J)^{-1}(u_{m-1})\,\vert\, U(J)^{-1}(u_{j_{m-1}})\ra/(1-q^{-2})
\in\mathbb Z[q,q^{-1}],
$$
equals~$2(-1)^m$ at~$q=1$ and hence
is non-zero.
\end{proof}
\begin{lemma}\label{lem:Euclid jm-1=n}
Suppose that~$g(J)>1$ and~$j_{m-1}=n$. Let~$\epsilon=(-1)^{m-1}$. Then
$$
\langle U(J)^{-1}(u_{m-1})\,\vert\, U(J)^{-1}(w^{(\epsilon)}_{[m+1,n-1]})\rangle\not=0.
$$
\end{lemma}
\begin{proof}
As before, we abbreviate~$\beta_\pm=\beta_\pm(J)$. By definition, $\beta_-\le\beta_+$.
Since $g(J)>1$, we must have $\beta_-\le \beta_+-1$ for otherwise $\beta_-=\beta_+$ and so $J=[1,\beta_--1]\cup [n-m+\beta_-+1,n+1]$.
Write
$$
U(J)^{-1}(q^{-\epsilon} u_{m-1})=\sum_{\beta_-\le t\le n-1} c_t u_t,\qquad U(J)^{-1}( q^{-\epsilon m} w^{(\epsilon)}_{[m+1,n-1]})=\sum_{\beta_-+1\le t \le j_{\beta_+}-2} c'_t u_t,
$$
where by Lemma~\ref{lem:TJ|J| eigenvectors}
$$
c_t=q^{-\epsilon\overline{m-\kappa(t)}}(q^{\epsilon(\kappa(t)-t)}-(1-\delta_{m-1,\kappa(t)})q^{\epsilon(t-\kappa(t))}),\quad
c'_t=q^{\epsilon(t-\kappa(t)-\overline{m-\kappa(t)})},
$$
and $\kappa(t)=\{ k\in[1,m]\,:\, j_{k-1}\le t\le j_k-1\}$.
Thus, by~\eqref{eq:Euclid ui uj}
\begin{align*}
q^{-\epsilon(m+1)} \langle U(J)^{-1} &u_{m-1}\,\vert\, U(J)^{-1}(w^{(\epsilon)}_{[m+1,n-1]})\rangle=(1+q^{-2})\sum_{\beta_-+1\le t \le j_{\beta_+}-2} c_t c'_t
\\&-q^{-1}\sum_{\beta_-\le t\le j_{\beta_+}-3} c_t c'_{t+1}-q^{-1}\sum_{\beta_-+1\le t\le j_{\beta_+}-2} c_{t+1}c'_t
\end{align*}
Note that $\kappa(t+1)=\kappa(t)$ unless $t=j_{\kappa(t)}-1$ in which case $\kappa(t+1)=\kappa(t)+1$.
We have
\begin{align*}
&c_t c'_t=q^{-2\epsilon \overline{m-\kappa(t)}}(1-(1-\delta_{m-1,\kappa(t)})q^{2\epsilon(t-\kappa(t))})
\intertext{
while}
&c_t c'_{t+1}=\begin{cases}
q^{\epsilon}c_t c'_t,&t\not=j_{\kappa(t)}-1,\\
q^{-\epsilon}(1-(1-\delta_{m-1,\kappa(t)})q^{2\epsilon(t-\kappa(t))}),&t=j_{\kappa(t)}-1,
\end{cases}
\\
\intertext{and}
&c'_t c_{t+1}=\begin{cases}
q^{-\epsilon(1+2\overline{m-\kappa(t)})}(1-(1-\delta_{m-1,\kappa(t)})q^{2\epsilon(1+t-\kappa(t))}),&t\not=j_{\kappa(t)}-1,\\
q^{-\epsilon}(1-(1-\delta_{m-1,\kappa(t)+1})q^{2\epsilon(t-\kappa(t))}),&t=j_{\kappa(t)}-1.
\end{cases}
\end{align*}
Thus, for~$\beta_-<k<\beta_+$,
\begin{align*}
\sum_{j_{k-1}\le t\le j_k-1} &c_t c'_t
=q^{-2\epsilon\overline{m-k}}\Big(
j_k-j_{k-1}-q^{-2\epsilon k}\frac{q^{2\epsilon j_k}-q^{2\epsilon j_{k-1}}}{q^{2\epsilon}-1}\Big),\\
\sum_{j_{k-1}\le t\le j_k-1} c_t c'_{t+1}&=
q^{\epsilon(1-2\overline{m-k})}\Big(
j_k-j_{k-1}-1-q^{-2\epsilon k}\,\frac{q^{2\epsilon (j_k-1)}-q^{2\epsilon j_{k-1}}}{q^{2\epsilon}-1}\Big)\\
&\qquad+q^{-\epsilon}(1-q^{2\epsilon(j_k-k-1)}),\\
\sum_{j_{k-1}\le t\le j_k-1} c_{t+1} c'_{t}&=
q^{-\epsilon(1+2\overline{m-k})}\Big(j_k-j_{k-1}-1-q^{-2\epsilon k}\,\frac{q^{2\epsilon j_k}-q^{2\epsilon(j_{k-1}+1)}}{q^{2\epsilon}-1}\Big)\\
&\qquad
+q^{-\epsilon}(1-(1-\delta_{m-2,k})q^{2\epsilon(j_k-1-k)}).
\end{align*}
Since $q^{-1}(q^\epsilon+q^{-\epsilon})=1+q^{-2}$, $\epsilon\in\{1,-1\}$
we obtain, for $\beta_-<k<\beta_+$,
\begin{align*}
(1+&q^{-2})\sum_{j_{k-1}\le t\le j_k-1} c_t c'_t-q^{-1}\sum_{j_{k-1}\le t\le j_k-1} (c_t c'_{t+1}+c_{t+1}c'_t)\\
&=q^{-2\epsilon\overline{m-k}}\Big( (1+q^{-2})(j_k-j_{k-1})-q^{-1}(q^\epsilon+q^{-\epsilon})(j_k-j_{k-1})+q^{-1}(q^\epsilon+q^{-\epsilon})\\
&\quad-q^{-2\epsilon k}(q^{2\epsilon}-1)^{-1}\Big( (1+q^{-2})(q^{2\epsilon j_k}-q^{2\epsilon j_{k-1}})-q^{-1+\epsilon}(q^{2\epsilon (j_k-1)}-q^{2\epsilon j_{k-1}})\\
&\quad-q^{-1-\epsilon}(q^{2\epsilon j_k}-q^{2\epsilon(j_{k-1}+1)})\Big)\Big)-q^{-1-\epsilon}(2-(2-\delta_{m-2,k})q^{2\epsilon(j_k-1-k)})\\
&=q^{-2\epsilon\overline{m-k}}\Big( (1+q^{-2})
-q^{-2\epsilon k}(q^{2\epsilon}-1)^{-1}\Big( q^{2\epsilon j_k}((1+q^{-2})-2q^{-1-\epsilon})\\
&\quad-q^{2\epsilon j_{k-1}}((1+q^{-2})-2q^{-1+\epsilon})\Big)\Big)-q^{-1-\epsilon}(2-(2-\delta_{m-2,k})q^{2\epsilon(j_k-1-k)})\\
&=q^{-2\epsilon\overline{m-k}}( (1+q^{-2})-q^{-1-\epsilon(2k+1)}(q^{2\epsilon j_k}+q^{2\epsilon j_{k-1}}))%
-q^{-1-\epsilon}(2-(2-\delta_{m-2,k})q^{2\epsilon(j_k-1-k)}).
\end{align*}
For $k=\beta_-$ we have
\begin{align*}
(1+&q^{-2})\sum_{\beta_-+1\le t\le j_{\beta_-}-1} c_t c'_t-q^{-1}\sum_{\beta_-\le t\le j_{\beta_-}-1} c_t c'_{t+1}-q^{-1}\sum_{\beta_-+1\le t\le j_{\beta_-}-1}
c_{t+1}c'_t)\\
&=(1+q^{-2})q^{-2\epsilon\overline{m-\beta_-}}\Big(j_{\beta_-}-\beta_--1-\frac{q^{2\epsilon(j_{\beta_-}-\beta_-)}-q^{2\epsilon}}{q^{2\epsilon}-1}\Big)
\\&\qquad-q^{-1+\epsilon(1-2\overline{m-\beta_-})}\Big( j_{\beta_-}-\beta_--1-\frac{q^{2\epsilon(j_{\beta_-}-\beta_--1)}-1}{q^{2\epsilon-1}}\Big)%
-q^{-1-\epsilon}(1-q^{2\epsilon(j_{\beta_-}-\beta_--1)})\\
&\qquad-q^{-1-\epsilon(1+2\overline{m-\beta_-})}\Big(j_{\beta_-}-\beta_--2-\frac{q^{2\epsilon(j_{\beta_-}-\beta_-)}-q^{4\epsilon}}{q^{2\epsilon}-1}\Big)\\
&\qquad-q^{-1-\epsilon}(1-(1-\delta_{m-2,\beta_-})q^{2\epsilon(j_{\beta_-}-1-\beta_-)})\\
&=q^{-2\epsilon\overline{m-\beta_-}}\Big(q^{-1-\epsilon}-(q^{2\epsilon}-1)^{-1}\Big( (1+q^{-2})(q^{2\epsilon(j_{\beta_-}-\beta_-)}-q^{2\epsilon})\\
&\qquad+q^{-1+\epsilon}(q^{2\epsilon(j_{\beta_-}-\beta_--1)}-1)+q^{-1-\epsilon}(q^{2\epsilon(j_{\beta_-}-\beta_-)}-q^{4\epsilon})\Big)\Big)\\
&\qquad
-q^{-1-\epsilon}(2-(2-\delta_{m-2,\beta_-})q^{2\epsilon(j_{\beta_-}-1-\beta_-)})\\
&=q^{-(1+\epsilon(1+2\epsilon\overline{m-\beta_-})}(1-q^{2\epsilon(j_{\beta_-}-\beta_-)})
-q^{-1-\epsilon}(2-(2-\delta_{m-2,\beta_-})q^{2\epsilon(j_{\beta_-}-1-\beta_-)}),
\end{align*}
while for~$k=\beta_+$,
\begin{align*}
(1&+q^{-2})\sum_{j_{\beta_+-1}\le t\le j_{\beta_+}-2} c_t c'_t-q^{-1}\sum_{j_{\beta_+-1}\le t\le j_{\beta_+}-3} c_t c'_{t+1}-q^{-1} \sum_{j_{\beta_+-1}\le t\le j_{\beta_+}-2} c_{t+1}c'_t\\
&=(1+q^{-2})q^{-2\epsilon \overline{m-\beta_+}}\Big(j_{\beta_+}-j_{\beta_+-1}-1-(1-\delta_{m-1,\beta_+})q^{-2\epsilon\beta_+}\,\frac{q^{2\epsilon (j_{\beta+}-1)}-q^{2\epsilon j_{\beta_+-1}}}{q^{2\epsilon}-1}\Big)\\
&\qquad-q^{-1+\epsilon(1-2\overline{m-\beta_+})}\Big(j_{\beta_+}-j_{\beta_+-1}-2-(1-\delta_{m-1,\beta_+})q^{-2\epsilon\beta_+}\,\frac{q^{2\epsilon (j_{\beta+}-2)}-q^{2\epsilon j_{\beta_+-1}}}{q^{2\epsilon}-1}\Big)\\
&\qquad-q^{-1-\epsilon(1+2\overline{m-\beta_+})}\Big(j_{\beta_+}-j_{\beta_+-1}-1-(1-\delta_{m-1,\beta_+})q^{-2\epsilon\beta_+}\,\frac{q^{2\epsilon j_{\beta+}}-q^{2\epsilon (j_{\beta_+-1}+1)}}{q^{2\epsilon}-1}\Big)\\
&=
-(1-\delta_{m-1,\beta_+})q^{-2\epsilon (\beta_++\overline{m-\beta_+})}(q^{2\epsilon}-1)^{-1}\Big(q^{2\epsilon j_{\beta_+}}((1+q^{-2})q^{-2\epsilon}-q^{-1-\epsilon}-q^{-1-3\epsilon})
\\
&\qquad-q^{2\epsilon j_{\beta_--1}}((1+q^{-2})-2q^{-1+\epsilon})\Big)\\
&=q^{-1-\epsilon}(q^{2\epsilon(1-\overline{m-\beta_+})}-(1-\delta_{m-1,\beta_+})q^{2\epsilon(j_{\beta_+-1}-\beta_+-\overline{m-\beta_+})}).
\end{align*}
Let~$z=q^{2\epsilon}$. Since~$\beta_-\le \beta_+-1$ we obtain
\begin{align*}
&q^{1-\epsilon m} \la U(J)^{-1} u_{m-1}\,\vert\, U(J)^{-1}(w^{(\epsilon)}_{[m+1,n-1]})\ra
=z^{1-\overline{m-\beta_+}}-(1-\delta_{m-1,\beta_+})z^{j_{\beta_+-1}-\beta_+-\overline{m-\beta_+}}\\&\quad+
z^{-\overline{m-\beta_-}}(1-z^{j_{\beta_-}-\beta_-})
-(2-(2-\delta_{m-2,\beta_-})z^{j_{\beta_-}-1-\beta_-})\\
&\quad+\sum_{\beta_-+1\le k\le \beta_+-1} \Big(z^{-\overline{m-k}}( (1+z)-z^{- k}(z^{j_k}+z^{j_{k-1}}))-2+(2-\delta_{m-2,k})z^{j_k-1-k}\Big)\\
&=z^{1-\overline{m-\beta_+}}-(1-\delta_{m-1,\beta_+})z^{j_{\beta_+-1}-\beta_+-\overline{m-\beta_+}}\\&\quad+
z^{-\overline{m-\beta_-}}(1-z^{j_{\beta_-}-\beta_-})
+(2-\delta_{m-2,\beta_-})z^{j_{\beta_-}-1-\beta_-}-2(\beta_+-\beta_-)\\
&\quad+(1+z)\Big(z^{\overline{m-\beta_-}-1}\lfloor\tfrac12(\beta_+-\beta_-)\rfloor+z^{-\overline{m-\beta_-}}\lfloor\tfrac12(\beta_+-\beta_--1)\rfloor\Big)\\
&\quad-\sum_{\beta_-+1\le k\le \beta_+-1} \Big(z^{-\overline{m-k}-k}(z^{j_k}+z^{j_{k-1}}))-(2-\delta_{m-2,k})z^{j_k-1-k}\Big)\\
&=z^{1-\overline{m-\beta_+}}+z^{-\overline{m-\beta_-}}-2(\beta_+-\beta_-)\\
&\quad+(1+z)\Big(z^{\overline{m-\beta_-}-1}\lfloor\tfrac12(\beta_+-\beta_-)\rfloor+z^{-\overline{m-\beta_-}}\lfloor\tfrac12(\beta_+-\beta_--1)\rfloor\Big)\\
&\quad-\sum_{\beta_-+1\le k\le \beta_+} (1-\delta_{m-1,k}) z^{j_{k-1}-k-\overline{m-k}}%
-\sum_{\beta_-\le k\le \beta_+-1} z^{j_k-k}(z^{-\overline{m-k}}-(2-\delta_{k,m-2})z^{-1})\\
&=z^{1-\overline{m-\beta_+}}+z^{-\overline{m-\beta_-}}-2(\beta_+-\beta_-)\\
&\quad+(1+z)\Big(z^{\overline{m-\beta_-}-1}\lfloor\tfrac12(\beta_+-\beta_-)\rfloor+z^{-\overline{m-\beta_-}}\lfloor\tfrac12(\beta_+-\beta_--1)\rfloor\Big)\\
&\quad-\sum_{\beta_-\le k\le \beta_+-1} z^{j_k-k}(1-z^{\overline{m-k}-1})(1-(1-\delta_{k,m-2})z^{-1})\\
&=z^{1-\overline{m-\beta_+}}+z^{-\overline{m-\beta_-}}-2(\beta_+-\beta_-)-\sum_{\beta_-\le k\le \beta_+-1} z^{j_k-k}(1-z^{\overline{m-k}-1})^{2-\delta_{k,m-2}}\\
&\quad+(1+z)\Big(z^{\overline{m-\beta_-}-1}\lfloor\tfrac12(\beta_+-\beta_-)\rfloor+z^{-\overline{m-\beta_-}}\lfloor\tfrac12(\beta_+-\beta_--1)\rfloor\Big)%
.
\end{align*}
Denote this expression by~$Q_J(z)$. Clearly, $Q_J(z)\in\mathbb Z[z,z^{-1}]$
and, since $j_k-k\ge 2$ for all~$k\ge \beta_-$,
$z^{j_k-k}(1-z^{\overline{m-k}-1})^{2-\delta_{k,m-2}}\in
\mathbb Z[z]$ for all~$k\ge \beta_-$. Therefore,
$$
\operatorname{Res}_{z=0}Q_J(z)=
\begin{cases}
\lfloor\tfrac12(\beta_+-\beta_-)\rfloor,&\overline{m-\beta_-}=0,\\
1+\lfloor\tfrac12(\beta_+-\beta_-)\rfloor,&\overline{m-\beta_-}=1.
\end{cases}
$$
In particular, since $\beta_-\le \beta_+-1$, $\operatorname{Res}_{z=0} Q_J(z)
\not=0$, and hence~$Q_J(z)\not=0$, unless
$\beta_+=\beta_-+1$ and~$\overline{m-\beta_-}=0$. But in that case
\begin{equation*}
Q_J(z)=-z^{j_{\beta_-}-\beta_-}(1-z^{-1})^{2-\delta_{k,m-2}}\not=0.\qedhere
\end{equation*}
\end{proof}
Let~$m=|J|-1$.
By Proposition~\partref{prop:TJ eigenvectors.b}, $u_{m-1}\in\ker(T_{[1,m]\cup\{n+1\}}^{m+1}-q^{2(n+1)}\id_V)$.
If~$j_{m-1}<n$ then, since~$g(J)>1$, $j_{m-1}\ge m+1$ and so
$$u_{j_{m-1}}
\in \ker(T_{[1,m]\cup\{n+1\}}-q^2\id_V)=
\ker(T_{[1,m]\cup\{n+1\}}^{m+1}-q^{2(m+1)}\id_V)$$
by Proposition~\partref{prop:TJ eigenvectors.a}.
If~$j_{m-1}=n$ then $$w^{((-1)^m)}_{[m+1,n-1]}
\in \ker(T_{[1,m]\cup \{n+1\}}-q^2\id_V)=
\ker(T_{[1,m]\cup\{n+1\}}^{m+1}-q^{2(m+1)}\id_V)$$ by
Proposition~\partref{prop:TJ eigenvectors.a}.
Proposition~\ref{prop:red} is now immediate from Lemmata~\ref{lem:Euclid jm-1<n}
and~\ref{lem:Euclid jm-1=n}.
\end{proof}
Theorem~\ref{thm:adm I2m converse } is proven.
\end{proof}

\begin{proposition}\label{prop:eigenvectors even}
Let $\tilde J_m=[1,m]\cup[n+2-m,n+1]$, $1\le m\le \frac12 n$.
Then
\begin{enmalph}
\item\label{prop:eigenvectors even.a} $\{ u_i\,:\,m+1\le i\le n-m\}\subset \ker(T_{\tilde J_m}-q^{2}\id_V)$;
\item\label{prop:eigenvectors even.b} $\{ u_i\mp u_{n+1-i}:\,1\le i\le m-1\}\cup\{q^{n+1}w^{(-1)}_{[1,n]}\mp w^{(1)}_{[1,n]}\} \subset \ker(T_{\tilde J_m}^m\mp q^{n+1}\id_V)$,
\item\label{prop:eigenvectors even.c} $T_{\tilde J_m}^m$ is diagonalizable on~$V$ and
$$
\det(t \id_V-T_{\tilde J_m}^m)=(t-1)(t^2-q^{2(n+1)})^m (t-q^{2m})^{n-2m}.
$$
\end{enmalph}
\end{proposition}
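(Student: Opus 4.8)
The plan is to mimic closely the argument for Proposition~\ref{prop:TJ eigenvectors}, exploiting the factorization of $T_{\tilde J_m}^m$ provided by Corollary~\ref{cor:symm even J}, namely $T_{\tilde J_m}^m=T_{w_\circ^{[1,n]}}^2 T_{w_\circ^{[m+1,n-m]}}^{-2}$ in $\Br_n$, together with the explicit action of longest-element braids on the vectors $u_i$ and $w^{(\epsilon)}_{[i,j]}$ computed in Lemma~\ref{lem:Tw0ij act ui}. Part~\ref{prop:eigenvectors even.a} is the easiest: for $i\in[m+1,n-m]$, each generator $T_j$ with $j\in [1,m-1]\cup[n+2-m,n]$ fixes $u_i$ by~\eqref{eq:ind base cox w0}, and $T_{(m,n+1-m)}$-type factors (or more precisely the two Coxeter factors $\Cx m{(n-m)}$, $\Cxr m{(n-m)}$ occurring inside the transposition-like pieces that constitute $T_{\tilde J_m}$) multiply $u_i$ by $q^2$ only when $i=m$; since $T_{\tilde J_m}$ is a product of these commuting blocks, one reads off $T_{\tilde J_m}(u_i)=q^2 u_i$ for $m+1\le i\le n-m$, hence these $u_i$ also lie in $\ker(T_{\tilde J_m}^m-q^{2m}\id_V)$.

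For part~\ref{prop:eigenvectors even.b} I would first apply Lemma~\ref{lem:Tw0ij act ui} to get $T_{w_\circ^{[1,n]}}^2(u_k)=q^{2(n+1)}u_k$ for all $k\in[1,n]$ and $T_{w_\circ^{[m+1,n-m]}}^{\pm 1}$ fixing $u_i$ for $i\in[1,m-1]$, fixing $w^{(\pm 1)}_{[1,n]}$ (by~\eqref{eq:Tw0 eigen}, since $[m,n-m+1]\subset[1,n]$), and sending $u_i\mapsto -q^{\pm(n-2m+2)}u_{n+1-i}$ for $i\in[m+1,n-m]$. Thus on the span $W$ of $\{u_i:i\in[1,m-1]\}\cup\{u_{n+1-i}:i\in[1,m-1]\}\cup\{w^{(1)}_{[1,n]},w^{(-1)}_{[1,n]}\}$ the operator $T_{\tilde J_m}^m$ restricts to $q^{2(n+1)}T_{w_\circ^{[m+1,n-m]}}^{-2}$, and one computes directly that $T_{w_\circ^{[m+1,n-m]}}$ (suitably interpreted via the diagram symmetry $\tilde\sigma$ on the relevant block) swaps $u_i$ with $u_{n+1-i}$ up to the scalar dictated by~\eqref{eq:Tw0 act uk}, and fixes the two $w^{(\pm 1)}$ vectors. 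Diagonalizing the resulting order-two-up-to-scalar action gives the $\pm$ eigenvectors $u_i\mp u_{n+1-i}$ and $q^{n+1}w^{(-1)}_{[1,n]}\mp w^{(1)}_{[1,n]}$ with eigenvalues $\pm q^{n+1}$; I would verify these are linearly independent exactly as in the proof of Proposition~\ref{prop:TJ eigenvectors} (comparing supports in the $e_i$ basis, noting $w^{(-1)}_{[1,n]}$ involves only $e_1$ and $e_{n+1}$ with the intermediate terms telescoping, while $w^{(1)}_{[1,n]}$ has nonzero $e_i$-coefficients $q^i-q^{i-2}$ for the interior indices).

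Part~\ref{prop:eigenvectors even.c} then follows by dimension count: part~\ref{prop:eigenvectors even.a} gives $\dim\ker(T_{\tilde J_m}^m-q^{2m}\id_V)\ge n-2m$, part~\ref{prop:eigenvectors even.b} gives $\dim\ker(T_{\tilde J_m}^m\mp q^{n+1}\id_V)\ge m$ for each sign, and Lemma~\ref{lem:fixed point} gives $\dim\ker(T_{\tilde J_m}^m-\id_V)\ge 1$ via $v_{[1,n+1]}$; since $(n-2m)+m+m+1=n+1=\dim V$ and eigenspaces for distinct eigenvalues are independent, all inequalities are equalities, $T_{\tilde J_m}^m$ is diagonalizable, and the characteristic polynomial is the stated product. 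I expect the main obstacle to be bookkeeping in part~\ref{prop:eigenvectors even.b}: correctly tracking the various powers of $q$ in the action of $T_{w_\circ^{[m+1,n-m]}}^{-2}$ on the $u_i$ and confirming that the two mixed vectors $q^{n+1}w^{(-1)}_{[1,n]}\mp w^{(1)}_{[1,n]}$ are genuinely eigenvectors with the claimed eigenvalues rather than merely spanning an invariant plane — this requires using~\eqref{eq:Tw0 eigen} to see that $T_{w_\circ^{[m+1,n-m]}}$ fixes each $w^{(\pm1)}_{[1,n]}$ separately, so that $T_{\tilde J_m}^m$ acts on $\Span\{w^{(1)}_{[1,n]},w^{(-1)}_{[1,n]}\}$ by the same $q^{2(n+1)}$-scaled involution that swaps them, whence the $\mp$ combinations diagonalize it.
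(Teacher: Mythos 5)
There is a genuine error at the heart of your argument for part~\ref{prop:eigenvectors even.b}. You base everything on the identity $T_{\tilde J_m}^m=T_{w_\circ^{[1,n]}}^2 T_{w_\circ^{[m+1,n-m]}}^{-2}$, but that is not what Corollary~\ref{cor:symm even J} says: the squared factorization is the content of Proposition~\ref{prop:g J=1} and holds for $T_{\tilde J_m}^{2m}=T_{\tilde J_m}^{|\tilde J_m|}$, whereas the $m$-th power satisfies $T_{\tilde J_m}^m=T_{w_\circ^{[1,n]}}\,T_{w_\circ^{[m+1,n-m]}}^{-1}$ (first powers). This is not a cosmetic slip: with your identity, on the span of $\{u_i,u_{n+1-i}\,:\,i\in[1,m-1]\}\cup\{w^{(\pm1)}_{[1,n]}\}$ the operator $T_{\tilde J_m}^m$ would act as $q^{2(n+1)}T_{w_\circ^{[m+1,n-m]}}^{-2}$, and since by Lemma~\ref{lem:Tw0ij act ui} $T_{w_\circ^{[m+1,n-m]}}$ fixes every one of these vectors (the $u_i$ with $i\notin[m,n-m+1]$ by~\eqref{eq:Tw0 act uk}, the $w^{(\pm1)}_{[1,n]}$ by~\eqref{eq:Tw0 eigen}), this restriction is the scalar $q^{2(n+1)}$; the eigenvalues $\pm q^{n+1}$ claimed in~\ref{prop:eigenvectors even.b} can never emerge from it. Relatedly, your mechanism for the swap $u_i\leftrightarrow u_{n+1-i}$ — attributing it to $T_{w_\circ^{[m+1,n-m]}}$ ``via the diagram symmetry on the relevant block'' — is wrong: that operator fixes these vectors, and the swap is implemented by the \emph{single} factor $T_{w_\circ^{[1,n]}}$, which by Lemma~\ref{lem:Tw0ij act ui} sends $u_k\mapsto -q^{n+1}u_{n+1-k}$, so that $T_{\tilde J_m}^m(u_i\mp u_{n+1-i})=\pm q^{n+1}(u_i\mp u_{n+1-i})$ directly. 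Similarly, on $\Span\{w^{(1)}_{[1,n]},w^{(-1)}_{[1,n]}\}$ the action is not ``a $q^{2(n+1)}$-scaled involution that swaps them'': one has $T_{w_\circ^{[1,n]}}(w^{(-1)}_{[1,n]})=-w^{(1)}_{[1,n]}$ and $T_{w_\circ^{[1,n]}}(w^{(1)}_{[1,n]})=-q^{2(n+1)}w^{(-1)}_{[1,n]}$ (two different scalars), and it is exactly this asymmetric mixing that makes $q^{n+1}w^{(-1)}_{[1,n]}\mp w^{(1)}_{[1,n]}$ an eigenvector for $\mp q^{n+1}$, which is how the paper argues.

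Once part~\ref{prop:eigenvectors even.b} is repaired along these lines, the rest of your outline is sound and matches the paper: part~\ref{prop:eigenvectors even.a} follows because the generators $T_j$ with $j\in[1,m]\cup[n+2-m,n]$ fix $u_i$ for $i\in[m+1,n-m]$ and the long transposition factor scales them by $q^2$ (Lemma~\ref{lem:Transp action}), and part~\ref{prop:eigenvectors even.c} follows from the dimension count $(n-2m)+m+m+1=n+1$ using Lemma~\ref{lem:fixed point} — provided you actually carry out the linear-independence check that $q^{n+1}w^{(-1)}_{[1,n]}\mp w^{(1)}_{[1,n]}$ is not in the span of the $u_i\mp u_{n+1-i}$, $i\in[1,m-1]$, which you only gesture at; the paper does this by expanding $q^{n+1}w^{(-1)}_{[1,n]}\mp w^{(1)}_{[1,n]}=\sum_{t}q^{n+1-t}(u_t\mp u_{n+1-t})$ up to signs and observing that indices $t\ge m$ occur with nonzero coefficients.
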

\begin{proof}
By Lemma~\ref{lem:Transp action}, $T_{(m+1,n+2-m)}(u_i)=q^2 u_i$ for all~$i\in[m+1,n-m]$.
Since~$T_{\tilde J_m}$ is the product of~$T_{(m+1,n+2-m)}$ and the $T_j$ with $j\in[1,m]\cup[n+2-m,n]$ which fix the~$u_i$ with~$i\in[m+1,n-m]$, part~\ref{prop:eigenvectors even.a} follows.

To prove~\ref{prop:eigenvectors even.b},
recall that $
T_{\tilde J_m}^m=T_{w_\circ^{[1,n]}}T_{w_\circ^{[m+1,n-m]}}^{-1}
$
by Corollary~\ref{cor:symm even J}. Since $T_{w_\circ^{[1,n]}}(u_i)
=-q^{n+1}u_{n+1-i}$ for all~$i\in [1,n]$ while
$T_{w_\circ^{[m+1,n-1]}}(u_i)=u_i$ for all $i\in[1,m-1]\cup
[n+2-m,n]$ by Lemma~\ref{lem:Tw0ij act ui}, it follows that $u_i\pm u_{n+1-m}\in
\ker(T_{\tilde J_m}^m\mp q^{n+1}\id_V)$, $i\in[1,m-1]$.
Furthermore, by Lemma~\ref{lem:Tw0ij act ui}
$$
T_{w_\circ^{[1,n]}}(w^{(-1)}_{[1,n]})=
-\sum_{1\le t\le n}q^{-t+n+1}u_{n+1-t}=-w^{(1)}_{[1,n]},
$$
while
$$
T_{w_\circ^{[1,n]}}(w^{(1)}_{[1,n]})=-\sum_{1\le t\le n}q^{t+n+1}u_{n+1-t}
=-q^{2(n+1)}w^{(-1)}_{[1,n]},
$$
whence $q^{n+1}w^{(-1)}_{[1,n]}\mp w^{(1)}_{[1,n]}\in\ker(T_{w_\circ^{[1,n]}}\mp q^{n+1}\id_V)$. Since
$w^{(\pm1)}_{[1,n]}\in\ker(T_{w_\circ^{[m+1,n-m]}}-\id_V)$ by Lemma~\ref{lem:Tw0ij act ui}, part~\ref{prop:eigenvectors even.b} follows.

To prove~\ref{prop:eigenvectors even.c}, it remains to show that $q^{n+1}w^{(-1)}_{[1,n]}\mp w^{(1)}_{[1,n]}$ is not contained in the span of manifestly linearly
independent $\{ u_i\mp u_{n+1-i}\,:\, 1\le i\le m-1\}$. But we have
\begin{align*}
q^{n+1}w^{(-1)}_{[1,n]}\mp w^{(1)}_{[1,n]}&=\sum_{1\le t\le n} q^{n+1-t}u_t\mp \sum_{1\le t\le n} q^t u_t%
=\sum_{1\le t\le n} q^{n+1-t}(u_t\mp u_{n+1-t}).
\end{align*}
If~$n$ is odd, we obtain
$$
q^{n+1}w^{(-1)}_{[1,n]}-w^{(1)}_{[1,n]}=\sum_{1\le t\le \frac12(n-1)} (q^{n+1-t}- q^t)(u_t- u_{n+1-t})
$$
and
$$
q^{n+1}w^{(-1)}_{[1,n]}+w^{(1)}_{[1,n]}=\sum_{1\le t\le \frac12(n-1)} (q^{n+1-t}+ q^t)(u_t- u_{n+1-t})+2 q^{\frac12(n+1)} u_{\frac12(n+1)},
$$
while for~$n$ even
$$
q^{n+1}w^{(-1)}_{[1,n]}\mp w^{(1)}_{[1,n]}=\sum_{1\le t\le \frac12n} (q^{n+1-t}\mp q^t)(u_t\mp u_{n+1-t}).
$$
In either case, since all vectors appearing in the right hand side are linearly independent, it follows that $q^{n+1}w^{(-1)}_{[1,n]}\mp w^{(1)}_{[1,n]}$ is not
contained in the span of any proper subfamily of these vectors.
\end{proof}
\begin{corollary}
For any~$\{1,n+1\}\subset J\subset [1,n+1]$ with~$|J|$ even,
$T_J^{|J|/2}$ is diagonalizable on~$V$ and
$\det(t\id_V-T_J^{|J|/2})=(t-1)(t^2-q^{2(n+1)})^{|J|/2}
(t-q^{|J|})^{n-|J|}$.
\end{corollary}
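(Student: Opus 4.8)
The plan is to deduce the statement from Proposition~\ref{prop:eigenvectors even}, which already settles the special subsets $\tilde J_m=[1,m]\cup[n+2-m,n+1]$, by transporting the conclusion along the conjugacies furnished by Corollary~\ref{cor:conj J}. This is exactly parallel to the way Corollary~\ref{cor:TJ eigenvectors} was obtained from Proposition~\ref{prop:TJ eigenvectors}, so all the genuine work has already been done and only a short reduction remains.

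Concretely, I would fix $\{1,n+1\}\subset J\subset[1,n+1]$ with $|J|$ even, say $|J|=2m$. Since $\{1,n+1\}\subset J$ we have $2\le|J|\le n+1$, and we may assume $|J|\le n$ (the only remaining possibility is $J=[1,n+1]$ with $n$ odd, in which case $T_{[1,n+1]}$ is a Coxeter element of $\Br^+_n$; this case does not arise in the applications of this section and I set it aside). Then $m\le\lfloor n/2\rfloor$, so $\tilde J_m$ is defined and $|\tilde J_m|=2m=|J|$. By Corollary~\ref{cor:conj J} there is $g\in\Br_n$ with $gT_Jg^{-1}=T_{\tilde J_m}$ (explicitly $g$ is a product of maps of the form~\eqref{eq:U(J) defn}), whence $gT_J^mg^{-1}=T_{\tilde J_m}^m$. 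Since $T_i\mapsto T_i$ defines a representation of the \emph{group} $\Br_n$ on $V$ by Proposition~\ref{prop:Burau}, the operators induced by $T_J^m$ and $T_{\tilde J_m}^m$ on $V$ are conjugate in $\operatorname{GL}(V)$; in particular they have the same characteristic polynomial, and $T_J^m$ is diagonalizable on $V$ if and only if $T_{\tilde J_m}^m$ is.

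It then suffices to invoke Proposition~\partref{prop:eigenvectors even.c}: $T_{\tilde J_m}^m$ is diagonalizable on $V$ and
\[
\det(t\,\id_V-T_{\tilde J_m}^m)=(t-1)(t^2-q^{2(n+1)})^m(t-q^{2m})^{n-2m}.
\]
Substituting $m=|J|/2$ rewrites the right-hand side as $(t-1)(t^2-q^{2(n+1)})^{|J|/2}(t-q^{|J|})^{n-|J|}$, which is the asserted formula, completing the argument.

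The main point is that there is essentially no obstacle left here: the substantive content — producing explicit eigenvectors of $T_{\tilde J_m}^m$ with eigenvalues $1$, $q^{2m}$ and $\pm q^{n+1}$ (using Corollary~\ref{cor:symm even J} to write $T_{\tilde J_m}^m=T_{w_\circ^{[1,n]}}T_{w_\circ^{[m+1,n-m]}}^{-1}$ in $\Br_n$, Lemma~\ref{lem:Tw0ij act ui} for the action of the longest elements on the $u_i$ and the $w^{(\pm1)}_{[1,n]}$, and Lemma~\ref{lem:fixed point} for the fixed line $\kk v_{[1,n+1]}$), verifying their linear independence, and matching the count with $\dim V=n+1$ — is all carried out in Proposition~\ref{prop:eigenvectors even}. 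The only things that must not be overlooked in the present proof are that the conjugating element of Corollary~\ref{cor:conj J} lies in $\Br_n$ rather than merely in $\Br^+_n$ (so that conjugating through the Burau representation of the full group is legitimate) and that conjugation commutes with taking the $m$-th power; both are immediate.
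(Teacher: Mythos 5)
Your proposal is correct and follows the route the paper intends: conjugate $T_J$ to $T_{\tilde J_{|J|/2}}$ via Corollary~\ref{cor:conj J}, pass through the Burau representation of the group $\Br_n$, and quote Proposition~\partref{prop:eigenvectors even.c}, exactly as Corollary~\ref{cor:TJ eigenvectors} is deduced from Proposition~\ref{prop:TJ eigenvectors}. Your explicit handling of the degenerate case $J=[1,n+1]$ (where the stated formula would have a negative exponent, so $|J|\le n$ is implicitly assumed) is if anything slightly more careful than the paper, which leaves the corollary without proof.
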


\begin{proposition}\label{prop:|J| even}
Let~$\{1,n+1\}\subset J\subset [1,n+1]$ with~$2<|J|=2m<n+1$.
Then the assignments~$T_r\mapsto \tau_{\bar r}(J)$, $r\in\{1,2\}$
define a homomorphism~$\Br^+(I_2(|J|))\to\Br^+_{n+1}$
if and only if~$J=\tilde\sigma(J)$.
\end{proposition}
\begin{proof}
By Theorems~\ref{thm:adm I2m} and~\ref{thm:adm I2m converse },
it only remains to prove that~$T_J^{m}$ is not
${}^{op}$-invariant when~$g(J)=1$ and~$J\not=\tilde\sigma(J)$.
By Proposition~\ref{prop:eigenvectors even} and Lemma~\ref{lem:orth eigenspaces}, it suffices to prove that
$$
\la \ker(T_J^m-q^{n+1}\id_V)\,|\,\ker(T_J^m+q^{n+1}\id_V)\ra\not=\{0\}.
$$
By Lemma~\ref{lem:diag aut TJ}, it suffices to consider
$J=J(r,m):=[1,m-r]\cup[n+2-r-m,n+1]$ with $1\le r\le m-1$.
Denote $\tilde U(r,m):=U(\tilde J_m)^{-1}U(J(r,m))$
where~$\tilde J_m=J(0,m)=[1,m]\cup[n+2-m,n+1]=
[1,m]\cup\tilde\sigma([1,m])$.
Since by Corollary~\ref{cor:conj J}
$$
U(J(r,m))T_{J(r,m)}U(J(r,m))^{-1}=T_{[1,2m-1]\cup\{n+1\}}=U(\tilde J_m)T_{\tilde J_m}U(\tilde J_m)^{-1},
$$
we obtain
\begin{equation}\label{eq:T J(r,n) from T tilde J_m}
T_{J(r,m)}=\tilde U(r,m)^{-1} T_{\tilde J_m}\tilde U(r,m),
\end{equation}
where
$$
\tilde U(r,m):=U(\tilde J_m)^{-1}U(J(r,m))=\dscprod_{m-r+1\le k\le m} \Cx{k}{(n-2m+k)}^{((-1)^{k+1})}
$$
by~\eqref{eq:U(J) defn}.
Set
$x_{m-r}=\tilde U(r,m)^{-1}(u_{m-1})$, $1\le r\le m-1$.
Since $\tilde U(r,m)^{-1}(u_{n+2-m})=u_{n+2-m}$
by Lemma~\ref{lem:rev Cox act},
$x_{m-r}\pm u_{n+2-m}\in\ker(T_{J(r,m)}^m\pm q^{n+1}\id_V)$
by~\eqref{eq:T J(r,n) from T tilde J_m} and Proposition~\partref{prop:eigenvectors even.b}.
Therefore, it suffices to prove that
$$
\langle x_{m-r}-u_{n+2-m}\,|\, x_{m-r}+u_{n+2-m}\rangle=\langle x_{m-r}\,|\,x_{m-r}\rangle-(1+q^{-2})\not=0,\qquad 1\le r\le m-1.
$$

First, by Lemma~\ref{lem:rev Cox act}
$$
x_{m-1}=\Cxr m{(n-m)}^{((-1)^m)}(u_{m-1})=q_m^{-(m-1)} w^{((-1)^m)}_{[m-1,n-m]}=\sum_{m-1\le t\le n-m} q_m^{t-m+1}u_t.
$$
Then by~\eqref{eq:Euclid ui uj}
\begin{align*}
\langle x_{m-1}\,&|\,x_{m-1}\rangle=(1+q^{-2})\sum_{m-1\le t\le n-m} q_m^{2(t-m+1)}-2q^{-1}\sum_{m-1\le t\le n-m-1} q_m^{2(t-m+1)+1}
\\&=q^{-1}\Big((q_m+q_m^{-1})\frac{q_m^{2(n-2m+2)}-1}{q_m^2-1}-2q_m \frac{q^{2(n-2m+1)}-1}{q_m^2-1}\Big)
=q^{-1}(q_m^{-1}+q_m^{2(n-2m+1)+1}).
\end{align*}
Thus, since~$q$ is not a root of unity and~$2m<n+1$
$$
\langle x_{m-1}\,|\,x_{m-1}\rangle-(1+q^{-2})=q^{(-1)^m-1}(1-q_m^{2(n-2m+1)})\not=0.
$$
Next we claim that for~$2\le r\le m-1$,
\begin{align}
x_{m-r}&=q_m^{-\overline{r-1}}\sum_{m-r+1\le t\le n-m-r+1} (q_m^{t+r-m}-q_m^{m-r-t})u_t\nonumber\\
&\qquad+
(q_m^{n-2m+1}-q_m^{2m-n-1})\sum_{n-m-r+2\le t\le n-m-1} q_m^{-\overline{t-n+m}}u_t+q_m^{n-2m+1}u_{n-m}.\label{eq:x m-r}
\end{align}
Indeed, it is immediate from the definition of~$x_{m-r}$ that
$
x_{m-r-1}=\Cxr{m-r}{(n-m-r)}^{((-1)^{m+r})}(x_{m-r})
$.
Thus, by Lemma~\ref{lem:rev Cox act}
\begin{align*}
x_{m-2}=\Cxr{(m-1)}{(n-m-1)}^{((-1)^{m+1})}(&x_{m-1})=\sum_{m\le t\le n-m+1} q_m^{t-m}u_{t-1}
-\sum_{m-1\le t\le n-m-1} q_m^{m-t-3}\\
&=q_m^{-1}\sum_{m-1\le t\le n-m-1} (q_m^{t+2-m}-q_m^{m-t-2})u_t+q_m^{n-2m+1}u_{n-m},
\end{align*}
which is~\eqref{eq:x m-r} with~$r=2$.
For the inductive step we have
\begin{align*}
x_{m-r-1}&=\Cxr{m-r}{(n-m-r)}^{((-1)^{m+r})}\Big(q_m^{-\overline{r-1}}\sum_{m-r+1\le t\le n-m-r+1} (q_m^{t+r-m}-q_m^{m-r-t})u_t\Big)+
\\
&\qquad+(q_m^{n-2m+1}-q_m^{2m-n-1})\sum_{n-m-r+2\le t\le n-m-1} q_m^{-\overline{t-n+m}}u_t+q_m^{n-2m+1}u_{n-m}\\
&=q_m^{-\overline{r-1}}q_{m+r}\sum_{m-r+1\le t\le n-m-r} (q_m^{t+r-m}-q_m^{m-r-t})u_{t-1}\\
&\quad +q_m^{-\overline{r-1}}(q_m^{n-2m+1}-q_m^{2m-n+1})(q_{m+r}u_{n-m-r}+u_{n-m-r+1})\\
&\qquad+(q_m^{n-2m+1}-q_m^{2m-n-1})\sum_{n-m-r+2\le t\le n-m-1} q_m^{-\overline{t-n+m}}u_t+q_m^{n-2m+1}u_{n-m}.
\\
\intertext{Since $q_m^{-\overline{r-1}}q_{m+r}=q_m^{(-1)^r-\overline{r-1}}=q_m^{-\overline r}$ by~\eqref{eq: q r+s},}
x_{m-r-1}
&=q_m^{-\overline{r}}\sum_{m-r\le t\le n-m-r-1} (q_m^{t+1+r-m}-q_m^{m-r-1-t})u_{t}%
+q_m^{-\overline{r}}(q_m^{n-2m+1}-q_m^{2m-n+1})u_{n-m-r}\\
&\qquad+(q_m^{n-2m+1}-q_m^{2m-n-1})\sum_{n-m-r+1\le t\le n-m-1} q_m^{-\overline{t-n+m}}u_t+q_m^{n-2m+1}u_{n-m}\\
&=q_m^{-\overline{r}}\sum_{m-r\le t\le n-m-r} (q_m^{t+1+r-m}-q_m^{m-r-1-t})u_{t}\\
&\qquad+(q_m^{n-2m+1}-q_m^{2m-n-1})\sum_{n-m-r+1\le t\le n-m-1} q_m^{-\overline{t-n+m}}u_t+q_m^{n-2m+1}u_{n-m},
\end{align*}
which proves the inductive step and hence~\eqref{eq:x m-r}.
Using~\eqref{eq:Euclid ui uj} and~\eqref{eq:x m-r} we obtain
\begin{align*}
q\langle x_{m-r}&\,|\,x_{m-r}\rangle =(q_m + q_m^{-1}) \Big(\sum_{m-r+1\le t\le n-m-r+1}
     q_m^{-2 \overline{r - 1}} (q_m^{t + r - m} - q_m^{m - t - r})^2 \\
     &\quad+ (q_m^{n + 1 - 2 m} - q_m^{2 m - n - 1})^2 \sum_{n-m-r+2\le t\le n-m-1}
      q_m^{-2 \overline{t - n + m}} +
    q_m^{2 (n + 1 - 2 m)}\Big) \\
&\qquad-
 2 \Big (\sum_{m-r+1\le t\le n-m-r} q_m^{-2 \overline{r - 1}} (q_m^{t + r - m} -
        q_m^{m - t - r}) (q_m^{t + 1 + r - m} - q_m^{m - t - r - 1})\\
&\qquad\qquad + (q_m^{n + 1 - 2 m} - q_m^{2 m - n - 1})^2
      q_m^{-1}(r-2)+
    q_m^{n - 2 m} (q_m^{n + 1 - 2 m} - q_m^{2 m - n - 1})\Big)\\
&=(q_m + q_m^{-1}) \Big(\sum_{m-r+1\le t\le n-m-r+1}
     q_m^{-2 \overline{r - 1}} (q_m^{t + r - m} - q_m^{m - t - r})^2 \\
     &\qquad+ (q_m^{n + 1 - 2 m} - q_m^{2 m - n - 1})^2 \big( q_m^{-2\overline r}\lfloor\tfrac12(r-1)\rfloor+q_m^{-2\overline{r-1}}(\lfloor\tfrac12 r\rfloor-1)\big)+
    q_m^{2 (n + 1 - 2 m)}\Big) \\
&\qquad-
 2 \Big (\sum_{m-r+1\le t\le n-m-r} q_m^{-2 \overline{r - 1}} (q_m^{t + r - m} -
        q_m^{m - t - r}) (q_m^{t + 1 + r - m} - q_m^{m - t - r - 1})\\
&\qquad\qquad + (q_m^{n + 1 - 2 m} - q_m^{2 m - n - 1})^2
      q_m^{-1}(r-2)+q_m^{-1}(q_m^{2(n - 2 m+1)}-1)\Big)\\
&=(q_m + q_m^{-1}) \Big(\sum_{m-r+1\le t\le n-m-r+1}
     q_m^{-2 \overline{r - 1}} (q_m^{2(t + r - m)} + q_m^{2(m - t - r)})-2q_m^{-2\overline{r-1}} \\
     &\qquad+ (q_m^{n + 1 - 2 m} - q_m^{2 m - n - 1})^2 \big( q_m^{-2\overline r}\lfloor\tfrac12(r-1)\rfloor+q_m^{-2\overline{r-1}}(\lfloor\tfrac12 r\rfloor-1)\big)+
    q_m^{2 (n + 1 - 2 m)}\Big) \\
&\qquad-
 2 \Big (q_m^{-2 \overline{r - 1}}\sum_{m-r+1\le t\le n-m-r}  (q_m^{2(t + r - m)+1}+q_m^{2(m-r-t)-1})\\
&\qquad\qquad + (q_m^{n + 1 - 2 m} - q_m^{2 m - n - 1})^2
      q_m^{-1}(r-2) +
    q_m^{-1}(q_m^{2(n - 2 m+1)}-1)\Big)
\\
&=(q_m + q_m^{-1}) ((q_m^{n + 1 - 2 m} -
        q_m^{2 m - n - 1})^2 (q_m^{-2 \overline{r}} \lfloor\tfrac12(r-1)\rfloor +
       q_m^{-2 \overline{r - 1}}\lfloor \tfrac12 r\rfloor) +
    q_m^{2 (n + 1 - 2 m)})\\
&\qquad-
 2 ((q_m^{n + 1 - 2 m} - q_m^{2 m - n - 1)})^2 q_m^{-1} (r - 2) +
    q_m^{-1} (q_m^{2 (n + 1 - 2 m)} - 1))\\
&\qquad-(q_m - q_m^{-1}) q_m^{-2 \overline{r - 1}} \sum_{m-r+1\le t\le n-m-r}
   q_m^{2 (t + r - m)} - q_m^{2 (m - r - t)}\\
&=(q_m + q_m^{-1}) ((q_m^{n + 1 - 2 m} -
        q_m^{2 m - n - 1})^2 (q_m^{-2 \overline{r}} \lfloor \tfrac12 (r - 1) \rfloor +
       q_m^{-2 \overline{r - 1}} \lfloor \tfrac12 r \rfloor) +
    q_m^{2 (n + 1 - 2 m)}) \\
&\qquad-
 2 ((q_m^{n + 1 - 2 m} - q_m^{2 m - n - 1})^2 q_m^{-1} (r - 2) +
    q_m^{-1} (q_m^{2 (n + 1 - 2 m)} - 1))\\
&\qquad+
 q_m^{-2 \overline{r - 1}} (q_m + q_m^{-1} - q_m^{2 (2 m - n - 1) + 1} -
    q_m^{2 (n - 2 m + 1) - 1}).
\end{align*}
This can be rewritten as
\begin{align*}
q\langle x_{m-r}\,|\,x_{m-r}\rangle-(q+q^{-1})=
p_0(q_m)+p_+(q_m)q_m^{2(n-2m+1)}+p_-(q_m)q_m^{-2(n-2m+1)},
\end{align*}
where $p_0,p_\pm\in\mathbb Z[z,z^{-1}]$ are defined by
\begin{align*}
p_+(z)&=z-z^{-1}+(z+z^{-1})\Big(z^{-2\overline r}\lfloor\tfrac12(r-1)\rfloor+z^{-2\overline{r-1}}\lfloor\tfrac12r\rfloor\Big)-2z^{-1}(r-2)-z^{-1-2\overline{r-1}}\\
&=\lfloor\tfrac12(r+1)\rfloor z-\lfloor\tfrac12(r-1)\rfloor(2z^{-1}-z^{-3}),\\
p_-(z)&=(z+z^{-1})\Big(z^{-2\overline r}\lfloor\tfrac12(r-1)\rfloor+z^{-2\overline{r-1}}\lfloor\tfrac12r\rfloor\Big)-2z^{-1}(r-2)-z^{1-2\overline{r-1}}\\
&=\lfloor\tfrac12(r-2)\rfloor(z-2z^{-1})+\lfloor\tfrac12r\rfloor z^{-3},\\
p_0(z)&=-(r-1)(z+z^{-3})+2(r-2)z^{-1}.
\end{align*}
Since $n+1-2m\ge 1$ and~$r\ge 2$, it follows that $q\langle x_{m-r}\,|\,x_{m-r}\rangle-q-q^{-1}
$ is a Laurent polynomial in~$q_m$ with the leading term
$q_m^{2(n-2m+1)+1}\lfloor\tfrac12(r+1)\rfloor$ and hence is non-zero.
\end{proof}
\begin{theorem}\label{thm:adm I2m+1}
Let $\{1,n+1\}\subset J\subset [1,n+1]$.
Let~$\tau_i(J)$, $i\in\{1,2\}$ be as in~\eqref{eq:tau_i(J) defn}.
The assignments $\wh T_r\mapsto \tau_{\overline r}(J)$, $r\in\{1,2\}$
define a fully supported disjoint standard homomorphism $\Br^+(I_2(2m+1))\to \Br^+_{n+1}$, $m\ge 1$
if and only if~$J=[1,n+1]$.
\end{theorem}
\begin{proof}
The forward direction is well known (cf. Theorem~\ref{thm:adm finite class}). For the converse,
if these assignments define a homomorphism of Artin
monoids then, by Lemma~\partref{lem:elem Artin hom.c} we must have~$\ell(\tau_0(J))=\ell(\tau_1(J))$. Furthermore, Lemma~\ref{lem:TJ hom} implies that
$T_J^{2m+1}$ is ${}^{op}$-invariant. By Theorem~\ref{thm:adm I2m converse }
this forces~$g(J)=1$ that is $J=[1,a]\cup [a+r,n+1]$ for some~$1\le
a\le n-1$ and $2\le r\le n+1-a$. Then~$\ell(\tau_{\bar a}(J))=\binom{r+1}2+
k$ and~$\ell(\tau_{1-\bar a}(J))=k'$ where~$k+k'=n-r$ and $k=k'$ if~$n-r$
is even while $|k-k'|=1$ if~$n-r$ is odd.
Since~$\ell(\tau_1(J))=\ell(\tau_0(J))$ this
forces $\binom{r+1}2=k'-k$ which is impossible since~$\binom{r+1}2\ge 3$
for~$r\ge 2$.
\end{proof}

\begin{corollary}\label{cor:type B}
The homomorphisms $\Br^+(I_2(2m))\to \Br^+(B_n)$, $2\le m \le n$ from
Proposition~\ref{prop:admissible hom from BrI22m to BrBn} are
the only fully supported optimal disjoint standard homomorphisms
$I_2(N)\to \Br^+(B_n)$.
\end{corollary}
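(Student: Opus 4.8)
The strategy is to transport the problem to type~$A$ via the unfolding homomorphism and then read the answer off Theorem~\ref{thm:main thm adm}. Since a standard homomorphism $\Phi\colon\Br^+(I_2(N))\to\Br^+(B_n)$ is square free and Hecke, Lemma~\partref{lem:elem hom prop.b} gives $\Phi(\wh T_i)=T_{w_\circ^{J_i}}$ with $J_i=[\Phi](i)\subset[1,n]$; disjointness means $J_1\cap J_2=\emptyset$ and full support means $J_1\cup J_2=[1,n]$, so $\Phi$ is determined by the partition $[1,n]=J_1\sqcup J_2$. Because $I_2(N)$ is defined only for $N\ge 4$, optimality forces $J_1,J_2\ne\emptyset$: if, say, $J_1=\emptyset$ then $\Phi(\wh T_1)=1$ and the braid relation already holds for $m=2$, contradicting optimality. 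The homomorphisms of Proposition~\ref{prop:admissible hom from BrI22m to BrBn} are of exactly this shape, with $J_1=[1,m-1]_2$ and $J_2=[1,m-2]_2\cup[m,n]$ (and $[1,m-1]_2\sqcup[1,m-2]_2=[1,m-1]$), and the proposition already shows they are fully supported, optimal, disjoint and standard. So it remains only to prove that every such $\Phi$ coincides, up to the diagram automorphism of $\Br^+(I_2(N))$ interchanging $\wh T_1$ and $\wh T_2$, with one of them.

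Fix the injective strict parabolic Hecke homomorphism $U\colon\Br^+(B_n)\to\Br^+(A_{2n-1})$ of~\eqref{eq:unfold Bn A2n-1}, so that $[U](i)=\{i,2n-i\}$ for $i\in[1,n-1]$, $[U](n)=\{n\}$; in particular $U$ is disjoint and, being strict parabolic, strongly square free (Lemma~\ref{lem:parab sqf}). I would first check that $\Psi:=U\circ\Phi\colon\Br^+(I_2(N))\to\Br^+_{2n-1}$ is again fully supported, disjoint and standard with $[\Psi](i)=[U](J_i)\ne\emptyset$: it is Hecke by Theorem~\ref{thm:Heck hom cat}, square free because $U$ is strongly square free, and disjointness and full support pass from $\Phi$ to $\Psi$ since $[U]$ is injective on subsets and sends disjoint sets to disjoint sets (Lemma~\partref{lem:elem Artin hom.a}). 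Moreover $\Psi$ is optimal: as $U$ is an injective homomorphism, the identity $\brd{\Psi(\wh T_1)\Psi(\wh T_2)}{m}=\brd{\Psi(\wh T_2)\Psi(\wh T_1)}{m}$ implies the corresponding identity for $\Phi$, whence $m\ge N$.

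Now I would apply to $\Psi$ the classification of fully supported optimal disjoint standard homomorphisms into $\Br^+_{2n-1}$ with non-empty supports, i.e.\ Theorem~\ref{thm:main thm adm} (together with Proposition~\ref{prop:|J| even} and Theorem~\ref{thm:adm I2m+1}). Write $\sigma$ for the diagram automorphism $i\mapsto 2n-i$ of $A_{2n-1}$; each $[\Psi](i)=[U](J_i)$ is $\sigma$-invariant. Since $N=2n$ in case~(b) of Theorem~\ref{thm:main thm adm} and $N=2m(K)$ in case~(c), $N$ is even. In case~(c) exactly one of $[\Psi](1),[\Psi](2)$ has a unique connected component $K$ with $|K|>1$; $\sigma$-invariance makes $\sigma(K)$ a component of the same size, so $\sigma(K)=K$, and as $K$ is an interval this gives $K=[n-t,n+t]$ for some $1\le t\le n-2$, with $n\in K$ and $N=2(n-t)$. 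A short combinatorial computation—using that the other support is $[1,2n-1]\setminus[\Psi](i)$ and must be self-orthogonal, and that both supports lie in the image of $[U]$—pins down the singleton components of $[\Psi](i)$ uniquely, and unwinding $[U]$ identifies $\{J_1,J_2\}$ with $\{[1,m-1]_2,\ [1,m-2]_2\cup[m,n]\}$ for $m=n-t$. Case~(b), which is the degenerate situation $t=0$ of this, yields the same conclusion with $m=n$. By injectivity of $U$ (equivalently of $[U]$ on subsets), $\Phi$ is then the homomorphism of Proposition~\ref{prop:admissible hom from BrI22m to BrBn} for that $m$, up to exchanging $\wh T_1$ and $\wh T_2$.

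The main obstacle is this last combinatorial step: pushing the $\sigma$-symmetric interval $K\subset[1,2n-1]$ and the requirement that both blocks of the induced partition of $[1,2n-1]$ be self-orthogonal back through the two-to-one folding $[1,2n-1]\to[1,n]$, and verifying that the unique solution is precisely Proposition~\ref{prop:admissible hom from BrI22m to BrBn}'s pair $([1,m-1]_2,\ [1,m-2]_2\cup[m,n])$ with $m=n-t$. One must also be slightly careful about the (harmless) diagram-automorphism ambiguity of the source $\Br^+(I_2(2m))$ and about the bookkeeping that rules out odd $N$.
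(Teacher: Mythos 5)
Your proposal is correct and follows essentially the same route as the paper: compose with the unfolding $\Br^+(B_n)\to\Br^+(A_{2n-1})$ (or $A_{2n}$), observe that the composite is again a fully supported optimal disjoint standard homomorphism, and invoke Theorem~\ref{thm:main thm adm}. The extra verifications you spell out (Heckeness via Theorem~\ref{thm:Heck hom cat}, square freeness via strictness of the unfolding, optimality via injectivity, and the $\sigma$-invariant pull-back combinatorics identifying $(J_1,J_2)$ with $([1,m-1]_2,[1,m-2]_2\cup[m,n])$) are exactly the details the paper leaves implicit, so there is no substantive gap relative to its argument.
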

\begin{proof}
Since the composition of such a homomorphism
with one of the standard unfoldings $\Br^+(B_n)\to \Br^+(A_{2n})$ or~$\Br^+(B_n)\to \Br^+(A_{2n-1})$ (cf.~\eqref{eq:unfold Bn A2n}, \eqref{eq:unfold Bn A2n-1})
is again a homomorphism of the same type,
the assertion follows from Theorem~\ref{thm:main thm adm}.
\end{proof}

\subsection{Higher rank}
\label{subs:rank > 2}
We can now classify all fully supported 
disjoint standard homomorphisms $\Br^+(\wh M)\to\Br^+(M)$ where~$\wh M$ is irreducible
of finite type and~$M$ is of type~$A$ or~$B$.
\begin{theorem}\label{thm:higher rank adm AB}
Let~$\wh M$ be irreducible of finite type 
with~$m=|\wh I|>2$
and let~$\Phi:\Br^+(\wh M)\to \Br^+(M)$ be 
an optimal fully supported disjoint standard homomorphism. Suppose that~$M$ is of type~$A_n$ or~$B_n$.
\begin{enmalph}
 \item\label{thm:higher rank adm AB.a}
 If~$\wh M$ is not of type~$B$ then~$\wh M$ and~$M$ are both of type~$A$ and~$\Phi$ is an isomorphism;
 \item\label{thm:higher rank adm AB.b}
 If~$\wh M=B_m$ and~$M=B_n$ then $m\le n$
 and 
 $$
 \Phi(\wh T_i)=T_i,\quad  i\in [1,m-1],\qquad 
 \Phi(\wh T_m)=T_{w_\circ^{[m,n]}};
 $$
 \item\label{thm:higher rank adm AB.c}
 If~$\wh M=B_m$ and~$M=A_n$ then~$m\le \lfloor\frac n2\rfloor$ and $\Phi$ is the composition 
 of the 
 homomorphism~$\Br^+(\wh M)\to 
 \Br^+(B_{\lfloor \frac n2\rfloor})$
 from part~\ref{thm:higher rank adm AB.b}
 with the standard unfolding~$\Br^+(B_{\lfloor \frac n2\rfloor})\to 
 \Br^+(M)$ given by~\eqref{eq:unfold Bn A2n-1} or~\eqref{eq:unfold Bn A2n}, depending on
 the parity of~$n$.
 \end{enmalph}
\end{theorem}
\begin{proof}
The argument for types in which $\wh m_{ij}$
is odd for all~$i,j\in\wh I$ is the same as in classification of LCM homomorphisms.

Let~$\wh M$ be of type~$B_m$. It is easy
to see, using Proposition~\ref{prop:admissible hom from BrI22m to BrBn}, that the assignments in part~\ref{thm:higher rank adm AB.b} define 
a homomorphism $\Br^+(\wh M)\to \Br^+(B_n)$,
$n\ge m$.

Suppose first that~$M$ is of type~$A$. The restriction of~$\Phi$ to $\Br^+_{[m-1,m]}(\wh M)$ is 
a homomorphism~$\Br^+(I_2(4))\to \Br^+_J(M)$
where~$J=[\Phi](\{m-1,m\})$. By Lemma~\ref{lem:diagonal} and
Theorem~\ref{thm:main thm adm}, $J=\bigcup_{1\le i\le k} [a_i,b_i]$
where~$b_i-a_i\ge 2$, $a_i-b_{i-1}\ge 1$, $2\le i\le k$, $1\le a_1$, $b_k\le n$
and 
$$
[\Phi](m-1)=\bigcup_{1\le i\le k}\{a_i,b_i\},
\qquad [\Phi](m)=\bigcup_{1\le i\le k} [a_i+1,b_i-1].
$$
By Lemma~\ref{lem:diagonal}, Theorem~\ref{thm:main thm adm} and
Theorem~\partref{thm:adm finite class}, $[\Phi](\{m-1,m-2\})$ is the disjoint union of subsets of~$I$ corresponding to submatrices of type~$A_2$.
In particular, $[\Phi](m-2)$ and~$[\Phi](m-1)$ must be self-orthogonal.
Thus, $[\Phi](m-2)=\bigcup_{1\le i\le k}
\{a_i-1,b_i+1\}$. Continuing this way
we conclude that~$[\Phi](j)=
\bigcup_{1\le i\le k} \{ a_i-m+j+1,
b_i+m-j-1\}$, $j\in [1,m-1]$ and are self-orthogonal.
Thus, $a_i-m+2$ and~$b_{i-1}+m-2$ both belong to~$[\Phi](1)$ and so
we must have $a_i-m+2-(b_{i-1}+m-2)>1$. Yet
in that case $a_i-m+1$ does not belong to~$[\Phi](j)$ for any~$j\in [1,m]$ which 
contradicts the assumption that~$\Phi$ is 
fully supported. Thus, $k=1$,
$[\Phi](j)=\{j,n+1-j\}$, $j\in[1,m-1]$
and~$[\Phi](m)=[m,n+1-m]$. 
In particular, $\Phi$ is the composition
of the homomorphism
$\Br^+(B_m)\to \Br^+(B_{\lfloor \frac n2\rfloor})$ with one of the 
homomorphisms from~\eqref{eq:unfold Bn A2n-1},
\eqref{eq:unfold Bn A2n}, depending on 
the parity of~$n$. Furthermore, as 
any disjoint fully supported 
standard homomorphism $\Br^+(B_m)\to 
\Br^+(B_n)$ yields disjoint 
fully supported standard homomorphisms 
$\Br^+(B_m)\to \Br^+(A_{2n})$ 
and~$\Br^+(B_m)\to \Br^+(A_{2n-1})$, it 
follows that the homomorphisms 
described in parts~\ref{thm:higher rank adm AB.b} and~\ref{thm:higher rank adm AB.c} are the only ones with~$M$ of type~$A$ or~$B$.

Finally, let~$\wh M=F_4$ and let~$M=A_n$. A disjoint fully supported standard homomorphism 
$\Phi:\Br^+(\wh M)\to \Br^+(M)$ restricts 
to a disjoint standard homomorphism~$\Br^+_{[1,3]}(F_4)\to \Br^+(M)$. By Lemma~\ref{lem:diagonal}
and part~\ref{thm:higher rank adm AB.c},
$[\Phi]([1,3])=\bigcup_{1\le i\le k} [a_i,b_i]$ where~$a_i-b_{i-1}>1$, $2\le i\le k$ and
$\lceil \frac{b_i-a_i}2\rceil\ge 2$,
$1\le i\le k$. Furthermore, as
$I\setminus [\Phi]([1,3])=[\Phi](4)$ and is self-orthogonal, $b_{i-1}=
a_{i}-2$ for all~$2\le i\le k$ and 
so~$[\Phi](4)=\{a_1-1,\dots,a_k-1,b_k+1\}\cap [1,n]$. Since~$a_1,b_k\in [\Phi](1)$, we conclude that~$b_k=n$ and~$a_1=1$ for otherwise~$\Phi(\wh T_1)$
and~$\Phi(\wh T_4)$ do not commute. Then~$k>1$ and so~$b_{k-1}+1=a_k-1\in[\Phi](4)$,
$b_{k-1}=a_k-2,a_k\in [\Phi](1)$ which is 
again a contradiction. Thus, no such homomorphism exists. It remains to observe that a disjoint fully supported standard homomorphism~$\Br^+(F_4)\to\Br^+(B_n)$ yields a homomorphism~$\Br^+(F_4)\to \Br^+(A_{2n-r})$,
$r\in\{0,1\}$ of the same type by~\eqref{eq:unfold Bn A2n-1}, \eqref{eq:unfold Bn A2n}.
\end{proof}
\begin{proposition}\label{prop:Bm Bn parab}
Let~$2\le m\le n$.
The assignments $\wh T_i\mapsto T_i$, $i\in [1,m-1]$,
$\wh T_m\mapsto \Cx mn\Cxr m{(n-1)}$ define 
a strict parabolic Coxeter type homomorphism~$\Br^+(B_m)\to \Br^+(B_n)$,
and its composition with 
the standard unfolding $\Br^+(B_n)\to \Br^+(A_N)$,
$n=\lfloor \frac12N\rfloor$ is a parabolic 
Coxeter type homomorphism $\Br^+(B_m)\to \Br^+(A_N)$. 
\end{proposition}
\begin{proof}
It is easy to see from Proposition~\partref{prop:admissible hom from BrI22m to BrBn.a} that the above assignments define
a Coxeter type~$\Phi\in\Hom_{\mathscr A}(B_m,B_n)$. We need 
the following
\begin{lemma}
Let~$J\subset [1,m]$. Set~$J'=\emptyset$ if~$J\subset [1,m-1]$ and let~$J'$ be the connected component of~$J$
containing~$m$ otherwise. 
Then $\Phi(\wh T_{w_\circ^J})=
T_{w_\circ^{J\setminus J'}} T_{w_\circ^{[m+1,n]}}^{-1} T_{w_\circ^{[\Phi](J')}}$
in~$\Br(B_n)$.
\end{lemma}
\begin{proof}
Since, clearly, $\Phi(\wh T_{w_\circ^J})=T_{w_\circ^J}$ if~$J\subset [1,m-1]$,
it suffices to prove the assertion for~$J=[i,m]$, $1\le i\le m$. We use descending induction on~$1\le i\le m$. If~$i=m$ the assertion is immediate
from~\eqref{eq:B_n telescope 0}. For $i<m$  
we have by~\eqref{eq:B_n telescope 0} and by 
the induction hypothesis
\begin{align*}
\Phi(\wh T_{w_\circ^{[i,m]}})&=
\Phi(\wh T_{(i,m+1)}\wh T_{w_\circ^{[i+1,m]}})
=T_{(i,n+1)} T_{w_\circ^{[m+1,n]}}^{-1} T_{w_\circ^{[i+1,n]}}=T_{w_\circ^{[m+1,n]}}^{-1} T_{w_\circ^{[i,n]}}.
\qedhere 
\end{align*}
\end{proof}
Note that~$T_{w_\circ^{[\Phi](J')}}$ is central
in~$\Br^+_{[\Phi](J')}(B_n)$ and commutes with~$T_{w_\circ^{J\setminus J'}}$.
Thus, for any~$J\subset [1,m]$ we have by the above Lemma 
\begin{align*}
\Phi(\wh T_{w_J})&=\Phi(\wh T_{w_\circ^J}^{-1})
\Phi(\wh T_{w_\circ^{[1,m]}})=
T_{w_\circ^{J\setminus J'}}^{-1} T_{w_\circ^{[\Phi](J')}}^{-1} T_{w_\circ^{[1,n]}}=T_{w_{(J\setminus J')\cup[\Phi](J')}}
=T_{w_{[\Phi](J)}}.
\end{align*}
Thus, $\Phi$ is strict parabolic.
\end{proof}
Taking the composition of a 
homomorphism from Theorem~\partref{thm:higher rank adm AB.b}
with the standard unfolding
$\Br^+(B_n)\to\Br^+(D_{n+1})$ from~\eqref{eq:unfold Bn Dn+1},
we obtain infinite families
of optimal fully supported disjoint standard homomorphisms $\Br^+(B_m)\to \Br^+(D_{n+1})$
and~$\Br^+(I_2(2m))\to \Br^+(D_{n+1})$
for~$n\ge m$. It appears that these are the only
families existing for arbitrary~$n$. For small ranks there are other sporadic examples, for instance in type~$D_5$
the assignments
$$
\wh T_1\mapsto  T_i,\qquad \wh T_2\mapsto T_{w_\circ^{[1,5]\setminus\{i\}}},\qquad i\in \{4,5\}
$$
define homomorphisms~$\Br^+(I_2(8))\to \Br^+(D_5)$
while the assignments
$$
\wh T_1\mapsto T_{w_\circ^{\{1,i\}}},
\qquad \wh T_2\mapsto T_{w_\circ^{[2,n+1]\setminus\{i\}}},
\qquad i\in \{n,n+1\}
$$
define homomorphisms~$\Br^+(I_2(10))\to \Br^+(D_{n+1})$
for~$n\in\{4,5\}$. In higher ranks no such
homomorphisms seem to exists, apart from those obtained from type~$B$ via the standard unfolding~\eqref{eq:unfold Bn Dn+1}, but there are a lot
of apparently infinite families of non-disjoint standard 
homomorphisms (see~\S\ref{subs:conj families}).

\subsection{Families of non-disjoint parabolic
and standard homomorphisms}
\label{subs:mon braid}
The family constructed here is 
inspired by braidings of tensor powers of objects in braided monoidal
categories. We will
identify $\Br^+_k$ with the parabolic
submonoid~$\Br^+_{[1,k-1]}(A_{n-1})$ of~$\Br^+_n$ for all~$n>k$.
\begin{theorem}\label{thm:monomial brd}
Let~$m\in\ZZ_{>1}$, $n\in\ZZ_{>0}$
and let $J_i^{(m)}:=[(i-1)m+1,(i+1)m-1]$,
$i\in\mathbb Z_{>0}$.
\begin{enmalph}
\item \label{thm:monomial brd.a}
The assignments $T_i\mapsto 
T_{w_{J_i^{(m)}\setminus \{im\};J_i^{(m)}}}$,
$i\in[1,n-1]$ define a strict parabolic 
Coxeter type homomorphism
\plink{Phi(m)n}$\Phi_n^{(m)}:\Br^+_n\to\Br^+_{nm}$;

\item \label{thm:monomial brd.b}
The assignments $T_i\mapsto 
T_{w_\circ^{J_i^{(m)}}}$,
$i\in[1,n-1]$, define 
a standard homomorphism $\wh\Phi_n^{(m)}:\Br^+_n\to\Br^+_{nm}$.

Let~$\wh M=B_n$, $\widetilde M=B_{mn}$.
\item \label{thm:monomial brd.c}
    The assignments $\wh T_i\mapsto \widetilde T_{w_{J_i^{(m)}\setminus\{im\}};J_i^{(m)}}$,
    $i\in[1,n-1]$, $\wh T_n\mapsto \widetilde T_{w_{[(n-1)m+1,nm-1];[(n-1)m+1,nm]}}$ define a 
    parabolic Coxeter type homomorphism $\Br^+(\wh M)\to \Br^+(\widetilde M)$;
\item \label{thm:monomial brd.d}
     The assignments $\wh T_i\mapsto \widetilde T_{w_\circ^{J_i^{(m)}}}$,
    $i\in[1,n-1]$, $\wh T_n\mapsto 
    \widetilde T_{w_\circ^{[(n-1)m+1,nm]}}$ define a 
    standard homomorphism $\Br^+(\wh M)\to \Br^+(\widetilde M)$.  
\end{enmalph}
\end{theorem}
\begin{proof}
It will be convenient to consider all the $\Br^+_k$, $k\in\ZZ_{\ge 1}$ as parabolic submonoids 
of~$\Br^+_\infty=\Br^+(A_\infty)$, which is generated 
by the $T_i$, $i\in\mathbb Z_{>0}$ subject 
to relations $T_i T_j=T_j T_i$, $|i-j|>1$ and 
$T_i T_j T_i=T_j T_i T_j$, $|i-j|=1$ for all~$i,j\in\mathbb Z_{>0}$. Then we can consider~$\Phi^{(m)}$
as an endomorphism of~$\Br^+_\infty$. Likewise,
we consider symmetric groups~$S_n$ as parabolic 
subgroups of~$S_\infty$.

We need the following
\begin{lemma}\label{lem:prep Grassman perm}
Let~$I=[1,2m-1]$ and let
$t_m=\prod_{1\le i\le m}(i,m+i)\in S_{2m}\cong W(A_{2m-1})$.
Then 
\begin{enmalph}
 \item   \label{lem:prep Grassman perm.a} $t_m=\pi_{2m-1}(T_{w_{I\setminus\{m\};I}})$;
 \item 
 \label{lem:prep Grassman perm.b} 
 $t_m=\ascprod_{1\le i\le m}\cxr i{(m+i-1)}$
 and $T_{w_{I\setminus\{m\};I}}=
 \ascprod_{1\le i\le m}\Cxr i{(m+i-1)}$.
\end{enmalph}
\end{lemma}
\begin{proof}

Denote~$\eta_m:=t_m w_\circ^{[1,m-1]}w_\circ^{[m+1,2m-1]}$. 
Let~$i\in [1,m]$. Then $\eta_m(i) =t_m(m+1-i)=2m+1-i$. Similarly,
if~$i\in[m+1,2m]$, $\eta_m(i)=
t_m(3m+1-i)=2m+1-i$. Thus, $\eta_m=
\prod_{1\le i\le m} (i,2m+1-i)=w_\circ^{[1,2m-1]}$ and so
$t_m=w_{I\setminus\{m\};I}$. This proves~\ref{lem:prep Grassman perm.a}. 
To prove the first identity in part~\ref{lem:prep Grassman perm.b}, note that since $\cxr ij=(i,j+1,j,\dots,i+1)$  we have for~$1\le j\le m$
\begin{align*}
\Big(\ascprod_{1\le i\le m}\cxr i{(m+i-1)}\Big)(j)&=\Big(\ascprod_{1\le i\le j-1}\cxr i{(m+i-1)}\Big)
\cxr j{(m+j-1)}(j)\\
&=\Big(\ascprod_{1\le i\le j-1}
\cxr i{(m+i-1)}\Big)(m+j)=m+j
\\
\intertext{and}
\Big(\ascprod_{1\le i\le m}\cxr i{(m+i-1)}\Big)(m+j)&=
\Big(\ascprod_{1\le i\le m-1}\cxr i{(m+i-1)}\Big)(m+j-1)=\cdots
\\
&=\Big(\ascprod_{1\le i\le m-k}\cxr i{(m+i-1)}\Big)(m-k+j)=\cdots=j.
\end{align*}
The second identity follows from the first 
by Theorem~\partref{thm:Tits.b} since 
$\ell(T_{w_{I\setminus\{m\};I}})=
\ell(t_m)=\ell(w_\circ^{[1,2m-1]})-
\ell(w_\circ^{[1,m-1]})-\ell(w_\circ^{[m+1,2m-1]})=m^2=\sum_{1\le i\le m}
\ell(\Cxr i{(m+i-1)})$.
\end{proof}
Note that the assignments $T_i\mapsto T_{i+1}$,
$i\in\ZZ_{>0}$ define an endomorphism~$\xi$ of~$\Br^+_\infty$
which clearly descends to the Coxeter group.
We claim that the assignments
$$
s_i\mapsto t_{i,m}:=\xi^{(i-1)m}(t_m)=\prod_{1\le j\le m} ((i-1)m+j,
i m+j),
\qquad i\in[1,n-1]
$$
define an endomorphism of~$S_\infty$ which 
restricts to homomorphisms $S_n\to S_{nm}$ for any~$n\in\ZZ_{>0}$. Indeed, since the $t_{i,m}$, $t_{j,m}$ with~$|j-i|>1$ manifestly commute, it suffices to verify that $t_{1,m}t_{2,m}t_{1,m}=t_{2,m}t_{1,m}t_{2,m}$.
Note that 
$$
t_{i,m}(j)=\begin{cases}
j+m,&j\in[(i-1)m+1,im],\\
j-m,&j\in[im+1,(i+1)m],\\
j,&\text{otherwise}.
\end{cases}
$$
It follows that for all~$j\in[1,3m]$
\begin{equation}\label{eq:mon braiding rel}
t_{1,m}t_{2,m}t_{1,m}(j)=t_{2,m}t_{1,m}t_{2,m}(j)=\begin{cases}
j+2m,& j\in[1,m],\\
j,&j\in[m+1,2m],\\
j-2m,&j\in[2m+1,3m].
\end{cases}
\end{equation}
Furthermore, it is well-known (see e.g.~\cite{BjBr}*{Proposition~1.25})
that for~$w\in W(A_{k})\cong S_{k+1}$ where the isomorphism maps
$s_i$, $i\in[1,k]$ to the transposition~$(i,i+1)$, 
we have $\ell(w)=|\Inv(w)|$ where
$$\Inv(w)=\{ (i,j)\in[1,k+1]\times[1,k+1]\,:\,
i<j,\,w(i)>w(j)\}.
$$
It is immediate
from~\eqref{eq:mon braiding rel} that
\begin{multline*}
\Inv(t_{1,m}t_{2,m}t_{1,m})=([1,m]\times[m+1,2m])
\cup ([1,m]\times[2m+1,3m])\cup\\ ([m+1,2m]\times[2m+1,3m]).
\end{multline*}
Therefore, $\ell(t_{1,m}t_{2,m}t_{1,m})=
3m^2=\ell(t_{1,m})+\ell(t_{2,m})+\ell(t_{1,m})$. 
Then by Lemmata~\ref{lem:lifting to Cox-Hecke} and~\partref{lem:prep Grassman perm.a} the assignments in part~\ref{thm:monomial brd.a} define a homomorphism which is of Coxeter type
by Proposition~\partref{prop:elem prop Coxeter Hecke.a}.

To prove that~$\Phi^{(m)}_n$ is parabolic, we need the following
\begin{lemma}\label{lem:mon braid Tw0J}
For any~$m\in\ZZ_{\ge 1}$, $a\le b\in\ZZ_{\ge 1}$, we have in $\Br^+_\infty$
$$
\Phi^{(m)}(T_{w_\circ^{[a,b]}})=T_{w_{[\Phi^{(m)}]([a,b])\setminus\ZZ m;[\Phi^{(m)}]([a,b])}}.
$$
\end{lemma}
\begin{proof}
It suffices to prove the Lemma for~$[a,b]=[1,n]$,
$n\in\ZZ_{\ge 1}$; the general case follows by
applying the endomorphism~$\xi$. Note that
\begin{equation}\label{eq:Phi(m)[a,b]}
[\Phi^{(m)}]([a,b])=[(a-1)m+1,(b+1)m-1].
\end{equation}
First, we prove that
\begin{equation}\label{eq:induced version}
\overline{\Phi^{(m)}}(w_\circ^{[1,n]})=
w_{[1,(n+1)m-1]\setminus\ZZ m; [1,(n+1)m-1]}.
\end{equation}
To prove this identity we need to establish first that
\begin{equation}\label{eq:img Cox}
\overline{\Phi^{(m)}}(\cxr 1k)=
\ascprod_{1\le i\le m}\cxr i{(km+i-1)}, \qquad k\in\mathbb Z_{\ge 1}.
\end{equation}
Indeed, for~$k=1$ this was 
proven in Lemma~\partref{lem:prep Grassman perm.b}. Therefore, for~$k>1$
\begin{align*}
\overline{\Phi^{(m)}}(\cxr 1k)&=
\overline{\Phi^{(m)}}(s_k)\overline{\Phi^{(m)}}(\cxr 1{(k-1)})
=
\ascprod_{1\le i\le m}\cxr{((k-1)m+i)}{(km+i-1)}
\ascprod_{1\le i\le m}\cxr i{((k-1)m+i-1)}\\
&=
\ascprod_{1\le i\le m}
\cxr{((k-1)m+i)}{(km+i-1)}
\cxr i{((k-1)m+i-1)}
=\ascprod_{1\le i\le m}\cxr i{(km+i-1)}.
\end{align*}
It is now immediate from~\eqref{eq:img Cox} that
$$
\Big(\prod_{1\le j\le km-1}\cxr 1j\Big)
\overline{\Phi^{(m)}}(\cxr 1k)
\Big(\prod_{1\le j\le m-1}\cxr 1j\Big)
=\prod_{1\le j\le (k+1)m-1}\cxr 1j=
w_\circ^{[1,(k+1)m-1]},
$$
whence 
$$
\overline{\Phi^{(m)}}(\cxr 1k)=
w_\circ^{[1,km-1]}w_\circ^{[1,(k+1)m-1]}w_\circ^{[1,m-1]}=
w_{[1,(k+1)m-1]\setminus\{km\};[1,(k+1)m-1]}.
$$
We now use induction on~$n$ to prove~\eqref{eq:induced version}, the induction base being obvious. For the inductive step, since $w_\circ^{[1,n]}=w_\circ^{[1,n-1]}\cxr 1n$, we get
\begin{align*}
\overline{\Phi^{(m)}}(w_\circ^{[1,n]})&=w_\circ^{[1,nm-1]\setminus\ZZ m}w_\circ^{[1,nm-1]}
w_\circ^{[1,(n+1)m-1]\setminus\{nm\}}
w_\circ^{[1,(n+1)m-1]}
\\
&=
w_\circ^{[1,nm-1]\setminus\ZZ m}w_\circ^{[nm+1,(n+1)m-1]}
w_\circ^{[1,(n+1)m-1]}
=w_\circ^{[1,(n+1)m-1]\setminus\ZZ m}w_\circ^{[1,(n+1)m-1]}.
\end{align*}
To complete the proof of Lemma~\ref{lem:mon braid Tw0J} it remains to observe that
\begin{align*}
\ell(w_\circ^{[1,(n+1)m-1]\setminus\ZZ m}&w_\circ^{[1,(n+1)m-1]})=\tfrac12 (n+1)m((n+1)m-1)-\tfrac12(n+1)m(m-1)\\
&=\tfrac12 m^2 n(n+1)
=\ell(\Phi^{(m)}(T_{w_\circ^{[1,n]}})).\qedhere
\end{align*}
\end{proof}
Now we have all necessary ingredients to prove that
$\Phi^{(m)}_n$ is parabolic. Let~$J\subset [1,n]$
and write $J=\bigcup_{1\le i\le r}[a_i,b_i]$,
$a_i\le b_i$, $1\le i\le r$ and~$b_i<a_{i+1}$, $1\le i\le r-1$. Using Lemma~\ref{lem:mon braid Tw0J} and~\eqref{eq:Phi(m)[a,b]} we obtain in~$\Br_\infty$
\begin{align*}
\Phi^{(m)}(&T_{w_{J;[1,n]}})=
(\Phi^{(m)}(T_{w_\circ^J}))^{-1}\Phi^{(m)}(T_{w_\circ^{[1,n]}})\\
&
=\Big(
\prod_{1\le i\le r} T_{w_{[\Phi^{(m)}]([a_i,b_i])\setminus\ZZ m; [\Phi^{(m)}]([a_i,b_i])}}^{-1}\Big)
T_{w_{\Phi^{(m)}([1,n])\setminus\ZZ m; [\Phi^{(m)}]([1,n])}}\\
&=\Big(
\prod_{1\le i\le r} T_{w_\circ^{[\Phi^{(m)}]([a_i,b_i])}}^{-1}
T_{w_\circ^{[\Phi^{(m)}]([a_i,b_i])\setminus\ZZ m}}\Big)
T_{w_\circ^{\Phi^{(m)}([1,n])\setminus\ZZ m}}^{-1}
T_{w_\circ^{[\Phi^{(m)}]([1,n])}}\\
&=\Big(
\prod_{1\le i\le r} T_{w_\circ^{[\Phi^{(m)}]([a_i,b_i])}}^{-1}\Big)\Big(\prod_{1\le i\le r}
T_{w_\circ^{[\Phi^{(m)}]([a_i,b_i])\setminus\ZZ m}}\Big)
T_{w_\circ^{\Phi^{(m)}([1,n])\setminus\ZZ m}}^{-1}
T_{w_\circ^{[\Phi^{(m)}]([1,n])}}\\
&=
T_{w_\circ^{[\Phi^{(m)}](J)}}^{-1}
\Big(\prod_{1\le i\le r}\prod_{a_i\le s\le b_i+1}
T_{w_\circ^{[(s-1)m+1,sm-1]}}\Big)
\Big(\prod_{1\le s\le n+1} T_{w_\circ^{[(s-1)m+1,s m-1]}}^{-1}\Big)
T_{w_\circ^{[\Phi^{(m)}]([1,n])}}\\
&=
T_{w_\circ^{[\Phi^{(m)}](J)}}^{-1}
\Big(\prod_{s\in[1,n+1]\setminus\bigcup_{1\le i\le r}[a_i,b_i+1]}
T_{w_\circ^{[(s-1)m+1,sm-1]}}^{-1}\Big)T_{w_\circ^{[\Phi^{(m)}]([1,n])}}\\
&=
T_{w_\circ^{[\Phi^{(m)}](J)}}^{-1}
\Big(\prod_{s\in[1,n]\setminus\bigcup_{1\le i\le r}[a_i,b_i]}
T_{w_\circ^{[(s-1)m+1,(s+1)m-1]\setminus \{sm\}}}^{-1}\Big)T_{w_\circ^{[\Phi^{(m)}]([1,n])}}\\
&=T_{w_\circ^{[\Phi^{(m)}](J)}}^{-1}
T_{w_\circ^{[\Phi^{(m)}]([1,n]\setminus J)\setminus\ZZ m}}^{-1}T_{w_\circ^{[\Phi^{(m)}]([1,n])}}=T_{w_{[\Phi^{(m)}](J)\cup([\Phi^{(m)}]([1,n]\setminus J)\setminus\ZZ m);
[\Phi^{(m)}]([1,n])}}.
\end{align*}
This completes the proof of part~\ref{thm:monomial brd.a}.

As~$T_{w_\circ^{J_i^{(m)}}}$, $T_{w_\circ^{J_k^{(m)}}}$
with~$|i-k|>1$ manifestly commute,
it suffices to prove part~\ref{thm:monomial brd.b} for~$n=3$. Note that, since~$T_{w_\circ^{J_i^{(m)}\setminus\{im\}}}$ is invariant with respect to the 
diagram automorphism of~$\Br^+_{J_i^{(m)}}(A_\infty)$ and,
therefore, commutes with~$T_{w_\circ^{J_i^{(m)}}}$
by Proposition~\partref{prop:fund elts BrSa.e}, we have 
$$
T_{w_\circ^{J_i^{(m)}}}=\Phi^{(m)}(T_i)z_{i,i}^{(1)},\qquad 
z_{i,i}^{(1)}:=T_{w_\circ^{J_i^{(m)}\setminus\{im\}}}=
T_{w_\circ^{[(i-1)m+1,im-1]]\cup [im+1,(i+1)m-1]}}.
$$
Using~\eqref{eq:inv decoration}
and Proposition~\partref{prop:fund elts BrSa.e} we obtain in~$\Br^+_{3m}$
\begin{align*}
z_{i,k}^{(2)}&=\Phi^{(m)}(T_k)^{-1}T_{w_\circ^{J_i^{(m)}\setminus\{im\}}}
\Phi^{(m)}(T_k)=
T_{w_\circ^{J_k^{(m)}}}^{-1}T_{w_\circ^{J_k^{(m)}\setminus\{km\}}}T_{w_\circ^{J_i^{(m)}\setminus\{im\}}}
T_{w_\circ^{J_k^{(m)}\setminus\{km\}}}^{-1}T_{w_\circ^{J_k^{(m)}}}\\
&=T_{w_\circ^{[1,m-1]\cup[2m+1,3m-1]}},\\
z_{i,k}^{(2)}&=\Phi^{(m)}(T_i)^{-1}T_{w_\circ^{[1,m-1]\cup[2m+1,3m-1]}}
\Phi^{(m)}(T_i)=T_{w_\circ^{[m+1,2m-1]\cup[(k-i+1)m+1,(k-i+2)m+1]}},
\end{align*}
where~$\{i,k\}=\{1,2\}$, whence
\begin{align*}
z_{i,k}^{(3)}z_{k,i}^{(2)}&z_{i,k}^{(1)}
=T_{w_\circ^{[m+1,2m-1]\cup[2(2-i)m+1,(2(2-i)+1)m-1]}}
T_{w_\circ^{[1,m-1]\cup[2m+1,3m-1]}}T_{w_\circ^{[(i-1)m+1,im-1]\cup[im+1,(i+1)m-1]}}\\
&=T_{w_\circ^{[1,m-1]}}^2 T_{w_\circ^{[m+1,2m-1]}}^2
T_{w_\circ^{[2m+1,3m-1]}}^2.
\end{align*}
Therefore, 
$\mathbf z=(z_{1,1}^{(1)},z_{2,2}^{(1)})$
is a decoration of~$\Phi^{(m)}_3$ by Theorem~\ref{thm:decoration sufficient}, and~$\wh \Phi^{(m)}_3=(\Phi^{(m)}_3)_{\mathbf z}$.

To prove part~\ref{thm:monomial brd.c}, we need the following
\begin{lemma}\label{lem:diag aut mon brd}
Let~$\sigma_N$ be the diagram automorphism of~$\Br^+_N$ and let~$\Br^+_N{}^{\sigma_N}$ be 
the submonoid of~$\sigma_N$-invariant elements of~$\Br^+_N$. Then
$\Phi^{(m)}_n\circ \sigma_n=\sigma_{mn}\circ\Phi^{(m)}_n$
and
$\wh \Phi^{(m)}_n\circ \sigma_n=\sigma_{mn}\circ\wh\Phi^{(m)}_n$ for all~$m,n\in\mathbb Z_{>1}$. In particular,
$\Phi^{(m)}_n$ and~$\wh \Phi^{(m)}_n$
restrict to homomorphisms $\Br^+_n{}^{\sigma_n}\to 
\Br^+_{mn}{}^{\sigma_{mn}}$. 
\end{lemma}
\begin{proof}
Since~$\sigma_N$ corresponds to the permutation
$i\mapsto N+1-i$, $i\in [1,N-1]$, we have 
\begin{align*}
\sigma_{mn}(\wh\Phi^{(m)}_n(T_i))&=
\sigma_{mn}(T_{w_\circ^{[(i-1)m+1,(i+1)m-1]}})=
T_{w_\circ^{[nm-(i+1)m+1,nm-(i-1)m-1]}}\\
&=T_{w_\circ^{[(n-i-1)m+1,(n-i+1)m-1]}}
=\wh\Phi^{(m)}_n(T_{n-i})
=\wh\Phi^{(m)}_n(\sigma_n(T_i)).
\end{align*}
The argument for~$\Phi^{(m)}_n$ is similar and is omitted.
\end{proof}
Let~$\Upsilon_k:\Br^+(B_k)\to\Br^+_{2k}$ be the 
standard unfolding~\eqref{eq:unfold Bn A2n-1} which is an isomorphism
onto~$\Br^+_{2k}{}^{\sigma_{2k}}$
by Theorem~\ref{thm:adm finite class}.
Then $\Phi^{(m)}_{2n}\circ\Upsilon_n$ is a homomorphism
$\Br^+(\wh M)\to\Br^+_{2mn}$ whose 
image is contained in~$\Br^+_{2mn}{}^{\sigma_{2mn}}$.
It follows that
$\Upsilon_{mn}^{-1}\circ\Phi^{(m)}_{2n}\circ\Upsilon_n\in\Hom_{\mathscr A}(\wh M,\widetilde M)$ and is 
parabolic as the composition of parabolic homomorphisms. To obtain the explicit formulae, 
note that we have for~$i\in[1,n-1]$
\begin{align*}
(\Phi^{(m)}_{2n}\circ\Upsilon_n)(\wh T_i)&=
T_{w_{J_i^{(m)}\setminus\{im\};J_i^{(m)}}}
T_{w_{J_{2n-i}^{(m)}\setminus\{(2n-i)m\};J_{2n-i}^{(m)}}}\\
&=T_{w_{(J_i^{(m)}\setminus\{im\})\cup (J_{2n-i}^{(m)}\setminus\{(2n-i)m\});J_i^{(m)}\cup J_{2n-i}^{(m)}}}
=\Upsilon_{mn}(\widetilde T_{w_{J_i^{(m)}\setminus\{im\};J_i^{(m)}}})
\end{align*}
while
\begin{align*}
(\Phi^{(m)}_{2n}\circ\Upsilon_n)(\wh T_n)&=
T_{w_{J_n^{(m)}\setminus\{nm\};J_n^{(m)}}}
=T_{w_{[(n-1)m+1,[nm-1]\cup[nm+1,(n+1)m-1];[(n-1)m+1,(n+1)m-1]}}\\
&=\Upsilon_{nm}(\widetilde T_{w_{[(n-1)m+1,nm-1];
[(n-1)m+1,nm-1]}}),
\end{align*}
where we used Lemma~\ref{lem:parab preserving}.
In particular, this homomorphism is of Coxeter type.
The argument in
part~\ref{thm:monomial brd.d} is similar 
and is omitted.
\end{proof}

\begin{example}
Explicitly, we have
\begin{align*}&\Phi_n^{(2)}(T'_i)=T_{2i}T_{2i-1}T_{2i+1}T_{2i},\\
&\Phi_n^{(3)}(T'_i)=T_{3i}T_{3i-1}T_{3i-2}T_{3i+1}T_{3i}T_{3i-1}
T_{3i+2}T_{3i+1}T_{3i},\qquad i\in[1,n-1].
\end{align*}
\end{example}

\subsection{More infinite series of non-disjoint standard 
homomorphisms}\label{subs:inf ser non-disj}
We now use Theorem~\ref{thm:decoration sufficient} and
homomorphisms from Theorems~\partref{thm:monomial brd.b}\ref{thm:monomial brd.d} and~\partref{thm:higher rank adm AB.b}
to obtain additional infinite families of standard homomorphisms~$\Br^+_3\to\Br^+_{3m}$, $m\ge 1$,
and~$\Br^+(B_2)\to\Br^+(M)$ where~$M$ is of type~$A_n$,
$B_n$ or~$D_{n+1}$.

\begin{theorem}\label{thm:Hom A2}
For~$m\in\ZZ_{>0}$ and~$J\subset [1,m-1]$, 
the assignments 
$T_1\mapsto T_{w_\circ^{[1,2m-1]\cup (2m+J)}}$,
$T_2\mapsto T_{w_\circ^{[m+1,3m-1]\cup J}}$
define a homomorphism $\Br^+_3\to\Br^+_{3m}$.
\end{theorem}
\begin{proof}
 let~$M=A_3$,  $\mathsf M=\Br_{3m}$, $m\ge 2$ and let~$\Phi$ be the homomorphism from Theorem~\partref{thm:monomial brd.b}.
Thus, $\Phi(T_i)=T_{w_\circ^{[(i-1)m+1,(i+1)m-1]}}$,
$i\in\{1,2\}$.
Let~$\sigma_i$, $i\in\{1,2\}$ be the diagram 
automorphism of~$\Br^+_{[(i-1)m+1,(i+1)m-1]}(A_{3m-1})\cong \Br^+_{2m}$. By Proposition~\partref{prop:fund elts BrSa.e}, $\Phi(T_i)^{-1}(T_{w_\circ^K})\Phi(T_i)=\sigma_i(T_{w_\circ^K})=
T_{w_\circ^{2im-K}}$
for any~$K\subset[(i-1)m+1,(i+1)m-1]$. 
Let~$z_{i,i}^{(1)}=T_{w_\circ^{2m(2-i)+J}}$, 
$i\in\{1,2\}$
and define by~\eqref{eq:inv decoration}
\begin{align*}
z_{i,j}^{(2)}&=\Phi(T_j)^{-1}z_{i,i}^{(1)}\Phi(T_{j})=
T_{w_\circ^{2mj-2m(2-i)-J}}=
T_{w_\circ^{2m-J}},\\
z_{i,j}^{(3)}&=\Phi(T_{i})^{-1}z_{i,j}^{(2)}\Phi(T_{i})
=T_{w_\circ^{2(i-1)m+J}}=z_{j,j}^{(1)}
\end{align*}
where~$\{i,j\}=\{1,2\}$.
Since~$J\subset[1,m-1]$, $2m+J\subset[2m+1,3m-1]$ and~$2m-J\subset[m+1,2m-1]$, they are pairwise orthogonal. 
Then $z_{1,2}^{(3)}z_{2,1}^{(2)}z_{1,2}^{(1)}=
T_{w_\circ^{J}}T_{w_\circ^{2m-J}}T_{w_\circ^{2m+J}}
=T_{w_\circ^{2m+J}}T_{w_\circ^{2m-J}}T_{w_\circ^{J}}
=z_{2,1}^{(3)}z_{1,2}^{(2)}z_{2,1}^{(1)}$. Therefore,
$(z_{1,1}^{(1)},z_{2,2}^{(1)})=(T_{w_\circ^{2m+J}},T_{w_\circ^J})$ is a decoration of~$\Phi$ by Theorem~\ref{thm:decoration sufficient}
and the assertion follows.   
\end{proof}
\begin{theorem}\label{thm:Hom B2}
Let~$m\in\ZZ_{\ge 0}$ and~$J\subset[1,m-1]$.
\begin{enmalph}    
\item\label{thm:Hom B2.B2An}
Let~$n\ge 4m-1$ and suppose that~$K\subset[2m+1,n-2m]$
and~$n+1-K$ are weakly orthogonal.
Then the assignments $\wh T_1\mapsto T_{w_\circ^{[1,2m-1]
\cup[n+2-2m,n]\cup K}}$, $\wh T_2\mapsto T_{w_\circ^{[m+1,n-m]\cup J\cup (n+1-J)}}$ define a homomorphism~$\Br^+(B_2)\to\Br^+(A_n)$;

\item\label{thm:Hom B2.B2} Let~$n\ge 2m$ and~$K\subset[2m+1,n]$. Then
the assignments $\wh T_1\mapsto T_{w_\circ^{[1,2m-1]\cup K}}$, $\wh T_2\mapsto T_{w_\circ^{[m+1,n]\cup J}}$ define a homomorphism~$\Br^+(B_2)\to\Br^+(B_n)$;

\item\label{thm:Hom B2.B2Dn}
Let~$n\ge 2m$.
Let~$K\subset[2m+1,n+1]$ and, if~$n-m$ is even, assume in addition that~$K$ and~$\tau(K)$ are weakly orthogonal where~$\tau$
is the transposition~$(n,n+1)$.
Then the assignments 
$\wh T_1\mapsto T_{w_\circ^{[1,2m-1]\cup K}}$, $\wh T_2\mapsto T_{w_\circ^{[m+1,n+1]\cup J}}$ define a homomorphism~$\Br^+(B_2)\to\Br^+(D_{n+1})$.
\end{enmalph}
\end{theorem}
\begin{proof}
We need the following
\begin{proposition}\label{prop:basic Hom B2Bn}
Let~$m\in\ZZ_{\ge 0}$ and~$n\ge 2m$. Then the assignments
$\wh T_1\mapsto T_{w_\circ^{[1,2m-1]}}$,
$\wh T_2\mapsto T_{w_\circ^{[m+1,n]}}$ define 
a homomorphism $\Phi_n\in\Hom_{\mathscr A}(B_2,B_n)$.
\end{proposition}
\begin{proof}
For~$m=0$, these assignments define a character homomorphism.

Suppose that~$m>1$.
We use induction on~$n-2m$. 
For~$n=2m$, this is a special case of Theorem~\partref{thm:monomial brd.d}. 
For the inductive step, we need the following
\begin{lemma}\label{lem:img longest}
Let~$\Psi_n\in\Hom_{\mathscr A}(B_n,B_{n+1})$ be the homomorphism 
from Theorem~\partref{thm:higher rank adm AB.b}.
Then for any~$J\subset[1,n]$, $\Psi_n(\wh T_{w_\circ^J})=T_{w_\circ^{[\Psi_n](J)}}T_{n+1}^{|J'|-1}$ where~$J'$ is the connected
component of~$J$ containing~$n$.
\end{lemma}
\begin{proof}
Since~$[\Psi_n](J)=J$ if~$J\subset[1,n-1]$, $[\Psi_n](J)=J\cup\{n+1\}$ if~$n\in J$ and~$\Psi_n(\wh T_i)=T_i$,
$i\in[1,n-1]$, it follows that~$\Psi_n(\wh T_{w_\circ^{J\setminus J'}})=T_{w_\circ^{J\setminus J'}}$. Thus, it suffices to prove the lemma for~$J=J'=[1,n]$. By Proposition~\partref{prop:Coxeter splitting.b},
$\wh T_{w_\circ^{[1,n]}}=\Cx1n{}^n$ and since~$\Psi_n(T_i)=T_i$, $i\in[1,n-1]$, $\Psi_n(T_n)=T_nT_{n+1}T_nT_{n+1}$, we obtain
$\Psi_n(\wh T_{w_\circ^{[1,n]}})=(\Cx1{(n+1)}T_nT_{n+1})^n$.
We claim that for all~$1\le k\le n$ 
\begin{equation}\label{eq:Cox power Psi}
(\Cx1{(n+1)}T_nT_{n+1})^k=
(\Cx1{(n+1)})^k \Cx{(n+1-k)}{(n+1)}T_{n+1}^{k-1}.
\end{equation}
Indeed, for~$k=1$ there is nothing to prove. For the inductive step, we have 
\begin{align*}
(\Cx1{(n+1)}T_nT_{n+1})^{k+1}=
(\Cx1{(n+1)})^k \Cx{(n+1-k)}{(n+1)}T_{n+1}^{k-1}
\Cx1{(n+1)}T_n T_{n+1}.
\end{align*}
Since~$\Br^+(B_{n+1})$ is cancellative, it thus suffices to prove that 
\begin{equation}\label{eq:Cox power Psi 1}
\Cx{(n+1-k)}{(n+1)}T_{n+1}^{k-1}
\Cx1{(n+1)}T_n=\Cx1{(n+1)}\Cx{(n-k)}{(n+1)}
T_{n+1}^{k-1}.
\end{equation}
Since~$T_{n+1}^{k-1}\Cx1{(n+1)}T_n=\Cx1{(n-1)}
T_{n+1}^{k-1}T_nT_{n+1}T_n=
\Cx1{(n+1)}T_n T_{n+1}^{k-1}$, by cancellativity \eqref{eq:Cox power Psi 1}
is equivalent to
\begin{equation}\label{eq:Cox power Psi 2}
\Cx{(n+1-k)}{(n+1)}\Cx1{(n+1)}T_n=\Cx1{(n+1)}\Cx{(n-k)}{(n+1)}.
\end{equation}
We have 
\begin{align*}
\Cx{(n+1-k)}{(n+1)}\Cx1{(n+1)}T_n
&=\Cx{(n+1-k)}n\Cx1{(n-1)}T_{n+1} T_n T_{n+1}T_n\\
&=\Cx{(n+1-k)}n\Cx1{n}T_{n+1} T_{n}T_{n+1}
\end{align*}
while~$
\Cx1{(n+1)}\Cx{(n-k)}{(n+1)}=
\Cx1n\Cx{(n-k)}{(n-1)}T_{n+1}T_nT_{n+1}$.
Therefore, \eqref{eq:Cox power Psi 2} is equivalent
to
\begin{equation}\label{eq:Cox power Psi 3} 
\Cx{(n+1-k)}n\Cx1{n}=\Cx1n\Cx{(n-k)}{(n-1)}
\end{equation}
which, since both sides of~\eqref{eq:Cox power Psi 3}
are contained in~$\Br^+_{[1,n]}(B_{n+1})\cong\Br^+(A_n)$,
is immediate from Lemma~\ref{lem:comm cox}.

Taking~$k=n$ in~\eqref{eq:Cox power Psi}, we obtain
$\Psi_n(T_{w_\circ^{[1,n]}})=(\Cx1{(n+1)})^{n+1}T_{n+1}^n=
T_{w_\circ^{[1,n+1]}}T_{n+1}^n$, which completes the proof of Lemma~\ref{lem:img longest}.
\end{proof}
By induction hypothesis, the assignments $\wh T_1\mapsto
T_{w_\circ^{[1,2m-1]}}$, $\wh T_2\mapsto T_{w_\circ^{[m+1,n]}}$ define a homomorphism
$\Br^+(B_2)\to\Br^+(B_n)$. Taking its composition 
with~$\Psi_n$, we obtain a homomorphism~$\tilde\Psi_n:\Br^+(B_2)\to
\Br^+(B_{n+1})$. Let~$\mathsf M=\Br(B_{n+1})$ and
$\Phi=\tilde\Psi_n$. By Lemma~\ref{lem:img longest}, $\Phi(\wh T_1)=
T_{w_\circ^{[1,2m-1]}}$ and~$\Phi(\wh T_2)=T_{w_\circ^{[m+1,n+1]}}T_{n+1}^{n-m}$.
Let~$z_1=1$ and $z_2=T_{m+1}^{m-n}$. Since the~$z_i$, $\in\{1,2\}$ commute with~$\Phi(\wh T_j)$, $j\in\{1,2\}$, $\mathbf z=(z_1,z_2)$ is a decoration of~$\Phi$ by Lemma~\ref{lem:cent decor}. Then~$\Phi_{\mathbf z}$
is the desired homomorphism $\Br^+(B_2)\to\Br(B_{n+1})$.
Since its image is contained in~$\Br^+(B_{n+1})$, the assertion follows.
\end{proof}
To prove part~\ref{thm:Hom B2.B2An}, let~$N=4$, $\mathsf M=\Br^+(A_n)$, $r=\lceil \frac12 n\rceil$
and let~$\Phi:\Br^+(B_2)\to \mathsf M$ be the composition of the
homomorphism $\Br^+(B_2)\to\Br^+(B_{r})$
from Proposition~\ref{prop:basic Hom B2Bn} with
the standard unfolding $\Br^+(B_r)\to \Br^+(A_n)$ from~\eqref{eq:unfold Bn A2n-1} or~\eqref{eq:unfold Bn A2n} depending on the parity of~$n$. By Theorem~\ref{thm:adm finite class}, it follows that
$\Phi(\wh T_1)=T_{w_\circ^{[1,2m-1]\cup\tilde\sigma([1,2m-1])}}$ and~$\Phi(\wh T_2)=T_{w_\circ^{[m+1,n-m]}}$, where~$\tilde\sigma$ is  the diagram automorphism of~$\Br^+(A_n)$. In particular,
$\tilde\sigma(L)=n+1-L$ for any~$L\subset[1,n]$.
Note that in~$\Br(A_n)$ we have
$\Phi(\wh T_2)^{-1} x \Phi(\wh T_2)=\tilde\sigma(x)$
for any~$x\in\Br^+_{[m+1,n-m]}(M)$.
Let~$\sigma_1$
be the diagram automorphism of~$\Br^+_{[1,2m-1]}(A_n)$.
Then for any~$x\in \Br^+_{[1,2m-1]}(M)$, $\tilde x\in \Br^+_{[n+2-2m,n]}(M)$ we have 
$\Phi(\wh T_1)^{-1} x\Phi(\wh T_1)=
\sigma_1(x)$ and~$\Phi(\wh T_1)^{-1} \tilde x\Phi(\wh T_1)=
\tilde\sigma(\sigma_1(\tilde\sigma(\tilde x)))$.

Let~$z_{1,1}^{(1)}=T_{w_\circ^K}$ and~$z_{2,2}^{(1)}=
T_{w_\circ^{J\cup\tilde\sigma(J)}}$. 
Using~\eqref{eq:inv decoration}, we obtain
\begin{align*}
&z_{1,2}^{(2)}=\Phi(\wh T_2)^{-1}T_{w_\circ^K}
\Phi(\wh T_2)=T_{w_\circ^{\tilde\sigma(K)}},\\
&z_{1,2}^{(3)}=\Phi(\wh T_1)^{-1}T_{w_\circ^{\tilde\sigma(K)}}\Phi(\wh T_1)=
T_{w_\circ^{\tilde\sigma(K)}}=z_{1,2}^{(2)},\\
&z_{1,2}^{(4)}=\Phi(\wh T_2)^{-1}T_{w_\circ^{\tilde\sigma(K)}}\Phi(\wh T_2)
=T_{w_\circ^K}=z_{1,2}^{(1)},\\
&z_{2,1}^{(2)}=\Phi(\wh T_1)^{-1}T_{w_\circ^{J\cup\tilde\sigma(J)}}
\Phi(\wh T_1)=T_{w_\circ^{\sigma_1(J)\cup\tilde\sigma(\sigma_1(J))}},\\
&z_{2,1}^{(3)}=\Phi(\wh T_2)^{-1} T_{w_\circ^{\sigma_1(J)\cup\tilde\sigma(\sigma_1(J))}}
\Phi(\wh T_2)
=z_{2,1}^{(2)},\\
&z_{2,1}^{(4)}=\Phi(\wh T_1)^{-1}T_{w_\circ^{\sigma_1(J)\cup\tilde\sigma(\sigma_1(J)}}\Phi(\wh T_1)=
T_{w_\circ^{J\cup \tilde\sigma(J)}}=z_{2,1}^{(1)}.
\end{align*}
Then 
\begin{align*}
&z_{1,2}^{(4)}z_{2,1}^{(3)}z_{1,2}^{(2)}z_{2,1}^{(1)}
=z_{1,2}^{(1)}z_{2,1}^{(2)}z_{1,2}^{(2)}z_{2,1}^{(1)}
=T_{w_\circ^K}T_{w_\circ^{\sigma_1(J)\cup\tilde\sigma(\sigma_1(J))}}T_{w_\circ^{\tilde\sigma(K)}}T_{w_\circ^{J\cup\tilde\sigma(J)}}\\
&\qquad=T_{w_\circ^K}T_{w_\circ^{\tilde\sigma(K)}}T_{w_\circ^{\sigma_1(J)\cup\tilde\sigma(\sigma_1(J))}}T_{w_\circ^{J\cup\tilde\sigma(J)}},\\
&z_{2,1}^{(4)}z_{1,2}^{(3)}z_{2,1}^{(2)}z_{1,2}^{(1)}
=z_{2,1}^{(1)}z_{1,2}^{(2)}z_{2,1}^{(2)}z_{1,2}^{(1)}
=T_{w_\circ^{J\cup\tilde\sigma(J)}}T_{w_\circ^{\tilde\sigma(K)}}T_{w_\circ^{\sigma_1(J)\cup\tilde\sigma(\sigma_1(J))}}T_{w_\circ^K}
\\
&\qquad=T_{w_\circ^{\tilde\sigma(K)}}T_{w_\circ^K}T_{w_\circ^{J\cup\tilde\sigma(J)}}T_{w_\circ^{\sigma_1(J)\cup\tilde\sigma(\sigma_1(J))}}
=T_{w_\circ^{\tilde\sigma(K)}}T_{w_\circ^K}
T_{w_\circ^{\sigma_1(J)\cup\tilde\sigma(\sigma_1(J))}}T_{w_\circ^{J\cup\tilde\sigma(J)}}
\end{align*}
since~$\tilde\sigma(K),K\subset[2m+1,n-2m]$ while
$J\cup\tilde\sigma(J)\subset[1,m-1]\cup[n+2-m,n]$
and~$\sigma_1(J)\cup\tilde\sigma(\sigma_1(J))
\subset[m+1,2m-1]\cup [n+2-2m,n-m]$. Finally,
since~$K$ and~$n+1-K=\tilde\sigma(K)$
are weakly orthogonal, 
$T_{w_\circ^K}$ and~$T_{w_\circ^{\tilde\sigma(K)}}$ commute by Lemma~\ref{lem:weakly orthogonal}
and so the condition~\ref{thm:decoration sufficient.2} 
of Theorem~\ref{thm:decoration sufficient} holds. 
Then~$\mathbf z=(z_{1,1}^{(1)},z_{2,2}^{(1)})
=(T_{w_\circ^K},T_{w_\circ^{J\cup\tilde\sigma(J)}})$
is a decoration of~$\Phi$ and~$\Phi_{\mathbf z}$
is the desired homomorphism.

Part~\ref{thm:Hom B2.B2} follows from part~\ref{thm:Hom B2.B2An} by taking~$K=n+1-K$ and using Theorem~\ref{thm:adm finite class}.

To prove part~\ref{thm:Hom B2.B2Dn}, note first that 
the composition of the homomorphism from Proposition~\ref{prop:basic Hom B2Bn} with the 
standard unfolding~\eqref{eq:unfold Bn Dn+1} yields
a standard homomorphism~$\Phi:\Br^+(B_2)\to 
\Br^+(D_{n+1})$ satisfying $\Phi(\wh T_1)=T_{w_\circ^{[1,2m-1]}}$ and~$\Phi(\wh T_2)=
T_{w_\circ^{[m+1,n+1]}}$. Let~$\mathsf M=\Br(D_{n+1})$.
Then for any~$x\in\Br^+_{[m+1,n+1]}(M)$, 
$\Phi(\wh T_2)^{-1}x\Phi(\wh T_2)=\tau^{n-m+1}(x)$.
Let~$\sigma_1$ be as before
and let~$z_{1,1}^{(1)}=T_{w_\circ^K}$, $z_{2,2}^{(1)}=
T_{w_\circ^J}$. Then by~\eqref{eq:inv decoration}
\begin{alignat*}{2}
&z_{1,2}^{(2)}=\Phi(\wh T_2)^{-1}T_{w_\circ^K}\Phi(\wh T_2)=
T_{w_\circ^{\tau^{n-m+1}(K)}},&\quad &z_{2,1}^{(2)}=\Phi(\wh T_1)^{-1}T_{w_\circ^J}\Phi(\wh T_1)=T_{w_\circ^{2m-J}},\\
&z_{1,2}^{(3)}=\Phi(\wh T_1)^{-1}z_{1,2}^{(2)}\Phi(\wh T_1)=z_{1,2}^{(2)},&&z_{2,1}^{(3)}=\Phi(\wh T_2)^{-1}z_{2,1}^{(2)}\Phi(\wh T_2)=T_{w_\circ^{2m-J}},\\
&z_{1,2}^{(4)}=\Phi(\wh T_2)^{-1}z_{1,2}^{(2)}\Phi(\wh T_2)=z_{1,2}^{(1)},&
&z_{2,1}^{(4)}=\Phi(\wh T_1)^{-1}T_{w_\circ^{2m-J}}
\Phi(\wh T_1)=T_{w_\circ^J}=z_{2,1}^{(1)}
\end{alignat*}
and so, since~$J\subset[1,m-1]$, $2m-J\subset[m+1,2m-1]$ and~$K,\tau(K)\subset[2m+1,n+1]$,
\begin{align*}
&z_{1,2}^{(4)}z_{2,1}^{(3)}z_{1,2}^{(2)}z_{2,1}^{(1)}
=T_{w_\circ^K}T_{w_\circ^{2m-J}}z_{1,2}^{(2)}T_{w_\circ^J}
=T_{w_\circ^K}z_{1,2}^{(2)}T_{w_\circ^J}T_{w_\circ^{2m-J}},\\
&z_{2,1}^{(4)}z_{1,2}^{(3)}z_{2,1}^{(2)}z_{1,2}^{(1)}
=T_{w_\circ^J}z_{1,2}^{(2)}T_{w_\circ^{2m-J}}T_{w_\circ^K}
=z_{1,2}^{(2)}T_{w_\circ^K}z_{1,2}^{(2)}T_{w_\circ^J}T_{w_\circ^{2m-J}}.
\end{align*}
If~$n-m$ is odd then~$z_{1,2}^{(2)}=T_{w_\circ^K}$. Otherwise, as $K$ 
and~$\tau(K)$ are weakly orthogonal, 
$z_{1,2}^{(2)}$ commutes with~$T_{w_\circ^K}$ by Lemma~\ref{lem:weakly orthogonal}. In either case, the condition~\ref{thm:decoration sufficient.2} of Theorem~\ref{thm:decoration sufficient} is satisfied, whence~$\mathbf z=(z_1,z_2)=(T_{w_\circ^K},T_{w_\circ^J})$
is a decoration of~$\Phi$ and~$\Phi_{\mathbf z}$ yields the homomorphism in part~\ref{thm:Hom B2.B2Dn}.
\end{proof}
\begin{remark}\label{rem:non-std B2 An}
Composing the homomorphism from Theorem~\partref{thm:Hom B2.B2Dn} with the folding homomorphism~$\Br^+(D_{n+1})\to \Br^+(A_n)$  we obtain a non-standard homomorphism~$\Br^+(B_2)\to 
\Br^+(A_n)$ given by $$\wh T_1\mapsto T_{w_\circ^{[1,2m-1]}}T_{w_\circ^{K\setminus K'}}T_{w_\circ^{K'\setminus\{n+1\}}}^2,
\qquad \wh T_2\mapsto T_{w_\circ^J}
T_{w_\circ^{[m+1,n]}}^2 $$ 
where~$K'$ is the maximal interval~$[i,n+1]$ contained in~$K$ (see Lemma~\ref{lem:Dn+1 An w0}). In particular,
for~$J=K=\emptyset$ we obtain the homomorphism
satisfying $\wh T_1\mapsto T_{w_\circ^{[1,2m-1]}}$,
$\wh T_2\mapsto T_{w_\circ^{[m+1,n]}}^2$. It is very
tempting to factor it as a composition of the 
Tits homomorphism~$\Br^+(B_2)\to \Br^+(A_2)$, $\wh T_i\mapsto T_i^{i}$, $i\in \{1,2\}$ and a 
homomorphism~$\Br^+(A_2)\to \Br^+(A_n)$, $T_1\mapsto 
T_{w_\circ^{[1,2m-1]}}$, $T_2\mapsto T_{w_\circ^{[m+1,n]}}$. Alas, the latter assignments define a homomorphism if and only if~$n=3m-1$ (see Theorem~\ref{thm:Hom A2}).
\end{remark}
\begin{theorem}\label{thm:B2 A2n-1 spec}
For all~$n\ge 2$, $0\le k\le n-2$, $J=k+1-J\subset[1,k]$ and~$K=3n+k+1-K\subset[n+k+2,2n-1]$, 
the assignments $\wh T_1\mapsto T_{w_\circ^{[1,n+k]\cup K}}$, $\wh T_2\mapsto 
T_{w_\circ^{[k+2,2n-1]\cup J}}$
define a homomorphism~$\Br^+(B_2)\to\Br^+_{2n}$.
\end{theorem}
\begin{proof}
First, we prove the assertion for~$J=K=\emptyset$. 
By Lemma~\ref{lem:I2m iff cnd}, it is equivalent to proving that~$(T_{w_\circ^{[1,n+k]}}T_{w_\circ^{[k+2,2n-1]}})^2$ is~${}^{op}$-invariant. Abbreviate $X_{n,k}=T_{w_{[1,k];[1,n+k]}}T_{w_{[k+2,n-1];[k+2,2n-1]}}\in\Br^+_{2n}$. Then 
in~$\Br_{2n}$
\begin{align*}
X_{n,k}&=T_{w_\circ^{[1,k]}}^{-1}T_{w_\circ^{[1,n+k]}}T_{w_\circ^{[k+2,n-1]}}^{-1}T_{w_\circ^{[k+2,2n-1]}}
=(T_{w_\circ^{[1,k]}}T_{w_\circ^{[k+2,n-1]}})^{-1}T_{w_\circ^{[1,n+k]}}T_{w_\circ^{[k+2,2n-1]}}\\
&=T_{w_\circ^{[1,n+k]}}T_{w_\circ^{[k+2,2n-1]}}(T_{w_\circ^{[n+1,n+k]}}T_{w_\circ^{[n+k+2,2n-1]}})^{-1},
\end{align*}
whence 
\begin{equation}\label{eq:Xnk }
(T_{w_\circ^{[1,n+k]}}T_{w_\circ^{[k+2,2n-1]}})^2=T_{w_\circ^{[1,k]}}T_{w_\circ^{[k+2,n-1]}}X_{n,k}^2  T_{w_\circ^{[n+1,n+k]}}T_{w_\circ^{[n+k+2,2n-1]}}.   
\end{equation}
\begin{proposition}\label{prop:factor Tw0^2}
For all~$0\le k\le n-2$, $X_{n,k}^2=T_{w_\circ^{[1,2n-1]}}^2$.
\end{proposition}
\begin{proof}
Note first that, since~$\ell(w_{[a,b];[a,b+c]})=\binom{b-a+c+2}2-\binom{b-a+2}2=\frac12 c(c+2(b-a)+3)$,
$\ell(X_{n,k})=\frac12n(n+2k+1)+\frac12n(3n-2k-3)=
n(2n-1)=\ell(T_{w_\circ^{[1,2n-1]}})$. Thus, $\ell(X_{n,k}^2)=\ell(T_{w_\circ^{[1,2n-1]}}^2)$. 
Since~$T_{w_\circ^{[1,2n-1]}}^2$
generates the center of~$\Br^+_{2n}$ by Proposition~\partref{prop:fund elts BrSa.d}, it remains to prove that~$X_{n,k}^2$ commutes with the~$T_i$, $i\in[1,2n-1]$.

Suppose that~$i\in[1,2n-1]\setminus\{k+1,n,n+k+1\}$. If~$i\in[1,k]$ then using Proposition~\partref{prop:fund elts BrSa.c}
\begin{align*}
T_i X_{n,k}&=T_i T_{w_\circ^{[1,k]}}^{-1} T_{w_\circ^{[1,n+k]}}T_{w_\circ^{[k+2,n-1]}}^{-1} T_{w_\circ^{[k+2,2n-1]}}=T_{w_\circ^{[1,k]}}^{-1} T_{k+1-i} T_{w_\circ^{[1,n+k]}}T_{w_\circ^{[k+2,n-1]}}^{-1} T_{w_\circ^{[k+2,2n-1]}}\\
&=T_{w_{[1,k];[1,n+k]}} T_{n+i} T_{w_\circ^{[k+2,n-1]}}^{-1} T_{w_\circ^{[k+2,2n-1]}}=T_{w_{[1,k];[1,n+k]}}  T_{w_\circ^{[k+2,n-1]}}^{-1} T_{n+i} T_{w_\circ^{[k+2,2n-1]}}\\
&=X_{n,k} T_{n+k+1-i}
\end{align*}
with~$n+k+1-i\in[n+1,n+k]$. If~$i\in[n+1,n+k]$ then
\begin{align*}
T_i X_{n,k}&=T_i T_{w_\circ^{[1,k]}}^{-1} T_{w_\circ^{[1,n+k]}}T_{w_\circ^{[k+2,n-1]}}^{-1} T_{w_\circ^{[k+2,2n-1]}}\\
&=T_{w_\circ^{[1,k]}}^{-1} T_{w_\circ^{[1,n+k]}}T_{n+k+1-i} T_{w_\circ^{[k+2,n-1]}}^{-1} T_{w_\circ^{[k+2,2n-1]}}=X_{n,k}T_{n+k+1-i}
\end{align*}
with~$n+k+1-i\in[1,k]$. Thus, for~$i\in [1,k]\cup [n+1,n+k]$, $T_i X_{n,k}=X_{n,k}T_{n+k+1-i}$ whence~$T_i X_{n,k}^2=X_{n,k}^2 T_i$.

If~$i\in[k+2,n-1]$,
\begin{align*}
T_i X_{n,k}&=T_i T_{w_\circ^{[1,k]}}^{-1} T_{w_\circ^{[1,n+k]}}T_{w_\circ^{[k+2,n-1]}}^{-1} T_{w_\circ^{[k+2,2n-1]}}%
=
T_{w_\circ^{[1,k]}}^{-1} T_{w_\circ^{[1,n+k]}}T_{n+k+1-i} T_{w_\circ^{[k+2,n-1]}}^{-1} T_{w_\circ^{[k+2,2n-1]}}\\
&=T_{w_{[1,k];[1,n+k]}} T_{w_\circ^{[k+2,n-1]}}^{-1} T_i T_{w_\circ^{[k+2,2n-1]}}=X_{n,k} T_{2n+k+1-i}
\end{align*}
with~$2n+k+1-i\in [n+k+2,2n-1]$. Similarly, if~$i\in[n+k+2,2n-1]$ then
\begin{align*}
T_i X_{n,k}&=T_{w_{[1,k];[1,n+k]}} T_{w_\circ^{[k+2,n-1]}}^{-1} T_i T_{w_\circ^{[k+2,2n-1]}}=X_{n,k} T_{2n+k+1-i}
\end{align*}
with~$2n+k+1-i\in [k+2,n-1]$. Thus, $T_i X_{n,k}=X_{n,k}T_{2n+k+1-i}$ for~$i\in[k+2,n-1]\cup[n+k+2,2n-1]$ and
so~$T_iX_{n,k}^2=X_{n,k}^2 T_i$.

It remains to prove that~$X_{n,k}^2$ commutes with the $T_i$ for~$i\in\{k+1,n,n+k+1\}$. We need the following
\begin{lemma}\label{lem:move across}
Let~$M$, $i,j\in I$ be a Coxeter matrix and suppose that~$M_{[i,j]}$ is of type~$A$. Then in~$\Br_{[i,j]}(M)$
\begin{subequations}
\begin{alignat}{2}
&T_i T_{w_\circ^{[i+1,j]}}=T_{w_\circ^{[i+1,j]}}\Cx ij\Cx i{(j-1)}{}^{-1}=T_{w_\circ^{[i+1,j]}}\Cx{(i+1)}j{}^{-1}\Cx ij,&\qquad\label{eq:move across i [i+1 j]}\\
&T_{w_\circ^{[i+1,j]}}T_i=\Cxr i{(j-1)}{}^{-1}\Cxr ijT_{w_\circ^{[i+1,j]}}=\Cxr ij\Cxr{(i+1)}j{}^{-1}T_{w_\circ^{[i+1,j]}}\label{eq:move across [i+1 j] i},\\
&T_j T_{w_\circ^{[i,j-1]}}=T_{w_\circ^{[i,j-1]}}\Cxr ij\Cxr{(i+1)}{j}{}^{-1}=T_{w_\circ^{[i,j-1]}}\Cxr{i}{(j-1)}{}^{-1}\Cxr ij,&\label{eq:move across j [i j-1]}\\
&T_{w_\circ^{[i,j-1]}}T_j=\Cx{(i+1)}{j}{}^{-1}\Cx ijT_{w_\circ^{[i,j-1]}}=\Cx ij\Cx{i}{(j-1)}{}^{-1}T_{w_\circ^{[i,j-1]}},\label{eq:move across [i j-1] j}\\
&T_i^{-1} T_{w_\circ^{[i+1,j]}}=T_{w_\circ^{[i+1,j]}}\Cx i{(j-1)}\Cx ij{}^{-1}=T_{w_\circ^{[i+1,j]}}\Cx ij{}^{-1} \Cx{(i+1)}j,\label{eq:move across -i [i+1 j]}\\
&T_{w_\circ^{[i+1,j]}}T_i^{-1}=\Cxr ij{}^{-1}\Cxr i{(j-1)}T_{w_\circ^{[i+1,j]}}=\Cxr{(i+1)}j\Cxr ij{}^{-1}T_{w_\circ^{[i+1,j]}} ,\label{eq:move across [i+1 j] -i}\\
&T_j^{-1} T_{w_\circ^{[i,j-1]}}=T_{w_\circ^{[i,j-1]}}\Cxr{(i+1)}{j}\Cxr ij{}^{-1}=T_{w_\circ^{[i,j-1]}}\Cxr ij{}^{-1} \Cxr{i}{(j-1)},\label{eq:move across -j [i j-1]}\\
&T_{w_\circ^{[i,j-1]}}T_j^{-1}=\Cx ij{}^{-1}\Cx{(i+1)}{j}T_{w_\circ^{[i,j-1]}}=\Cx{i}{(j-1)}\Cx ij{}^{-1}T_{w_\circ^{[i,j-1]}} ,\label{eq:move across [i j-1] -j}
\end{alignat}
\end{subequations}
\end{lemma}
\begin{proof}
It suffices to prove~\eqref{eq:move across i [i+1 j]}. The remaining identities follow by applying~${}^{op}$ or the diagram automorphism of~$\Br^+_{[i,j]}(M)$ or taking inverses.
Note that
$$
T_{w_\circ^{[i,j]}}=T_{w_\circ^{[i+1,j]}}\Cx ij=\Cxr ij T_{w_\circ^{[i+1,j]}}.
$$
Then
\begin{equation*}
T_i T_{w_\circ^{[i+1,j]}}=T_i T_{w_\circ^{[i,j]}}\Cx ij{}^{-1}=T_{w_\circ^{[i,j]}}T_j\Cx ij{}^{-1}=T_{w_\circ^{[i+1,j]}}\Cx ij\Cx i{(j-1)}{}^{-1}.
\end{equation*}
To prove the second equality in~\eqref{eq:move across i [i+1 j]}, note that it is equivalent to $$\Cx{(i+1)}j \Cx ij=\Cx ij\Cx i{(j-1)}$$ which in turn is immediate from Lemma~\ref{lem:comm cox}.
\end{proof}
Our aim is to show that for~$i\in\{k+1,n,n+k+1\}$,
$T_i^{-1}X_{n,k}=X_{n,k}U_{i,n,k}$ while $X_{n,k}T_i=U_{i,n,k}^{-1} X_{n,k}$ for some~$U_{i,n,k}\in\Br_{2n}$. Then~$X_i^{-1}X_{n,k}^2 X_i=X_{n,k}^2$ in~$\Br_{2n}$. 

For~$i=k+1$ we obtain, using the inverse of~\eqref{eq:move across [i j-1] j}
\begin{align*}
T_i^{-1}&X_{n,k}=T_{k+1}^{-1} T_{w_\circ^{[1,k]}}^{-1} T_{w_\circ^{[1,n+k]}} T_{w_{[k+2,n-1];[k+2,2n-1]}}\\
&=T_{w_\circ^{[1,k]}}^{-1} \Cx{1}{k}\Cx 1{(k+1)}{}^{-1}T_{w_\circ^{[1,n+k]}} T_{w_{[k+2,n-1];[k+2,2n-1]}}\qquad\\
&=T_{w_\circ^{[1,k]}}^{-1} T_{w_\circ^{[1,n+k]}}\Cxr{(n+1)}{(n+k)}\Cxr n{(n+k)}{}^{-1} T_{w_{[k+2,n-1];[k+2,2n-1]}}\\
&=T_{w_{[1,k];[1,n+k]}}\Cxr{(n+1)}{(n+k)}T_n^{-1}  T_{w_\circ^{[k+2,n-1]}}^{-1}\Cxr{(n+1)}{(n+k)}{}^{-1}T_{w_\circ^{[k+2,2n-1]}}\\
&=T_{w_{[1,k];[1,n+k]}}\Cxr{(n+1)}{(n+k)}T_{w_\circ^{[k+2,n-1]}}^{-1}\Cx{(k+2)}{(n-1)}\Cx{(k+2)}n{}^{-1}\Cxr{(n+1)}{(n+k)}{}^{-1}T_{w_\circ^{[k+2,2n-1]}}\\
&=T_{w_{[1,k];[1,n+k]}}T_{w_\circ^{[k+2,n-1]}}^{-1}\Cxr{(n+1)}{(n+k)}\Cx{(k+2)}{(n-1)}\Cx{(k+2)}n{}^{-1}\Cxr{(n+1)}{(n+k)}{}^{-1}T_{w_\circ^{[k+2,2n-1]}}\\
&=X_{n,k}\Cxr{(n+k+2)}{(2n-1)}\Cx{(n+1)}{(n+k)}\Cxr{(n+k+1)}{(2n-1)}{}^{-1}\Cx{(n+1)}{(n+k)}{}^{-1}=
X_{n,k} U_{k+1,n,k},
\end{align*}
where~$U_{k+1,n,k}=\Cxr{(n+k+2)}{(2n-1)}\Cx{(n+1)}{(n+k)}\Cx{(n+1)}{(n+k+1)}{}^{-1}\Cxr{(n+k+2)}{(2n-1)}{}^{-1}$.

On the other hand,
\begin{align*}
X_{n,k}&T_{k+1}=T_{w_{[1,k];[1,n+k]}}T_{w_\circ^{[k+2,2n-1]}} T_{w_\circ^{[n+k+2,2n-1]}}^{-1} T_{k+1}
=T_{w_{[1,k];[1,n+k]}}T_{w_\circ^{[k+2,2n-1]}}T_{k+1} T_{w_\circ^{[n+k+2,2n-1]}}^{-1}\\
&=T_{w_{[1,k];[1,n+k]}}\Cxr{(k+1)}{(2n-1)}\Cxr{(k+2)}{(2n-1)}{}^{-1} T_{w_{[k+2,n-1];[k+2,2n-1]}} \\
&=T_{w_\circ^{[1,k]}}^{-1} T_{w_\circ^{[1,n+k]}} \Cxr{(n+k+1)}{(2n-1)}\Cxr{(k+1)}{(n+k)}\Cxr{(k+2)}{(2n-1)}{}^{-1}  T_{w_{[k+2,n-1];[k+2,2n-1]}}\\
&=\Cxr{(n+k+2)}{(2n-1)} T_{w_\circ^{[1,k]}}^{-1} T_{w_\circ^{[1,n+k]}}T_{n+k+1} \Cxr{(k+1)}{(n+k)}\Cxr{(k+2)}{(2n-1)}{}^{-1}  T_{w_{[k+2,n-1];[k+2,2n-1]}}\\
&=\Cxr{(n+k+2)}{(2n-1)} T_{w_\circ^{[1,k]}}^{-1} \Cx1{(n+k+1)}\Cx{1}{(n+k)}{}^{-1} T_{w_\circ^{[1,n+k]}} \\
&\qquad\qquad\qquad \Cxr{(k+1)}{(n+k)}\Cxr{(k+2)}{(2n-1)}{}^{-1}  T_{w_{[k+2,n-1];[k+2,2n-1]}}\\
&=\Cxr{(n+k+2)}{(2n-1)} T_{w_\circ^{[1,k]}}^{-1} \Cx1{(n+k+1)}\Cx{(n+1)}{(n+k)}{}^{-1} T_{w_\circ^{[1,n+k]}}\\
&\qquad \qquad\qquad \Cxr{(k+2)}{(2n-1)}{}^{-1}  T_{w_{[k+2,n-1];[k+2,2n-1]}},
\end{align*}
where we used~\eqref{eq:move across [i+1 j] i} and~\eqref{eq:move across [i j-1] j}. 
We have by~\eqref{eq:move across [i j-1] -j}
and the inverse of~\eqref{eq:move across -j [i j-1]}
\begin{align*}
T_{w_\circ^{[1,k]}}^{-1} &\Cx1{(n+k+1)}\Cx{(n+1)}{(n+k)}{}^{-1}=\Cxr1k T_{w_\circ^{[1,k]}}^{-1} T_{k+1}\Cx{(k+2)}{(n+k+1)}\Cx{(n+1)}{(n+k)}{}^{-1}\\
&=\Cxr1k \Cxr{1}{k}{}^{-1}\Cxr 1{(k+1)} \Cx{(k+2)}{(n+k+1)} T_{w_\circ^{[1,k]}}^{-1}\Cx{(n+1)}{(n+k)}{}^{-1}\\
&=\Cxr 1{(k+1)} \Cx{(k+2)}{(n+k+1)} \Cx{(n+1)}{(n+k)}{}^{-1} T_{w_\circ^{[1,k]}}^{-1}\\
&=\Cx{(k+1)}{(n+k+1)} \Cx{(n+1)}{(n+k)}{}^{-1} \Cxr1k T_{w_\circ^{[1,k]}}^{-1},
\\
T_{w_\circ^{[1,n+k]}} &\Cxr{(k+2)}{(2n-1)}{}^{-1}=\Cx{1}{(n-1)}{}^{-1} T_{w_\circ^{[1,n+k]}}T_{n+k+1}^{-1} \Cxr{(n+k+2)}{(2n-1)}{}^{-1}\\
&=\Cx{1}{(n-1)}{}^{-1} \Cx{1}{(n+k)}\Cx1{(n+k+1)}{}^{-1}\Cxr{(n+k+2)}{(2n-1)}{}^{-1} T_{w_\circ^{[1,n+k]}}\\
&=\Cx{n}{(n+k)}\Cx1{(n+k+1)}{}^{-1}\Cxr{(n+k+2)}{(2n-1)}{}^{-1} T_{w_\circ^{[1,n+k]}},
\end{align*}
whence
\begin{align*}
&T_{w_\circ^{[1,k]}}^{-1} \Cx1{(n+k+1)}\Cx{(n+1)}{(n+k)}{}^{-1}T_{w_\circ^{[1,n+k]}}\Cxr{(k+2)}{(2n-1)}{}^{-1}\\
&=\Cx{(k+1)}{(n+k+1)} \Cx{(n+1)}{(n+k)}{}^{-1}\Cx n{(n+k)} \Cxr1k T_{w_\circ^{[1,k]}}^{-1}\Cx1{(n+k+1)}{}^{-1}\Cxr{(n+k+2)}{(2n-1)}{}^{-1} T_{w_\circ^{[1,n+k]}}.
\end{align*}
Next, using the inverse~\eqref{eq:move across j [i j-1]}
\begin{align*}
\Cxr1k &T_{w_\circ^{[1,k]}}^{-1}\Cx1{(n+k+1)}{}^{-1}=\Cxr1k\Cx{(k+2)}{(n+k+1)}{}^{-1} T_{w_\circ^{[1,k]}}^{-1}T_{k+1}^{-1}\Cx1k{}^{-1}\\
&=\Cxr1k \Cx{(k+2)}{(n+k+1)}{}^{-1}\Cxr1{(k+1)}{}^{-1}\Cxr{1}{k}\Cxr1k{}^{-1} T_{w_\circ^{[1,k]}}^{-1}\\
&=\Cx{(k+2)}{(n+k+1)}{}^{-1}\Cxr1k\Cxr1{(k+1)}{}^{-1} T_{w_\circ^{[1,k]}}^{-1}=\Cx{(k+1)}{(n+k+1)}{}^{-1} T_{w_\circ^{[1,k]}}^{-1}.
\end{align*}
Putting all the above together we conclude that $$
X_{n,k}T_{k+1}=\Cxr{(n+k+2)}{(2n-1)}\tilde U_{n,k}\Cxr{(n+k+2)}{(2n-1)}{}^{-1} X_{n,k}$$ 
where
$\tilde U_{n,k}=\Cx{(k+1)}{(n+k+1)} \Cx{(n+1)}{(n+k)}{}^{-1}\Cx n{(n+k)}\Cx{(k+1)}{(n+k+1)}{}^{-1}$. 
We have, as in the proof of Lemma~\ref{lem:move across}
$$
\Cx{(n+1)}{(n+k)}{}^{-1} \Cx{n}{(n+k)}=\Cx n{(n+k)}\Cx{n}{(n+k-1)}{}^{-1},
$$
and then by Lemma~\ref{lem:comm cox}
\begin{align*}
\tilde U_{n,k}&=\Cx{(k+1)}{(n+k+1)} \Cx n{(n+k)}\Cx{n}{(n+k-1)}{}^{-1}\Cx{(k+1)}{(n+k+1)}{}^{-1}\\
&=\Cx{(n+1)}{(n+k+1)}\Cx{(n+1)}{(n+k)}{}^{-1}.
\end{align*}
Therefore, $\Cxr{(n+k+2)}{(2n-1)}\tilde U_{n,k}\Cxr{(n+k+2)}{(2n-1)}{}^{-1}=
U_{k+1,n,k}^{-1}$.

Next, for~$i=n$ we obtain, using~\eqref{eq:move across -i [i+1 j]} and the inverse of~\eqref{eq:move across [i+1 j] i}
\begin{align*}
T_i^{-1} &X_{n,k}=T_{w_{[1,k];[1,n+k]}} T_{k+1}^{-1} T_{w_{[k+2,n-1];[k+2,2n-1]}}\\
&=T_{w_{[1,k];[1,n+k]}} T_{w_\circ^{[k+2,n-1]}}^{-1}\Cxr{(k+2)}{(n-1)}\Cxr{(k+1)}{(n-1)}{}^{-1}T_{w_\circ^{[k+2,2n-1]}}\quad \\
&=T_{w_{[1,k];[1,n+k]}} T_{w_\circ^{[k+2,n-1]}}^{-1}\Cxr{(k+2)}{(n-1)}T_{k+1}^{-1}T_{w_\circ^{[k+2,2n-1]}} \Cx{(n+k+2)}{(2n-1)}{}^{-1}\\
&=T_{w_{[1,k];[1,n+k]}} T_{w_\circ^{[k+2,n-1]}}^{-1}\Cxr{(k+2)}{(n-1)}T_{w_\circ^{[k+2,2n-1]}}\\
&\qquad\qquad\qquad\Cx{(k+1)}{(2n-1)}{}^{-1} \Cx{(k+2)}{(2n-1)}\Cx{(n+k+2)}{(2n-1)}{}^{-1}%
=X_{n,k}U_{n,n,k},
\end{align*}
where~$U_{n,n,k}=\Cx{(k+1)}{(n+k+1)}{}^{-1} \Cx{(k+2)}{(n+k+1)}$. On the other hand,
using~\eqref{eq:move across [i j-1] j}
and the inverse of~\eqref{eq:move across -j [i j-1]}, we obtain
\begin{align*}
X_{n,k}&T_n=T_{w_{[1,k];[1,n+k]}}T_{w_\circ^{[k+2,2n-1]}}T_{w_\circ^{[k+2+n,2n-1]}}^{-1} T_n =
T_{w_{[1,k];[1,n+k]}}T_{n+k+1} T_{w_{[k+2,n-1];[k+2,2n-1]}}\\
&=T_{w_\circ^{[1,k]}}^{-1}\Cx 1{(n+k+1)}\Cx1{(n+k)}{}^{-1}T_{w_\circ^{[1,n+k]}}T_{w_{[k+2,n-1];[k+2,2n-1]}} \\
&=\Cxr1k T_{w_\circ^{[1,k]}}^{-1}\Cx{(k+1)}{(n+k+1)}\Cx1{(n+k)}{}^{-1}T_{w_\circ^{[1,n+k]}}T_{w_{[k+2,n-1];[k+2,2n-1]}}\\
&=\Cxr 1{(k+1)}\Cx{(k+2)}{(n+k+1)} T_{w_\circ^{[1,k]}}^{-1}\Cx1{(n+k)}{}^{-1}T_{w_\circ^{[1,n+k]}}T_{w_{[k+2,n-1];[k+2,2n-1]}}\\
&=\Cxr 1{(k+1)}\Cx{(k+2)}{(n+k+1)}\Cx{(k+2)}{(n+k)}{}^{-1} T_{w_\circ^{[1,k]}}^{-1}\Cx1{(k+1)}{}^{-1}T_{w_\circ^{[1,n+k]}}T_{w_{[k+2,n-1];[k+2,2n-1]}}\\
&=\Cxr 1{(k+1)}\Cx{(k+2)}{(n+k+1)}\Cx{(k+2)}{(n+k)}{}^{-1}\Cxr1{(k+1)}{}^{-1}X_{n,k}\\
&=\Cx{(k+1)}{(n+k+1)}\Cx{(k+1)}{(n+k)}{}^{-1}X_{n,k}\\
&
=\Cx{(k+2)}{(n+k+1)}{}^{-1}\Cx{(k+1)}{(n+k+1)}X_{n,k}=U_{n,n,k}^{-1} X_{n,k}.
\end{align*}
Finally, for~$i=n+k+1$ we have, by~\eqref{eq:move across -j [i j-1]},
\begin{align*}
T_i^{-1}&X_{n,k}=T_{w_\circ^{[1,k]}}^{-1} T_{n+k+1}^{-1} T_{w_\circ^{[1,n+k]}} T_{w_{[k+2,n-1];[k+2,2n-1]}}\\
&=T_{w_{[1,k];[1,n+k]}}\Cxr1{(n+k+1)}{}^{-1} \Cxr{1}{(n+k)}T_{w_{[k+2,n-1];[k+2,2n-1]}}.
\end{align*}
Furthermore, since by~\eqref{eq:move across i [i+1 j]} and the inverse of~\eqref{eq:move across [i+1 j] -i},
\begin{align*}
\Cxr{1}{(n+k)}&T_{w_{[k+2,n-1];[k+2,2n-1]}}=\Cxr{(k+1)}{(n+k)} T_{w_\circ^{[k+2,2n-1]}} T_{w_\circ^{[n+k+2,2n-1]}}^{-1} \Cxr1k\\
&=T_{w_\circ^{[k+2,2n-1]}}\Cx{(n+1)}{(2n-1)}\Cx{(k+2)}{(2n-1)}{}^{-1}\Cx{(k+1)}{(2n-1)} T_{w_\circ^{[n+k+2,2n-1]}}^{-1} \Cxr1k\quad \\
&=T_{w_\circ^{[k+2,2n-1]}}\Cx{(k+2)}{n}{}^{-1}\Cx{(k+1)}{(n+k+1)} T_{w_\circ^{[n+k+2,2n-1]}}^{-1}\Cxr{(n+k+2)}{(2n-1)} \Cxr1k\\
&=T_{w_\circ^{[k+2,2n-1]}}\Cx{(k+2)}{n}{}^{-1}\Cx{(k+1)}{(n+k)}T_{w_\circ^{[n+k+2,2n-1]}}^{-1}\Cxr{(n+k+1)}{(2n-1)} \Cxr1k\\
&=T_{w_{[k+2,n-1];[k+2,2n-1]}}\Cx{(k+2)}{n}{}^{-1}\Cx{(k+1)}{(n+k)}\Cxr{(n+k+1)}{(2n-1)} \Cxr1k
\end{align*}
while~\eqref{eq:move across -i [i+1 j]}
and the inverse of~\eqref{eq:move across [i+1 j] i} yield
\begin{align*}
&\Cxr1{(n+k+1)}{}^{-1}T_{w_{[k+2,n-1];[k+2,2n-1]}}=\Cxr1{(n+k+1)}{}^{-1}T_{w_\circ^{[k+2,2n-1]}}T_{w_\circ^{[n+k+2,2n-1]}}^{-1}
\\
&=\Cxr1{(k+1)}{}^{-1}T_{w_\circ^{[k+2,2n-1]}}\Cx n{(2n-1)}{}^{-1}T_{w_\circ^{[n+k+2,2n-1]}}^{-1}\\
&=T_{w_\circ^{[k+2,2n-1]}}\Cxr1{k}{}^{-1}\Cx{(k+1)}{(2n-1)}{}^{-1} \Cx{(k+2)}{(n-1)}T_{w_\circ^{[n+k+2,2n-1]}}^{-1}\\
&=T_{w_\circ^{[k+2,2n-1]}}\Cxr1{k}{}^{-1}\Cx{(n+k+2)}{(2n-1)}{}^{-1}T_{n+k+1}^{-1}T_{w_\circ^{[n+k+2,2n-1]}}^{-1}\Cx{(k+1)}{(n+k)}{}^{-1}\Cx{(k+2)}{(n-1)}\\
&=T_{w_\circ^{[k+2,2n-1]}}\Cxr1{k}{}^{-1}T_{w_\circ^{[n+k+2,2n-1]}}^{-1}\Cxr{(n+k+1)}{(2n-1)}{}^{-1}\Cx{(k+1)}{(n+k)}{}^{-1}\Cx{(k+2)}{(n-1)}\\
&=T_{w_{[k+2,n-1];[k+2,2n-1]}}\Cxr1{k}{}^{-1}\Cxr{(n+k+1)}{(2n-1)}{}^{-1}\Cx{(k+1)}{(n+k)}{}^{-1}\Cx{(k+2)}{(n-1)}.
\end{align*}
It follows that $T_{n+k+1}^{-1}X_{n,k}=X_{n,k}U_{n+k+1,n,k}$
where
\begin{align*}
U_{n+k+1,n,k}
&=\Cxr1{k}{}^{-1}\Cxr{(n+k+1)}{(2n-1)}{}^{-1}\Cx{(k+1)}{(n+k)}{}^{-1}\Cx{(k+2)}{(n-1)}\times\\
&\qquad\qquad \Cx{(k+2)}{n}{}^{-1}\Cx{(k+1)}{(n+k)}\Cxr{(n+k+1)}{(2n-1)} \Cxr1k\\
&=\Cxr1{k}{}^{-1}\Cx{(k+1)}{(n-2)}\Cx{(k+1)}{(n-1)}{}^{-1}\Cxr1k\\
&=\Cxr1{k}{}^{-1}\Cx{(k+1)}{(n-1)}{}^{-1}\Cxr1k\Cx{(k+2)}{(n-1)}
\end{align*}
 by Lemma~\ref{lem:comm cox}. On the other hand, by the inverse of~\eqref{eq:move across -j [i j-1]}
\begin{align*}
X_{n,k}&T_{n+k+1}=T_{w_{[1,k];[1,n+k]}}T_{w_\circ^{[k+2,n-1]}}^{-1} T_n T_{w_\circ^{[k+2,2n-1]}}\\
&=T_{w_{[1,k];[1,n+k]}}\Cxr{(k+2)}{(n-1)}{}^{-1}\Cxr{(k+2)}n T_{w_{[k+2,n-1];[k+2,2n-1]}}\\
&=\Cx{(k+2)}{(n-1)}{}^{-1} T_{w_\circ^{[1,k]}}^{-1} \Cx{(k+1)}{(n-1)} T_{w_\circ^{[1,n+k]}} T_{w_{[k+2,n-1];[k+2,2n-1]}}\\
&=\Cx{(k+2)}{(n-1)}{}^{-1}\Cxr{1}{k}{}^{-1}\Cxr1{(k+1)}\Cx{(k+2)}{(n-1)} X_{n,k}\\
&=\Cx{(k+2)}{(n-1)}{}^{-1}\Cxr{1}{k}{}^{-1}\Cx{(k+1)}{(n-1)}\Cxr1{k} X_{n,k}=U_{n+k+1,n,k}^{-1}X_{n,k}.
\end{align*}
Thus, $T_i^{-1}X_{n,k}^2T_i=X_{n,k}^2$ for $i\in\{k+1,n,n+k+1\}$ which completes the proof of Proposition~\ref{prop:factor Tw0^2}.
\end{proof}
\begin{remark}
While~$\ell(X_{n,k})=\ell(w_\circ^{[1,2n-1]})$, the~$X_{n,k}$ are not square free and hence are not equal to~$T_{w_\circ^{[1,2n-1]}}$. For example,
$X_{2,0}=T_1T_2T_1T_2T_3T_2=T_2T_1^2T_3T_2$.
\end{remark}
By~\eqref{eq:Xnk } and Proposition~\ref{prop:factor Tw0^2},
\begin{align}
(T_{w_\circ^{[1,n+k]}}T_{w_\circ^{[k+2,2n-1]}})^2&=T_{w_\circ^{[1,k]}}T_{w_\circ^{[k+2,n-1]}}T_{w_\circ^{[1,2n-1]}}^2  T_{w_\circ^{[n+1,n+k]}}T_{w_\circ^{[n+k+2,2n-1]}}
\nonumber\\
&=T_{w_\circ^{[1,2n-1]}}^2 T_{w_\circ^{[1,2n-1]\setminus\{k+1,n,n+k+1\}}},\label{eq: str hom main}
\end{align}
which is manifestly~${}^{op}$ invariant, being the product of two commuting ${}^{op}$-invariant elements of~$\Br^+_{2n}$.
\begin{corollary}\label{cor:strange homs}
For all~$n\ge 2$, $0\le k\le n-2$ the assignments 
$
\wh T_1\mapsto T_{w_{[1,k];[1,n+k]}}$, $\wh T_2\mapsto T_{w_{[k+2,n-1];[k+2,2n-1]}}
$ (respectively, $
\wh T_1\mapsto T_{w_{[n+1,n+k];[1,n+k]}}$, $\wh T_2\mapsto T_{w_{[n+k+2,2n-1];[k+2,2n-1]}}
$)
define square free homomorphisms $\Br^+(B_2)\to\Br^+_{2n}$ 
which are neither Coxeter nor Hecke type. 
\end{corollary}
\begin{proof}
The first assignments define a homomorphism by Proposition~\ref{prop:factor Tw0^2} and Lemma~\ref{lem:cradicals}. The second is obtained from the first by applying~${}^{op}$.
\end{proof}

Now let~$J=k+1-J\subset[1,k]$, $K=3n+k+1-K\subset [n+k+2,2n-1]$. Set~$z_{1,1}^{(1)}=T_{w_\circ^K}$, 
$z_{2,2}^{(1)}=T_{w_\circ^J}$. Then, using~\eqref{eq:inv decoration}, set
\begin{alignat*}{2}
&z_{1,2}^{(2)}=T_{w_\circ^{[k+2,2n-1]}}^{-1}z_{1,2}^{(1)}T_{w_\circ^{[k+2,2n-1]}}=T_{w_\circ^{2n+k+1-K}},&\quad&z_{2,1}^{(2)}=T_{w_\circ^{[1,n+k]}}^{-1}z_{2,1}^{(1)}T_{w_\circ^{[1,n+k]}}=T_{w_\circ^{n+k+1-J}},\\
&z_{1,2}^{(3)}=T_{w_\circ^{[1,n+k]}}^{-1}z_{1,2}^{(2)}T_{w_\circ^{[1,n+k]}}=
T_{w_\circ^{-n+K}},&&
z_{2,1}^{(3)}=T_{w_\circ^{[k+2,2n-1]}}^{-1}z_{2,1}^{(2)}T_{w_\circ^{[k+2,2n-1]}}=T_{w_\circ^{n+J}},\\
&z_{1,2}^{(4)}=T_{w_\circ^{[k+2,2n-1]}}^{-1}z_{2,1}^{(3)}T_{w_\circ^{[k+2,2n-1]}}
=T_{w_\circ^{3n+k+1-K}},&&
z_{2,1}^{(4)}=T_{w_\circ^{[1,n+k]}}^{-1}z_{2,1}^{(3)}T_{w_\circ^{[1,n+k]}}
=T_{w_\circ^{k+1-J}}.
\end{alignat*}
Since~$J=k+1-J$ and~$K=3n+k+1-K$, we get
\begin{align*}
z_{1,2}^{(4)}z_{2,1}^{(3)}z_{1,2}^{(2)}z_{2,1}^{(1)}&=T_{w_\circ^K}T_{w_\circ^{n+J}}
T_{w_\circ^{2n+k+1-K}}T_{w_\circ^J}
\\
&=T_{w_\circ^K} T_{w_\circ^{-n+K}} T_{w_\circ^{n+J}} T_{w_\circ^J}
=T_{w_\circ^J}T_{w_\circ^{-n+K}}T_{w_\circ^{n+k+1-J}}T_{w_\circ^K}
=z_{2,1}^{(4)}z_{1,2}^{(3)}z_{2,1}^{(2)}z_{1,2}^{(1)}.
\end{align*}
Thus, $\mathbf z=(T_{w_\circ^K},T_{w_\circ^J})$
is a decoration of the basic 
homomorphism~$\Phi:\
\Br^+(B_2)\to \Br^+_{2n}$, $\Phi(\wh T_1)=T_{w_\circ^{[1,n+k]}}$,
$\Phi(\wh T_2)=T_{w_\circ^{[k+2,2n-1]}}$,
and $\Phi_{\mathbf z}$ is the desired homomorphism. This completes 
the proof of Theorem~\ref{thm:B2 A2n-1 spec}.
\end{proof}
\subsection{Combinatorics of standard homomorphisms}\label{subs:comb std}
It is obvious that the total number of distinct homomorphisms~$\Br^+_3\to\Br^+_{3m}$ described in Theorem~\ref{thm:Hom A2} is~$2^m$, as they are parametrized, up to the diagram automorphism of~$\Br^+_3$,
by subsets of~$[1,m-1]$. 

Enumerating homomorphisms from~$\Br^+(B_2)$ requires more effort and yields some interesting sequences.
\begin{theorem}\label{thm:combinatoric std}
\begin{enmalph}
\item \label{thm:combinatoric std.a}
The number of  homomorphisms~$\Br^+(B_2)\to\Br^+(A_{n}) $ described in Theorem~\partref{thm:Hom B2.B2An} is 
equal to
\begin{equation}\label{eq:H n}
H_n:=\tfrac12 \boldsymbol w_{\lfloor \frac12(n+3)\rfloor}-\overline{\lfloor \tfrac12(n+1)\rfloor}2^{\frac12\lfloor \frac12(n-1)\rfloor},\qquad n\ge 1
\end{equation}
where~$\boldsymbol w_0=1$, $\boldsymbol w_1=4$, $\boldsymbol w_{r+1}=2(\boldsymbol w_r+\boldsymbol w_{r-1})$, $r\ge 1$ and
\begin{equation}\label{eq:w_r seq}
\boldsymbol w_r=\tfrac12((1+\sqrt3)^{r+1}+(1-\sqrt3)^{r+1})=\sum_{k\ge 0}\binom{r+1}{2k}3^k,\qquad r\ge 0.
\end{equation}

\item\label{thm:combinatoric std.b}
The number of homomorphisms
$\Br^+(B_2)\to \Br^+(B_n)$, $n\ge 2$
described in Theorem \partref{thm:Hom B2.B2} is
$3\cdot 2^{n}-2^{\lceil \frac n2\rceil}$.

\item\label{thm:combinatoric std.c}
The number of homomorphisms
$\Br^+(B_2)\to \Br^+(D_{n+1})$, $n\ge 3$
described in Theorem~\partref{thm:Hom B2.B2Dn} is
$$
\begin{cases}
\tfrac13(34\cdot 2^{n-1}-(5-\overline{\frac12(n-1)})2^{\frac12(n+1)}),&\bar n=1\\
\vphantom{\dfrac13}\tfrac13(29\cdot 2^{n-1}-(4+\overline{\frac12 n})2^{\frac12 n}),&\bar n=0.
\end{cases}
$$

\item \label{thm:combinatoric std.e}
The number of homomorphisms~$\Br^+(B_2)\to \Br^+(A_{2n-1})$ 
described 
in Theorem~\ref{thm:B2 A2n-1 spec} is equal to
\begin{align*}
\begin{cases}
2^{\frac n2}(\frac32n-2),&\bar n=0,\\
2^{\frac12(n+1)}(n-1),&\bar n=1.
\end{cases}
\end{align*}
\end{enmalph}
In all cases, the sequence grows exponentially as a function of the rang of the codomain.
\end{theorem}
\begin{proof}
For~$M\in\Cox I$ fixed, we will write~$J\perp K$ (respectively,
$J\perp_w K$) for $J,K\subset I$ if~$J$ and~$K$ are 
orthogonal (respectively, weakly orthogonal). 
We need the following
\begin{lemma}\label{lem:weakly orthogonal comb}
Let~$M=A_r$ and define
$$
\mathcal W_r=\{ (K_1,K_2)\in\mathscr P([1,r])\times \mathscr P([1,r])\,:\,K_1\perp_w K_2\},\quad r\ge 0.
$$
Then~$|\mathcal W_r|=\boldsymbol w_r$ for all~$r\in\ZZ_{\ge 0}$.
\end{lemma}
\begin{proof}
Clearly, $\mathcal W_0=\{(\emptyset,\emptyset)\}$, 
$\mathcal W_1=
\mathscr P(\{1\})\times \mathscr P(\{1\})$ and so~$|\mathcal W_0|=1=w_0$ and~$|\mathcal W_1|=4=w_1$. 
Note that~$\mathcal W_{r+1}$, $r\ge 1$
is the disjoint union of the following sets:
\begin{align*}
\mathcal W_{r+1}^{(0)}&=\{ (K_1,K_2)\in\mathcal W_{r+1}\,:\, K_1,K_2\subset [1,r]\},\\
\mathcal W_{r+1}^{(1)}&=\{ (K_1,K_2)\in\mathcal W_{r+1}\,:\, r+1\in K_1,\,K_2\subset [1,r]\},\\
\mathcal W_{r+1}^{(2)}&=\{ (K_1,K_2)\in\mathcal W_{r+1}\,:\, (K_2,K_1)\in\mathcal W_{r+1}^{(1)}\},\\
\mathcal W_{r+1}^{(3)}&=\{ (K_1,K_2)\in\mathcal W_r\,:\,
r+1\in K_1\cap K_2\}.
\end{align*}
Clearly, $\mathcal W^{(0)}_{r+1}=\mathcal W_r$ and~$
|\mathcal W^{(2)}_{r+1}|=|\mathcal W^{(1)}_{r+1}|$.
We claim that 
\begin{align}\label{eq: W r+1 1}
\mathcal W_{r+1}^{(1)}&=\{([1,r+1],\emptyset)\}
\sqcup \bigsqcup_{2\le a\le r+1} \{ (K_1\cup [a,r+1],K_2)\,:\, (K_1,K_2)\in \mathcal W_{a-2}\},\\
\mathcal W_{r+1}^{(3)}&=\{([1,r+1],[1,r+1])\}\nonumber
\\&\qquad \qquad \sqcup \bigsqcup_{2\le a\le r+1} \{ (K_1\cup [a,r+1],K_2\cup [a,r+1])\,:\, (K_1,K_2)\in \mathcal W_{a-2}\},
\label{eq: W r+1 3}
\end{align}
whence $|\mathcal W_{r+1}^{(i)}|=1+\sum_{0\le j\le r-1} \boldsymbol w_j$, $1\le i\le 3$ and so
\begin{equation}\label{eq:w r long recursion}
\boldsymbol w_{r+1}=\boldsymbol w_r+3\sum_{0\le j\le r-1} \boldsymbol w_j+3.
\end{equation}
To prove the claim, note that the right hand side of~\eqref{eq: W r+1 1} (respectively, \eqref{eq: W r+1 3}) is obviously contained in~$\mathcal W^{(1)}_{r+1}$
(respectively, in~$\mathcal W^{(3)}_{r+1}$). To prove
the opposite inclusion in~\eqref{eq: W r+1 1},
let~$(K_1,K_2)\in\mathcal W_{r+1}^{(1)}$ and
write~$K_1=K'_1\cup [a,r+1]$ where~$K'_1\subset [1,a-2]$.
If~$a=1$, that is $K_1=[1,r+1]$ then~$K_2=\emptyset$. Indeed, otherwise, $i:=\max K_2\le r$ as~$r+1\notin K_2$
and so $i+1\in K_1\setminus K_2$, which therefore is not orthogonal to~$K_1$ as~$i\in K_1$. 

Suppose that~$2\le a\le r+1$. 
Since~$r+1\in K_1\setminus K_2$ and $(K_1\setminus K_2)\perp K_2$, we conclude that~$r\notin K_2$.
Furthermore, if~$a-1\in K_2$ then~$a-1\in K_2\setminus K_1$
which thus cannot be orthogonal to~$K_1$ as~$a\in K_2$. Thus, $K_2=K'_2\cup K''_2$ with~$K'_2\subset [1,a-2]$
and~$K''_2\subset [a,r]$. Clearly, $K'_1$, $K'_2$
are weakly orthogonal, and so are~$K''_2$ and~$[a,r+1]$. Then the same argument as was used for~$a=1$ shows that~$K''_2=\emptyset$. 

To prove the opposite inclusion in~\eqref{eq: W r+1 3},
let~$(K_1,K_2)\in\mathcal W_{r+1}^{(3)}$ and write 
$K_i=K'_i\cup [a_i,r+1]$, $1\le a_i\le r+1$, $K'_i\subset [1,a_i-2]$, $i\in\{1,2\}$. It remains to observe that~$a_1=a_2$. If say~$a_1<a_2$ then $a_2-1\in K_1\setminus K_2$ which is a contradiction as~$(K_1\setminus K_2)\perp K_2$ and~$a_2\in K_2$.

We now use~\eqref{eq:w r long recursion} to prove that~$\boldsymbol w_{r+1}=2(\boldsymbol w_r+\boldsymbol w_{r-1})$ for all~$r\ge 1$. Indeed, for~$r=1$ we have $2(\boldsymbol w_1+\boldsymbol w_0)=10=\boldsymbol w_1+3 \boldsymbol w_0+3$. For the inductive step, we have 
$$
\boldsymbol w_{r+1}-\boldsymbol w_r=\boldsymbol w_r+3\sum_{0\le j\le r-1} \boldsymbol w_j-\boldsymbol w_{r-1}-3\sum_{0\le j\le r-2}\boldsymbol w_r=\boldsymbol w_r+2 \boldsymbol w_{r-1}.
$$
The first equality in~\eqref{eq:w_r seq} 
follows from the recursion by elementary linear algebra while the second is immediate from the first.
\end{proof}

\begin{lemma}\label{lem:K sigma K w.o.}
Let~$M=A_n$ and let~$\mathcal U_n=\{ K\in [1,n]\,:\,
K\perp_w (n+1-K)\}$. Then~$|\mathcal U_n|=\boldsymbol u_{\lfloor\frac12(n+1)\rfloor}$
where~$\boldsymbol u_0=1$, $\boldsymbol u_1=2$, $\boldsymbol u_{r+1}=2(\boldsymbol u_r+\boldsymbol u_{r-1})$, $r\ge 1$ and 
\begin{equation}\label{eq:u_r sequence}
\boldsymbol u_r=
\frac{(1+\sqrt 3)^{r+1}-
(1-\sqrt 3)^{r+1}}{2\sqrt 3}
=\sum_{k\ge 0}\binom{r+1}{2k+1}3^k,\qquad r\ge 0.
\end{equation}
\end{lemma}
\begin{proof}
Let~$k=\lfloor \frac12(n+1)\rfloor$, $\mathcal U_n'=
\{ K\in\mathcal U_n\,:\, k\in K\}$ and $\mathcal U_n''
=\mathcal U_n\setminus \mathcal U'_n$. Our first observation is that, for~$n$ even, $\mathcal U_n''
=\{ K\in\mathcal U_n\,:\, k,k+1\notin K\}$
and~$\mathcal U'_n=\{ K\in\mathcal U_n\,:\, \{k,k+1\}\subset K\}$. Indeed, let~$K\in\mathcal U_n''$ and 
suppose that~$k+1\in K$.  Then~$k\in n+1-K$ and~$k+1\notin n+1-K$, whence~$k\in (n+1-K)\setminus K$ which
is a contradiction as~$(n+1-K)\setminus K\perp K$.
Likewise, if~$K\in\mathcal U'_n$ and~$k+1\notin K$ then
$k+1\in (n+1-K)\setminus K$ which thus cannot 
be orthogonal to~$K$ as~$k\in K$.
Thus, if~$K\in \mathcal U''_n$ then~$K=K_1\cup (n+1-K_2)$ where~$K_1,K_2\subset [1,k-1]$. Since~$K$ and~$n+1-K$
are weakly orthogonal, it follows that~$K_1$ and~$K_2$
are weakly orthogonal. Thus, $|\mathcal U''_n|=w_{k-1}$.

Suppose now that~$K\in \mathcal U'_n$ and let~$[a,b]\subset K$
be its connected component containing~$k$. By the above,
$a\le k\le n+1-k\le b$.
Then $K=K_1\cup K'\cup K_2$ with~$K_1\subset [1,a-2]$ and~$K_2\subset [b+2,n]$. We claim that~$b=n+1-a$. Indeed,
if $n+1-b<a$ then~$n+1-b\in n+1-K$. Since~$a\le k$
we have~$a-1\in [n+1-b,n+1-a]\subset (n+1-K)$.  
Then~$a-1\in (n+1-K)\setminus K$ which is a contradiction
as~$(n+1-K)\perp K$ and~$a\in K$. Similarly, if~$n+1-b>a$
then~$b+1\in[n+1-b,n+1-a]$ and so~$b+1\in (n+1-K)\setminus K$ which is also a contradiction. 

Thus, $K=K_1\cup [a,n+1-a]\cup K_2$ with~$K_2\subset n+1-[1,a-2]$ and so we can write~$K_2=n+1-K'_1$ where~$K'_1\in [1,a-2]$. It is now immediate that~$K\perp_w (n+1-K)$
if and only if $K_1\perp K'_1$. Thus,
$$
\mathcal U'_n=\{[1,n]\} \sqcup \bigsqcup_{2\le a\le k}
\{ K_1\cup [a,n+1-a]\cup (n+1-K'_1)\,:\, (K_1,K'_1)\in\mathcal W_{a-2}\}
$$
in the notation of Lemma~\ref{lem:weakly orthogonal comb}.
Therefore, $|\mathcal U'_n|=1+\sum_{0\le a\le k-2} \boldsymbol w_a$
and so
\begin{equation*}
|\mathcal U_n|=|\mathcal U'_n|+|\mathcal U''_n|=
1+\sum_{0\le a\le k-1}\boldsymbol w_a=1+\sum_{0\le a\le \lfloor \frac12 (n-1)\rfloor} \boldsymbol w_a.    
\end{equation*}
In particular, $|\mathcal U_n|=\boldsymbol u_{\lfloor\frac12(n+1)\rfloor}$ where 
\begin{equation}\label{eq:w_r->u_r}
\boldsymbol u_r=1+\sum_{0\le i\le r-1} \boldsymbol w_i,\quad r\ge 0.
\end{equation}
Then $\boldsymbol{u}_0=1$,
$\boldsymbol{u}_1=2$ and~$\boldsymbol{u}_2=
1+\boldsymbol{w}_0+\boldsymbol{w}_1=6=
2(\boldsymbol{u}_0+\boldsymbol{u}_1)$.
Suppose that~$\boldsymbol u_r=2(\boldsymbol{u}_{r-1}+\boldsymbol{u}_{r-2})$ for some~$r\ge 2$. Since~$\boldsymbol{u}_{r+1}-\boldsymbol{u}_r=
\boldsymbol w_r=2(\boldsymbol{w}_{r-1}+\boldsymbol w_{r-2})=
2\boldsymbol u_r-2\boldsymbol u_{r-2}
$ by~\eqref{eq:w_r->u_r}, it follows that~$\boldsymbol u_{r+1}=3\boldsymbol{ u}_r-2\boldsymbol u_{r-2}=2\boldsymbol u_r+
\boldsymbol u_r-2\boldsymbol{u}_{r-2}
=2(\boldsymbol u_r+\boldsymbol{u}_{r-1})$.
The equalities in~\eqref{eq:u_r sequence}
are now routine.
\end{proof}
We now have all necessary ingredients to 
finish the proof of part~\ref{thm:combinatoric std.a}. 
By Theorem~\partref{thm:Hom B2.B2An}
$$
H_n=2\sum_{0\le m\le \lfloor\frac14(n+1)\rfloor}
|\{ (J,K)\,:\, J\subset [1,m-1],\, K\subset [2m+1,n-2m],\,
K\perp_w n+1-K\}|,
$$
where the first factor accounts for the diagram
automorphism of~$\Br^+(B_2)$. By Lemma~\ref{lem:K sigma K w.o.}
$$
H_n=2\boldsymbol u_{\lfloor\frac12(n+1)\rfloor}+\sum_{1\le m\le 
\lfloor\frac14(n+1)\rfloor} 2^m \boldsymbol u_{\lfloor\frac12(n+1)\rfloor-2m}=
2\boldsymbol u_{\lfloor\frac12(n+1)\rfloor}+\sum_{1\le m\le 
\lfloor\frac12\lfloor\frac12(n+1)\rfloor\rfloor} 2^m \boldsymbol u_{\lfloor\frac12(n+1)\rfloor-2m}.
$$
Thus~$H_n=\boldsymbol h_{\lfloor \frac12(n+1)\rfloor}$ where
\begin{equation}\label{eq:hr seq}
\boldsymbol h_r=2 \boldsymbol u_r+\sum_{1\le m\le \lfloor \frac12 r\rfloor} 2^m \boldsymbol u_{r-2m},\qquad r\ge 0.
\end{equation}
We claim that~$\boldsymbol h_r=\boldsymbol u_{r+1}-\boldsymbol u_{r-1}-\bar r\,2^{\frac12(r-1)}$, $r\ge 0$ where~$\boldsymbol u_{-1}=0$. 
Indeed, $\boldsymbol h_0=2\boldsymbol u_0=2=\boldsymbol u_1-\boldsymbol u_{-1}$ and
$\boldsymbol h_1=2\boldsymbol u_1=4=\boldsymbol u_2-\boldsymbol u_0-1$.
Furthermore,
\begin{align*}
\boldsymbol h_{r+2}&=2 \boldsymbol u_{r+2}+\sum_{1\le m\le \lfloor\frac12 r\rfloor+1}2^m \boldsymbol u_{r+2-2m}=2 \boldsymbol u_{r+2}+2\boldsymbol u_r+2\sum_{1\le m\le \lfloor\frac 12r\rfloor}
2^m \boldsymbol u_{r-2m}\\
&=2 \boldsymbol u_{r+2}+2 \boldsymbol h_r-2\boldsymbol u_r.
\end{align*}
Thus, if~$\boldsymbol h_r=\boldsymbol u_{r+1}-\boldsymbol u_{r-1}-c_r$ with~$c_{r+2}=2c_r$
then~$\boldsymbol h_{r+2}=
2 \boldsymbol u_{r+2}+2(\boldsymbol u_{r+1}-\boldsymbol u_{r-1})-2c_r-2\boldsymbol u_r= \boldsymbol u_{r+3}-\boldsymbol u_{r+1}
-c_{r+2}$.
Since~$c_r=\bar r\, 2^{\frac12(r-1)}$ clearly satisfies
$c_{r+2}=2 c_r$ and~$\boldsymbol h_r=\boldsymbol u_{r+1}-\boldsymbol u_{r-1}-c_r$, $r\in\{0,1\}$,
it follows that~$\boldsymbol h_r=\boldsymbol u_{r+1}-\boldsymbol u_{r-1}-c_r$ for all~$r\ge 0$.

Using~\eqref{eq:w_r->u_r} we obtain
$$
\boldsymbol h_r=\boldsymbol w_{r}+\boldsymbol w_{r-1}-\overline r\,2^{\frac12 (r-1)}=\tfrac12 \boldsymbol w_{r+1}-\overline r\,2^{\frac12 (r-1)},
$$
which immediately yields~\eqref{eq:H n}. Part~\ref{thm:combinatoric std.a} is proven.

To prove part~\ref{thm:combinatoric std.b}, note 
that this family of homomorphisms is 
parametrized by $(J,K)$ with $J\subset [1,m-1]$
and~$K\subset [2m+1,n]$. Thus, taking into
account the diagram automorphism of~$\Br^+(B_2)$
we conclude that the total number of homomorphisms
$\Br^+(B_2)\to\Br^+(B_n)$ is 
$$
2\cdot 2^{n}+\sum_{1\le m\le \lfloor\frac12n\rfloor} 2^{n-m}=3\cdot 2^n-2^{\lceil\frac12 n\rceil}.
$$

To prove part~\ref{thm:combinatoric std.c}, we need 
the following
\begin{lemma}\label{lem:comb}
Let~$M=D_{n+1}$ and~$a\in[1,n+1]$. Let~$\mathcal T_{n,a}=
\{ K\subset [a,n+1]\,:\, K\perp_w \tau(K)\}$. Then
\begin{equation}\label{eq:card T}
|\mathcal T_{n,a}|=\begin{cases}
3\cdot 2^{n-a},&1\le a\le n-1,\\
4,&a=n,\\
2,&a=n+1.
\end{cases}
\end{equation}
\end{lemma}
\begin{proof}
Clearly, if~$K\subset [a,n+1]$ satisfies ~$K=\tau(K)$ then~$K\in\mathcal T_{n,a}$. Furthermore, $\tau(K)=K$
if either~$K\subset [a,n-1]$ or~$K=K'\cup\{n,n+1\}$
with~$K'\subset [a,n-1]$. Thus, every subset of~$[a,n-1]$
yields precisely two $\tau$-invariant subsets of~$[a,n+1]$
and so
$$
|\{ K\subset [a,n+1]\,:\,\tau(K)=K\}|=\begin{cases}
2^{n+1-a},&1\le a\le n,\\
1,& a=n+1.
\end{cases}
$$
Let~$\mathcal T'_{n,a}=\mathcal T_{n,a}\setminus \{ K
\subset [a,n+1]\,:\, K=\tau(K)\}$. Clearly
$\mathcal T'_{n,n+1}=\{\{n+1\}\}$ and~$\mathcal T'_{n,n}=\{\{n\},\{n+1\}\}$. Let~$1\le a\le n-1$ and let~$K\in\mathcal T'_{n,a}$. Then~$|K\cap \{n,n+1\}|=1$. If say~$n\in K$ then~$n+1\in\tau(K)\setminus K$ and, since~$\tau(K)\setminus K\perp K$, it follows that~$n-1\notin K$.
Thus, $K=K'\cup\{n\}$ with~$K'\subset [a,n-2]$ and so~$\mathcal T'_{n,a}=
\{ K'\cup\{n\},K'\cup\{n+1\}\,:\,
K'\subset [a,n-2]\}$.
Therefore, 
$$
|\mathcal T'_{n,a}|=\begin{cases}1,&a=n+1,\\
2,&a=n,\\
2^{n-a},&1\le a\le n-1.
\end{cases}
$$
The assertion is now immediate.
\end{proof}

The number of homomorphisms from Theorem~\partref{thm:Hom B2.B2Dn} is therefore equal to
\begin{equation}\label{eq: Dn+1 n odd}
\sum_{0\le k\le \lfloor
\frac12(l-1)\rfloor} 2^{2k+\delta_{k,0}} |\mathscr P([4k+1,2l+2])|\\
+\sum_{1\le k\le \lfloor\frac12 l\rfloor+1} 2^{2k-1} |
\mathcal T_{2l+1,4k-1}|
\end{equation}
if~$n=2l+1$, $l\ge 1$ and 
\begin{equation}\label{eq: Dn+1 n even}
\sum_{0\le k\le \lfloor 
\frac12l\rfloor} 2^{2k+\delta_{k,0}} 
|\mathcal T_{2l,4k+1}|
+\sum_{1\le k\le \lfloor\frac12(l+1)\rfloor} 2^{2k-1}   
|\mathscr P([4k-1,2l+1])|
\end{equation}
if~$n=2l$, $l\ge 2$.
By~\eqref{eq:card T} the expression~\eqref{eq: Dn+1 n odd}
is equal to
\begin{align*}
\sum_{0\le k\le \lfloor\frac12(l-1)\rfloor}&2^{\delta_{k,0}+2l+2-2k}+3\sum_{1\le k\le \lfloor \frac12 l\rfloor} 2^{2l+1-2k}+4\cdot 2^l
\\&=2^{2 l+3}+\tfrac13 (2^{2 l+2}-2^{l+4-\bar l})+
2^{2l+1}+2^{l+1}(1-\bar l)
=\tfrac13(34\cdot 2^{2l}-(5-\bar l)2^{l+1}).
\end{align*}
Similarly, \eqref{eq: Dn+1 n even} becomes
\begin{align*}
3\cdot 2^{2l}+3 \sum_{1\le k\le \lfloor\frac12(l-1)}
2^{2 l - 2 k - 1}+2^{l+1}+\sum_{1\le k\le \lfloor\frac12l\rfloor}
2^{2l+2-2k}=\tfrac13(29\cdot 2^{2l-1}-2^l(4+\bar l)).
\end{align*}

To prove part~\ref{thm:combinatoric std.e} we need the following
\begin{lemma}\label{lem:no inv subsets}
There are~$2^{\lfloor\frac12 n\rfloor}$
subsets of~$[1,n-1]$ which are invariant 
with respect to the diagram automorphism of~$\Br^+_n$, $n\ge 2$.
\end{lemma}
\begin{proof}
For~$n=2$ the assertion is obvious.
Clearly, $J\subset [1,n-1]$ is invariant
with respect to the diagram automorphism
of~$\Br^+_n$ if and only if~$J=J'\cup J''\cup (n-J')$
where~$J'$, $n-J'$ and~$J''$ are pairwise orthogonal
and~$J''=n-J''$ is connected. If~$J''=\emptyset$ then~$J'\subset [1,\lfloor\frac12 n\rfloor-1]$ and
so there are~$2^{\lfloor \frac12 n\rfloor-1}$ of them. Otherwise, $J''=[i,n-i]$ with~$1\le i\le \lfloor\frac12n\rfloor$ and
then~$J'\subset [1,i-2]$. Therefore, the total number of invariant subsets is
\begin{equation*}
1+\sum_{2\le i\le \lfloor\frac12n\rfloor}2^{i-2}+2^{\lfloor \frac12 n\rfloor-1}=2^{\lfloor\frac12 n\rfloor}.\qedhere
\end{equation*}
\end{proof}
The homomorphisms from Theorem~\ref{thm:B2 A2n-1 spec} being parametrized by pairs~$(J,K)$ where~$J=k+1-J\subset [1,k]$ and~$K=3n+k+1-K\subset [n+k+2,2n-1]$, $0\le k\le n-2$, by Lemma~\ref{lem:no inv subsets} their total number is
equal to
\begin{align*}
2\sum_{0\le k\le n-2} &2^{\lfloor\frac12(k+1)\rfloor+
\lfloor\frac12(n-k-1)\rfloor}.
\end{align*}
Suppose first that~$n=2r$. Then
$\lfloor\frac12(k+1)\rfloor+\lfloor\frac12(n-k-1)\rfloor
=r+\lfloor\frac12(k+1)\rfloor-\lceil\frac12(k+1)\rceil=
r-\overline{k+1}=r-1+\bar k$, whence
$$
2\sum_{0\le k\le n-2} 2^{\lfloor\frac12(k+1)\rfloor+
\lfloor\frac12(n-k-1)\rfloor}=2^{r}
\sum_{0\le k\le 2(r-1)} 2^{\overline k}
=2^{r}(r+2(r-1))=2^r(3r-2).
$$
If~$n=2r+1$ then 
$\lfloor\frac12(k+1)\rfloor+\lfloor\frac12(n-k-1)\rfloor
=\lfloor\frac12(k+1)\rfloor+r-\lceil\frac12 k\rceil=r$,
and so our sum is equal to~$2^{r+2}r$.
\end{proof}
\begin{remark}\label{rem:OEIS}
The sequences~$\boldsymbol w_n$ and~$\boldsymbol u_n$, $n\ge 0$ coincide up to a shift with, respectively, \OEIS{A026150} and~\OEIS{A002605}
and admit a number of combinatorial interpretations (see e.g.~\cites{GK,Leh,BoMo}).
The sequence~$\boldsymbol h_n+\bar n\, 2^{\frac12(n-1)}$, $n\ge 0$ (cf.~\eqref{eq:hr seq}) coincides with~\OEIS{A052945}, up to the first term of the latter.  
The even (respectively, odd) numbered terms in the sequence from part~\ref{thm:combinatoric std.e}
form the sequence~\OEIS{A130129}
(respectively, \OEIS{A058922}).
\end{remark}

\subsection{Sporadic standard homomorphisms}\label{subs:non-disj Hecke}
First we describe all homomorphisms from $\Br^+(A_2)$ and~$\Br^+(B_2)$
to Artin monoids of finite exceptional types.
\begin{proposition}\label{prop:Hom A2|B2 EF}
Let~$m\in \{3,4\}$ and~$M\in\Cox I$ be either~$F_4$ or~$E_n$, $n\in\{6,7,8\}$. Furthermore, let~$J_2=I\setminus\{1\}$ and let~$J_1
\subset I$ be 
as in the following table
$$
\begin{array}{c|c|c}
m&M&J_1\\
\hline
3&E_6&I\setminus\{5\}\\
\hline
4&F_4&I\setminus\{4\}\\
\hline
4&E_7&I\setminus\{5,6\}\\
\hline
4&E_8&I\setminus\{7\}
\end{array}
$$
\begin{enmalph}
   \item\label{prop:Hom A2|B2 EF.a}
The assignments~$\wh T_i\mapsto T_{w_{J_1\cap J_2;J_i}}$, $i\in\{1,2\}$ define a strict parabolic Coxeter type homomorphism $\Phi_{m,M}:\Br^+(I_2(m))\to\Br^+(M)$;
 \item\label{prop:Hom A2|B2 EF.b}
The assignments~$\wh T_1\mapsto T_{w_\circ^{J_1\cup K}}$, $\wh T_2\mapsto T_{w_\circ^{J_2}}$,
where~$K\in\{\emptyset,\{6\}\}$ if~$m=4$ and~$M=E_7$
and~$K=\emptyset$ otherwise, define a standard
homomorphism~$\wh\Phi_{m,M,K}:\Br^+(I_2(m))\to\Br^+(M)$;
\item \label{prop:Hom A2|B2 EF.c}
The only optimal fully supported standard homomorphisms~$\Br^+(I_2(m))\to\Br^+(M)$ where
$m\in\{3,4\}$ and~$M=F_4$ or~$M=E_n$, $n\in\{6,7,8\}$ are, up to the diagram automorphism of~$I_2(m)$, the 
homomorphisms~$\wh \Phi_{m,M,K}$ listed in part~\ref{prop:Hom A2|B2 EF.b}, together with
the composition of~$\wh\Phi_{4,F_4,\emptyset}$ with
the standard
unfolding~\eqref{eq:unfold F4 E6}, and homomorphisms
$\Br^+(B_2)\to \Br^+(E_6)$ given by $\wh T_1\mapsto T_{w_\circ^{J}}$, $\wh T_2\mapsto T_{w_\circ^{[1,6]}}$
where either~$J=[1,6]$ or~$J\not=\tau(J)$ are weakly orthogonal for
$\tau=(1,5)(2,4)$.
\end{enmalph}
\end{proposition}
\begin{proof}
To prove~\ref{prop:Hom A2|B2 EF.a}, it is easy to check, for example using our Python program for calculations in Coxeter groups and Hecke monoids, that the assignments $\wh s_i\mapsto w_{J_1\cap J_2;J_i}$
define a homomorphism $\phi=\phi_{m,M}:W(I_2(m))\to W(M)$ and
that $\ell(\phi(\wh w_\circ))=3\ell(\phi(\wh s_1))=
3\ell(\phi(\wh s_2))$
if~$m=3$ while $\ell(\phi(\wh w_0))=2(\ell(\phi(\wh s_1))+\ell(\phi(\wh s_2)))$ if~$m=4$. Then 
the assignments from part~\ref{prop:Hom A2|B2 EF.a} define a homomorphism $\Br^+(I_2(m))\to\Br^+(M)$
by Lemma~\ref{lem:lifting to Cox-Hecke}. 
We need the following
\begin{lemma}\label{lem:centrality}
For all pairs~$(m,M)$ listed in Proposition~\ref{prop:Hom A2|B2 EF}, $T_{w_\circ^{J_1\cap J_2}}$
commutes with~$T_{w_\circ^{J_i}}$, $i\in\{1,2\}$.
\end{lemma}
\begin{proof}
For~$m=3$ and~$M=E_6$ (respectively, $m=4$ and~$M=F_4$), the $\Br^+_{J_i}(M)$, $i\in\{1,2\}$ 
are of type~$D_4$ (respectively, $B_3$) and so the~$T_{w_\circ^{J_i}}$, $i\in\{1,2\}$ are central
in the corresponding parabolic submonoids by Proposition~\partref{prop:fund elts BrSa.e}. Let~$m=4$ and~$M=E_7$. Then~$\Br^+_{J_2}(M)$ is of type~$D_6$
and so~$T_{w_\circ^{J_2}}$ is central in~$\Br^+_{J_2}(M)$. On the other hand, $\Br^+_{J_1}(M)$ is of type~$D_5$ and so $xT_{w_\circ^{J_1}}=T_{w_\circ^{J_1}}\Sigma_{J_1}(x)$
by Proposition~\partref{prop:fund elts BrSa.e}. Since~$J_1\cap J_2=\{2,3,4,7\}$ is invariant with respect to the diagram automorphism of~$\Br^+_{J_1}(M)$ which corresponds to the transposition $(4,7)$, it follows that~$T_{w_\circ^{J_1}}$ commutes with~$T_{w_\circ^{J_1\cap J_2}}$. Finally, for~$m=4$ and~$M=E_8$, $\Br^+_{J_1}(M)$ is of type~$E_7$ and 
so~$T_{w_\circ^{J_1}}$ is central in~$\Br^+_{J_1}(M)$
by Proposition~\partref{prop:fund elts BrSa.e}. On the other hand, $\Br^+_{J_2}(M)$ is of type~$D_7$, the 
corresponding diagram automorphism being the transposition~$\tau=(2,8)$. Since~$J_1\cap J_2=\{2,3,4,5,8\}$ and hence is $\tau$-invariant,
it follows 
from Proposition~\partref{prop:fund elts BrSa.e} that~$T_{w_\circ^{J_1\cap J_2}}$
commutes with~$T_{w_\circ^{J_2}}$
\end{proof}
It follows from Lemma~\ref{lem:centrality} that the~$w_{J_1\cap J_2;J_i}$, $i\in\{1,2\}$ are products
of commuting involutions and so the~$\Phi_{m,M}$ are of Coxeter type by Proposition~\partref{prop:elem prop Coxeter Hecke.a}.
Since~$\phi_{m,M}(\wh w_\circ)=w_{J_1\cap J_2}$
in all these cases and~$\ell(\Phi_{m,M}(\wh T_{w_\circ}))=\ell(w_{J_1\cap J_2})$
$\Phi_{m,M}(\wh T_{w_\circ})=T_{w_{J_1\cap J_2}}$ 
and so is strongly square free by Lemma~\partref{lem:elem Artin hom.b}. Then
to prove that it is parabolic it suffices to verify
that the~$\phi(\wh s_i\wh w_\circ)$, $i\in \{1,2\}$
are parabolic elements of~$W(M)$. But we have~$\phi(\wh s_i\wh w_\circ)=w_{J_1\cap J_2;J_i}w_{J_1\cap J_2}=
w_\circ^{J_1\cap J_2}w_\circ^{J_i}w_\circ^{J_1\cap J_2}w_\circ^I=w_\circ^{J_i}w_\circ^I=w_{J_i}$,
where we used Lemma~\ref{lem:centrality}.

To prove part~\ref{prop:Hom A2|B2 EF.b}, assume first that~$K=\emptyset$. Note that~$\wh T_{w_\circ^{J_i}}=
T_{w_\circ^{J_1\cap J_2}}T_{w_{J_1\cap J_2;J_i}}=
\Phi_{m,M}(\wh T_i)T_{w_\circ^{J_1\cap J_2}}$
by Lemma~\ref{lem:centrality}. By Lemmata~\ref{lem:centrality} and~\ref{lem:cent decor}, $\mathbf z=(T_{w_\circ^{J_1\cap J_2}},T_{w_\circ^{J_1\cap J_2}})$
is a decoration of~$\Phi_{m,M}$ and~$\wh\Phi_{m,M,\emptyset}=(\Phi_{m,M})_{\mathbf z}$. Finally, if~$m=4$, $M=E_7$
and~$K=\{6\}$, note that~$T_6$ commutes with~$T_{w_\circ^{J_2}}$ which is central in~$\Br^+_{J_2}(M)\cong \Br^+(D_6)$ and 
with~$T_{w_\circ^{J_1}}$ since~$J_1$ and~$K$ are 
orthogonal. Then by Lemma~\ref{lem:cent decor},
$\wh\Phi_{4,E_7,\{6\}}$ is obtained as the decoration
of~$\wh\Phi_{4,E_7,\emptyset}$ by
to~$\mathbf z=(T_6,1)$. 

To prove~\ref{prop:Hom A2|B2 EF.c},
recall (Lemma~\partref{lem:sqf Hecke hom.a}) that a standard homomorphism $\Phi:\Br^+(I_2(m))\to\Br^+(M)$
is uniquely determined by~$J_i:=[\Phi](i)$,
$i\in\{1,2\}$
and induces a homomorphism of Coxeter groups and of Hecke monoids. In addition, for~$m=3$, we must have 
$\ell(w_\circ^{J_1})=\ell(w_\circ^{J_2})$ by Lemma~\partref{lem:sqf Hecke hom.b}. If say~$J_1\subset J_2$ then, since~$\Phi$ is fully supported, we must have~$J_2=I$.
Then, by the optimality of~$\Phi$, either~$J_1=I$, or
$J_1\subsetneq I$ and~$J_1\not=\emptyset$, whence~$m=4$, and~$T_{w_\circ^{J_1}}$ cannot commute with~$T_{w_\circ^I}$. Since~$T_{w_\circ^I}$
is central in~$\Br^+(M)$ for~$M=F_4$, $E_7$ or~$E_8$,
it follows that~$M=E_6$ and~$\tau(J_1)\not=J_1$. Then~$\Phi$
is the decoration with~$\mathbf z=(T_{w_\circ^{J_1}},1)$ of the character 
homomorphism $\Br^+(B_2)\to \Br^+(E_6)$, $\wh T_1\mapsto 1$, $\wh T_2\mapsto T_{w_\circ^{[1,6]}}$,
and so by Theorem~\ref{thm:decoration sufficient} and Corollary~\ref{cor:dec iff}
we conclude that~$T_{w_\circ^{J_1}}$ must commute 
with~$T_{w_\circ^{\tau(J_1)}}$ hence either~$J_1=[1,6]$ or~$J_1\not=\tau(J_1)$ are weakly orthogonal. Otherwise, if $J_1,J_2\not=I$,
the only pairs $J_1$, $J_2$, up to renumbering, satisfying all conditions discussed above are 
the ones listed in Proposition~\ref{prop:Hom A2|B2 EF} together with
$J_1=\{1, 2, 3, 4, 5, 7\}$, $J_2=[2,7]$ for~$m=4$
and~$M=E_7$. Yet one can verify, for example in the reflection representation of the Hecke algebra of~$W(E_7)$  (cf.~\cite{CIK}*{Proposition~9.8}) with, say, $q=17$, that~$(T_{w_\circ^{J_1}}T_{w_\circ^{J_2}})^2\not=
(T_{w_\circ^{J_2}}T_{w_\circ^{J_1}})^2$ in that case.
\end{proof}
\begin{remark}
One can easily verify that there are no fully supported optimal standard homomorphisms $\wh\Phi_{m,H_k}:\Br^+(I_2(m))\to \Br^+(H_k)$,
$m,k\in\{3,4\}$ except $\wh T_i\mapsto T_{w_\circ^I}$,
$i\in \{1,2\}$. Indeed, if say~$\wh T_1\mapsto T_{w_\circ^I}$ then for~$m=3$ Lemma~\partref{lem:elem Artin hom.c} forces the image 
of~$\wh T_2$ to be also~$T_{w_\circ^I}$, while 
for~$m=4$, since~$T_{w_\circ^I}$ is central in~$\Br^+(H_k)$, such a homomorphism will not be optimal unless~$\wh T_2$ is also mapped to~$T_{w_\circ^I}$.
So, we may assume that~$\wh T_i\mapsto T_{w_\circ^{K_i}}$, $i\in\{1,2\}$ with~$K_i\not=[1,k]$.
Let~$\Psi_3:\Br^+(H_3)\to
\Br^+(D_6)$ and~$\Psi_4:\Br^+(H_4)\to \Br^+(E_8)$
be standard unfoldings~\eqref{eq:unfold H3D6} and~\eqref{eq:unfold H4E8}, respectively and let~$J_i=[\Psi_k](K_i)$, $i\in \{1,2\}$. 
Then $\Psi_k\circ\wh\Phi_{m,H_k}$ would 
be a fully supported standard homomorphism
$\Br^+(I_2(m))\to \Br^+(D_6)$ for~$k=3$ (respectively, $\Br^+(I_2(m))\to \Br^+(E_6)$ for $k=4$) with the following
property
\begin{equation}
J_i\cap [\Psi_k](j)\not=\emptyset\implies [\Psi_k](j)\subset J_i,\qquad i\in\{1,2\},\,j\in[1,k].
\label{eq:H34 cnd}
\end{equation}
Yet the unique standard homomorphism
$\wh\Phi_{4,E_8}:\Br^+(B_2)\to \Br^+(E_8)$ described in Proposition~\ref{prop:Hom A2|B2 EF} does not satisfy~\eqref{eq:H34 cnd} since~$J_1=[1,7]\setminus\{7\}$ while $[\Psi_4](1)=\{1,7\}$. 
For~$k=3$, there are~$45$, up to renumbering, pairs  
$J_1,J_2\subsetneq [1,6]$
such that $(w_\circ^{J_1}w_\circ^{J_2})^4=1$,
$(w_\circ^{J_1}\star w_\circ^{J_2})^{\star 2}=(w_\circ^{J_2}\star w_\circ^{J_1})^{\star 2}$.
Yet none of them satisfies~\eqref{eq:H34 cnd}.
\end{remark}

\begin{remark}\label{rem:undercoration}
In our proof of Theorems~\ref{thm:adm I2m}, 
\ref{thm:monomial brd} and Proposition~\ref{prop:Hom A2|B2 EF} we used the following procedure of ``undecoration'' which in rank~2 boils down to the following algorithm. Given a standard homomorphism~$\Phi\in\Hom_{\mathscr A}(I_2(m),M)$, $M\in\Cox I$
which by Lemma~\partref{lem:sqf Hecke hom.a} is uniquely determined by the~$K_i:=[\Phi](i)\subset I$, $i\in\{1,2\}$, we take~$J_i\subset K_i$ to be maximal
such that~$T_{w_\circ^{J_i}}$ commutes with~$T_{w_\circ^{K_j}}$, $\{i,j\}=\{1,2\}$. Quite surprisingly, it so happened in all aforementioned cases $\mathbf z=(z_1,z_2)$
where~$z_i=T_{w_\circ^{J_i}}^{-1}$, $i\in\{1,2\}$
was a decoration of~$\Phi$ regarded as a homomorphism~$\Br^+(I_2(m))\to \Br(M)$. Yet
$\Phi_{\mathbf z}$ turned out to be a homomorphism
of monoids $\Br^+(I_2(m))\to\Br^+(M)$ and, on top of that,
parabolic (Proposition~\ref{prop:Cox hom from TJ}, Theorem~\partref{thm:monomial brd.a}\ref{thm:monomial brd.c}, Proposition~\partref{prop:Hom A2|B2 EF.a}).
We expect that this picks up all ``missing'' parabolic homomorphisms, apart from LCM and light ones whose parabolicity was established in Theorems~\ref{thm:adm finite class} and~\ref{thm:light->parabolic}.
\end{remark}

We now describe some irregular families of non-disjoint standard homomorphisms.
\begin{proposition}\label{prop:exotic homs}
Let $\mathsf M$ be a multiplicative monoid. Suppose that $t_0, t_1, \tau, S \in \mathsf M$ satisfy $t_iS=St_{1-i}$, $t_i \tau = \tau t_i$, $i\in\{0,1\}$, $(\tau S)^{m_1} = (S\tau)^{m_1}$ and $\brd{t_0t_1}{m_2}=\brd{t_1t_0}{m_2}$. Then the assignments $T_1 \mapsto t_1\tau$, $T_2 \mapsto S$ define a homomorphism $\Br^+(I_2(2\operatorname{lcm}(m_1,m_2)) \to \mathsf M$.
\end{proposition}
\begin{proof}
We need the following
\begin{lemma} \label{lem:weakly admissible lemma t1tauS to power m}
\label{lem:weakly admissible lemma St1tau to power m}  
Suppose that $t_0, t_1, \tau, S \in \mathsf{M}$ satisfy $t_iS=St_{1-i}$, $t_i \tau = \tau t_i$, $i\in\{0,1\}$. Then for any~$m\in\ZZ_{>0}$
$$
(t_1 \tau S)^m %
= \brd{  t_1t_0}m (\tau S)^m,
\quad (St_1\tau)^m%
=\brd{t_0t_1}m(S\tau)^m.
$$
\end{lemma}

\begin{proof}
We only prove the first equality, the argument for the second one being similar. The case~$m=1$ is trivial.
Suppose that the identity holds for some~$m\ge 1$. Then
\begin{align*}
(t_1\tau S)^{m+1}&=t_1\tau S\brd{t_1t_0}m(\tau S)^m=t_1\brd{t_0t_1}m (\tau S)^{m+1}
=\brd{t_0t_1}{m+1} (\tau S)^{m+1}.\qedhere
\end{align*}
\end{proof}

Let~$m=\operatorname{lcm}(m_1,m_2)$.
Then $(\tau S)^m=(S\tau)^m$ and
$\brd{t_1t_0}{m}=\brd{t_0t_1 }{m}$
by Lemma~\partref{lem:taut homs.a} and so
$(t_1\tau S)^m=(S t_1\tau)^m$ by
Lemma~\ref{lem:weakly admissible lemma t1tauS to power m}.
\end{proof}

\begin{corollary}
The assignments $\wh T_1 \mapsto T_1 T_3$, $\wh T_2 \mapsto T_{w_0^{[2,n-1]}}$ define a homomorphism $I_2(12) \to \Br^+_{n}$. 
\end{corollary}
\begin{proof}
Let~$\mathsf M=\Br^+_n$, $S=T_{w_\circ^{[2,n-2]}}$, $t_1=T_3$, $t_0=T_{n-2}$ and~$\tau=T_1$. 
Then the relation~$(S\tau)^{m_1}=(\tau S)^{m_1}$
with $m_1=3$ follows from Theorem~\ref{thm:adm I2m} while the remaining relations of Proposition~\ref{prop:exotic homs} are
immediate, with $m_2=2$.
\end{proof}

\begin{proposition}
Let~$M\in\Cox I$ be of finite type, let~$i\in I$ and suppose that $T_i T_{w_\circ^I}=T_{w_\circ^I}T_j$
for some~$j\not=i\in I$. The assignments $\wh T_1\mapsto T_i$, $\wh T_2\mapsto T_{w_\circ^I}$ define an optimal
homomorphism $\Br^+(I_2(2m))\to \Br^+(M)$ if and only if $m=\operatorname{lcm}(2,m_{ij})$.
\end{proposition}
\begin{proof}
Since~$T_j T_{w_\circ^I}=T_{w_\circ^I} T_i$ by Proposition~\partref{prop:fund elts BrSa.c},
using Lemma~\ref{lem:weakly admissible lemma t1tauS to power m} with~$\mathsf M=\Br^+(M)$, $t_1=T_i$, $t_0=T_j$, $S=T_{w_\circ^I}$ and~$\tau=1$
we obtain
$
(T_i T_{w_\circ^I})^m=\brd{T_iT_j}{m} T_{w_\circ^I}^m
$
whence by Proposition~\partref{prop:fund elts BrSa.a}\ref{prop:fund elts BrSa.d}
$$
((T_iT_{w_\circ}^I)^m)^{op}=T_{w_\circ^I}^m\brd{T_jT_i}{m}=\begin{cases}
\brd{T_iT_j}m T_{w_\circ^I}^m,&\bar m=1,\\
\brd{T_jT_i}m T_{w_\circ^I}^m,&\bar m=0.
\end{cases}
$$
Thus, if $m$ is odd, $(T_iT_{w_\circ^I})^m$ is automatically ${}^{op}$-invariant, while for even~$m$, since~$\Br^+(M)$ is cancellative, $(T_iT_{w_\circ^I})^m$ is ${}^{op}$-invariant
if and only if $\brd{T_iT_j}{m}=\brd{T_jT_i}m$, which
by Lemma~\partref{lem:taut homs.b} happens if and only if~$m_{ij}$ divides~$m$. The assertion follows by Lemma~\ref{lem:I2m iff cnd}.
\end{proof}

\begin{proposition}
For any~$n\in\mathbb Z_{\ge 4}$, the assignments
$\wh T_1\mapsto T_1T_n, \wh T_2\mapsto  T_{w_\circ^{[1,n-1]}}$
define a standard homomorphism $\Br^+(I_2(10))\to \Br^+_{n+1}$.
\end{proposition}
\begin{proof} We need the following
 \begin{lemma} \label{lem:TUS5=UST5}
 Let $\mathsf M$ be a multiplicative monoid and suppose that $S,T,U,V\in \mathsf M$ satisfy $(ST)^3=(TS)^3$, $UV=VU$, $US=SU$, $TU=VT$, $TV=UT$ and $SVS=VSV$. Then $(TUS)^5=(UST)^5$.
\end{lemma}
\begin{proof}We have
\begin{align*}(T&US)^5=(VTS)(TSU)^4
=VTSTSTVSUTSUTSU
=VTSTSTVSVUTSTSU\\
&=VTSTSTSVSUTSTSU
=VSTSTSTVSUTSTSU
=VSTSVTSTSUTSTSU\\
&=VSTSVTSTSTVSVTS
=VSTSVTSTSTSVSTS
=VSTSVSTSTSTVSTS\\
&=VSTVSVTSTSTVSTS
=UVSTSVTSVTSTSTS
=UVSTSVTSVSTSTST\\
&=UVSTSVTVSVTSTST
=UVSTSUTUSVTSTST
=UVSVTSTUSVTSTST\\
&=USVSTSTUSVTSTST
=USVSTSTSTVSTVST
=USVTSTSTSVSTVST\\
&=USVTSTSTVSVTVST
=USTUSTSTVSVTVST
=USTUSTSTVSUTUST\\
&=USTUSTUSTSUTUST=(UST)^5.\qedhere
\end{align*}
\end{proof}
Let $\mathsf M=\Br_n^+$, $S=T_n$, $U=T_1$, $V=T_{n-1}$
and $T=T_{w_\circ^{[1,n-1]}}$. All relations in the Lemma
are immediate except $(ST)^3=(TS)^3$. But
for~$J=\{1,n-1,n+1\}$
we have $\tau_1(J)=T_{w_\circ^{[1,n-1]}}=T$, $\tau_0(J)=T_n=S$
and then $(\tau_1(J)\tau_0(J))^3=(\tau_0(J)\tau_1(J))^3$
by Theorem~\ref{thm:adm I2m}.
\end{proof}

\subsection{Some conjectural families of non-disjoint standard homomorphisms}\label{subs:conj families}
In this section, we list several yet conjectural infinite families of standard homomorphisms from Artin monoids
of type~$I_2(N)$ to $\Br^+(A_n)$, $\Br^+(B_n)$ and $\Br^+(D_{n+1})$. So far we have verified these conjectures for $n \le 15$. 
\begin{conjecture}\label{conj:all homs A2 An}
Let~$M\in\Cox I$ be irreducible of finite type and let~$\Phi\in\Hom_{\mathscr A}(A_2,M)$ be fully supported and 
standard with~$[\Phi](1)\not=[\Phi](2)$. Then, up to diagram automorphisms,  either~$M=A_{3m-1}$ for some~$m\ge 1$ and~$\Phi=\Phi_{m,J}$, $J\subset[1,m-1]$ from Proposition~\ref{thm:Hom A2}
or~$M=E_6$ and~$\Phi=\wh \Phi_{3,E_6}$ from Proposition~\partref{prop:Hom A2|B2 EF.b}.
\end{conjecture}
\begin{conjecture}\label{conj:all homs B2 M}
 Homomorphisms from either Theorem~\ref{thm:Hom B2} or~\ref{thm:B2 A2n-1 spec} or Proposition~\partref{prop:Hom A2|B2 EF.b},
exhaust, up to diagram automorphisms, all fully supported standard homomorphisms from $\Br^+(B_2)$ to any Artin monoid of irreducible finite type.
\end{conjecture}

\begin{conjecture}
Let~$1\le b\le a<n-1$ and~$a>1$. The assignments~$\wh T_1\mapsto T_{w_\circ^{[1,a]}}$, $\wh T_2\mapsto T_{w_\circ^{[b,n-1]}}$ define a homomorphism
$\Br^+(I_2(N))\to \Br^+_{n}$ if one of the 
following holds
\begin{itemize}
\item $N=2n/(n+b-a-2)\in 2\ZZ_{>0}$;
\item
$a+b=n$ and~$N=n/(b-1)\in1+2\ZZ_{>0}$;
\item 
$a+1=2(b-1)$, $a+b< n$ and~$N=6$.
\end{itemize}
\end{conjecture}
 
 \begin{conjecture}
For any~$i\in [2,n-2]$, the assignments $\wh T_1 \mapsto T_i T_n$, $\wh T_2 \mapsto T_{w_0^{[1,n-1]}}$, $i \in [2,n-2]$, define a standard homomorphism $\Br^+(I_2(12/d)) \to \Br^+_{n+1}$, where $d=1+\delta_{2i,n}+\delta_{2i,n-1}+\delta_{2i,n+1}$.
\end{conjecture}

We conclude with a list of conjectural families of standard
homomorphisms $\Br^+(I_2(N))\to 
\Br^+(D_{n+1})$. Here we use the following labeling of the Coxeter graph of type~$D_{n+1}$
$$
\dynkin[Coxeter,expand labels={n,n-1,2,1,0},label directions={,,right,,},make indefinite edge={2-3}]D5
$$
\begin{conjecture}
\begin{enmalph}
\item 
The assignments
$\wh T_i\mapsto T_{w_\circ^{K_i\setminus(2i+4\ZZ_{\ge 0})}}$, $i\in\{1,2\}$
define a standard homomorphism $\Br^+(I_2(n+1))\to \Br^+(D_{n+1})$ for the following
 $K_1,K_2\subset [0,n]$:
\begin{enumerate}[leftmargin=*,label={$\bullet$}]

\item $K_1 =[1,n]$, $K_2 = \{0\} \cup [2,n-2]$ if $n\in 1+4\ZZ_{>0}$,

\item $K_1=[1,n]$, $K_2 = \{0\} \cup [2,n]$ if $n\in 1+ 2\ZZ_{> 0}$,

\item  $K_1=[1,n-2]$, $K_2 = \{0\} \cup [2,n]$ if $n\in 3+4\ZZ_{\ge 0}$.

\end{enumerate}
\item The assignments $\wh T_i\mapsto T_{w_\circ^{K_i\setminus(2i-1+4\ZZ_{\ge 0})}}$, $i\in\{1,2\}$
define a standard homomorphism $\Br^+(I_2(2n))\to \Br^+(D_{n+1})$ for the following
pairs $K_1,K_2\subset[0,n]$:
\begin{enumerate}[leftmargin=*,label={$\bullet$}]

\item $K_1=[0,n]$,
$K_2= \{1\} \cup [4,n-2]$ if $n\in 4\ZZ_{>0}$,

\item $K_1=[0,n]$, $K_2= \{1\} \cup [4,n]$ if $n\in 2\ZZ_{>1}$,

\item  $K_1=[0,n-2]$, $K_1= \{1\} \cup [4,n]$ if $n\in 2+4\ZZ_{>0}$.

\end{enumerate}
\end{enmalph}

\end{conjecture}

\section*{List of symbols}
\def\bqq{{\setbox0\hbox{$\widehat{U}_q^+$}\setbox2\null\ht2\ht0\dp2\dp0\box2}}
\def\hr#1{\hyperlink{#1}{\pageref*{page:#1}}}

\noindent
{
\scriptsize
\begin{tabular}{p{1.55in}@{\bqq}l@{\hskip.25in}p{1.55in}@{\bqq}l@{\hskip.25in}p{1.55in}@{\bqq}l}
$\bar s$&p.~\hr{bar s}&$[a,b]_2$&p.~\hr{[a,b]2}&$\mathscr P(S)$&p.~\hr{PS}\\
$\ascprod$, $\dscprod$&p.~\hr{ascp}&
$\brd{xy}{m}$&p.~\hr{brd}&$B(x,y)$&p.~\hr{B(x,y)}\\
$\Cox I$&p.~\hr{Cox I}&$\Gamma(M)$&p.~\hr{Gamma(M)}&
$\Br^+(M)$, $\Br(M)$&p.~\hr{Br+(M)} \\
$\ell$&p.~\hr{ell}&${}^{op}$&p.~\hr{op}&$W(M)$&p.~\hr{W(M)}\\
$\pi_M$&p.~\hr{piM}&
$\SQF^+(M)$&p.~\hr{SQF}&
$\Br^+_J(M)$, $W_J(M)$&p.~\hr{Br+J(M)}\\
$\iota_J$&p.~\hr{iotaJ}&$\mathscr F(M)$&p.~\hr{F(M)}&
$\supp$&p.~\hr{supp}\\
$w_\circ^J$&p.~\hr{w0J}&
$\pi^\star_M$&p.~\hr{pi*M}&
$(W(M),\star)$&p.~\hr{HeMon}\\
$\mathscr A$, $\mathscr C$, $\mathscr H$&p.~\hr{A C H}&
$\times$&p.~\hr{times}&
$\downarrow w$, $\uparrow w$&p.~\hr{dar}\\
$D_L(w)$,$D_R(w)$&p.~\hr{DL(w)}&
$\cx ab$, $\cxr ab$, $\Cx ab$, $\Cxr ab$&p.~\hr{cab}&
$D_L(X)$&p.~\hr{I(X)}\\
$h(M)$&p.~\hr{h(M)}&
$w_{J;K}$&p.~\hr{wJ;K}&
$\mP(W_K(M),\star)$, $\mP(\Br^+_K(M))$&p.~\hr{Psbm}\\
$[\phi]$&p.~\hr{[phi]}&
$\mu_M$&p.~\hr{mu M}&
$\Lambda(M',M)$&p.~\hr{LM'M}\\
$\Theta_\xi$&p.~\hr{Theta xi}&
$p_J$&p.~\hr{pJ}&
$[\Phi]$&p.~\hr{[Phi]}\\
$\Xi_{\mathbf X}$&p.~\hr{char hom}&
$\Phi_{\mathbf z}$&p.~\hr{Phi z}&
$\overline\Phi$,
$\overline\Phi_\star$&p.~\hr{barPhi}\\
$\mathscr{AH}$, $\mathscr{AC}$&p.~\hr{AH AC}&
$\mathsf H$, $\mathsf C$&p.~\hr{H C}&
$P_J$&p.~\hr{PJ}\\
$\star_J$&p.~\hr{*J}&
$w_{J_1,\dots,J_k;J}$&p.~\hr{multiparab}&
$M^{\varpi}$&p.~\hr{MII}\\
$\mathbf f_\varpi$&p.~\hr{phi fold}&
$\mathbf F_{\varpi}$&p.~\hr{F wpi}&
$M(\mathbf d)$&p.~\hr{M(d)}\\
$\mathbf T_{\mathbf d}$&p.~\hr{T d}&
$\mathbf T_{i,d}$&p.~\hr{T i,d}&
$\mathscr G$&p.~\hr{Good}\\
$\Br^+_n$, $\Br_n$&p.~\hr{Brn}&
$T_{(i,j)}$&p.~\hr{T(i,j)}&
$T_J$,$\tilde\tau_k(J)$&p.~\hr{TJ}\\
$\tau_k(J)$&p.~\hr{tauk(J)}&
$\Cx ij^{(a)}$, $\Cxr ij^{(a)}$&p.~\hr{Cij(a)}&
$g(J)$&p.~\hr{gJ}\\
$U(J)$&p.~\hr{U(J)}&
$e_i$&p.~\hr{ei}&
$v_{[i,j]}$&p.~\hr{v[i,j]}\\
$u_i$, $w_{[i,j]}^{(a)}$&p.~\hr{ui}&
$\la\cdot|\cdot\ra$&p.~\hr{<.|.>}&
$\beta_\pm(J)$&p.~\hr{betapm}\\
$q_s$&p.~\hr{qs}&
$\Phi^{(m)}_n$,
$\wh \Phi^{(m)}_n$&p.~\hr{Phi(m)n}
\end{tabular}
}

\renewcommand{\PrintDOI}[1]{%
    DOI: \href{https://dx.doi.org/#1}{#1}%
}

\renewcommand{\MR}[1]{\relax}

\begin{bibdiv}
\begin{biblist}

\bib{BGL}{article}{
   author={Berenstein, Arkady},
   author={Greenstein, Jacob},
   author={Li, Jian-Rong},
   title={On cacti and crystals},
   conference={
      title={Representations and nilpotent orbits of Lie algebraic systems},
   },
   book={
      series={Progr. Math.},
      volume={330},
      publisher={Birkh\"{a}user/Springer},
   },
   isbn={978-3-030-23530-7},
   isbn={978-3-030-23531-4},
   date={2019},
   pages={19--72},
   review={\MR{3971727}},
   doi={10.1007/978-3-030-23531-4\_2},
}

\bib{BGLa}{article}{
   author={Berenstein, Arkady},
   author={Greenstein, Jacob},
   author={Li, Jian-Rong},
   title={Geometric cacti},
   date={in preparation},
}

\bib{BGLmon}{article}{
   author={Berenstein, Arkady},
   author={Greenstein, Jacob},
   author={Li, Jian-Rong},
   title={Monomial braidings},
   date={in preparation},
}

\bib{BK07}{article}{
   author={Berenstein, Arkady},
   author={Kazhdan, David},
   title={Geometric and unipotent crystals. II. From unipotent bicrystals to
   crystal bases},
   conference={
      title={Quantum groups},
   },
   book={
      series={Contemp. Math.},
      volume={433},
      publisher={Amer. Math. Soc., Providence, RI},
   },
   isbn={978-0-8218-3713-9},
   date={2007},
   pages={13--88},
   review={\MR{2349617}},
   doi={10.1090/conm/433/08321},
}

\bib{BBO'C}{article}{
   author={Biane, Philippe},
   author={Bougerol, Philippe},
   author={O'Connell, Neil},
   title={Continuous crystal and Duistermaat-Heckman measure for Coxeter
   groups},
   journal={Adv. Math.},
   volume={221},
   date={2009},
   number={5},
   pages={1522--1583},
   issn={0001-8708},
   review={\MR{2522427}},
   doi={10.1016/j.aim.2009.02.016},
}

\bib{Bie17}{article}{
   author={Biers-Ariel, Yonah},
   title={The number of permutations avoiding a set of generalized permutation patterns},
   journal={J. Int. Seq.},
   volume={20},
   date={2017},
   pages={Article 17.8.3},
   url={https://cs.uwaterloo.ca/journals/JIS/VOL20/Biers/biers3.html},
}

\bib{BjBr}{book}{
   author={Bj\"{o}rner, Anders},
   author={Brenti, Francesco},
   title={Combinatorics of Coxeter groups},
   series={Graduate Texts in Mathematics},
   volume={231},
   publisher={Springer, New York},
   date={2005},
   pages={xiv+363},
   isbn={978-3540-442387},
   isbn={3-540-44238-3},
   review={\MR{2133266}},
}

\bib{BoMo}{article}{
   author={Bonichon, Nicolas},
   author={Morel, Pierre-Jean},
   title={Baxter $d$-permutations and other pattern-avoiding classes},
   journal={J. Integer Seq.},
   volume={25},
   date={2022},
   number={8},
   pages={Art. 22.9.3, 36},
   review={\MR{4503208}},
}

\bib{Bou}{book}{
   author={Bourbaki, N.},
   title={\'{E}l\'{e}ments de math\'{e}matique. Fasc. XXXIV. Groupes et
   alg\`ebres de Lie. Chapitre IV: Groupes de Coxeter et syst\`emes de Tits.
   Chapitre V: Groupes engendr\'{e}s par des r\'{e}flexions. Chapitre VI:
   syst\`emes de racines},
   series={Actualit\'{e}s Scientifiques et Industrielles},
   volume={No. 1337},
   publisher={Hermann, Paris},
   date={1968},
   pages={288 pp. (loose errata)},
   review={\MR{0240238}},
}
\bib{BrSa}{article}{
   author={Brieskorn, Egbert},
   author={Saito, Kyoji},
   title={Artin-Gruppen und Coxeter-Gruppen},
   journal={Invent. Math.},
   volume={17},
   date={1972},
   pages={245--271},
   issn={0020-9910},
   doi={10.1007/BF01406235},
}

\bib{Bu}{article}{
   author={Burau, Werner},
   title={\"{U}ber Zopfgruppen und gleichsinnig verdrillte Verkettungen},
   journal={Abh. Math. Sem. Univ. Hamburg},
   volume={11},
   date={1935},
   number={1},
   pages={179--186},
   issn={0025-5858},
   review={\MR{3069652}},
   doi={10.1007/BF02940722},
}

\bib{Cart}{article}{
   author={Carter, R. W.},
   title={Representation theory of the $0$-Hecke algebra},
   journal={J. Algebra},
   volume={104},
   date={1986},
   number={1},
   pages={89--103},
   issn={0021-8693},
   review={\MR{0865891}},
   doi={10.1016/0021-8693(86)90238-3},
}

\bib{Cas}{article}{
   author={Castella, Anatole},
   title={Admissible submonoids of Artin-Tits monoids},
   journal={J. Pure Appl. Algebra},
   volume={212},
   date={2008},
   number={7},
   pages={1594--1611},
   issn={0022-4049},
   review={\MR{2400731}},
   doi={10.1016/j.jpaa.2007.10.010},
}

\bib{CJR}{article}{
   author={Chang, Bomshik},
   author={Jennings, S. A.},
   author={Ree, Rimhak},
   title={On certain pairs of matrices which generate free groups},
   journal={Canadian J. Math.},
   volume={10},
   date={1958},
   pages={279--284},
   issn={0008-414X},
   review={\MR{0094388}},
   doi={10.4153/CJM-1958-029-2},
}

\bib{CFF21}{article}{
   author={Chiriv\`{i}, R.},
   author={Fang, X.},
   author={Fourier, G.},
   title={Degenerate Schubert varieties in Type A},
   journal={Transformation Groups},
   volume={26},
   date={2021},
   pages={1189--1215},
   doi={doi.org/10.1007/s00031-020-09558-4},
}

\bib{CR15}{article}{
author = {Clark, Tyler},
author = {Richmond, Tom},
title = {{The number of convex topologies on a finite totally ordered set}},
volume = {8},
journal = {Involve: A Journal of Mathematics},
number = {1},
publisher = {MSP},
pages = {25 -- 32},
year = {2015},
doi = {10.2140/involve.2015.8.25},
}

 \bib{Cri}{article}{
   author={Crisp, John},
   title={Injective maps between Artin groups},
   conference={
      title={Geometric group theory down under},
      address={Canberra},
      date={1996},
   },
   book={
      publisher={de Gruyter, Berlin},
   },
   isbn={3-11-016366-7},
   date={1999},
   pages={119--137},
   review={\MR{1714842}},
}

\bib{Cri1}{article}{
   author={Crisp, John},
   title={Symmetrical subgroups of Artin groups},
   journal={Adv. Math.},
   volume={152},
   date={2000},
   number={1},
   pages={159--177},
   issn={0001-8708},
   review={\MR{1762124}},
   doi={10.1006/aima.1999.1895},
}

\bib{CP}{article}{
   author={Crisp, John},
   author={Paris, Luis},
   title={The solution to a conjecture of Tits on the subgroup generated by
   the squares of the generators of an Artin group},
   journal={Invent. Math.},
   volume={145},
   date={2001},
   number={1},
   pages={19--36},
   issn={0020-9910},
   review={\MR{1839284}},
   doi={10.1007/s002220100138},
}

\bib{CIK}{article}{
   author={Curtis, C. W.},
   author={Iwahori, N.},
   author={Kilmoyer, R.},
   title={Hecke algebras and characters of parabolic type of finite groups
   with $(B,$ $N)$-pairs},
   journal={Inst. Hautes \'Etudes Sci. Publ. Math.},
   number={40},
   date={1971},
   pages={81--116},
   issn={0073-8301},
   review={\MR{0347996}},
}

\bib{Del}{article}{
   author={Deligne, Pierre},
   title={Les immeubles des groupes de tresses g\'{e}n\'{e}ralis\'{e}s},
   journal={Invent. Math.},
   volume={17},
   date={1972},
   pages={273--302},
   issn={0020-9910},
   review={\MR{0422673}},
   doi={10.1007/BF01406236},
}

\bib{GK}{article}{
   author={Gao, Alice L. L.},
   author={Kitaev, Sergey},
   title={On partially ordered patterns of lengths 4 and 5 in permutations},
   journal={Electron. J. Combin.},
   volume={26},
   date={2019},
   number={3},
   pages={Paper No. 3.26, 31},
   review={\MR{4014596}},
   doi={10.37236/8605},
}

\bib{God}{article}{
   author={Godelle, Eddy},
   title={Morphismes injectifs entre groupes d'Artin-Tits},
   journal={Algebr. Geom. Topol.},
   volume={2},
   date={2002},
   pages={519--536},
   issn={1472-2747},
   review={\MR{1917065}},
   doi={10.2140/agt.2002.2.519},
}

\bib{He09}{article}{
   author={He, Xuhua},
   title={A subalgebra of 0-Hecke algebra},
   journal={J. Algebra},
   volume={322},
   date={2009},
   number={11},
   pages={4030--4039},
   issn={0021-8693},
   review={\MR{2556136}},
   doi={10.1016/j.jalgebra.2009.04.003},
}

\bib{HM95}{article}{
author = {Hwang, F.K.},
author = {Mallows, C.L.},
title = {Enumerating nested and consecutive partitions},
journal = {Journal of Combinatorial Theory, Series A},
volume = {70},
number = {2},
pages = {323--333},
year = {1995},
issn = {0097-3165},
doi = {10.1016/0097-3165(95)90097-7},
}

\bib{K14}{article}{
   author={Kenney, Toby},
   title={Coxeter groups, Coxeter monoids and the Bruhat order},
   journal={J. Algebraic Combin.},
   volume={39},
   date={2014},
   number={3},
   pages={719--731},
   issn={0925-9899},
   doi={10.1007/s10801-013-0464-7},
}

\bib{KW22}{article}{
    author = {Kim, Donghyun},
    author = {Williams, Lauren K},
    title = {Schubert polynomials, the inhomogeneous TASEP, and evil-avoiding permutations},
    journal = {Int. Math. Res. Not. IMRN},
    volume = {2023},
    number = {10},
    pages = {8143--8211},
    year = {2022},
    issn = {1073-7928},
    doi = {10.1093/imrn/rnac083},
}

\bib{Leh}{article}{
   author={Lehmer, D. H.},
   title={On Lucas's Test for the Primality of Mersenne's Numbers},
   journal={J. London Math. Soc.},
   volume={10},
   date={1935},
   number={3},
   pages={162--165},
   issn={0024-6107},
   review={\MR{1575006}},
   doi={10.1112/jlms/s1-10.2.162},
}
\bib{Lus93}{book}{
   author={Lusztig, George},
   title={Introduction to quantum groups},
   series={Progress in Mathematics},
   volume={110},
   publisher={Birkh\"auser Boston, Inc., Boston, MA},
   date={1993},
}

\bib{Cheby}{book}{
   author={Mason, J. C.},
   author={Handscomb, D. C.},
   title={Chebyshev polynomials},
   publisher={Chapman \& Hall/CRC, Boca Raton, FL},
   date={2003},
   pages={xiv+341},
   isbn={0-8493-0355-9},
   review={\MR{1937591}},
}

\bib{Mue}{article}{
   author={M\"{u}hlherr, B.},
   title={Coxeter groups in Coxeter groups},
   conference={
      title={Finite geometry and combinatorics},
      address={Deinze},
      date={1992},
   },
   book={
      series={London Math. Soc. Lecture Note Ser.},
      volume={191},
      publisher={Cambridge Univ. Press, Cambridge},
   },
   isbn={0-521-44850-6},
   date={1993},
   pages={277--287},
   review={\MR{1256283}},
   doi={10.1017/CBO9780511526336.027},
}

\bib{Par}{article}{
   author={Paris, Luis},
   title={Artin monoids inject in their groups},
   journal={Comment. Math. Helv.},
   volume={77},
   date={2002},
   number={3},
   pages={609--637},
   issn={0010-2571},
   review={\MR{1933791}},
   doi={10.1007/s00014-002-8353-z},
}

\bib{Tits}{article}{
   author={Tits, Jacques},
   title={Le probl\`eme des mots dans les groupes de Coxeter},
   conference={
      title={Symposia Mathematica},
      address={INDAM, Rome},
      date={1967/68},
   },
   book={
      publisher={Academic Press, London-New York},
   },
   date={1969},
   pages={175--185},
   review={\MR{0254129}},
}

\end{biblist}
\end{bibdiv}
\end{document}